\theoremstyle{definition}\newtheorem{theorem}{Theorem}[section]
\newtheorem*{theorem*}{theorem}
\newtheorem{proposition}{Proposition}[section]
\newtheorem{lemma}[proposition]{Lemma}
\newtheorem{corollary}[proposition]{Corollary}
\newtheorem{conjecture}[proposition]{Conjecture}
\theoremstyle{definition}\newtheorem{definition}[proposition]{Definition}
\theoremstyle{remark}\newtheorem{example}[proposition]{Example}
\newtheorem{remark}[proposition]{Remark}
\renewcommand{\to}[1][]{\xrightarrow{#1}}
\newcounter{AP}
\begin{document}
\begin{titlepage}
{\Huge ~\\~\\Alexey Petukhov\\~\\~\\~\\~\\Ph.D. Thesis\\~\\~\\~\\A geometric approach to the study of $(\frak g, \frak k)$-modules of finite type}\\ \\~\\~\\~\\~\\~\\~\\~\\~\\~\\~\\~\\~\\~\\\begin{center}{\bf J}acobs {\bf U}niversity {\bf B}remen\end{center}\begin{center}{\bf24}-th May {\bf2011}\end{center}~\\~ \\Committee members\begin{center}Professor Dr. {\bf I}van {\bf P}enkov (Ph. D. Advisor and Chair of Committee)\end{center}~\\Professor Dr. {\bf A}lan {\bf H}uckleberry\hspace{10pt}~\\~\\Professor Dr. {\bf D}mitri {\bf P}anyushev~\\~\\~\\~\\~\\\end{titlepage}
\tableofcontents
\section{Notation and definitions}\label{Snot}
Throughout this thesis $\frak g$ will be a reductive Lie algebra
over the algebraically closed field $\mathbb C$ of characteristic 0
and $\frak k\subset\frak g$ will be a reductive in $\frak g$
subalgebra. Let U$(\frak k)$ be the universal enveloping algebra of
$\frak k$, $G$ be the adjoint group of $[\frak g, \frak g]$, $K$ be
a connected subgroup of $G$ with Lie algebra $\frak k\cap [\frak g,
\frak g]$ and $B$ be a Borel subgroup of $K$. We work in the
category of algebraic varieties over $\mathbb C$. All Lie algebras
considered are finite-dimensional.
\begin{definition}\label{Dgk}A $(\frak g, \frak k)$-{\it module} is a $\frak g$-module for which action of $\frak k$ is {\it locally finite}, i.e. for which dim~(U$(\frak k)m)<\infty$ for any $m\in M$, where U$(\frak k)m:=\{m'\in M\mid m'=um$ for some $u\in$U$(\frak k)$ and $m\in M\}$.\end{definition}
\begin{definition}Let $M$ be a {\it locally finite} $\frak k$-module, i.e a $(\frak k, \frak k)$-module. We say that $M$ {\it has finite type over $\frak k$} if all isotypic components of $\frak k$ are finite-dimensional. We say that a $(\frak g, \frak k)$-module is of {\it finite type} if $M$ has finite type over $\frak k$.\end{definition}
Let Z$(\frak g)$ be the center of the universal enveloping algebra U$(\frak g$).
\begin{definition} We say that a $\frak g$-module $M$ {\it affords a central character} if for some homomorphism of algebras $\chi:$Z$(\frak g)\to\mathbb C$ we have $zm=\chi(z)m$  for all $z\in$Z$(\frak g)$ and $m\in M$.\end{definition}
\begin{definition} We say that a $\frak g$-module $M$ {\it affords a generalized central character} if for some homomorphism \begin{center}$\chi:$Z$(\frak g)\to\mathbb C$\end{center} and some $n\in\mathbb Z_{>0}$ we have \begin{center}$(z-\chi(z))^nm=0$\end{center} for all $m\in M$ and $z\in$Z$(\frak g)$.\end{definition}
Let $M$ be a $(\frak g, \frak k)$-module of finite type and $V$ be a simple $\frak k$-module. We denote by $[M: V]_\frak k$ the supremum over all finite-dimensional $\frak k$-submodules $M'\subset M$ of the Jordan-H\"older multiplicities $[M': V]_\frak k$. By $[M: \cdot]_\frak k$  we denote the corresponding function from the set of simple $\frak k$-modules to $\mathbb Z_{\ge0}$. This is nothing but the $\frak k$-character of $M$.
\begin{definition}\label{Dbm} A {\it bounded $(\frak g, \frak k)$-module} $M$ is a $(\frak g, \frak k)$-module which is {\it bounded} as a $\frak k$-module, i.e. for which the function $[M:\cdot]_\frak k$ is uniformly bounded by some constant $C_M$.\end{definition}
\begin{definition} A {\it multiplicity-free} $(\frak g, \frak k)$-module $M$ is a $(\frak g, \frak k)$-module which is multiplicity-free as a $\frak k$-module, i.e. for which the function $[M:\cdot]_\frak k$ is uniformly bounded by 1.\end{definition}
For a finitely generated $\frak g$-module we denote by V$(M)$ the associated variety of $M$, by Ann$M$ the annihilator of $M$ in U$(\frak g)$, by GV($M$) the zero set of gr(Ann$M$) (see Subsection~\ref{SSavg}). We call a $G$-orbit $Gu$ in $\frak g^*$ {\it nilpotent} if $0\in\overline{Gu}$. If $\frak g$ is semi-simple we identify $\frak g$ and $\frak g^*$.

For a variety $X$ we denote by $\EuScript O(X)$ the structure sheaf of $X$, by $\mathbb C[X]$ the algebra of regular functions on $X$, by $\EuScript D(X)$ the sheaf of differential operators on $X$, by D$(X)$ the algebra of global sections of $\EuScript D(X)$. We denote by T$X$ (respectively, T$^*X$) the total space of the tangent (respectively, cotangent) bundle to $X$. If $\EuScript M$ is a coherent $\EuScript D(X)$-module, $\EuScript V(\EuScript M)\subset$T$^*X$ stands for {\it the singular support} of $\EuScript M$~\cite{Bo} (see also Subsection~\ref{SSd_g}).  All $\EuScript D(X)$-modules considered are quasicoherent.

For a finite-dimensional vector space $W$ we set $n_W:=$dim$W$.
\begin{definition} We call an $n_W$-tuple $\bar\lambda:=(\lambda_1,..., \lambda_{n_W}), \lambda_i\in\mathbb C,$ {\it decreasing} if $\lambda_i-\lambda_j\in\mathbb Z_{\ge0}$ for $i\ge j$. We call $\bar\lambda$ {\it semi-decreasing} if it is not decreasing but becomes decreasing when we remove one coordinate (cf. with O.~Mathieu's~\cite{M} definitions of ordered/semi-ordered tuples).\end{definition}
Assume $n_W\ge 3$. Let $\bar\lambda=(\lambda_1,..., \lambda_{n_W-1}), \lambda_j\in\mathbb C$, be a decreasing tuple and $t\in\mathbb C$. By adding an additional coordinate $t$ to $\bar\lambda$ (at any position) we obtain a semi-decreasing $n_W$-tuple $\bar\lambda^+$.
\begin{definition}{\it The monodromy} m$(\bar\lambda^+)$ of $\bar\lambda^+$ is the number e$^{2\pi i(t-\lambda_1)}$.\end{definition}
\begin{definition} We call an $n_W$-tuple $\bar\lambda:=(\lambda_1, ..., \lambda_{n_W})$, $\lambda_i\in\mathbb C$, {\it integral} if $\lambda_i-\lambda_j\in\mathbb Z$ for any $i, j$. We call $\bar\lambda$ {\it semi-integral} if it is not integral but becomes integral when we remove one term. We call a tuple $\bar\lambda$ {\it regular} if $\lambda_i\ne\lambda_j$ for all pairs $i\ne j$.\end{definition}
Any semi-decreasing $n_W$-tuple $\bar\lambda$ is regular integral, singular integral, or semi-integral. If $\bar\lambda$ is integral then m$(\bar\lambda)=1$; if $\bar\lambda$ is not integral then m$(\bar\lambda)\ne1$.

\begin{definition} We call an $n_W$-tuple a {\it Shale-Weil tuple} if $\mu_i>\mu_j$ for $i>j$, $\mu_{n_W-1}>|\mu_{n_W}|$ and $\mu_i\in\frac{1}{2}+\mathbb Z$ for all $i\in\{1,...,n_W\}$.\end{definition}
For a $K$-variety $X$ and a point $x\in X$ we denote by $K_x$ the stabilizer of $x$ in $K$ and by $\frak k_x$ the Lie algebra of $K_x$. If there exists a subgroup $H\subset K$ such that $H$ is conjugate to $K_x$ for all $x$ from some open subset $\tilde X\subset X$, we call $H$ {\it a generic stabilizer} of $K$ on $X$; we call the Lie algebra of $H$ {\it a generic isotropy subalgebra}. By definition, $X$ is a $K$-{\it spherical} variety if $X$ is irreducible and has an open $B$-orbit.

By SL$_n$, SO$_n$, SP$_n$ we denote respectively the special linear, orthogonal and symplectic groups of $n$-dimensional vector space (SP$_n$ is defined only when $n=2k$). By $\frak{sl}_n, \frak{so}_n, \frak{sp}_n$ we denote the corresponding Lie algebras.
\newpage\section{Introduction and brief statements of results}\label{Stro}
There are two well-known categories of $(\frak g, \frak k)$-modules: the category of Harish-Chandra modules and the category $\EuScript O$. In the first case $\frak k$ is a symmetric subalgebra of $\frak g$ (i.e. $\frak k$ coincides with the fixed points of an involution of $\frak g$), and in the second case $\frak k$ is a Cartan subalgebra $\frak h_\frak g$ of $\frak g$. For both types of pairs $(\frak g, \frak k)$ the $(\frak g, \frak k)$-modules in question are of finite type. I.~Penkov, V.~Serganova, and G.~Zuckerman have proposed to study, and attempt to classify, simple $(\frak g, \frak k)$-modules of finite type for arbitrary reductive in $\frak g$ subalgebras $\frak k$,~\cite{PSZ},~\cite{PZ} (see also~\cite{Mi}).

Let $X$ be the variety of all Borel subalgebras of $\frak g$. Let $\lambda\in$H$^1(X,\Omega^{1,\mathrm{cl}})$ be a cohomology class ($\Omega^{1,\mathrm{cl}}$ is the sheaf of closed holomorphic 1-forms on $X$) and $\EuScript D^\lambda(X)$ be the corresponding sheaf of twisted differential operators on $X$~\cite{Be}. The cohomology class $\lambda$ defines functors of 'localization' (Loc: $\frak g$-modules to $\EuScript D^\lambda(X)$-modules) and 'global sections' (GSec: $\EuScript D^\lambda(X)$-modules to $\frak g$-modules). For any central character $\chi$ there exists a cohomology class $\lambda$ such that GSec(Loc) is the identity functor after restriction to the category of $\frak g$-modules which affords the central character $\chi$~\cite{BeBe}. Any simple $\frak g$-module affords a central character~\cite{Dix}.

We fix $\lambda\in$H$^1(X,\Omega^{1,\mathrm{cl}})$. In the category of $\EuScript D^\lambda(X)$-modules there is a distinguished full subcategory of holonomic sheaves of modules, or simply holonomic modules. Informally, holonomic modules are $\EuScript D^\lambda(X)$-modules of minimal growth (see Definition~\ref{Dhol}). The simple holonomic modules are in one-to-one correspondence with the set of pairs $(L ,S)$, where $L$ is an irreducible closed subvariety of $X$ and $S$ is sheaf of $\EuScript D^\lambda(L')$-modules which is $\EuScript O(L')$-coherent after restriction to a suitable open subset $L'\subset L$. Moreover, a coherent holonomic module $S$ is locally free on $L'$, and one could think about it as a vector bundle $S_B$ over $L'$ with a flat connection. Note that flat local sections of $S_B$ are not necessarily algebraic.

Our first result is the following theorem, which we prove in Section~\ref{SHol} (Corollary~\ref{Cholsup}).
\begin{theorem}\label{THol} Let $M$ be a simple $(\frak g, \frak k)$-module of finite type. Then Loc$(M)$ is a holonomic $\EuScript D^\lambda(X)$-module.\end{theorem}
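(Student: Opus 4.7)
Write $\EuScript M:=$Loc$(M)$. Since $M$ is simple, by Dixmier's theorem it affords a central character $\chi$; choose $\lambda$ compatible with $\chi$ so that GSec$\circ$Loc is the identity on the category of $\frak g$-modules with central character $\chi$, whence $\EuScript M\ne 0$ and GSec$(\EuScript M)\cong M$. Holonomicity amounts to $\dim\EuScript V(\EuScript M)=\dim X$; Gabber's involutivity theorem already gives $\dim\EuScript V(\EuScript M)\ge\dim X$, so only the upper bound requires work.

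The first ingredient is $K$-equivariance. Local finiteness of the $\frak k$-action on $M$ integrates (since $K$ is connected) to an algebraic action of $K$ on $M$ whose differential is the restriction of the $\frak g$-action, and functoriality of Loc then makes $\EuScript M$ a weakly $K$-equivariant coherent $\EuScript D^\lambda(X)$-module. Passing to symbols of the operators representing $\frak k$ on $\EuScript M$ and using the definition of the moment map $\mu\colon\mathrm T^*X\to\frak g^*\simeq\frak g$, one obtains
$$\EuScript V(\EuScript M)\;\subset\;\mu^{-1}(\frak k^\perp),$$
with $\EuScript V(\EuScript M)$ closed, conical, and $K$-stable. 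By Borho--Brylinski, $\mu(\EuScript V(\EuScript M))$ coincides (up to closure) with V$(M)$, and the standard bound $\dim$V$(M)\le\dim X$ (Joseph) holds for the simple module $M$. When $\lambda$ is regular, $\mu$ is generically finite along $\EuScript V(\EuScript M)$, so $\dim\EuScript V(\EuScript M)=\dim$V$(M)\le\dim X$ and we are done. The substantive case is singular $\lambda$, where $\mu$ can have positive-dimensional fibers along components of $\EuScript V(\EuScript M)$ and the finite-type hypothesis must enter.

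The crux is eliminating those positive-dimensional fibers using finite type. Let $Y\subset X$ be the support of $\EuScript M$, a closed $K$-stable subvariety; on its open $K$-homogeneous locus $Y^\circ\cong K/K_x$, Kashiwara's equivalence identifies $\EuScript M|_{Y^\circ}$ with a finite-dimensional $(\frak k_x,K_x)$-module $F_x$, so $\EuScript M|_{Y^\circ}$ is $\EuScript O$-coherent and its contribution to $\EuScript V(\EuScript M)$ is the conormal bundle $\mathrm T^*_{Y^\circ}X$, of dimension $\dim X$. Were some other irreducible component of $\EuScript V(\EuScript M)$ of dimension $>\dim X$, applying Frobenius reciprocity to $F_x$ and to the $\frak k$-isotypic decomposition of $M$ along the fibers of $\mu|_{\EuScript V(\EuScript M)\cap\mathrm T^*Y^\circ}$ would produce some simple $\frak k$-type appearing with infinite multiplicity in $M$, contradicting the finite-type hypothesis.

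The main obstacle is this last step: turning the algebraic bound on $\frak k$-isotypic multiplicities into a geometric bound on the fiber dimension of the moment map along $\EuScript V(\EuScript M)$. Because $\frak k$ is not assumed symmetric, $Y$ may meet infinitely many $K$-orbits of $X$, and the Frobenius-reciprocity argument must therefore hold uniformly across the whole support $Y$ rather than on a single orbit. The remaining steps are essentially a packaging of Beilinson--Bernstein, Gabber, and Kashiwara.
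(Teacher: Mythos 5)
There is a genuine gap, and you have in fact identified it yourself: the argument breaks down precisely at the step you call the ``crux,'' and the Frobenius-reciprocity idea you sketch there is not the way the paper fills the hole. The paper does not try to bound the fiber dimension of the moment map along $\EuScript V(\text{Loc}\,M)$ at all. Instead it works entirely on the associated variety $V(M)\subset\frak g^*$ and proves it is \emph{Lagrangian} by sandwiching it between two facts. First, $V(M)\subset\frak k^\bot$ by Fernando--Kac, and the finite-type hypothesis forces the only closed $K$-orbit in $V(M)$ to be $\{0\}$, i.e.\ $V(M)\subset N_K(\frak g^*)$. Second --- and this is the missing geometric ingredient --- Theorem~\ref{TGeo} shows that any subvariety of $\frak k^\bot\cap N_K(\frak g^*)$ is isotropic in its $G$-saturation: the Hilbert--Mumford criterion writes $N_K(\frak g^*)$ as a finite union of pieces $K\frak n_h$, each of which is exhibited as the image under a moment map of $T^*(G/G_h)$ restricted to $N^*_{Z_K/Z_G}$, a Lagrangian; since moment-map images of isotropics are isotropic, the conclusion follows. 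Combining this with Gabber's coisotropicity theorem for $V(M)$, one gets that $V(M)$ is Lagrangian, whence $\dim\EuScript V(\text{Loc}\,M)=\dim X$. No regular/singular-$\lambda$ case split, no conormal-bundle-plus-Kashiwara analysis on the support, and no isotypic-multiplicity accounting along fibers is needed.

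Separately, your reduction in the regular-$\lambda$ case leans on a bound ``$\dim V(M)\le\dim X$ (Joseph) for the simple module $M$,'' but this is not a theorem about arbitrary simple $\frak g$-modules. It holds for simple highest-weight modules, but for a general simple module, $\dim V(M)\le\dim X$ is essentially equivalent to the holonomicity of $M$ (via $\dim V(M)=\tfrac12\dim GV(M)$), which is not known without further hypotheses --- indeed it is exactly what the finite-type assumption is being used to establish here. So that step, as written, is either circular or appeals to a result that does not exist; the finite-type hypothesis must enter even in the regular case, and in the paper it enters through the containment $V(M)\subset\frak k^\bot\cap N_K(\frak g^*)$ and the isotropicity theorem.
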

The following theorem is a 'geometric twin' of the previous one and is proved also in Section~\ref{SHol}. The definitions needed for the statement of the theorem see in Subsection~\ref{Sssg} and Subsection~\ref{SSHM}, or in~\cite{VP}.
\begin{theorem}\label{TGeo} Let $\EuScript Z\subset\frak g^*$ be a nilpotent $G$-orbit, $\frak k^\bot$ be the annihilator of $\frak k$ in $\frak g^*$, N$_K(\frak g^*)$ be the $K$-null-cone in $\frak g^*$. Then the irreducible components of $\EuScript Z\cap\frak k^\bot\cap$N$_K(\frak g^*$) are isotropic subvarieties of $\EuScript Z$.\end{theorem}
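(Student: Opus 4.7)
The plan is to combine the Kirillov--Kostant--Souriau (KKS) symplectic structure on $\mathcal Z$, the moment-map interpretation of $\mathfrak k^\perp$, and the Hilbert--Mumford criterion for membership in $N_K(\mathfrak g^*)$. Recall that $\omega_\xi([a,\xi],[b,\xi])=\xi([a,b])$ for $a,b\in\mathfrak g$ and $\xi\in\mathcal Z$. The restriction map $\mu\colon\mathcal Z\to\mathfrak k^*$, $\xi\mapsto\xi|_{\mathfrak k}$, is a moment map for the $K$-action, so $\mathcal Z\cap\mathfrak k^\perp=\mu^{-1}(0)$ and $T_\xi\mu^{-1}(0)=(T_\xi(K\xi))^\omega$ at smooth points. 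Every $K$-orbit inside $\mathcal Z\cap\mathfrak k^\perp$ is already isotropic in $\mathcal Z$: for $a,b\in\mathfrak k$ one has $\omega_\xi([a,\xi],[b,\xi])=\xi([a,b])=0$ because $[a,b]\in\mathfrak k\subset\ker\xi$.

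Now fix an irreducible component $Y$ of $\mathcal Z\cap\mathfrak k^\perp\cap N_K(\mathfrak g^*)$. The Hilbert--Mumford criterion applied to the linear $K$-action on the affine space $\mathfrak g^*$, combined with the finiteness of the Kempf--Hesselink stratification of the $K$-null-cone, produces a single one-parameter subgroup $\lambda\colon\mathbb G_m\to K$ that attracts every point of an open dense $Y^\circ\subset Y$ to $0$. Write $\mathfrak g=\bigoplus_i\mathfrak g_i$ for the $\operatorname{Ad}(\lambda)$-weight grading and $\mathfrak p=\mathfrak g_{\ge 0}$ for the associated parabolic subalgebra of $\mathfrak g$. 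The attraction condition translates into $\xi|_{\mathfrak p}=0$ on $Y^\circ$, so by closedness $Y\subset\mathcal Z\cap(\mathfrak p+\mathfrak k)^\perp$.

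At a smooth generic $\xi\in Y^\circ$ the isotropy of $Y$ amounts to checking that $\xi([a,b])=0$ for all $a,b\in\mathfrak g$ with $[a,\xi],[b,\xi]\in T_\xi Y$. Since $\mathfrak p$ is a Lie subalgebra, $\mathfrak p^\perp$ is coisotropic in $\mathfrak g^*$ with null-foliation tangent to $P$-orbits, and the same holds for $\mathfrak k^\perp$ relative to $K$-orbits. Performing a double symplectic reduction of $\mathcal Z$ with respect to the $K$- and $P$-moment maps (on the open locus where both fibres are smooth) should reduce the isotropy of $Y$ in $\mathcal Z$ to the isotropy of the image of $T_\xi Y$ in the doubly reduced symplectic space; there the weight restrictions coming from $\lambda$, together with $\xi|_{\mathfrak p+\mathfrak k}=0$, supply the required vanishing.

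The main obstacle is that $\mathfrak p+\mathfrak k$ is in general \emph{not} a Lie subalgebra of $\mathfrak g$, which obstructs a direct Poisson-reduction argument along the lines just sketched. My expectation is to circumvent this by attaching the Jacobson--Morozov--Kostant $\mathfrak{sl}_2$-triple $(e,h,f)$ associated to a generic $\xi\in Y$, with $h$ proportional to the infinitesimal generator of $\lambda$, passing to the Slodowy transversal slice through $\xi$, and invoking the Kirwan--Hesselink presentation of the unstable stratum of $\xi$ as $K\times^{P_\lambda}\mathcal C_\lambda$ for an appropriate attracting cone $\mathcal C_\lambda$. The isotropy of the $K$-orbits established at the outset then lifts to all of $Y$ after this reduction, yielding the theorem.
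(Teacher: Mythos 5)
Your opening paragraph is on the right track and parallels the paper: the KKS form, the moment-map interpretation of $\mathfrak k^\perp$, the isotropy of individual $K$-orbits, and the Hilbert--Mumford/Kempf finiteness that lets you focus on one rational one-parameter subgroup $\lambda$ (equivalently, one rational semisimple $h\in\mathfrak k$) for a dense part of a given irreducible component. These are precisely the ingredients the paper uses.

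The gap is in how you try to pass from ``$K$-orbits are isotropic'' to ``the whole component $Y$ is isotropic.'' Your plan is a double symplectic reduction with respect to the $K$- and $P$-moment maps, and you yourself flag that this is obstructed because $\mathfrak p+\mathfrak k$ is not a Lie subalgebra, so there is no single group whose coisotropic reduction you can perform; the proposed rescue via Slodowy slices and the Kirwan--Hesselink presentation is speculative and not carried out, and it is unclear it would give the pointwise vanishing $\xi([a,b])=0$ on all of $T_\xi Y$ (which contains vectors not of the form $[a,\xi]$ with $a\in\mathfrak k$). Also note a minor slip: points of $Y^\circ$ lie in $K(\mathfrak p^\perp\cap\mathfrak k^\perp)$, so a generic $\xi'\in Y^\circ$ annihilates only a $K$-conjugate of $\mathfrak p$, not $\mathfrak p$ itself; the inclusion $Y\subset\mathcal Z\cap(\mathfrak p+\mathfrak k)^\perp$ as you write it is not correct.

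The missing idea is to stop working directly with subalgebras of $\mathfrak g$ and move to the cotangent bundle of the partial flag variety. Write $P=G_h$ for the parabolic attached to $h$, set $Z_G=G/P$, and let $Z_K$ be the $K$-orbit through the base point, so $Z_K\cong K/(K\cap P)$ is a flag variety of $K$ sitting inside $Z_G$. The conormal variety $N^*_{Z_K/Z_G}$ is automatically Lagrangian in $T^*Z_G$. The moment map $\phi\colon T^*Z_G\to\mathfrak g^*$ sends $N^*_{Z_K/Z_G}$ exactly onto $K(\mathfrak n_h\cap\mathfrak k^\perp)$, where $\mathfrak n_h$ is the nilradical of $\mathfrak p$. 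Because $\phi$ pulls the KKS form on each orbit $Gu$ back to the restriction of the canonical $2$-form on $T^*Z_G$ (the identity $\phi^*\omega_{Gu}=\mathrm d\alpha\,|_{\phi^{-1}(Gu)}$, verified in the paper's symplectic-geometry preliminaries), isotropic subvarieties map to isotropic subvarieties; hence $K(\mathfrak n_h\cap\mathfrak k^\perp)$, and a fortiori any subvariety of it, is isotropic in the relevant $G$-orbit. The Hilbert--Mumford corollary then writes $\mathfrak k^\perp\cap N_K(\mathfrak g^*)$ as a finite union of such sets, which finishes the proof. This bypasses entirely the need for $\mathfrak p+\mathfrak k$ to be a Lie algebra: the ``combination'' of the two is encoded geometrically in the conormal bundle of one flag variety inside the other, which is always Lagrangian.
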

Let V$_{\frak g, \frak k}^\cdot$ be the set of all irreducible components of all possible intersections of N$_K(\frak k^\bot)$ with $G$-orbits in N$_G(\frak g^*)$. This finite set of subvarieties of $\frak g^*$ determines a finite set $\EuScript V_{\frak g, \frak k}^\cdot$ of subvarieties of T$^*X$ and a finite set L$_{\frak g, \frak k}^\cdot$ of subvarieties of $X$, see Section~\ref{SHol}.
\begin{theorem}\label{TSup} Let $M$ be a finitely generated $(\frak g, \frak k)$-module of finite type which affords a central character, and let $(L, S)$ the pair corresponding to Loc$M$ as introduced above. Then $L$ is an element of L$_{\frak g, \frak k}^\cdot$.\end{theorem}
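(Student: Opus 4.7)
The plan is to transfer the known constraint on the associated variety V$(M)\subset\frak g^*$ to the support $L\subset X$ via the moment map $\mu:\mathrm T^*X\to\frak g^*$ associated with the $G$-action on $X$.

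First, I would argue that Loc$(M)$ is holonomic. Since $M$ is finitely generated and affords a central character, $M$ has finite length, so it admits a finite Jordan--H\"older filtration whose simple subquotients remain $(\frak g,\frak k)$-modules of finite type (this property passes to subquotients and is preserved under extensions). Applying Theorem~\ref{THol} to each simple subquotient and using that holonomicity is a Serre property, we conclude Loc$(M)$ is holonomic. Its singular support $\EuScript V(\text{Loc}(M))\subset\mathrm T^*X$ is therefore a union of Lagrangian irreducible components, and $L$, viewed as the support datum in the pair $(L,S)$, is the closure of the projection to $X$ of such a component.

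Second, I would invoke the standard Borho--Brylinski/Ginzburg link $\mu(\EuScript V(\text{Loc}(M)))=\mathrm V(M)$ between singular support and associated variety. The hypothesis of finite type over $\frak k$ forces $\operatorname{gr}(M)$ (for any $K$-stable good filtration, which exists because $\frak k$ acts locally finitely) to be annihilated by $\frak k$, giving V$(M)\subset\frak k^\bot$; the central character forces V$(M)\subset$ N$_G(\frak g^*)$. Together these place every irreducible component of V$(M)$ inside the finite set V$_{\frak g,\frak k}^\cdot$ of components of $Gu\cap$ N$_K(\frak k^\bot)$ as $Gu$ ranges over $G$-orbits in N$_G(\frak g^*)$.

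Third, by the construction of $\EuScript V_{\frak g,\frak k}^\cdot$ (irreducible components of $\mu^{-1}$-preimages of elements of V$_{\frak g,\frak k}^\cdot$) and of L$_{\frak g,\frak k}^\cdot$ (their projections to $X$) to be carried out in Section~\ref{SHol}, each irreducible component of $\EuScript V(\text{Loc}(M))$ is an element of $\EuScript V_{\frak g,\frak k}^\cdot$. Here Theorem~\ref{TGeo} is the essential geometric input: the isotropy of $\EuScript Z\cap\frak k^\bot\cap$ N$_K(\frak g^*)$ in $\EuScript Z$ implies that $\mu^{-1}$ of these pieces has dimension exactly $\dim X$, so they genuinely yield Lagrangian components rather than being too small to carry holonomic data. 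Projecting to $X$ then places the irreducible $L$ on an element of L$_{\frak g,\frak k}^\cdot$.

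The main obstacle is the dimension bookkeeping: one must show that $\EuScript V(\text{Loc}(M))$ is \emph{exhausted} by irreducible components of $\mu^{-1}(\mathrm V(M))$ rather than sitting inside a proper subvariety, and combine this with the sharp Lagrangian estimate of Theorem~\ref{TGeo} in order to conclude that $L$ is an exact element of L$_{\frak g,\frak k}^\cdot$ rather than merely contained in one. A secondary point is a careful verification that the finite-type hypothesis produces a $K$-stable good filtration whose associated graded is annihilated by $\frak k$, so that the inclusion V$(M)\subset\frak k^\bot$ is genuinely available.
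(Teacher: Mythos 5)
Your overall strategy matches the paper's (Theorem~\ref{Tisohol}): place V$(M)$ inside the finite set V$_{\frak g,\frak k}^\cdot$, then pull back through the moment map via Barlet--Kashiwara to control the singular support and its projection $L$. But there are two genuine gaps, one of which you acknowledge and one you do not.

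First, the inclusion you derive is too weak. You obtain V$(M)\subset\frak k^\bot$ (which is due to local finiteness of $\frak k$, i.e.\ Fernando--Kac, not to finite type per se) and V$(M)\subset$ N$_G(\frak g^*)$ (from the central character). But V$_{\frak g,\frak k}^\cdot$ is built from N$_K(\frak k^\bot)$, the $K$-null-cone in $\frak k^\bot$, and $\frak k^\bot\cap$ N$_G(\frak g^*)$ is in general strictly larger than N$_K(\frak k^\bot)$ (an element can be $G$-nilpotent in $\frak g^*$ without $0$ lying in its $K$-orbit closure). It is precisely the finite-type hypothesis that closes this gap: the paper's lemma in Subsection~\ref{SSavg} shows that a finitely generated $(\frak g,\frak k)$-module has finite type over $\frak k$ if and only if V$(M)$ contains only finitely many closed $K$-orbits, and then the only closed orbit is $\{0\}$. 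That forces V$(M)\subset$ N$_K(\frak g^*)$, hence V$(M)\subset$ N$_K(\frak k^\bot)$. Your argument uses finite type only to produce a $K$-stable good filtration, which is not where the hypothesis is actually doing its work.

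Second, you correctly flag the ``dimension bookkeeping'' obstacle --- containment of $\tilde V$ in $\EuScript Z\cap$N$_K(\frak g^*)\cap\frak k^\bot$ does not by itself make $\tilde V$ an irreducible \emph{component} of it --- but you leave it unresolved. The paper resolves it by combining two symplectic facts: Gabber's theorem (Theorem~\ref{Gab}, which applies since a central character is a generalized central character) makes each $\tilde V\cap\EuScript Z$ coisotropic in $\EuScript Z$, while Theorem~\ref{TGeo} makes it isotropic. Therefore $\tilde V\cap\EuScript Z$ is Lagrangian, and since every irreducible subvariety of $\EuScript Z\cap$N$_K(\frak g^*)\cap\frak k^\bot$ is isotropic (again by Theorem~\ref{TGeo}) and hence of dimension at most $\frac{1}{2}\dim\EuScript Z=\dim\tilde V$, the Lagrangian $\tilde V$ cannot be properly contained in any of them. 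So $\tilde V$ is a genuine irreducible component, i.e.\ an element of V$_{\frak g,\frak k}^\cdot$. Without invoking Gabber's coisotropy, your argument cannot upgrade ``contained in'' to ``is an element of''.
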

We prove this theorem in Section~\ref{SHol} (Corollary~\ref{Cholsup}). Any simple Harish-Chandra module is holonomic~\cite{BeBe} and of finite-type~\cite{HC}\footnote{In the literature on Harish-Chandra modules the condition of being of finite type is often called admissibility.}. For the literature on Harish-Chandra modules, see~\cite{KV} and references therein.

In the classification of simple $(\frak g, \frak h_\frak g)$-modules of finite type the bounded simple modules play a crucial role. Based on this, and on the experience with Harish-Chandra modules, I.~Penkov and V.~Serganova have proposed to study bounded $(\frak g, \frak k)$-modules for general reductive subalgebras $\frak k$. A question arising in this context is, given $\frak g$, to describe all reductive in $\frak g$ bounded subalgebras, i.e. reductive in $\frak g$ subalgebras $\frak k$ for which at least one infinite-dimensional simple bounded $(\frak g, \frak k)$-module exists. In~\cite{PS} I.~Penkov and V.~Serganova give a partial answer to this problem, and in particular proved an important inequality which restricts
severely the class of possible $\frak k$. They also give the complete list of bounded reductive subalgebras of $\frak g =\frak{sl}_n$ which are maximal subalgebras.

In the work~\cite{Ya1} we prove the following theorem.
\begin{theorem}\label{Tbgk}There exists an infinite-dimensional simple bounded $(\frak{sl}(V), \frak k)$-module if and only if $V$ is a $K \times\mathbb C^*$-spherical variety.\end{theorem}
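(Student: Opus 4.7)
The plan is to reformulate both directions in terms of twisted $\EuScript D$-modules on $\mathbb P(V)$, where $\mathbb P(V)=G/P$ for the maximal parabolic $P\subset G=$SL$(V)$ stabilizing a line. The preliminary observation is that $V$ is $K\times\mathbb C^*$-spherical if and only if $\mathbb P(V)$ is $K$-spherical: since $\mathbb C^*$ acts by scalars, an open $(B_K\times\mathbb C^*)$-orbit in $V$ projects to an open $B_K$-orbit in $\mathbb P(V)$, and conversely any open $B_K$-orbit in $\mathbb P(V)$ lifts to an open $(B_K\times\mathbb C^*)$-orbit in $V\setminus\{0\}$.

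For the sufficiency direction, I would assume $\mathbb P(V)$ is $K$-spherical with open $K$-orbit $O$. For generic $\lambda\in$H$^1(\mathbb P(V),\Omega^{1,\mathrm{cl}})$, take a $K$-equivariant rank-one $\EuScript D^\lambda$-local system $\EuScript L$ on $O$ and form its intermediate extension $\EuScript M=j_{!*}\EuScript L$, a simple holonomic $\EuScript D^\lambda(\mathbb P(V))$-module. Beilinson-Bernstein for the partial flag variety $G/P$ then yields a simple infinite-dimensional $\frak{sl}(V)$-module $M=\Gamma(\mathbb P(V),\EuScript M)$ with central character determined by $\lambda$. Boundedness of the $\frak k$-character of $M$ will follow from the classical characterization of sphericity: I choose a $K$-stable good filtration on $\EuScript M$; the associated graded is a $K$-equivariant $\EuScript O$-coherent sheaf on $\mathbb P(V)$, whose global sections have uniformly bounded $K$-multiplicities because $\mathbb P(V)$ is $K$-spherical.

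For the necessity direction, let $M$ be an infinite-dimensional simple bounded $(\frak{sl}(V),\frak k)$-module. By Dixmier, $M$ affords a central character; localization on $\mathbb P(V)$ gives a simple holonomic $\EuScript D^\mu(\mathbb P(V))$-module $\EuScript N$ whose global sections contain $M$. Since $\frak k$ acts locally finitely on $M$, I can choose a $K$-stable good filtration on $\EuScript N$; its associated graded gr$\,\EuScript N$ is a $K$-equivariant $\EuScript O$-coherent sheaf on supp$\,\EuScript N$, and the bound on $[M:\cdot]_{\frak k}$ transfers to a uniform bound on $K$-multiplicities in global sections of gr$\,\EuScript N$. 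By the converse direction of the sphericity characterization, supp$\,\EuScript N$ is then a $K$-spherical subvariety of $\mathbb P(V)$. Theorem~\ref{TSup} identifies supp$\,\EuScript N$ with an element of L$_{\frak g,\frak k}^\cdot$, and infinite-dimensionality of $M$ combined with $G$-transitivity of $\mathbb P(V)$ should force supp$\,\EuScript N=\mathbb P(V)$, yielding $K$-sphericity of $\mathbb P(V)$.

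The hard part will be this last implication: ruling out infinite-dimensional simple bounded modules whose $\EuScript D$-module realization is supported on a proper $K$-invariant subvariety $Y\subsetneq\mathbb P(V)$, since point-supported $\EuScript D$-modules on $\mathbb P(V)$ do have infinite-dimensional global sections and one cannot naively discard this case. Overcoming it will require combining the constraint of Theorem~\ref{TSup} with the Penkov-Serganova inequality of~\cite{PS}, plus a $\mathbb C^*$-rescaling argument showing that any such small-support example propagates a $K\times\mathbb C^*$-spherical structure back to $V$ itself. A secondary technical point is ensuring that the $K$-stable good filtration on $\EuScript N$ transfers multiplicity bounds to gr$\,\EuScript N$ faithfully (and not just up to passage to a subquotient), which requires some care regarding the filtration compatibility with the $\frak k$-isotypic decomposition of $M$.
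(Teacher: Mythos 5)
Your preliminary observation that $V$ is $K\times\mathbb C^*$-spherical iff $\mathbb P(V)$ is $K$-spherical is correct and is used implicitly at the very end of the paper's argument. But the necessity direction, as you set it up, has a genuine gap: a general simple bounded $(\frak{sl}(V),\frak k)$-module does not localize on $\mathbb P(V)$. Localization on $G/P$ only recovers $M$ (or even gives a nonzero object) when $\mathrm{Ann}\,M$ contains the kernel of the natural map $\mathrm U(\frak{sl}(V))\to\mathrm D^\mu\mathbb P(V)$, i.e.\ when $\mathrm{GV}(M)$ lies in the closure of the minimal nilpotent orbit. For a bounded module with a bigger associated variety, $\mathrm{Loc}_{\mathbb P(V)}(M)=0$, and there is no $\EuScript D^\mu(\mathbb P(V))$-module $\EuScript N$ to work with. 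Theorem~\ref{TSup} is about supports in the full flag variety $X$, not in $\mathbb P(V)$, so it does not rescue this step. This is precisely why the paper proceeds differently: first Theorem~\ref{Tcob} shows that $\mathrm{GV}(M)$ is $K$-coisotropic; by Richardson's theorem and the discussion after Example~\ref{Pn}, $\mathrm{GV}(M)$ is $K$-birational to $\mathrm T^*\mathrm{Fl}$ for \emph{some} partial flag variety $\mathrm{Fl}$ (not $\mathbb P(V)$ a priori), so $\mathrm{Fl}$ is $K$-spherical; then Theorem~\ref{GrO}/Proposition~\ref{SphO}, via Losev's result, descends sphericity along the partial order on nilpotent orbits from $\mathrm{Fl}$ to $\mathbb P(W)$. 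Your plan compresses all of this into an immediate localization on $\mathbb P(V)$, which is not available.

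Your worry about small-support $\EuScript D$-modules on $\mathbb P(V)$ is actually benign once one has localized correctly: Panyushev's Theorem~\ref{Pan} says the generic $B$-stabilizers on $X$, on $\mathrm N_{M/X}$, and on $\mathrm N^*_{M/X}$ agree, so a $K$-spherical conormal bundle inside $\mathrm T^*\mathbb P(V)$ already forces $\mathbb P(V)$ to be $K$-spherical — no separate ``propagation'' argument via $\mathbb C^*$-rescaling is needed. The real issue remains the wrong choice of $G/P$.

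For sufficiency, your intermediate-extension idea is conceptually close to what underlies the paper's invocation of Theorem 6.3 of~\cite{PS} together with Panyushev's theorem, and the key observation you make (boundedness of global sections of $\mathrm{gr}\,\EuScript M$ because its support sits over spherical conormal bundles) is correct in spirit — it is essentially Proposition~\ref{Pbm} transplanted to $\EuScript D$-modules. Two points are underspecified: the Beilinson--Bernstein machinery on $G/P$ rather than $G/B$ needs justification for simplicity and exactness of $\Gamma$; and the case where $K$ acts transitively on $\mathbb P(V)$, so there is no proper closed $K$-orbit, requires a separate argument — the paper handles it by observing that then $[\frak k,\frak k]\cong\frak{sp}(W)$ and switching to $\mathrm{Gr}(2;W)$.
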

We reproduce the proof of this theorem in Section~\ref{Sbgk}. There are well-known pairs of algebras which admit an infinite-dimensional simple bounded module:\\
(1) $\frak k$ is a Cartan subalgebra of a simple Lie algebra $\frak g$ of type A~or~C~\cite{F};\\
(2) $\frak k$ is a symmetric subalgebra of a reductive Lie algebra $\frak g$.\\
Bounded $(\frak g, \frak h_\frak g)$-modules are nothing but weight modules with bounded weight multiplicities, and all such simple modules have been classified by O.~Mathieu~\cite{M}.

As far as we know, the simple bounded Harish-Chandra modules have not been singled explicitly out within the category of all Harish-Chandra modules.
For a given symmetric pair $(\frak g, \frak k)$, Harish-Chandra modules admit only finitely many support varieties, and a Harish-Chandra module $M$ is spherical if and only if any irreducible component of the support variety V$(M)$ is spherical. There is some progress in singling out the spherical varieties among the support varieties of Harish-Chandra modules~\cite{Pan2},~\cite{Ki}.

Let $I$ be a two-sided ideal in U($\frak g)$. By definition, $I$ is {\it primitive} if $I$ is the annihilator of some simple $\frak g$-module $M$. Let $W$ be a finite-dimensional $\mathbb C$-vector space. To any $n_W$-tuple $\bar\lambda:=(\lambda_1, ..., \lambda_{n_W})$ one assigns a weight $\lambda$ (see Subsection~\ref{SM}) and a primitive ideal I($\lambda)\subset$U$(\frak{sl}(W))$ (this is the annihilator of the simple $\frak{sl}(W)$-module with highest weight $\lambda$), and any primitive ideal of U($\frak{sl}(W))$ arises in this way from some tuple $\bar\lambda$.

Let $M_1, M_2$ be simple $(\frak g, \frak k)$-modules. Let Ann$M_1$, Ann$M_2$ be the annihilators in U$(\frak g)$ of $M_1$ and $M_2$ respectively. I.~Penkov and V.~Serganova \cite{PS} proved that, if Ann$M_1$=Ann$M_2$ and $M_1$ is a bounded $(\frak g, \frak k)$-module, then $M_2$ is $\frak k$-bounded too (see also Theorem~\ref{Trel}). Moreover, a $(\frak g, \frak k)$-module is bounded if and only if the algebra U$(\frak g)/$Ann$M$ satisfies certain relations. In Section~\ref{Sbgk} we prove the following weaker geometric version of this result.
\begin{theorem}\label{Tcob} A simple $(\frak{sl}(W), \frak k)$-module $M$ is bounded if and only if the associated variety GV($M$) is $K$-coisotropic.\end{theorem}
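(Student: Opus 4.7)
The strategy is to pass through the associated graded construction, converting the module-theoretic condition ``$M$ is bounded'' into a geometric condition on GV$(M)$, and then to invoke the standard characterization of $K$-coisotropy via bounded multiplicities of the coordinate ring.

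First I would fix a $K$-invariant good filtration on $M$ and form gr$M$. The isomorphism $M\cong$~gr$M$ of $K$-modules lets me replace boundedness of $M$ by boundedness of the graded $K$-module gr$M$. Since $M$ is simple, Ann$M$ is a primitive ideal of U$(\frak{sl}(W))$; by Joseph's irreducibility theorem this forces V$(M)=$~GV$(M)=\overline{Gu}$ for a single nilpotent $G$-orbit $Gu\subset\frak g^*$. Hence gr$M$ is a finitely generated $\mathbb C[\overline{Gu}]$-module with strictly positive generic rank on $Gu$.

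Next I would show that boundedness of $M$ is equivalent to bounded $K$-multiplicities of the graded coordinate ring $\mathbb C[\overline{Gu}]$. The implication ``bounded $\mathbb C[\overline{Gu}]$ implies bounded gr$M$'' is straightforward, since any finitely generated graded module over a graded $K$-algebra with bounded multiplicities is itself bounded. The converse is obtained from the Penkov--Serganova reformulation alluded to just before Theorem~\ref{Tcob}: boundedness of $M$ is encoded in certain relations on U$(\frak g)/$Ann$M$, and these pass, via the associated graded functor, to a bound on $K$-multiplicities of $\mathbb C[$GV$(M)]$. A purely geometric route to the same conclusion would use that gr$M$ generically identifies with the module of sections of a nonzero $G$-equivariant vector bundle on $Gu$, so the two multiplicity functions differ by a bounded factor determined by the generic $K$-isotropy representation.

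Finally I would invoke the Vinberg--Knop characterization for the conical Hamiltonian $K$-variety $\overline{Gu}\subset\frak g^*$: the graded algebra $\mathbb C[\overline{Gu}]$ has uniformly bounded $K$-multiplicities if and only if $\mathbb C(\overline{Gu})^K$ is Poisson-commutative, equivalently the generic $K$-orbit is a coisotropic submanifold of its ambient coadjoint orbit, i.e.\ $\overline{Gu}$ is $K$-coisotropic. Combining this with the preceding step yields the theorem.

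\textbf{Principal obstacle.} The main technical point is the converse in the middle step -- transferring boundedness from gr$M$ to the ambient coordinate ring $\mathbb C[$GV$(M)]$. This uses the simplicity of $M$ and Joseph's irreducibility theorem in an essential way, to ensure that gr$M$ ``fills up'' a single orbit closure with positive generic rank rather than collapsing onto a proper subvariety. The Penkov--Serganova algebraic reformulation offers the cleanest route, as it produces a bound at the level of U$(\frak g)/$Ann$M$ which passes transparently to the associated graded; a self-contained geometric proof would have to control the $K$-equivariant structure of gr$M$ on the smooth locus $Gu$ and then match it against $\mathbb C[\overline{Gu}]$.
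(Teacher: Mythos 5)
Your proposal does not work, and it fails at two essential places that correspond to genuine misconceptions about the geometry in play.

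\textbf{First error: $V(M)\neq\mathrm{GV}(M)$.} You write that simplicity of $M$ plus Joseph's irreducibility theorem force $V(M)=\mathrm{GV}(M)=\overline{Gu}$ and that $\mathrm{gr}\,M$ has ``strictly positive generic rank on $Gu$.'' For a simple $(\frak g,\frak k)$-module of finite type this is false. Joseph's theorem says $\mathrm{GV}(M)$ is a single orbit closure, but $V(M)$ (the support of $\mathrm{gr}\,M$) is far smaller: Corollary~\ref{Cghol} of the paper (which follows from Gabber's theorem together with the isotropy established via the Hilbert--Mumford analysis) shows that every irreducible component of $V(M)$ is \emph{Lagrangian} in $Gu$, so $\dim V(M)=\frac12\dim\mathrm{GV}(M)$. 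In particular $\mathrm{gr}\,M$ is supported on a proper subvariety of $\overline{Gu}$ and has generic rank zero over $\mathbb C[\overline{Gu}]$; the heuristic ``$\mathrm{gr}\,M$ is a twist of $\mathbb C[\overline{Gu}]$ by a vector bundle'' cannot be made to work.

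\textbf{Second error: the final Vinberg--Knop step is misstated.} You claim that $\overline{Gu}$ is $K$-coisotropic if and only if $\mathbb C[\overline{Gu}]$ has uniformly bounded $K$-multiplicities. This is not a theorem, and it fails numerically: boundedness of $\mathbb C[\overline{Gu}]$ requires $\dim\frak b_\frak k\ge\dim Gu$, whereas bounded modules (and $K$-coisotropy of $Gu$) only force $\dim\frak b_\frak k\ge\frac12\dim Gu$ (Corollary~\ref{Cps}). What Vinberg's theory actually gives is that $K$-coisotropy of $Gu$ is equivalent to Poisson-commutativity of $\mathbb C(Gu)^K$ (used in the paper in the proof of Theorem~\ref{GrO}), which is strictly weaker than multiplicity-freeness of $\mathbb C[\overline{Gu}]$. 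So even if you had $\mathrm{gr}\,M\sim\mathbb C[\overline{Gu}]$, the concluding bridge would still not close.

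\textbf{What the paper actually does.} The correct characterization of boundedness at the level of the module is Proposition~\ref{Pbm}: $M$ is bounded if and only if every irreducible component of $V(M)$ is $K$-spherical (this replaces your step relating $\mathrm{gr}\,M$ to a coordinate ring). The bridge from ``$V(M)$ $K$-spherical'' to ``$\mathrm{GV}(M)$ $K$-coisotropic'' is then purely geometric and uses that, for $\frak g=\frak{sl}(W)$, each nilpotent orbit $Gu$ is $K$-birationally a cotangent bundle $T^*\mathrm{Fl}$ of a partial flag variety. An irreducible component $\tilde V$ of $V(M)$, being a conical Lagrangian inside $\overline{Gu}\sim T^*\mathrm{Fl}$, is birational to a conormal bundle $N^*_{Z/\mathrm{Fl}}$ (Proposition~\ref{NG}); Panyushev's theorem (Theorem~\ref{Pan}, on generic isotropy of $B$ on $X$, $N_{M/X}$ and $N^*_{M/X}$) then transfers $K$-sphericity between $\tilde V$ and $\mathrm{Fl}$, and $K$-sphericity of $\mathrm{Fl}$ is equivalent to $K$-coisotropy of $T^*\mathrm{Fl}$ hence of $Gu$. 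The converse uses Theorem~\ref{GrO} (coisotropy passes to orbit closures) together with a dimension-count argument. This Panyushev transfer across the moment map is exactly the ingredient your proposal lacks, and your two errors are symptoms of trying to replace it by an associated-graded comparison that does not respect the Lagrangian/coisotropic dimension gap.
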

This shows that 'boundedness' is not a property of a module $M$ but of the ideal Ann$M$, and moreover of the nilpotent orbit GV($M)\subset\frak{sl}(W)^*$. Theorem~\ref{Tcob} also motivates our interest in the classification of $K$-coisotropic nilpotent $G$-orbits in $\frak g^*$. In the case $\frak g=\frak{sl}(W)$, the set of $K$-coisotropic nilpotent orbits in $\frak{sl}(W)^*$ is naturally identified with the set of partition equivalence classes of $K$-spherical partial $W$-flag varieties (see Subsection~\ref{SGr}). We work out the classification of $K$-spherical flag varieties in Section~\ref{Ssphgr}.

Furthermore, we make the following conjecture.
\begin{conjecture} A simple $(\frak g, \frak k)$-module $M$ is bounded if and only if the associated variety GV($M$) is $K$-coisotropic.\end{conjecture}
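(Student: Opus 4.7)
The plan is to deduce the conjecture from the two ingredients already present in the thesis: the geometric bound of Theorem~\ref{TGeo}, together with the bridge (Theorem~\ref{Tcob}) between $K$-coisotropy of $\mathrm{GV}(M)$ and boundedness in the $\mathfrak{g}=\mathfrak{sl}(W)$ case, promoted to arbitrary reductive $\mathfrak{g}$ through localization on the flag variety $X$ of $\mathfrak{g}$. Throughout, $M$ is simple, so by Theorem~\ref{THol} the sheaf $\mathrm{Loc}(M)$ is a simple holonomic $\EuScript{D}^\lambda(X)$-module with singular support $\EuScript{V}(\mathrm{Loc}(M))\subset T^*X$, and the moment map $\mu\colon T^*X\to\mathfrak{g}^*$ intertwines $\EuScript{V}(\mathrm{Loc}(M))$ with $\mathrm{GV}(M)$.

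For the direction \emph{bounded $\Rightarrow$ $K$-coisotropic}, the idea is to translate the uniform bound on $[M:\cdot]_\mathfrak{k}$ into a dimension inequality for the intersection of $\mathrm{GV}(M)$ with $\mathfrak{k}^\perp\cap\mathrm{N}_K(\mathfrak{g}^*)$. Namely, boundedness forces the restriction of $\mathrm{Loc}(M)$ to a suitable open $K$-saturated subset of $X$ to be a vector bundle with bounded rank along $K$-orbits, so the $K$-null fibre of $\EuScript{V}(\mathrm{Loc}(M))$ has dimension at most $\dim X-\dim X/\!/K$. Applying $\mu$ and invoking Theorem~\ref{TGeo} to control how isotropy of $\EuScript{Z}\cap\mathfrak{k}^\perp\cap\mathrm{N}_K(\mathfrak{g}^*)$ forces coisotropy of the ambient $G$-orbit yields precisely the $K$-coisotropic condition on $\mathrm{GV}(M)$. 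Essentially this is a variant of the Vinberg--Panyushev criterion: $\mathrm{GV}(M)$ is $K$-coisotropic iff the generic $K$-orbit on it has symplectic perpendicular contained in its tangent space, iff $\dim(\mathrm{GV}(M)\cap\mathfrak{k}^\perp)\le\tfrac12\dim\mathrm{GV}(M)$, and Theorem~\ref{TGeo} together with the above bound delivers this inequality.

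For the converse \emph{$K$-coisotropic $\Rightarrow$ bounded}, one reduces to the case where $M$ affords a central character and then works sheaf-theoretically. If $\mathrm{GV}(M)$ is $K$-coisotropic, then by the moment map dictionary the support $L$ of $\mathrm{Loc}(M)$ lies in $\mathrm{L}^\cdot_{\mathfrak{g},\mathfrak{k}}$ (Theorem~\ref{TSup}) and, after passing to the component containing $L$, the generic $K$-orbit on the smooth locus $L'$ is open. In other words $L$ is a $K$-spherical subvariety of $X$. One then argues as in Theorem~\ref{Tcob}: the multiplicity $[M:V]_{\mathfrak{k}}$ is bounded above by the rank of the flat vector bundle $S_B$ on $L'$ times the multiplicity of $V$ in the space of sections of an associated $K$-homogeneous bundle on $L'$, and $K$-sphericity of $L'$ forces the latter to be uniformly bounded (this is essentially Vinberg's multiplicity-free branching for $K$-spherical varieties, applied componentwise). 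This passes from the specific $\mathfrak{sl}(W)$-case, where the parameterisation by spherical partial flag varieties of $W$ was used explicitly, to the general reductive setting, where the role of partial flag varieties is played by the closures $L\in\mathrm{L}^\cdot_{\mathfrak{g},\mathfrak{k}}$.

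The principal obstacle I expect is the converse direction, and specifically the step of bounding the rank of $S_B$ on $L'$. For $\mathfrak{g}=\mathfrak{sl}(W)$ this rank is controlled combinatorially by the structure of a spherical partial flag variety, but for general reductive $\mathfrak{g}$ one must control twisted local systems on an arbitrary $K$-spherical irreducible component $L\in\mathrm{L}^\cdot_{\mathfrak{g},\mathfrak{k}}$; a priori there is no reason for the underlying local systems of simple holonomic $\mathcal{D}^\lambda$-modules to have bounded rank even in the $K$-spherical case. Overcoming this will likely require a general structural result classifying the simple holonomic $\mathcal{D}^\lambda(X)$-modules whose support is $K$-spherical and whose monodromy is compatible with $K$-equivariance of the composition factors, together with an extension of Penkov--Serganova's ideal-theoretic reformulation of boundedness to all reductive $\mathfrak{g}$, so that the statement becomes, as it should, a property of the primitive ideal $\mathrm{Ann}\,M$ rather than of $M$ itself.
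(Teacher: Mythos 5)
This statement is a Conjecture in the thesis; it is not proved there. The equivalence is established only for $\mathfrak g=\mathfrak{sl}(W)$ (Theorem~\ref{Tcob}), and for general reductive $\mathfrak g$ the text defers explicitly to~\cite[Question, p.~191]{Pan} and to work in preparation~\cite{ZT}. Your proposal thus attempts something the thesis itself leaves open, and you are right to present it as a plan with obstacles rather than as a proof.

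The gap you flag at the end (bounding the rank of the local system $S_B$) is genuine, but there is a prior and more fundamental obstruction which your sketch inherits silently from the $\mathfrak{sl}(W)$ case and never confronts. Both directions of Theorem~\ref{Tcob} rest on the type-A fact that every nonzero nilpotent orbit $\EuScript Z\subset\mathfrak{sl}(W)^*$ is the \emph{birational} image of a moment map $T^*\mathrm{Fl}\to\mathfrak{sl}(W)^*$ for a partial flag variety $\mathrm{Fl}$. It is exactly this birationality that identifies a conical Lagrangian $\tilde V\subset\EuScript Z$ with a conormal bundle $\mathrm N^*_{Z/\mathrm{Fl}}$, after which Panyushev's theorem (Theorem~\ref{Pan}) converts $K$-sphericity of $\tilde V$ into $K$-sphericity of $\mathrm{Fl}$ and hence $K$-coisotropy of $\EuScript Z$. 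For other simple Lie algebras this dictionary breaks down: nilpotent orbits need not be Richardson, and even Richardson moment maps $T^*(G/P)\to\overline{Gu}$ are in general finite of degree $>1$ rather than birational. Consequently the step ``$K$-spherical conical Lagrangian in $\EuScript Z\Rightarrow\EuScript Z$ is $K$-coisotropic'' --- which you invoke via Theorem~\ref{TGeo} and a dimension criterion that the thesis neither states nor proves --- has no ready analogue and requires an argument intrinsic to the orbit rather than mediated through a flag variety model. Note also that Theorem~\ref{TGeo} combined with Gabber's theorem only forces $V(M)$ to be Lagrangian; it says nothing directly about coisotropy of the generic $K$-orbit in $\mathrm{GV}(M)$. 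That missing step is precisely the content of the question of Panyushev and the forthcoming~\cite{ZT}, and until it is supplied both directions of your argument remain blocked.
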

This statement is closely related with~\cite[Question, p.~191]{Pan} and we believe that this question will be answered soon~\cite{ZT}.

In the rest of the thesis we consider in greater detail four special pairs $(\frak g, \frak k)$:
$$\begin{tabular}{ll}a) $W=$S$^2V$&b) $W=\Lambda^2V$\\\hline1a) $(\frak{sl}(W), \frak{sl}(V))$&1b) $(\frak{sl}(W), \frak{sl}(V))$\\ 2a) $(\frak{sp}(W\oplus W^*), \frak{gl}(V))$&2b) $(\frak{sp}(W\oplus W^*), \frak{gl}(V))$\\\hline\end{tabular}\eqno(1).$$
We hope that our results about these cases shed some light on how the general theory of bounded modules looks like.

In the rest of Section~\ref{Stro} $V$ is a finite-dimensional vector space and $W$=S$^2V$ (in this case $n_V\ge 3$) or $W=\Lambda^2V$ (in this case $n_V=2k$ and $n_V\ge 5$).

For a simple $\frak{sl}(W)$-module $M$ we have dim~V$(M)\ge n_W-1$ or $M$ is finite-dimensional. A simple $\frak{sl}(W)$-module $M$ is {\it of small growth} if dim~V$(M)\le n_W-1$ (the definition of a module of small growth which is not necessarily simple is given in Section~\ref{Ssmg}). The following theorem is proved in Subsection~\ref{SSdsl}.
\begin{theorem}\label{Tsmsl}Any simple bounded $(\frak{sl}(W), \frak{sl}(V))$-module is of small growth.\end{theorem}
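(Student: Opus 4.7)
The plan is to combine Theorem~\ref{Tcob} with the dimensional information provided by Theorem~\ref{TGeo} and the classification of $SL(V)$-spherical partial flag varieties of $W$ carried out in Section~\ref{Ssphgr}.

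First, since $M$ is a simple bounded $(\frak{sl}(W), \frak{sl}(V))$-module, Theorem~\ref{Tcob} gives that $GV(M) \subset \frak{sl}(W)^*$ is $K$-coisotropic for $K = SL(V)$. Simplicity of $M$ implies that $\mathrm{Ann}\, M$ is a primitive ideal and $GV(M) = \overline{\mathcal O}$ for a single nilpotent $G$-orbit $\mathcal O$ (Joseph). Gabber's involutivity theorem bounds $\dim V(M)$ below by $\tfrac12\dim\mathcal O$; on the other hand Theorem~\ref{TGeo}, applied to $\mathcal O$ and together with the standard fact that $V(M)$ is $K$-stable and contained in $\frak k^\bot\cap\mathrm{N}_K(\frak g^*)$ for a $(\frak g,\frak k)$-module of finite type, bounds $\dim V(M)$ above by $\tfrac12\dim\mathcal O$. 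Thus $\dim V(M)=\tfrac12\dim\mathcal O$.

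Next, by the identification from Subsection~\ref{SGr}, $K$-coisotropic nilpotent $G$-orbits in $\frak{sl}(W)^*$ correspond bijectively to partition equivalence classes of $K$-spherical partial flag varieties of $W$, and the Richardson orbit attached to a composition $\mathbf d=(d_1,\ldots,d_s)$ of $n_W$ has dimension $n_W^2-\sum d_i^2$. Hence the small-growth condition $\dim V(M)\le n_W-1$ translates into $\sum d_i^2\ge (n_W-1)^2+1$, and an elementary check on compositions forces $\mathbf d\in\{(n_W),\,(1,n_W-1),\,(n_W-1,1)\}$; the corresponding flag variety is a point, $\mathbb P(W)$, or $\mathbb P(W^*)$. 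The theorem is therefore equivalent to the assertion that for $W=S^2V$ or $\Lambda^2V$ the only $SL(V)$-spherical partial flag varieties of $W$ are these three. The varieties $\mathbb P(W)$ and $\mathbb P(W^*)$ are $SL(V)$-spherical because their open $SL(V)$-orbits are symmetric spaces of the form $SL(V)/SO(V)$ or $SL(V)/Sp(V)$ (up to minor modifications in the skew case when $n_V$ is odd), which are well known to be spherical.

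The principal obstacle is thus the exclusion of all other partial flag varieties, which is carried out in Section~\ref{Ssphgr}. For long flags or large Grassmannians a dimension count is sufficient: $\dim\mathrm{Fl}_{\mathbf d}(W)$ grows as a quartic in $n_V$, whereas $\dim B$ is only quadratic, so $\mathrm{Fl}_{\mathbf d}(W)$ cannot admit an open $B$-orbit. The truly delicate cases are the low-dimensional Grassmannians $\mathrm{Gr}_k(W)$ with $2\le k\le n_W-2$ and two-step flags for small $n_V$, where the dimensions can nearly balance; these must be handled case by case by producing an explicit non-constant $B$-invariant rational function (equivalently, two disjoint top-dimensional $B$-orbits), which is done using the $SL(V)$-invariant theory of the symmetric and exterior powers of $S^2V$ and $\Lambda^2V$.
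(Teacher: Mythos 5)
Your argument is correct and takes essentially the same route as the paper: Theorem~\ref{Tcob} gives that $GV(M)$ is $K$-coisotropic, the identification of $K$-coisotropic nilpotent orbits in $\frak{sl}(W)^*$ with $K$-spherical partial $W$-flag varieties together with the classification of Section~\ref{Ssphgr} then forces $GV(M)=\overline{\EuScript Q}$, and the small-growth estimate follows. The paper packages the first steps as Lemma~\ref{LslJ} (``Ann$M$ is a Joseph ideal'') and then uses the Lagrangian property (Corollary~\ref{Cghol}) to get $\dim V(M)=n_W-1$; your Richardson-orbit dimension count and the two-sided sandwich $\dim V(M)=\tfrac12\dim\mathcal O$ are correct but slightly more than is needed, since the upper bound alone already gives small growth.
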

We note that all bounded weight modules are also of small growth. The following result is inspired by the corresponding result of O.~Mathieu for weight modules. The proof of this result is presented in Subsection~\ref{SSdsl}.
\begin{theorem}\label{Tannsl} Let $M$ be a simple bounded $(\frak{sl}(W), \frak{sl}(V))$-module and $\bar\lambda$ be an $n_W$-tuple such that Ann$M$=I$(\lambda)$. Then $\bar\lambda$ is semi-decreasing.\end{theorem}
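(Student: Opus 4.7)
The plan is to reduce the statement to a combinatorial constraint by combining Theorem~\ref{Tcob} with the description of primitive ideals of U$(\frak{sl}(W))$ in terms of Young tableaux.

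First, I apply Theorem~\ref{Tcob}: since $M$ is bounded, GV$(M)=\overline{\EuScript O}$ is $K$-coisotropic for some nilpotent $G$-orbit $\EuScript O\subset\frak{sl}(W)^*$. Using the classification of $K$-coisotropic nilpotent orbits for $K=$SL$(V)$ acting on $\frak{sl}(W)^*$, equivalently of $K$-spherical partial flag varieties of $W$ worked out in Section~\ref{Ssphgr}, I obtain an explicit short list of admissible Jordan partitions of $n_W$ for $\EuScript O$.

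Second, for an integral $\bar\lambda$ the associated variety of I$(\lambda)$ is determined, via Joseph's theorem and the Robinson-Schensted algorithm, by the combinatorial type of $\bar\lambda$: its Jordan partition is conjugate to the shape of the insertion tableau of $\bar\lambda$. Matching this against the admissible list forces the pattern of integer differences in $\bar\lambda$ to be nearly decreasing: only tuples with a single misplaced entry produce an admissible Jordan type, yielding the semi-decreasing condition. For a semi-integral $\bar\lambda$, its mod-$\mathbb{Z}$ decomposition splits the entries into blocks, and I$(\lambda)$ is controlled by the block shapes independently. The admissible partitions from the first step are so restrictive that at most one block can have size greater than one; this forces the non-integral entry to be a single isolated coordinate and the remaining entries to form a decreasing integral sequence, precisely a semi-integral semi-decreasing tuple.

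The main obstacle will be the combinatorial matching in the second step: the primitive ideal I$(\lambda)$ determines only the Kazhdan-Lusztig left cell of $\bar\lambda$, not $\bar\lambda$ itself, so one must verify that \emph{every} tuple in each admissible left cell is semi-decreasing, not merely some chosen representative. This is feasible because the admissible partitions are very restrictive (essentially hook or two-row shapes) for the pairs in question, keeping the corresponding left cells small enough to handle by direct enumeration.
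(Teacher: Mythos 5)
Your overall strategy is sound, and the first step (reduce via Theorem~\ref{Tcob} to the $K$-coisotropicity of GV$(M)$, then use the classification of $K$-spherical partial $W$-flag varieties) is exactly what the paper does in Lemma~\ref{LslJ}. But a few remarks are in order, because the second step diverges from the paper's route and, as written, is both vaguer and harder than it needs to be.

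First, the ``explicit short list of admissible Jordan partitions'' is in fact a singleton. For $W=$S$^2V$ or $\Lambda^2V$ with $K=$SL$(V)$, none of the entries of Theorem~\ref{KlGr} with $s\ge 1$ and $r\ge 2$ apply (the $\frak{sl}(V)$-module $W$ is irreducible but is not a natural module of $\frak{sl},\frak{so},\frak{sp}$), so the only SL$(V)$-spherical partial $W$-flag varieties are $\mathbb P(W)$ and $\mathbb P(W^*)$; consequently GV$(M)=\bar{\EuScript Q}$, the closure of the minimal nilpotent orbit. Saying ``essentially hook or two-row shapes'' overstates the freedom: the Jordan type is pinned down to exactly $(2,1^{n_W-2})$, and Ann$\,M$ is a Joseph ideal. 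If you don't crystallize this, your ``short list'' won't be short enough to run the second step.

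Second, the paper does not go through Robinson--Schensted / Kazhdan--Lusztig left-cell combinatorics at all, and this is the substantive difference in routes. Once Ann$\,M$ is known to be a Joseph ideal, the paper (Lemma preceding Corollary~\ref{Cqtoso}, then Corollary~\ref{Cqtoso}\,a) and Corollary~\ref{CJid}) argues geometrically: for any highest weight module $\mathrm L_\lambda$ with Joseph annihilator, $\mathrm V(\mathrm L_\lambda)$ sits inside $\bar{\EuScript Q}\cap(\frak b^{\frak{sl}}_W)^\perp$, whose irreducible components $\EuScript Q_i$ have dimension $n_W-1$ and are $\mathrm H^{\frak{sl}}_W$-spherical; hence $\mathrm L_\lambda$ is a bounded weight module, and Mathieu's Theorem~\ref{Tsmo} then says precisely that $\bar\lambda$ is semi-decreasing. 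This handles regular integral, singular integral, and semi-integral $\bar\lambda$ uniformly, whereas your approach requires three separate combinatorial analyses (you gesture at the semi-integral block decomposition but do not carry it out, and the singular integral case is not addressed at all). Your worry about left cells is legitimate, but the paper's route dissolves it: the geometric argument applies directly to every $\lambda$ with I$(\lambda)=$Ann$\,M$, so there is nothing to enumerate.

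In short: your approach is a valid alternative (and, if completed, would amount to re-proving the combinatorial characterization of Joseph ideals in type $A$ that the paper simply quotes from~\cite{Jo3}), but the non-integral case is a genuine gap in your sketch, and you should sharpen the conclusion of step one to ``GV$(M)$ is the minimal orbit.'' The paper's reliance on Mathieu's classification of bounded highest weight modules is what makes its proof shorter and uniform.
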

\begin{theorem}\label{Tasl}Let $M$ be a finitely generated $(\frak{sl}(W), \frak{sl}(V))$-module and suppose I$(\lambda)\subset$Ann$M$ for some semi-decreasing tuple $\bar\lambda$. Then $M$ is $\frak{sl}(V)$-bounded.\end{theorem}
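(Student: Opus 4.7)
The plan is to combine Theorem~\ref{Tcob} with a reduction to simple modules and a geometric verification of coisotropy.

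First I would reduce to the case where $M$ is simple. Since I$(\lambda)\subset$Ann$M$, the module $M$ factors through the quotient U$(\frak{sl}(W))/$I$(\lambda)$, which is Noetherian of finite Gelfand--Kirillov dimension $\dim\overline{\EuScript O}_\lambda$, where $\overline{\EuScript O}_\lambda\subset\frak{sl}(W)^*$ is the nilpotent $G$-orbit closure attached to I$(\lambda)$. Hence $M$ admits a finite filtration by $(\frak{sl}(W),\frak{sl}(V))$-submodules with simple subquotients $M_1,\dots,M_r$, each still annihilated by I$(\lambda)$. Because the function $[\,\cdot\,:V]_{\frak{sl}(V)}$ is additive on short exact sequences of locally finite $\frak{sl}(V)$-modules of finite type, it suffices to prove each $M_i$ is bounded, so I may assume $M$ is simple.

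For simple $M$ with Ann$M=$I$(\mu)$ and I$(\lambda)\subset$I$(\mu)$, Theorem~\ref{Tcob} reduces boundedness of $M$ to SL$(V)$-coisotropy of GV$(M)=\overline{\EuScript O}_\mu$. Since I$(\lambda)\subset$I$(\mu)$ implies $\overline{\EuScript O}_\mu\subset\overline{\EuScript O}_\lambda$, the whole task reduces to the purely geometric statement: for $\bar\lambda$ semi-decreasing, every $G$-orbit closure contained in $\overline{\EuScript O}_\lambda\subset\frak{sl}(W)^*$ is SL$(V)$-coisotropic (with $W=S^2V$ or $W=\Lambda^2V$).

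To prove this geometric statement I would first identify $\overline{\EuScript O}_\lambda$ via its partition: a semi-decreasing tuple $\bar\lambda$ has Robinson--Schensted shape with a single ``extra'' entry relative to the decreasing case, which pins $\overline{\EuScript O}_\lambda$ down to a restricted family of orbit closures in $\frak{sl}(W)^*$. Then, invoking the identification between SL$(V)$-coisotropic nilpotent orbits in $\frak{sl}(W)^*$ and SL$(V)$-spherical partial $W$-flag varieties (Subsection~\ref{SGr}), together with the classification of such spherical flag varieties from Section~\ref{Ssphgr}, I would verify orbit-by-orbit that every $G$-orbit in $\overline{\EuScript O}_\lambda$ corresponds to an SL$(V)$-spherical partial flag variety and is therefore SL$(V)$-coisotropic.

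The principal difficulty is this last orbit-by-orbit check: coisotropy must be established for every smaller orbit in the closure, not only for the generic one, and the orbit poset is visibly richer for $W=\Lambda^2V$ than for $W=S^2V$, so the case analysis must be organized separately for the two families. The complementary ``only if'' statement Theorem~\ref{Tannsl} indicates that the semi-decreasing tuples are precisely those producing orbit closures of this restricted form, so the verification should succeed; carrying it out cleanly is the main computational content.
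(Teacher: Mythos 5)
Your opening reduction to simple subquotients, and the appeal to Theorem~\ref{Tcob} plus the partial-flag/coisotropy dictionary, are all in the spirit of the paper, and the first two steps are fine (with the small addendum that finite length needs to be justified; the paper does this via the notion of small growth in Section~\ref{Ssmg}). The genuine gap is in your third step. You write that the Robinson--Schensted shape of a semi-decreasing tuple pins $\overline{\EuScript O}_\lambda$ down to ``a restricted family of orbit closures'' and that the ``main computational content'' is then an orbit-by-orbit coisotropy check across that family, organized separately for $W=\mathrm S^2V$ and $W=\Lambda^2V$. That picture is wrong, and the error is what makes the envisioned computation both unnecessary and not actually executable as stated: for \emph{every} semi-decreasing $n_W$-tuple $\bar\lambda$, the ideal I$(\lambda)$ is a Joseph ideal, i.e.\ V$(\mathrm I(\lambda))=\overline{\EuScript Q}$, where $\EuScript Q\subset\frak{sl}(W)^*$ is the unique \emph{minimal} nonzero nilpotent orbit (rank-one traceless matrices). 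This is exactly Corollary~\ref{Cqtoso}b) (equivalently Corollary~\ref{CJid}) from Section~\ref{SJId}. There is no family to sift through: $\overline{\EuScript Q}$ contains exactly two SL$(W)$-orbits, $\EuScript Q$ and $\{0\}$. The coisotropy of $\EuScript Q$ is one fact, namely that $\EuScript Q$ is birationally $\mathrm T^*\mathbb P(W)$ (Example~\ref{Pn}) and $\mathbb P(W)$ is SL$(V)$-spherical since $W=\mathrm S^2V$ or $\Lambda^2V$ is a spherical GL$(V)$-module; no use of the full classification from Section~\ref{Ssphgr} is needed. Without the identification GV$(M)\subset\overline{\EuScript Q}$ you would not know which orbit closures to test, so the case analysis you describe cannot get off the ground; with it, the argument collapses to a single line, which is precisely the proof the paper gives. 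One last small point worth making explicit: if $\mathrm I(\lambda)\subsetneq\mathrm{Ann}\,M$ rather than equality, then GV$(M)\subsetneq\overline{\EuScript Q}$ forces GV$(M)=\{0\}$, i.e.\ $M$ is finite-dimensional and boundedness is trivial.
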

We prove Theorem ~\ref{Tasl} in Section~\ref{SSdsl}. The primitive ideals which correspond to the semi-decreasing sequences are called Joseph ideals (see Section~\ref{SJId}).

Fix $t\in\mathbb C$. Let P$_{\mathrm e^{2\pi it}}(W)$ be the cardinality of the set of isomorphism classes of simple perverse sheaves on $W$ (with respect to the stratification by GL$(V)$-orbits) which have fixed monodromy $e^{2\pi it}$ and are neither supported at 0 nor are smooth on $W$.
The above simple perverse sheaves on $W$ are described, following~\cite{BR}, in quiver terms in the Appendix. In particular, there are only finitely many isomorphism classes of simple perverse sheaves with a given monodromy. The following theorem 'counts' simple bounded $(\frak{sl}(W), \frak{sl}(V))$-modules. We prove this theorem in Subsection~\ref{SSdsl}.
\begin{theorem}\label{Tgkps}Let $\bar\lambda$ be a semi-decreasing tuple. Then there exist precisely P$_{\mathrm m(\lambda)}(W)$ non-isomorphic infinite-dimensional simple $(\frak{sl}(W), \frak{sl}(V))$-modules annihilated by I$(\lambda)$.\end{theorem}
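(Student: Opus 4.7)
My plan is to translate the counting problem into a count of simple perverse sheaves on $W$, by combining Beilinson--Bernstein localization with the geometry specific to the case $W=\mathrm S^2V$ or $\Lambda^2V$. Fix a twist $\lambda\in$~H$^1(X,\Omega^{1,\mathrm{cl}})$ compatible with the central character of I$(\bar\lambda)$, so that GSec$\circ$Loc is the identity on the block of $\frak{sl}(W)$-modules affording this central character. By Theorem~\ref{THol}, for each infinite-dimensional simple $(\frak{sl}(W),\frak{sl}(V))$-module $M$ with Ann$M=$I$(\bar\lambda)$, the localization Loc$(M)$ is a simple SL$(V)$-equivariant holonomic $\EuScript D^\lambda(X)$-module; by Theorem~\ref{TSup} its support $L$ belongs to the finite set L$_{\frak g,\frak k}^\cdot$; and $M$ is recovered as GSec$(\mathrm{Loc}(M))$. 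Thus $M$ is determined by a pair $(L,S)$ with $S$ an irreducible SL$(V)$-equivariant $\EuScript D^\lambda$-module on $L$ that is $\EuScript O$-coherent on a dense open $L'\subset L$.

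I would next enumerate these pairs for semi-decreasing $\bar\lambda$. Since I$(\bar\lambda)$ is a Joseph ideal in the sense of Section~\ref{SJId}, its associated variety is the closure of a distinguished nilpotent SL$(W)$-orbit. The characteristic variety of Loc$(M)$ lies inside this closure intersected with the conormal directions of SL$(V)$-orbits on $X$; using the moment-map description underlying Theorem~\ref{TGeo} together with the spherical-flag-variety analysis of Section~\ref{Ssphgr}, the admissible supports $L$ are in natural bijection with the proper intermediate GL$(V)$-orbit closures in $W$, i.e.\ the rank strata $W_r\subset W$ with $0<r<n_V$ (and the analogous range for the skew-symmetric case).

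For each such $L$ the irreducible SL$(V)$-equivariant $\EuScript D^\lambda$-modules $S$ on $L$ that are $\EuScript O$-coherent on $L'$ correspond, via a horospherical/Radon-type transform between the flag geometry of $\frak{sl}(W)$ and the linear GL$(V)$-action on $W$, to the irreducible SL$(V)$-equivariant local systems on the smooth open part of the corresponding stratum $W_r$ whose monodromy across the central $\mathbb C^*\subset$~GL$(V)$ equals $\mathrm m(\bar\lambda)=\mathrm e^{2\pi i(t-\lambda_1)}$. Taking intermediate extensions and invoking Riemann--Hilbert, these are precisely the simple perverse sheaves on $W$ (with respect to the GL$(V)$-stratification) having monodromy $\mathrm m(\bar\lambda)$ and neither supported at $0$ nor smooth on all of $W$. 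The total count is therefore P$_{\mathrm m(\lambda)}(W)$.

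The principal obstacle is the Radon-type equivalence invoked in the last step: one must set it up as a functor, check that it preserves simplicity, holonomicity, and the relevant equivariance, and verify that the twist $\lambda$ on $X$ translates to the scalar monodromy $\mathrm m(\bar\lambda)$ of the central $\mathbb C^*$-action on $W$, with matching stratifications on the two sides. A subsidiary point is to confirm that no two distinct pairs $(L,S)$ produce isomorphic $M$ and that every admissible $(L,S)$ has GSec$(\mathrm{Loc})\neq 0$, so that the count of $(L,S)$ equals the count of $M$; the latter requires ruling out vanishing of global sections for the chosen twist $\lambda$ on each $L$ in our list.
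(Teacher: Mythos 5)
Your route differs from the paper's, and the step you flag yourself as the ``principal obstacle''---a horospherical/Radon-type equivalence between SL$(V)$-equivariant $\EuScript D^\lambda(X)$-modules on the full flag variety $X$ of $\frak{sl}(W)$ and monodromic $\EuScript D$-modules on the vector space $W$---is exactly the missing argument, and it is not an incidental technicality. The paper circumvents it entirely: because $\mathrm I(\lambda)$ is a Joseph ideal, the quotient $\mathrm U(\frak{sl}(W))/\mathrm I(t, n_W-1,\dots,1)$ is isomorphic to the ring $\mathrm D^t\mathbb P(W)$ of twisted differential operators on $\mathbb P(W)$ (end of Subsection~\ref{SSbmsl}), and the Ind/Res functors of Subsection~\ref{SSutr} together with Lemma~\ref{Ldtpw} match simple infinite-dimensional modules annihilated by this ideal with simple $(\mathrm D(W),\frak{gl}(V))$-modules of monodromy $\mathrm e^{2\pi it}$ other than $\mathrm D(W)1$ and $\mathrm D(W)\delta_0$. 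Lemma~\ref{Lkrs} then shows such a $\mathrm D(W)$-module is holonomic with \emph{regular} singularities with respect to the GL$(V)$-stratification, so Kashiwara's Riemann--Hilbert theorem gives the bijection with simple perverse sheaves of that monodromy, which is Lemma~\ref{Ldps}. The reduction from an arbitrary semi-decreasing $\bar\lambda$ to the canonical tuple $(t,n_W-1,\dots,1)$ with $\mathrm e^{2\pi it}=\mathrm m(\bar\lambda)$ is Lemma~\ref{Lsoch} together with Corollary~\ref{Cii}, not merely the choice of a compatible localization twist as you assume.

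Beyond the unconstructed transform, two of your steps would fail as written. First, Riemann--Hilbert pairs perverse sheaves only with \emph{regular} holonomic $\EuScript D$-modules; Theorem~\ref{THol} gives holonomicity of $\mathrm{Loc}(M)$ on $X$ but says nothing about regular singularities, and you do not address it, whereas the paper proves regularity directly on $W$ in Lemma~\ref{Lkrs} using $K_{ss}$-equivariance together with local finiteness of the central $\frak a$-action. Second, your assertion that the admissible supports $L\subset X$ are ``in natural bijection with the proper intermediate GL$(V)$-orbit closures in $W$'' already presupposes the flag-variety-to-$W$ dictionary you have not built: as stated it conflates subvarieties of $X$ with strata of $W$ and so cannot serve as an input to the count. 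The paper's device of realizing $\mathrm U(\frak{sl}(W))/\mathrm I(\lambda)$ as twisted differential operators on the \emph{minimal} partial flag variety $\mathbb P(W)$ and lifting to the Weyl algebra of $W$ is precisely what keeps the whole argument concrete and verifiable; localizing on the full flag variety, as you propose, reintroduces all of these difficulties.
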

Let $\bar\lambda$ be a decreasing tuple. Let $\EuScript J_\lambda$ be the set of infinite-dimensional simple bounded $(\frak{sl}(W), \frak{sl}(V))$-modules annihilated by Ker$\chi_\lambda$. Let $\langle\EuScript J_\lambda\rangle$ be the free vector space generated by $\EuScript J_\lambda$. The set of $n_W$-tuples carries a natural action of S$_{n_W}$. This action induces an action of S$_{n_W}$ on $\langle\EuScript J_\lambda\rangle$. The S$_{n_W}$-module $\langle\EuScript J_\lambda\rangle$ is isomorphic to a direct sum of P$_1(W)$-copies of an $(n_W-1)$-dimensional module of S$_{n_W}$.

We now turn our attention to row 2 of table (1). The space $W\oplus W^*$ has a natural symplectic form \begin{center}$\omega(\cdot, \cdot):((x_1, l_1), (x_2, l_2))\mapsto x_1(l_2)-x_2(l_1)\in\mathbb C$.\end{center} There is an inclusion $\frak{gl}(W)\subset\frak{sp}(W\oplus W^*)$. The following theorem relates bounded $(\frak{sl}(W), \frak{sl}(V))$-modules and bounded $(\frak{sp}(W\oplus W^*), \frak{gl}(V))$-modules. We prove it in Subsection~\ref{SSdsl}.
\begin{theorem}\label{Tsptosl}a) Let $\tilde M$ be a bounded $(\frak{sp}(W\oplus W^*), \frak{gl}(V))$-module. Then any simple $\frak{sl}(W)$-subquotient of $\tilde M$ is an $(\frak{sl}(W), \frak{sl}(V))$-bounded module.\\b)Let $M$ be a simple bounded $(\frak{sl}(W), \frak{sl}(V))$-module. Then there exists a simple bounded $(\frak{sp}(W\oplus W^*), \frak{gl}(V))$-module $\tilde M$ such that $M$ is an $\frak{sl}(W)$-subquotient of $\tilde M$ .\end{theorem}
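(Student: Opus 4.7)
The plan is to exploit the inclusions $\frak{sl}(V)\subset\frak{gl}(V)\subset\frak{sp}(W\oplus W^*)$ and $\frak{sl}(V)\subset\frak{sl}(W)\subset\frak{gl}(W)\subset\frak{sp}(W\oplus W^*)$, where $\frak{gl}(W)\hookrightarrow\frak{sp}(W\oplus W^*)$ is the embedding $A\mapsto\mathrm{diag}(A,-A^t)$ preserving $\omega$, and $\frak{gl}(V)\subset\frak{gl}(W)$ arises from the natural $\frak{gl}(V)$-action on $W$. The crucial observation is that the identity $I_V\in\frak{gl}(V)$ acts on $W=\mathrm S^2V$ (resp.\ $\Lambda^2V$) as the scalar $2\cdot I_W$, so its image in $\frak{sp}(W\oplus W^*)$ lies in the center of $\frak{gl}(W)$ and in particular commutes with $\frak{sl}(W)$.

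For part a), let $\tilde M$ be a bounded $(\frak{sp}(W\oplus W^*),\frak{gl}(V))$-module and $M$ a simple $\frak{sl}(W)$-subquotient. The subalgebra $\frak{sl}(V)\subset\frak{gl}(V)$ inherits local finiteness on $M$. Since $I_V$ commutes with $\frak{sl}(W)$ inside $U(\frak{sp}(W\oplus W^*))$, it acts on the simple $\frak{sl}(W)$-module $M$ as an $\frak{sl}(W)$-linear endomorphism, and by Dixmier's form of Schur's lemma this endomorphism is a scalar $c$. Consequently every simple $\frak{sl}(V)$-type $U$ occurring in $M$ is lifted by $c$ to a unique simple $\frak{gl}(V)$-type $V'$, and
$$[M:U]_{\frak{sl}(V)}=[M:V']_{\frak{gl}(V)}\le[\tilde M:V']_{\frak{gl}(V)}\le C_{\tilde M},$$
so $M$ is $\frak{sl}(V)$-bounded.

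For part b), given a simple bounded $(\frak{sl}(W),\frak{sl}(V))$-module $M$, I extend $M$ to a $\frak{gl}(W)$-module $M_c$ by letting $I_W$ act as a scalar $c\in\mathbb C$ (to be chosen), then to a $\frak p$-module on which the abelian nilradical $\frak n$ of the Siegel parabolic $\frak p=\frak{gl}(W)\ltimes\frak n\subset\frak{sp}(W\oplus W^*)$ (the stabilizer of the Lagrangian $W$) acts by zero, and form the parabolically induced module $\tilde M_c:=U(\frak{sp}(W\oplus W^*))\otimes_{U(\frak p)}M_c\cong\mathrm S(\frak n^-)\otimes M_c$. Since $\tilde M_c$ is cyclically generated over $\frak{sp}(W\oplus W^*)$ by any nonzero element of $M_c$, Zorn yields a simple quotient $\bar M$; the composition $M_c\hookrightarrow\tilde M_c\twoheadrightarrow\bar M$ is a nonzero, hence injective, $\frak{gl}(W)$-map because $M_c$ is simple over $\frak{gl}(W)$, so $M$ appears as an $\frak{sl}(W)$-submodule of $\bar M$.

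The hard part is selecting $c$ so that some simple quotient $\bar M$ is actually $\frak{gl}(V)$-bounded. The induced module $\tilde M_c$ itself is unbounded, since $\mathrm S(\frak n^-)$ has unbounded $\frak{sl}(V)$-multiplicities, so the boundedness of $\bar M$ must come from the cut down to a simple quotient. The plan is to apply the geometric criterion of Theorem~\ref{Tcob}: $\mathrm{GV}(M)\subset\frak{sl}(W)^*$ is $\mathrm{SL}(V)$-coisotropic by hypothesis, and one must verify that the nilpotent $\mathrm{Sp}(W\oplus W^*)$-orbit closure arising as $\mathrm{GV}(\bar M)$, obtained by saturating $\mathrm{GV}(M)$ through the moment map dual to parabolic induction, remains $\mathrm{GL}(V)$-coisotropic in $\frak{sp}(W\oplus W^*)^*$. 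This should follow from the $\mathrm{GL}(V)\times\mathbb C^*$-sphericity of $W$ in both cases 2a) and 2b), compare Theorem~\ref{Tbgk} and the classification of Section~\ref{Ssphgr}; the correct value of $c$ is then pinned down by matching the primitive ideal $\mathrm{Ann}\,\bar M$ with $\mathrm{Ann}\,M$ via the semi-decreasing tuple dictionary of Theorems~\ref{Tannsl} and~\ref{Tgkps}.
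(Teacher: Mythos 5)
Your strategy for part a) is close in spirit to the paper's, but the Schur's-lemma step has a real gap. You take an arbitrary simple $\frak{sl}(W)$-subquotient $M=M_2/M_1$ of $\tilde M$ and claim $I_V$ induces an endomorphism of $M$. But $M_1$ and $M_2$ are only $\frak{sl}(W)$-submodules of $\tilde M$; the central element $I_V$ is $\frak{sl}(W)$-linear on $\tilde M$, yet nothing forces it to preserve $M_1$ or $M_2$, so there is no induced action on the quotient $M$. The paper circumvents this by first decomposing $\tilde M$ into $E$-eigenspaces $\tilde M_t$ (here $E=I_W$ generates the center of $\frak{gl}(W)$ and $I_V\mapsto 2E$; semisimplicity of the $I_V$-action follows from local finiteness over $\frak{gl}(V)$). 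Each $\tilde M_t$ is then simultaneously $\frak{sl}(W)$- and $\frak{gl}(V)$-stable, and every simple $\frak{sl}(W)$-subquotient of the direct sum $\oplus_t\tilde M_t$ is a subquotient of some $\tilde M_t$, on which your scalar argument and the multiplicity comparison $[M:\cdot]_{\frak{sl}(V)}\le[\tilde M_t:\cdot]_{\frak{gl}(V)}$ become legitimate. If you insert that eigenspace reduction, part a) goes through.

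For part b) the gap is more serious, and your route diverges sharply from the paper's. You construct a simple quotient $\bar M$ of the parabolically induced module $\tilde M_c=U(\frak{sp}(W\oplus W^*))\otimes_{U(\frak p)}M_c\cong\mathrm S(\frak n^-)\otimes M_c$ and correctly observe that $M$ embeds into $\bar M$ over $\frak{gl}(W)$. But you then acknowledge that $\tilde M_c$ itself is unbounded and merely assert that \emph{some} choice of $c$ makes the simple quotient $\bar M$ bounded, gesturing at the coisotropicity criterion of Theorem~\ref{Tcob}. That assertion is precisely the content of the theorem in this direction; no argument is given that the maximal submodule of $\tilde M_c$ absorbs the unbounded $\frak{gl}(V)$-multiplicities, nor that the associated variety $\mathrm{GV}(\bar M)$ equals the minimal orbit $\overline{\EuScript Q}_{\frak{sp}}$ rather than something larger (a larger orbit would generally fail to be $\mathrm{GL}(V)$-coisotropic by Corollary~\ref{Cps}). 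The paper sidesteps these difficulties entirely: a simple bounded $(\frak{sl}(W),\frak{sl}(V))$-module with the ``base'' annihilator is realized concretely inside a $\mathrm D^{\bar 0}(W)$-module, which is already a bounded $\frak{sp}(W\oplus W^*)$-module since $\mathrm D^{\bar 0}(W)\cong\mathrm U(\frak{sp}(W\oplus W^*))/\mathrm I_{\frak{sp}}(\mu_0)$; the remaining simple bounded modules are reached from the base case by tensoring with a finite-dimensional $\mathrm{SL}(W)$-module $F$, and then extending $F$ to an $\mathrm{SP}(W\oplus W^*)$-module $F_{\frak{sp}}$, which preserves $\frak{gl}(V)$-boundedness automatically. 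Your parabolic-induction scheme is a reasonable alternative idea, but as written it leaves the essential boundedness claim unproven, so the proof is incomplete.
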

Any tuple $\bar\mu=(\mu_1,..., \mu_{n_W})$ determines a two-sided ideal I$_\frak{sp}(\bar\mu)$ of U$(\frak{sp}(W\oplus W^*))$ and any primitive ideal of U$(\frak{sp}(W\oplus W^*))$ is determined by some tuple $\bar\mu$.

For a simple $\frak{sp}(W\oplus W^*)$-module $M$, we have either dim~V$(M)\ge n_W$ or dim$M<\infty$. A simple $\frak{sp}(W\oplus W^*)$-module $M$ {\it is of small growth} if dim~V$(M)\le n_W$ (the definition of a module of small growth which is not necessarily simple is given in Section~\ref{Ssmg}). The following theorems are '$\frak{sp}$-twins' of Theorem~\ref{Tsmsl}, Theorem~\ref{Tannsl}, Theorem~\ref{Tasl}.
\begin{theorem}\label{Tsmsp}Any simple bounded $(\frak{sp}(W\oplus W^*), \frak{gl}(V))$-module is of small growth.\end{theorem}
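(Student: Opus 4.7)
The strategy is to bootstrap Theorem~\ref{Tsmsl} across the bridge given by Theorem~\ref{Tsptosl}(a). Let $\tilde M$ be a simple bounded $(\frak{sp}(W\oplus W^*),\frak{gl}(V))$-module and let $M$ be any simple $\frak{sl}(W)$-subquotient of $\tilde M$. Theorem~\ref{Tsptosl}(a) guarantees that $M$ is a bounded $(\frak{sl}(W),\frak{sl}(V))$-module, and Theorem~\ref{Tsmsl} then yields $\dim V(M)\le n_W-1$. In other words, $\mathrm{GV}(M)$ is contained in the closure of the minimal nilpotent $SL(W)$-orbit in $\frak{sl}(W)^*$, which has dimension $2(n_W-1)$.

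The remaining task is to convert this control of the $\frak{sl}(W)$-subquotients into a bound on $V(\tilde M)$. I propose to work at the level of annihilators. Since $\mathrm{Ann}\,\tilde M$ is primitive, and by the Penkov--Serganova type rigidity already exploited in the proof of Theorem~\ref{Tsptosl}, one expects $\mathrm{Ann}\,\tilde M$ to be of the form $\mathrm{I}_{\frak{sp}}(\bar\mu)$ for some Shale--Weil tuple $\bar\mu$; otherwise the simple $\frak{sl}(W)$-subquotient $M$, whose annihilator is forced to be of semi-decreasing type via Theorem~\ref{Tannsl}, could not arise as a constituent. The strict inequality $\mu_{n_W-1}>|\mu_{n_W}|$ together with the half-integer weight condition defining a Shale--Weil tuple cut $\mathrm{GV}(\tilde M)$ down to the closure of the minimal nilpotent $Sp(W\oplus W^*)$-orbit, whose dimension is $2n_W$, and the desired bound $\dim V(\tilde M)\le n_W$ follows.

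The main obstacle is making the transfer from the $\frak{sl}(W)$-annihilator to the $\frak{sp}$-annihilator precise. Concretely, one must identify, among all primitive ideals of $\mathrm{U}(\frak{sp}(W\oplus W^*))$ whose intersection with $\mathrm{U}(\frak{sl}(W))$ is compatible with a semi-decreasing tuple $\bar\lambda$, the unique one whose associated nilpotent orbit is the minimal $Sp$-orbit. I expect to handle this by combining the $GL(W)$-equivariant decomposition $\frak{sp}(W\oplus W^*)\cong\frak{sl}(W)\oplus\mathbb C\oplus S^2W\oplus S^2W^*$ with the partition classification of nilpotent $Sp$-orbits, matching each candidate orbit against the semi-decreasing constraint produced by Theorem~\ref{Tannsl}; this is a finite case check whose outcome is dictated by the dimension formula for orbit closures under the restriction map $\frak{sp}(W\oplus W^*)^*\to\frak{sl}(W)^*$.
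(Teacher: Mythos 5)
Your proposal genuinely diverges from the paper's argument and leaves the heart of the matter unresolved. The paper does not pass through $\frak{sl}(W)$-subquotients at all: its proof of Theorem~\ref{Tsmsp} is a two-line consequence of Lemma~\ref{LspJ}, which states that $\mathrm{Ann}\,\tilde M$ is a Joseph ideal of $\mathrm U(\frak{sp}(W\oplus W^*))$. That lemma is proved directly from boundedness, via Corollary~\ref{Cps} (the inequality $\dim\frak b_\frak k\ge\frac12\dim\mathrm{GV}(\tilde M)$ for bounded $\tilde M$) together with the classification of nilpotent $\mathrm{SP}$-orbits: if $\mathrm{GV}(\tilde M)$ were anything other than $0$ or $\bar{\EuScript Q}_\frak{sp}$, it would contain the next-smallest nilpotent orbit, of dimension $2(2n_W-1)$, forcing $\dim\frak b_{\frak{sl}(V)}\ge 2n_W-1$, which is false for $W=\mathrm S^2V$ or $\Lambda^2V$. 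Once $\mathrm{Ann}\,\tilde M$ is a Joseph ideal, $\mathrm V(\tilde M)\subset\bar{\EuScript Q}_\frak{sp}\cap\frak k^\perp$, and the latter has dimension $n_W$.

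The gap in your proposal is precisely the ``main obstacle'' you flag but do not close. Knowing (via Theorem~\ref{Tsptosl}(a) and Theorem~\ref{Tsmsl}) that every simple $\frak{sl}(W)$-subquotient $M$ has $\mathrm{GV}(M)=\bar{\EuScript Q}$ in $\frak{sl}(W)^*$ does not, by itself, pin down $\mathrm{GV}(\tilde M)$ in $\frak{sp}(W\oplus W^*)^*$. The intermediate assertion you lean on—that $\mathrm{Ann}\,\tilde M$ ``is expected'' to be $\mathrm I_\frak{sp}(\bar\mu)$ for a Shale--Weil $\bar\mu$—is essentially Theorem~\ref{Tannsp}, which in the paper is itself a \emph{consequence} of Lemma~\ref{LspJ}. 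So you are implicitly invoking the conclusion you need to establish. The proposed finite case-check (matching nilpotent $\mathrm{SP}$-orbits against the semi-decreasing constraint under the restriction $\frak{sp}(W\oplus W^*)^*\to\frak{sl}(W)^*$) is plausible in spirit, but it is not carried out, and the relationship between $\mathrm{GV}(\tilde M)$, $\mathrm{Ann}\,\tilde M\cap\mathrm U(\frak{sl}(W))$, and $\mathrm{Ann}\,M$ involves subtleties (the intersection need not be prime, the restriction of a $G$-orbit closure need not be an orbit closure) that the sketch does not address. The paper's route via Corollary~\ref{Cps} sidesteps all of this and is substantially shorter.
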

\begin{theorem}\label{Tannsp} Let $M$ be an infinite-dimensional simple bounded $(\frak{sp}(W\oplus W^*), \frak{gl}(V))$-module and $\bar\mu$ be
an $n_W$-tuple such that Ann$M=$I$_\frak{sp}(\mu)$. Then $\bar\mu$ is a Shale-Weil tuple.\end{theorem}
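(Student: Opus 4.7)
The plan is to reduce the statement to its $\frak{sl}(W)$-twin, Theorem~\ref{Tannsl}, via the bridge provided by Theorem~\ref{Tsptosl}(a), and then to translate the semi-decreasing conclusion on the $\frak{sl}(W)$-annihilator to the Shale-Weil condition on $\bar\mu$ by comparing central characters across the Siegel Levi $\frak{gl}(W) \subset \frak{sp}(W\oplus W^*)$.

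I would first produce an infinite-dimensional simple $\frak{sl}(W)$-subquotient $M'$ of $M$. If such an $M'$ exists, then by Theorem~\ref{Tsptosl}(a) it is a bounded $(\frak{sl}(W), \frak{sl}(V))$-module, and Theorem~\ref{Tannsl} gives a semi-decreasing $n_W$-tuple $\bar\lambda$ with $\mathrm{Ann}_{\frak{sl}(W)} M' = I(\bar\lambda)$. If on the contrary every simple $\frak{sl}(W)$-subquotient of $M$ is finite-dimensional, then $M$ is locally $\frak{sl}(W)$-finite, hence locally $\frak{gl}(W)$-finite (the trace element acts compatibly on each composition factor), so $M$ is an infinite-dimensional simple bounded Harish-Chandra module for the symmetric pair $(\frak{sp}(W\oplus W^*), \frak{gl}(W))$; the infinite-dimensional simple bounded modules in this category are known to be subquotients of the oscillator representation tensored with finite-dimensional $\frak{sp}$-modules and thus have Shale-Weil infinitesimal character, already yielding the claim.

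In the main case, I would match $\bar\mu$ and $\bar\lambda$ by comparing actions of $Z(\frak{sp}(W\oplus W^*))$ and $Z(\frak{sl}(W))$. Fix a Cartan $\frak h \subset \frak{gl}(W)$ shared with $\frak{sl}(W)$ and $\frak{sp}(W\oplus W^*)$. The $W(C_{n_W})$-orbit of $\bar\mu + \rho_\frak{sp}$ in $\frak h^*$ restricts to a union of $W(A_{n_W-1})$-orbits, one of which equals the $W(A_{n_W-1})$-orbit of $\bar\lambda + \rho_\frak{sl} + c(1,\ldots,1)$, where $c$ encodes the action of the central element $I_{\frak{gl}(W)}$ of $\frak{gl}(W)$ on $M$. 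The Shale-Weil conditions are then read off: strict monotonicity of $\mu_1, \ldots, \mu_{n_W-1}$ comes from the monotonicity of the $n_W - 1$ correctly-placed coordinates of $\bar\lambda$; the bound $|\mu_{n_W}| < \mu_{n_W-1}$ comes from the misplaced coordinate of $\bar\lambda$, with the sign on $\mu_{n_W}$ absorbed by the reflection in $W(C_{n_W}) \setminus W(A_{n_W-1})$; half-integrality $\mu_i \in \frac{1}{2}+\mathbb Z$ arises from the combination of integer differences among the $\lambda_i$, the $\rho$-shift $\rho_\frak{sp} - \rho_\frak{sl}$, and the specific value of $c$ forced by boundedness of $M$ (equivalently, by $\frak{gl}(V)$-coisotropy of its associated variety, in the spirit of Theorem~\ref{Tcob}).

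The main obstacle will be pinning down the value of $c$: naively it is one degree of freedom, but the half-integer condition says it lies in a specific arithmetic progression, and verifying this requires exploiting boundedness of $M$ via the coisotropy of its associated variety. A secondary subtlety is the combinatorial matching of $W(A_{n_W-1})$-orbits inside a single $W(C_{n_W})$-orbit: the semi-decreasing property of $\bar\lambda$ must correspond exactly to a Shale-Weil representative of $\bar\mu$, and not to a boundary configuration; infinite-dimensionality of $M$ excludes the degenerate case $\mu_{n_W-1} = |\mu_{n_W}|$, which would force $\bar\lambda$ to be fully decreasing and $M'$ finite-dimensional.
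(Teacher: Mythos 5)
Your route is genuinely different from the paper's. The paper proves this theorem in two short steps, mirroring the $\frak{sl}(W)$-case exactly: Lemma~\ref{LspJ} shows that $\mathrm{Ann}\,M$ is a Joseph ideal, using only the inequality $\dim\frak b_\frak k\ge\frac12\dim\mathrm{GV}(M)$ from Corollary~\ref{Cps} and the fact that the next nilpotent orbit above the minimal one already has dimension $\ge 2(2n_W-1)$; then Corollary~\ref{Cqtosw} converts ``Joseph ideal'' into ``$\bar\mu$ is Shale--Weil'' by observing that a simple highest-weight module annihilated by such an ideal has associated variety inside $\overline{\mathcal Q}_\frak{sp}\cap(\frak b^\frak{sp}_W)^\bot$, a single $n_W$-dimensional $\mathrm H_W$-spherical variety, hence is $\frak h_W$-bounded, and then Mathieu's Theorem~\ref{Tbwm} gives the Shale--Weil condition. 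No comparison of central characters across the Levi $\frak{gl}(W)\subset\frak{sp}(W\oplus W^*)$ is ever needed.

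Your detour through Theorem~\ref{Tsptosl}(a) and Theorem~\ref{Tannsl}, followed by a Weyl-group-orbit matching, is a legitimate alternative idea, but as written it has a genuine gap precisely where you flag it: the central parameter $c$. Knowing that the restriction $M_t$ to $\frak{sl}(W)$ is bounded with a semi-decreasing annihilator fixes $\bar\lambda$ up to the $\mathrm W(A_{n_W-1})$-action but says nothing about $c$, and the half-integrality $\mu_i\in\frac12+\mathbb Z$ is exactly the datum that depends on $c$. To pin it down you propose invoking $K$-coisotropy of $\mathrm{GV}(M)$ ``in the spirit of Theorem~\ref{Tcob}''; but Theorem~\ref{Tcob} is stated and proved in this paper only for $\frak g=\frak{sl}(W)$ — the general case, including $\frak{sp}(W\oplus W^*)$, is the Conjecture following it. What the paper actually uses in the $\frak{sp}$-case is not coisotropy per se but the weaker Bernstein/Gabber dimension bound (Corollary~\ref{Cps}), and once you are willing to reason about $\mathrm{GV}(M)$ that way, you can identify $\mathrm{Ann}\,M$ with a Joseph ideal directly and the $\frak{sl}(W)$-detour becomes superfluous. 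Your fallback branch (every $\frak{sl}(W)$-composition factor finite-dimensional $\Rightarrow$ $M$ is a bounded Harish--Chandra module for $(\frak{sp},\frak{gl}(W))$ $\Rightarrow$ oscillator infinitesimal character) also cites a classification that is not established in the paper and cannot be used as a black box, and the passage from ``all composition factors finite-dimensional'' to ``$M$ is $\frak{gl}(W)$-locally finite'' needs an argument. So the proposal is a sketch of a plausible but substantially harder route, with the decisive step — determination of the trace parameter — still missing.
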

\begin{theorem}\label{Tasp}Let $M$ be a finitely generated $(\frak{sp}(W\oplus W^*), \frak{gl}(V))$-module and I$_\frak{sp}(\mu)\subset$Ann$M$ for some Shale-Weil tuple $\bar\mu$. Then $M$ is bounded.\end{theorem}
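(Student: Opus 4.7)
The plan is to mirror the proof of the $\frak{sl}$-twin Theorem~\ref{Tasl}. The key geometric input is the identification of the associated variety of $\mathrm{I}_\frak{sp}(\bar\mu)$ for a Shale-Weil tuple. The half-integrality $\mu_i\in\frac12+\mathbb Z$, the strict decrease, and the interlacing bound $\mu_{n_W-1}>|\mu_{n_W}|$ mark $\mathrm{I}_\frak{sp}(\bar\mu)$ as a Joseph-type ideal whose zero set $V(\mathrm{I}_\frak{sp}(\bar\mu))$ is the closure of a specific small nilpotent orbit $\overline{\EuScript Z}\subset\frak{sp}(W\oplus W^*)^*$, namely the orbit attached to the oscillator representation on $\mathbb C[W]$. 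The first step of the proof is to pin this identification down using the standard tableau description of primitive ideals in $\mathrm U(\frak{sp}(W\oplus W^*))$.

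The second step, which I expect to be the main obstacle, is to verify that $\overline{\EuScript Z}$ is $\mathrm{GL}(V)$-coisotropic. Here Theorem~\ref{TGeo} and the classification of $\mathrm{GL}(V)$-spherical flag varieties from Section~\ref{Ssphgr} enter: the inclusion $\frak{gl}(V)\subset\frak{sp}(W\oplus W^*)$ dualizes to the Lagrangian polarization $W\oplus W^*$, and one analyzes the intersection $\overline{\EuScript Z}\cap\frak{gl}(V)^\perp$ component-by-component, checking the dimension equality defining coisotropicity for both $W=\mathrm{S}^2V$ and $W=\Lambda^2V$. As a sanity check, the oscillator representation $\mathbb C[W]$ is an explicit bounded $(\frak{sp}(W\oplus W^*),\frak{gl}(V))$-module whose annihilator contains $\mathrm{I}_\frak{sp}(\bar\mu)$ for a reference Shale-Weil tuple, confirming that $\overline{\EuScript Z}$ does support bounded modules; translation functors through finite-dimensional $\frak{sp}$-modules produce such witnesses for every Shale-Weil $\bar\mu$.

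Finally, for the given finitely generated $M$ with $\mathrm{I}_\frak{sp}(\bar\mu)\subset\mathrm{Ann}\,M$, I would choose a $\frak{gl}(V)$-stable good filtration so that $\mathrm{Supp}\,\mathrm{gr}\,M\subset V(\mathrm{Ann}\,M)\subset\overline{\EuScript Z}$. The $\mathrm{GL}(V)$-coisotropicity of $\overline{\EuScript Z}$ established in the previous step then forces a uniform bound on the $\frak{gl}(V)$-isotypic multiplicities of $\mathrm{gr}\,M$, hence of $M$ itself, via the $\frak{sp}$-analogue of the ``coisotropic implies bounded'' direction of Theorem~\ref{Tcob}, extended from the simple case to finitely generated modules through the good-filtration argument.
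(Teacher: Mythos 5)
Your plan outlines a genuinely different route from the paper's, and it has a real gap. You propose to show that the closure $\overline{\EuScript Z}$ of the minimal nilpotent orbit in $\frak{sp}(W\oplus W^*)^*$ is $\mathrm{GL}(V)$-coisotropic and then invoke ``the $\frak{sp}$-analogue of the `coisotropic implies bounded' direction of Theorem~\ref{Tcob}.'' No such analogue is established anywhere in the text: Theorem~\ref{Tcob} is stated and proved only for $\frak g=\frak{sl}(W)$, and its proof depends in an essential way on the $\frak{sl}_n$-specific fact that every nilpotent orbit closure is birationally isomorphic (via a birational moment map) to $T^*\mathrm{Fl}$ for a partial flag variety, so that coisotropicity of $\EuScript Z$ can be converted to sphericity of $\mathrm{Fl}$ and then transferred to $\tilde V$ by Panyushev's theorem. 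For $\frak{sp}$ the moment maps from cotangent bundles of flag varieties are in general only generically finite of degree two, not birational, and the general ``bounded $\Leftrightarrow$ coisotropic'' statement is only a Conjecture in the paper. So the key reduction you are leaning on is unavailable, and the entire coisotropicity detour is a dead end as written.

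The paper's actual argument sidesteps all of this with a short direct computation. Since $M$ is a $(\frak{sp}(W\oplus W^*),\frak{gl}(V))$-module, $\mathrm V(M)\subset\frak{gl}(V)^\perp$; since $\mathrm I_\frak{sp}(\mu)$ is a Joseph ideal (Corollary~\ref{Cqtosw}), $\mathrm V(M)\subset\overline{\EuScript Q}_\frak{sp}$. The minimal orbit $\EuScript Q_\frak{sp}$ consists of matrices $v\otimes v$ with $v\in W\oplus W^*$, i.e. it is the $\mathbb Z_2$-quotient of $W\oplus W^*\cong T^*W$. Under this quotient $\frak{gl}(V)^\perp\cap\EuScript Q_\frak{sp}$ is the image of the zero fiber of the $\frak{gl}(V)$-moment map on $T^*W$, namely the union of conormal bundles to $\mathrm{GL}(V)$-orbits in $W$. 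Because $W$ is $\mathrm{GL}(V)$-spherical, each of these conormal bundles is $\mathrm{GL}(V)$-spherical (Theorem~\ref{Pan}), hence so is every irreducible component of $\mathrm V(M)$, and Proposition~\ref{Pbm}a) gives boundedness directly. Your first step (identifying the Joseph ideal and its associated variety) matches Corollary~\ref{Cqtosw}, and your final step (good filtration, sphericity of $\mathrm V(M)$ components forces boundedness) is really Proposition~\ref{Pbm}; the missing piece is to drop the coisotropicity middle step entirely and instead exploit the explicit description of $\EuScript Q_\frak{sp}$ as a $\mathbb Z_2$-quotient of $T^*W$.
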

We prove these three theorems in Subsection~\ref{Sspgl}. The primitive ideals which correspond to the Shale-Weil sequences are called Joseph ideals (see Section~\ref{SJId}).

We call a Shale-Weil $n_W$-tuple $\bar\mu$ {\it positive} if $\mu_{n_W}>0$ and {\it negative} otherwise. Let $\frak h_W$ be the $\mathbb C$-affine space of $n_W$-tuples. The automorphism \begin{center}$\sigma:\frak h_W^*\to\frak h_W^*, (\mu_1, \mu_2,...,\hspace{10pt} \mu_{n_W})\mapsto (\mu_1. \mu_2, ..., -\mu_{n_W})$\end{center} interchanges the sets of positive and negative Shale-Weil tuples. Furthermore, I$_\frak{sp}(\bar\mu)=$I$_\frak{sp}(\sigma\bar\mu)$. Set $\bar\mu_0:=(n_W-\frac{1}{2}, n_W-\frac{3}{2},..., \frac{1}{2})$. The following theorem shows that the categories of bounded modules annihilated by different ideals are equivalent. We prove it in Subsection~\ref{Sspgl}.
\begin{theorem}\label{Tspeq}a) For any two positive Shale-Weil tuples $\bar\mu_1, \bar\mu_2$ the categories of bounded $(\frak{sp}(W\oplus W^*), \frak{gl}(V))$-modules annihilated by I$_\frak{sp}(\bar\mu_1)$ and I$_\frak{sp}(\bar\mu_2)$ are equivalent.\\b) In particular, the set of simple bounded $(\frak{sp}(W\oplus W^*), \frak{gl}(V))$-modules is naturally identified with the set of pairs $(\bar\mu, M)$, where $\bar\mu$ is a positive Shale-Weil $n_W$-tuple and $M$ is a simple bounded $(\frak{sp}(W\oplus W^*), \frak{gl}(V))$-module annihilated by I$_\frak{sp}(\bar\mu_0)$.\end{theorem}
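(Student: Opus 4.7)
The plan is to establish (a) using translation functors in the spirit of Jantzen and Zuckerman, and to deduce (b) by taking $\bar\mu_0$ as a preferred base point. Because both $\bar\mu_1$ and $\bar\mu_2$ have entries in $\frac12+\mathbb Z$, the difference $\nu:=\bar\mu_2-\bar\mu_1$ is integral. Choose a finite-dimensional simple $\frak{sp}(W\oplus W^*)$-module $F$ whose set of $\frak h$-weights meets the Weyl orbit of $\nu$, and define
$$T^{12}(M):=\mathrm{pr}_{\chi_{\bar\mu_2}}(M\otimes F),\qquad T^{21}(N):=\mathrm{pr}_{\chi_{\bar\mu_1}}(N\otimes F^*),$$
where $\mathrm{pr}_\chi$ is the projection onto the generalized $\chi$-eigenspace of the center $\mathrm Z(\frak{sp}(W\oplus W^*))$.

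Next I would verify that $T^{12}$ and $T^{21}$ are mutually inverse equivalences between the subcategories of $\frak{sp}(W\oplus W^*)$-modules affording the central characters $\chi_{\bar\mu_1}$ and $\chi_{\bar\mu_2}$. The standard translation principle applies because both Shale-Weil tuples are regular --- the strict inequalities together with $\mu_{n_W-1}>|\mu_{n_W}|$ guarantee trivial stabilizer in the type-$C$ Weyl group $W_C=S_{n_W}\ltimes(\mathbb Z/2)^{n_W}$ --- and they are integrally related. The role of the positivity hypothesis is to canonically pin down one Weyl-chamber representative for each central character, so that the equivalence tracks the specific primitive ideal $I_{\frak{sp}}(\bar\mu_2)$ rather than the one associated with $\sigma\bar\mu_2$.

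I would then check that boundedness is preserved under these functors. For any simple $\frak{gl}(V)$-module $V_\nu$ one has the elementary estimate
$$[M\otimes F:V_\nu]_{\frak{gl}(V)}\le(\dim F)\cdot C_M,$$
so tensoring by $F$ sends the bounded subcategory to itself, and $\mathrm{pr}_{\chi_{\bar\mu_2}}$ picks out a direct $\frak g$-module summand which preserves this bound. Combining with Theorem~\ref{Tsmsp} --- any simple bounded $\frak{sp}(W\oplus W^*)$-module has its associated variety in the minimal (Shale-Weil) nilpotent orbit --- and the uniqueness of the Joseph ideal with a given central character and minimal-orbit associated variety, any simple subquotient of $T^{12}(M)$ is annihilated by $I_{\frak{sp}}(\bar\mu_2)$. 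This yields (a), and (b) follows by specializing to $\bar\mu_1=\bar\mu_0$ and letting $\bar\mu_2$ range over all positive Shale-Weil tuples.

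The main obstacle will be the step verifying that $T^{12},T^{21}$ are mutually inverse. Translation principles are classical for category $\EuScript O$ and for Harish-Chandra modules, but here the ambient category is $(\frak{sp}(W\oplus W^*),\frak{gl}(V))$-modules of finite type and, in rows 2a)--2b) of table~(1), the pair is not symmetric. One must therefore either adapt Jantzen's argument directly --- using only that tensoring with $F$ preserves local $\frak{gl}(V)$-finiteness (immediate since $\dim F<\infty$) together with exactness of the central-character projection --- or invoke a general translation theorem at the level of $\frak g$-modules affording a fixed central character, and then observe that the $\frak{gl}(V)$-finiteness is transported functorially through the tensor-and-project construction.
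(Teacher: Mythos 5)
Your proof takes essentially the same route as the paper's. The paper simply invokes Theorem~\ref{Tweq} (the Bernstein--Gelfand projective-functor result of Subsection~\ref{SSpro}), noting that the bounded subcategories are stable under that equivalence; your translation functors $T^{12},T^{21}$ are exactly the projective functors $\EuScript H_{\mu_1}^{\mu_2},\EuScript H_{\mu_2}^{\mu_1}$, and your verification that Shale--Weil tuples are regular and integrally related is the paper's hypothesis ``$\lambda_1$ equivalent to $\lambda_2$.'' Your closing worry about whether translation applies outside category $\EuScript O$ or the Harish-Chandra setting is resolved precisely because Bernstein--Gelfand's theorem is formulated for arbitrary $\frak g$-modules affording a fixed generalized central character --- no $(\frak g,\frak k)$-structure is needed for the equivalence itself; local $\frak{gl}(V)$-finiteness and boundedness are then checked to be preserved exactly as you do (tensoring by a finite-dimensional $F$ and projecting are exact, preserve local finiteness, and your multiplicity estimate $[M\otimes F:\cdot]_{\frak{gl}(V)}\le(\dim F)\,C_M$ is the standard one).
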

Let $\bar\mu$ be a positive Shale-Weil $n_W$-tuple. The functor $\EuScript H_{\mu}^{\sigma\mu}$ is exact and involutive. The category of modules annihilated by I$_\frak{sp}(\bar\mu)$ is stable under $\EuScript H_\mu^{\sigma\mu}$. We denote $\EuScript H_{\mu_0}^{\sigma\mu_0}$ by Inv.

The Dynkin diagram of Spin$_{2n_W}$ has a nontrivial involution (it is unique unless $n=4$) and it induces an involution $\sigma$ on Spin$_{2n_W}$. Furthermore there is a unique central element $z\in$Spin$_{2n_W}$ such that $z^2=1$ and Spin$_{2n_W}/\{1,z\}=$SO$_{2n_W}$ (for $n=4, z$
is uniquely determined by the additional requirement $\sigma(z)=z$). Ann {\it odd pair} of simple Spin$_{2n_W}$-modules is a pair $\{L^{\bar\mu}, L^{\sigma\bar\mu}\}$ of simple Spin$_{2n_W}$-modules which are conjugate by $\sigma$ and such that $z$ acts by -1 on $L^{\bar\mu}$ and $L^{\sigma\bar\mu}$. Obviously, $L^{\bar\mu}$ and $L^{\sigma\bar\mu}$ has the same dimension. Any positive Shale-Weil $n_W$-tuple $\bar\mu$ determines an odd pair $\{L^{\bar\mu}, L^{\sigma\bar\mu}\}$ of Spin$_{2n_W}$-modules (cf.~\cite{M}).

Let $\{L, L^\sigma\}$ be the unique odd pair of minimal dimension. The following theorem should be understood as a mnemonic rule corresponding to the results of Subsection~\ref{Sspgl}. It is related to the $\frak{gl}(V)$-characters of bounded $(\frak{sp}(W\oplus W^*), \frak{gl}(V))$-modules.
\begin{theorem} Let $(\bar\mu, M)$ be a pair as in Theorem~\ref{Tspeq} and let $\bar\mu M$ be the corresponding simple $(\frak{sp}(W\oplus W^*), \frak{gl}(V))$-module. Then$$\frac{[\bar\mu M:\cdot]_\frak k+[\bar\mu\mathrm{Inv}M:\cdot]_\frak k}{[M:\cdot]_\frak k+[\mathrm{Inv}M:\cdot]_\frak k}=\frac{[L^{\bar\mu}:\cdot]_\frak h+[L^{\sigma\bar\mu}:\cdot]_\frak h}{[L:\cdot]_\frak h+[L^\sigma:\cdot]_\frak h}\eqno(2.1)$$and$$\frac{[\bar\mu M:\cdot]_\frak k-[\bar\mu\mathrm{Inv}M:\cdot]_\frak k}{[M:\cdot]_\frak k-[\mathrm{Inv}M:\cdot]_\frak k}=\frac{[L^{\bar\mu}:\cdot]_\frak h-[L^{\sigma\bar\mu}:\cdot]_\frak h}{[L:\cdot]_\frak h-[L^\sigma:\cdot]_\frak h}\eqno(2.2)$$\end{theorem}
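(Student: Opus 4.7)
The plan is to reduce both sides to a common form by exploiting the category equivalence of Theorem~\ref{Tspeq}, then match two explicit formulas coming from the Weil (oscillator) representation.

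First, by Theorem~\ref{Tspeq}(a) the functor $M\mapsto\bar\mu M$ is an equivalence of categories, and $\mathrm{Inv}=\EuScript H_{\mu_0}^{\sigma\mu_0}$ is exact and involutive; an analogous exact involutive translation functor exists on the category annihilated by $\mathrm{I}_{\mathfrak{sp}}(\bar\mu)$, and I would first verify that it agrees with $\mathrm{Inv}$ under the equivalence, so that $\bar\mu\,\mathrm{Inv}\,M\cong\mathrm{Inv}(\bar\mu M)$. Granting this, both numerator and denominator in (2.1) and (2.2) decompose according to the $\pm 1$ eigenspaces of $\mathrm{Inv}$, and the ratios become well-defined multipliers between the $\mathfrak{k}$-characters of the $+$ and $-$ parts of $\bar\mu M$ and those of $M$.

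Second, I would compute the base case $\bar\mu=\bar\mu_0$. The simple bounded $(\mathfrak{sp}(W\oplus W^*),\mathfrak{gl}(V))$-modules annihilated by $\mathrm{I}_{\mathfrak{sp}}(\bar\mu_0)$ are the two irreducible constituents of the Weil representation of $\mathfrak{sp}(W\oplus W^*)$, swapped by $\mathrm{Inv}$, with $\mathfrak{gl}(V)$-characters given by even and odd polynomial characters on $W^*$ twisted by the half-integral shift encoded in $\bar\mu_0$. On the spin side, the unique minimal odd pair $\{L,L^\sigma\}$ consists of the two half-spin representations of $\mathrm{Spin}_{2n_W}$. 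For this base case the ratios in (2.1) and (2.2) reduce to the tautology $1=1$.

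Third, I would realize the equivalence $M\mapsto\bar\mu M$ as tensoring with a finite-dimensional $\mathfrak{sp}(W\oplus W^*)$-module followed by projection onto the central character $\bar\mu$, where the finite-dimensional input is obtained by lifting the odd pair $\{L^{\bar\mu},L^{\sigma\bar\mu}\}$ of $\mathrm{Spin}_{2n_W}$ through the double cover $\mathrm{Spin}_{2n_W}\to\mathrm{SO}_{2n_W}\subset\mathrm{Sp}(W\oplus W^*)$. In the Grothendieck group, the $+1$ (respectively $-1$) eigenspace of $\mathrm{Inv}$ inside $\bar\mu M$ then corresponds to tensoring the analogous eigenspace of $M$ with $[L^{\bar\mu}]+[L^{\sigma\bar\mu}]$ (respectively $[L^{\bar\mu}]-[L^{\sigma\bar\mu}]$), and dividing by the base case supplies the denominators $[L]\pm[L^\sigma]$. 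Passing to $\mathfrak{k}$- and $\mathfrak{h}$-characters yields exactly (2.1) and (2.2). The main obstacle will be identifying the abstractly defined equivalence of Theorem~\ref{Tspeq} with the concrete tensor-product-and-projection functor just described. This is a Howe-duality-type compatibility between the translation functor $\EuScript H_\mu^{\sigma\mu}$ and the $\mathrm{Spin}_{2n_W}$-action on the oscillator representation, and requires careful tracking of the central element $z\in\mathrm{Spin}_{2n_W}$ (which is precisely what distinguishes odd pairs from honest $\mathrm{SO}_{2n_W}$-modules) through the correspondence.
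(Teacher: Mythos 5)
There is a fundamental obstruction to your third step. You propose to realize the equivalence $M\mapsto\bar\mu M$ by tensoring with a finite-dimensional $\frak{sp}(W\oplus W^*)$-module ``obtained by lifting the odd pair $\{L^{\bar\mu},L^{\sigma\bar\mu}\}$ of $\mathrm{Spin}_{2n_W}$ through the double cover $\mathrm{Spin}_{2n_W}\twoheadrightarrow\mathrm{SO}_{2n_W}\subset\mathrm{SP}(W\oplus W^*)$.'' This construction does not exist: $\mathrm{SO}_{2n_W}$ is not a subgroup of $\mathrm{SP}_{2n_W}$ (one preserves a symmetric form, the other an alternating one; they merely share a Weyl group), and, separately, the members of an odd pair are by definition \emph{genuine} spin representations (the central element $z$ acts by $-1$), so they do not descend to $\mathrm{SO}_{2n_W}$, much less extend or pull back to any $\mathrm{SP}_{2n_W}$-module. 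The paper emphasizes this very point at the end of Subsection~\ref{Sspgl}: ``the function $[L:\cdot]_{\frak h_W}$ comes from the universe of $\mathrm{Spin}_{2n_W}$-modules and the functions $[E_i:\cdot]_{\frak h_W}$ come from the universe of $\mathrm{SP}_{2n_W}$-modules.'' The two universes are connected only by a combinatorial character identity, not by an inclusion of groups, a lift, or a Howe-duality datum; your argument needs a structural relationship that simply is not there.

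The actual argument (Subsection~\ref{Sspgl}) stays entirely on the $\frak{sp}$ side and never constructs any $\frak{sp}$-module out of spin modules. One writes, via formula (5), the translation functor $\EuScript H_{\mu_0}^{\mu}$ as a $\mathbb Z$-linear combination of $\mathrm F_{E_i}\circ\EuScript H_{\mu_0}^{\mu_0}$ and $\mathrm F_{E_i'}\circ\mathrm{Inv}$, with $E_i, E_i'$ finite-dimensional $\frak{sp}(W\oplus W^*)$-modules. Passing to the Grothendieck group of modules of small growth and splitting along the $\pm1$ eigenspaces of $\mathrm{Inv}$ gives that $[\bar\mu M]\pm[\bar\mu\mathrm{Inv}M]$ is $[M]\pm[\mathrm{Inv}M]$ tensored with the class $\sum_i c_i[E_i]\pm\sum_i c_i'[E_i']$. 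This already yields the left side of (2.1) and (2.2) as $\frak k$-characters of that class. The same operator identity (5) applies verbatim to $M=\mathrm L_{\mu_0}$ in the weight-module category, and here Mathieu's computation in~\cite{M} identifies the $\frak h_W$-character of $\sum_i c_i[E_i]\pm\sum_i c_i'[E_i']$ with the ratio $([L^{\bar\mu}:\cdot]_{\frak h}\pm[L^{\sigma\bar\mu}:\cdot]_{\frak h})/([L:\cdot]_{\frak h}\pm[L^\sigma:\cdot]_{\frak h})$. That step is an imported combinatorial fact about bounded weight modules, precisely the part you are trying to derive structurally. Your base case computation and the remark that $\mathrm{Inv}$ should commute appropriately with the equivalence are fine, but without a replacement for the structural lift in step three the proof does not go through.
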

The functions $[L:\cdot]_{\frak h}, [L^\sigma:\cdot]_{\frak h},
[L^{\bar\mu}:\cdot]_{\frak h}, [L^{\sigma\bar\mu}:\cdot]_{\frak h}$
are known since the modules\begin{center}$L, L^\sigma, L^{\bar\mu},
L^{\sigma\bar\mu}$\end{center}are finite-dimensional. Therefore the
formulas (2.1), (2.2) provide two linear equations for the four
unknowns.

Let $M_1, M_2$ be non-isomorphic simple $(\frak{sp}(W\oplus W^*), \frak{gl}(V))$-modules annihilated by I$_\frak{sp}(\mu_0)$. Then both $M_1, M_2$ are multiplicity-free $\frak{gl}(V)$-modules and the functions \begin{center}$[M_1:\cdot]_{\frak{gl}(V)}$ and $[M_2:\cdot]_{\frak{gl}(V)}$\end{center} are pairwise disjoint, i.e. their product is the zero-function. Moreover, there are infinitely-many non-isomorphic simple $(\frak{sp}(W\oplus W^*), \frak{gl}(V))$-modules annihilated by I$_\frak{sp}(\mu_0)$. We relate the category of bounded \\$(\frak{sp}(W\oplus W^*), \frak{gl}(V))$-modules with a suitable category of perverse sheaves in Subsection~\ref{Sspgl}.
\begin{theorem}\label{Tspps} Let $\bar\mu$ be a positive Shale-Weil $n_W$-tuple. Then the category of $(\frak{sp}(W\oplus W^*), \frak{gl}(V))$-modules annihilated by I$_\frak{sp}(\bar\mu)$ is equivalent to the direct sum of two copies of the category of perverse sheaves on $W$ with respect to the stratification by GL$(V)$-orbits.\end{theorem}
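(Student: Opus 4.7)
The plan is to realise $U(\frak{sp}(W\oplus W^*))/\mathrm I_\frak{sp}(\bar\mu_0)$ via the Shale--Weil (oscillator) representation and then to invoke the Riemann--Hilbert correspondence, exploiting the fact that $GL(V)$ acts on $W$ with only finitely many orbits. By Theorem~\ref{Tspeq}(a) the category in question is, up to equivalence, independent of the positive Shale--Weil tuple $\bar\mu$, so I may treat the canonical choice $\bar\mu=\bar\mu_0=(n_W-\tfrac12,\dots,\tfrac12)$.

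The first main step is to use the Shale--Weil representation to produce a surjective algebra homomorphism
\[
\pi:\;U(\frak{sp}(W\oplus W^*))\;\twoheadrightarrow\;\EuScript D(W)^+
\]
onto the subalgebra of even-parity differential operators on $W$, whose kernel is precisely $\mathrm I_\frak{sp}(\bar\mu_0)$ --- the Howe--Joseph identification of the minimal primitive ideal of $\frak{sp}$ with the annihilator of the metaplectic representation $\mathbb C[W]$. Under $\pi$ the composite $\frak{gl}(V)\hookrightarrow\frak{gl}(W)\hookrightarrow\frak{sp}(W\oplus W^*)\to\EuScript D(W)^+$ is the differentiated natural $GL(V)$-action on $W$, so the $(\frak{sp},\frak{gl}(V))$-condition translates into $\frak{gl}(V)$-local finiteness of the underlying $\EuScript D(W)^+$-module. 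Since $\frak{gl}(V)$ is reductive and $GL(V)$ is connected, local finiteness integrates to a rational monodromic $GL(V)$-action.

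Next I would exploit the $\mathbb Z/2$-grading $\EuScript D(W)=\EuScript D(W)^+\oplus\EuScript D(W)^-$ and the two non-isomorphic extensions to $\EuScript D(W)$ of a given $\EuScript D(W)^+$-module (differing by the parity twist) to obtain a block decomposition
\[
\EuScript D(W)^+\text{-mod}\;\simeq\;\EuScript D(W)\text{-mod}\;\oplus\;\EuScript D(W)\text{-mod},
\]
whose two summands are distinguished by whether the composition factors descend from the ``even'' half $\mathbb C[W]^+$ or the ``odd'' half $\mathbb C[W]^-$ of the oscillator module; this accounts for the two copies in the statement. Because $W=S^2V$ or $\Lambda^2V$, the group $GL(V)$ acts on $W$ with only finitely many orbits (classified by the rank of the form), and a $GL(V)$-monodromic $\EuScript D(W)$-module is then automatically regular holonomic with singular support in the union of conormal bundles to the $GL(V)$-orbits. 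The Riemann--Hilbert correspondence identifies such modules with perverse sheaves on $W$ constructible with respect to the $GL(V)$-orbit stratification, completing the equivalence.

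The principal obstacle is the identification $\pi:U(\frak{sp})/\mathrm I_\frak{sp}(\bar\mu_0)\xrightarrow{\sim}\EuScript D(W)^+$: although classical, its verification requires matching the infinitesimal character of the metaplectic representation with $\bar\mu_0$ and exhibiting surjectivity through an associated-graded calculation. A secondary subtlety is the two-copies decomposition: a naive Morita equivalence between $\EuScript D(W)^+$ and $\EuScript D(W)\rtimes\mathbb Z/2$ is not available because the $\pm 1$-action of $\mathbb Z/2$ on $W$ fixes the origin, so one argues directly that every simple $\EuScript D(W)^+$-module arises from a simple $\EuScript D(W)$-module by taking a parity component and that the resulting dichotomy is a genuine block decomposition of the abelian category.
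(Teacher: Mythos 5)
Your plan matches the paper's proof essentially step for step: the paper also reduces to $\bar\mu_0$ by projective-functor equivalence (Theorem~\ref{Tweq}), uses the homomorphism $\phi_\frak{sp}:\mathrm U(\frak{sp}(W\oplus W^*))\to\mathrm D(W)$ with kernel $\mathrm I_\frak{sp}(\mu_0)$ and image $\mathrm D^{\bar 0}(W)$ (your $\EuScript D(W)^+$), invokes the half-monodromy decomposition $(\mathrm D^{\bar 0}(W),E)\text{-mod}\cong(\mathrm D(W),E)\text{-mod}\oplus(\mathrm D(W),E)\text{-mod}$ from Subsection~\ref{SSutr}, and finishes with Kashiwara's Riemann--Hilbert correspondence via Lemma~\ref{Lkrs}. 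One small imprecision worth flagging: the two copies arise from the two half-monodromies $\pm e^{\pi i t}$ refining a fixed monodromy $e^{2\pi i t}$, i.e.\ two choices of parity component of a given $\mathrm D(W)$-module under the restriction functor $\mathrm{Res}$, rather than from two inequivalent extensions of a $\mathrm D^{\bar 0}(W)$-module to $\mathrm D(W)$, but your later paragraph states the correct mechanism and the argument is unaffected.
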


For both cases $W=\Lambda^2V,$ S$^2V$, the category of perverse sheaves on $W$ with respect to the stratification by GL$(V)$-orbits is equivalent to a category of representations of an explicitly described quiver with relations~\cite{BG}, see also the Appendix. The simple objects of this category are enumerated by pairs $(S, Y)$, where $S$ is a GL$(V)$-orbit in $W$ and $Y$ is a simple GL$(V)$-equivariant local system on $S$, i.e. a simple representation of the fundamental group $\pi_1(S)$. As $S$ has to be GL$(V)$-spherical, this group has to be finitely generated and abelian. The category contains infinitely many non-isomorphic simple objects, but only finitely many for any fixed monodromy.

Similar work for the category of bounded weight modules has been done by D.~Grantcharov and V.~Serganova~\cite{GrS1},~\cite{GrS2}: they have found a quiver with relations, whose category of modules is equivalent to the category of bounded weight modules (see also~\cite{SM} and~\cite{GVM}).
\newpage\section{Preliminaries}\label{Spre}
\subsection{Symplectic geometry}\label{Sssg} By $\EuScript TX$ we denote the tangent bundle  of a smooth variety $X$, by T$_xX$ the tangent space to $X$ at a point $x\in X$; by T$^*_xX$ the dual to T$_xX$ space. For a smooth $G$-variety $X$ we denote by $\tau_X: \frak g\to\EuScript TX$ the canonical homomorphism. Let $Y$ be a smooth subvariety of $X$. We denote by N$_{Y/X}$ and N$_{Y/X}^*$ the total spaces of the normal and conormal bundles to $Y$ in $X$ respectively.
\begin{definition}\upshape Suppose that $X$ is a smooth variety which admits a closed nondegenerate two-form $\omega$. Such a pair $(X,\omega)$ is called a {\it symplectic variety}. If $X$ is a $G$-variety and $\omega$ is $G$-invariant, $(X, \omega)$ is called a {\it symplectic $G$-variety}.\end{definition}
\begin{example}\upshape Let $X$ be a smooth $G$-variety. Then $\mathrm T^*X$ has a one-form $\alpha_X$ defined at a point $(l,x) (l\in\mathrm T_x^*X)$ by the equality \begin{center}$\alpha_X(\xi)=l(\pi_*\xi)$\end{center} for any $\xi\in\mathrm T_{(l, x)}(\mathrm T^*X)$, where $\pi: \mathrm T^*X\to X$ is the projection. The differential $\mathrm d\alpha_X$ is a nondegenerate $G$-invariant two-form on $\mathrm T^*X$ and therefore $(\mathrm T^*X, \mathrm d\alpha_X)$ is a symplectic $G$-variety.\end{example}
\begin{example}\upshape Let $\EuScript O$ be a $G$-orbit in $\frak g^*$. Then $\EuScript O$ has a {\it{}Kostant-Kirillov-Souriau two-form } $\omega(\cdot~\!, \cdot)$ defined at a point $x\in\frak g^*$ by the equality\begin{center}$\omega_x(\tau_{\frak g^*}p|_x, \tau_{\frak g^*}q|_x)=x([p,q])$\end{center} for $p, q\in\frak g$.\end{example}
\begin{definition}\upshape Let $(X,\omega)$ be a symplectic variety. We call a subvariety $Y\subset X$\\a) {\it isotropic} if $\omega|_{\mathrm T_yY}=0$ for the generic point $y\in Y$;\\b) {\it coisotropic} if $\omega|_{(\mathrm T_yY)^{\bot_\omega}}=0$ for the generic point $y\in Y$;\\c) {\it Lagrangian} if T$_yY=($T$_yY)^{\bot_\omega}$ for the generic point $y\in Y$ or equivalently if it is both isotropic and coisotropic.\end{definition}
\begin{example}\upshape \label{TZX} Let $X$ be a smooth $G$-variety and $Y\subset X$ be a smooth $G$-subvariety. Then N$^*_{Y/X}$ is Lagrangian in T$^*X$.\end{example}
\begin{proposition}[{see for example~\cite[Lemma 1.3.27]{NG}}]\label{NG} Any closed irreducible conical (i.e. $\mathbb C^*$-stable) Lagrangian $G$-subvariety of $\mathrm T^*X$ is the closure of the total space of the conormal bundle $\mathrm N^*_{Y/X}$ to a $G$-subvariety $Y\subset X$.\end{proposition}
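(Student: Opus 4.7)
The plan is to recover the base $Y$ as $\overline{\pi(\Lambda)}$, where $\pi\colon\mathrm T^*X\to X$ is the cotangent projection, and then to verify fiberwise on a dense open subset that $\Lambda$ agrees with the total space of the conormal bundle. Set $Y:=\overline{\pi(\Lambda)}$. Since $\Lambda$ is $G$-stable and $\pi$ is $G$-equivariant, $Y$ is a closed $G$-subvariety of $X$. As $\Lambda$ is Lagrangian, $\dim\Lambda=\dim X$, and the generic fiber of $\pi|_\Lambda$ has dimension $r:=\dim X-\dim Y$.

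Next I would pass to a dense open subset $U$ of the smooth locus of $Y$ over which $\pi|_\Lambda\colon\pi^{-1}(U)\cap\Lambda\to U$ is flat with irreducible fibers of dimension $r$, and along which $\Lambda$ itself is smooth. Such $U$ is available from generic flatness, generic smoothness of a morphism of irreducible varieties, and the generic irreducibility of fibers; since $Y$ is $G$-stable one may intersect with the finitely many $G$-translates of $U$ on each irreducible component to retain $G$-stability. Fix a smooth point $(y,\xi)\in\Lambda$ with $y\in U$. The differential $d\pi$ sends $\mathrm T_{(y,\xi)}\Lambda$ onto $\mathrm T_yY$, and its kernel is the tangent space $\mathrm T_\xi\Lambda_y$ to the fiber $\Lambda_y:=\pi^{-1}(y)\cap\Lambda$, which is ``vertical'', i.e.\ lives inside $\mathrm T^*_yX\subset\mathrm T_{(y,\xi)}(\mathrm T^*X)$.

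The key computation then uses the canonical two-form $\mathrm d\alpha_X$. In local Darboux coordinates $(x_i,\xi_i)$ it is $\omega=\sum_i\mathrm d\xi_i\wedge\mathrm dx_i$, so the $\omega$-pairing of a vertical vector $v\in\mathrm T_\xi\Lambda_y$ with any lift $w\in\mathrm T_{(y,\xi)}\Lambda$ of $u\in\mathrm T_yY$ is precisely $\langle v,u\rangle$. Since $\Lambda$ is Lagrangian this must vanish, forcing $\Lambda_y\subset(\mathrm T_yY)^{\perp}=\mathrm N^*_{Y/X,y}$. But $\Lambda_y$ is conical of dimension $r=\dim\mathrm N^*_{Y/X,y}$, and irreducible by the choice of $U$, so the inclusion is an equality of conical varieties in the vector space $\mathrm T^*_yX$. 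Therefore $\Lambda\cap\pi^{-1}(U)=\mathrm N^*_{U/X}$, and taking closures in $\mathrm T^*X$ yields $\Lambda=\overline{\mathrm N^*_{Y/X}}$ (using that $\Lambda$ is irreducible and closed, and that $\mathrm N^*_{U/X}$ is dense in $\mathrm N^*_{Y/X}$).

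The hard part will be the bookkeeping around the choice of $U$: arranging smoothness of $Y$ and $\Lambda$, flatness of $\pi|_\Lambda$, and irreducibility of its fibers simultaneously on a single $G$-stable open subset. The symplectic computation itself is very short, but it depends decisively on the conical hypothesis, which is what guarantees that $\Lambda_y$ is an honest cone determined by the linear structure of $\mathrm N^*_{Y/X,y}$ and that the dimension bound $\dim\Lambda=\dim X$ propagates to each generic fiber as a true equality rather than merely an upper bound.
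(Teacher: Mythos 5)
The paper itself does not give a proof of this Proposition: it simply cites Chriss--Ginzburg (the \cite{NG} reference in the bracket). So there is no ``paper's own proof'' to compare against; what you should be compared with is the standard argument, and your outline is essentially that argument.

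One step as you wrote it is not quite right, and it happens to be exactly the place where conicality must enter. Your Darboux computation correctly shows that at a smooth point $(y,\xi)\in\Lambda$ over a good point $y\in U$, the \emph{tangent space} $\mathrm T_\xi\Lambda_y$ pairs to zero with $\mathrm T_yY$, hence $\mathrm T_\xi\Lambda_y\subset\mathrm N^*_{Y/X,y}$. From this you write ``forcing $\Lambda_y\subset(\mathrm T_yY)^\perp$'' --- but that is a statement about the fiber itself, not its tangent spaces, and it does not follow without an extra word. The cleanest way to close the gap is to observe that, because $\Lambda$ is $\mathbb C^*$-stable, the Euler (scaling) vector field $\eta$ is tangent to $\Lambda$; $\eta$ is vertical, so $\eta_{(y,\xi)}\in\mathrm T_\xi\Lambda_y$; and under the identification $\mathrm T_\xi(\mathrm T^*_yX)\cong\mathrm T^*_yX$ one has $\eta_{(y,\xi)}=\xi$. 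Thus $\xi\in\mathrm T_\xi\Lambda_y\subset\mathrm N^*_{Y/X,y}$, which is the pointwise inclusion you actually need. (Alternatively: $0\in\Lambda_y$ since $\Lambda$ is closed and conical, and an irreducible subvariety of a vector space containing $0$ whose tangent spaces at smooth points all lie in a fixed subspace $W$ is itself contained in $W$.) You do flag the importance of conicality in your last paragraph, so you are aware it matters, but the argument as written has a small logical jump there.

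A more streamlined version of the same idea avoids Darboux coordinates entirely and is worth knowing: on a conical Lagrangian, the Liouville one-form restricts to zero, i.e.\ $\alpha_X|_\Lambda=0$ (Cartan's formula $\iota_\eta\,\mathrm d\alpha_X=\alpha_X$ plus $\eta$ tangent to $\Lambda$ and $\mathrm d\alpha_X|_\Lambda=0$). Since $\alpha_X$ at $(y,\xi)$ is $w\mapsto\xi(\pi_*w)$ and $\pi_*(\mathrm T_{(y,\xi)}\Lambda)=\mathrm T_yY$ on your good locus, you get $\xi(\mathrm T_yY)=0$ in one line. This is what the conical hypothesis really buys; your two-form computation reaches the same place but requires the extra cone-to-tangent-cone observation above. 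Apart from this point, your bookkeeping plan (generic flatness/smoothness/irreducibility, intersecting with finitely many $G$-translates to keep $G$-stability, then taking closures) is sound, and the overall route coincides with the standard proof.
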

\begin{definition}Let $(X, \omega)$ be a $K$-symplectic variety. We say that $(X, \omega)$ is $K$-{\it isotropic} if any $K$-orbit of some open $K$-stable set $\tilde X\subset X$ is isotropic. We say that $(X, \omega)$ is $K$-{\it coisotropic} if any $K$-orbit of some open $K$-stable subset $\tilde X\subset X$ is coisotropic.\end{definition}
Let $X$ be a $G$-variety. The map\begin{center} T$^*X\times\frak g\to\mathbb C$\hspace{40pt}$((x, l), g)\mapsto l(\tau_X g|_x),$\end{center} where $g\in \frak g, x\in X, l\in$T$^*_xX$, induces a map \begin{center}$\phi_X: $T$^*X\to\frak g^*$\end{center} called the {\it{}moment map}. This map provides the following description of a nilpotent orbit $ Gu\subset\frak g^*$. Suppose that $P$ is a parabolic subgroup of $G$.
\begin{theorem}[R.~W.~Richardson~\cite{Rich}]\label{MMapSl}  The moment map \begin{center}$\phi_{G/P}:\mathrm T^*(G/P)\to\frak g^*$\end{center} is a proper morphism to the closure of some nilpotent orbit $\overline{Gu}$ and is a finite morphism over $Gu$.\end{theorem}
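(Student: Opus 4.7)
The plan is to realize $\mathrm T^*(G/P)$ concretely as a homogeneous bundle and then exploit the projectivity of $G/P$.

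First I would identify $\mathrm T^*(G/P)$ with $G\times^P\frak p^\bot$, where $\frak p:=$Lie$(P)$ and $\frak p^\bot\subset\frak g^*$ is its annihilator (so $\frak p^\bot$ consists of nilpotent functionals, since under an invariant form on $\frak g$ it corresponds to the nilradical $\frak n$ of $\frak p$). In this model the moment map becomes the explicit formula $\phi_{G/P}([g,x])=\mathrm{Ad}^*(g)(x)$. To establish properness I would factor $\phi_{G/P}$ as the composition of the morphism
$$G\times^P\frak p^\bot\hookrightarrow G/P\times\frak g^*,\qquad[g,x]\mapsto(gP,\mathrm{Ad}^*(g)(x)),$$
with the second projection $G/P\times\frak g^*\to\frak g^*$. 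The first map is a closed immersion, as its image is cut out by the condition $\mathrm{Ad}^*(g^{-1})y\in\frak p^\bot$; the second is proper because $G/P$ is projective. Hence $\phi_{G/P}$ is proper, and in particular its image is closed.

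Next I would analyze this image. It equals $G\cdot\frak p^\bot$, which is closed, irreducible (as the image of an irreducible variety), $G$-stable, and contained in the nilpotent cone N$_G(\frak g^*)$ since every element of $\frak p^\bot$ is nilpotent. Because there are only finitely many nilpotent $G$-orbits in $\frak g^*$ and each orbit closure is irreducible, this image must coincide with the closure $\overline{Gu}$ of a single orbit, where $u$ may be chosen in a dense open subset of $\frak p^\bot$.

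For the finiteness of $\phi_{G/P}$ over $Gu$, I would compare dimensions. One has $\dim\mathrm T^*(G/P)=2\dim(G/P)=2\dim\frak p^\bot$. Richardson's theorem on the $P$-action on its nilradical gives that $P$ has a dense orbit $P\cdot u$ on $\frak p^\bot$, and a short computation with stabilizers then yields $\dim Gu=2\dim\frak p^\bot$ as well. Thus $\phi_{G/P}$ is a proper dominant morphism onto $\overline{Gu}$ between irreducible varieties of equal dimension, so its generic fiber is finite. Since $\phi_{G/P}$ is $G$-equivariant and $Gu$ is a single orbit, fiber dimension is constant on $Gu$, and so $\phi_{G/P}$ is finite over all of $Gu$.

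The main obstacle I foresee is the density input used in the last step, namely that $P$ has an open orbit on its own nilradical $\frak p^\bot$ (equivalently, that the generic element of $\frak p^\bot$ has stabilizer contained in $P$). Without this statement, which is Richardson's original density lemma, the dimensions of the source and of the orbit closure in the image would not be forced to agree, and the finiteness assertion would break; everything else in the argument is formal consequences of properness and the $G$-geometry of $T^*(G/P)$.
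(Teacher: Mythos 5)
The paper states this result as a citation to Richardson's original article and gives no proof of its own, so there is nothing in the paper to compare your argument against; I'll only assess the proof on its merits.

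Your argument is correct and is the classical route. The associated-bundle model $T^*(G/P)\cong G\times^P\frak p^\perp$, the closed embedding into $G/P\times\frak g^*$ (which is a vector subbundle of the trivial bundle, hence closed), and properness coming from the projectivity of $G/P$ are all sound, as is the identification of the image as a single orbit closure using irreducibility together with the finiteness of the number of nilpotent orbits. One small imprecision: the density statement ``$P$ has a dense orbit on $\frak n\cong\frak p^\perp$'' gives $\dim P_u=\dim P-\dim\frak n$, but the dimension count $\dim Gu=2\dim\frak n$ also requires $\frak g_u\subset\frak p$ for generic $u\in\frak p^\perp$ (so that $\dim G_u=\dim P_u$); these two statements are companions in Richardson's paper rather than being literally equivalent, and your ``short computation with stabilizers'' silently invokes both. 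Since both are indeed part of Richardson's lemma, this is a looseness of phrasing rather than a gap. You might also notice that the paper, immediately after the theorem, verifies $\phi_X^*(\omega_{Gu})=\mathrm d\alpha_{T^*X}|_{\phi_X^{-1}(Gu)}$; since $\mathrm d\alpha$ is nondegenerate, this forces $\mathrm d\phi$ to be injective over $Gu$, so $\phi$ is an immersion there, and a proper immersion has finite fibers. This symplectic observation yields finiteness over $Gu$ (and, a posteriori, the equality $\dim Gu=2\dim\frak n$) without appealing to the density lemma at all, while the identification of the image as $\overline{Gu}$ still follows by the irreducibility/finiteness argument you gave.
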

\begin{example}\upshape\label{Pn}Let $G=\mathrm{SL}_n$ and $G/P=\mathbb P(\mathbb C^n)$. Then $\phi_{G/P}(\mathrm T^*(G/P))$ (considered as a subset of $\frak{sl}_n$) coincides with the set of nilpotent matrices of rank $\le$1. The open SL$_n$-orbit in $\phi_{G/P}(\mathrm T^*(G/P))$ is a set of $\mathrm{SL}_n$-highest weight vectors and is contained in the closure of any nonzero nilpotent $\mathrm{SL}_n$-orbit in $\frak{sl}_n^*$.\end{example}
For $G=$SL$_n$, each moment map $\phi_{G/P}$ corresponding to a parabolic subgroup $P$ is a birational isomorphism of $\mathrm T^*(G/P)$ with the image of $\phi_{G/P}$, and one can obtain the closure of any nilpotent orbit $\overline{Gu}$ as the image of a suitable moment map $\phi_{G/P}$ (see~\cite{CM} and references therein). Assume $X=G/P$. The following computation shows that $\phi_X^*(\omega_{Gu})=$d$\alpha_{\mathrm T^*X}|_{\phi^{-1}_XGu}$:\begin{center}d$\alpha_X|_{(x, t)}(\tau_Xp, \tau_Xq)=(\tau_Xp\cdot\alpha_X(\tau_Xq))|_{(x, t)}-(\tau_Xq\cdot\alpha_X(\tau_Xp))|_{(x, t)}-\alpha_X([\tau_Xp, \tau_Xq])|_{(x, t)}=$\\$(\tau_Xq\cdot\phi^*_Xq)|_{(x, t)}-(\tau_Xq\cdot\phi^*_Xp)-(\phi^*_X[p,q])|_{(x, t)})=$\\$(\phi^*_X[p,q])|_{(x, t)}-(\phi^*_X[q, p])|_{(x, t)}-(\phi^*_X[p,q])|_{(x, t)}=x([p,q])=\omega_x([p,q])$\end{center} for any $p, q\in\frak g$ and $(x, t)\in$T$^*X\subset \frak g^*\times X$.

In what follows we call quotients of SL$(W)$ by parabolic subgroups 'partial $W$-flag varieties' (see Subsection~\ref{SGr}).
\subsection{Spherical varieties}\label{SSph} Let $V$ be a finite-dimensional $K$-module. We recall that $B$ is a Borel subgroup of $K$.
\begin{definition} Let $X$ be an irreducible $K$-variety. Then $X$ is called {\it $K$-spherical} if and only if there is an open
orbit of $B$ on $X$. A $K$-module $W$ is called $K$-spherical if it is $K$-spherical as a $K$-variety.\end{definition}
It is well known that $K$-spherical varieties have many beautiful properties. In particular, the number of $K$-orbits on such varieties is finite~\cite{VK}. A subgroup $K'\subset K$ is called {\it spherical} if the quotient $K/K'$ is $K$-spherical. For example, any symmetric subgroup $K'\subset K$ is spherical.

The following lemma is a reformulation in the terms of the present thesis of a result of \'E.~Vinberg and B.~Kimelfeld~\cite[Thm.~2]{VK}.
\begin{lemma}An irreducible quasiaffine algebraic $K$-variety $X$ is $K$-spherical if and only if the space of regular functions $\mathbb C[X]$ is a bounded $\frak k$-module. Moreover, if $X$ is $K$-spherical, then $\mathbb C[X]$ is a multiplicity-free $\frak k$-module.\end{lemma}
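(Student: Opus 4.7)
The plan is to translate $\frak k$-multiplicities in $\mathbb C[X]$ into dimensions of spaces of $B$-semi-invariants. For a simple $K$-module $V_\lambda$ of $B$-highest weight $\lambda$, any $K$-homomorphism $V_\lambda\to\mathbb C[X]$ is determined by the image of a highest weight vector, which is a $B$-semi-invariant regular function of weight $\lambda$; conversely, every such semi-invariant generates a copy of $V_\lambda$. Hence
$$[\mathbb C[X]:V_\lambda]_{\frak k} \;=\; \dim \mathbb C[X]^{(B)}_\lambda,$$
and both directions of the lemma reduce to controlling the right-hand side.

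For the forward implication, suppose $\mathcal O=Bx_0$ is the open $B$-orbit on $X$. Since $X$ is irreducible, $\mathcal O$ is dense and the restriction $\mathbb C[X]\to\mathbb C[\mathcal O]$ is injective. A regular $B$-semi-invariant of weight $\lambda$ is then determined by its value at $x_0$, and compatibility with the stabilizer $B_{x_0}$ forces this value to vanish unless $\lambda|_{B_{x_0}}$ is trivial. Therefore $\dim\mathbb C[X]^{(B)}_\lambda\le 1$ for every $\lambda$, which simultaneously yields the multiplicity-free statement and, a fortiori, boundedness.

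For the converse, assume $X$ is not $K$-spherical. By Rosenlicht's theorem on rational quotients, generic $B$-orbits have codimension $\ge 1$, so $\mathbb C(X)^B$ has positive transcendence degree over $\mathbb C$ and contains a non-constant element $\phi$. The step I expect to be the principal obstacle is realising $\phi$ as a quotient $g/h$ of two regular $B$-semi-invariants of the same weight $\lambda$. Quasi-affineness enters essentially here: it gives $\mathbb C(X)=\mathrm{Frac}\,\mathbb C[X]$, so the $B$-stable fractional ideal $I:=\{f\in\mathbb C[X]\mid f\phi\in\mathbb C[X]\}$ is nonzero. Using that $\mathbb C[X]$ is locally $K$-finite, one chooses a finite-dimensional $B$-submodule of $I$, picks a $B$-highest weight vector $h$ of some weight $\lambda$, and sets $g:=\phi h\in\mathbb C[X]$; since $\phi$ is $B$-invariant, $g$ is itself a $B$-semi-invariant of weight $\lambda$.

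Given such a pair $(g,h)$, transcendence of $\phi=g/h$ over $\mathbb C$ forces the $n+1$ monomials $g^jh^{n-j}$ with $0\le j\le n$ to be linearly independent $B$-semi-invariants of weight $n\lambda$, so $\dim\mathbb C[X]^{(B)}_{n\lambda}\ge n+1$. For $n$ larger than the assumed bound $C_{\mathbb C[X]}$ this contradicts the hypothesis, completing the converse.
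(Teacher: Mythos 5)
Your proof is correct. Note, however, that the paper itself does not prove this lemma: it states it as ``a reformulation \ldots{} of a result of \'E.~Vinberg and B.~Kimelfeld'' and refers to \cite[Thm.~2]{VK}, so there is no in-paper argument to compare against. What you have written is essentially the standard proof of the Vinberg--Kimelfeld theorem. The identification $[\mathbb C[X]:V_\lambda]_{\frak k}=\dim\mathbb C[X]^{(B)}_\lambda$ uses that $\mathbb C[X]$ is a semisimple locally finite rational $K$-module (valid for quasiaffine $X$ and reductive $K$). The forward direction is the classical observation that a $B$-semi-invariant on an irreducible variety with an open $B$-orbit is determined, up to scalar, by its value at a base point of that orbit, which gives multiplicity at most one. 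The converse is the standard fractional-ideal trick: quasiaffineness yields $\mathbb C(X)=\mathrm{Frac}\,\mathbb C[X]$, so a nonconstant $B$-invariant rational function $\phi$ can be written as $g/h$ with $g,h$ regular $B$-semi-invariants of the same weight $\lambda$, and the powers $g^jh^{n-j}$ then give $n+1$ independent semi-invariants of weight $n\lambda$, contradicting boundedness. One small remark: where you say ``picks a $B$-highest weight vector $h$'' of a finite-dimensional $B$-submodule of the ideal $I$, what is really wanted is simply a $B$-eigenvector (semi-invariant), whose existence is guaranteed by the Lie--Kolchin/Borel fixed point theorem; the phrase ``highest weight vector'' is a harmless slip of terminology since $I$ need not be a $K$-submodule.
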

\begin{theorem}[D.~Panyushev {~\cite[Thm 2.1]{Pan}}]\label{Pan}Let $X$ be a smooth irreducible $K$-variety and $M$ a smooth locally closed $K$-stable subvariety. Then the generic stabilizers of the actions of $B$ on $X,\mathrm N_{M/X}$ and $\mathrm N_{M/X}^*$ are isomorphic.\label{MPan}\end{theorem}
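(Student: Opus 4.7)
\textbf{Proof proposal for Theorem \ref{MPan}.}

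My plan is to reduce the computation of all three generic stabilizers to a single local model obtained from Luna's étale slice theorem, applied at a generic point of $M$, and then compare generic $K_m$-stabilizers on three related $K_m$-varieties.

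\textbf{Step 1: Luna slice setup.} Pick $m\in M$ in the open $K$-orbit so that $K_m$ is a generic isotropy subgroup of the $K$-action on $M$. Shrinking $M$ to this open orbit if necessary, $K_m$ is reductive and Luna's theorem produces a $K_m$-stable smooth locally closed subvariety $S\subset X$ through $m$ together with a $K$-equivariant étale map
$$\Phi:K\times^{K_m}\!S\longrightarrow X$$
onto a $K$-stable open subset $X'\subset X$, such that $S\cap M=\{m\}$ and $\mathrm T_mS\cong\mathrm N_{M/X,m}$ as $K_m$-modules. Over $Km\subset M$ one has the global identifications
$$\mathrm N_{M/X}\big|_{Km}\cong K\times^{K_m}\!\mathrm T_mS,\qquad \mathrm N^*_{M/X}\big|_{Km}\cong K\times^{K_m}\!(\mathrm T_mS)^*.$$
Since étale morphisms preserve generic stabilizers of any group action, the generic $B$-stabilizer on $X$ equals that on $K\times^{K_m}\!S$. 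Hence it suffices to establish that the generic $B$-stabilizer of $K\times^{K_m}\!Y$ is independent (up to $B$-conjugacy) of $Y\in\{S,\,\mathrm T_mS,\,(\mathrm T_mS)^*\}$.

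\textbf{Step 2: comparison on twisted products.} For a point $[k,y]\in K\times^{K_m}\!Y$,
$$B_{[k,y]}\;=\;\{b\in B\mid k^{-1}bk\in(K_m)_y\}\;=\;B\cap k\,(K_m)_y\,k^{-1}.$$
Taking $(k_0,y)$ generic, the generic $B$-stabilizer of $K\times^{K_m}\!Y$ equals $B\cap k_0H_Y k_0^{-1}$, where $H_Y\subset K_m$ is a generic $K_m$-stabilizer on $Y$ and $k_0\in K$ is chosen so that $k_0H_Yk_0^{-1}$ is in generic position with respect to $B$. Crucially this depends on $Y$ only through the $K_m$-conjugacy class of $H_Y$.

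\textbf{Step 3: matching the $H_Y$.} For $Y=S$ versus $Y=\mathrm T_mS$, the equality $H_S\sim H_{\mathrm T_mS}$ follows because the reductive group $K_m$ fixes $m\in S$, so by linearization at a reductive fixed point the formal (or étale) neighborhood of $m$ in $S$ is $K_m$-equivariantly isomorphic to $\mathrm T_mS$, so the generic $K_m$-orbit structures, and hence the generic stabilizers, agree. For $Y=\mathrm T_mS$ versus $Y=(\mathrm T_mS)^*$, one invokes the classical fact (going back to Élashvili) that for a reductive group acting linearly on a vector space the generic stabilizers on the module and on its dual are conjugate; this can be seen by passing to the categorical quotient and matching closed generic orbits via a Cartan involution of $K_m$.

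Assembling the three comparisons, the generic $B$-stabilizer on each of $X$, $\mathrm N_{M/X}$, $\mathrm N^*_{M/X}$ is isomorphic to the common group $B\cap k_0Hk_0^{-1}$, with $H$ the generic $K_m$-stabilizer on $\mathrm T_mS$.

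\textbf{Main obstacle.} The technically delicate point is Step 3: reductivity of $K_m$ is needed both for Luna linearization and for the $W\leftrightarrow W^*$ comparison, and while reductivity holds on the open $K$-orbit of $M$ (since that orbit is $K/K_m$ with $K_m$ the generic isotropy, which is reductive under the standing hypothesis that $M$ is a smooth $K$-stable subvariety with the generic isotropy in question), one must keep track of this reduction carefully. Once the module/dual stabilizer comparison is in hand, the $B$-stabilizer statement follows mechanically from the twisted-product formula of Step 2.
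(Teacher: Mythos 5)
The paper does not supply a proof of this statement at all: it is imported verbatim from Panyushev's article, cited as \cite[Thm 2.1]{Pan}, and used as a black box throughout. So there is no ``paper's own proof'' to compare against; I can only assess the proposal on its merits.

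The proposal has a genuine gap at its very first step, and you half-acknowledge it yourself in the ``Main obstacle'' paragraph before talking yourself out of the worry with a circular argument. To run Luna's \'etale slice theorem at a generic point $m$ of the open $K$-orbit in $M$, and later to invoke the $W\leftrightarrow W^*$ comparison of principal isotropy groups, you need $K_m$ to be reductive. But there is no reason for the generic isotropy subgroup of a reductive group action to be reductive, and the hypotheses of the theorem (smoothness, local closedness, $K$-stability of $M$) do nothing to enforce it. A basic example: take $K=\mathrm{SL}_2$ and $M$ a locally closed $K$-orbit isomorphic to $K/U$ with $U$ maximal unipotent, embedded in some smooth $K$-variety $X$ (e.g.\ an orbit of a highest weight vector in a representation). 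Here $K_m=U$ is unipotent, Luna's theorem does not apply, and the Elashvili/Luna statement about $V$ versus $V^*$ for reductive isotropy is unavailable. Since both Step~1 and Step~3 hinge on this unjustified reductivity, the argument does not go through.

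There is also a secondary issue in Step~3: even when $K_m$ is reductive, the comparison of generic $K_m$-stabilizers on $W$ and $W^*$ via a Chevalley-type involution $\theta$ of $K_m$ produces subgroups that are $\theta$-conjugate, hence abstractly isomorphic, but not obviously $K_m$-conjugate; and your Step~2 reduction to $B\cap k_0 H_Y k_0^{-1}$ only shows the generic $B$-stabilizer depends on the $K$-conjugacy class of $H_Y$. You would need to argue that $\theta$ extends suitably, or use the sharper form of the $W\leftrightarrow W^*$ theorem giving $K_m$-conjugacy via the categorical quotient, to close this loop. The structural issue, however, is the reductivity hypothesis: Panyushev's actual proof works with the local structure theorem relative to the Borel $B$ (the Brion--Luna--Vust/Knop parabolic induction for $B$-actions), which sidesteps any need for the $K$-isotropy of $m$ to be reductive. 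Replacing your $K$-equivariant Luna slice by a $B$-local-structure reduction is the natural fix, and then the passage from $N_{M/X}$ to $N^*_{M/X}$ is handled at the level of the Levi, where duality is clean.
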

Let $X$ be an irreducible $K$-variety. Then there exist open subset $\tilde X\subset X$ such that the stabilizers $B_x$ and $B_y$ are conjugate for all $x,y\in \tilde X$~\cite{Pan}. For all $x\in\tilde X$ there exists a unique connected reductive subgroup L$(x)\subset K_x$ such that $B_x$ is a Borel subgroup of L$(x)$~\cite{Pan} (see also~\cite{Gr}).

Let Gr$(r; V)$ be the variety of $r$-dimensional subspaces of $V$. For $r\in\{1,...,n_V-1\}$ and $x\in$Gr$(r; V)$ we denote by V$^r(x)\subset V$ the $r$-dimensional subspace which corresponds to $x$. We apply the construction of~\cite{Pan} to the case $X:=$Gr$(r; V)$. Then L$(x)$ stabilizes V$^r(x)$. Therefore the datum $(K,V,r)$ determines the modules (L$(x), $V$^r(x)$). The type of (L$(x), $V$^r(x))$ does not depend on a point $x\in\tilde X$ and therefore the datum $(K, V, r)$ determines the pair ($L, V^r$). One can compute the subgroup $L$ via a technique of doubled actions~\cite{Pan}.
\begin{definition}We denote by c$_K(X)$ the codimension in $X$ of the generic orbit of $B$.\end{definition}
\begin{remark}The variety $X$ is $K$-spherical if and only if c$_K(X)=0$.\end{remark}
\begin{lemma}\label{Lgtw}Let $P_1, P_2$ be parabolic subgroups of $G$ and $L_1$, $L_2$ be Levi subgroups of $P_1$ and $P_2$ respectively. Then \begin{center}c$_G(G/P_1\times G/P_2)=c_{L_1}(G/P_2)=c_{L_2}(G/P_1)$.\end{center}\end{lemma}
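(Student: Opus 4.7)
The plan is to establish $c_G(G/P_1\times G/P_2)=c_{L_2}(G/P_1)$; the equality with $c_{L_1}(G/P_2)$ then follows by exchanging $P_1$ and $P_2$, since $G/P_1\times G/P_2\cong G/P_2\times G/P_1$ as $G$-varieties. I would fix a Borel $B\subset G$ contained in $P_1\cap P_2$ and exploit the $B$-equivariant second projection $\pi:G/P_1\times G/P_2\to G/P_2$.

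The first main step is to reduce to a smaller orbit computation. A generic $B$-orbit on the product must project dominantly onto the open $B$-orbit $Bw_0P_2/P_2\subset G/P_2$, where $w_0$ is the longest element of the Weyl group of $G$, so a generic orbit representative may be chosen of the form $y=(y_1,w_0P_2)$ with $y_1\in G/P_1$ generic. Setting $B_0:=\mathrm{Stab}_B(w_0P_2)=B\cap w_0P_2w_0^{-1}$, the orbit $B\cdot y$ fibres over $B\cdot w_0P_2\cong B/B_0$ with typical fibre $B_0\cdot y_1$, so that
$$c_G(G/P_1\times G/P_2)=\dim G/P_1-\max_{y_1\in G/P_1}\dim(B_0\cdot y_1).$$

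The heart of the argument, and what I expect to be the main obstacle, is to recognize $B_0$ as a Borel subgroup of the Levi $L':=w_0L_2w_0^{-1}$ of $w_0P_2w_0^{-1}$. This is a classical root-theoretic statement: $w_0$ sends every positive root of $G$ to a negative one, so the unipotent radical $w_0U_{P_2}w_0^{-1}$ lies entirely in the opposite unipotent of $G$ and meets $B$ trivially; what remains is $B_0=B\cap w_0L_2w_0^{-1}$, the standard Borel of $L'$ cut out by the positive roots of $G$ lying in $\Phi(L')$. Since $L'$ is $G$-conjugate to $L_2$, the action of $B_0$ on $G/P_1$ is isomorphic (via conjugation by $w_0$) to the action of a Borel of $L_2$ on $G/P_1$; hence $\max_{y_1}\dim(B_0\cdot y_1)=\dim G/P_1-c_{L_2}(G/P_1)$, and combining with the previous display yields the desired equality $c_G(G/P_1\times G/P_2)=c_{L_2}(G/P_1)$.
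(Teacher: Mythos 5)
Your proof is correct and follows essentially the same route as the paper's: both reduce the $B$-action on $G/P_1\times G/P_2$ to the action of a Borel subgroup of one of the Levi subgroups on the other flag variety (via the open Bruhat cell), and then finish with a dimension count. Your version is somewhat more explicit, in particular in verifying that $B\cap w_0 P_2 w_0^{-1}$ equals $B\cap w_0 L_2 w_0^{-1}$ and is a Borel subgroup of $w_0 L_2 w_0^{-1}$ --- a fact the paper's proof simply asserts (stating that the generic $\tilde B$-stabilizer is a Borel of $L_1$).
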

\begin{proof}Let $\tilde B$ be a Borel subgroup of $G$. The generic stabilizer for the action of $\tilde B$ on $G/L_1$ is a Borel subgroup of $L_1$. Therefore the generic stabilizer for the action of a Borel subgroup of $L_1$ on $G/P_2$ coincides with the generic stabilizer for the action of $\tilde B$ on $G/P_1\times G/P_2$. We denote this stabilizer $S$. Let $r$ be the rank of $G$. Then \begin{center}c$_{L_1}(G/P_2)=$dim$G/P_2-(\frac{\mathrm{dim}L_1+r}{2}-$dim$S$)=\\dim$(G/P_1\times G/P_2)-(\frac{\mathrm{dim}G+r}{2}-$dim$S$)=c$_G(G/P_1\times G/P_2)$.\end{center}\end{proof}
\subsection{Grassmannians}\label{SGr} Let $V$ be a finite-dimensional vector space. The set of flags $V_1\subset...\subset V_s\subset V$ with fixed dimensions $(n_1,..., n_s)$ is a homogeneous space of the group GL$(V)$, and we denote this variety by Fl$(n_1,..., n_s; V)$. We call such a variety {\it partial flag variety}. The varieties $\mathbb P(V)$ and Fl$(1; V)$ are naturally identified.

For any $r\in\{1, ..., n_V-1\}$ we denote Fl$(r; V)$ by Gr$(r; V)$.
For $r=1$, the variety Gr$(r; V)$ coincides with $\mathbb P(V)$. We
have\begin{center}Gr$(r; V)\cong$Gr$(n_V-r; V^*)$.\end{center} Let
$n_1, ..., n_s\in\{1, ..., n_V-1\}$ be numbers such that
\begin{center}$n_1<...< n_s$\end{center} and Fl$(n_1, ..., n_s; V)$
be the corresponding partial flag variety. Let $P(x)$ be the
stabilizer of a point $x\in$Fl$(n_1, ..., n_s; V)$ in SL$(V)$ and
$\frak n(x)$ be a nilpotent radical of the Lie algebra of $P(x)$.
Then $\frak n(x)\subset\frak{sl}(V)$ consists of nilpotent elements,
and \begin{center}$\cup_{x\in\mathrm{Fl}(n_1, ..., n_s; V)}\frak
n(x)\subset\frak{sl}(V)^*$\end{center} coincides with the image of
the moment map \begin{center}$\phi:$T$^*$Fl$(n_1,..., n_s;
V)\to\frak{sl}(V)^*$.\end{center}Moreover, the image of $\phi$ in
$\frak{sl}(V)^*$ is the closure of a unique nilpotent SL$(V)$-orbit.
In this way, to any partial flag variety one assigns a unique
nilpotent orbit. The condition 'to be in closure of' on the set of
nilpotent orbits induces a partial order on the set of partial flag
varieties. We will make essential use of this partial order. In what
follows we refer to one partial flag varietiy as being higher or
lower than another in terms of this partial order. Partial flag
varieties which are equivalent in terms of this partial order are
called {\it cotangent-equivalent} (see also~\cite{Kn3}).

We now describe this latter equivalence explicitly. Let Fl$_1$,
Fl$_2$ be partial $W$-flag varieties and \begin{center}$(n_1,...,
n_s), (n_1',..., n_{s'}')$\end{center} be the corresponding
dimension vectors. These vectors define the following
partitions\begin{center}$(n_1, n_2-n_1..., n_V-n_s)$ and $(n_1',
n_2'-n_1',..., n_V-n_{s'}')$ of $n_V$.\end{center}
\begin{lemma}[{~\cite[Ch.~6.2]{CM}}]\label{TtoP} Two partial flag varieties are cotangent-equivalent if and only if their corresponding partitions coincide as sets.\end{lemma}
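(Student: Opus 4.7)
The plan is to reduce the lemma to the classical description of Richardson nilpotent orbits in $\mathfrak{sl}(V)$. By Theorem~\ref{MMapSl}, the image of the moment map $\phi:\mathrm T^*(G/P)\to\frak g^*$ is the closure of a single nilpotent $G$-orbit, and by the discussion preceding the lemma, two partial flag varieties are cotangent-equivalent precisely when their associated Richardson orbits coincide. Since nilpotent $\mathrm{SL}(V)$-orbits in $\frak{sl}(V)^*\cong\frak{sl}(V)$ are in bijection with partitions of $n_V$ via Jordan type, the task is to show that the Jordan type of the Richardson element for the parabolic $P$ stabilizing a flag of dimensions $(n_1,\ldots,n_s)$ depends only on the multiset of block sizes $(n_1,\,n_2-n_1,\ldots,\,n_V-n_s)$.

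First I would fix a flag $V_1\subset\cdots\subset V_s$ with $\dim V_i=n_i$ and identify the nilradical $\frak n(x)\subset\frak{sl}(V)$ of the parabolic $P(x)$ as the space of linear maps $y$ satisfying $y(V_i)\subset V_{i-1}$ for all $i$ (with $V_0=0,\,V_{s+1}=V$). By Richardson's theorem the $G$-saturation $G\cdot\frak n(x)$ contains a dense $P$-orbit in $\frak n(x)$; the Richardson orbit is the $G$-orbit of any element in this dense $P$-orbit. A direct computation (or equivalently the standard construction via a basis aligned with the flag) shows that a generic element of $\frak n(x)$ has Jordan type equal to the transpose of the partition obtained by arranging the block sizes $(n_1,\,n_2-n_1,\ldots,\,n_V-n_s)$ in weakly decreasing order.

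Second, I would observe that this transpose construction manifestly depends only on the partition (as an unordered multiset) of block sizes, and conversely that distinct unordered partitions yield distinct transposes, hence distinct Jordan types. Combining the two directions: two partial flag varieties $\mathrm{Fl}(n_1,\ldots,n_s;V)$ and $\mathrm{Fl}(n_1',\ldots,n_{s'}';V)$ produce the same Richardson orbit if and only if their dimension jumps form equal multisets, which is exactly the statement that the associated partitions coincide as sets.

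The only substantive step is the identification of the Jordan type of the Richardson element, which is a well-known explicit computation (presented in detail in~\cite[Ch.~7.2]{CM}); everything else is bookkeeping and the translation between the various equivalent formulations (Richardson orbit, moment map image, cotangent-equivalence class). Accordingly, the main obstacle is merely to invoke this Jordan-type calculation cleanly rather than to develop new ideas, and the lemma follows.
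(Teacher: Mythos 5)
Your proposal is correct, and the paper itself gives no proof of this lemma---it is cited directly from Collingwood--McGovern. Your argument reconstructs exactly the content of that reference: the Richardson orbit of a parabolic in $\frak{sl}(V)$ with dimension-jump partition $\mu=(n_1,\,n_2-n_1,\ldots,\,n_V-n_s)$ has Jordan type $\mu^t$, the transpose partition, which depends only on $\mu$ as an unordered multiset; transposition is an involution and hence injective on partitions, and distinct partitions label distinct nilpotent $\mathrm{SL}(V)$-orbits, so cotangent-equivalence is precisely equality of the multisets. This is the standard argument and matches what the cited source proves.
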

For example, Gr$(n_1; V)$ and Gr($n_V-n_1; V$) are cotangent-equivalent.

The relation between partial flag varieties and nilpotent
SL$(V)$-orbits of $\frak{sl}(V)^*$ has been described in terms of
partitions (see~\cite{CM}); the partial order on nilpotent
SL$(V)$-orbits in $\frak{sl}(V)^*$ has been also described in terms
of partitions (see~\cite{CM}). Therefore in order to check that a
given partial flag variety is higher then another one it suffices to
check the corresponding condition on partitions. In this way we
establish in particular the following statements.\\(1) Any partial
flag variety is higher than or is cotangent-equivalent to~$\mathbb
P(V)$.\\(2) If $r_1, r_2\in\{1,..., [\frac{n_V}{2}]\}$ and
$r_1>r_2$, then Gr$(r_1; V)$ is higher than Gr($r_2; V)$.\\(3) The
subset of Grassmannians Gr$(r; V)$ is totally ordered.\\(4) If Fl is
a partial flag variety which is not cotangent-equivalent to $\mathbb
P(V)$, then Fl is higher than or is cotangent-equivalent to Gr$(2;
V)$.\\(5) Any partial $V$-flag variety is cotangent-equivalent to
one of the following\begin{center}Gr$(r; V)$,\hspace{10pt}Fl$(1, 2;
V)$,\hspace{10pt}Fl$(1, 3; V)$,\end{center}or is higher than Fl$(1,
3; V$).\\(6) Any partial $V$-flag variety is cotangent-equivalent to
one of the following\begin{center}Gr$(r; V)$,\hspace{10pt}Fl$(1,r;
V)$,\hspace{10pt}Fl$(1,2,3; V)$,\hspace{10pt} Fl$(2, 4;
V)$,\end{center}or is higher than Fl$(2, 4; V$).

\begin{proposition}\label{SphO} Suppose Fl$_1$ is a $K$-spherical variety and Fl$_2$ is lower then Fl$_1$. Then Fl$_2$ is a $K$-spherical variety.\end{proposition}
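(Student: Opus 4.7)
The plan is to exploit the \'E.~Vinberg--F.~Knop sphericity criterion: a smooth $K$-variety $X$ is $K$-spherical if and only if the Poisson algebra $\mathbb C(\mathrm T^*X)^K$ is commutative with respect to the canonical symplectic Poisson bracket. I would combine this with the moment map description of the associated nilpotent orbit closures.

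The first step is to reformulate $K$-sphericity of a partial flag variety in terms of the orbit closure itself. For $G=\mathrm{SL}(V)$, the paper recalls just before Lemma~\ref{TtoP} that the moment map $\phi_i\colon\mathrm T^*\mathrm{Fl}_i\to\overline{\EuScript O_i}$ is a proper morphism which is a birational isomorphism onto the nilpotent orbit closure $\overline{\EuScript O_i}\subset\frak{sl}(V)^*$. Since $\phi_i$ is moreover a $G$-equivariant Poisson morphism with target carrying the Kirillov--Kostant--Souriau bracket, its pullback identifies $\mathbb C(\overline{\EuScript O_i})$ with $\mathbb C(\mathrm T^*\mathrm{Fl}_i)$ as Poisson fields. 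Consequently $\mathrm{Fl}_i$ is $K$-spherical if and only if the Poisson field $\mathbb C(\overline{\EuScript O_i})^K$ is commutative.

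The second step uses the hypothesis $\mathrm{Fl}_2\le\mathrm{Fl}_1$, which by definition of the cotangent order means $\overline{\EuScript O_2}\subset\overline{\EuScript O_1}$. Because $\overline{\EuScript O_2}$ is a union of coadjoint $G$-orbits, i.e.\ of symplectic leaves of $\frak g^*$, its defining ideal inside $\mathbb C[\overline{\EuScript O_1}]$ is a Poisson ideal, and the restriction map $\mathbb C[\overline{\EuScript O_1}]\twoheadrightarrow\mathbb C[\overline{\EuScript O_2}]$ is a $K$-equivariant surjective homomorphism of Poisson algebras. Reductivity of $K$ makes taking invariants exact, so this descends to a surjective Poisson homomorphism $\mathbb C[\overline{\EuScript O_1}]^K\twoheadrightarrow\mathbb C[\overline{\EuScript O_2}]^K$. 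By the first step and the assumption that $\mathrm{Fl}_1$ is $K$-spherical, the source is Poisson-commutative; commutativity descends to the target, and then to the field of fractions $\mathbb C(\overline{\EuScript O_2})^K$ via the Leibniz rule. A second application of the first step yields $K$-sphericity of $\mathrm{Fl}_2$.

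The principal obstacle I anticipate is not logical but referential: one must invoke the Vinberg--Knop sphericity criterion at the appropriate level of generality and verify that the Kirillov--Kostant--Souriau Poisson structure on $\overline{\EuScript O_1}$ really does restrict consistently to the (possibly singular) closed subvariety $\overline{\EuScript O_2}$. The first point is a standard and widely used tool in the geometry of spherical varieties; the second is automatic because $G$-orbits in $\frak g^*$ are exactly the symplectic leaves of the coadjoint Poisson structure, and orbit closures are automatically unions of leaves, hence Poisson subvarieties. Once these two inputs are in place, the proof is essentially a one-line Poisson-algebraic observation.
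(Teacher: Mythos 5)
Your route coincides structurally with the paper's: the paper proves this proposition as a corollary of Theorem~\ref{GrO}, which propagates $K$-coisotropicity down the closure order on nilpotent orbits, and your argument essentially inlines the proof of that theorem. The first step (sphericity of $\mathrm{Fl}_i$ $\Leftrightarrow$ Poisson-commutativity of $\mathbb C(\mathrm T^*\mathrm{Fl}_i)^K$ $\Leftrightarrow$ Poisson-commutativity of $\mathbb C(\overline{\EuScript O_i})^K$ via the birational moment map) and the surjection $\mathbb C[\overline{\EuScript O_1}]^K\twoheadrightarrow\mathbb C[\overline{\EuScript O_2}]^K$ are exactly as in the paper and are correct.

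The gap is in the return trip. You claim that Poisson-commutativity of $\mathbb C[\overline{\EuScript O_2}]^K$ passes "to the field of fractions $\mathbb C(\overline{\EuScript O_2})^K$ via the Leibniz rule." The Leibniz rule does give Poisson-commutativity of $\mathrm{Quot}(\mathbb C[\overline{\EuScript O_2}]^K)$, but you have silently identified this fraction field with $\mathbb C(\overline{\EuScript O_2})^K$. That identification is not automatic: for a reductive $K$ acting on an irreducible affine variety the inclusion $\mathrm{Quot}(\mathbb C[X]^K)\subset\mathbb C(X)^K$ can be strict (e.g.\ $\mathbb C^*$ acting with weights $(1,1)$ on $\mathbb C^2$, where the left side is $\mathbb C$ and the right side contains $x/y$), so nontrivial rational invariants need not be ratios of polynomial invariants. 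The paper bridges this exactly with Losev's Theorem~\ref{LI} (and its Corollary~\ref{Clp}, which drops normality, since the orbit closure $\overline{\EuScript O_2}$ need not be normal), together with the fact that a nilpotent orbit closure in $\frak g^*$ is a strongly equidefectinal affine irreducible Hamiltonian $K$-variety. This is the one genuinely nontrivial ingredient in the argument, and it is the one your proposal omits; the two "referential" obstacles you flag (the Vinberg criterion and the Poisson subvariety structure on orbit closures) are indeed routine, but they are not where the real work lies.
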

To prove the proposition we need to recall some results of I.~Losev.
\begin{theorem}[I.~Losev ~\cite{Lo}]\label{LI}Suppose $X$ is a strongly equidefectinal~\cite[Def. 1.2.5]{Lo} normal affine irreducible Hamiltonian $K$-variety. Then\begin{center} $\mathbb C(X)^K=\mathrm{Quot}(\mathbb C[X]^K)$,\end{center} where $\mathrm{Quot}(\mathbb C[X]^K)$ is the field of fractions of $\mathbb C[X]^K$.\end{theorem}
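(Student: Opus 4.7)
The plan is to prove the nontrivial inclusion $\mathbb C(X)^K\subseteq\mathrm{Quot}(\mathbb C[X]^K)$ by showing that the categorical quotient $\pi:X\to Y:=\mathrm{Spec}\,\mathbb C[X]^K$ generically parametrizes the $K$-orbits on a dense open subset of $X$. Since $X$ is normal affine, $Y$ is normal affine and $\mathbb C(Y)=\mathrm{Quot}(\mathbb C[X]^K)$; the opposite inclusion $\mathrm{Quot}(\mathbb C[X]^K)\subseteq\mathbb C(X)^K$ is automatic because invariant regular functions are $K$-invariant as rational functions, so it really suffices to separate generic $K$-orbits by invariants.

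First I would unpack the Hamiltonian structure. The moment map $\mu:X\to\frak k^*$ is $K$-equivariant, so it induces a morphism $\bar\mu:Y\to\frak k^*/\!\!/K$. For each $g\in\mathbb C[X]^K$, the Hamiltonian vector field $\xi_g$ is $K$-invariant (since $g$ is) and tangent to the level sets of any other invariant. The strongly equidefectional hypothesis provides the crucial uniformity: the defect $\mathrm{def}(x):=\dim Kx-\dim\mu(Kx)$, equivalently the corank of $\omega$ restricted to $Kx$, is constant on a $K$-stable dense open subset $X^{\circ}\subset X$, and the ranks of both the moment map and the orbit map agree in a controlled way with invariants of the generic $K$-orbit type. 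I would use this to establish that on $X^{\circ}$, the vector fields $\{\xi_g:g\in\mathbb C[X]^K\}$ generate precisely the tangent distribution to the generic $K$-orbits, i.e.\ every Hamiltonian flow preserving all elements of $\mathbb C[X]^K$ lies inside the $K$-orbit directions generically.

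Next I would invoke a Rosenlicht-type argument. Given $f\in\mathbb C(X)^K$, the function $f$ is constant along every orbit of every $\xi_g$ with $g\in\mathbb C[X]^K$, since $\xi_g\cdot f=\{g,f\}=-\xi_f\cdot g$ and $\xi_f$ preserves $K$-invariants by $K$-invariance of $f$. By the previous step, this forces $f$ to be constant along the connected components of the generic fibers of $\pi$ restricted to $X^{\circ}$. Using normality of $X$ and the theorem of Richardson/Rosenlicht identifying the function field of the rational quotient with $\mathrm{Quot}$ of the ring of invariants when generic orbits are closed and equidimensional, one concludes that $f$ descends to a rational function on $Y$, i.e.\ $f\in\mathrm{Quot}(\mathbb C[X]^K)$.

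The main obstacle is translating the abstract condition ``strongly equidefectional'' into the concrete statement that generic $K$-orbits on $X$ are closed (equivalently, that the scheme-theoretic generic fiber of $\pi$ is a single $K$-orbit up to a finite group). In general $\mathrm{Quot}(\mathbb C[X]^K)$ can be strictly smaller than $\mathbb C(X)^K$ precisely because non-closed generic orbits are collapsed by $\pi$; the content of Losev's hypothesis is that the Poisson/symplectic structure rigidifies orbit closures sufficiently to rule this out. Carrying this out rigorously requires invoking Losev's machinery on cross-sections of Hamiltonian actions and equidefectionality in~\cite{Lo}, rather than a soft general-principles argument.
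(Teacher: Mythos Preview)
The paper does not give its own proof of this statement: Theorem~\ref{LI} is quoted verbatim as a result of Losev~\cite{Lo}, with no argument supplied. There is therefore nothing in the paper to compare your proposal against.

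As for the proposal itself, it is not a proof but an outline that correctly locates the difficulty and then defers it. Your Hamiltonian-vector-field paragraph is largely circular: a function $f\in\mathbb C(X)^K$ is already constant on $K$-orbits by hypothesis, so showing that $\xi_g\cdot f=0$ for $g\in\mathbb C[X]^K$ and then arguing that the $\xi_g$ span the tangent spaces to $K$-orbits recovers only what you started with. The genuine question is whether generic fibres of the categorical quotient $\pi$ coincide with generic $K$-orbits, and you say explicitly in your final paragraph that establishing this ``requires invoking Losev's machinery \ldots\ rather than a soft general-principles argument.'' That is an accurate assessment, but it means your write-up is a restatement of what needs to be proved, not a proof. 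If you want an independent argument you must actually extract from~\cite{Lo} the mechanism by which strong equidefectionality forces closedness (or separability by invariants) of generic orbits; nothing short of that will close the gap.
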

\begin{corollary}\label{Clp}Suppose $X$ is a strongly equidefectinal affine irreducible Hamiltonian $K$-variety. Then $\mathbb C(X)^K=\mathrm{Quot}(\mathbb C[X]^K)$.\end{corollary}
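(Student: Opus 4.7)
The plan is to reduce to Theorem~\ref{LI} by passing to the normalization. Let $\pi\colon\tilde X\to X$ be the normalization of $X$. Since $X$ is affine and irreducible, so is $\tilde X$, and $\pi$ is a finite birational morphism, with $\tilde X$ normal. The action of $K$ on $X$ lifts uniquely to an action on $\tilde X$ by functoriality of normalization, and $\pi$ is $K$-equivariant. Because the $K$-action on $\mathbb{C}[X]$ is locally finite and $K$ is reductive, the Reynolds operator is available on both $\mathbb{C}[X]$ and $\mathbb{C}[\tilde X]$, so $K$-invariants are well behaved.

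Next, I would verify that the hypotheses of Theorem~\ref{LI} transfer to $\tilde X$. For the Hamiltonian structure: the Poisson bracket on $\mathbb{C}[X]$ together with the moment map $\mu\colon X\to\frak k^*$ extend rationally to $\tilde X$ via the identification $\mathbb{C}(\tilde X)=\mathbb{C}(X)$; the pulled-back bracket sends $\mathbb{C}[\tilde X]\times\mathbb{C}[\tilde X]$ into $\mathbb{C}[\tilde X]$ because normality together with Hartogs-type arguments forces rational functions that are regular in codimension one to be regular everywhere on $\tilde X$, and $\pi$ is an isomorphism outside a set of codimension at least one in $\tilde X$. For strong equidefectinality in the sense of \cite[Def.~1.2.5]{Lo}, the condition is local on $X$ and is preserved by finite birational morphisms onto a variety having it, because the relevant defect along $K$-orbits is computed on the smooth locus and $\pi$ is an isomorphism over the smooth locus of $X$.

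With these verifications in place, Theorem~\ref{LI} applied to $\tilde X$ yields
\[
\mathbb{C}(\tilde X)^K=\mathrm{Quot}(\mathbb{C}[\tilde X]^K).
\]
Birationality of $\pi$ gives $\mathbb{C}(\tilde X)^K=\mathbb{C}(X)^K$, so it only remains to identify $\mathrm{Quot}(\mathbb{C}[\tilde X]^K)$ with $\mathrm{Quot}(\mathbb{C}[X]^K)$. Since $\mathbb{C}[\tilde X]$ is the integral closure of $\mathbb{C}[X]$ in $\mathbb{C}(X)$, every element of $\mathbb{C}[\tilde X]^K$ is integral over $\mathbb{C}[X]^K$. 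Reductivity of $K$ (Reynolds operator applied to the finite $\mathbb{C}[X]$-module $\mathbb{C}[\tilde X]$) shows $\mathbb{C}[\tilde X]^K$ is a finite $\mathbb{C}[X]^K$-module, and together with normality of $\tilde X$ this identifies $\mathrm{Spec}(\mathbb{C}[\tilde X]^K)$ with the normalization of $\mathrm{Spec}(\mathbb{C}[X]^K)$. Normalization preserves the field of fractions, so $\mathrm{Quot}(\mathbb{C}[\tilde X]^K)=\mathrm{Quot}(\mathbb{C}[X]^K)$, completing the argument.

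The main obstacle is the middle step: checking that strong equidefectinality and the Hamiltonian structure genuinely descend along the normalization map. The Hamiltonian side reduces to a codimension argument on the normal variety $\tilde X$, but pinning down strong equidefectinality requires carefully unwinding the definition in \cite{Lo} and confirming that it only depends on the generic behaviour of $K$-orbits on the smooth locus, where $\pi$ is an isomorphism.
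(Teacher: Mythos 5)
Your proposal follows the same strategy as the paper's proof of Corollary~\ref{Clp}: pass to the normalization $\tilde X \to X$, check that $\tilde X$ inherits the hypotheses of Theorem~\ref{LI}, apply that theorem to $\tilde X$, and descend via birationality and the finiteness of $\mathbb C[X]^K \hookrightarrow \mathbb C[\tilde X]^K$. The only cosmetic divergence is that where the paper cites Kaledin~\cite{Kd} for the fact that the normalization of a Poisson/Hamiltonian variety remains Hamiltonian, you reproduce the underlying codimension-one (Hartogs) argument directly; these are the same idea.

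One small caution on your final step: you identify $\mathrm{Spec}(\mathbb C[\tilde X]^K)$ with the normalization of $\mathrm{Spec}(\mathbb C[X]^K)$ and then invoke ``normalization preserves the fraction field.'' That identification already presupposes $\mathbb C[\tilde X]^K\subset\mathrm{Quot}(\mathbb C[X]^K)$, which is essentially what one is trying to establish; a finite integral extension of domains need not be birational in general. What actually closes the gap is that one can produce a nonzero $K$-invariant in the conductor ideal of $\mathbb C[X]\subset\mathbb C[\tilde X]$ (or, equivalently, observe that the finite map $\tilde X/\!/K\to X/\!/K$ is an isomorphism over a dense open where $\pi$ itself is an isomorphism), so that each $f\in\mathbb C[\tilde X]^K$ has an invariant denominator. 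The paper glosses over the same point with the phrase ``finite extension of degree $1$,'' so this is a shared informality rather than a defect particular to your write-up; but it is worth being aware that the statement ``$\mathbb C[\tilde X]^K$ is the normalization of $\mathbb C[X]^K$'' is a conclusion, not a freely available premise.
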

\begin{proof}Let $\tilde X$ be the spectrum of the integral closure of $\mathbb C[X]$ in Quot$(\mathbb C[X])$ and $\tilde X\to X$ be the canonical finite map. Then $\tilde X$ is a normal affine irreducible Hamiltonian $K$-variety~\cite{Kd}. A straightforward check shows that $\tilde X$ is strongly equideffectional. As $\mathbb C[\tilde X]^K$ is a finite extension of $\mathbb C[X]^K$ of degree 1 and Quot$(\mathbb C[\tilde X]^K)=\mathbb C(\tilde X)^K=\mathbb C(X)^K$, we have \begin{center}Quot($\mathbb C[X]^K)=\mathbb C(X)^K$.\end{center}\end{proof}
The closure of any $G$-orbit in $\frak g^*$ is a strongly equidefectinal affine irreducible Hamiltonian $K$-variety~\cite[Corollary 3.4.1]{Lo}.
\begin{theorem}\label{GrO} Let $\EuScript Z\subset\frak g^*$ and $\EuScript Z'\subset\frak g^*$ be nilpotent $G$-orbits such that $\EuScript Z'\subset\overline{\EuScript Z}$. If $\EuScript Z$ is $K$-coisotropic then $\EuScript Z'$ is $K$-coisotropic.\end{theorem}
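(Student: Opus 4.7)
My plan is to reduce $K$-coisotropy to Poisson commutativity of the $K$-invariant rational functions, use Corollary~\ref{Clp} to replace the invariant field by the fraction field of the invariant ring, and then transfer Poisson commutativity through the closed Poisson embedding $\overline{\EuScript Z'}\hookrightarrow\overline{\EuScript Z}$.

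First I would invoke the classical coisotropy criterion of \'E.~Vinberg: a Hamiltonian $K$-variety $(X,\omega)$ is $K$-coisotropic if and only if $\mathbb C(X)^K$ is Poisson commutative with respect to the bracket induced by $\omega$. The coadjoint orbit $\EuScript Z$, equipped with its Kirillov-Kostant-Souriau form, is a smooth Hamiltonian $K$-variety, so the hypothesis of the theorem yields Poisson commutativity of $\mathbb C(\EuScript Z)^K=\mathbb C(\overline{\EuScript Z})^K$. By the Losev result cited immediately before the theorem, $\overline{\EuScript Z}$ is strongly equidefectional, affine, irreducible, and Hamiltonian; Corollary~\ref{Clp} then gives $\mathbb C(\overline{\EuScript Z})^K=\mathrm{Quot}(\mathbb C[\overline{\EuScript Z}]^K)$, and since Poisson commutativity of a fraction field is equivalent to that of the underlying domain, the algebra $\mathbb C[\overline{\EuScript Z}]^K$ is Poisson commutative.

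Next I would descend to $\overline{\EuScript Z'}$. Because $\overline{\EuScript Z'}$ is a closed $G$-stable subvariety of $\overline{\EuScript Z}$, its defining ideal $I\subset\mathbb C[\overline{\EuScript Z}]$ is $\frak g$-stable. The Kirillov-Kostant-Souriau bracket on $\frak g^*$ is generated, via the Leibniz rule, by brackets with linear functions on $\frak g^*$, and bracketing with such a linear function is precisely the infinitesimal coadjoint action of the corresponding element of $\frak g$; hence $I$ is automatically a Poisson ideal. The restriction map $\mathbb C[\overline{\EuScript Z}]\twoheadrightarrow\mathbb C[\overline{\EuScript Z'}]$ is therefore a surjective Poisson algebra homomorphism, and reductivity of $K$ preserves the surjection on $K$-invariants: $\mathbb C[\overline{\EuScript Z}]^K\twoheadrightarrow\mathbb C[\overline{\EuScript Z'}]^K$. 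Poisson commutativity descends, so $\mathbb C[\overline{\EuScript Z'}]^K$ is Poisson commutative.

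Applying Corollary~\ref{Clp} to $\overline{\EuScript Z'}$ (itself the closure of a nilpotent $G$-orbit, hence strongly equidefectional by the same Losev result) yields $\mathbb C(\EuScript Z')^K=\mathrm{Quot}(\mathbb C[\overline{\EuScript Z'}]^K)$, which is Poisson commutative, and invoking Vinberg's criterion in the opposite direction concludes that $\EuScript Z'$ is $K$-coisotropic. The main technical point that requires careful verification is the claim that $I$ is a Poisson ideal, so that the Poisson bracket on $\overline{\EuScript Z}$ descends compatibly to the bracket inherited by $\overline{\EuScript Z'}$ from $\frak g^*$; this is standard for $G$-stable closed subvarieties of a coadjoint orbit closure, but ought to be spelled out explicitly in the write-up.
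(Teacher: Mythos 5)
Your proposal follows essentially the same route as the paper's proof: invoke Vinberg's criterion to translate $K$-coisotropy into Poisson commutativity of the $K$-invariant rational function field, use the Losev result to pass between that field and the fraction field of $\mathbb C[\overline{\EuScript Z}]^K$, descend Poisson commutativity through the restriction map to $\mathbb C[\overline{\EuScript Z'}]^K$, and conclude by reversing the first two steps. The paper's write-up is terser and cites Theorem~\ref{LI} directly rather than Corollary~\ref{Clp}; since nilpotent orbit closures need not be normal, your choice of Corollary~\ref{Clp} is actually the correct citation, and your explicit verification that the defining ideal of $\overline{\EuScript Z'}$ is a Poisson ideal (via $\frak g$-stability) and that reductivity of $K$ preserves surjectivity on invariants fills in steps the paper leaves implicit. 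No gaps.
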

\begin{proof} As $K$-action on $\EuScript Z$ is $K$-coisotropic, $\mathbb C(\EuScript Z)^K$ is a Poisson-commutative subfield of $\mathbb C(\EuScript Z)$~\cite[Ch. II, Prop. 5]{Vi}. By Theorem~\ref{LI} this is equivalent to the Poisson-commutativity of $\mathbb C[\EuScript Z]^K$. As $\mathbb C[\EuScript Z']^K$ is a quotient of $\mathbb C[\EuScript Z]^K$, $\mathbb C[\EuScript Z']^K$ is Poisson-commutative. Then the field $\mathbb C(\EuScript Z')^K$ is Poisson-commutative too. Therefore the $K$-action on $\EuScript Z'$ is $K$-coisotropic.\end{proof}
\begin{proof}[Proof of Proposition~\ref{SphO}]The sphericity of a $K$-variety $X$ is equivalent to the $K$-coisotropicity of the $K$-variety T$^*X$. Therefore the statement follows from Theorem~\ref{GrO}.\end{proof}
\begin{corollary} Suppose Fl$(n_1, ..., n_s; V)$ is a $K$-spherical variety. Then the variety $\mathbb P(V)$ is $K$-spherical.\end{corollary}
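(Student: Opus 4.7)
The plan is to deduce the corollary from Proposition~\ref{SphO} together with the order-theoretic observation~(1) preceding it, which says that every partial $V$-flag variety is higher than or cotangent-equivalent to $\mathbb P(V)=\mathrm{Gr}(1;V)$. In other words, $\mathbb P(V)$ sits at the bottom of the cotangent partial order on partial $V$-flag varieties, which puts it in the position of $\mathrm{Fl}_2$ relative to any other partial flag variety.

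The argument naturally splits into two cases. When $\mathrm{Fl}(n_1,\ldots,n_s;V)$ is strictly higher than $\mathbb P(V)$, Proposition~\ref{SphO} applies verbatim and the conclusion is immediate. When $\mathrm{Fl}(n_1,\ldots,n_s;V)$ is cotangent-equivalent to $\mathbb P(V)$, Lemma~\ref{TtoP} forces its associated partition of $n_V$ to coincide with $\{1,n_V-1\}$, so the flag variety is either $\mathbb P(V)$ itself (nothing to prove) or $\mathrm{Gr}(n_V-1;V)\cong\mathbb P(V^*)$. In this latter subcase I would replay the proof of Proposition~\ref{SphO}: both $\mathrm T^*\mathbb P(V)$ and $\mathrm T^*\mathbb P(V^*)$ are birational, via their moment maps (which in type A are birational onto their images, by Theorem~\ref{MMapSl} and Example~\ref{Pn}), to the closure of the same minimal nilpotent $\mathrm{SL}(V)$-orbit; since these moment maps are symplectic, $K$-coisotropicity of $\mathrm T^*\mathbb P(V^*)$ passes to the orbit closure and then to $\mathrm T^*\mathbb P(V)$ by Theorem~\ref{GrO}, and the $K$-sphericity of $\mathbb P(V)$ follows from the sphericity/coisotropicity equivalence used in the proof of Proposition~\ref{SphO}.

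The only mild obstacle is the cotangent-equivalent corner case, which is not literally covered by the ``$\mathrm{Fl}_2$ is lower than $\mathrm{Fl}_1$'' wording of Proposition~\ref{SphO}, but is handled by exactly the same moment-map argument that underlies its proof; everything else is a direct quotation of Proposition~\ref{SphO} and observation~(1).
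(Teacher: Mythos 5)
Your proof is correct and takes essentially the same route the paper intends: observation (1) following Lemma~\ref{TtoP} places $\mathbb P(V)$ at the bottom of the cotangent order, and Proposition~\ref{SphO} pushes sphericity downward. Your extra care in the cotangent-equivalent subcase is harmless but not strictly necessary, since the proof of Proposition~\ref{SphO} goes through Theorem~\ref{GrO}, whose hypothesis $\EuScript Z'\subset\overline{\EuScript Z}$ already allows $\EuScript Z'=\EuScript Z$, so the equivalence case is covered by the proposition as stated.
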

The following Lemma becomes a useful tool in Section~\ref{Ssphgr} and is a trivial corollary of Proposition~\ref{SphO}.
\begin{lemma}\label{Lgr2}If some partial $W$-flag variety, which is not cotangent-equivalent to $\mathbb P(W)$, is $K$-spherical, then the variety Gr$(2; W)$ is $K$-spherical.\end{lemma}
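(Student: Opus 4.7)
My plan is to invoke Proposition~\ref{SphO} together with the structural statement~(4) in the list following Lemma~\ref{TtoP}. That statement classifies partial flag varieties not cotangent-equivalent to $\mathbb{P}(V)$ as being at least as high as $\mathrm{Gr}(2;V)$ in the partial order on partial flag varieties; paired with Proposition~\ref{SphO}, this immediately forces sphericity to descend from the hypothesized $\mathrm{Fl}$ down to $\mathrm{Gr}(2;W)$.

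Concretely, let $\mathrm{Fl}$ be the given $K$-spherical partial $W$-flag variety, assumed not cotangent-equivalent to $\mathbb{P}(W)$. Statement~(4) tells me that either $\mathrm{Fl}$ is strictly higher than $\mathrm{Gr}(2;W)$, or $\mathrm{Fl}$ is cotangent-equivalent to $\mathrm{Gr}(2;W)$. In the strictly-higher case, $\mathrm{Gr}(2;W)$ is lower than the $K$-spherical $\mathrm{Fl}$, so Proposition~\ref{SphO} applies verbatim and yields that $\mathrm{Gr}(2;W)$ is $K$-spherical. In the cotangent-equivalent case, Lemma~\ref{TtoP} forces the partition of $\mathrm{Fl}$ to coincide as a set with that of $\mathrm{Gr}(2;W)$, namely $\{2, n_W-2\}$; hence $\mathrm{Fl}$ is either $\mathrm{Gr}(2;W)$ itself (done by hypothesis) or $\mathrm{Gr}(n_W-2;W)$. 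For the latter, I appeal to the mechanism already present in the proof of Proposition~\ref{SphO}: $K$-sphericity of a smooth $G/P$ is equivalent to $K$-coisotropicity of $\mathrm{T}^*(G/P)$, and by Richardson's theorem (Theorem~\ref{MMapSl}) together with birationality of $\phi_{G/P}$ in the $\mathrm{SL}(W)$-case, both $\mathrm{T}^*\mathrm{Gr}(n_W-2;W)$ and $\mathrm{T}^*\mathrm{Gr}(2;W)$ map birationally onto the same nilpotent orbit closure in $\frak{sl}(W)^*$. Theorem~\ref{GrO} then transfers $K$-coisotropicity between them, giving $K$-sphericity of $\mathrm{Gr}(2;W)$.

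There is no substantive obstacle, which is exactly why the author labels this a trivial corollary of Proposition~\ref{SphO}: the entire content is packaged by statement~(4) and the sphericity-descent of Proposition~\ref{SphO}. The only line requiring a moment's attention is the cotangent-equivalent boundary case, but it is handled by precisely the same coisotropy-transfer argument (Theorem~\ref{GrO} plus Richardson birationality) that underlies Proposition~\ref{SphO} itself, so no new idea is needed.
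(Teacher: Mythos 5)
Your proof is correct and takes exactly the route the paper intends: the author flags the lemma as ``a trivial corollary of Proposition~\ref{SphO},'' and the content you supply — statement~(4) from the list after Lemma~\ref{TtoP} plus the sphericity-descent of Proposition~\ref{SphO} — is precisely that unpacking. Your extra care with the cotangent-equivalent boundary case is sound, though arguably unnecessary: since Theorem~\ref{GrO} already allows $\EuScript Z' = \EuScript Z$ (as $\EuScript Z \subset \overline{\EuScript Z}$), the paper's ``lower than'' in Proposition~\ref{SphO} is effectively non-strict, and the cotangent-equivalent case is absorbed directly rather than needing a separate coisotropy-transfer argument.
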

\subsection{D-modules versus $\frak g$-modules}\label{SSd_g} Let $\frak h_\frak g\subset\frak g$ be a Cartan subalgebra of $\frak g$; $\Delta\subset\frak h_\frak g$ be a root system; $\Delta^+\subset\Delta$ be a set of positive roots. Denote by $\hat{\frak h^*}$ the set of weights $\lambda$ such that $\alpha^\vee(\lambda)$ is not a strictly positive integer for any root $\alpha^\vee$ of the dual rot system $\Delta^\vee\subset\frak h_\frak g^*$.

For a fixed $\lambda$ we denote by $\EuScript D^\lambda(X)$ the
sheaf of twisted differential operators on $X$ and by D$^\lambda(X)$
its space of global sections. The algebras D$^\lambda(X)$ and
D$^\mu(X)$ are naturally identified if $\lambda$ and $\mu$ lie in a
single shifted orbit of the Weyl group~\cite{HMWJ}. Moreover, any
such orbit intersects $\hat{\frak h^*}$~\cite{HMWJ}. If
$\lambda\in\hat{\frak h^*}$, the
isomorphism\begin{center}$\tau:\mathrm U(\frak
g)/($Ker$\chi_\lambda)\to[{\sim}]\mathrm D^{\lambda}(X)$\end{center}
established in~\cite{BeBe} enables us to identify the category of
D$^{\lambda}(X)$-modules with the category of $\frak g$-modules
affording the central character $\chi=\chi_\lambda$. We have the
following functors:
$$\begin{tabular}{c|c}GSec: Sheaves$^\lambda_X\longrightarrow$ $\frak g$-modules$^\chi$&Loc: Sheaves$^\lambda_X\longleftarrow$ $\frak g$-modules$^\chi$\\$\EuScript F\to \Gamma(X, \EuScript F)$&\hspace{26pt}$M\otimes_{(1\otimes\tau)\mathrm U(g)}\EuScript D(X)\longleftarrow M$\hspace{10pt};\\\end{tabular}$$
A.~Beilinson and J.~Bernstein~\cite{BeBe} have proved that if $\lambda\in\hat{\frak h}^*$ then GSec(Loc) equals the identity.

For $\lambda\in\frak h_\frak g^*$ the sheaf $\EuScript D^\lambda(X)$ has a natural filtration  by degree \begin{center}$0\subset\EuScript O(X)\subset \EuScript D_1\subset...\subset\EuScript D(X)=\varinjlim_{i\in\mathbb Z_{\ge0}}\EuScript D_i$.\end{center} The relative spectrum of the associated graded sheaf of algebras \begin{center}gr~$\EuScript D^\lambda(X)=\oplus_{i\in\mathbb Z_{\ge0}}(\EuScript D_{i+1}/\EuScript D_i)$\end{center} is isomorphic to T$^*X$.

Let $\EuScript M$ be a quasicoherent $\EuScript D^\lambda(X)$-module
which admits $\EuScript O(X)$-coherent generating subsheaf
$\EuScript M_{gen}$ of $\EuScript M$. The associated graded sheaf
gr$\EuScript M$ with respect to the filtration
\begin{center}$0\subset\EuScript M_{gen}\subset\EuScript
D_1\EuScript M_{gen}\subset...\subset\EuScript M$\end{center} is a
gr$\EuScript D^\lambda(W)$-module. By definition, the {\it singular
support} $\EuScript V(\EuScript M)$ of $\EuScript M$ is the support
of gr$\EuScript M$ in Spec$_X$gr$\EuScript D^\lambda(X)\cong$T$^*X$.
\begin{theorem}[O. Gabber~\cite{Gab}]\label{GabD} The variety $\EuScript V(\EuScript M)$ is coisotropic in T$^*X$. In particular \begin{center}dim$\tilde{\EuScript V}\ge$dim$X$\end{center} for any irreducible component $\tilde{\EuScript V}\subset\EuScript V(\EuScript M)$. \end{theorem}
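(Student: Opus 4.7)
The plan is to show that the singular support $\EuScript V(\EuScript M)$ is the zero set of a Poisson-involutive radical ideal in $\mathbb C[\mathrm T^*X]$, and then to invoke the standard symplectic-geometric fact that such subvarieties are automatically coisotropic.

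First I will reduce to the affine local setting: coisotropy of $\EuScript V(\EuScript M)$ can be checked on an affine open $U\subset X$ over which $\EuScript D^\lambda$ trivializes, so I work on such a $U$ and drop it from the notation. Fixing an $\EuScript O(X)$-coherent generating subsheaf $\EuScript M_{gen}\subset\EuScript M$ produces a good filtration $\EuScript M_i:=\EuScript D_i\EuScript M_{gen}$, so that $\mathrm{gr}\,\EuScript M$ becomes a finitely generated graded module over the commutative Poisson algebra $A:=\mathrm{gr}\,\EuScript D^\lambda(X)\cong\mathbb C[\mathrm T^*X]$. Setting $I:=\mathrm{Ann}_A(\mathrm{gr}\,\EuScript M)$ and $J:=\sqrt I$, one has $\EuScript V(\EuScript M)=V(J)$ as subsets of $\mathrm T^*X$.

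The heart of the argument is Gabber's involutivity theorem, namely that $J$ is closed under the Poisson bracket: $\{J,J\}\subset J$. To establish this I will lift $f,g\in J$ to operators $P,Q\in\EuScript D^\lambda(X)$ of pure orders $p,q$ with principal symbols $f,g$. Because $f^N,g^M\in I$ for suitable $N,M$, the operators $P^N,Q^M$ shift the filtration strictly less than naively expected: $P^N\EuScript M_i\subset\EuScript M_{i+pN-1}$ and likewise for $Q^M$. Expanding the commutator $[P^N,Q^M]$ and taking its principal symbol yields a relation of the form $NM\,f^{N-1}g^{M-1}\{f,g\}\in I$; iterating the same order-drop argument on the resulting operator---Gabber's combinatorial refinement, which pushes the shift down by a further unit at each round---eventually produces a power of $\{f,g\}$ lying in $I$, whence $\{f,g\}\in J$.

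With involutivity in hand I will conclude by symplectic geometry. At a smooth point $y\in V(J)$ the conormal space is spanned by the differentials $\mathrm df$ for $f\in J$, and the symplectic form $\mathrm d\alpha_X$ identifies conormals with the skew-orthogonal complements of tangent spaces; the Poisson closure $\{J,J\}\subset J$ then translates into the inclusion $(\mathrm T_yV(J))^{\bot_\omega}\subset\mathrm T_yV(J)$, so $V(J)$ is coisotropic. A coisotropic subvariety of the $2\dim X$-dimensional symplectic variety $\mathrm T^*X$ has all irreducible components of dimension at least $\dim X$, yielding both claims. The hard part will be the involutivity step: a naive symbol computation only gives $\{I,I\}\subset I$ when $I$ is already radical, which generally fails, and it is precisely Gabber's delicate iterated order-drop argument on powers $P^N,Q^M$ that overcomes this obstruction and cannot be bypassed.
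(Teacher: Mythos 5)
The paper does not prove this statement: it is a black-box citation to Gabber's integrability-of-characteristic-varieties theorem, so there is no in-paper argument to compare against. Taken on its own terms, your sketch correctly reproduces the global structure of Gabber's proof: reduce to an affine chart where the twisted sheaf trivializes, take a good filtration, form $I=\mathrm{Ann}_A(\mathrm{gr}\,\EuScript M)$ and $J=\sqrt I$, prove $\{J,J\}\subset J$, and then read off coisotropy at smooth points of $V(J)$ from the identification of conormals with symplectic orthogonals, whence $\dim\tilde{\EuScript V}\ge\dim X$. You also correctly flag that the only substantive step is the involutivity of the \emph{radical}. One small correction to your closing sentence: the naive lift-and-commute computation (take $P,Q$ of orders $p,q$ with symbols $f,g\in I$, use $P\EuScript M_i\subset\EuScript M_{i+p-1}$, $Q\EuScript M_i\subset\EuScript M_{i+q-1}$ to show $[P,Q]\EuScript M_i\subset\EuScript M_{i+p+q-2}$) always yields $\{I,I\}\subset I$, not only when $I$ is radical. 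The genuine obstruction is that $\{I,I\}\subset I$ does \emph{not} formally imply $\{\sqrt I,\sqrt I\}\subset\sqrt I$: for example $I=(x^2,xy,y^2)$ in $\mathbb C[x,y]$ with $\{x,y\}=1$ satisfies $\{I,I\}\subset I$ while $\{x,y\}=1\notin\sqrt I=(x,y)$. Gabber's iterated order-drop on $P^N, Q^M$ is what exploits the special (finitely generated good-filtration) provenance of $I$ to push past this, and you are right that this step cannot be bypassed; your sketch is acceptable as a description of where the work lies, but it does not replace that argument.
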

\begin{definition} The $\EuScript D^\lambda(X)$-module $\EuScript M$ is called {\it holonomic} if \begin{center}dim$\EuScript V(\EuScript M)=$dim$X$.\end{center}\end{definition}
\subsection{Associated varieties of $\frak g$-modules}\label{SSavg} The algebra U$(\frak g)$ has a natural filtration  by degree \begin{center}$0\subset\mathbb C\subset U_1\subset...\subset$U$(\frak g)=\cup_{i\in\mathbb Z_{\ge0}}U_i$.\end{center} The associated graded algebra \begin{center}gr~U$(\frak g) =\oplus_{i\in\mathbb Z_{\ge0}}(U_{i+1}/U_i)$\end{center} is isomorphic to S$(\frak g)$. The filtration $\{U_i\}_{i\in\mathbb Z_{\ge0}}$ induces the filtration on any ideal $I$ of U$(\frak g)$, namely $\{I\cap U_i\}_{i\in\mathbb Z_{\ge0}}$, hence gr$I\subset$S$(\frak g)$ is a well defined ideal. The ideal gr$I$ of the commutative algebra S$(\frak g)$ determines the variety
\begin{center}V(gr$I):=\{x\in\frak g^*\mid f(x)=0$ for all $f\in$gr$I\}$.\end{center}
In particular, if $I$=Ann$M$ for a $\frak g$-module $M$, we set GV$(M):=$V(gr$I$).
\begin{theorem}[\cite{Jo}] For a simple $\frak g$-module $M$ the variety GV$(M)$ is the closure of an orbit $Gu$, and furthermore $0\in\overline{Gu}$.\end{theorem}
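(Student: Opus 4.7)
The plan is to decompose the claim into three sub-statements and address them in turn: \textbf{(a)} GV($M$) is a $G$-stable subvariety of $\frak g^*$; \textbf{(b)} it is contained in the nilpotent cone N$_G(\frak g^*)$, so every orbit it contains has $0$ in its closure; and \textbf{(c)} it is irreducible, hence the closure of a single orbit $Gu$.

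First I would dispatch (a) and (b). Set $I:=$Ann$M$. As a two-sided ideal, $I$ is ad$(\frak g)$-stable, so gr$I$ is a $\frak g$-invariant ideal in S$(\frak g)=\mathbb C[\frak g^*]$; since $G$ is connected, GV($M$)$=$V(gr$I$) is a union of coadjoint $G$-orbits, giving (a). For (b), since $M$ is simple it affords a central character $\chi:$Z$(\frak g)\to\mathbb C$ by~\cite{Dix}, hence $z-\chi(z)\in I$ for every $z\in$Z$(\frak g)$. The principal symbol of $z-\chi(z)$ equals the symbol $\sigma(z)$ of $z$, and by Harish-Chandra's theorem gr$\,$Z$(\frak g)=$S$(\frak g)^G$; consequently gr$I$ contains every $G$-invariant polynomial on $\frak g^*$ of strictly positive degree. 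By Kostant's theorem the common zero set of these invariants is exactly N$_G(\frak g^*)$, proving (b).

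The substantive part is (c). The key additional fact is that gr$I$ is a \emph{Poisson} ideal of S$(\frak g)$ for the Kirillov-Kostant-Souriau bracket: if $u\in U_i\cap I$ and $v\in U_j$, then $[u,v]\in U_{i+j-1}\cap I$, and the principal symbol of $[u,v]$ equals $\{\sigma(u),\sigma(v)\}$. Hence V(gr$I$) is a Poisson subvariety of $\frak g^*$, whose irreducible components are closures of symplectic leaves, that is, of coadjoint orbits. I would then use that $M$ simple implies Ann$M$ is primitive, hence prime, and follow Joseph's strategy: each minimal associated prime $\frak p_i$ of gr$I$ is $G$-stable and Poisson, so V($\frak p_i$) is the closure of a single nilpotent orbit. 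All such orbit closures have the same dimension (cf.\ Gabber's result, Theorem~\ref{GabD}), and the possibility of more than one maximal component is then ruled out by a representation-theoretic input, e.g.\ Goldie-rank considerations or a translation-functor argument exploiting the primeness of $I$.

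The main obstacle is this last step. Neither $G$-invariance nor the Poisson property is, on its own, enough to force irreducibility: a union of several orbit closures of equal dimension can perfectly well support a $G$-stable radical Poisson ideal. The genuine content of the theorem is that primitivity of Ann$M$ is strictly stronger than primeness of gr$\,$Ann$M$, and encodes enough representation-theoretic information to single out one orbit.
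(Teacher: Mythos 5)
The paper cites this result from Joseph (\cite{Jo}) without supplying a proof of its own, so there is no internal argument to compare against. Your steps (a) and (b) are correct and standard: $G$-stability of $\mathrm{gr}\,I$ follows from the ad-invariance of a two-sided ideal, and inclusion of $\mathrm{GV}(M)$ in $\mathrm N_G(\frak g^*)$ follows from the central character together with the graded Harish–Chandra/Chevalley isomorphism and Kostant's description of the nilpotent cone as the common zero set of positive-degree invariants. The Poisson-ideal observation is also correct and is indeed used in this circle of ideas.

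However, the gap you flag in (c) is genuine and is precisely the content of Joseph's theorem; nothing you have written closes it. $G$-stability, the Poisson property, and Gabber's equidimensionality are all compatible with $\mathrm{GV}(M)$ being a union of several nilpotent orbit closures of the same dimension, and primeness of $I$ does not transfer to primeness of $\mathrm{gr}\,I$. The actual argument in \cite{Jo} proceeds by reducing (via translation functors and Duflo's theorem $I=\mathrm{Ann}\,\mathrm L_\lambda$) to the integral regular case and then showing that the associated variety is governed by a single Kazhdan–Lusztig left cell, hence by a single nilpotent orbit through the Springer correspondence; Joseph packages this through Goldie-rank polynomials and the map from left cells to orbital varieties. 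Gesturing at "Goldie-rank considerations or a translation-functor argument" names the right toolbox but does not constitute a proof, and the softer Poisson/associated-prime structure you develop in the first two paragraphs is insufficient on its own to establish irreducibility. Your write-up is valuable in that it isolates exactly where the representation-theoretic input must enter, but as a proof it is incomplete at step (c).
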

Suppose that $M$ is a U$(\frak g)$-module and that a filtration \begin{center}$0\subset M_0\subset M_1\subset...\subset M=\cup_{i\in \mathbb Z_{\ge 0}}M_i$\end{center} of vector spaces is given. We say that this filtration is {\it good} if\begin{center}(1) $U_i M_j= M_{i+j}$;\hspace{40pt}(2) dim~$M_i<\infty~$for all $i\in\mathbb Z_{\ge 0}$.\end{center}
Such a filtration arises from any finite-dimensional space of generators $M_0$. The corresponding associated graded object gr$M=\oplus_{i\in\mathbb Z_{\ge0}}M_{i+1}/M_i$ is a module over gr~U($\frak{g})\cong$S$(\frak g)$, and we set
\begin{center}J$_M:=\{s\in $S$(\frak g)$ $\mid$ there exists $k\in \mathbb Z_{\ge0}$ such that $s^km=0$ for all $m\in $gr$M\}.$\end{center}
In this way we associate to any $\frak g$-module $M$ the variety
\begin{center}V($M):=\{x\in\frak g^*\mid f(x)=0$ for all $f\in$J$_M\}$.\end{center}
It is easy to check that the module gr$M$ depends on the choice of good filtration, but the ideal J$_M$  and the variety V($M)$ does not. Indeed, let $M_0, M_0'$ be different generating spaces of $M$ and $\{M_i\}_{i\in\mathbb Z_{\ge0}}, \{M_i'\}_{i\in\mathbb Z_{\ge0}}$ be the corresponding filtrations of $M$. Then there exist $r, s\in\mathbb Z_{\ge0}$ such that $M_0\in M_r'$ and $M_0'\in M_s$. Let $f\in$S$(\frak g)$ be an element of the degree $d$ such that \begin{center}$fM_i\subset M_{i+d-1}$ for all $i\in\mathbb Z_{\ge0}$.\end{center} Then \begin{center}$f^{r+s+1}M_0'\subset f^{r+s+1}M_s\subset$\\$M_{s+(d-1)(r+s+1)}\subset M_{s+r+(d-1)(r+s+1)}'=M_{(s+r+1)d-1}'$.\end{center} Therefore J$^{s+r+1}_M\subset$J$'_M$. In the same way J$_M'^{s+r+1}\subset$J$_M$, and therefore \begin{center}V$(M)$=V$(M)'$.\end{center}
\begin{theorem}[Bernstein's theorem~{\cite[p. 118]{KL}}]\label{Tber} Let $M$ be a finitely generated $\frak g$-module. Then dim~V$(M)\ge\frac{1}{2}$dim~GV($M$).\end{theorem}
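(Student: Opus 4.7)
The plan is to recast the inequality as a statement about Gelfand--Kirillov dimensions and then use the classical pairing argument of I.~N.~Bernstein. Let $A:=\mathrm U(\frak g)/\mathrm{Ann}\,M$, equipped with the filtration induced by $\{U_i\}_{i\in\mathbb Z_{\ge 0}}$. Since $\mathrm{gr}\,A=\mathrm S(\frak g)/\mathrm{gr}(\mathrm{Ann}\,M)$, one has $\mathrm{GK\textrm{-}dim}(A)=\dim\mathrm{GV}(M)$, and choosing a good filtration on $M$ shows that $\mathrm{GK\textrm{-}dim}(M)=\dim\mathrm V(M)$ (filtration-independence of V$(M)$ was just verified above). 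The inequality to prove therefore reads
$$\mathrm{GK\textrm{-}dim}(M)\ge\tfrac{1}{2}\,\mathrm{GK\textrm{-}dim}(A),$$
which is Bernstein's classical inequality for a module over a finitely generated filtered algebra on which it acts faithfully.

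Next I would set up the key injection. Fix a finite-dimensional generating subspace $V\subset A$ with $1\in V=V^{-1}$, and a finite-dimensional generating subspace $M_0\subset M$, and form the good filtrations $A_n:=V^n$ and $M_n:=V^n M_0$. By construction of $A$, the action map $A\hookrightarrow\mathrm{End}_{\mathbb C}(M)$ is injective. Since any element of $V^n$ sends $M_n$ into $M_{2n}$, restriction gives an injection of vector spaces
$$V^n\;\hookrightarrow\;\mathrm{Hom}_{\mathbb C}(M_n,\,M_{2n}),$$
hence the fundamental inequality $\dim V^n\le(\dim M_n)(\dim M_{2n})$ holds for every $n\in\mathbb Z_{\ge 0}$.

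Finally I would extract the asymptotics from the Hilbert--Samuel theorem applied to the finitely generated $\mathrm S(\frak g)$-modules $\mathrm{gr}\,A$ and $\mathrm{gr}\,M$. These give polynomials $P_A,P_M$ with
$$\dim V^n=P_A(n),\qquad\dim M_n=P_M(n)\quad\text{for }n\gg 0,$$
of degrees $\deg P_A=\mathrm{GK\textrm{-}dim}(A)=\dim\mathrm{GV}(M)$ and $\deg P_M=\mathrm{GK\textrm{-}dim}(M)=\dim\mathrm V(M)$. Substituting into the inequality above and letting $n\to\infty$ forces
$$\dim\mathrm{GV}(M)=\deg P_A\;\le\;2\deg P_M=2\dim\mathrm V(M),$$
which is the desired bound. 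The only delicate step is the degree-comparison: one must know that the dimensions of the spaces $V^n$ and $M_n$ really do grow as honest polynomials of degrees $\dim\mathrm{GV}(M)$ and $\dim\mathrm V(M)$ respectively. This is precisely the content of the Hilbert--Serre/Hilbert--Samuel theorem for the graded commutative ring $\mathrm S(\frak g)$ together with the observation that the Krull dimension of the support equals the degree of the Hilbert polynomial; once this is in place, the remainder of the argument is formal.
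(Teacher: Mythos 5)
The paper does not prove this statement; it is cited to Krause--Lenagan and used as a black box, so there is no internal argument to compare with. Your proposal is the classical Bernstein pairing proof, which is the right circle of ideas, but it contains a real gap at the injectivity step.

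You assert that since $A=\mathrm U(\frak g)/\mathrm{Ann}\,M$ acts faithfully on $M$, the restriction $V^n\hookrightarrow\mathrm{Hom}_{\mathbb C}(M_n,M_{2n})$ is injective. This does not follow from faithfulness. Faithfulness says $aM=0\Rightarrow a=0$, but the kernel of $V^n\to\mathrm{Hom}(M_n,M_{2n})$ consists of $a\in A_n$ with the much weaker property $aM_n=0$, and such $a$ can be nonzero. Concretely, take $\frak g=\frak{sl}_2$ with standard generators $e,h,f$, let $M$ be the Verma module of highest weight $0$ with highest weight vector $v_0$, and take $M_0:=\mathbb Cv_0$. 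Then $A=\mathrm U(\frak g)/\mathrm{Ann}\,M$ acts faithfully on $M$, and $M_1=\mathbb Cv_0\oplus\mathbb Cfv_0$. One computes $ev_0=0$ and $efv_0=(fe+h)v_0=0$, so $e$ kills $M_1$; yet $ef^2v_0=-2fv_0\ne 0$, so $e\ne 0$ in $A$. Thus $e\in A_1$ lies in the kernel of $A_1\to\mathrm{Hom}(M_1,M_2)$ even though the action is faithful, and the inequality $\dim V^n\le(\dim M_n)(\dim M_{2n})$ that you use is not established. (This phenomenon is specific to the PBW filtration on $\mathrm U(\frak g)$; for the Weyl algebra with the Bernstein filtration the injectivity lemma does hold, but its proof uses the degree-$2$ drop of commutators, which is exactly what is missing here.) To salvage the pairing argument one must show that the contribution of these kernels to the Hilbert polynomial of $A_n$ has strictly lower degree, or work with a modified filtration on $M$, or pass through the characteristic ideal of $A$ as a bimodule — all of which is the actual content of the Krause--Lenagan proof and cannot be elided.
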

Let $M$ be a finitely generated $\frak g$-module which affords a generalized central character.
\begin{theorem}[O. Gabber~\cite{Gab}]\label{Gab} Let $\tilde V$ be an irreducible component of V$(M)$ and $\EuScript Z$ be the unique open $G$-orbit of \begin{center}$G\tilde V:=\{x\in\frak g^*\mid x=gv$ for some $g\in G$ and $v\in\tilde V$\}.\end{center} Then $\tilde V\cap \EuScript Z$ is a coisotropic subvariety of $\EuScript Z$. In particular \begin{center}dim$\tilde V\ge\frac{1}{2}$dim$G\tilde V$.\end{center}\end{theorem}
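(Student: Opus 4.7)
The plan is to derive the statement from Gabber's coisotropicity theorem for $\EuScript D$-modules (Theorem~\ref{GabD}) by transporting the result along the moment map $\phi_X\colon\mathrm T^*X\to\frak g^*$, exploiting the symplectic compatibility $\phi_X^*\omega_{\EuScript Z}=\mathrm d\alpha_X|_{\phi_X^{-1}(\EuScript Z)}$ verified in Subsection~\ref{Sssg}.

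First I would reduce to the case of an actual central character. If $M$ affords only a generalized central character $\chi$, filter $M$ by the submodules $M^{(n)}:=\{m\in M\mid(\ker\chi)^nm=0\}$; each subquotient $M^{(n)}/M^{(n-1)}$ is annihilated by $\ker\chi$ and hence affords the central character $\chi$, while $\mathrm V(M)$ equals the union of the associated varieties of the subquotients. Any irreducible component of $\mathrm V(M)$ therefore appears as an irreducible component of some $\mathrm V(M^{(n)}/M^{(n-1)})$, so we may assume $\mathrm{Ann}\,M\supset\ker\chi_\lambda$ for some $\lambda\in\hat{\frak h^*}$.

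Now set $\EuScript M:=\mathrm{Loc}(M)$, which is coherent because $M$ is finitely generated. Theorem~\ref{GabD} gives that $\EuScript V(\EuScript M)$ is coisotropic in $\mathrm T^*X$. A good filtration on $M$ transports via the isomorphism $\tau$ to a good filtration on $\EuScript M$, and a standard comparison of associated gradeds identifies $\mathrm V(M)$ with the closed subset $\phi_X(\EuScript V(\EuScript M))$ of $\frak g^*$. Hence each irreducible component $\tilde V$ of $\mathrm V(M)$ is the image $\phi_X(\tilde{\EuScript V})$ of some irreducible component $\tilde{\EuScript V}$ of $\EuScript V(\EuScript M)$. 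Over the open $G$-orbit $\EuScript Z\subset G\tilde V$, Richardson's theorem (Theorem~\ref{MMapSl}) makes $\phi_X$ finite, and the formula $\phi_X^*\omega_{\EuScript Z}=\mathrm d\alpha_X|_{\phi_X^{-1}(\EuScript Z)}$ makes $\phi_X\colon\phi_X^{-1}(\EuScript Z)\to\EuScript Z$ a symplectic morphism. Finite symplectic morphisms in characteristic $0$ are generically \'etale, so at a generic smooth point of $\tilde{\EuScript V}$ lying over $\tilde V\cap\EuScript Z$ the tangent map $\mathrm d\phi_X$ is a symplectic isomorphism. Since coisotropicity is a generic tangent-space condition preserved by symplectic isomorphisms, $\tilde V\cap\EuScript Z$ is coisotropic in $\EuScript Z$, and the dimension inequality $\dim\tilde V\ge\tfrac12\dim G\tilde V$ follows because any coisotropic subvariety has dimension at least half of the ambient symplectic variety.

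The step I expect to be the main obstacle is the component-tracking above: I need to match an irreducible component $\tilde V$ of $\mathrm V(M)$ with a component $\tilde{\EuScript V}$ of $\EuScript V(\EuScript M)$ that dominates $\tilde V$ generically finitely, rather than mapping with positive-dimensional fibers to a smaller subvariety that would spoil the tangent-space argument. This requires combining Richardson's finiteness statement over $\EuScript Z$ with a dimension count on the components of $\EuScript V(\EuScript M)$ versus the components of $\mathrm V(M)$, and it is where the interplay between the good filtration on $M$ and the stratification of the nilpotent cone by $G$-orbits enters in an essential way.
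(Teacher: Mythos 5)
The paper does not actually prove this statement: it is cited directly from Gabber's paper~\cite{Gab}. The result is an instance of Gabber's integrability theorem applied to the filtered ring $\mathrm U(\frak g)$ itself --- the radical of the annihilator of $\mathrm{gr}\,M$ in $\mathrm S(\frak g)$ is a Poisson ideal, which is exactly the statement that $\mathrm V(M)$ is coisotropic with respect to the Lie--Poisson structure on $\frak g^*$, hence that $\tilde V\cap\EuScript Z$ is coisotropic in each symplectic leaf $\EuScript Z$. Theorem~\ref{GabD} is a parallel application of the same integrability theorem on the $\EuScript D$-module side; the paper treats the two statements as siblings, not as parent and child.

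Your attempt to derive Theorem~\ref{Gab} from Theorem~\ref{GabD} by transport along the moment map has a genuine gap at the Richardson step. Theorem~\ref{MMapSl} asserts that $\phi_{G/P}$ is finite only over the open orbit of its image, namely the Richardson orbit of $P$. For the full flag variety $X=G/B$ --- the one used in Beilinson--Bernstein localization --- the image of $\phi_X$ is the whole nilpotent cone and the open orbit is the regular nilpotent orbit. Over any non-regular orbit $\EuScript Z$ the fiber of $\phi_X$ is a Springer fiber of positive dimension $\tfrac12\bigl(\dim\mathrm N_G(\frak g^*)-\dim\EuScript Z\bigr)$, so $\mathrm d\phi_X$ is not injective, let alone a symplectic isomorphism, at generic points of a component $\tilde{\EuScript V}$ sitting over $\tilde V\cap\EuScript Z$. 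What the symplectic-compatibility formula actually gives is that $\phi_X^{-1}(\EuScript Z)$ is a coisotropic submanifold of $\mathrm T^*X$ whose null foliation has the Springer fibers as leaves, with $\EuScript Z$ the symplectic reduction. Pushing a coisotropic subvariety of $\mathrm T^*X$ through such a reduction to a coisotropic subvariety of $\EuScript Z$ requires a cleanness hypothesis on the intersection $\tilde{\EuScript V}\cap\phi_X^{-1}(\EuScript Z)$ that you have not established; this is a real obstacle, and it is logically prior to the component-tracking difficulty you flag at the end. The intended argument is the direct one: apply Gabber's integrability theorem to a good filtration on the $\mathrm U(\frak g)$-module $M$, bypassing localization entirely.
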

\begin{definition}\label{Dhol} A simple $\frak g$-module $M$ is called {\it holonomic} if \begin{center}dim$\tilde V=\frac{1}{2}$dim$G\tilde V$\end{center} for any irreducible component $\tilde V$ of V$(M)$.\end{definition}
\begin{corollary}[S.~Fernando~\cite{F}] The vector space \begin{center}V$(M)^\bot=\{g\in\frak g\mid v(g)=0$ for all $v\in$V$(M)$\}\end{center} is a Lie algebra and V$(M)$ is a
V$(M)^\bot$-variety.\end{corollary}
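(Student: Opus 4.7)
The plan is to derive both assertions from a single fact: for every $g\in$V$(M)^\bot$, the coadjoint vector field $\tau_{\frak g^*}g$ on $\frak g^*$ is tangent to V$(M)$, in the sense that the derivation $\{g,\cdot\}$ of S$(\frak g)$ preserves the vanishing ideal I(V$(M)$). Granted this, assertion (b) follows at once, since V$(M)$ is then stable under the flow of every such vector field. For (a), if $g_1,g_2\in$V$(M)^\bot$ then $g_2\in$I(V$(M)$), and applying the derivation $\{g_1,\cdot\}$ yields $\{g_1,g_2\}=[g_1,g_2]\in$I(V$(M)$); since this bracket also lies in $\frak g$, we conclude $[g_1,g_2]\in$V$(M)^\bot$. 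Throughout, $\frak g$ is identified with the space of linear functions on $\frak g^*$ inside S$(\frak g)=\mathbb C[\frak g^*]$, under which identification V$(M)^\bot=\frak g\cap$I(V$(M)$).

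To prove the tangency, we fix an irreducible component $\tilde V$ of V$(M)$ and let $\EuScript Z$ be the open $G$-orbit of $G\tilde V$. Since $\EuScript Z$ is open and dense in $G\tilde V=G\cdot\tilde V$, the intersection $\tilde V\cap\EuScript Z$ is nonempty -- otherwise $\tilde V$ would lie inside the proper closed $G$-invariant set $G\tilde V\setminus\EuScript Z$, contradicting $G\cdot\tilde V=G\tilde V$ -- and hence is open and dense in the irreducible $\tilde V$. By Theorem~\ref{Gab}, $\tilde V\cap\EuScript Z$ is coisotropic inside the symplectic variety $(\EuScript Z,\omega)$, where $\omega$ is the Kostant--Kirillov--Souriau form of Subsection~\ref{Sssg}; by the very definition of $\omega$, the Hamiltonian vector field on $\EuScript Z$ of any linear function $p\in\frak g$ is precisely the restriction of $\tau_{\frak g^*}p$.

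We now invoke the standard symplectic fact that a smooth subvariety of a symplectic manifold is coisotropic if and only if the Hamiltonian vector field of every function vanishing on it is tangent to it at its smooth points. Applied to $g|_{\EuScript Z}$ for $g\in$V$(M)^\bot$, and translated into algebraic language: for every $h\in$I$(\tilde V)\subset$S$(\frak g)$, the regular function $\{g,h\}$ vanishes on the open dense subset $\tilde V\cap\EuScript Z$ of $\tilde V$ and therefore on all of $\tilde V$, so the derivation $\{g,\cdot\}$ preserves I$(\tilde V)$. Intersecting over the finitely many irreducible components of V$(M)$ yields preservation of I(V$(M)$), as required.

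The only substantive input is the coisotropicity in Theorem~\ref{Gab}, which is already in hand; everything else is routine symplectic-geometric bookkeeping. The single point that needs some care is the density transfer in the third paragraph: nonemptiness of $\tilde V\cap\EuScript Z$, combined with irreducibility of $\tilde V$, suffices to upgrade the tangency on $\tilde V\cap\EuScript Z$ to a statement about the algebraic vector field $\tau_{\frak g^*}g$ on all of $\tilde V$. This is the main (mild) obstacle and is handled above.
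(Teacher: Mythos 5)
The paper does not provide a proof of this corollary; it is simply attributed to Fernando~\cite{F} and placed immediately after Gabber's theorem. Your deduction from the coisotropicity statement (Theorem~\ref{Gab}) is correct and is exactly the kind of argument the ``Corollary'' heading suggests. The key points all check out: the identification $\mathrm V(M)^\bot = \frak g\cap \mathrm I(\mathrm V(M))$ under $\frak g\subset\mathrm S(\frak g)=\mathbb C[\frak g^*]$; the density argument showing $\tilde V\cap\EuScript Z\ne\emptyset$ (hence open dense in the irreducible $\tilde V$); the fact that Hamiltonian vector fields of functions vanishing on a coisotropic subvariety are tangent to it at smooth points; and the polynomial $\{g,h\}$ vanishing on a dense subset of $\tilde V$, hence on all of $\tilde V$, and finally on $\mathrm V(M)=\bigcup_i\tilde V_i$ by intersecting over components. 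This establishes that $\{g,\cdot\}$ preserves $\mathrm I(\mathrm V(M))$ for $g\in\mathrm V(M)^\bot$, from which both the Lie-algebra property (via $\{g_1,g_2\}=[g_1,g_2]$ on linear elements) and the infinitesimal stability of $\mathrm V(M)$ follow at once. One minor wording issue: the tangency you quote holds at the generic (smooth, coisotropic) points of $\tilde V\cap\EuScript Z$, not literally on the whole intersection, but this is still a dense subset of $\tilde V$ so the conclusion is unaffected. In effect your argument reconstructs the Poisson-closure formulation of Gabber's theorem (that $\sqrt{\mathrm J_M}$ is closed under $\{\cdot,\cdot\}$) from the geometric formulation recorded in the paper; either route is fine, and yours is a faithful filling-in of the intended proof.
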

\begin{theorem}[S.~Fernando~{\cite[Cor. 2.7]{F}}, V.~Kac~\cite{Kc2}]Set \begin{center}$\frak g[M]:=\{g\in\frak g\mid\mathrm{dim}(\mathrm{span}_{i\in\mathbb Z_{\ge 0}}\{ g^im\})< \infty$ for all $m\in M\}$.\end{center} Then $\frak g[M]$ is a Lie algebra and $\frak g[M]\subset\mathrm V(M)^\bot$.\end{theorem}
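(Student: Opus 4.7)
My plan is to handle the two assertions in sequence, starting from the containment $\frak g[M]\subset\mathrm V(M)^\bot$ since it supplies the nilpotence datum required for the Lie algebra claim. I assume $M$ is finitely generated (the setting in which $\mathrm V(M)$ is well-defined), fix a finite-dimensional generating subspace $M_0\subset M$, and work with the induced good filtration $M_i:=\mathrm U_i(\frak g)M_0$.

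For the containment, suppose $g\in\frak g[M]$. The subspace $\sum_{k\ge0}g^kM_0$ is a finite sum of finite-dimensional spans (one per basis vector of $M_0$) and is therefore finite-dimensional, hence contained in some $M_N$. I would then prove by induction on $i$ that $g^{N+1}M_i\subset M_{i+N}$, i.e.\ that $g^{N+1}$ drops one level below the naive expectation $M_{i+N+1}$. The inductive step uses the PBW commutator estimate $[\mathrm U_{N+1},\mathrm U_1]\subset\mathrm U_{N+1}$: for $u\in\frak g$ and $m\in M_{i-1}$ the identity
\[
g^{N+1}(um)=u(g^{N+1}m)+[g^{N+1},u]m
\]
places both terms in $M_{i+N}$ via the inductive hypothesis. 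Since $g^{N+1}$ thus acts as zero on $\mathrm{gr}(M)$, it lies in $\mathrm J_M$, so $g$ vanishes on $\mathrm V(M)=\mathrm V(\mathrm J_M)$, yielding $g\in\mathrm V(M)^\bot$.

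For the Lie algebra property, following Fernando and Kac, I would prove the stronger finiteness statement that, for $g_1,g_2\in\frak g[M]$ and any $m\in M$, the subspace of $M$ generated by $m$ under all words in $g_1,g_2$ is finite-dimensional. This suffices, since then $g_1+g_2$ and $[g_1,g_2]$ send $m$ into this finite-dimensional subspace and act locally finitely. The key input is the uniform nilpotence from the previous step: there exist $K_1,K_2$ with $g_i^{K_i}\cdot\mathrm{gr}(M)=0$, i.e.\ each $g_i^{K_i}$ drops the filtration by one level. One then accumulates these drops across every word in $g_1,g_2$ to obtain a uniform cap on the filtration level reached when applied to $m$.

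The main obstacle is precisely this combinatorial step. Each $g_i^{K_i}$ drops the filtration by only one, so the linear growth of filtration level with word length is just barely too large for a naive argument to close. One must exploit that the iterated commutators $[g_1,g_2],[g_1,[g_1,g_2]],\ldots$ all lie in $\mathrm V(M)^\bot$ (by the previous corollary together with the inclusion just proved), so each admits its own filtration-drop, and careful bookkeeping in PBW normal form converts these drops into the required uniform bound. Packaging these filtration drops into a genuine finite-dimensionality statement is the technical heart of Fernando's Corollary~2.7 and of Kac's parallel argument.
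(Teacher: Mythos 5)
The paper states this result with citations to Fernando [F, Cor.~2.7] and Kac [Kc2] and gives no proof, so there is no in-text argument to compare against; I assess your proposal on its own.

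Your containment argument is correct. Choosing $N$ with $\sum_{k}g^{k}M_{0}\subset M_{N}$ and inducting up a good filtration via $g^{N+1}(um)=u(g^{N+1}m)+[g^{N+1},u]m$, with $[\mathrm U_{N+1},\mathrm U_{1}]\subset\mathrm U_{N+1}$, gives $g^{N+1}M_{i}\subset M_{i+N}$ for all $i$; hence the symbol $\bar g^{N+1}$ annihilates $\mathrm{gr}\,M$, so $\bar g\in\mathrm J_{M}$ and $g\in\mathrm V(M)^{\perp}$.

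The Lie-algebra claim is where a genuine gap lies, and the accounting you sketch does not close. You are right that there is a uniform nilpotence bound: $\mathrm{gr}\,M$ is finitely generated over the Noetherian ring $\mathrm S(\frak g)$, so some $N$ satisfies $\mathrm J_{M}^{N}\subset\mathrm{Ann}_{\mathrm S(\frak g)}\mathrm{gr}\,M$, whence $h^{N}M_{i}\subset M_{i+N-1}$ simultaneously for \emph{every} $h\in\mathrm V(M)^{\perp}$. But each application of $h^{N}$ buys only one level of filtration drop against $N$ raises. Expanding a length-$n$ word in $g_{1},g_{2}$ into PBW monomials $h_{1}^{a_{1}}\cdots h_{r}^{a_{r}}$ over a basis of the Lie subalgebra of $\mathrm V(M)^{\perp}$ generated by $g_{1},g_{2}$, the best you conclude is that a monomial of degree $d=\sum_{j}a_{j}\le n$ sends $M_{0}$ into $M_{d-\sum_{j}\lfloor a_{j}/N\rfloor}$, and $d-\sum_{j}\lfloor a_{j}/N\rfloor\ge d\cdot\tfrac{N-1}{N}$, which is unbounded in $n$. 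So "accumulating drops" reduces the growth rate by a constant factor, but does not cap the filtration level, and hence does not give the finite-dimensional orbit $\mathrm{span}\{w(g_{1},g_{2})m\}$ you need. Since this is precisely the half of the statement you yourself flag as the "technical heart," you should either reproduce the actual argument from [F] or [Kc2] or replace this step with a different finiteness mechanism; as written, it is a real gap.
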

\begin{corollary}For a $(\frak g, \frak k)$-module $M$, V$(M)\subset\frak k^\bot$ and V$(M)$ is a $K$-variety.\end{corollary}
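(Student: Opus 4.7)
The plan is to read this corollary as a direct consequence of the two results immediately preceding it: the Fernando--Kac inclusion $\mathfrak{g}[M]\subset\mathrm V(M)^\bot$ and the earlier corollary asserting that $\mathrm V(M)^\bot$ is a Lie algebra and $\mathrm V(M)$ is a $\mathrm V(M)^\bot$-variety. All the heavy lifting is already in these two statements; the corollary is essentially an observation that the defining condition of a $(\mathfrak{g},\mathfrak{k})$-module places $\mathfrak{k}$ inside $\mathfrak{g}[M]$.

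First I would verify $\mathfrak{k}\subset\mathfrak{g}[M]$. By Definition~\ref{Dgk}, for every $m\in M$ the subspace $\mathrm U(\mathfrak{k})m$ is finite-dimensional. For any $k\in\mathfrak{k}$ every power $k^im$ lies in $\mathrm U(\mathfrak{k})m$, so $\dim\mathrm{span}_{i\ge 0}\{k^im\}<\infty$, and hence $k\in\mathfrak{g}[M]$ by definition. Combining with the Fernando--Kac theorem yields $\mathfrak{k}\subset\mathfrak{g}[M]\subset\mathrm V(M)^\bot$. This last inclusion means that every $x\in\mathrm V(M)\subset\mathfrak{g}^*$ annihilates $\mathfrak{k}$, i.e.\ $\mathrm V(M)\subset\mathfrak{k}^\bot$, which is the first claim.

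For the second claim, the previous corollary asserts that $\mathrm V(M)$ is stable under the Lie algebra $\mathrm V(M)^\bot$ acting on $\mathfrak{g}^*$ by the coadjoint action. Since $K$ is by definition the connected subgroup of $G$ with Lie algebra $\mathfrak{k}\cap[\mathfrak{g},\mathfrak{g}]\subset\mathfrak{k}\subset\mathrm V(M)^\bot$, and $\mathrm V(M)$ is a closed subvariety of $\mathfrak{g}^*$, the infinitesimal invariance integrates to genuine $K$-invariance: for any $x\in\mathrm V(M)$ the orbit map $K\to\mathfrak{g}^*$, $g\mapsto gx$, has image contained in $\mathrm V(M)$ because its differential at every point lands in the tangent cone of $\mathrm V(M)$, and $K$ is connected. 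Hence $\mathrm V(M)$ is a $K$-variety.

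There is no serious obstacle in this argument; the only step that deserves care is the passage from Lie-algebra invariance under $\mathrm{Lie}(K)$ to group invariance under $K$, which is the standard connectedness argument combined with closedness of $\mathrm V(M)$.
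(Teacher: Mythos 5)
Your proof is correct and follows the route the paper clearly intends (the corollary is stated without a proof precisely because it is meant to fall out of the Fernando--Kac theorem $\frak g[M]\subset\mathrm V(M)^\bot$ together with the preceding Fernando corollary). You correctly identify that the only real content is the observation $\frak k\subset\frak g[M]$, which follows from Definition~\ref{Dgk} because $k^im\in\mathrm U(\frak k)m$ and the latter is finite-dimensional; the equivalence $\frak k\subset\mathrm V(M)^\bot\Leftrightarrow\mathrm V(M)\subset\frak k^\bot$ is a bilinear-pairing tautology, and the passage from $\frak k$-stability to $K$-stability via connectedness of $K$ and closedness of $\mathrm V(M)$ is the standard integration argument. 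No gaps.
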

Let $M$ be a $(\frak g, \frak k)$-module and $M_0$ be a $\frak k$-stable finite-dimensional space of generators of $M$; J$_M,$ gr$M$ be the corresponding objects constructed as above. Consider the S$(\frak g)$-modules\begin{center}  J$_M^{-i}\{0\}:=\{m\in$gr$M\mid j_1...j_im=0$ for all $j_1,...,j_i\in$J$_M~\}$.\end{center} One can easily see that these modules form an ascending filtration\begin{center}$0\subset$J$_M^{-1}\subset...\subset$gr$M$\end{center}such that \begin{center}$\cup_{i=1}^\infty$J$_M^{-i}\{0\}=$gr$M$.\end{center} Since S$(\frak g)$ is a N\"otherian ring, the filtration stabilizes, i.e. J$_M^{-i}\{0\}=$gr$M$ for some $i$. By $\overline{\mathrm{gr}}M$ we denote the corresponding graded object. By definition, $\overline{\mathrm{gr}}M$ is an S$(\frak g)/$J$_M$-module. Suppose that $f\overline{\mathrm{gr}}M=0$ for some $f\in$S$(\frak g)$. Then $f^i$gr$M=0$ and hence $f\in$J$_M$. This proves that the annihilator of $\mathrm{\overline{gr}}M$ in $\mathrm S(\frak g)/\mathrm J_M$ equals zero.

As $M$ is a finitely generated $\frak g$-module, the S$(\frak
g)$-modules gr$M$ and $\mathrm{\overline{gr}}M$ are finitely
generated. Let $\tilde M_0$ be a $\frak k$-stable finite-dimensional
space of generators of $\mathrm{\overline{gr}}M$. Then there is a
surjective homomorphism \begin{center}$\psi: \tilde
M_0\otimes_\mathbb C$(S$(\frak
g)/$J$_M)\to\overline{\mathrm{gr}}M$.\end{center}
Set\begin{center}Rad$M:=\{m\in\overline{\mathrm{gr}}M|$ there exists
$f\in$S$(\frak g)/$J$_M$ such that $fm=0$ and  $f\ne
0\}$.\end{center} The space Rad$M$ is a $\frak k$-stable S$(\frak
g)$-submodule of $\overline{\mathrm{gr}}M$ and $\tilde
M_0\not\subset$Rad$M$. The homomorphism $\psi$ determines an
injective homomorphism
\begin{center}$\hat\psi:$ S$(\frak g)/$J$_M\to\tilde
M_0^*\otimes_\mathbb C\overline{\mathrm{gr}}M$.\end{center}
\begin{proposition}\label{Pbm}a) The module $M$ is bounded if and only if all irreducible components of $\mathrm V(M)$ are $K$-spherical.\\ b) If the equivalent conditions of a) are satisfied, then any irreducible component $\tilde V$ of $\mathrm V(M)$ is a conical Lagrangian subvariety of $G\tilde V$.\end{proposition}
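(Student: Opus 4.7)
The plan is to chain equivalences between boundedness of $M$ and boundedness of $\mathbb C[\mathrm V(M)]$ as a $\frak k$-module, and then invoke the Vinberg--Kimelfeld lemma to translate boundedness into $K$-sphericity of the irreducible components. First, fix the good filtration $\{M_i\}$ of $M$ arising from a $\frak k$-stable generating space $M_0$. Then the $\frak k$-characters of $M$, $\mathrm{gr}\,M$ and $\overline{\mathrm{gr}}\,M$ all coincide: for the first two because each $M_i$ is $\frak k$-stable, and for the third because the ascending filtration $\mathrm J_M^{-i}\{0\}$ stabilizes to $\mathrm{gr}\,M$ in finitely many steps. Hence $M$ is bounded iff $\overline{\mathrm{gr}}\,M$ is bounded. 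Next, the surjection $\psi\colon\tilde M_0\otimes(\mathrm S(\frak g)/\mathrm J_M)\twoheadrightarrow\overline{\mathrm{gr}}\,M$ and the injection $\hat\psi\colon\mathrm S(\frak g)/\mathrm J_M\hookrightarrow\tilde M_0^*\otimes\overline{\mathrm{gr}}\,M$, together with the finite-dimensionality of $\tilde M_0$, show that $\overline{\mathrm{gr}}\,M$ is bounded iff $\mathrm S(\frak g)/\mathrm J_M$ is bounded.

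Since $\mathrm S(\frak g)$ is N\"otherian, the nilradical $N$ of $\mathrm S(\frak g)/\mathrm J_M$ satisfies $N^m=0$ for some $m$, and the finite filtration $0\subset N^{m-1}\subset\cdots\subset N\subset \mathrm S(\frak g)/\mathrm J_M$ has graded pieces that are finitely generated modules over $\mathbb C[\mathrm V(M)]=(\mathrm S(\frak g)/\mathrm J_M)/N$. Hence $\mathrm S(\frak g)/\mathrm J_M$ is bounded iff $\mathbb C[\mathrm V(M)]$ is. Writing $\mathrm V(M)=\tilde V_1\cup\cdots\cup\tilde V_r$, the injection $\mathbb C[\mathrm V(M)]\hookrightarrow\bigoplus_i\mathbb C[\tilde V_i]$ together with the surjections $\mathbb C[\mathrm V(M)]\twoheadrightarrow\mathbb C[\tilde V_i]$ shows that $\mathbb C[\mathrm V(M)]$ is bounded iff every $\mathbb C[\tilde V_i]$ is. Each $\tilde V_i$ is an irreducible quasiaffine $K$-variety (it is $K$-stable because $\mathrm V(M)$ is and $K$ is connected), so the Vinberg--Kimelfeld lemma yields part (a).

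For (b), assume each component $\tilde V$ of $\mathrm V(M)$ is $K$-spherical. Since $\mathrm J_M$ is a graded ideal in $\mathrm S(\frak g)$, the variety $\mathrm V(M)$ is $\mathbb C^*$-stable, so $\tilde V$ is conical. Let $\EuScript Z$ be the open $G$-orbit of $G\tilde V$; Theorem~\ref{Gab} already gives $\dim\tilde V\ge\frac{1}{2}\dim\EuScript Z$. On the other hand, sphericity of $\tilde V$ produces an open $K$-orbit $Kx\subset\tilde V$ (obtained by $K$-saturating the open $B$-orbit), and since $\tilde V\cap\EuScript Z$ is a nonempty open subset of $\tilde V$ we may take $x\in\EuScript Z$. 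The inclusion $\mathrm V(M)\subset\frak k^\bot$ gives, for all $p,q\in\frak k$, $\omega_x(\tau p|_x,\tau q|_x)=x([p,q])=0$ because $[p,q]\in\frak k$; thus $Kx$ is isotropic in $\EuScript Z$, so $\dim\tilde V=\dim Kx\le\frac{1}{2}\dim\EuScript Z$. Combined with Gabber's lower bound, $\tilde V$ is Lagrangian in $G\tilde V$. The most delicate point is this final dimension count: it is the only place where $K$-sphericity is really used, and one must combine it with $\mathrm V(M)\subset\frak k^\bot$ to push past Gabber's coisotropic conclusion to the full Lagrangian statement. The bookkeeping in Steps~1--3, by contrast, is routine once one observes that every filtration in sight is $\frak k$-stable with finite-dimensional graded pieces.
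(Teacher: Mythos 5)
Your proof is correct and follows essentially the same route as the paper: reduce boundedness of $M$ to boundedness of $\mathbb C[\mathrm V(M)]$ via $\mathrm{gr}\,M$ and $\overline{\mathrm{gr}}\,M$ using the maps $\psi,\hat\psi$, then apply Vinberg--Kimelfeld componentwise; for (b), combine isotropy of $K$-orbits in $\frak k^\bot$ with the open $K$-orbit coming from sphericity and Gabber's coisotropic bound. One small redundancy: the nilradical step is vacuous, since $\mathrm J_M$ is by construction the radical of $\mathrm{Ann}(\mathrm{gr}\,M)$, so $\mathrm S(\frak g)/\mathrm J_M=\mathbb C[\mathrm V(M)]$ already; otherwise your explicit invocation of Theorem~\ref{Gab} for the lower bound $\dim\tilde V\ge\frac12\dim\EuScript Z$ is a detail the paper leaves implicit, and it is exactly the right thing to cite.
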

\begin{proof}a) Suppose that all irreducible components of $\mathrm V(M)$ are $K$-spherical. Then \begin{center}$\tilde M_0\otimes_\mathbb C$(S$(\frak g)/$J$_M$)\end{center} is a bounded $\frak k$-module. Therefore $\overline{\mathrm{gr}}M$ is bounded, which implies that $M$ is a bounded $\frak k$-module too.

Assume now that a $\frak g$-module $M$ is $\frak k$-bounded. Then S$(\frak g)/$J$_M$ is a $\frak k$-bounded module and all irreducible components of $\mathrm V(M)$ are $K$-spherical. This completes the proof of a).\\
b) Let $\tilde V\subset\mathrm V(M)$ be an irreducible component and
$x\in\tilde V$ be a generic point. As $x\in\frak k^\bot$, we have
\begin{center}$x([k_1, k_2])=\omega_x(\tau_{\frak g^*}k_1|_x,
\tau_{\frak g^*}k_2|_x)=0$\end{center} for all $k_1, k_2\in\frak k$.
Therefore any $K$-orbit in $\frak k^\bot$ is isotropic. As $\tilde
V$ is a spherical variety, $\tilde V$ has an open $K$-orbit.
Therefore $\tilde V$ is Lagrangian in $G\tilde V$ and  this
completes the proof of b).\end{proof}
\begin{corollary}[{\cite{PS}}]\label{Cps} Let $M$ be a finitely generated bounded $(\frak g, \frak k)$-module. Then dim$\frak b_\frak k \ge\frac{1}{2}$dim~GV$(M)$.\end{corollary}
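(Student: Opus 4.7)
The plan is to chain together two dimension bounds: one coming from the sphericity of the irreducible components of $\mathrm V(M)$ (which forces them to be small relative to a Borel of $K$), and one coming from Bernstein's inequality (which forces $\mathrm V(M)$ to be big relative to $\mathrm{GV}(M)$).

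First I would invoke the results already established about $\mathrm V(M)$ under the hypothesis that $M$ is bounded. By the corollary to Fernando's theorem, $\mathrm V(M)\subset\frak k^\bot$ and $\mathrm V(M)$ is a $K$-variety. By Proposition~\ref{Pbm}(a), every irreducible component $\tilde V$ of $\mathrm V(M)$ is $K$-spherical.

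Next I would use the definition of $K$-spherical: a Borel subgroup $B$ of $K$ has an open orbit on $\tilde V$. Since $\tilde V$ is irreducible, this open $B$-orbit has the same dimension as $\tilde V$, and that dimension is at most $\dim B=\dim\frak b_\frak k$. Taking the maximum over irreducible components yields
\[
\dim\mathrm V(M)\le \dim\frak b_\frak k.
\]

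Finally I would apply Bernstein's theorem (Theorem~\ref{Tber}) to the finitely generated module $M$, giving
\[
\dim\mathrm V(M)\ge \tfrac{1}{2}\dim\mathrm{GV}(M).
\]
Chaining the two inequalities produces $\dim\frak b_\frak k\ge \tfrac{1}{2}\dim\mathrm{GV}(M)$, as required. There is no real obstacle here: every ingredient has been set up in the preceding subsections, so the only thing to be careful about is making sure we quote Proposition~\ref{Pbm}(a) for \emph{every} irreducible component of $\mathrm V(M)$ (not just one), so that the bound $\dim\tilde V\le\dim\frak b_\frak k$ holds uniformly and thus passes to $\dim\mathrm V(M)=\max_{\tilde V}\dim\tilde V$.
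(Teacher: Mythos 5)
Your proof is correct and follows the same route as the paper: use Proposition~\ref{Pbm}(a) to get $K$-sphericity of $\mathrm V(M)$ and thus $\dim\frak b_\frak k\ge\dim\mathrm V(M)$, then apply Bernstein's theorem to conclude. The extra care about iterating over irreducible components is a minor elaboration of what the paper compresses into a single line.
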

\begin{proof}As $M$ is bounded, V$(M)$ is $K$-spherical and therefore\\ dim$\frak b_\frak k\ge$dim~V$(M)$. We have dim~V$(M)\ge\frac{1}{2}$dim~GV$(M)$.\end{proof}
\begin{lemma}[\cite{VP}] Let $\hat X$ be an affine $K$-variety. Then $\mathbb C[X]$ has finite type as a $\frak k$-module if and only if $X$ contains only finite number of the closed $K$-orbits. In the latter case any irreducible component of $X$ contains precisely one closed $K$-orbit.\end{lemma}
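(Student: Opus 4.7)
The plan is to pass through the affine categorical quotient $\pi\colon X\to Q:=\mathrm{Spec}\,\mathbb{C}[X]^K$. Because $K$ is reductive and $X$ is affine, $\pi$ is surjective and its closed points are in natural bijection with the closed $K$-orbits of $X$: each orbit closure contains a unique closed $K$-orbit, and two orbits have the same image in $Q$ if and only if their closures intersect. In particular, the number of closed $K$-orbits of $X$ is finite if and only if $Q$ is zero-dimensional, if and only if $\mathbb{C}[X]^K$ is finite-dimensional.

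For the forward direction, decompose $\mathbb{C}[X]=\bigoplus_\lambda V_\lambda\otimes\mathrm{Hom}_K(V_\lambda,\mathbb{C}[X])$ into $K$-isotypic components; this is legitimate because $\mathbb{C}[X]$ is a rational $K$-module, and for such a module finite type over $\frak k$ is exactly finite multiplicity of every simple $K$-module. The hypothesis applied to the trivial isotypic component gives $\dim\mathbb{C}[X]^K<\infty$, so by the preceding paragraph $X$ contains only finitely many closed $K$-orbits.

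For the converse, assume $\dim\mathbb{C}[X]^K<\infty$ and fix a simple $K$-module $V_\lambda$. The main input here is the standard fact (a consequence of Hilbert's finiteness theorem for reductive groups) that the multiplicity space $\mathrm{Hom}_K(V_\lambda,\mathbb{C}[X])=(\mathbb{C}[X]\otimes V_\lambda^*)^K$ is finitely generated as a module over $\mathbb{C}[X]^K$: one regards $\mathbb{C}[X]\otimes V_\lambda^*$ as a finitely generated $\mathbb{C}[X]$-module with a rational $K$-action and uses reductivity of $K$ to descend finite generation to the invariants. Finite-dimensionality of $\mathbb{C}[X]^K$ then forces every multiplicity space, and hence every isotypic component, to be finite-dimensional.

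For the last statement, let $Y\subset X$ be an irreducible component. Since $K$ is connected, $Y$ is $K$-stable, and $\pi(Y)$ is an irreducible closed subset of the finite set $Q$, hence a single point. By the orbit-point correspondence this point corresponds to a unique closed $K$-orbit, which therefore lies in $Y$. The main obstacle in the whole argument is the finite generation of the multiplicity spaces over the invariant ring used in the converse; everything else is a formal consequence of the definition of the categorical quotient and of the connectedness of $K$.
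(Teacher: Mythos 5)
The paper offers no proof of this lemma; it simply cites Vinberg--Popov's survey \cite{VP}, where it appears as part of the standard development of affine GIT. Your reconstruction is correct and is precisely the standard argument: the categorical quotient $\pi\colon X\to Q=\mathrm{Spec}\,\mathbb C[X]^K$, the bijection between closed points of $Q$ and closed $K$-orbits, finite generation of the multiplicity spaces $(\mathbb C[X]\otimes V_\lambda^*)^K$ over $\mathbb C[X]^K$ for the converse, and $K$-stability of irreducible components (using connectedness of $K$, which the paper assumes) for the final claim. One small point worth making explicit, though it does not affect correctness: the equivalence between ``finite type over $\frak k$'' and ``finite multiplicity of each simple $K$-module in $\mathbb C[X]$'' relies on $K$ being connected, so that $K$-isotypic and $\frak k$-isotypic decompositions of the rational $K$-module $\mathbb C[X]$ coincide; this is consistent with the paper's standing hypothesis on $K$. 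Your argument for the last assertion is also correct --- since $\pi(Y)$ is a single point $q$ and each fiber of $\pi$ contains a unique closed orbit, that orbit lies in $Y$ because $Y$ is closed and contains the closure of any of its orbits; uniqueness is immediate because any closed orbit in $Y$ maps to $q$.
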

\begin{lemma}A finitely generated $(\frak g, \frak k)$-module $M$ has finite type over $\frak k$ if and only if the variety V$(M)$ contains only finitely many closed $K$-orbits. In this case the unique closed orbit of V$(M)$ is the point 0.\end{lemma}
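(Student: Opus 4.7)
The plan is to reduce the statement to the previous lemma, applied to the affine variety $V(M)$, by showing that $M$ and the coordinate ring $\mathbb C[V(M)]$ are of finite type over $\frak k$ simultaneously. First I would fix a $\frak k$-stable finite-dimensional generating subspace $M_0\subset M$, which exists because $M$ is finitely generated and $\frak k$-locally finite, and form the good filtration $M_i:=U_iM_0$. Each $M_i$ is a finite-dimensional $\frak k$-submodule, so $M$, $\mathrm{gr}M$, and $\overline{\mathrm{gr}}M$ all share the same $\frak k$-character; in particular any one of them is of finite type over $\frak k$ if and only if all are.

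Next I would invoke the sandwich already set up in Subsection~\ref{SSavg}: the surjective $\frak k$-equivariant map $\tilde M_0\otimes_{\mathbb C}\mathbb C[V(M)]\twoheadrightarrow\overline{\mathrm{gr}}M$ together with the injective $\frak k$-equivariant map $\mathbb C[V(M)]=\mathrm S(\frak g)/\mathrm J_M\hookrightarrow\tilde M_0^*\otimes_{\mathbb C}\overline{\mathrm{gr}}M$. Since $\tilde M_0$ is finite-dimensional, $\overline{\mathrm{gr}}M$ and $\mathbb C[V(M)]$ are simultaneously of finite type as $\frak k$-modules. Combined with the preceding lemma, this yields the claimed equivalence: $M$ has finite type over $\frak k$ if and only if $V(M)$ contains only finitely many closed $K$-orbits.

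For the final assertion I would use that $\mathrm J_M$ is a homogeneous ideal of $\mathrm S(\frak g)$, so $V(M)$ is a conical subvariety of $\frak g^*$, and the scaling action of $\mathbb C^*$ on $\frak g^*$ commutes with the $K$-action. In particular $\{0\}\subset V(M)$ is a closed $K$-orbit. For uniqueness, given any closed $K$-orbit $Kx\subset V(M)$, each $K(tx)=t\cdot Kx$ with $t\in\mathbb C^*$ is again a closed $K$-orbit; finiteness of closed $K$-orbits then forces the closed subgroup $S:=\{t\in\mathbb C^*\mid tx\in Kx\}$ to have finite index in $\mathbb C^*$, and connectedness of $\mathbb C^*$ forces $S=\mathbb C^*$. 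Hence $\mathbb C^*x\subset Kx$; closedness of $Kx$ in $\frak g^*$ together with $\lim_{t\to 0}tx=0$ then forces $0\in Kx$, so $x=0$.

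The main obstacle I foresee is this last step. The equivalence is a formal consequence of the sandwich of Subsection~\ref{SSavg}, but pinning down the unique closed orbit as $\{0\}$ additionally requires the homogeneity of $\mathrm J_M$ and the elementary observation that $\mathbb C^*$ has no proper closed subgroups of finite index—both modest but indispensable ingredients.
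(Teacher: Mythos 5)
Your proof is correct and follows the same outline as the paper's: reduce to the cited lemma of Vinberg--Popov via the sandwich of Subsection~\ref{SSavg}, observing that $\tilde M_0$ being finite-dimensional makes $\overline{\mathrm{gr}}M$ and $\mathrm S(\frak g)/\mathrm J_M=\mathbb C[\mathrm V(M)]$ have finite type over $\frak k$ simultaneously (and that $M$, $\mathrm{gr}M$, $\overline{\mathrm{gr}}M$ share the same $\frak k$-character because every step of the filtrations is a $\frak k$-submodule).

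Where you diverge is the final claim that the unique closed $K$-orbit is $\{0\}$. The paper obtains this by invoking the second sentence of the cited lemma (in an affine $K$-variety with finitely many closed orbits, each irreducible component contains \emph{exactly} one closed orbit) together with the fact that $\mathrm V(M)$ is conical, so $0$ lies in every irreducible component. You instead give a self-contained argument using only conicality and finiteness: for a closed orbit $Kx$ the stabilizer $S=\{t\in\mathbb C^*\mid tx\in Kx\}$ is a closed subgroup, finiteness of closed orbits makes $[\mathbb C^*:S]$ finite, connectedness of $\mathbb C^*$ forces $S=\mathbb C^*$, and then closedness of $Kx$ together with $\lim_{t\to 0}tx=0$ gives $x=0$. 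Your route does not need the ``exactly one closed orbit per component'' part of the cited lemma; the paper's route is shorter but leans on that extra input. Both are valid, and the rest of your argument (including the observation that $\mathrm J_M$ is a homogeneous, hence radical and graded, ideal, making $\mathrm V(M)$ conical and $\mathbb C[\mathrm V(M)]=\mathrm S(\frak g)/\mathrm J_M$) is sound.
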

\begin{proof}If V$(M)$ contains the unique closed $K$-orbit then $\mathbb C[$V$(M)]$ is a $\frak k$-module of finite type and therefore $\tilde M_0\otimes_\mathbb C($S$(\frak g)/$J$_M$) and $\overline{\mathrm{gr}}M$ are $\frak k$-modules of finite type.

Assume that $M$ is a $\frak k$-module of finite type. Then S$(\frak g)/$J$_M$ is a $\frak k$-module of finite type and V($M$) contains only finite number of the closed $K$-orbits. On the other hand, V$(M)$ is $\mathbb C^*$-stable and hence any irreducible component of V($M$) contains 0.\end{proof}
\subsection{Other faces of the support variety}\label{SSof}In this section $X$ is a variety of Borel subalgebras of $\frak g$. We recall the singular support $\EuScript V(\EuScript M)\subset$T$^*X$ of any coherent $\EuScript D^\lambda(X)$-module $\EuScript M$  is defined. The correspondence between $\frak g$-modules and $\EuScript D^\lambda(X)$-modules allows us to assign interesting geometric objects to a $\frak g$-module.

Let $M$ be a finitely generated $\frak g$-module which affords a central character $\chi$ and $\lambda\in\hat{\frak h}^*$ (see Subsection~\ref{SSd_g}) be a weight such that $\chi=\chi_\lambda$.
\begin{definition} The {\it singular support} of  $M$ is the variety $\EuScript V(M):=\EuScript V($Loc$M)$.\end{definition}
\begin{definition} The {\it support variety} L$(M)$ of $M$ is the projection of $\EuScript V(M)$ to $X$.\end{definition}
Let $\phi_X:$T$^*X\to\frak g^*$ be the moment map. D.~Barlet and M.~Kashiwara~\cite{BK}, have proved that \begin{center}V$(M)=\phi_X(\EuScript V($Loc$M))$\end{center} (see also~\cite{BoBry}). Therefore we have a diagram
\begin{center}\hspace{40pt}$\xymatrix{&\EuScript V(M)\subset\mathrm T^*X\ar[ld]^{\phi_X} \ar[rd]^{\mathrm{pr}} &\\\mathrm{V(}M\mathrm{)}\subset\frak g^*&&\mathrm L(M)\subset X\hspace{39pt}.}$\end{center}
\subsection{Hilbert-Mumford criterion}\label{SSHM} Let $X$ be an affine $K$-variety, $V$ be a $K$-module.
\begin{theorem}[Hilbert-Mumford~\cite{VP}] The closure of any $K$-orbit $\overline{Kx}\subset X$ contains a unique closed orbit $K\overline x\subset X$. There exists a group homomorphism $\mu:\mathbb C^*\to K$ such that $\lim\limits_{t\to 0}\mu(t)x=\bar x\in K\overline x$.\end{theorem}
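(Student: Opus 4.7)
The plan is to prove the two assertions separately: first the existence and uniqueness of a closed orbit in $\overline{Kx}$, then the existence of a one-parameter subgroup that realizes the degeneration.

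For the first assertion, I would invoke reductivity of $K$ to form the categorical (GIT) quotient $\pi\colon X\to X/\!/K=\mathrm{Spec}\,\mathbb C[X]^K$. Since $\pi$ is $K$-invariant and $\overline{Kx}$ is irreducible and $K$-stable, $\pi(\overline{Kx})$ is a single point $p$. Among the $K$-orbits inside $\overline{Kx}$, pick one, say $K\overline x$, of minimal dimension; since the boundary of any orbit is a union of orbits of strictly smaller dimension, $K\overline x$ must be closed. For uniqueness, the essential fact is that $\pi$ separates closed orbits: two disjoint closed $K$-invariant subsets have disjoint images (this is the standard consequence of reductivity via Haboush/Mumford). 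Thus any two closed orbits in the fiber $\pi^{-1}(p)$ would have to coincide, giving uniqueness of $K\overline x\subset\overline{Kx}$.

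For the second assertion I would reduce to the linear case. Since $X$ is affine and $K$ acts rationally on $\mathbb C[X]$, one chooses a finite-dimensional $K$-stable subspace of $\mathbb C[X]$ that generates it as an algebra; this yields a $K$-equivariant closed embedding $X\hookrightarrow V$ into a $K$-module $V$. Translating by $-\bar x$ (if $\bar x$ is fixed by the one-parameter subgroup we seek, otherwise working in a $K$-slice near $\bar x$), one may assume $\bar x=0$; the problem becomes: if $0\in\overline{Kx}$ in $V$, produce $\mu\colon\mathbb C^*\to K$ with $\lim_{t\to 0}\mu(t)x=0$. Apply the valuative criterion of properness to the morphism $K\to Kx$, $k\mapsto kx$ (a principal $K_x$-bundle): since $0\in\overline{Kx}$, there is a discrete valuation ring $R$ with fraction field $F$, a morphism $\mathrm{Spec}\,R\to\overline{Kx}$ whose generic point lands in $Kx$ and whose closed point maps to $0$. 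Lifting the generic point gives $g\in K(F)$ with $g\cdot x$ extending to $R$ and specializing to $0$.

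The remaining and most delicate step is to replace $g\in K(F)$ by an honest algebraic cocharacter $\mu\colon\mathbb C^*\to K$. Here I would invoke the Iwahori--Cartan decomposition $K(F)=K(R)\,T(F)\,K(R)$, where $T\subset K$ is a maximal torus. Writing $g=k_1\,\tau\,k_2$ and noting that $K(R)$ preserves the property of specializing to $0$, one sees that $\tau(k_2 x)$ already specializes to $0$. The $T(F)$-element $\tau$ is, up to $T(R)$, a cocharacter $\mu\colon\mathbb C^*\to T\subset K$ evaluated on a uniformizer, so after replacing $x$ by $k_2 x\in Kx$ we obtain $\lim_{t\to 0}\mu(t)(k_2 x)=0$, and conjugating $\mu$ by $k_2^{-1}$ recovers the desired cocharacter for $x$. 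The main obstacle in the whole argument is precisely this use of the Iwahori decomposition, which is where the reductive structure of $K$ is used in an essential, nontrivial way; the rest is formal manipulation of quotients and valuations.
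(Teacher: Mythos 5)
The paper states this result as a citation to [VP] (Vinberg--Popov, \emph{Invariant Theory}) and supplies no proof, so there is no internal argument to compare against; I will assess your sketch on its own terms. Your overall strategy --- categorical quotient plus the separation property of $\pi\colon X\to X/\!/K$ for the first assertion; $K$-equivariant closed embedding into a module, the valuative criterion, and the Iwahori--Cartan decomposition $K(F)=K(R)\,T(F)\,K(R)$ for the second --- is the standard one and is the argument in the cited source. The first half of your proof is correct as written.

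There is, however, a genuine gap in the reduction to the linear case. After embedding $X$ into a $K$-module $V$, you translate so that $\bar x=0$, but translation by $-\bar x$ is not $K$-equivariant unless $\bar x$ is a $K$-fixed vector, and the parenthetical alternatives you offer do not repair this: requiring $\bar x$ to be fixed by the very cocharacter you are trying to construct is circular, and ``working in a $K$-slice near $\bar x$'' is not an argument but a gesture at Luna's slice theorem, which would in turn need the reductivity of $K_{\bar x}$ (Matsushima) and a verification that the degeneration from $x$ to $K\bar x$ is visible in the slice --- none of which you supply. Fortunately the reduction is unnecessary: the valuative argument and the Iwahori decomposition prove the general statement directly. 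Choose a DVR $R\subset F$ and $g\in K(F)$ such that $g\cdot x$ extends over $R$ and specializes into the closed orbit $K\bar x$ (the first half of the theorem locates the limit orbit). Writing $g=k_1\tau k_2$, note that $k_1\in K(R)$ specializes to an element of $K$, which permutes $K\bar x$; hence $\tau k_2\cdot x=k_1^{-1}(g\cdot x)$ also specializes into $K\bar x$. Extracting the cocharacter $\mu$ from $\tau$ modulo $T(R)$, and conjugating by the specialization of $k_2$, then goes exactly as you sketched. In short: replace every occurrence of ``specializes to $0$'' by ``specializes into $K\bar x$,'' drop the translation step, and your argument becomes correct.
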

The {\it null cone} N$_K(V):=\{x\in V\mid 0\in \overline{Kx}\}$ is a closed algebraic subvariety of $V$~\cite{VP}.
\begin{theorem}[\cite{VP}]Fix $x\in V$. Then $0\in \overline{Kx}$ if and only if there exists a rational semisimple element $h\in\frak k$ such that $x\in V^{>0}_h$; here $V^{> 0}_h$ is the direct sum of $h$-eigenspaces in $V$ with positive eigenvalues.\end{theorem}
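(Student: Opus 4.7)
The statement is the classical Hilbert--Mumford numerical criterion phrased in Lie-algebraic terms. Since the preceding theorem (which I may assume) already produces, from the hypothesis $0\in\overline{Kx}$, a one-parameter subgroup $\mu:\mathbb{C}^*\to K$ with $\lim_{t\to 0}\mu(t)x=0$, the plan is to translate between such algebraic cocharacters and rational semisimple elements of $\mathfrak{k}$, checking that the limit condition becomes the eigenspace condition $x\in V_h^{>0}$.

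For the direction ($\Rightarrow$), I would start from the cocharacter $\mu$ supplied by the previous theorem. Differentiating $\mu$ at $1\in\mathbb{C}^*$ yields an element $h:=d\mu(1)\in\mathfrak{k}$ which is semisimple because $\mu$ is an algebraic homomorphism from a torus, and its eigenvalues on any $K$-module are integers (in particular rational). Decomposing $V=\bigoplus_{n\in\mathbb{Z}}V_h^{(n)}$ into $h$-eigenspaces and writing $x=\sum_n x_n$ accordingly, one has $\mu(t)x=\sum_n t^n x_n$. The limit as $t\to 0$ exists and equals $0$ exactly when $x_n=0$ for every $n\le 0$, which is precisely the statement that $x\in V_h^{>0}$.

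For the direction ($\Leftarrow$), I would reverse this procedure. Given a rational semisimple $h\in\mathfrak{k}$ with $x\in V_h^{>0}$, choose a positive integer $N$ clearing all denominators so that $Nh$ acts on $V$ with integer eigenvalues and, crucially, integrates to an algebraic cocharacter $\mu:\mathbb{C}^*\to K$ with $d\mu(1)=Nh$. The existence of such $\mu$ uses that $K$ is a (connected) algebraic group, so any semisimple element with integer-valued weights on a faithful representation comes from a cocharacter (one may pass to a maximal torus containing $\exp(\mathbb{C}Nh)$). Since the $h$-eigenvalues of the components of $x$ are strictly positive rationals, the $\mu$-weights are strictly positive integers, whence $\mu(t)x=\sum_{n>0} t^{n} x_n\to 0$ as $t\to 0$. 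Combined with the previous Hilbert--Mumford theorem, this gives $0\in\overline{Kx}$.

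The main technical point is the integration step in the $(\Leftarrow)$ direction: one has to know that a rational semisimple element of $\mathfrak{k}$ is, after scaling, the derivative of an algebraic one-parameter subgroup of $K$. Everything else is bookkeeping with eigenspace decompositions. Since $K$ is a connected linear algebraic group (subgroup of $G$, the adjoint group of $[\mathfrak{g},\mathfrak{g}]$), the correspondence between algebraic cocharacters of a maximal torus and elements of the integral lattice in its Lie algebra takes care of this; the rest of the argument is purely formal.
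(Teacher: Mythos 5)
The paper cites this theorem from~\cite{VP} without supplying a proof, so there is no internal argument to compare your proposal against; it stands or falls on its own. Your argument is the standard Hilbert--Mumford translation between one-parameter subgroups and rational semisimple generators, and it is essentially correct. The forward direction, differentiating the cocharacter $\mu$ furnished by the preceding theorem (specialized to $\bar x=0$) and reading off $x\in V_h^{>0}$ from $\mu(t)x=\sum_n t^n x_n\to 0$, is fine.

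One imprecision in the $(\Leftarrow)$ direction: clearing denominators so that $Nh$ has integer eigenvalues on $V$ does \emph{not} by itself put $Nh$ in the cocharacter lattice $X_*(T)$ of a maximal torus $T\subset K$ with $h\in\mathrm{Lie}(T)$. The set of elements of $\mathrm{Lie}(T)$ with integer pairing against all weights of $V$ is a lattice which can strictly contain $X_*(T)$ (precisely when the weights of $V$ generate a proper finite-index sublattice of $X^*(T)$). What one should say instead is that rational semisimplicity of $h$ means exactly $h\in X_*(T)\otimes\mathbb Q$, so some positive integer multiple $Nh$ lies in $X_*(T)$ and integrates to a cocharacter $\mu:\mathbb C^*\to T\subset K$; since positive scaling preserves the signs of the eigenvalues, $\mu(t)x\to 0$ still holds. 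Also, your closing appeal to the preceding Hilbert--Mumford theorem in this direction is superfluous: once $\lim_{t\to 0}\mu(t)x=0$, the chain $0\in\overline{\mu(\mathbb C^*)x}\subset\overline{Kx}$ is immediate.
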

\begin{corollary}[\cite{VP}]\label{CHM} There exists a finite set $H\subset\frak k$ of rational semisimple elements such that N$_K(V):=\cup_{h\in H}KV_h^{>0}$, where \begin{center}$KV_h^{>0}:=\{v\in V\mid v=kv_h$ for some $k\in K$ and $v_h\in V_h^{>0}$\}.\end{center}\end{corollary}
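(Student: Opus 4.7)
The plan is to reduce the union appearing in the previous Hilbert--Mumford theorem from one indexed by all rational semisimple elements of $\frak k$ to a finite subunion, by observing that $V_h^{>0}$ depends only on finitely much data.

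First I would invoke the preceding theorem to get $\mathrm N_K(V)=\bigcup_h KV_h^{>0}$, where $h$ ranges over all rational semisimple elements of $\frak k$. Next I would fix a Cartan subalgebra $\frak t\subset\frak k$. Since any rational semisimple element is $K$-conjugate to an element of $\frak t$, and since $V_{kh}^{>0}=kV_h^{>0}$ implies $KV_{kh}^{>0}=KV_h^{>0}$, I can replace the index set of the union by the set $\frak t_{\mathbb Q}$ of rational points of $\frak t$ without changing the union.

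The crucial observation is then combinatorial. Decompose $V=\bigoplus_{\lambda\in\Lambda}V_\lambda$ into $\frak t$-weight spaces; here $\Lambda\subset\frak t^*$ is the finite set of weights of $V$. For $h\in\frak t_{\mathbb Q}$ one has $V_h^{>0}=\bigoplus_{\lambda(h)>0}V_\lambda$, so $V_h^{>0}$ is determined by the subset $\Lambda_h^+:=\{\lambda\in\Lambda\mid\lambda(h)>0\}$. The finite collection of hyperplanes $\{h\in\frak t_{\mathbb Q}\mid\lambda(h)=0\}$, as $\lambda$ ranges over $\Lambda$, cuts $\frak t_{\mathbb Q}$ into finitely many cells, and $\Lambda_h^+$ (hence $V_h^{>0}$) is constant on each open cell. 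Selecting one representative $h$ from each cell (and discarding the cell $h=0$, which contributes nothing) produces a finite set $H\subset\frak t_{\mathbb Q}\subset\frak k$ of rational semisimple elements with $\bigcup_{h\in H}KV_h^{>0}=\bigcup_{h\in\frak t_{\mathbb Q}}KV_h^{>0}=\mathrm N_K(V)$, as required.

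There is no serious obstacle here: the only mildly subtle point is the reduction to $\frak t$, which rests on two facts used together, namely that rational semisimple elements of $\frak k$ form a single $K$-saturated class of conjugacy types meeting $\frak t$, and that the statement concerns the $K$-saturates $KV_h^{>0}$ rather than the subspaces $V_h^{>0}$ themselves. Once this reduction is made, the finiteness is a standard consequence of the finiteness of the weight arrangement.
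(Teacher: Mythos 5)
Your proof is correct, and it supplies an argument that the paper itself does not give: Corollary~\ref{CHM} is stated with a citation to~\cite{VP} and no proof appears in the text, so there is no internal argument to compare against. The route you take---conjugating $h$ into a fixed Cartan subalgebra $\frak t\subset\frak k$, noting that $KV_{kh}^{>0}=KV_h^{>0}$, decomposing $V$ into $\frak t$-weight spaces, and observing that $V_h^{>0}$ depends only on the sign pattern $\bigl(\mathrm{sgn}\,\lambda(h)\bigr)_{\lambda\in\Lambda}$, of which there are finitely many---is the standard one (it is what Kempf, Rousseau, Hesselink, and Vinberg--Popov all do in one form or another), and it correctly builds on the preceding Hilbert--Mumford statement in Subsection~\ref{SSHM}. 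One small point worth making explicit: $\Lambda_h^+$ is constant on each \emph{face} of the weight hyperplane arrangement in $\frak t_{\mathbb Q}$, not merely on the open top-dimensional chambers, so the finite set $H$ should be taken as one rational representative per nonzero face; your phrase ``one representative from each cell'' is fine if ``cell'' is read in this inclusive sense. With that reading, the argument is complete and there is no gap.
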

\subsection{A monoid of projective functors}\label{SSpro} Let $\frak b_\frak g$ be a Borel subalgebra of $\frak g$ and let $\frak h_\frak g$ be a Cartan subalgebra of $\frak b_\frak g$. Let $\Delta\subset \frak h_\frak g^*$ be the root system of $\frak g$, $\Delta^+\subset\Delta$ be the set of positive roots of $\frak b_g$, $\Pi\subset \Delta^+$ be the set of simple roots. We denote by s$_\alpha$ the reflection of $\frak h_\frak g^*$ with respect to $\alpha$ for $\alpha\in\Delta$, and $W^\frak g$ is the group generated by  $\{$s$_\alpha\}_{\alpha\in\Delta}$, i.e. the Weyl group of $\frak g$. Let $\alpha^\vee\in \frak h_\frak g$ be the coroot of $\alpha\in\Delta$, and \begin{center}$\rho:=\frac{1}{2}\Sigma_{\alpha\in\Delta^+}\alpha$.\end{center}

We introduce a partial order on $\frak h_\frak g^*$ (cf.~\cite[1.5]{BeG}). If $\phi, \psi\in\frak h_\frak g^*$, $\gamma\in\Delta^+$ we put $\phi<^\gamma\psi$ whenever $\phi=$s$_\gamma\psi$ and $\gamma^\vee(\psi)\in\mathbb Z_{>0}$. We put $\phi<\psi$ whenever there exists sequences of weights $\phi_0,..., \phi_n$ and of roots $\gamma_0,..., \gamma_n$ such that \begin{center}$\phi<^{\gamma_0}\phi_0<...<^{\gamma_n}\phi_n=\psi$.\end{center} We write $\phi\le \psi$ whenever $\phi=\psi\vee\phi<\psi$. A weight is called {\it dominant} if it is maximal with respect to partial order $<$.

We denote by W$^\frak g_\phi$ the stabilizer in W$^\frak g$ of $\phi\in\frak h_\frak g^*$. By definition, $\phi\in\frak h_\frak g^*$ {\it integral} if $\alpha^\vee(\phi)\in\mathbb Z$ for any $\alpha\in\Delta$. Furthermore, $\phi\in\frak h_\frak g^*$ is {\it regular} if W$^\frak g_\phi=\{e\}$. We call a pair of weights $(\phi, \psi)\in\frak h_\frak g^*\times\frak h_\frak g^*$ {\it correctly ordered} if $\phi$ is dominant, $\phi-\psi$ is an integral weight and $\psi\le w\psi$ for all $w\in$W$^\frak g_\phi$. In addition, we define two weights $\phi_1, \phi_2$ to be {\it equivalent} if $\phi_1-\phi_2$ is integral and W$^\frak g_{\phi_1}=$W$^\frak g_{\phi_2}$.

Let $\EuScript O$ be the category of finitely generated $\frak g$-modules such that the action of $\frak b_\frak g$ is locally finite and the action of $\frak h_\frak g$ is semisimple. Let $\lambda\in\frak h_\frak g^*$ be a weight. We denote by M$_\lambda$ the Verma module with the highest weight $\lambda-\rho$, and by L$_\lambda$ the unique simple quotient of M$_\lambda$. Both M$_\lambda$ and L$_\lambda$ are objects of the category $\EuScript O$. We denote by P$_\lambda$ a minimal projective cover of L$_\lambda$ in $\EuScript O$. If $\lambda$ is dominant, then M$_\lambda\simeq$P$_\lambda$.

The modules M$_\lambda$, L$_\lambda$, P$_\lambda$ afford the same generalized central character and we denote this central character by $\chi_\lambda$. Let $I$ be a primitive ideal of U$(\frak g)$. By a famous theorem of Duflo~\cite{Dix}, $I$=Ann~L$_\lambda$ for some weight $\lambda\in\frak h_\frak g^*$.

Let $\chi$ be a central character. The set of weights $\lambda$ such that M$_\lambda$ are annihilated by Ker$\chi$ is nonempty and we denote this set W$^\frak g(\chi)$. The set W$^\frak g(\chi)$ is an orbit of W$^\frak g$. Therefore we can identify the set of central characters with the set of W$^\frak g$-orbits in $\frak h_\frak g^*$.

The Weyl group W$^\frak g$ acts on $\frak h_\frak g^*\times\frak h_\frak g^*$:\begin{center} $w((\lambda_1, \lambda_2))\mapsto (w(\lambda_1), w(\lambda_2))$.\end{center} Any W$^\frak g$-orbit in $\frak h_\frak g^*\times\frak h_\frak g^*$ contains a correctly ordered representative.

Let $E$ be a finite-dimensional $\frak g$-module. We have the functor\begin{center}F$_E:\frak g-$mod$\to\frak g-$mod$,\hspace{10pt} M\mapsto E\otimes M$.\end{center}The restriction of F$_E$ to the category of $\frak g$-modules affording a given generalized central character $\chi$ is a direct sum of a finite number of indecomposable exact functors. Following~\cite{BeG}, we call a direct summand of F$_E$ a {\it projective functor}.

The set of indecomposable projective functors arising from all finite-dimensional $\frak g$-modules $E$ is naturally identified with the set of W$^\frak g$-orbits in $\frak h_\frak g^*\times\frak h_\frak g^*$~\cite[3.3]{BeG}. Let $(\phi, \psi)\in\frak h_\frak g^*\times\frak h_\frak g^*$ be a correctly ordered pair and $\EuScript H_\phi^\psi$ be the corresponding projective functor. Then \begin{center}$\EuScript H_\phi^\psi$M$_\phi=$P$_\psi$\end{center} and the functors H$_\phi^\psi$ and H$_\psi^\phi$ are adjoint one to each other. Assume that $\phi-\psi$ is integral and W$^\frak g_\phi=$W$^\frak g_\psi$. Then \begin{center}$(\EuScript H_\phi^\psi, \EuScript H_\psi^\phi)$\end{center} is a pair of mutually inverse equivalences of the respective categories of $\frak g$-modules affording the generalized central characters $\chi_\phi$ and  $\chi_\psi$.

Let PFunc$(\chi_\lambda$) be the set of projective functors as above which preserve the category of $\frak g$-modules affording the generalized central character $\chi_\lambda$. Obviously PFunc$(\chi_\lambda)$ carries an additive structure and is a monoid with respect to it. We denote by PF$\overline{\mathrm{unc}}(\chi_\lambda)$ the minimal abelian group which contains PFunc$(\chi_\lambda$). As the composition of projective functors is a projective functor, PF$\overline{\mathrm{unc}}(\chi_\lambda$) carries a multiplicative structure. Therefore PF$\overline{\mathrm{unc}}(\chi_\lambda)$ is a unitary ring.
\begin{proposition}\label{Ppfsn}Let $\lambda$ be a regular dominant integral weight. Then the ring PF$\overline{\mathrm{unc}}(\chi_\lambda)$ is isomorphic to the ring $\mathbb Z[$W$^\frak g$].\end{proposition}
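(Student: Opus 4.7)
The plan is to combine the Bernstein--Gelfand classification of indecomposable projective functors recalled in the preceding paragraphs with an analysis of their action on the Grothendieck group of the principal block.

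First I would establish the additive isomorphism. By \cite[3.3]{BeG}, indecomposable projective functors correspond bijectively to $W^{\frak g}$-orbits in $\frak h_{\frak g}^*\times\frak h_{\frak g}^*$; those lying in $\mathrm{PFunc}(\chi_\lambda)$ are exactly the orbits whose correctly ordered representative $(\phi,\psi)$ has both coordinates in $W^{\frak g}\lambda$. Since $\lambda$ is dominant, $\phi=\lambda$; since $\lambda$ is regular, $W^{\frak g}_\lambda=\{e\}$, so the secondary condition $\psi\le w\psi$ for $w\in W^{\frak g}_\phi$ is vacuous. Hence $\psi=w\lambda$ for a unique $w\in W^{\frak g}$, and we obtain a bijection $w\leftrightarrow T_w:=\EuScript H_\lambda^{w\lambda}$ between $W^{\frak g}$ and the indecomposable objects of $\mathrm{PFunc}(\chi_\lambda)$. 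Consequently $\mathrm{PF\overline{unc}}(\chi_\lambda)$ is free abelian with basis $\{[T_w]\}_{w\in W^{\frak g}}$, matching the additive rank $|W^{\frak g}|$ of $\mathbb Z[W^{\frak g}]$.

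Second, to promote this to a ring isomorphism, I would consider the natural action of $\mathrm{PF\overline{unc}}(\chi_\lambda)$ on $K:=K_0(\EuScript O_{\chi_\lambda})$, which is free abelian of rank $|W^{\frak g}|$ with the Verma classes $\{[M_{w\lambda}]\}$ as a basis. This produces a ring homomorphism $\Phi:\mathrm{PF\overline{unc}}(\chi_\lambda)\to\mathrm{End}_{\mathbb Z}(K)$. Its injectivity follows from the fact that a projective functor in the regular case is determined up to isomorphism by its value on $M_\lambda$: if $\Phi([F])=0$ then $[F(M_\lambda)]=0$ in $K$, so the projective module $F(M_\lambda)$ vanishes, forcing $F=0$. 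It remains to identify the image of $\Phi$ with the image of $\mathbb Z[W^{\frak g}]$ inside $\mathrm{End}_{\mathbb Z}(K)$ under its regular representation. Both images are free abelian of rank $|W^{\frak g}|$, and both contain the identity and are closed under composition, so matching them reduces to checking the multiplicative relations among the generators, which is supplied by the detailed composition formulas in \cite[Sect.~3]{BeG}.

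The main obstacle is this last matching. The naive assignment $[T_w]\leftrightarrow w$ is \emph{not} a ring homomorphism: already for $\frak g=\frak{sl}_2$ one has $[T_s]\cdot[T_s]=2[T_s]$ (translation through the wall composed with itself is twice translation through the wall), whereas $s\cdot s=e$ in $\mathbb Z[W^{\frak g}]$. The correct isomorphism sends $[T_w]$ to a nontrivial integer combination of group elements, whose coefficients encode the Verma multiplicities $(P_{w\lambda}:M_{x\lambda})$ in a manner consistent with BGG reciprocity. Verifying that these combinations form a $\mathbb Z$-basis of $\mathbb Z[W^{\frak g}]$ and that composition matches the group product is the technical heart of the proof.
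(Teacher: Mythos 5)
Your first paragraph (additive bijection via the Bernstein--Gelfand classification of indecomposable projective functors) is correct and matches the paper, and your diagnosis of the obstacle is also correct: the naive assignment $T_w\mapsto w$ is not multiplicative, as your $\frak{sl}_2$ example shows. However, your proof stops exactly where it needs to do work: you write that matching the image of $\Phi$ with $\mathbb Z[W^\frak g]$ ``reduces to checking the multiplicative relations among the generators'' and later concede that this verification ``is the technical heart of the proof''---but you never carry it out, and the route you sketch (identifying two sub-lattices of $\mathrm{End}_\mathbb Z(K)$ of the same rank and invoking ``composition formulas'') has no reason to succeed without a further idea.

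The missing ingredient, which the paper supplies, is the coherent-continuation action of $W^\frak g$ on the Grothendieck group $[\EuScript O_\lambda]_\frak g$: one defines a $W^\frak g$-module structure by $w\cdot[\mathrm M_{w'\lambda}]:=[\mathrm M_{ww'\lambda}]$, which makes $[\EuScript O_\lambda]_\frak g$ a free $\mathbb Z[W^\frak g]$-module of rank one with generator $[\mathrm M_\lambda]$. The crucial fact---absent from your argument---is that every projective functor commutes with this action at the level of Grothendieck classes. Granting that, a projective functor acts as a $\mathbb Z[W^\frak g]$-module endomorphism of a free rank-one module, hence as (right) multiplication by a unique element of $\mathbb Z[W^\frak g]$, namely its value $[\EuScript H(\mathrm M_\lambda)]$. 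Multiplicativity of the map $\EuScript H\mapsto[\EuScript H(\mathrm M_\lambda)]$ is then automatic from $\mathrm{End}_{\mathbb Z[W^\frak g]}(\mathbb Z[W^\frak g])\cong\mathbb Z[W^\frak g]$; there is no case-by-case relation-checking to do. Surjectivity follows because $\{[\mathrm P_{w\lambda}]\}_{w\in W^\frak g}$ is also a basis, and your injectivity argument via evaluation on $\mathrm M_\lambda$ supplies the rest. Without the commutation with coherent continuation, the ring structure of $\mathrm{PF\overline{unc}}(\chi_\lambda)$ is not under control and your second and third paragraphs cannot be completed as written.
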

\begin{proof}Let $\EuScript O_\lambda$ be the subcategory of $\EuScript O$ consisting of $\frak g$-modules which afford the generalized central character $\chi_\lambda$. Let $[\EuScript O]_\frak g$ be the free abelian group generated by the isomorphism classes of simple $\frak g$-modules of the category $\EuScript O$, and let $[\EuScript O_\lambda]_\frak g$ be the subgroup generated by the simple objects of $\EuScript O_\lambda$. Any object $M$ of $\EuScript O$ has finite length and therefore determines [$M]\in [\EuScript O]_\frak g$.

Let $\EuScript H_1, \EuScript H_2\in$PFunc$(\chi_\lambda)$. Then $\EuScript H_1=\EuScript H_2$ if and only if \begin{center}$[\EuScript H_1($M$_\lambda)]=[\EuScript H_2($M$_\lambda)]$.\end{center} For any $w\in$W$^\frak g$, we have $\EuScript H_\lambda^{w\lambda}\in$PFunc$(\chi_\lambda)$ and $\EuScript H_\lambda^{w\lambda}$M$_\lambda=$P$_{w\lambda}$. The elements $\{[$M$_{w\lambda}]\}_{w\in\mathrm W^\frak g}$\} form a basis of the free abelian group $[\EuScript O_\lambda]_\frak g$, and therefore $[\EuScript O_\lambda]_\frak g$ is identified with $\mathbb Z^{\mathrm W^\frak g}$ as a set. We introduce an action of W$^\frak g$ on $[\EuScript O_\lambda]_\frak g$ via the formula\begin{center}$(w, [$M$_{w'\lambda}])\mapsto $[M$_{ww'\lambda}]$ for any $w', w\in$W$^\frak g$.\end{center}Any functor $\EuScript H\in$PFunc$(\chi_\lambda)$ commutes with this action. As $\{[$P$_{w\lambda}]\}_{w\in\mathrm W^\frak g}$ is a basis of the free abelian group $[\EuScript O_\lambda]_\frak g$, the map \begin{center}PF$\overline{\mathrm{unc}}(\chi_\lambda)\to\mathbb Z[$W$^\frak g$]$=\mathbb Z^{\mathrm W^\frak g}$,\end{center} given by $\EuScript H\mapsto{}{}[\EuScript H($M$_\lambda)]$ on generators, is an isomorphism.\end{proof}
\begin{theorem}[\cite{BeG}]\label{Tweq} Let $\lambda_1, \lambda_2$ be dominant weights such that the difference $\lambda_1-\lambda_2$ is integral and W$^\frak g_{\lambda_1}=$W$^\frak g_{\lambda_2}$, i.e. $\lambda_1$ is equivalent to $\lambda_2$. Then\\a) the categories of $\frak g$-modules which afford the generalized central characters $\chi_{\lambda_1}$ and $\chi_{\lambda_2}$ are equivalent;\\b) for a given $w\in$W$^\frak g$, the categories of $\frak g$-modules annihilated by Ann~L$_{w\lambda_1}$ and Ann~L$_{w\lambda_2}$ are equivalent.\end{theorem}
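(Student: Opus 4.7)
My plan is to deduce both parts from the formalism of projective functors recalled just above. Part~(a) is almost immediate. The pair $(\lambda_1,\lambda_2)$ is correctly ordered: $\lambda_1$ is dominant by hypothesis, $\lambda_1-\lambda_2$ is integral by hypothesis, and the requirement $\lambda_2\le w\lambda_2$ for $w\in W^\frak g_{\lambda_1}$ is automatic since $W^\frak g_{\lambda_1}=W^\frak g_{\lambda_2}$ fixes $\lambda_2$. The same verification works with the roles of $\lambda_1$ and $\lambda_2$ swapped. Invoking the statement recalled in the paragraph introducing the pair $(\EuScript H_\phi^\psi,\EuScript H_\psi^\phi)$, the projective functors $F:=\EuScript H_{\lambda_1}^{\lambda_2}$ and $F':=\EuScript H_{\lambda_2}^{\lambda_1}$ are mutually inverse equivalences between the two categories in question, giving~(a).

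For part~(b), the key intermediate fact is the identity $F(L_{w\lambda_1})=L_{w\lambda_2}$ for every $w\in W^\frak g$. I would prove it by first observing the base case $w=e$: since $\lambda_1$ is dominant we have $M_{\lambda_1}=P_{\lambda_1}$, so $F(M_{\lambda_1})=P_{\lambda_2}$ combined with exactness and the fact that an equivalence sends simple heads to simple heads yields $F(L_{\lambda_1})=L_{\lambda_2}$. For the inductive step I would use that $F$ preserves composition multiplicities, that the indecomposable projective functor indexing the $W^\frak g$-orbit of $(\lambda_1,\lambda_2)$ coincides with that of $(w\lambda_1,w\lambda_2)$, and that the Bruhat-ordered indexing of simples in the blocks $\EuScript O_{\chi_{\lambda_1}}$ and $\EuScript O_{\chi_{\lambda_2}}$ is canonically identified under the hypothesis $W^\frak g_{\lambda_1}=W^\frak g_{\lambda_2}$.

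With $F(L_{w\lambda_1})=L_{w\lambda_2}$ in hand, part~(b) is formal. The annihilator of any $M$ is the intersection over composition factors $L_{w'\lambda_1}$ of $M$ of the ideals Ann~$L_{w'\lambda_1}$; applying $F$ takes these composition factors to $L_{w'\lambda_2}$ with the same multiplicities, and the inclusion order among primitive ideals Ann~$L_{w'\lambda_i}$ of a single block can be described intrinsically (via existence of nonzero maps from a projective cover of $L_1$ to modules containing $L_2$ as a composition factor), so $F$ also preserves this order. Consequently Ann~$L_{w\lambda_1}\subseteq$~Ann~$M$ if and only if Ann~$L_{w\lambda_2}\subseteq$~Ann~$F(M)$, and the restricted equivalence of~(b) follows by applying the same argument to $F'$.

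The main obstacle is the identification $F(L_{w\lambda_1})=L_{w\lambda_2}$: an equivalence of categories only gives an abstract bijection of simples, and pinning it down explicitly uses the hypothesis $W^\frak g_{\lambda_1}=W^\frak g_{\lambda_2}$ in an essential way, ruling out the wall-crossing phenomenon in which translation across a wall kills a simple module. Once this identification is secured, the rest of the proof is bookkeeping with composition factors and the categorically-intrinsic inclusion order on primitive ideals.
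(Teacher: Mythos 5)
The paper does not give its own proof of this result; it is stated as a citation to [BeG] (Bernstein--Gelfand, \emph{Tensor products of finite and infinite-dimensional representations of semisimple Lie algebras}), so there is no in-paper argument to compare against. Your part~(a) is correct and uses exactly the formalism the paper recalls just above the theorem: $(\lambda_1,\lambda_2)$ is correctly ordered, and the displayed paragraph in Subsection~\ref{SSpro} asserts precisely that $(\EuScript H_{\lambda_1}^{\lambda_2},\EuScript H_{\lambda_2}^{\lambda_1})$ is a pair of mutually inverse equivalences when $\lambda_1-\lambda_2$ is integral and $W^\frak g_{\lambda_1}=W^\frak g_{\lambda_2}$.

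Part~(b), however, has a genuine gap. You write that ``the annihilator of any $M$ is the intersection over composition factors $L_{w'\lambda_1}$ of $M$ of the ideals Ann~$L_{w'\lambda_1}$.'' This is false: one has $\mathrm{Ann}\,M \subseteq \bigcap_i \mathrm{Ann}\,L_i$ but the inclusion is generally strict (e.g.\ for a nontrivial self-extension of a simple module, an element killing the simple need not kill the extension). More fundamentally, the theorem concerns \emph{arbitrary} $\frak g$-modules annihilated by $\mathrm{Ann}\,L_{w\lambda_i}$ --- that is, all modules over the quotient algebra $\mathrm U(\frak g)/\mathrm{Ann}\,L_{w\lambda_i}$ --- and such modules need not have composition series at all, so counting composition factors is not available. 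The [BeG] argument instead works at the level of two-sided ideals and Harish-Chandra bimodules: the equivalence $F=\EuScript H_{\lambda_1}^{\lambda_2}$ is realized by tensoring with a projective Harish-Chandra bimodule, and one shows that the induced map on the lattices of two-sided ideals of $\mathrm U(\frak g)/\mathrm{Ker}\,\chi_{\lambda_1}$ and $\mathrm U(\frak g)/\mathrm{Ker}\,\chi_{\lambda_2}$ is an order-isomorphism matching $\mathrm{Ann}\,L_{w\lambda_1}$ with $\mathrm{Ann}\,L_{w\lambda_2}$ (here your observation $F(L_{w\lambda_1})=L_{w\lambda_2}$, which is correct and does rely on the no-wall-crossing hypothesis, is the needed ingredient). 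From that bimodule identity one deduces that $F$ restricts to an equivalence between the module categories over the two quotient algebras. To repair your proof you would replace the composition-factor step with this ideal-lattice/bimodule argument.
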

The subcategories of locally finite $\frak k$-modules and bounded $\frak k$-modules are stable under this equivalence.
\subsection{Bounded weight modules}\label{SM} In the joint work~\cite{PS} I.~Penkov and V.~Serganova proved that 'boundness' is not a property of a module $M$ but of the ideal
Ann$M$. More precisely, this means the following.
\begin{theorem}\label{Trel}Let $M$ and $N$ be simple $(\frak g, \frak k)$-modules such that Ann$M$=Ann$N$. Suppose that the function $[M:\cdot]_\frak k$ is uniformly bounded by a constant $C_M$. Then the function $[N:\cdot]_\frak k$ is uniformly bounded by $C_M$ (see also Theorem~\ref{Tcob}).\end{theorem}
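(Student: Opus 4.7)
The plan is to express the $\frak k$-multiplicity of a simple $(\frak g, \frak k)$-module as the dimension of a simple module over an auxiliary algebra that depends only on the annihilator ideal, and then to exploit the uniformity of simple-module dimensions for that algebra. Set $I := \mathrm{Ann}\,M = \mathrm{Ann}\,N$, $A := U(\frak g)/I$, and for each simple $\frak k$-module $V$ introduce the auxiliary algebra
\[
R_V \;:=\; \bigl(A \otimes \mathrm{End}_{\mathbb C}(V)^{\mathrm{op}}\bigr)^{\frak k},
\]
where $\frak k$ acts on $A$ by the adjoint action and on $\mathrm{End}(V)$ by conjugation via the $V$-action. For any $(\frak g, \frak k)$-module $L$ annihilated by $I$, the multiplicity space $L[V] := \mathrm{Hom}_{\frak k}(V, L)$ is naturally a left $R_V$-module of dimension $[L:V]_{\frak k}$, and when $L$ is a simple $A$-module the space $L[V]$ is either zero or a simple $R_V$-module. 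Both $M[V]$ and $N[V]$ are therefore simple modules over the \emph{same} algebra $R_V$.

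The core step is to establish that $R_V$ has a uniform bound on the dimensions of its simple modules. More precisely, one shows that $R_V$ satisfies a polynomial identity, i.e.\ is finite over its center after inverting a suitable element, so that every simple $R_V$-module has dimension at most the PI-degree $d(R_V)$ of $R_V$. Once this is in hand, the hypothesis $\dim M[V] \le C_M$ together with the fact that $M[V]$ is a simple $R_V$-module forces $d(R_V) \le C_M$ whenever $M[V] \ne 0$, and an argument using primality of $A$ (and hence of $A \otimes \mathrm{End}(V)^{\mathrm{op}}$) propagates the same bound to every simple $R_V$-module, in particular to $N[V]$. Concluding, $\dim N[V] \le C_M$ for every simple $\frak k$-module $V$, i.e.\ $[N:V]_{\frak k} \le C_M$.

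The main obstacle is the second paragraph: proving that the invariant subalgebra $R_V$ enjoys the claimed PI-structure and that its PI-degree is read off from \emph{any} simple $A$-module computing it. This is the technical heart of~\cite{PS}, resting on the fact that $A = U(\frak g)/I$, for $I$ primitive, is a prime Noetherian Harish-Chandra bimodule to which Joseph's theory of Goldie rank polynomials applies; the $\frak k$-invariant subalgebra inherits just enough of this structure to ensure the uniform bound. A delicate point is that the \emph{same} numerical constant $C_M$ (not merely some constant depending on $M$) controls $N$: this follows because both $M[V]$ and $N[V]$ are simple modules over the same $R_V$, so that once $R_V$ has PI-degree at most $C_M$, every simple $R_V$-module has dimension at most $C_M$ uniformly.
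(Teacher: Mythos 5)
The paper does not prove Theorem~\ref{Trel}; it quotes it from Penkov and Serganova~\cite{PS} and only proves the weaker geometric Theorem~\ref{Tcob}. So your proposal has to stand on its own. The framework you set up --- view $[L:V]_{\frak k}$ as $\dim L[V]$ for $L[V]=\mathrm{Hom}_{\frak k}(V,L)$, a simple module over the invariant algebra $R_V$, and then impose a PI-degree bound --- does gesture at the mechanism in~\cite{PS}.

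But the step you call "delicate" is a genuine gap, not a detail to be quoted. For a prime PI ring $R$, a simple $R$-module can have dimension \emph{strictly smaller} than the PI-degree $d(R)$ (this happens for simple modules whose central annihilator lies off the Azumaya locus of the center). So knowing $M[V]$ is a simple $R_V$-module with $\dim M[V]\le C_M$ does \emph{not} yield $d(R_V)\le C_M$. The inequality you actually need, $d(R_V)\le\dim M[V]$, would follow if $M[V]$ were a \emph{faithful} $R_V$-module --- a prime PI ring with a faithful simple module is primitive, hence by Kaplansky a $d(R_V)\times d(R_V)$ matrix algebra over $\mathbb C$, and then every simple module has dimension exactly $d(R_V)$. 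But faithfulness of $M$ over $A=\mathrm U(\frak g)/I$ does not pass to faithfulness of $M[V]$ over $R_V=(A\otimes\mathrm{End}(V)^{\mathrm{op}})^{\frak k}$: $A\otimes\mathrm{End}(V)^{\mathrm{op}}$ acts faithfully on the full space $\mathrm{Hom}_{\mathbb C}(V,M)$, but restricting to the subring $R_V$ and the invariant subspace $M[V]$ can create a nonzero kernel --- an element of $A^{\frak k}$ can annihilate the entire $V$-isotypic component of $M$ while acting nontrivially elsewhere. Your phrase "an argument using primality of $A$" is meant to patch exactly this, but no such argument is given, and primality of $A$ or of $A\otimes\mathrm{End}(V)^{\mathrm{op}}$ alone does not do it. There is also an unaddressed secondary issue: your bound on $d(R_V)$ is only derived for $V$ with $M[V]\neq 0$, whereas the conclusion requires controlling $\dim N[V]$ for simple $\frak k$-types $V$ with $N[V]\neq 0$ but $M[V]=0$ --- which can occur (e.g.\ pairs of simple Harish--Chandra modules sharing an annihilator but with disjoint $K$-type supports). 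Closing both gaps is precisely the technical content of~\cite{PS} that the sketch would need to reconstruct rather than cite.
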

To classify bounded modules $M$ we should first classify 'bounded ideals' Ann$M$, i.e. ideals for which exist at least one bounded module. It is well known that the two-sided ideals of U$(\frak g)$ containing a fixed maximal ideal in Z$(\frak g)$ are closely related to category $\EuScript O$~\cite{BeG}. Therefore one may expect that a classification of bounded ideals has something in common with the classification of Verma modules L$_\lambda$ whose weight multiplicities are bounded.

We fix the following notation:\\
--- $\{$e$_i\}_{i\le n_W} \subset W$ is a basis of W, \{e$_i^*\}_{i\le n_W}\subset W^*$ is the dual basis;\\
--- $\{$e$_{i,j}\}_{i\le n_W , j\le n_W}\subset\frak{gl}(W)$ stands for the elementary matrix e$_i\otimes$e$_j^*$;\\
--- $\varepsilon_i:=$e$_{i,i}-\frac{1}{n_W}($e$_{1,1}+$e$_{2,2}+...+$e$_{n_W, n_W}$) for $i\le n_W$;\\
--- $\frak h_W:=$span$\langle$e$_{i,i}\rangle_{i\le n_W}$.
\subsubsection{The case $\frak g=\frak{sl}(W)$}\label{SSbmsl}We identify all weights of $\frak h^{\frak{sl}}_W:=\frak h_W\cap\frak{sl}(W)$ with the set of $n_W$-tuples modulo the equivalence relation\begin{center}
$(\lambda_1, \lambda_2,...,
\lambda_{n_W})\leftrightarrow(\lambda_1+k, \lambda_2+k,...,
\lambda_{n_W}+k).$\end{center} We fix the set of positive roots of
$\frak{sl}(W)$ in $\frak h_W^\frak{sl}$ for which $\rho$ is
identified with $(n_W,..., 1)$ and $\varepsilon_i$ is identified
with the $n_W$-tuple $(i$ zeros , 1, $n_W-i-1$ zeros). Let
$\bar\lambda$ be an $n_W$-tuple and $\lambda$ be the corresponding
weight. We put I$(\lambda):=$Ann~L$_\lambda$. Duflo's
theorem~\cite{Dix} claims now that, if $I$ is any primitive ideal of
U$(\frak{sl}(W))$, then \begin{center}$I$=I$(\lambda)$\end{center}
for some $n_W$-tuple $\bar\lambda$.
\begin{theorem}[\cite{M}]\label{Tsmo} The $(\frak{sl}(W), \frak h^{\frak{sl}}_W)$-module L$_\lambda$ is $\frak h^{\frak{sl}}_W$-bounded if and only if $\lambda$ is a semi-decreasing tuple.\end{theorem}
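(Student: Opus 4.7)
The plan is to combine three pieces of machinery: the translation principle (Theorem~\ref{Tweq}), Beilinson-Bernstein localization on $\mathbb{P}(W)$, and the associated-variety criterion of Proposition~\ref{Pbm}. Both boundedness and the property of being semi-decreasing descend to the $\mathrm W^\frak{g}$-equivalence class of $\bar\lambda$, so one may always pass to a convenient representative. The case of a decreasing $\bar\lambda$ is trivial since $L_\lambda$ is then finite-dimensional; we focus throughout on the infinite-dimensional case.

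For the ``if'' direction, the goal is to realize $L_\lambda$ explicitly inside a manifestly multiplicity-free $\frak{sl}(W)$-module. Given a semi-decreasing $\bar\lambda$, write $\bar\lambda = \bar\mu^+$ where $\bar\mu$ is a decreasing $(n_W-1)$-tuple and $t$ is the extra coordinate inserted at position $k$. On $X = \mathbb{P}(W)$, fix an affine cell $U \cong \mathbb{A}^{n_W-1}$ carrying a coordinate $z$ corresponding to the $k$-th direction, and consider the $\EuScript D^\lambda(X)$-module $\EuScript M_\lambda$ obtained as the Deligne canonical extension of $\mathcal{O}_U \cdot z^{t-\lambda_1}$ across $X \setminus U$. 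This $\EuScript M_\lambda$ is holonomic with monodromy $\mathrm m(\bar\lambda)$, and $\mathrm{GSec}(\EuScript M_\lambda)$ decomposes as an $\frak h^\frak{sl}_W$-module into one-dimensional weight spaces indexed by monomials. The highest-weight subquotient of this module is then $L_\lambda$, which inherits multiplicity-freeness and hence is bounded.

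For the ``only if'' direction, I argue by contrapositive via Proposition~\ref{Pbm}(a). The key structural input is that the associated variety $\mathrm{GV}(L_\lambda) = \overline{\mathrm V(\mathrm I(\lambda))}$ equals the closure of the minimal nilpotent orbit $\overline{\EuScript O_{\min}}$ (rank-$\le 1$ nilpotent matrices) precisely when $\bar\lambda$ is semi-decreasing, and otherwise is the closure of a strictly larger nilpotent $\mathrm{SL}(W)$-orbit. By Proposition~\ref{Pbm}(a), boundedness of $L_\lambda$ would require every irreducible component of $\mathrm V(L_\lambda) \subset \mathrm{GV}(L_\lambda) \cap (\frak h^\frak{sl}_W)^\perp$ to be $\frak h^\frak{sl}_W$-spherical, i.e.\ to carry a dense torus orbit. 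A direct dimension count at each of the minimal nilpotent orbits strictly above $\EuScript O_{\min}$ rules out torus-sphericity for those orbits, and Theorem~\ref{GrO} then propagates non-coisotropy upward in the closure order, contradicting boundedness.

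The main obstacle is the holonomic construction of $\EuScript M_\lambda$ in the ``if'' direction when $t$ is non-integral (so $\bar\lambda$ is genuinely semi-integral). The singular-integral case reduces to a Beilinson-Bernstein localization of a singular Verma quotient and is standard. For non-integral $t$ one must verify that the Deligne extension of $\mathcal{O}_U \cdot z^{t-\lambda_1}$ is a coherent $\EuScript D^\lambda(X)$-module affording $\chi_\lambda$, and that its global sections admit a unique highest-weight subquotient isomorphic to $L_\lambda$. This reduces, via restriction to an $\frak{sl}_2$-subalgebra corresponding to the distinguished root direction, to the classical analysis of $\frak{sl}_2$-modules on $z^{t-\lambda_1}\mathbb{C}[z,z^{-1}]$, where the weight-space structure is elementary.
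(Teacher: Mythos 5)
This theorem is stated in the paper as a citation to Mathieu~\cite{M}; the paper gives no proof of its own, so there is nothing in the text to compare your argument against directly. What follows is an assessment of your sketch as an independent proof.

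The ``only if'' direction is basically the right idea. If $\mathrm L_\lambda$ is infinite-dimensional and $\frak h_W^\frak{sl}$-bounded, then every component of $\mathrm V(\mathrm L_\lambda)$ is $\mathrm H_W^\frak{sl}$-spherical (Proposition~\ref{Pbm}), hence of dimension at most $n_W-1$, and Bernstein's inequality forces $\dim\mathrm{GV}(\mathrm L_\lambda)\le 2(n_W-1)$; since the only nonzero nilpotent orbit of that dimension is the minimal one $\EuScript Q$, $\mathrm{Ann}\,\mathrm L_\lambda$ must be a Joseph ideal. But the step from ``Joseph ideal'' to ``$\bar\lambda$ is semi-decreasing'' is where you have to be careful: the paper's Corollary~\ref{CJid}, which asserts precisely this correspondence, is itself deduced (in Section~\ref{SJId}) from Theorem~\ref{Tsmo} together with the structure of Mathieu's coherent families. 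So if you lean on the internal Corollary~\ref{CJid}, the argument is circular. To avoid that, you need to invoke Joseph's independent classification~\cite{Jo3} of primitive ideals with associated variety $\overline{\EuScript Q}$, as the paper's own attribution ``[Jo3, Table~3]'' indicates; your write-up presents this input as though it were a tautological consequence of the associated-variety picture, which it isn't.

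The ``if'' direction is the genuine gap. Your Deligne-extension construction of $\EuScript M_\lambda$ on $\mathbb P(W)$ is under-specified in exactly the places where the content lies: you do not verify that the resulting sheaf is coherent over $\EuScript D^\lambda(X)$ for the correct twist $\lambda$, that $\mathrm{GSec}(\EuScript M_\lambda)$ affords the central character $\chi_\lambda$, or that it admits $\mathrm L_\lambda$ as a subquotient (the ``highest-weight subquotient'' of a module that is not a highest-weight module need not be $\mathrm L_\lambda$, or exist at all). The cleaner route, already implicit in the paper's Subsection~\ref{SSbmsl}, is to work directly with $\mathrm D^t\mathbb P(W)=\mathrm U(\frak{sl}(W))/\mathrm I(t,n_W-1,\dots,1)$: restrict the rank-one $\mathrm D(W)$-module $e^a\,\mathbb C[e_1^{\pm1},\dots,e_{n_W}^{\pm1}]$ to its degree-$t$ part, observe that the $\frak h$-weight spaces are literally one-dimensional, identify a highest-weight vector of weight $\lambda$, and then translate via Theorem~\ref{Tweq}. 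Even there one has to match central characters and handle the singular-integral subcase separately. Your proposal has the right shape, but as written it asserts rather than proves that $\mathrm L_\lambda$ appears with the multiplicity-free $\frak h$-character intact.

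Finally, note the edge case you flagged: decreasing tuples are excluded from ``semi-decreasing'' by definition, yet give finite-dimensional, hence bounded, $\mathrm L_\lambda$. The theorem as stated is therefore only an iff-statement under the implicit convention that $\mathrm L_\lambda$ is infinite-dimensional; your restriction to the infinite-dimensional case is the correct reading, and worth stating explicitly rather than in passing.
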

The bounded $(\frak{sl}(W), \frak h^{\frak{sl}}_W)$-modules split into coherent families~\cite{M}, and all simple modules from one coherent family share an annihilator. Let $\bar\lambda$ be a semi-decreasing tuple. Assume that $\bar\lambda$ is not regular integral, i.e. it is singular integral or semi-integral. Then there exists precisely one coherent family annihilated by Ker$\chi_\lambda$~\cite{M}. Therefore all simple bounded $(\frak{sl}(W), \frak h^{\frak{sl}}_W)$-modules which afford the central character $\chi_\lambda$ have the same annihilator.

V.~Serganova and D.~Grantcharov~\cite{GrS1} note that the lemma below follows from the results of O.~Mathieu~\cite{M}. See also Theorem~\ref{Tweq}.
\begin{lemma}\label{Lsoch} Assume $\bar\lambda$ is a semi-decreasing tuple and that $\bar\lambda$ is not regular integral. Then there exists $s\in\mathbb C$ such that \begin{center}e$^{2\pi is}=$m$(\lambda)$\end{center} and the categories of $\frak{sl}(W)$-modules annihilated by \begin{center}I$(\lambda)$ and I$(s, n_W-1,...,1)$\end{center} are equivalent.\end{lemma}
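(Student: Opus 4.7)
The plan is to invoke Theorem~\ref{Tweq}(b): given dominant weights $\lambda_1,\lambda_2$ with $\lambda_1-\lambda_2$ integral and $\mathrm W^{\frak g}_{\lambda_1}=\mathrm W^{\frak g}_{\lambda_2}$, together with any $w\in\mathrm W^{\frak g}$, the categories of $\frak g$-modules annihilated by $\mathrm{Ann}\,\mathrm L_{w\lambda_1}$ and by $\mathrm{Ann}\,\mathrm L_{w\lambda_2}$ are equivalent. My task is to exhibit a triple $(\lambda_1,\lambda_2,w)$ with $w\lambda_1=\bar\lambda$ and $w\lambda_2=(s,n_W-1,\ldots,1)$.

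First, I fix the monodromy parameter. Writing $\bar\lambda$, after reordering by some $w\in\mathrm W^{\frak g}$, as $(\lambda'_1,\ldots,\lambda'_{n_W-1},t)$ where $(\lambda'_1,\ldots,\lambda'_{n_W-1})$ is the decreasing tuple obtained by removing the bad coordinate $t$ and $\lambda'_1$ is its largest entry, I set $s:=t-\lambda'_1+(n_W-1)$. Then $\mathrm e^{2\pi is}=\mathrm e^{2\pi i(t-\lambda'_1)}=\mathrm m(\lambda)$, so the monodromy matches, and $s\in\mathbb Z$ iff $t-\lambda'_1\in\mathbb Z$, so $s$ has the same integrality type as the bad coordinate of $\bar\lambda$. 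Next I identify dominant representatives: let $\lambda_1$ be the $\mathrm W^{\frak g}$-translate of $\bar\lambda$ that is dominant with respect to the partial order of Subsection~\ref{SSpro} and compatible with the chosen $w$, and let $\lambda_2$ be the analogous dominant form of $(s,n_W-1,\ldots,1)$. In the regular integral case both stabilizers are trivial and $\lambda_1-\lambda_2$ is integral by construction; in the singular integral case $s\in\mathbb Z$ can be taken equal to an entry of $(n_W-1,\ldots,1)$, and $\lambda_1,\lambda_2$ then acquire matching nontrivial stabilizers aligned with the equality pattern of $\bar\lambda$. Theorem~\ref{Tweq}(b) now produces the equivalence.

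The main obstacle is the singular semi-integral case, where the decreasing part $(\lambda'_1,\ldots,\lambda'_{n_W-1})$ has coincident entries while $(n_W-1,\ldots,1)$ does not, and the necessity $s\notin\mathbb Z$ forces the target $(s,n_W-1,\ldots,1)$ to be regular, so the Weyl stabilizers simply cannot be aligned within Theorem~\ref{Tweq}(b). For this case I appeal to O.~Mathieu's~\cite{M} coherent-family machinery recalled just before Lemma~\ref{Lsoch}: the simple bounded weight modules annihilated by $\mathrm I(\lambda)$ form the unique semisimple coherent family with central character $\chi_\lambda$, whose isomorphism class is determined by the monodromy $\mathrm m(\lambda)$ and the integral equivalence class of $\bar\lambda$. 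Mathieu's twisted-localization and translation functors intertwine this family with the corresponding family attached to $(s,n_W-1,\ldots,1)$, and since both $\mathrm I(\lambda)$ and $\mathrm I(s,n_W-1,\ldots,1)$ are identified as the common annihilator inside their respective families, the induced functor upgrades to the required equivalence on the full categories of $\frak{sl}(W)$-modules annihilated by these ideals.
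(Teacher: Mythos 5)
The paper does not actually prove Lemma~\ref{Lsoch}: it is preceded only by the remark that ``V.~Serganova and D.~Grantcharov~\cite{GrS1} note that the lemma below follows from the results of O.~Mathieu~\cite{M}. See also Theorem~\ref{Tweq}.'' So there is no detailed argument to compare yours against. You are therefore attempting to fill a gap the paper leaves open, which is fine, but your case analysis does not in fact close it.

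The concrete gap is in your treatment of the singular integral case. You claim that by taking $s\in\{1,\dots,n_W-1\}$ the weights $\lambda_1$ and $\lambda_2$ ``acquire matching nontrivial stabilizers aligned with the equality pattern of $\bar\lambda$.'' This requires $\bar\lambda$ to have exactly one coincidence. But with the paper's definition of ``decreasing'' ($\lambda_i-\lambda_j\in\mathbb Z_{\ge0}$, a non-strict inequality), semi-decreasing tuples such as $(m+1,m,m,\dots,m)$ are allowed; these have $\mathrm W^{\frak g}$-stabilizer of order $(n_W-1)!$, whereas any tuple of the form $(s,n_W-1,\dots,1)$ has stabilizer of order at most $2$. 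For such $\bar\lambda$ the hypothesis $\mathrm W^{\frak g}_{\lambda_1}=\mathrm W^{\frak g}_{\lambda_2}$ of Theorem~\ref{Tweq}(b) cannot be met, and your proof stalls. (You also discuss the regular integral case, which is explicitly excluded by the hypothesis, which suggests a misreading.) The remaining cases are deferred to ``Mathieu's twisted-localization and translation functors,'' but that sentence only restates the paper's citation; it does not supply the missing argument that the coherent-family equivalence upgrades to an equivalence of the full categories of $\frak{sl}(W)$-modules annihilated by the two ideals, which is what the lemma asserts. So either the definitions should be tightened to Mathieu's strict ordering (in which case Theorem~\ref{Tweq}(b) covers both the singular integral and the (then automatically regular) semi-integral cases, and the Mathieu hand-wave becomes unnecessary), or the singular cases must genuinely be routed through Mathieu's results with an explicit argument. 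As written, the proposal does neither.
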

The subcategories of locally finite $\frak k$-modules and bounded $\frak k$-modules are stable under this equivalence.

Assume that $\bar\lambda$ is regular integral. Let ord$(\bar\lambda)$ be a decreasing $n_W$-tuple which coincides as a set with $\bar\lambda$. For $k\in\{1,..., n_W-1\}$ we denote by s$_k$ the permutation $(k, k+1)\in$S$_{n_W}$. There exist precisely $n_W-1$ different coherent families annihilated by Ker$\chi_\lambda$ and any such family contains L$_{\mathrm s_k\mathrm{ord}(\lambda)}$ for some $k\in\{1,..., n_W-1\}$. Therefore \begin{center}I$(\lambda)$=I(s$_k$ord$(\lambda))$\end{center} for some $k\in\{1,..., n_W-1\}$~\cite{M}.

Consider the Weyl algebra of differential operators D($W$) on $W$. This algebra is generated by e$_i$ and $\partial_{\mathrm e_i}$ for $i\le n_W$. Set \begin{center}$E:= $e$_1\partial_{\mathrm e_1}+...+$e$_{n_W}\partial_{\mathrm e_{n_W}}$.\end{center}The operator [$E, \cdot$]: D$(W)\to$D$(W$) is semisimple, and D$(W$) splits into a direct sum of eigenspaces \begin{center}\{D$^i(W)\}_{i\in\mathbb Z}$\end{center} with respect to this operator.

The homomorphism of Lie algebras \begin{center}$\frak{gl}(W)\to$D$^0(W),\hspace{40pt}$e$_{i, j}\mapsto$e$_i\partial_{\mathrm e_j}$\end{center}induces a surjective homomorphism \begin{center}$\phi:$U$(\frak{gl}(W))\to$D$^0(W)$.\end{center} The element $E-t$ generates a two-sided ideal in D$^0(W)$. We denote the corresponding quotient by D$^t\mathbb P(W)$. Furthermore, $\phi$ induces a surjective homomorphism \begin{center} $\phi_t:$U$(\frak{sl}(W))\to$D$^t\mathbb P(W)$\end{center}and\begin{center}Ker$\phi_t=$I$(t, n_W-1,...,1)$,\end{center}see~\cite{M}.
\subsubsection{The case $\frak g=\frak{sp}(W\oplus W^*)$}We note that $\frak h_W$ is a Cartan subalgebra of $\frak{sp}(W\oplus W^*)$. We identify the weights $\frak h_W^*$ with $n_W$-tuples. We fix the set of positive roots of $\frak{sp}(W\oplus W^*)$ in $\frak h_W$ for which $\rho$ is identified with $(n_W,..., 1)$. Let $\bar\mu$ be an $n_W$-tuple and $\mu$ be the corresponding weight. We denote Ann~L$_\mu$ by I$_{\frak{sp}}(\mu)$. Let $I$ be a primitive ideal of U$(\frak{sp}(W\oplus W^*))$. Then $I$=I$_\frak{sp}(\mu)$ for some $n_W$-tuple $\bar\mu$.
\begin{theorem}[\cite{M}]\label{Tbwm} The $(\frak{sp}(W\oplus W^*), \frak h_W)$-module L$_\mu$ is $\frak h_W$-bounded if and only if $\bar\mu$ is a Shale-Weil tuple.\end{theorem}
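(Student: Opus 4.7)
The plan is to follow the strategy Mathieu uses for Theorem~\ref{Tsmo}, replacing the role of the natural action of $\frak{sl}(W)$ on $\mathbb C[W]$ by the metaplectic (Segal--Shale--Weil) representation of $\frak{sp}(W\oplus W^*)$. I split the equivalence into the two directions and handle them separately.

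For the direction \emph{Shale-Weil implies bounded}, I would first exhibit at least one bounded simple highest-weight module with Shale-Weil highest weight. The symplectic form on $W\oplus W^*$ produces an embedding of $\frak{sp}(W\oplus W^*)\cong\mathrm S^2(W\oplus W^*)$ into the Weyl algebra $\mathrm D(W)$ by sending $\mathrm S^2 W$, $\mathrm S^2 W^*$ and $W\otimes W^*$ to the obvious quadratic expressions in the $\mathrm e_i$ and $\partial_{\mathrm e_i}$. The module $\mathbb C[W]$ under this action splits into $\mathbb C[W]^{\mathrm{ev}}\oplus\mathbb C[W]^{\mathrm{odd}}$; each summand is simple, $\frak h_W$-semisimple and in fact $\frak h_W$-multiplicity-free, with highest weights the two minimal Shale-Weil tuples. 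This gives the bounded module at the base central character. For an arbitrary Shale-Weil $\bar\mu$ I would then invoke Theorem~\ref{Tweq}: any two Shale-Weil tuples have the same $\mathrm W^\frak{sp}$-stabilizer (they are both regular and dominant, and share the same integral root subsystem of type $A_{n_W-1}$ permuting the half-integer shifts) and differ by an integral weight, so the corresponding categories are equivalent via an exact projective functor that commutes with $\frak h_W$, hence preserving boundedness.

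For the direction \emph{bounded implies Shale-Weil}, suppose $L_\mu$ is $\frak h_W$-bounded and infinite dimensional. Corollary~\ref{Cps} applied with $\frak k=\frak h_W$ gives $\dim\mathrm{GV}(L_\mu)\le 2n_W$, because $\frak h_W$ is abelian and $\dim\frak h_W=n_W$. The only nonzero nilpotent $\mathrm{Sp}(W\oplus W^*)$-orbit whose closure has dimension at most $2n_W$ is the minimal orbit, whose dimension is exactly $2n_W$, so $\mathrm{GV}(L_\mu)$ must equal its closure. Consequently $\mathrm{Ann}\,L_\mu$ is a completely prime primitive ideal with minimal associated variety, i.e.\ a Joseph ideal of type $\mathrm C$. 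Writing out the infinitesimal-character data (using $\rho=(n_W,\ldots,1)$) for a dominant weight $\mu$ whose annihilator is such a Joseph ideal recovers exactly the Shale-Weil conditions $\mu_i\in\tfrac12+\mathbb Z$, the strict dominance $\mu_1>\mu_2>\cdots>\mu_{n_W-1}$, and $\mu_{n_W-1}>|\mu_{n_W}|$.

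The main obstacle is the converse: the identification of the primitive ideals with minimal associated variety. That every such ideal is a Joseph ideal, and that the Joseph ideals in type $\mathrm C$ are in bijection with Shale-Weil tuples, are the nontrivial points; both rest on Joseph's construction and the classification of primitive ideals in $\mathrm U(\frak{sp})$. Once these are in hand, the existence half follows by direct computation in the Weyl algebra and the uniqueness half follows from Duflo's theorem together with the sharp dimension bound above.
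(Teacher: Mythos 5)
The paper does not supply its own proof of Theorem~\ref{Tbwm}: it is quoted from Mathieu~\cite{M} and used as a black box (for instance, Corollary~\ref{Cqtosw}a) is explicitly declared ``trivial'' because it follows from~\ref{Tbwm}, and Lemma~\ref{L332} likewise rests on it). So your proposal is not being measured against an in-paper argument but against Mathieu's, whose route is through coherent families and a degree formula for weight multiplicities rather than through associated varieties and primitive ideal classification. Your approach is genuinely different in spirit and closer to the geometric machinery the thesis develops in Sections~\ref{SSavg} and~\ref{SJId}.

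Your ``Shale--Weil $\Rightarrow$ bounded'' direction is essentially sound: the oscillator realization gives two simple bounded weight modules with annihilator $\mathrm I_\frak{sp}(\mu_0)$, Theorem~\ref{Trel} then transfers boundedness to $\mathrm L_{\mu_0}$ (you should make this step explicit --- $\mathbb C[W]^{\mathrm{ev}}$ is faithful over $\mathrm D^{\bar 0}(W)\cong \mathrm U(\frak{sp})/\mathrm I_\frak{sp}(\mu_0)$, so the two simple modules share the annihilator $\mathrm I_\frak{sp}(\mu_0)$), and Theorem~\ref{Tweq} carries this to all positive Shale--Weil $\bar\mu$. Two small repairs: negative Shale--Weil tuples are \emph{not} dominant, so one must first apply $\sigma$ and use $\mathrm I_\frak{sp}(\bar\mu)=\mathrm I_\frak{sp}(\sigma\bar\mu)$ before invoking~\ref{Tweq}; and the blanket claim that all Shale--Weil tuples ``are both regular and dominant'' should say ``regular, and dominant after possibly applying $\sigma$.''

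For the converse you have correctly isolated the genuine difficulty, but as written it is not yet a proof. Corollary~\ref{Cps} and the dimension of the minimal orbit do give $\mathrm{GV}(\mathrm L_\mu)=\overline{\EuScript Q_\frak{sp}}$ for infinite-dimensional bounded $\mathrm L_\mu$, hence $\mathrm{Ann}\,\mathrm L_\mu$ is a Joseph ideal in the paper's sense (any primitive ideal whose associated variety is the minimal orbit closure, not only the completely prime one). What you still need, and cannot get ``by writing out the infinitesimal-character data,'' is the statement that \emph{every} primitive ideal of $\mathrm U(\frak{sp}_{2n_W})$ with associated variety $\overline{\EuScript Q_\frak{sp}}$ is of the form $\mathrm I_\frak{sp}(\mu)$ for a Shale--Weil $\bar\mu$. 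This is exactly the type~$\mathrm C$ analogue of Corollary~\ref{CJid} and must be imported from Joseph's work directly; citing Corollary~\ref{Cqtosw} here would be circular, since its part a) is derived in the paper from Theorem~\ref{Tbwm}. Pin down a precise reference for that classification (for instance, Joseph's analysis of orbital varieties of the minimal orbit in~\cite{Jo3} or the left-cell description in~\cite{Jo4}) and check its independence from Mathieu's theorem; until then the converse is an outline, not a proof.
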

The bounded $(\frak{sp}(W\oplus W^*), \frak h_W)$-modules split into coherent families~\cite{M} and all simple modules from one coherent family share an annihilator. Let $\bar\mu$ be a Shale-Weil tuple. There exists precisely one coherent family annihilated by Ker$\chi_\mu$~\cite{M}. Therefore all simple bounded $(\frak{sp}(W\oplus W^*), \frak h_W)$-modules affording the central character $\chi_\mu$ have the same annihilator. The modules L$_\mu$ and L$_{\sigma\mu}$ belong to the same coherent family. Recall that $\mu_0$ is the $n_W$-tuple $(n_W-\frac{1}{2}, n_W-\frac{3}{2},..., \frac{1}{2})$.
\begin{lemma}\label{L332}Let $\bar\mu$ be a Shale-Weil $n_W$-tuple. Then the categories of U$(\frak{sp}(W\oplus W^*))$-modules annihilated by \begin{center}I$_\frak{sp}(\mu)$ and I$_\frak{sp}(\mu_0)$\end{center} are equivalent.\end{lemma}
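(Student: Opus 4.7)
The plan is to apply Theorem~\ref{Tweq}(b) with $\lambda_1$ equal to the dominant representative of the $W^\frak g$-orbit of $\bar\mu$ and $\lambda_2=\bar\mu_0$, and then to absorb any residual Weyl-group twist via the identity $I_\frak{sp}(\bar\nu)=I_\frak{sp}(\sigma\bar\nu)$ recorded just before Theorem~\ref{Tspeq}.

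The first step is to observe that a Shale-Weil tuple has all absolute values distinct. The defining inequalities give a strictly decreasing sequence $\mu_1>\mu_2>\cdots>\mu_{n_W-1}>|\mu_{n_W}|\geq 1/2$, so $\mu_1,\ldots,\mu_{n_W-1}$ are positive half-integers all exceeding $|\mu_{n_W}|$, and hence $|\mu_1|>\cdots>|\mu_{n_W-1}|>|\mu_{n_W}|$. Consequently the dominant representative of the Weyl orbit $W^\frak g\cdot\bar\mu$ is $\bar\lambda:=(\mu_1,\ldots,\mu_{n_W-1},|\mu_{n_W}|)$, and this $\bar\lambda$ is regular dominant. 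Since $\bar\mu_0$ is also regular dominant and both tuples lie in $(1/2+\mathbb Z)^{n_W}$, the weights $\bar\lambda$ and $\bar\mu_0$ have the same trivial $W^\frak g$-stabilizer, and their difference is integral; in the terminology of Subsection~\ref{SSpro} they are equivalent.

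Next I would pick $w\in W^\frak g$ such that $w\bar\lambda=\bar\mu$: take $w=e$ if $\mu_{n_W}>0$, and $w=\sigma$ (the sign-flip of the last coordinate) if $\mu_{n_W}<0$. Theorem~\ref{Tweq}(b) then provides an equivalence between the categories of $U(\frak{sp}(W\oplus W^*))$-modules annihilated respectively by $\mathrm{Ann}\,L_{w\bar\lambda}=I_\frak{sp}(\bar\mu)$ and $\mathrm{Ann}\,L_{w\bar\mu_0}=I_\frak{sp}(w\bar\mu_0)$. Since $w\bar\mu_0\in\{\bar\mu_0,\sigma\bar\mu_0\}$, the identity $I_\frak{sp}(\bar\mu_0)=I_\frak{sp}(\sigma\bar\mu_0)$ yields $I_\frak{sp}(w\bar\mu_0)=I_\frak{sp}(\bar\mu_0)$, finishing the argument.

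The delicate point I expect to have to check carefully is the regularity of $\bar\lambda$, since Theorem~\ref{Tweq}(b) requires matching stabilizers. If the inequality $\mu_{n_W-1}>|\mu_{n_W}|$ in the definition of Shale-Weil tuple were weakened, the two smallest absolute values could coincide, $\bar\lambda$ would be singular, and the failure $W^\frak g_{\bar\lambda}\neq W^\frak g_{\bar\mu_0}=\{e\}$ would block the application of Theorem~\ref{Tweq}(b). The strict inequality built into the definition is precisely what permits the categorical reduction to the distinguished ideal $I_\frak{sp}(\bar\mu_0)$.
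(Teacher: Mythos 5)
Your argument is correct and follows essentially the same route as the paper, which proves Lemma~\ref{L332} simply by citing Theorem~\ref{Tweq}~b); you have filled in the verification (regularity and dominance of the representative $\bar\lambda$, integrality of $\bar\lambda-\bar\mu_0$, choice of $w\in\{e,\sigma\}$, and absorption of the $\sigma$-twist via $I_\frak{sp}(\bar\mu_0)=I_\frak{sp}(\sigma\bar\mu_0)$) that the paper leaves implicit.
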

\begin{proof}This is a simple corollary of Theorem~\ref{Tweq} b).\end{proof}
The subcategories of locally finite $\frak k$-modules and bounded
$\frak k$-modules are stable under the equivalence of
Lemma~\ref{L332}.

The spaces \{D$^i(W)$\}$_{i\in\mathbb Z}$ define a $\mathbb Z$-grading of an algebra D$(W)$. Therefore the spaces \begin{center}\{$\oplus_{i\in\mathbb Z}$D$^{2i}(W), \oplus_{i\in\mathbb Z}$D$^{2i+1}(W)$\}\end{center} define a $\mathbb Z_{2}$-grading of the algebra D$(W)$. We set \begin{center}D$^{\bar 0}(W):=\oplus_{i\in\mathbb Z}$D$^{2i}(W)$,\hspace{10pt} D$^{\bar1}(W):=\oplus_{i\in\mathbb Z}$D$^{2i+1}(W)$.\end{center}
The space span $\langle$e$_i$e$_j, $e$_i\partial_{\mathrm e_j}, \partial_{\mathrm e_i}\partial_{\mathrm e_j}, 1\rangle_{i, j\le n_W}$ is a Lie algebra with respect to commutator. This Lie algebra is isomorphic to $\frak{sp}(W\oplus W^*)\oplus\mathbb C$. We note that span$\langle$e$_i\partial_{\mathrm e_j}\rangle_{i, j\le n_W}$ is a Lie subalgebra and is isomorphic to $\frak{gl}(W)$. This defines a homomorphism of associative algebras\begin{center}$\phi_\frak{sp}:$U$(\frak{sp}(W\oplus W^*))\to$D$(W)$,\end{center}and I$(\mu_0)$=Ker$\phi_\frak{sp}$, D$^{\bar0}(W)$=Im$\phi_\frak{sp}$.
\subsection{A lemma on S$_{n_W}$-modules}\label{SSsn} The set of isomorphism classes of simple S$_{n_W}$-modules is naturally identified with the set of partitions \begin{center}$m_1\ge m_2\ge ...\ge m_s (m_1+m_2+...+m_s=n_W)$\end{center} of $n_W$. Let $\mathbb C^{n_W-1}$ be the natural $(n_W-1)$-dimensional module of S$_{n_W}$ and $\mathbb C$ be the trivial one-dimensional module. Recall that s$_i$ is the simple transposition $(i, i+1)$. Let $R$ be an S$_{n_W}$-module. For any $i\in\{1,..., n_W-1\}$ we denote by $R_{\bar i}$ the vector space \begin{center}$\{v\in R\mid$~s$_jv=v$ for all $j\ne i\}$.\end{center}
\begin{lemma}\label{Lsn} Suppose that $R$ is an S$_{n_W}$-module such that \begin{center}$R=+_{i\le n_W-1}R_{\bar i}$.\end{center} Then $R$ is a direct sum of several copies of $\mathbb C^{n_W-1}$ and $\mathbb C$.\end{lemma}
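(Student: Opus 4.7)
The plan is to pass to the isotypic decomposition of $R$ and determine which simple $S_{n_W}$-modules are allowed to appear.

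First, I will decompose $R = \bigoplus_\lambda V^\lambda \otimes W_\lambda$ over partitions $\lambda$ of $n_W$, where $W_\lambda$ is a multiplicity space with trivial $S_{n_W}$-action. Since taking invariants under a subgroup respects isotypic decompositions, $R_{\bar i} = \bigoplus_\lambda (V^\lambda)_{\bar i} \otimes W_\lambda$, and the hypothesis $R = \sum_i R_{\bar i}$ reduces to a separate condition on each isotypic factor:
\[
V^\lambda = \sum_{i=1}^{n_W-1} (V^\lambda)_{\bar i} \qquad \text{whenever } W_\lambda \ne 0.
\]
It then suffices to classify the partitions $\lambda$ satisfying this identity.

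Next, I will identify $\langle s_j : j \ne i \rangle$ with the Young subgroup $S_i \times S_{n_W - i}$ (preserving $\{1,\ldots,i\} \cup \{i+1,\ldots,n_W\}$), and apply Frobenius reciprocity to express $\dim (V^\lambda)_{\bar i}$ as the multiplicity of $V^\lambda$ in $\mathrm{Ind}_{S_i \times S_{n_W-i}}^{S_{n_W}} \mathbb{C}$, i.e.\ the Kostka number $K_{\lambda, (i, n_W-i)}$. Since semistandard Young tableaux of content $(i, n_W-i)$ have entries in $\{1,2\}$, only shapes with at most two rows contribute, and a short check on $\lambda = (n_W-k, k)$ shows the tableau is unique precisely when $k \le i \le n_W-k$. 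This yields the dimension bound
\[
\dim \sum_i (V^{(n_W-k, k)})_{\bar i} \le n_W - 2k + 1.
\]

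The final step is a dimension comparison to exclude $k \ge 2$. By the hook-length formula,
\[
\dim V^{(n_W - k, k)} = \binom{n_W}{k} - \binom{n_W}{k-1} = \frac{n_W - 2k + 1}{k}\binom{n_W}{k-1},
\]
and for $k \ge 2$ with $n_W \ge 2k$ the factor $\binom{n_W}{k-1}/k$ is strictly greater than $1$. Hence $\dim V^{(n_W-k,k)} > n_W - 2k + 1$, contradicting the required identity. The surviving partitions are $\lambda = (n_W)$ (the trivial module $\mathbb{C}$) and $\lambda = (n_W-1, 1)$ (the natural $(n_W-1)$-dimensional module $\mathbb{C}^{n_W-1}$), so by semisimplicity $R$ is a direct sum of copies of these two. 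The main obstacle is the combination of the Kostka-number identification via Frobenius reciprocity with the dimension inequality ruling out $k \ge 2$; once these are in place, the isotypic decomposition produces the stated direct-sum structure.
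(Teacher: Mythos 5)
Your argument is correct, and it takes a genuinely different route from the paper's. After the same reduction to irreducible $R$, the paper only examines $i=1$: if $R_{\bar 1}=0$ then $R=\sum_{i\ge 2}R_{\bar i}$ is fixed pointwise by $s_1$, and since all transpositions are conjugate and $R$ is irreducible this forces $R\cong\mathbb C$; if $R_{\bar 1}\ne 0$ then the restriction of $R$ to $S_{n_W-1}$ contains the trivial module, and the branching rule immediately pins $R$ down to $\mathbb C$ or $\mathbb C^{n_W-1}$. Your route is quantitative and symmetric in $i$: you identify $\dim R_{\bar i}$ with the Kostka number $K_{\lambda,(i,n_W-i)}$ via Frobenius reciprocity (Young's rule for the permutation module on $S_i\times S_{n_W-i}$), note that any shape with three or more rows is killed outright, and then eliminate two-row shapes $(n_W-k,k)$ with $k\ge 2$ by the inequality $\dim V^{(n_W-k,k)}=\frac{n_W-2k+1}{k}\binom{n_W}{k-1}>n_W-2k+1$. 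The paper's proof is shorter and needs only one value of $i$; yours yields the explicit count $\dim\sum_i\bigl(V^{(n_W-k,k)}\bigr)_{\bar i}\le n_W-2k+1$ as a byproduct and exhibits the failure for $k\ge 2$ as a transparent dimension obstruction rather than an appeal to the branching rule.
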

\begin{proof} Without loss of generality we assume that $R$ is irreducible. If $R_{\bar 1}$ equals zero, the sum $+_{i\le n_W-1}R_{\bar i}$ is s$_1$-invariant and therefore $R\cong \mathbb C$. If $R_{\bar 1}\ne0$, the restriction of $R$ to S$_{n_W-1}$ contains $\mathbb C$ as a simple submodule. Therefore $R$ is \begin{center}$\mathbb C^{n_W-1}$, or $\mathbb C$,\end{center}see~\cite{FH}.\end{proof}
\subsection{Regular singularities}\label{SSrs} Let $C$ be a smooth connected affine curve. We recall that $\mathbb C[C]$ is the algebra of regular functions of $C$ and D$(C)$ is the algebra of regular differential operators on $C$. Let $(x)\subset\mathbb C[C]$ be a maximal ideal and $x\in C$ be the corresponding point. Set\begin{center}D$^{\ge0}_x(C):=\{D\in$D$(C)\mid Df\subset (x)$ for all $f\in (x)\}$.\end{center}We say that a D$(C)$-module $F$ has {\it regular singularities at $x$} if for any finite-dimensional space $F_0\subset F$, D$^{\ge0}_x(C)F_0$ is a finitely generated $\mathbb C[C]$-module.

Let $C^+$ be a unique  compact connected curve which contains $C$ as an open subset. We say that a D$(C)$-module $F$ has {\it regular singularities} if for any point $x\in C^+$ there exists an affine open set $C(x)\subset C^+$ such that $x\in C(x)$ and $F\otimes_{\mathbb C[C]}\mathbb C[C\cap C(x)]$ considered as a D$(C(x))$-module by extension from $C\cap C(x)$ to $C(x)$ has regular singularities at $x$.
\begin{definition}Let $X$ be a smooth variety and $\EuScript F$ be a coherent $\EuScript D(X)$-module. We say that $\EuScript F$ has {\it regular singularities} if $\EuScript F$ has regular singularities after restriction to any smooth connected affine curve.\end{definition}
\newpage\section{Holonomicity of $(\frak g, \frak k)$-modules of finite type}\label{SHol}
We are now going to prove Theorem~\ref{TGeo} stated in Section~\ref{Stro}.
\begin{proof}[ Proof of Theorem~\ref{TGeo}] Without loss of generality we assume that $\frak g$ is semisimple. We identify $\frak g$ and $\frak g^*$ by use of the Killing form. Let $h\in\frak k$ be a rational semisimple element. We denote by $\frak g_h$ the direct sum of $h$-eigenspaces of $\frak g$ with nonnegative eigenvalues; by $G_h\subset G$ the parabolic subgroup with the Lie algebra $\frak g_h$. Set $Z_G:=G/G_h$. In a similar way we define $\frak k_h, K_h, Z_K$. We denote by $\frak n_h$ the nilpotent radical of $\frak g_h$. Let $e\subset K$ be the unit element. The orbit $Ke\subset Z_G$ is isomorphic to $Z_K$. Put\\$\bullet~G\frak n_h:=\{x\in\frak g\mid x=gn$ for some $n\in\frak n_h, g\in G$\},\\$\bullet~K\frak n_h:=\{x\in\frak g\mid x=kn$ for some $n\in\frak n_h, k\in K$\},\\$\bullet~K\frak n_h\cap\frak k^\bot:=\{x\in\frak g\mid x=kn$ for some $n\in\frak n_h\cap\frak k^\bot, k\in K$\}.\\
Let $\phi:$~T$^*Z_G\to\frak g^*$ be the moment map. It is easy to see that $G\frak n_h$ coincides with $\phi($T$^*Z_G)$, $K\frak n_h$ coincides with $\phi($T$^*Z_G|_{Z_K})$, $K\frak n_h\cap\frak k^\bot$ coincides with $\phi($N$_{Z_K/Z_G}^*)$:
$$\xymatrix{\mathrm T^*Z_G\ar@{->>}[d]^\phi\ar@{<-^)}[r]&\mathrm T^*Z_G|_{Z_K}\ar@{->>}[d]^\phi\ar@{<-^)}[r]&\mathrm N^*_{Z_K/Z_G}\ar@{->>}[d]^\phi\\G\frak n_h\ar@{<-^)}[r]&K\frak n_h\ar@{<-^)}[r]&K\frak n_h\cap\frak k^\bot\hspace{10pt}.}$$
As the variety N$^*_{Z_K/Z_G}$ is isotropic in T$^*Z_G$, the image $\phi($N$^*_{Z_K/Z_G}$) is isotropic in $G(\frak n_h\cap\frak k^\bot)$ and any subvariety $\tilde V\subset K\frak n_h\cap\frak k^\bot$ is isotropic in $G\tilde V$. Therefore by Corollary~\ref{CHM} any subvariety $\tilde V\subset\frak k^\bot\cap$N$_K(\frak g^*)$ is isotropic in $G\tilde V$.\end{proof}
We introduce the following notation:\\$\bullet$~~~~V$^._{\frak g, \frak k}$ is the set of all irreducible components of  intersections of N$_K(\frak k^\bot)$ with all possible $G$-orbits of N$_G(\frak g^*)$.\\$\bullet$ $\EuScript V^._{\frak g, \frak k}$ is the set of all possible irreducible components of the preimages of elements of V$^._{\frak g, \frak k}$ under the moment map T$^*X\to\frak g^*$.\\$\bullet$ $\mathrm L^._{\frak g, \frak k}$ is the set of images in $X$ of all elements of $\EuScript V^._{\frak g, \frak k}$.

Let $M$ be a finitely generated $\frak g$-module which affords a central character $\chi$ and $\lambda\in\hat{\frak h}^*$ (see Subsection~\ref{SSd_g}) be a weight such that $\chi=\chi_\lambda$.
\begin{theorem}\label{Tisohol} If $M$ is a $(\frak g, \frak k)$-module of finite type, then\\a) the irreducible components of V$(M)$ are contained in V$^._{\frak g, \frak k}$;  the irreducible components of $\EuScript V(M)$ are contained in $\EuScript V^._{\frak g, \frak k}$, the irreducible components of L$(M)$ are contained in L$^._{\frak g, \frak k}$.\\b) The module Loc$M$ is holonomic. If $M$ is simple, $M$ is holonomic.\end{theorem}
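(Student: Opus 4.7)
The plan is to combine three ingredients already in place: (i) the inclusions $\mathrm V(M)\subset\frak k^\bot$ and $\mathrm V(M)\subset\mathrm N_K(\frak g^*)\cap\mathrm N_G(\frak g^*)$ coming from Fernando--Kac, the final finite-type lemma of Subsection~\ref{SSavg}, and Joseph's theorem; (ii) Theorem~\ref{TGeo}, by which any irreducible component of $\EuScript Z\cap\frak k^\bot\cap\mathrm N_K(\frak g^*)$ is isotropic in the nilpotent $G$-orbit $\EuScript Z$; (iii) Gabber's coisotropy Theorem~\ref{Gab}, by which the trace of a component of $\mathrm V(M)$ on the open $G$-orbit of its $G$-saturation is coisotropic in that orbit. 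Isotropic plus coisotropic in a single symplectic orbit forces Lagrangian, and this is exactly what will identify the components of $\mathrm V(M)$ with elements of $\mathrm V^{\cdot}_{\frak g,\frak k}$.

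Concretely, fix an irreducible component $\tilde V\subset\mathrm V(M)$ and let $\EuScript Z$ be the open $G$-orbit of $G\tilde V$. Then $\tilde V\cap\EuScript Z$ is a dense open subset of $\tilde V$, is coisotropic in $\EuScript Z$ by Theorem~\ref{Gab}, and is contained in some irreducible component $V'$ of $\EuScript Z\cap\frak k^\bot\cap\mathrm N_K(\frak g^*)$, which is isotropic in $\EuScript Z$ by Theorem~\ref{TGeo}. The inequalities
\[
\tfrac12\dim\EuScript Z\;\le\;\dim(\tilde V\cap\EuScript Z)\;\le\;\dim V'\;\le\;\tfrac12\dim\EuScript Z
\]
force equality throughout, so $\tilde V\cap\EuScript Z=V'$, $\tilde V=\overline{V'}\in\mathrm V^{\cdot}_{\frak g,\frak k}$, and $\tilde V$ is Lagrangian in $\EuScript Z$.

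To transfer the statement to $\EuScript V(M)=\EuScript V(\mathrm{Loc}\,M)$ and $\mathrm L(M)$ I would use the Barlet--Kashiwara equality $\mathrm V(M)=\phi_X(\EuScript V(M))$ from Subsection~\ref{SSof} together with the fibre-dimension formula $\dim\phi_X^{-1}(x)=\dim X-\tfrac12\dim Gx$ for the Springer-type moment map $\phi_X:\mathrm T^*X\to\frak g^*$. Every irreducible component $\tilde{\EuScript V}\subset\EuScript V(M)$ satisfies $\dim\tilde{\EuScript V}\ge\dim X$ by Theorem~\ref{GabD}. A short dimension count using the fibre formula shows that unless $\phi_X$ sends $\tilde{\EuScript V}$ surjectively onto some component $\tilde V\in\mathrm V^{\cdot}_{\frak g,\frak k}$ whose image meets the open orbit $\EuScript Z$ of $G\tilde V$, one would obtain $\dim\tilde{\EuScript V}<\dim X$, contradicting Gabber. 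Hence every $\tilde{\EuScript V}$ surjects onto some $\tilde V$, the fibre formula yields $\dim\tilde{\EuScript V}=\dim X$, and $\tilde{\EuScript V}$ is an irreducible component of $\phi_X^{-1}(\tilde V)$, i.e.\ an element of $\EuScript V^{\cdot}_{\frak g,\frak k}$; its image in $X$ lies in $\mathrm L^{\cdot}_{\frak g,\frak k}$. The equality $\dim\tilde{\EuScript V}=\dim X$ for every component is precisely the holonomicity of $\mathrm{Loc}\,M$. Finally, for simple $M$, Definition~\ref{Dhol} asks for $\dim\tilde V=\tfrac12\dim G\tilde V$ for every component of $\mathrm V(M)$, which is exactly the Lagrangian-in-$\EuScript Z$ conclusion of part~(a).

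The only genuinely new step beyond bookkeeping is the isotropic-meets-coisotropic squeeze in the second paragraph, where Theorem~\ref{TGeo} and Theorem~\ref{Gab} meet. The rest is the moment-map diagram of Subsection~\ref{SSof} together with the standard Springer fibre-dimension formula, and the most delicate technical point to state carefully is this fibre formula, which should be cited explicitly to close the dimension count in the third paragraph.
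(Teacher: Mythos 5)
Your proposal follows essentially the same route as the paper: the core of the argument in both cases is to intersect a component $\tilde V$ of $\mathrm V(M)$ with the open $G$-orbit $\EuScript Z$ of its $G$-saturation, invoke Theorem~\ref{Gab} (Gabber) for coisotropy, invoke Theorem~\ref{TGeo} (via $\tilde V\subset \mathrm N_K(\frak k^\bot)\cap\EuScript Z$) for isotropy, and conclude Lagrangian, hence that $\tilde V$ is an irreducible component of $\EuScript Z\cap\frak k^\bot\cap\mathrm N_K(\frak g^*)$ and thus lies in $\mathrm V^\cdot_{\frak g,\frak k}$. The only real difference is one of explicitness: the paper's proof of part (b) is terse (a one-line dimension bound $\dim\EuScript V(\mathrm{Loc}\,M)\le\dim X$ with the Springer-fibre count left implicit, plus an apparent typo in the phrase about intersections being isotropic), whereas you carefully carry the argument over to $\EuScript V(M)$ and $\mathrm L(M)$ via Barlet--Kashiwara and the fibre-dimension formula for the moment map $\phi_X$. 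One small point worth tightening in your dimension squeeze: after $\tfrac12\dim\EuScript Z\le\dim(\tilde V\cap\EuScript Z)\le\dim V'\le\tfrac12\dim\EuScript Z$ you should note that $\tilde V\cap\EuScript Z$ and $V'$ are both irreducible and that $\tilde V\cap\EuScript Z$ is dense in $\tilde V$; equality of dimensions of a closed irreducible subvariety and an irreducible subvariety containing a dense open subset of it then forces the desired identification, but dimension equality alone does not. Similarly, in the paragraph on $\EuScript V(M)$, the dimension count over lower orbits $\EuScript Z'\subsetneq\EuScript Z$ inside $\overline{\EuScript Z}$ needs the isotropy bound $\dim(\tilde V\cap\EuScript Z')\le\tfrac12\dim\EuScript Z'$ (which again follows from $\tilde V\subset\frak k^\bot\cap\mathrm N_K(\frak g^*)$), not merely $\dim(\tilde V\cap\EuScript Z')<\dim\tilde V$, to keep the preimage dimension at most $\dim X$. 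With those clarifications the argument is complete and coincides with the paper's.
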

\begin{proof}Let $\tilde V\subset$V$(M)$ be an irreducible component and $\EuScript Z$ be the closure of $G\tilde V$ in $\frak g^*$. By Theorem~\ref{Gab} the variety $\tilde V$ is coisotropic. On the other hand \begin{center}$\tilde V\subset$N$_K(\frak k^\bot)\cap \EuScript Z$,\end{center} and therefore $\tilde V$ is isotropic. Hence $\tilde V$ is Lagrangian and is an irreducible component of $\EuScript Z\cap$N$_K(\frak g^*)\cap\frak k^\bot$.

As intersections of V$(M)$ with any irreducible component of $M$ are isotropic, \begin{center}dim$\EuScript V$(Loc$M)\le$dim$X$.\end{center} Therefore Loc$M$ is holonomic.

Assume that $M$ is simple. As all irreducible components of V$(M)$ are Lagrangian, $M$ is holonomic.\end{proof}
\begin{corollary}\label{Cholsup} The statements of Theorem~\ref{THol} and Theorem~\ref{TSup} follow from Theorem~\ref{Tisohol}.\end{corollary}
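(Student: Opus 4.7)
The plan is to observe that both conclusions are essentially bookkeeping consequences of Theorem~\ref{Tisohol}, once the relevant definitions are unpacked. First I would recall Dixmier's theorem (cited in Section~\ref{Stro}): any simple $\frak g$-module affords a central character. Hence a simple $(\frak g, \frak k)$-module $M$ of finite type automatically satisfies the hypotheses of Theorem~\ref{Tisohol}, and part (b) of that theorem gives directly that Loc$(M)$ is holonomic. This yields Theorem~\ref{THol}.

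For Theorem~\ref{TSup} I would unpack the description of the pair $(L, S)$ attached to a simple holonomic $\EuScript D^\lambda(X)$-module given in Section~\ref{Stro}: the irreducible closed subvariety $L\subset X$ is precisely the closure of the image of the singular support $\EuScript V($Loc$M)\subset$T$^*X$ under the bundle projection T$^*X\to X$. By the definition in Subsection~\ref{SSof}, this is the support variety L$(M)$. Applying Theorem~\ref{Tisohol}(a) to the $(\frak g, \frak k)$-module $M$, every irreducible component of L$(M)$ is an element of L$_{\frak g, \frak k}^\cdot$. In the simple case relevant to the pair $(L,S)$ correspondence, L$(M)=L$ is irreducible, so $L\in$ L$_{\frak g, \frak k}^\cdot$, which is the content of Theorem~\ref{TSup}.

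There is no substantial obstacle here; the only point requiring care is the identification of the subvariety $L$ of the pair $(L,S)$ with the support variety L$(M)$ from Subsection~\ref{SSof}, and the observation that for a finitely generated module of finite type affording a central character the statement of Theorem~\ref{TSup} should be interpreted componentwise, i.e. as the assertion that each irreducible component of L$(M)$ lies in L$_{\frak g, \frak k}^\cdot$. That assertion is immediate from Theorem~\ref{Tisohol}(a), and together with the first paragraph this completes the deduction.
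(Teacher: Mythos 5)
Your argument is correct, and since the paper states Corollary~\ref{Cholsup} without a written proof (treating the deduction as immediate from Theorem~\ref{Tisohol}), your write-up simply supplies the bookkeeping the author left implicit: invoking Dixmier's theorem so that a simple module satisfies the central-character hypothesis of Theorem~\ref{Tisohol}, then reading Theorem~\ref{THol} off part~(b), and identifying the subvariety $L$ in the pair $(L,S)$ with the support variety L$(M)$ of Subsection~\ref{SSof} so that Theorem~\ref{TSup} follows from part~(a). Your remark that Theorem~\ref{TSup} should really be read componentwise when $M$ is not simple is a reasonable reading of the paper's slightly loose use of the pair $(L,S)$ for a module that need not itself be simple, and the deduction goes through as you describe.
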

\begin{corollary}\label{Cghol}Let $M$ be a simple $(\frak g, \frak k)$-module of finite type and $\tilde V$ be an irreducible component of V$(M)$. Then \begin{center}dim$\tilde V=\frac{1}{2}$dim~GV$(M)$ and $G\tilde V$=GV$(M)$.\end{center}\end{corollary}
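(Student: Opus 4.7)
The plan is to combine Theorem~\ref{Tisohol}(b) (the holonomicity of $M$) with Joseph's irreducibility of GV$(M)$ and Bernstein's inequality.

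Since $M$ is simple, Joseph's theorem (stated in Subsection~\ref{SSavg}) gives GV$(M)=\overline{Gu}$ for a single nilpotent $G$-orbit $Gu$; in particular GV$(M)$ is irreducible. By Theorem~\ref{Tisohol}(b) combined with Definition~\ref{Dhol}, every irreducible component $\tilde V$ of V$(M)$ satisfies $\dim\tilde V=\tfrac{1}{2}\dim G\tilde V$. Since gr(Ann$M)\subset$J$_M$ (the symbol of any element annihilating $M$ annihilates gr$M$), we have V$(M)\subset$GV$(M)=\overline{Gu}$. As $\overline{Gu}$ is $G$-stable, $\overline{G\tilde V}\subset\overline{Gu}$ for each component, whence $\dim\tilde V\le\tfrac{1}{2}\dim$GV$(M)$.

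Bernstein's inequality (Theorem~\ref{Tber}) gives $\dim$V$(M)\ge\tfrac{1}{2}\dim$GV$(M)$, so combined with the previous bound $\dim$V$(M)=\tfrac{1}{2}\dim$GV$(M)$. Hence at least one component $\tilde V_{i_0}$ attains $\dim\tilde V_{i_0}=\tfrac{1}{2}\dim$GV$(M)$, which via the Lagrangian property forces $\dim G\tilde V_{i_0}=\dim$GV$(M)$, and by irreducibility of $\overline{Gu}$ we obtain $\overline{G\tilde V_{i_0}}=$GV$(M)$.

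The main obstacle is extending this conclusion to \emph{every} component of V$(M)$, which amounts to equidimensionality of V$(M)$. The plan is to invoke pure-dimensionality of the singular support of a holonomic $\EuScript D^\lambda(X)$-module: by Theorem~\ref{Tisohol}(b), Loc$M$ is holonomic, so every irreducible component of $\EuScript V($Loc$M)$ has dimension $\dim X$. Combining the Barlet--Kashiwara identity V$(M)=\phi_X(\EuScript V($Loc$M))$ with the Springer fiber-dimension estimate $\dim\phi_X^{-1}(u)=\dim X-\tfrac{1}{2}\dim Gu_0$ for $u\in Gu_0$, each image $\phi_X(\EuScript V_j)$ of a component of $\EuScript V($Loc$M)$ satisfies $\dim\phi_X(\EuScript V_j)\ge\tfrac{1}{2}\dim\overline{G\phi_X(\EuScript V_j)}$; together with the Lagrangian upper bound from the first step this forces the common dimension $\tfrac{1}{2}\dim$GV$(M)$ for every component $\tilde V$ of V$(M)$. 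Irreducibility of $\overline{Gu}$ then yields $\overline{G\tilde V}=$GV$(M)$ for every such component.
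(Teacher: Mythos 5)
Your first part (through the identification of one component $\tilde V_{i_0}$ attaining dimension $\frac12\dim\mathrm{GV}(M)$) is correct and follows the paper's logic. However, the final paragraph where you try to propagate the conclusion to \emph{every} component has a genuine gap.

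The argument you give -- combining the Barlet--Kashiwara identity $\mathrm V(M)=\phi_X(\EuScript V(\mathrm{Loc}\,M))$, the pure dimensionality of the singular support of a holonomic module, and the Springer fiber dimension formula $\dim\phi_X^{-1}(u)=\dim X-\tfrac12\dim Gu_0$ -- only yields the bound $\dim\tilde V\ge\tfrac12\dim\overline{G\tilde V}$ for each component. This is precisely Gabber's coisotropy inequality (Theorem~\ref{Gab}), and combined with the Lagrangian equality $\dim\tilde V=\tfrac12\dim G\tilde V$ from Theorem~\ref{Tisohol} it gives nothing new: it does not force $\dim G\tilde V=\dim\mathrm{GV}(M)$. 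Holonomicity of $\mathrm{Loc}\,M$ controls the dimension of the components $\EuScript V_j$ of $\EuScript V(\mathrm{Loc}\,M)$ upstairs, but the generic fiber of $\phi_X$ restricted to a given $\EuScript V_j$ can be as large as the full Springer fiber, so the image $\phi_X(\EuScript V_j)$ can \emph{a priori} land in a proper orbit closure $\overline{G\tilde V}\subsetneq\mathrm{GV}(M)$; your computation never excludes this.

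What the paper actually uses -- signaled by the parenthetical reference to [GL] (Gabber, \emph{Equidimensionalit\'e de la vari\'et\'e caract\'eristique}) -- is the theorem that for a finitely generated $\mathrm U(\frak g)$-module affording a central character, the associated variety $\mathrm V(M)$ is \emph{equidimensional}. Once you know all components of $\mathrm V(M)$ have the same dimension, Bernstein's inequality $\dim\mathrm V(M)\ge\tfrac12\dim\mathrm{GV}(M)$ applies to every component, and together with the upper bound $\dim\tilde V=\tfrac12\dim G\tilde V\le\tfrac12\dim\mathrm{GV}(M)$ (where the last inequality uses $G\tilde V\subset\mathrm{GV}(M)$) this forces equality; irreducibility of $\mathrm{GV}(M)$ (Joseph's theorem) then gives $G\tilde V=\mathrm{GV}(M)$. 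Your proof would be repaired by replacing the Springer-fiber paragraph with a citation of this equidimensionality theorem.
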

\begin{proof}As $\tilde V$ is Lagrangian in $G\tilde V$, \begin{center}dim$\tilde V=\frac{1}{2}$dim$G\tilde V$\end{center} (see also~\cite{GL}). On the other hand $G\tilde V\subset$GV$(M)$, and \begin{center}dim$\tilde V\ge\frac{1}{2}$dim~GV$(M)$.\end{center} As GV$(M)$ is irreducible, GV$(M)$ coincides with $G\tilde V$.\end{proof}
\newpage\section{Bounded subalgebras of $\frak{sl}(W)$}\label{Sbgk}
We now prove Theorem~\ref{Tbgk} stated in the Introduction. In this
section we assume that $\frak g=\frak{sl}(W)$ for some
finite-dimensional vector space $W$. First we prove
Theorem~\ref{Tcob}.
\begin{proof}[Proof of Theorem~\ref{Tcob}]Let $\EuScript Z$ be an SL$(V)$-orbit open in GV$(M)$. Assume $M$ is a bounded $(\frak{sl}(W), \frak k)$-module. Let $\tilde V$ be an irreducible component of V$(M)$. Then $\tilde V\cap\EuScript Z$ is an open subset of $\tilde V$ and is a conical Lagrangian subvariety of $\EuScript Z$. By the discussion following Example~\ref{Pn}, $\EuScript Z$ is $K$-birationally isomorphic to T$^*$Fl for some partial $W$-flag variety Fl. As $\tilde V$ is a conical Lagrangian subvariety of $\EuScript Z$, $\tilde V$ is birationally isomorphic to N$^*_{Z/\mathrm{Fl}}$ for some smooth subvariety $Z\subset$Fl (Proposition~\ref{NG}). As $\tilde V$ is $K$-spherical, N$^*_{Z/\mathrm{Fl}}$ is $K$-spherical. Therefore Fl is $K$-spherical (Theorem~\ref{Pan}) and T$^*$Fl is $K$-coisotropic. Hence $\EuScript Z$ is $K$-coisotropic.

Assume GV$(M)$ is $K$-coisotropic. Let $\tilde V$ be an irreducible component of V$(M)$ and $\tilde{\EuScript Z}$ be a $G$-orbit open in $G\tilde V$. By the discussion following Example~\ref{Pn}, $\tilde{\EuScript Z}$ is $K$-birationally isomorphic to T$^*$Fl for some partial $W$-flag variety Fl. As $\tilde{\EuScript Z}\subset\overline{\EuScript Z}$, the variety $\tilde{\EuScript Z}$ is $K$-coisotropic by Theorem~\ref{GrO}. Therefore the variety Fl is $K$-spherical and in particular has finitely many $K$-orbits. As $\tilde V\cap\tilde{\EuScript Z}\subset\frak k^\bot$, $\tilde V\cap\tilde{\EuScript Z}$ is isomorphic to an irreducible subvariety of the total space of the conormal bundle to a $K$-orbit in Fl. Therefore \begin{center}dim$\tilde V\le$dim~Fl.\end{center} On the other hand, $\tilde V\cap\tilde{\EuScript Z}$ is coisotropic in $\tilde{\EuScript Z}$ and therefore \begin{center}dim$\tilde V\ge$dim~Fl.\end{center} Therefore dim$\tilde V$=dim~Fl and $\tilde V$ is birationally isomorphic to the total space of the conormal bundle to a $K$-orbit in Fl. Hence $\tilde V$ is $K$-spherical (Theorem~\ref{Pan}).

As all irreducible components of V$(M)$ are $K$-spherical, $M$ is a bounded $K$-module.\end{proof}
\begin{theorem}\label{M_to_Gr} If there exists a simple infinite-dimensional bounded $(\frak{sl}(W), \frak k)$-module $M$, then $\mathrm{Gr}(r; W)$ is a spherical $K$-variety for some $r\in\{1,..., n_W-1\}$.\end{theorem}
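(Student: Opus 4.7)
My plan is to combine Theorem~\ref{Tcob} with the correspondence between nilpotent $\mathrm{SL}(W)$-orbits and partial $W$-flag varieties, and then descend through the partial order of Subsection~\ref{SGr} to reach a Grassmannian.

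First, by Theorem~\ref{Tcob}, the hypothesis that $M$ is a simple bounded $(\frak{sl}(W),\frak k)$-module forces $\mathrm{GV}(M)$ to be $K$-coisotropic. Since $M$ is simple and infinite-dimensional, $\mathrm{GV}(M)$ is the closure of a nonzero nilpotent $\mathrm{SL}(W)$-orbit $\EuScript Z$ (if $\mathrm{GV}(M)=\{0\}$, then the filtration of $\mathrm{Ann}\,M$ is cofinite in $\mathrm S(\frak{sl}(W))$, forcing $M$ to be finite-dimensional). By the discussion following Example~\ref{Pn}, there exists a partial $W$-flag variety $\mathrm{Fl}$ whose moment map $\phi_{\mathrm{Fl}}\colon\mathrm T^*\mathrm{Fl}\to\overline{\EuScript Z}$ is an $\mathrm{SL}(W)$-equivariant birational morphism; in particular it restricts to a $K$-equivariant isomorphism of an open subset of $\mathrm T^*\mathrm{Fl}$ onto an open subset of $\EuScript Z$. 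Since $K$-coisotropy is detected on any open $K$-stable subset, $\mathrm T^*\mathrm{Fl}$ is $K$-coisotropic, which (as used in the proof of Proposition~\ref{SphO}) is equivalent to $\mathrm{Fl}$ being $K$-spherical.

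Next I invoke statement~(1) from Subsection~\ref{SGr}: every partial $W$-flag variety is either cotangent-equivalent to or strictly higher than $\mathbb P(W)=\mathrm{Gr}(1;W)$. If $\mathrm{Fl}$ is cotangent-equivalent to $\mathbb P(W)$, then by Lemma~\ref{TtoP} its associated partition is $(1,n_W-1)$, which forces $\mathrm{Fl}$ itself to be $\mathrm{Gr}(1;W)$ or $\mathrm{Gr}(n_W-1;W)$, and we are done. Otherwise $\mathrm{Fl}$ is strictly higher than $\mathbb P(W)$, so Proposition~\ref{SphO} transfers the $K$-sphericity of $\mathrm{Fl}$ down to $\mathbb P(W)=\mathrm{Gr}(1;W)$, again yielding a $K$-spherical Grassmannian $\mathrm{Gr}(r;W)$ with $r\in\{1,\dots,n_W-1\}$.

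The substantive analytic input is already packaged into Theorem~\ref{Tcob} and the moment map machinery of Subsection~\ref{SGr}; the remaining argument is a combinatorial descent through the partial order on partial flag varieties, and I do not anticipate a serious technical obstacle. The only point requiring minor care is the transfer of $K$-coisotropy from $\EuScript Z$ to $\mathrm T^*\mathrm{Fl}$, which is handled by the fact that $\phi_{\mathrm{Fl}}$ is a $K$-equivariant isomorphism over $\EuScript Z$.
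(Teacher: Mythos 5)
Your proof is correct and follows essentially the same route as the paper's: both apply Theorem~\ref{Tcob} to see that $\mathrm{GV}(M)$ is $K$-coisotropic, identify $\mathrm{GV}(M)$ $K$-birationally with $\mathrm T^*\mathrm{Fl}$ via the moment-map discussion after Example~\ref{Pn}, and conclude that $\mathrm{Fl}$ is $K$-spherical. The paper then asserts directly that $\mathrm{Gr}(r;W)$ is $K$-spherical for some $r$, whereas you spell out this last step via Proposition~\ref{SphO} and the partial order of Subsection~\ref{SGr}; that extra detail is harmless and simply makes explicit an inference the paper leaves implicit.
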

\begin{proof}By Theorem~\ref{Tcob} the variety GV$(M)$ is $K$-coisotropic. By the discussion following Example~\ref{Pn} the variety GV$(M)$ is $K$-birationally isomorphic to T$^*$Fl for some partial flag variety Fl. Hence Fl is $K$-spherical and Gr$(r; W)$ is a $K$-spherical variety for some $r$.\end{proof}
\begin{theorem}\label{Gr_to_M}Let Fl be a partial $W$-flag variety. Assume that Fl is $K$-spherical. Then there exists a simple infinite-dimensional multiplicity-free $(\frak{sl}(W), \frak k)$-module.\end{theorem}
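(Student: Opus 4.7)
The plan is to construct the required module as the global sections of a $K$-equivariant twisted $\EuScript D$-module on Fl whose generic support is the open $K$-orbit. Write $\mathrm{Fl}=G/P$ with $P\subset G=\mathrm{SL}(W)$ parabolic, and let $\EuScript O\subset\mathrm{Fl}$ denote the open $K$-orbit, which by $K$-sphericity of Fl is a spherical $K$-homogeneous space $\EuScript O\cong K/K_{x_0}$ with $K_{x_0}$ reductive in $K$ (Theorem~\ref{MPan}).

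First I would pick a generic weight $\lambda\in\hat{\frak h}^*$ extending a character of $\frak p$, chosen so that $\mathrm D^\lambda(\mathrm{Fl})$ is a nontrivial quotient of $\mathrm U(\frak{sl}(W))$ with the global-sections functor furnishing an equivalence onto the category of $\frak{sl}(W)$-modules of central character $\chi_\lambda$, and so that $\lambda$ lifts to an algebraic character of $K_{x_0}$ producing a $K$-equivariant twisted line bundle $\EuScript L_\lambda$ on $\EuScript O$. The cohomological constraint on $\lambda$ can be met by varying within its $W^\frak g$-orbit.

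Next I would form the intermediate extension $\EuScript M:=j_{!*}\EuScript L_\lambda$ along $j:\EuScript O\hookrightarrow\mathrm{Fl}$, a simple $K$-equivariant $\EuScript D^\lambda$-module on Fl. Setting $M:=\mathrm{GSec}\,\EuScript M$, the Beilinson-Bernstein theorem gives simplicity, $K$-equivariance of $\EuScript M$ forces $\frak k$-local finiteness of $M$, and the positivity of $\dim\EuScript O$ combined with the identity $\mathrm V(M)=\phi_{\mathrm{Fl}}(\EuScript V(\EuScript M))$ of Barlet-Kashiwara (Subsection~\ref{SSof}) yields infinite-dimensionality.

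The main obstacle is proving multiplicity-freeness as a $\frak k$-module. The target estimate is morally accessible via Frobenius reciprocity: the $\frak k$-character of $\Gamma(\EuScript O,\EuScript L_\lambda)=\mathrm{Ind}_{K_{x_0}}^K\mathbb C_\lambda$ is multiplicity-free by the Vinberg-Kimelfeld lemma applied to the spherical homogeneous space $K/K_{x_0}$. What requires care is comparing $M=\mathrm{GSec}\,j_{!*}\EuScript L_\lambda$ with $\Gamma(\EuScript O,\EuScript L_\lambda)$, since the two a priori differ by contributions supported on boundary $K$-orbits in $\mathrm{Fl}\setminus\EuScript O$. I would control this by an induction on $K$-orbit codimension, using that each boundary orbit is itself $K$-spherical, by Proposition~\ref{SphO} applied to the lower partial flag varieties parameterizing their closures, so the contribution of each boundary stratum to $M$ is also multiplicity-free and cannot inflate any given $\frak k$-isotype beyond multiplicity one.
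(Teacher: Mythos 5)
Your plan constructs $M$ as the global sections of the intermediate extension $j_{!*}\EuScript L_\lambda$ from the \emph{open} $K$-orbit, whereas the paper delegates the construction to Theorem~6.3 of~\cite{PS}, which localizes on a proper \emph{closed} $K$-orbit whose conormal bundle is $K$-spherical (an equivalent condition to $K$-sphericity of Fl, by Panyushev's Theorem~\ref{Pan}). The choice of closed rather than open orbit is not cosmetic, and it hides the genuine gap in your argument: when $K$ acts transitively on Fl, the open $K$-orbit is all of Fl, so $j$ is the identity, the intermediate extension is just a twisted line bundle on the projective variety Fl, and $\Gamma(\mathrm{Fl},\EuScript L_\lambda)$ is \emph{finite-dimensional} (or zero for non-integral $\lambda$). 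Your appeal to "positivity of $\dim\EuScript O$" to get infinite-dimensionality does not help, because the singular support of $j_{!*}\EuScript L_\lambda$ then reduces to the zero section and $\mathrm V(M)=\{0\}$. This case really occurs: take $\frak k=\frak{sp}(W)$ and $\mathrm{Fl}=\mathbb P(W)$, which is $K$-spherical and a single $K$-orbit. The paper handles exactly this situation separately (via Onishchik's classification, then passing to $\mathrm{Gr}(2;W)$), and your construction gives no output there. A secondary concern is the choice of $\lambda$: you need it simultaneously to be dominant regular (so that global sections preserve simplicity) and to admit a $K$-equivariant twisted rank-1 local system on $\EuScript O$; you assert this can be arranged "by varying within its $W^\frak g$-orbit," but that orbit is finite while the constraint lives on a coset of a lattice, so the compatibility needs an actual argument.

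On the positive side, the place you flagged as "the main obstacle" — controlling boundary contributions to prove multiplicity-freeness — is not actually an obstacle. Since $j$ is an open embedding, $j_{!*}\EuScript L_\lambda$ is a $\EuScript D^\lambda$-submodule of $j_*\EuScript L_\lambda$, and global sections are left exact, so $M\hookrightarrow\Gamma(\EuScript O,\EuScript L_\lambda)=\mathrm{Ind}_{K_{x_0}}^K\mathbb C_\lambda$; the latter is multiplicity-free because $\EuScript O$ is $K$-spherical. Submodules of multiplicity-free modules are multiplicity-free, so the induction on $K$-orbit codimension you propose is unnecessary. Overall, your route is genuinely different from the paper's (direct IC-extension from the open orbit vs. invoking the closed-orbit construction of~\cite{PS}), and the boundary-orbit worry is a red herring, but you must supply a separate argument for the $K$-transitive case and a justification for the simultaneous choice of $\lambda$ before this becomes a complete proof.
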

\begin{proof}Theorem 6.3 in ~\cite{PS} proves the existence of a simple infinite-dimensional multiplicity-free $(\frak{sl}(W), \frak k)$-module under the assumption that there exists a partial $W$-flag variety for which $K$ has a proper closed orbit on Fl such that the total space of its conormal bundle is $K$-spherical. By~Theorem~\ref{Pan} this latter condition is equivalent to the $K$-sphericity of Fl. It remains to consider the case when Fl has no proper closed $K$-orbits on Fl, i.e. $K$ has only one orbit on Fl. If $K$ has an open orbit on Fl then $[\frak k,\frak k]\cong\frak{sp}(W)$~\cite{On}. However, in this last case Gr$(2; W)$ has a proper closed $K$-orbit and is $K$-spherical.\end{proof}
We have thus proved the following weaker version of Theorem~\ref{Tbgk}.
\begin{corollary}\label{Gr1}A pair $(\frak{sl}(W), \frak k)$ admits an infinite-dimensional bounded simple $(\frak{sl}(W), \frak k)$-module if and only if $\mathrm{Gr}(r; W)$ is a spherical $K$-variety for some $r$.\end{corollary}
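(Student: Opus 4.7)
The plan is to assemble the corollary directly from the two preceding theorems, viewing it as a packaging of Theorem~\ref{M_to_Gr} and Theorem~\ref{Gr_to_M} into a single biconditional; no new geometric input is needed.

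For the forward implication, I would start with an infinite-dimensional simple bounded $(\frak{sl}(W),\frak k)$-module $M$ and invoke Theorem~\ref{M_to_Gr} verbatim: its conclusion is exactly that $\mathrm{Gr}(r;W)$ is $K$-spherical for some $r\in\{1,\ldots,n_W-1\}$. Nothing further is required for this direction.

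For the reverse implication, I would suppose $\mathrm{Gr}(r;W)$ is $K$-spherical for some $r$, and then apply Theorem~\ref{Gr_to_M} with the partial flag variety $\mathrm{Fl}:=\mathrm{Gr}(r;W)$; the hypothesis of that theorem is satisfied, and its conclusion produces a simple infinite-dimensional multiplicity-free (hence bounded, by Definition~\ref{Dbm}, since the multiplicity function is uniformly bounded by $1$) $(\frak{sl}(W),\frak k)$-module. I would then note explicitly that a multiplicity-free module is automatically bounded, so this supplies the required module.

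I do not expect any genuine obstacle here: the two halves of the corollary are literally restatements of Theorem~\ref{M_to_Gr} and Theorem~\ref{Gr_to_M}, with the only minor point being the implicit passage from the ``multiplicity-free'' output of Theorem~\ref{Gr_to_M} to the weaker ``bounded'' conclusion demanded by the corollary, which is immediate from the definitions. Thus the proof will be a short two-line deduction rather than an independent argument.
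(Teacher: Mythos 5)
Your proposal is correct and matches the paper's own proof, which states that the corollary "follows directly from Theorems~\ref{Gr_to_M} and~\ref{M_to_Gr}." You have simply spelled out the two implications and flagged the trivial passage from multiplicity-free to bounded, which the paper leaves implicit.
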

\begin{proof} The statement  follows directly from Theorems~\ref{Gr_to_M} and ~\ref{M_to_Gr}.\end{proof}
\begin{corollary}[see also ~\cite{PS}, Conjecture 6.6]If there exists a bounded simple infinite-dimensional $(\frak{sl}(W), \frak k)$-module, then there exists \\a multiplicity-free simple infinite-dimensional $(\frak{sl}(W), \frak k)$-module.\end{corollary}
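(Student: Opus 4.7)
The plan is to obtain this corollary by chaining together the two implications that have just been established in the section, namely Theorem~\ref{M_to_Gr} and Theorem~\ref{Gr_to_M}, which together form the equivalence of Corollary~\ref{Gr1}. The key observation is that the intermediate geometric condition, namely the $K$-sphericity of some Grassmannian $\mathrm{Gr}(r;W)$, is strong enough to produce a \emph{multiplicity-free} module, not merely a bounded one.

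Concretely, suppose we are given a simple infinite-dimensional bounded $(\frak{sl}(W),\frak k)$-module $M$. First, I would invoke Theorem~\ref{M_to_Gr}: the existence of such $M$ forces $\mathrm{Gr}(r;W)$ to be $K$-spherical for some $r\in\{1,\dots,n_W-1\}$. (At the level of proof this goes through Theorem~\ref{Tcob}, identifying $\mathrm{GV}(M)$ up to $K$-birational isomorphism with $\mathrm T^*\mathrm{Fl}$ for a partial flag variety $\mathrm{Fl}$, and then observing that the $K$-sphericity of $\mathrm{Fl}$ implies the $K$-sphericity of some Grassmannian via the partial order on partial flag varieties and Proposition~\ref{SphO}.) Then I would apply Theorem~\ref{Gr_to_M} to the $K$-spherical partial flag variety $\mathrm{Fl}=\mathrm{Gr}(r;W)$: this yields a simple infinite-dimensional \emph{multiplicity-free} $(\frak{sl}(W),\frak k)$-module, which is what we want.

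There is essentially no new obstacle beyond what was already overcome in Theorems~\ref{M_to_Gr} and~\ref{Gr_to_M}; the corollary is a one-line composition of the two implications, exploiting the fact that the ``$\Leftarrow$'' direction in Corollary~\ref{Gr1} produces a multiplicity-free module, even though only boundedness was needed for the ``$\Rightarrow$'' direction. In effect the argument upgrades an arbitrary bounded simple module to a multiplicity-free simple one by passing through the intermediate geometric data of a $K$-spherical Grassmannian.
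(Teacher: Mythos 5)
Your proposal is correct and follows exactly the same route as the paper: chain Theorem~\ref{M_to_Gr} (bounded simple module $\Rightarrow$ some $\mathrm{Gr}(r;W)$ is $K$-spherical) with Theorem~\ref{Gr_to_M} (a $K$-spherical partial flag variety $\Rightarrow$ a multiplicity-free simple infinite-dimensional module). The paper compresses this to ``follows directly from Corollary~\ref{Gr1},'' which is precisely the two implications you unpack.
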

\begin{proof}The statement follows directly from Corollary~\ref{Gr1}.\end{proof}
As $\mathbb P(W)$ is lower than or cotangent-equivalent to any other
Grassmannian (see discusion following Lemma~\ref{TtoP}), $r$ in
Corollary~\ref{Gr1} can be chosen to equal 1 (see
Proposition~\ref{SphO}). This completes the proof of
Theorem~\ref{Tbgk}.

All finite-dimensional $\frak k$-modules $W$ such that $\mathbb P(W)$ is a $K$-spherical variety are known from the work of C.~Benson and G.~Ratcliff~\cite{BR} (see also~\cite{Kc} and~\cite{Le}). The list of respective pairs $(\frak k, W)$ is reproduced in the Appendix. Theorem~\ref{Tbgk} implies the following.
\begin{corollary} The list of pairs $(\frak{sl}(W), \frak k)$ for which $\frak k$ is reductive and bounded in $\frak{sl}(W)$ coincides with the list of C.~Benson and G.~Ratcliff reproduced in the Appendix.\end{corollary}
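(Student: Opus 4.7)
The plan is to chain Theorem~\ref{Tbgk} with the Benson-Ratcliff classification already cited in the paragraph preceding the corollary. By the definition of a \emph{bounded reductive subalgebra}, $\frak k\subset\frak{sl}(W)$ is bounded exactly when there exists at least one infinite-dimensional simple bounded $(\frak{sl}(W),\frak k)$-module. Theorem~\ref{Tbgk} identifies this condition with $W$ being a $K\times\mathbb C^*$-spherical variety, where $\mathbb C^*$ acts by scalar multiplication.

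The second step is to observe that $W$ is $K\times\mathbb C^*$-spherical if and only if $\mathbb P(W)$ is $K$-spherical. A Borel subgroup of $K\times\mathbb C^*$ has the form $B\times\mathbb C^*$; the $\mathbb C^*$-orbits on $W\setminus\{0\}$ are the fibres of the canonical projection $W\setminus\{0\}\to\mathbb P(W)$; hence $B\times\mathbb C^*$ has an open orbit in $W$ precisely when $B$ has an open orbit in $\mathbb P(W)$. This is the same Borel-sphericity reduction already invoked in the paragraph that closes the proof of Theorem~\ref{Tbgk}.

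The third and final step is to invoke the classification of C.~Benson and G.~Ratcliff~\cite{BR}, which enumerates exactly the $K$-modules $W$ for which $\mathbb P(W)$ is $K$-spherical, and whose output is reproduced in the Appendix of the thesis. Composing the two equivalences above with this classification yields the claimed coincidence of lists. Since the substantive work has been done in Theorem~\ref{Tbgk} and in~\cite{BR}, I do not expect any real obstacle here; the only non-cosmetic ingredient beyond the cited results is the trivial $B\times\mathbb C^*$-versus-$B$ comparison above.
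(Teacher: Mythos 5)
Your argument is correct and follows essentially the paper's own route: the paper also deduces the corollary directly from Theorem~\ref{Tbgk}, with the reduction from $K\times\mathbb C^*$-sphericity of $W$ to $K$-sphericity of $\mathbb P(W)=\mathrm{Gr}(1;W)$ already absorbed into the final step of the proof of Theorem~\ref{Tbgk} (via Corollary~\ref{Gr1}). Your explicit spelling out of the Borel-orbit comparison between $W\setminus\{0\}$ and $\mathbb P(W)$ is the same observation, merely stated rather than left implicit.
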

\newpage\section{Spherical partial flag varieties}\label{Ssphgr}
We recall that $K$ is a reductive Lie group. Let $V$ be a finite-dimensional $K$-module. We denote by $\frak k_V$ the image of $\frak k$ in End$(V)$.
\begin{definition} A $K$-module $V$ is called {\it weakly irreducible} if it is not a proper direct sum $V_1\oplus V_2$ of two $\frak k$-submodules such that \begin{center}$[\frak k_{V_1}, \frak k_{V_1}]\oplus [\frak k_{V_2}, \frak k_{V_2}]=[\frak k_V, \frak k_V]$.\end{center}\end{definition}
All spherical representations are classified in the work~\cite{BR} (see also~\cite{Le} and ~\cite{Kc}) and we now recall this classification. According to~\cite{BR}, a $K$-module $V$ is $K$-spherical if and only if the pair $([\frak k_V, \frak k_V], V)$ is a direct sum of pairs $(\frak k_i, V_i)$ listed in the Appendix (cf.~\cite{BR}) and in addition \begin{center}$(\frak k_V+_i\frak c_i)=$N$_{\frak{gl}(V)}(\frak k_V+_i\frak c_i)$\end{center} for certain abelian Lie algebras $\frak c_i$ attached to $(\frak k_i, V_i)$ where \begin{center}N$_{\frak{gl}(V)}(\frak k_V+_i\frak c_i)$\end{center} is the normalizer of $\frak k_V+_i\frak c_i$ inside $\frak{gl}(V)$.

Assume that $V$ is a direct sum of two simple modules, $V=V_1\oplus V_2$. For $a,b\in\mathbb C$ we denote by $h_{a,b}$ the rational semisimple element such that \begin{center}$h_{a,b}|_{V_1}=a$Id and $h_{a,b}|_{V_2}=b$Id,\end{center} where Id is the identity map. We use similar notation if $V$ is semisimple of length 1 or 3.
\begin{theorem}\label{KlGr}a) Assume that a partial flag variety Fl$(n_1,..., n_s; V)$ is not cotangent-equivalent to $\mathbb P(V)$. If the partial flag variety Fl$(n_1,...,n_s; V)$ is $K$-spherical, then Fl$(n_1,..., n_s; V)$ is cotangent-equivalent to Fl$(n_1',...,n_s'; V)$ for some datum \begin{center}$(n_1',...,n_s'; [\frak k_V, \frak k_V], V)$\end{center} which appears in the following list.\\
\setcounter{AP}{1}I) Case $s=1$ ('Grassmannians').\\
\Roman{AP}-1) $(r; \frak{sl}_n, \mathbb C^n); (r; \frak{so}_n, \mathbb C^n) (n\ge 3); (r; \frak{sp}_n, \mathbb C^n)$.\\
\Roman{AP}-2-1-1) $(2; \frak{sp}_n\oplus\frak{sl}_m, \mathbb C^n\oplus\mathbb C^m) (m\ge1);$\\
\Roman{AP}-2-1-2) $(2; \frak{sp}_n\oplus\frak{sp}_m, \mathbb C^n\oplus\mathbb C^m);$\\
\Roman{AP}-2-2) $(3; \frak{sl}_n\oplus\frak{sp}_m, \mathbb C^n\oplus\mathbb C^m) ( n\ge 1);$\\
\Roman{AP}-2-3) $(r; \frak{sp}_n, \mathbb C^n\oplus\mathbb C);$\\
\Roman{AP}-2-4) $(r; \frak{sl}_n\oplus\frak{sp}_4, \mathbb C^n\oplus\mathbb C^4) (n\ge 1)$;\\
\Roman{AP}-2-5) $(r; \frak{sl}_n\oplus\frak{sl}_m, \mathbb C^n\oplus\mathbb C^m) (n,m\ge 1)$;\\
\Roman{AP}-3-1-1) $(2; \frak{sl}_n\oplus\frak{sl}_m\oplus\frak{sl}_q, \mathbb C^n\oplus\mathbb C^m\oplus\mathbb C^q) (m,n,q\ge 1)$;\\
\Roman{AP}-3-1-2) $(2; \frak{sl}_n\oplus\frak{sl}_m\oplus\frak{sp}_q, \mathbb C^n\oplus\mathbb C^m\oplus\mathbb C^q) ( m\ge 1, n\ge 1)$;\\
\Roman{AP}-3-1-3) $(2; \frak{sl}_n\oplus\frak{sp}_m\oplus\frak{sp}_q,\mathbb C^n\oplus\mathbb C^m\oplus\mathbb C^q) ( n\ge 1)$;\\
\Roman{AP}-3-1-4) $(2; \frak{sp}_n\oplus\frak{sp}_m\oplus\frak{sp}_q, \mathbb C^n\oplus\mathbb C^m\oplus\mathbb C^q)$;\\
\Roman{AP}-3-2) $(r; \frak{sl}_n\oplus\frak{sl}_m,\mathbb C^n\oplus\mathbb C^m\oplus\mathbb C) (n,m\ge 1)$.\\
\setcounter{AP}{2}II) Case $s\ge 2$.\\
\Roman{AP}-1-1) $(n_1,..., n_s; \frak{sl}_n, \mathbb C^n)$;\\
\Roman{AP}-1-2) $(1,2,3; \frak{sp}_n, \mathbb C^n);$\\
\Roman{AP}-1-3) $(1,r; \frak{sp}_n, \mathbb C^n)$;\\
\Roman{AP}-2-1) $(n_1,..., n_s; \frak{sl}_n,\mathbb C^n\oplus\mathbb C)$;\\
\Roman{AP}-2-2) $(1, r; \frak{sl}_n\oplus\frak{sl}_m, \mathbb C^n\oplus\mathbb C^m) (m,n\ge 1)$;\\
\Roman{AP}-2-3) $(r_1, r_2; \frak{sl}_2\oplus\frak{sl}_n, \mathbb C^2\oplus\mathbb C^n) (n\ge 1)$;\\
\Roman{AP}-2-4) $(1,2; \frak{sl}_n\oplus\frak{sp}_m, \mathbb  F^n\oplus\mathbb C^m) (n\ge 1)$;\\
\Roman{AP}-2-5) $(1,2; \frak{sp}_n\oplus\frak{sp}_m, \mathbb C^n\oplus\mathbb C^m)$.\\
b) For all data $(n_1,..., n_s; \frak k', V)$ from this list there exists a reductive subgroup $K\subset$GL$(V)$ with Lie algebra $\frak k$ such that $\frak k'=[\frak k, \frak k]$ and Fl$(n_1,...,n_s; V)$ is a $K$-spherical variety.\end{theorem}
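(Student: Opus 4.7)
The strategy is to reduce to the Benson--Ratcliff classification of representations with spherical projectivization, and then, for each spherical $V$ appearing on that list, to determine exactly which partial $V$-flag varieties are $K$-spherical.

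For part a), suppose Fl$(n_1,\dots,n_s;V)$ is $K$-spherical and is not cotangent-equivalent to $\mathbb P(V)$. By Lemma~\ref{Lgr2} the variety Gr$(2;V)$ is $K$-spherical, and since $\mathbb P(V)$ is lower than Gr$(2;V)$ in the partial order of Subsection~\ref{SGr}, Proposition~\ref{SphO} forces $\mathbb P(V)$ to be $K$-spherical as well. Therefore (up to the normalizer condition recalled at the start of this section) $V$ occurs in the Benson--Ratcliff list reproduced in the Appendix. This gives us a finite list of candidate pairs $(\frak k_V,V)$ to check.

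For each such $V$ I would then walk through the partial flag varieties Fl$(n_1,\dots,n_s;V)$ and decide which are $K$-spherical. The tools available are: Lemma~\ref{TtoP}, which lets us replace a dimension vector by any partition in its equivalence class; statements (1)--(6) following Lemma~\ref{TtoP}, which show that it suffices to test the `lowest' candidates not cotangent-equivalent to $\mathbb P(V)$ --- namely Gr$(r;V)$, Fl$(1,2;V)$, Fl$(1,r;V)$, Fl$(1,2,3;V)$, Fl$(2,4;V)$, together with Fl$(1,3;V)$ via (5), with anything higher than Fl$(2,4;V)$ already excluded by Proposition~\ref{SphO} once all lowest candidates fail; Lemma~\ref{Lgtw}, which rewrites $c_K(\mathrm{Fl}(n_1,\dots,n_s;V))$ as the complexity of a Grassmannian for a Levi subgroup of SL$(V)$; and Panyushev's doubled-action technique summarized at the end of Subsection~\ref{SSph}, which produces the generic stabilizer $L(x)$ on Gr$(r;V)$ together with its representation $V^r(x)$. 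Iterating Lemma~\ref{Lgtw} and the Panyushev reduction along the dimension vector turns each sphericity test for Fl$(n_1,\dots,n_s;V)$ into a finite sequence of sphericity tests for Grassmannians $\mathrm{Gr}(n_j';V')$ of smaller representations $(L',V')$; each such test is settled by Benson--Ratcliff or by a dimension count against $\dim B$. Organising the positive answers produces precisely lists I and II of part a).

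For part b), for each entry $(n_1,\dots,n_s;\frak k',V)$ in the list I would exhibit a reductive subgroup $K\subset \mathrm{GL}(V)$ with $[\frak k,\frak k]=\frak k'$ and satisfying the Benson--Ratcliff normalizer condition, and then verify sphericity either by producing an open $B$-orbit on Fl$(n_1,\dots,n_s;V)$ directly in the reducible cases, or by running the reduction of (iv) in reverse: the spherical Grassmannian pieces produced by iterating Panyushev's construction certify sphericity of the original flag variety through Lemma~\ref{Lgtw}.

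The main obstacle is the bookkeeping in the doubled-action reduction: one must track the pair $(L,V^r(x))$ through several iterations, across all reducible entries of the Benson--Ratcliff list (the families labelled \mbox{I-2-$i$-$j$,} \mbox{I-3-$i$-$j$}, \mbox{II-2-$i$}), where one has to be careful about the extra central tori, small-dimension coincidences (notably $\frak{sp}_4$ versus $\frak{so}_5$ and $\frak{sl}_2\oplus\frak{sl}_2$ versus $\frak{so}_4$), and the normalizer condition. These are the phenomena responsible for the somewhat irregular shape of the final list.
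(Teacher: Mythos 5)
Your high-level skeleton matches the paper's (reduce via Lemma~\ref{Lgr2} and Proposition~\ref{SphO} to the Benson--Ratcliff list, then settle the remaining cases one by one), but the engine you propose for the case analysis does not actually run, and a key input that the paper relies on is missing.

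First, you cannot use Lemma~\ref{Lgtw} the way you describe. That lemma computes $c_G(G/P_1\times G/P_2)$ for \emph{two parabolics of the same group} $G$, and its only use is when the subgroup $K$ happens to be a Levi subgroup of $G$. The paper invokes it once, precisely to rephrase a flag-variety sphericity test for the Levi $K=\mathrm{GL}_{d_1}\times\cdots\times\mathrm{GL}_{d_{s+1}}\subset\mathrm{GL}(W)$ as a sphericity test for a product of two $\mathrm{GL}(W)$-flag varieties, in order to feed it into Ponomareva's classification. It does not ``rewrite $c_K(\mathrm{Fl}(n_1,\dots,n_s;V))$ as the complexity of a Grassmannian for a Levi subgroup'' for a general reductive $K\subset\mathrm{GL}(V)$.

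Second, and more importantly, you omit Theorem~\ref{Tpon} (Ponomareva's classification of spherical products of partial $W$-flag varieties), which is the actual workhorse of the paper. The paper first proves structural constraints --- Lemma~\ref{<4} ($V$ has at most three simple constituents) and Lemma~\ref{SPSL} (each constituent $V_i$ contributes $\frak{sl}(V_i)$ or $\frak{sp}(V_i)$) --- and then reduces nearly every remaining case to Theorem~\ref{Tpon}, plus a small number of $\frak{sp}$-specific lemmas (Corollary~\ref{Cflnesp}, Corollary~\ref{SPG4}, etc.). Without Ponomareva's list and without knowing that the simple pieces are forced to be $\frak{sl}$ or $\frak{sp}$, the ``iterate Panyushev reduction along the dimension vector'' plan collapses into an unbounded case-by-case search: Theorem~\ref{PTr} lets you peel off one simple summand from a \emph{Grassmannian} at a time, but you have no mechanism for handling longer flags, nor any a priori bound on the number of summands or on the list of candidate $(L',V')$ at each stage. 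Concretely, the entries II-1-1, II-2-1, II-2-2, II-2-3 of the theorem fall directly out of Theorem~\ref{Tpon}, and II-2-4, II-2-5 require Corollary~\ref{Cflnesp} to eliminate $\mathrm{Fl}(1,3;V)$; none of these steps is reachable by your stated toolkit.

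So the gap is genuine: you need to first establish the length and type constraints (Lemmas~\ref{<4}, \ref{SPSL}), then invoke Ponomareva's Theorem~\ref{Tpon}, and only then does the case-by-case bookkeeping become finite.
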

The rest of this section is devoted to the proof of Theorem~\ref{KlGr}.
\subsection{Simplifications} Let $V_s$ and $V_b$ be $K$-modules of dimensions $n_s, n_b$ such that $n_s, n_b\ge 1$. We start with the following remark. The varieties \begin{center}Hom$(V_s, V_b)$ and Gr$(n_s; V_s\oplus V_b)$\end{center} are birationally isomorphic. This shows that the $K$-module Hom$(V_s, V_b)$ is $K$-spherical if and only if Gr$(n_s; V_s\oplus V_b)$ is $K$-spherical.
\begin{theorem}\label{PTr} Fix $r\in\{1,..., n_b\}$. Let $(L, V_b^r)$ be the pair determined by the datum $(K, V_b, r)$ (see the discussion following Theorem~\ref{Pan}). Then the variety Gr$(r; V)$ is $K$-spherical if and only if the module Hom$(V_b^r, V_s)$ is $L$-spherical and the variety Gr$(r; V_b)$ is $K$-spherical.\end{theorem}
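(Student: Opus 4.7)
The plan is to exhibit $\mathrm{Gr}(r;V)$ birationally as a $K$-equivariant affine bundle over $\mathrm{Gr}(r;V_b)$ with fibers of the form $\mathrm{Hom}(V_b^r,V_s)$ carrying their natural $L$-action, and then to prove the additivity of complexity
\[
c_K(\mathrm{Gr}(r;V))\;=\;c_K(\mathrm{Gr}(r;V_b))\;+\;c_L(\mathrm{Hom}(V_b^r,V_s)).
\]
Since both summands on the right are nonnegative and sphericity is equivalent to vanishing of $c_K$ (Remark in Subsection~\ref{SSph}), the equivalence claimed in the theorem is immediate from this formula.

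First I would introduce the $K$-stable open subset $U\subset\mathrm{Gr}(r;V)$ of $r$-dimensional subspaces $W\subset V$ on which the projection $p_b\colon V\to V_b$ restricts to an isomorphism $W\to p_b(W)$, and define the $K$-equivariant surjection
\[
\pi\colon U\longrightarrow\mathrm{Gr}(r;V_b),\qquad W\longmapsto p_b(W).
\]
A subspace $W\in\pi^{-1}(x)$ is the graph of the linear map $V_b^r(x)\to V_s$ obtained by composing $(p_b|_W)^{-1}$ with $p_s|_W$, so $\pi^{-1}(x)$ is canonically identified with $\mathrm{Hom}(V_b^r(x),V_s)$; in particular $\dim\mathrm{Gr}(r;V)=\dim\mathrm{Gr}(r;V_b)+\dim\mathrm{Hom}(V_b^r,V_s)$.

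Second I would pass to a generic $x\in\mathrm{Gr}(r;V_b)$, so that by the discussion following Theorem~\ref{MPan} the stabilizer $B_x$ is a Borel subgroup of the reductive group $L=L(x)\subset K_x$. Since $L$ stabilizes $V_b^r(x)$, it acts on the fiber $\mathrm{Hom}(V_b^r(x),V_s)$ through its action on $V_b^r(x)$ and on $V_s\subset V$, and $B_x$ acts on this fiber as a Borel subgroup of $L$. For a generic $y\in\pi^{-1}(x)$ the $B$-orbit of $y$ fibers over the $B$-orbit of $x$ with fiber $B_x\cdot y$, giving
\[
\dim(B\cdot y)\;=\;\dim(B\cdot x)\;+\;\dim(B_x\cdot y).
\]
Because $B_x$ is itself a Borel of $L$, the codimension of $B_x\cdot y$ in the fiber equals $c_L(\mathrm{Hom}(V_b^r,V_s))$; subtracting from $\dim\mathrm{Gr}(r;V)$ then yields the additivity formula above.

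The main obstacle is verifying that at a generic point $y\in U$ the $B$-stabilizer really does act on the fiber via a Borel of $L$, so that no extra codimension is lost when passing from $x$ to $y$. This reduces to Panyushev's identification of $B_x$ with a Borel subgroup of $L(x)$ cited after Theorem~\ref{MPan}, combined with the standard observation that the complexity of an $L$-module is unchanged if one measures it with respect to any fixed Borel of $L$ instead of $L$ itself. Once this is in place, the theorem follows directly from the displayed additivity formula.
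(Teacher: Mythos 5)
Your proof is correct and follows essentially the same strategy as the paper: exhibit $\mathrm{Gr}(r;V)$ birationally as a fibration over $\mathrm{Gr}(r;V_b)$ with fiber $\mathrm{Hom}(V_b^r,V_s)$ (equivalently an open piece of $\mathrm{Gr}(r;V_b^r\oplus V_s)$, which is the form the paper uses), and invoke Panyushev's identification of the generic $B$-stabilizer on the base with a Borel of $L$. The additivity-of-complexity formula you isolate is simply the quantitative form of the argument that the paper leaves implicit in its terser two-direction proof.
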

\begin{proof}Assume that Gr$(r; V)$ is $K$-spherical. Then Gr$(r; V_b)$ is $K$-spherical and Gr$(r; V_b^r\oplus V_s)$ is $L$-spherical. The variety Gr$(r; V_b^r\oplus V_s)$ is $L$-spherical if and only if Hom$(V_b^r, V_s)$ is an L-spherical module.

Assume that the module Hom$(V_b^r, V_s)$ is $L$-spherical and the variety Gr$(r; V_b)$ is $K$-spherical. Then Gr$(r; V_b^r\oplus V_s)$ is an $L$-spherical variety. Therefore the variety Gr$(r; V)$ is $K$-spherical.\end{proof}
\begin{corollary}\label{GG} Suppose that the variety Gr$(r; V)$ is $K$-spherical for some $r\in\{2,..., n_b\}$. Then\\a) Hom$(\mathbb C^r, V_s)$ is a spherical GL$_r\times$K-module;\\b) if $n_b\ne 2$, then the variety Fl$(1,2; V_b)$ is $K$-spherical.\end{corollary}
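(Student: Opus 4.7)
The plan is to derive both statements from Theorem~\ref{PTr}.

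For part~(a), I apply Theorem~\ref{PTr} directly: the $K$-sphericity of $\mathrm{Gr}(r;V)$ gives that $\mathrm{Hom}(V_b^r,V_s)$ is $L$-spherical, where $(L,V_b^r)$ is the pair arising from the discussion following Theorem~\ref{Pan}. The $L$-action on $\mathrm{Hom}(V_b^r,V_s)$ factors through $\mathrm{GL}(V_b^r)\times K$: in the first factor via restriction of $L$'s action to the stable subspace $V_b^r$, and in the second via the inclusion $L\subset K$. A Borel $B_L\subset L$ is solvable, so its image lies in some Borel $B_{\mathrm{GL}_r}\times B_K$ of $\mathrm{GL}(V_b^r)\times K$. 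Since $B_L$ has an open orbit on $\mathrm{Hom}(V_b^r,V_s)$, so does the larger Borel containing it; identifying $V_b^r\cong\mathbb C^r$ then proves $\mathrm{Hom}(\mathbb C^r,V_s)$ is $\mathrm{GL}_r\times K$-spherical.

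For part~(b), I first reduce to the case $r=2$. By item~(2) in the list following Lemma~\ref{TtoP} (using the cotangent-equivalence $\mathrm{Gr}(r;V)\cong\mathrm{Gr}(n_V-r;V)$ if $r>n_V/2$), the variety $\mathrm{Gr}(r;V)$ is higher than $\mathrm{Gr}(2;V)$ in the partial order on Grassmannians, so Proposition~\ref{SphO} yields $K$-sphericity of $\mathrm{Gr}(2;V)$. Applying Theorem~\ref{PTr} with $r=2$ then gives both that $\mathrm{Gr}(2;V_b)$ is $K$-spherical and that $\mathrm{Hom}(V_b^2,V_s)$ is $L$-spherical, where $L$ denotes the Levi attached to a generic point of $\mathrm{Gr}(2;V_b)$.

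Next I analyze the $\mathbb P^1$-fibration $\mathrm{Fl}(1,2;V_b)\to\mathrm{Gr}(2;V_b)$ whose fiber over a generic $V_b^2$ is $\mathbb P(V_b^2)$. A Borel $B_K$ has an open orbit on the base; its generic isotropy contains a Borel of $L$, which acts on the fiber $\mathbb P^1$. Hence $\mathrm{Fl}(1,2;V_b)$ is $K$-spherical precisely when $V_b^2$ is an $L$-spherical module. Since $\dim V_b^2=2$, this is equivalent to the image of the natural homomorphism $L\to\mathrm{GL}(V_b^2)$ having positive dimension (any one-parameter subgroup of $\mathrm{GL}_2$ has an open orbit on $\mathbb P^1$).

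The main obstacle is establishing this positive-dimensionality. I plan to argue by contradiction: if $L$ acted trivially on $V_b^2$, then $\mathrm{Hom}(V_b^2,V_s)\cong V_s\oplus V_s$ as $L$-modules, and the assumed $L$-sphericity of this doubled module must be ruled out using $n_s\geq 1$ together with the structure forced on $L$ by the $K$-sphericity of $\mathrm{Gr}(2;V_b)$, via a doubled-action analysis in the spirit of Panyushev's techniques (Theorem~\ref{Pan}). The hypothesis $n_b\ne 2$ ensures $V_b^2\subsetneq V_b$, so that the fibration is nondegenerate.
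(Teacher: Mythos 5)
Part (a) is fine; it is essentially the explicit version of what the paper dismisses as ``obvious,'' and the factoring of the $L$-action through $\mathrm{GL}(V_b^r)\times K$ together with the enlargement of $B_L$ to a Borel of the bigger group is exactly the right observation.

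Part (b) starts along the same lines as the paper --- reduce to $r=2$, then analyze the $\mathbb P^1$-fibration $\mathrm{Fl}(1,2;V_b)\to\mathrm{Gr}(2;V_b)$, whose sphericity reduces to $\mathbb P(V_b^2)$ being $L$-spherical --- but your route from that point on has two real problems. First, the criterion you state is wrong: a one-parameter subgroup of $\mathrm{GL}_2$ acting by \emph{scalars} is positive-dimensional yet has no open orbit on $\mathbb P^1$; what you actually need is that the image of a Borel of $L$ in $\mathrm{PGL}(V_b^2)$ be nontrivial. Second, and more seriously, the step you yourself flag as ``the main obstacle'' is left as a plan, not a proof: you propose to rule out the degenerate case by noting that $\mathrm{Hom}(V_b^2,V_s)\cong V_s\oplus V_s$ and appealing to a doubled-action analysis, but it is not clear this gives a contradiction --- a doubled module of the right size relative to $\dim B_L$ can perfectly well be spherical, so $n_s\ge1$ alone does not finish it. The paper sidesteps the whole case analysis with the observation you are missing: from Theorem~\ref{PTr} (with $r=2$) one already knows that $\mathrm{Gr}(2;V_b^2\oplus V_s)$ is $L$-spherical, hence by Proposition~\ref{SphO} so is $\mathbb P(V_b^2\oplus V_s)$; since $V_b^2$ is a quotient (equivalently a direct summand) of $V_b^2\oplus V_s$, sphericity of $\mathbb P(V_b^2\oplus V_s)$ passes down to $\mathbb P(V_b^2)$, and the fibration argument then closes. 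That quotient-of-a-spherical-module step is what replaces the contradiction you were trying to set up, and without it your proof of (b) is incomplete.
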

\begin{proof}The first statement is obvious. By Lemma~\ref{Lgr2} we can assume that $r=2$. Let $(L, V_b^2)$ be the pair determined by the datum $(K, V_b, 2)$. As Gr$(2; V_b^2\oplus V_s)$ is an $L$-spherical variety, $\mathbb P(V_b^2\oplus V_s)$ is an $L$-spherical variety by Proposition~\ref{SphO}. Therefore Fl$(1,2; V_b)$ is a $K$-spherical variety.\end{proof}
The following lemma is a first approximation to Theorem~\ref{KlGr}.
\begin{lemma} Let $W$ be a $K$-module. Suppose that $W\otimes\mathbb C^r$ is a spherical $K\times$GL$_r$-module for some $r\in\mathbb Z_{\ge2}$. Then one of the following possibilities holds.\\1) $r=2$ and the datum $([\frak k_W, \frak k_W], W)$ appears in the following list:\\$(\frak{sl}_n\oplus\frak{sl}_m,\mathbb C^n\oplus\mathbb C^m) (n,m\ge 1),\hspace{10pt} (\frak{sl}_n\oplus\frak{sp}_{2m}, \mathbb C^n\oplus\mathbb C^{2m}) (n\ge 1),$\\$(\frak{sp}_{2n}\oplus\frak{sp}_{2m}, \mathbb C^{2n}\oplus\mathbb C^{2m}),\hspace{10pt} (\frak{sp}_{2n}, \mathbb C^{2n}),\hspace{10pt}(\frak{sl}_n,\mathbb C^n)$.\\2) $r=3$ and $(\frak k_W\cap\frak{sl}(W), W)\cong(\frak{sp}_{2n}, \mathbb C^{2n})$.\\3) $r\ge 3$ and the datum $([\frak k_W, \frak k_W], W)$ appears in the following list:\\ $(\frak{sl}_n, \mathbb C^n\oplus\mathbb C) (n \ge 1),\hspace{10pt}(\frak{sl}_n,\mathbb C^n) (n\ge 1), \hspace{10pt} (\frak{sp}_4, \mathbb C^4)$.\end{lemma}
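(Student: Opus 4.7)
The plan is to reduce the statement to the Benson--Ratcliff classification of spherical linear actions, which is recalled at the beginning of Section~\ref{Ssphgr} and reproduced in the Appendix. Since $W\otimes\mathbb{C}^r$ is a spherical $K\times\mathrm{GL}_r$-module, the pair $(\frak{k}_W\oplus\frak{gl}_r,\,W\otimes\mathbb{C}^r)$ must satisfy the Benson--Ratcliff conditions, so the task becomes to identify exactly those entries of their list which admit the tensor-product form $V'\otimes\mathbb{C}^r$ with $r\geq 2$.

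First I would decompose $W$ into weakly irreducible $K$-summands $W=W_1\oplus\cdots\oplus W_s$, so that $W\otimes\mathbb{C}^r=\bigoplus_i(W_i\otimes\mathbb{C}^r)$. By sphericity each summand $W_i\otimes\mathbb{C}^r$ must itself appear as a weakly irreducible spherical representation in the list, and the normalizer condition restricts how these summands can be combined. For $r\geq 2$ the simple factor $\frak{sl}_r\subset\frak{gl}_r$ is a factor of the acting algebra that acts only on the $\mathbb{C}^r$ tensor position, which provides the main source of rigidity in the subsequent case analysis.

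Second, I would go through the Benson--Ratcliff list entry by entry and retain only the entries with a tensor-product structure $V'\otimes\mathbb{C}^r$, $r\geq 2$. For weakly irreducible $W$ this produces essentially the families $(\frak{sl}_n\oplus\frak{sl}_r,\,\mathbb{C}^n\otimes\mathbb{C}^r)$ and $(\frak{sp}_{2n}\oplus\frak{sl}_r,\,\mathbb{C}^{2n}\otimes\mathbb{C}^r)$; inspection of the list shows that the latter is spherical only for $r\leq 3$, and the case $r=3$ is exactly part~(2) of the lemma. Trivial one-dimensional summands of $W$ can be absorbed to produce the case $W=\mathbb{C}^n\oplus\mathbb{C}$ in part~(3), and the sporadic pair $(\frak{sp}_4,\mathbb{C}^4)$ must be extracted separately from the list.

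The case analysis then splits naturally according to $r$. For $r\geq 3$ the requirement that $\frak{sl}_r$ sits as an isolated simple factor acting only on a single tensor position is so restrictive that only the three weakly irreducible cases in part~(3) survive, together with the $(\frak{sp}_{2n},\mathbb{C}^{2n})$ case at $r=3$ from part~(2). For $r=2$ the situation is richer because $\frak{sl}_2\cong\frak{sp}_2$, and several spherical representations carrying an $\frak{sp}_2$-factor may be reinterpreted as tensor products with $\mathbb{C}^2$; this is what produces the longer list in part~(1), obtained by combining at most two weakly irreducible summands compatibly with the normalizer condition. The main obstacle will be the bookkeeping: the Benson--Ratcliff list is long, and for each entry one must verify whether the representation can be written in the form $V'\otimes\mathbb{C}^r$, $r\geq 2$, and determine which values of $r$ actually work. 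No new technique is required beyond a careful and complete enumeration.
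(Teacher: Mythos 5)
Your proposal takes exactly the approach the paper does: the paper's proof is the one-line statement ``The result follows directly from the tables of~\cite{BR}.'' You have simply spelled out the bookkeeping — decompose $W$ into weakly irreducible summands, scan the Benson--Ratcliff list for entries of the form $V'\otimes\mathbb{C}^r$, split on $r=2$ versus $r\geq 3$ — that is implicit in the paper's reference to the tables.
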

\begin{proof}The result follows directly from the tables of~\cite{BR}.\end{proof}
Let $0<n_1<...< n_s<n_W$ and $0<n_1'<...<n_{s'}'<n_W$ be sequences of integers. By Lemma~\ref{Lgtw} \begin{center}Fl$(n_1, ..., n_s; W)\times$Fl$(n_1',..., n_{s'}'; W)$\end{center} is GL$(W)$-spherical if and only if the variety Fl$(n_1',..., n_{s'}'; W)$ is $K$-spherical, where \begin{center}$K=$GL$_{n_W-n_s}\times...\times$GL$_{n_1}$\end{center} is a Levi subgroup of a parabolic subgroup of GL$(W)$. Let \begin{center}$(d_1,..., d_{s+1})$ and $(d_1',..., d_{s'+1}')$\end{center} be the corresponding to $(n_1,..., n_s)$ and $(n_1',..., n_{s'}')$ partitions of $n_W$.

The following theorem is a particular case of a result of E.~Ponomareva~\cite{Po}; she classified $G$-spherical products of two compact $G$-homogeneous spaces for arbitrary algebraic groups $G$.
\begin{theorem}[see also~\cite{Pan3}]\label{Tpon}The variety \begin{center}Fl$(n_1,..., n_s; W)\times$Fl$(n_1',...,n_{s'}'; W)$\end{center} is GL$(W)$-spherical if and only if the (unordered) pair of sets \begin{center}$(\{d_1,..., d_{s+1}\}, \{d_1',...,d_{s'+1}'\})$\end{center} appears in the following list:$\hspace{10pt}(\{p_1, p_2\}, \{q_1, q_2\});\hspace{10pt}(\{p_1, p_2\}, \{1, q_1, q_2\});$\\$(\{2, p_1\}, \{q_1, q_2, q_3\});\hspace{10pt}(\{1, p_1\}, \{q_1,..., q_{s'+1}\}).$\end{theorem}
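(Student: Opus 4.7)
The plan is to exploit Lemma~\ref{Lgtw} to recast the question about $\mathrm{GL}(W)$-sphericity of a product of flag varieties as one about sphericity of a single flag variety under a Levi subgroup. Specifically, $\mathrm{Fl}(n_1,\ldots,n_s;W)\times\mathrm{Fl}(n_1',\ldots,n_{s'}';W)$ is $\mathrm{GL}(W)$-spherical if and only if $\mathrm{Fl}(n_1',\ldots,n_{s'}';W)$ is $L$-spherical, where $L=\mathrm{GL}_{d_1}\times\cdots\times\mathrm{GL}_{d_{s+1}}$ is the Levi subgroup determined by the partition $(d_1,\ldots,d_{s+1})$ of $n_W$, and by the symmetry of Lemma~\ref{Lgtw} the roles of the two partitions may be freely interchanged. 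The task thus becomes the classification of those pairs of partitions of $n_W$ for which the corresponding Levi acts spherically on the corresponding flag variety.

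For sufficiency---verifying that each of the four listed families yields a spherical action---I would exhibit an open Borel orbit case-by-case. In the family $(\{1,p_1\},\{q_1,\ldots,q_{s'+1}\})$, $L=\mathrm{GL}_1\times\mathrm{GL}_{p_1}$ and one can select a flag in generic position relative to the hyperplane $\mathbb{C}^{p_1}\subset W$; the resulting $B_L$-stabilizer has the required codimension. The family $(\{2,p_1\},\{q_1,q_2,q_3\})$ is handled analogously by measuring incidence between a three-step flag and a two-dimensional subspace. The remaining two cases---$(\{p_1,p_2\},\{q_1,q_2\})$ (a product of two Grassmannians) and $(\{p_1,p_2\},\{1,q_1,q_2\})$ (a Grassmannian times a two-step flag variety whose first step is a line)---are classical: in both, one puts a pair of subspaces into matrix Schubert normal form to extract an open Borel orbit and count dimensions to confirm sphericity.

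For necessity, I would use the doubled-action technique of Panyushev recalled after Theorem~\ref{MPan}, together with the recursive form of Theorem~\ref{PTr}: if the pair $(\{d_i\},\{d_j'\})$ produces a spherical Levi action on a flag variety, then iterating the reduction $(K,V,r)\mapsto(L,V^r)$ along successive parts of the partitions must always land on spherical representation data, which by Benson--Ratcliff~\cite{BR} are severely restricted. The main obstacle is precisely this reduction argument: one needs a clean combinatorial bound showing that whenever both partitions have length $\ge 3$ and no part of size $\le 2$ lies in the shorter partition, some iterated Levi reduction produces a triple-factor action $\mathrm{GL}_a\times\mathrm{GL}_b\times\mathrm{GL}_c$ on a nontrivial product of Grassmannians, which is not spherical. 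Implementing this induction requires a careful split on the number and parity of the parts and controlled use of the cotangent-equivalence partial order of Lemma~\ref{TtoP}; in spirit this follows the strategy of~\cite{Pan3}, but remaining inside type $A$ should allow a fully elementary dimension-counting argument to finish the classification.
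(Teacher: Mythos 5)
Your overall framing via Lemma~\ref{Lgtw} matches the paper, but the mechanism you use for necessity is genuinely different from the one the paper actually deploys, and the gap you yourself flag is the place where the two approaches diverge most sharply. The paper does not iterate the $(K,V,r)\mapsto(L,V^r)$ reduction of Theorem~\ref{PTr} through the parts of the partitions. Instead it exploits Proposition~\ref{SphO}: since $K$-sphericity of a partial flag variety is preserved when passing to a flag variety that is lower in the cotangent-equivalence partial order, it suffices to exhibit two \emph{minimal} non-spherical pairs, namely $(\{1,1,n\},\{1,1,n\})$ and $(\{3,n+1\},\{2,2,n\})$, check each by a direct computation, and then observe that every pair of partitions outside the stated list dominates one of these two. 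The monotonicity encapsulated in Proposition~\ref{SphO} (itself a consequence of Theorem~\ref{GrO}, i.e.\ Poisson-commutativity of $\mathbb C[\EuScript Z']^K$ passing to subvarieties of $\overline{\EuScript Z}$) is what replaces your proposed ``careful split on the number and parity of the parts.'' This is precisely the combinatorial bound you identify as the main obstacle, and the paper circumvents it rather than proving it.

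A further concrete worry with the way you set up the induction: the criterion you state --- ``both partitions have length $\ge 3$ and no part of size $\le 2$ lies in the shorter partition'' --- does not even reach the first of the paper's two minimal non-examples, $(\{1,1,n\},\{1,1,n\})$, whose parts are mostly equal to $1$. So the split you foresee would need at least two more branches (both partitions of length $\ge 3$ regardless of part sizes, and a length-$2$ partition with no small part paired against a length-$\ge 3$ partition with no part equal to $1$), and each branch would need its own terminal non-spherical configuration. Switching to the paper's viewpoint avoids these branches entirely: once the two minimal non-examples are computed (for which Theorem~\ref{PTr} does help, as the paper notes parenthetically), the remaining exclusions follow from a purely order-theoretic comparison of partitions.
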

\begin{proof}A straightforward computation shows that the pair \begin{center}$(\{1, 1, n\}, \{1, 1, n\})$\end{center} does not yield a spherical variety for $n\ge 1$ (see also Theorem~\ref{PTr}). A straightforward computation shows that the pair \begin{center}$(\{3, n+1\}, \{2, 2, n\})$\end{center} does not yield a spherical variety for $n\ge 2$ (see also Theorem~\ref{PTr}). The rest of the proof is an exercise to Proposition~\ref{SphO}. \end{proof}
\subsection{Simple spherical modules}\label{SSssph}Let $W$ be a simple $K$-module. We recall that by Lemma~\ref{Lgr2} if some partial $W$-flag variety, which is not cotangent-equivalent to $\mathbb P(W)$, is $K$-spherical, then the variety Gr$(2; W)$ is $K$-spherical too.
\begin{lemma}\label{TensP} Let Gr$(2; \mathbb C^m\otimes\mathbb C^n)$ be an SL$_n\times$SL$_m$-spherical variety for some $m,n\in\mathbb Z_{\ge 2}$. Then $m=n=2$.\end{lemma}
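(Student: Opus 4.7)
My plan is to prove the lemma by combining a Borel-dimension inequality with a Kronecker-theoretic stabilizer analysis. The first step is the observation that the open subvariety of linearly independent pairs in $V\oplus V$ is a principal $\mathrm{GL}_2$-bundle over $\mathrm{Gr}(2;V)$, so $B_K$ has an open orbit on $\mathrm{Gr}(2;V)$ if and only if $B_K\times\mathrm{GL}_2$ has an open orbit on $V\otimes\mathbb C^2$. Computing dimensions with $\dim B_K=\tfrac{m^2+n^2+m+n}{2}-2$ produces the necessary inequality $m^2+n^2+m+n+4\ge 4mn$. Assuming $m\le n$, this inequality forces $n=2$ or $n\ge 5$ when $m=2$, $n\ge 10$ when $m=3$, and still larger $n$ when $m\ge 4$. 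The pair $(m,n)=(2,2)$ is indeed $K$-spherical via the exceptional isomorphism $\mathrm{SL}_2\times\mathrm{SL}_2/\{\pm 1\}\cong\mathrm{SO}_4$, which realizes $\mathrm{Gr}(2;\mathbb C^4)$ as the symmetric space $\mathrm{SO}_4/\mathrm S(\mathrm O_2\times\mathrm O_2)$.

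For each surviving pair $(m,n)\ne(2,2)$ I will rule out sphericity by exhibiting a positive-dimensional subtorus of $B_K$ inside the stabilizer of a generic $2$-plane. A generic $2$-plane corresponds to a generic pencil of $m\times n$ matrices; by Kronecker's theorem, its nontrivial $L$-blocks occupy $m+p$ of the $n$ columns (where $p$ counts those blocks), leaving $n-m-p$ zero columns. Whenever $(m,n)\ne(2,2)$ survives the dimension inequality, one verifies $n-m-p\ge 1$, and the diagonal torus rescaling these zero columns lies in the standard maximal torus of $\mathrm{SL}_n\subset B_K$ and fixes the pencil, so that $\dim(B_K\cap K_P)>0$, contradicting the openness of the $B_K$-orbit and forcing $(m,n)=(2,2)$.

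The main obstacle is making this Kronecker step uniform across all surviving pairs: in edge cases such as $(m,n)=(2,5)$ the torus coming from the trivial columns becomes degenerate after passing from $\mathrm{GL}_n$ to $\mathrm{SL}_n$, and one must supplement with subtori arising from the internal structure of the nontrivial $L$-blocks. Accordingly, the verification that $B_K\cap K_P$ is always positive-dimensional has to be carried out case by case, which is the technical heart of the argument.
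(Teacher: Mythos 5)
Your dimension inequality is correct and identical to the paper's first step (up to swapping the roles of $m$ and $n$), and the list of surviving pairs is right. From there, however, the proposal and the paper part ways, and the proposal has a real logical flaw. You plan to rule out each surviving pair $(m,n)\ne(2,2)$ by ``exhibiting a positive-dimensional subtorus of $B_K$ inside the stabilizer of a generic $2$-plane'' and concluding that $\dim(B_K\cap K_P)>0$ contradicts openness of the $B_K$-orbit. That inference is false: an open $B_K$-orbit on $\mathrm{Gr}(2;V)$ has generic stabilizer of dimension $\dim B_K-\dim\mathrm{Gr}(2;V)$, which is positive whenever the inequality is strict. For $m=2$ and $n\ge 6$ the excess is $\tfrac{(n-2)(n-5)}{2}$, while the zero-column torus inside $\mathrm{SL}_n$ that you describe has dimension only $n-2m-1=n-5$; the gap $\tfrac{(n-5)(n-4)}{2}$ is positive and grows quadratically, so the construction you sketch falls far short of what is needed even after the $(2,5)$ edge case is repaired. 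To salvage the idea you would have to compute the full $B_{\mathrm{SL}_m}\times B_{\mathrm{SL}_n}\times\mathrm{GL}_2$-stabilizer of a generic pencil and show it exceeds the excess, which is a nontrivial non-transversality claim that the outline does not address. Moreover, the surviving set is infinite (all $(2,n)$ with $n\ge5$, all $(3,n)$ with $n\ge 10$, and so on), so ``carried out case by case'' cannot be literal; a uniform argument is required.

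The paper avoids all of this. After the same inequality, it observes that sphericity is inherited under passing to a generic quotient of the bigger tensor factor (this is the $(L,V^r)$/restriction machinery of Theorem~\ref{Pan} and Theorem~\ref{PTr}). Concretely, for $n=2$, $m\ge 5$ this pushes the problem down to $\mathrm{Gr}(2;\mathbb C^2\otimes\mathbb C^4)$ for $\mathrm{SL}_2\times\mathrm{SL}_4$, which fails by dimension; and for $n\ge 3$ the inequality forces $m>2n$, the restriction lands on $\mathrm{Gr}(2;\mathbb C^n\otimes\mathbb C^{2n})$, and the question becomes whether $\mathrm{SL}_n\times\mathrm{SL}_2$ is a spherical subgroup of $\mathrm{SL}_{2n}$, which Kr\"amer's classification answers in the negative for $n\ge3$. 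This monotonicity trick collapses the infinite family to two finite checks. If you want to keep your Kronecker picture, the realistic route is to borrow that restriction step to first reduce to a handful of small $(m,n)$, and only then do a full stabilizer count; the ``zero-column torus'' on its own does not do the job.
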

\begin{proof}Without loss of generality we assume that $m\ge n$. Assume that $n=2$. Then \begin{center}dim$(\frak b_{\frak{sl}_m}\oplus\frak b_{\frak{sl}_2})\ge$dim~Gr$(2; \mathbb C^m\otimes\mathbb C^2)$.\end{center} We have $\frac{m(m+1)}{2}+1\ge 2(2m-2)$ and therefore $(m-2)(m-5)\ge 0$. Hence $m\in\{2,5,6,7,...\}$.

Assume $m\ge 5$. As Gr$(2; \mathbb C^2\otimes\mathbb C^m)$ is SL$_2\times$SL$_m$-spherical, it must be that Gr$(2; \mathbb C^2\otimes\mathbb C^4)$ is SL$_2\times$SL$_4$-spherical. This is false by dimension reasons.

Assume $n\ge 3$. Then\begin{center}$\frac{14}{18}(m^2+n^2)\ge\frac{m^2+m}{2}-1+\frac{n^2+n}{2}-1+4\hspace{40pt}.$\end{center}As dim$(\frak b_{\frak{sl}_m}\oplus\frak b_{\frak{sl}_n})\ge$dim~Gr$(2; \mathbb C^m\otimes\mathbb C^n)$, we have \begin{center}$\frac{m^2+m}{2}-1+\frac{n^2+n}{2}-1+4\ge 2mn$,\\ $\frac{14}{18}(m^2+n^2)\ge 2mn$ and $m>2n$.\end{center} As Gr$(2; \mathbb C^n\otimes\mathbb C^m)$ is SL$_n\times$SL$_m$-spherical, we have Gr$(2; \mathbb C^n\otimes\mathbb C^{2n})$ is SL$_n\times$SL$_{2n}$-spherical. This implies that SL$_n\times$SL$_2$ is a spherical subgroup of SL$_{2n}$. Hence $n=2$~\cite{Kr}.\end{proof}
\begin{theorem}\label{Tsgr2}Suppose the variety Gr$(2; W)$ is $K$-spherical. Then the pair $(\frak k_W\cap\frak{sl}(W), W)$ is isomorphic to one of the following 3 pairs:\begin{center}$(\frak{sl}(W), W)$, $(\frak{so}(W), W)$, $(\frak{sp}(W), W)$.\end{center}\end{theorem}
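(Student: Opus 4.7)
The plan is to combine Proposition~\ref{SphO} with a systematic check against the Benson--Ratcliff list of simple spherical modules reproduced in the Appendix. Since $\mathbb P(W)$ is lower than $\mathrm{Gr}(2;W)$, Proposition~\ref{SphO} gives that $\mathbb P(W)$ is $K$-spherical, so $(\frak k_W\cap\frak{sl}(W),W)$ must appear in that list. The three pairs named in the conclusion do yield a $K$-spherical $\mathrm{Gr}(2;W)$: transitively for $\mathrm{SL}(W)$, and by the classical orbit stratification according to the rank of the restricted bilinear form for $\mathrm{SO}(W)$ and $\mathrm{Sp}(W)$. It therefore remains to exclude every other entry of the list.

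The main tool is the inequality $\dim\frak b_\frak k\ge\dim\mathrm{Gr}(2;W)=2(n_W-2)$. When $[\frak k_W,\frak k_W]$ is simple, a direct count eliminates the exceptional modules $(\frak{so}_9,\mathbb C^{16})$, $(\frak{so}_{10},\mathbb C^{16})$, $(\frak g_2,\mathbb C^7)$, $(\frak e_6,\mathbb C^{27})$, and $(\frak e_7,\mathbb C^{56})$. For the series $(\frak{sl}_n,\Lambda^2\mathbb C^n)$ the bound forces $n\le 4$, and the low-rank survivors are identified with standard representations of $\frak{sl}$ or $\frak{so}$ via $\frak{sl}_4\cong\frak{so}_6$ and $\Lambda^2\mathbb C^3\cong(\mathbb C^3)^*$. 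Analogously, $(\frak{sl}_n,S^2\mathbb C^n)$ reduces to the standard $\frak{so}_3$-representation via $\frak{sl}_2\cong\frak{so}_3$, and $(\frak{sp}_{2n},\Lambda_0^2\mathbb C^{2n})$ reduces to the standard $\frak{so}_5$-representation via $\frak{sp}_4\cong\frak{so}_5$.

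When $[\frak k_W,\frak k_W]$ is a product of two or more simple ideals, $W$ is a tensor product of natural modules of those factors. For two factors, $W=\mathbb C^n\otimes\mathbb C^m$ with $[\frak k_W,\frak k_W]\subset\frak{sl}_n\oplus\frak{sl}_m$; since scalars act trivially on $\mathrm{Gr}(2;W)$, $K$-sphericity of $\mathrm{Gr}(2;W)$ forces $(\mathrm{SL}_n\times\mathrm{SL}_m)$-sphericity, and Lemma~\ref{TensP} then gives $m=n=2$. Via $\frak{so}_4\cong\frak{sl}_2\oplus\frak{sl}_2$ this yields the standard $\frak{so}_4$-representation. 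Triple tensor products are eliminated by the same dimension argument applied to $(\mathrm{SL}_2)^3$ acting on $\mathbb C^8$.

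The one genuinely borderline case left by the dimension count is the spin representation $(\frak{so}_7,\mathbb C^8)$, where $\dim\frak b_\frak k=13$ barely exceeds $\dim\mathrm{Gr}(2;8)=12$. This is the main obstacle, and the plan is to dispose of it by a direct stabilizer computation. Under the embedding $\mathrm{Spin}_7\hookrightarrow\mathrm{SO}_8$ realized by the spin form, $\mathrm{Spin}_7$ is the $\mathrm{SO}_8$-stabilizer of a nonisotropic vector $v\in\mathbb C^8$. For a generic $2$-plane $\pi\subset\mathbb C^8$, write $v=v_1+v_2$ with $v_1\in\pi$, $v_2\in\pi^\perp$, both nonzero; then the $\mathrm{Spin}_7$-stabilizer of $\pi$ equals $\mathrm{SO}(\pi)_{v_1}\times\mathrm{SO}(\pi^\perp)_{v_2}\cong\{1\}\times\mathrm{SO}_5$, of dimension $10$. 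Hence the $\mathrm{Spin}_7$-orbit of $\pi$ has dimension $21-10=11<12$, so no open orbit exists and this pair is excluded, completing the classification.
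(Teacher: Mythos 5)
Your overall strategy runs roughly parallel to the paper's: use a classification of spherical (or prehomogeneous) modules as a starting list, eliminate most entries by the Borel-dimension bound against $\dim\mathrm{Gr}(2;W)$, dispose of product cases with Lemma~\ref{TensP}, and handle a borderline case by a direct stabilizer argument. The paper feeds in the Sato--Kimura classification of irreducible prehomogeneous modules with reductive group (Table~1 in the text), whereas you restrict attention to the Benson--Ratcliff list of spherical modules via Proposition~\ref{SphO}; both inputs are legitimate and lead to essentially the same short list. You are also right that the sole case surviving the dimension count is the $8$-dimensional spin module of $\frak{so}_7$ (incidentally, $\dim\frak b_{\frak{so}_7}=12$, not $13$, so it is borderline with equality, not with slack).

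However, your treatment of the $\mathrm{Spin}_7$ case contains a genuine error that invalidates the conclusion. The embedding $\mathrm{Spin}_7\hookrightarrow\mathrm{SO}_8$ realized by the $8$-dimensional spin representation is an \emph{irreducible} embedding: $\mathrm{Spin}_7$ acts on $\mathbb C^8$ without any invariant line, so it cannot be the $\mathrm{SO}_8$-stabilizer of a nonisotropic vector. That stabilizer is the reducibly embedded $\mathrm{SO}_7$, which acts on $\mathbb C^8$ as $\mathbb C^7\oplus\mathbb C$ --- a different subgroup and, crucially, not the representation under consideration here (these two copies are exchanged by triality inside $\mathrm{Spin}_8$, but they are distinct subgroups of $\mathrm{SO}_8$). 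Consequently the block-diagonal stabilizer formula $\mathrm{SO}(\pi)_{v_1}\times\mathrm{SO}(\pi^\perp)_{v_2}$ does not apply, and the orbit-dimension count $21-10=11$ is incorrect. In fact, there \emph{is} an open $\mathrm{Spin}_7$-orbit on $\mathrm{Gr}(2;\mathbb C^8)$: by~\cite{SK} the generic isotropy subalgebra of $\frak{so}_7\oplus\frak{gl}_2$ on $\mathrm{Spin}(\mathbb C^7)\otimes\mathbb C^2$ is $\frak{gl}_3$, which has dimension $9$, giving generic orbit dimension $21-9=12=\dim\mathrm{Gr}(2;\mathbb C^8)$. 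What fails for sphericity is the open $B$-orbit, and the argument needed is the one in the paper: the generic isotropy $\frak{gl}_3$ is not a spherical subalgebra of $\frak{so}_7$ by Kr\"amer's classification~\cite{Kr}, hence there is no open Borel orbit. You would need to replace your stabilizer computation with this generic-isotropy argument (or an equivalent one).
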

\begin{proof}Without loss of generality we assume that $\frak k=\frak k_W\subset\frak{gl}(W)$ and that $\frak k$ is a semisimple Lie algebra. We recall that $B$ is a Borel subgroup of $K$.

A straightforward computation shows that Gr$(2; W)$ is SL$(W)$-spherical, and also SO$(W)$-spherical and SP$(W)$-spherical. In the rest of the proof we assume that $(\frak k_W, W)$ is not isomorphic to \begin{center}$(\frak{sl}(W), W)$, $(\frak{so}(W), W)$ and $(\frak{sp}(W), W)$.\end{center}
1) Assume that $\frak k$ is a simple Lie algebra. If there is an open orbit of $B$ on Gr$(2; W)$ then there is an open orbit of $B\times$GL$_2$ on $W\otimes\mathbb C^2$. The simple modules with an open orbit of a reductive group  are classified in~\cite{SK}. In Table~1 below we reproduce, following~\cite{SK}, all simple $K\times$GL$_2$-modules with an open orbit of $K\times$GL$_2$ such that $W$ is a $K$-spherical module.
\begin{center}{\bf Table 1.} The pairs $([\frak k_W, \frak k_W], W)$ such that $W\otimes\mathbb C^2$ has an open orbit of $K\times$GL$_2$ and $W$ is a spherical $K$-module.\\
$\begin{tabular}{|c|l|c|c|}\hline No&The pair $(\frak k_W, W)$&dim$(\frak b_\frak k\oplus\frak{gl}_2)$&2$n_W$\\
\hline{\bf1}&$(\frak{sl}_n,\mathbb C^n) (n \ge 2)$&$\frac{n(n+1)}{2}+3$&$2n$\\
\hline{\bf2}&$(\frak{so}_n,\mathbb C^n) (n \ge 3)$&$\frac{1}{2}(\frac{n(n-1)}{2}+[\frac{n}{2}])$&$2n$\\
\hline{\bf3}&$(\frak{sp}_{2n}, \mathbb C^{2n}) (n\ge2)$&$n^2+n$&$4n$\\
\hline{\bf4}&$(\frak{sl}_3,$ S$^2\mathbb C^3)$&9&12\\
\hline{\bf5}&$(\frak{sl}_{2n+1}, \Lambda^2\mathbb C^{2n+1})$&$2n^2+3n+4$&$4n^2+2n$\\
\hline{\bf6}&$(\frak{sl}_6, \Lambda^2\mathbb C^6)$&24&30\\
\hline{\bf7}&$(\frak{so}_7,$ Spin$(\mathbb C^7))$&16&16\\
\hline{\bf8}&$(\frak{so}_{10},$ Spin$(\mathbb C^{10}))$&29&32\\
\hline{\bf9}&(G$_2, \mathbb C^7)$&12&14\\
\hline{\bf10}&(E$_6, \mathbb C^{27}$)&42&54\\\hline
\end{tabular}$\end{center}
Here Spin($F$) is any spinor module of SO($F$), and $\mathbb C^7$ in case {\bf9} is the unique faithful G$_2$-module of minimal dimension, $\mathbb C^{27}$ in case {\bf10} is a faithful E$_6$-module of minimal dimension. If $\frak b_\frak k\oplus\frak{gl}_2$ has an open orbit on $W\otimes\mathbb C^2$, then dim$(\frak b_\frak k\oplus\frak{gl}_2)\ge 2n_W$.

Under the assumption that $\frak k$ is simple we complete the proof by the following case-by-case considerations.\\
Case {\bf5}. The computation $4n^2+2n-(2n^2+3n+4)=2n^2-n-4=n^2-4+n(n-1)>0 (n\ge 2)$ shows that dim$(\frak b_\frak k\oplus\frak{gl}_2)< 2n_W$.\\Case {\bf7}. A generic isotropy subalgebra for the action of $\frak{so}_7\oplus\frak{gl}_2$ on Spin$(\mathbb C^7)\otimes\mathbb C^2$ is isomorphic to $\frak{gl}_3$~\cite{SK}. All spherical subgroups of SO$_7$ are not quotients of SL$_3\times\mathbb C^*$~\cite{Kr}. Therefore Spin$(\mathbb C^7)\otimes \mathbb C^2$ has no open B$(\mathrm{SO}_7)\times$GL$_2$-orbit.\\
For the cases {\bf4, 6, 8, 9, 10} from Table~1 we have dim$(\frak b_\frak k\oplus\frak{gl}_2)<2n_W$ and therefore Gr$(2; W)$ has no open $B$-orbit.\\
2) Let $\frak k$ be a direct sum $\frak k_1\oplus\frak k_2$ of 2 noncommutative ideals. Since $W$ is a simple $K$-module, $W$ is isomorphic to $V_1\otimes V_2$, where $V_i (i=1,2)$ are simple $\frak k_i$-modules such that $\frak k_{V_i}=\frak k_i$. As Gr$(2; V_1\otimes V_2)$ is $G$-spherical, Gr$(2; V_1\otimes V_2)$ is SL$(V_1)\times$SL$(V_2)$-spherical too. Since $n_{V_{1,2}}\ge 2$, the only possibility for $(\frak k, W)$ is $(\frak{so}_4, \mathbb C^4)$ by Lemma~\ref{TensP}. In this case Gr$(2; W)$ is a spherical $K$-variety.\end{proof}
\begin{remark}\label{Rsphgr} Fix $r\in\{1,..., n_W-1\}$. Then\\a) the variety Gr$(r; W)$ is SL$(W)$-spherical;\\b) the
variety Gr$(r; W)$ is SO$(W)$-spherical;\\c) the variety Gr$(r; W)$ is SP$(W)$-spherical.\end{remark}
\begin{lemma}\label{SimG}Suppose that Fl$(1,2; W)$ is a $K$-spherical variety. Then the pair $(\frak k_W\cap\frak{sl}(W), W)$ is isomorphic to one of the following pairs:\begin{center}$(\frak{sl}(W), W)$, $(\frak{sp}(W), W)$.\end{center}\end{lemma}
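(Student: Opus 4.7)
The plan proceeds in two stages: first, reduce to Theorem~\ref{Tsgr2} by lowering the flag variety; then rule out the orthogonal possibility that the theorem leaves open.

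For the reduction, the key observation is that the partitions of $n_W$ attached to $\mathrm{Fl}(1,2;W)$ and $\mathrm{Gr}(2;W)$ are $\{n_W-2,1,1\}$ and $\{n_W-2,2\}$ respectively; the former refines the latter, so $\mathrm{Gr}(2;W)$ is strictly lower than $\mathrm{Fl}(1,2;W)$ in the cotangent-equivalence partial order (see the discussion following Lemma~\ref{TtoP}). Proposition~\ref{SphO} thus forces $\mathrm{Gr}(2;W)$ to be $K$-spherical. Since $W$ is a simple $K$-module by the standing assumption of Subsection~\ref{SSssph}, Theorem~\ref{Tsgr2} pins $(\frak k_W\cap\frak{sl}(W),W)$ down to one of three pairs: $(\frak{sl}(W),W)$, $(\frak{so}(W),W)$, $(\frak{sp}(W),W)$.

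Two of these are the desired conclusions, so the task reduces to eliminating the orthogonal case. I would assume $\frak k_W\cap\frak{sl}(W)=\frak{so}(W)$, take $K=\mathrm{SO}(W)$ without loss of generality, and aim for a contradiction to $\mathrm{SO}(W)$-sphericity of $\mathrm{Fl}(1,2;W)$. The first weapon is a dimension count: $\dim\mathrm{Fl}(1,2;W)=2n_W-3$, whereas $\dim B_{\mathrm{SO}(W)}=\frac12\bigl(\tfrac{n_W(n_W-1)}{2}+\lfloor n_W/2\rfloor\bigr)$. For $n_W\in\{3,4,5\}$ a direct check yields $\dim B_{\mathrm{SO}(W)}<2n_W-3$, ruling out any open Borel orbit outright.

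The main obstacle is the range $n_W\geq 6$, where the dimension bound alone is too weak. Here my plan is to exploit the $\mathbb P^1$-fibration $\pi\colon\mathrm{Fl}(1,2;W)\rightarrow\mathrm{Gr}(2;W)$. Since $\mathrm{Gr}(2;W)$ is $\mathrm{SO}(W)$-spherical by Remark~\ref{Rsphgr}, $B$-sphericity of $\mathrm{Fl}(1,2;W)$ is equivalent to the generic $B$-stabilizer on $\mathrm{Gr}(2;W)$ acting with an open orbit on the fiber $\pi^{-1}(V)\cong\mathbb P^1$. Using Theorem~\ref{Pan} applied to a closed $K$-orbit inside $\mathrm{Gr}(2;W)$, I would compute this generic stabilizer via its conormal-bundle incarnation: for a generic non-degenerate 2-plane $V$ the full $\mathrm{SO}(W)$-stabilizer is isomorphic to $\mathrm S(\mathrm O(V)\times\mathrm O(V^\perp))$, whose image in $\mathrm{PGL}(V)\cong\mathrm{PGL}_2$ is the finite quotient $\mathrm O(V)/\mathrm{SO}(V)$. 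Tracing this through the Borel reduction shows the induced action on the fiber factors through a finite group and hence admits no open $\mathbb P^1$-orbit, giving the desired contradiction. The tightest and hardest case is $n_W=6$, where $\dim B_{\mathrm{SO}_6}=\dim\mathrm{Fl}(1,2;W)=9$ forces all dimension inequalities to become equalities; there one must exploit the exceptional isomorphism $\mathrm{SO}_6\cong\mathrm{SL}_4/\{\pm I\}$ with $W\cong\Lambda^2\mathbb C^4$ to verify the computation explicitly, for instance by writing down the generic $B_{\mathrm{SL}_4}$-stabilizer of a pair (line, plane) in $\Lambda^2\mathbb C^4$ in coordinates and checking that the two-dimensional fiber direction is never swept out.
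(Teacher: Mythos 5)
Your first half matches the paper exactly: reduce via Proposition~\ref{SphO} to $K$-sphericity of $\mathrm{Gr}(2;W)$, then apply Theorem~\ref{Tsgr2} to pin down $(\frak k_W\cap\frak{sl}(W),W)$ to the three pairs $(\frak{sl}(W),W)$, $(\frak{so}(W),W)$, $(\frak{sp}(W),W)$. The remaining task is to eliminate the orthogonal case, and here the paper is much more direct than you: it observes that the generic isotropy subgroup for the $\mathrm{SO}_n$-action on $\mathrm{Fl}(1,2;\mathbb C^n)$ is $\mathrm{SO}_{n-2}$, and cites Kr\"amer's classification to say $\mathrm{SO}_{n-2}$ is not a spherical subgroup of $\mathrm{SO}_n$ for $n\ge 3$. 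No case split, no dimension counts, no fibration argument.

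Your dimension count for $n_W\in\{3,4,5\}$ is correct, but your argument in the hard range $n_W\ge 6$ has a genuine error. You compute the $\mathrm{SO}(W)$-stabilizer of a nondegenerate $2$-plane $V$ as $\mathrm S(\mathrm O(V)\times\mathrm O(V^\perp))$ (correct), and then assert that its image in $\mathrm{PGL}(V)\cong\mathrm{PGL}_2$ is ``the finite quotient $\mathrm O(V)/\mathrm{SO}(V)$.'' This is false: the projection of $\mathrm S(\mathrm O(V)\times\mathrm O(V^\perp))$ to $\mathrm{GL}(V)$ is all of $\mathrm O(V)\cong\mathrm O_2$ (the $\mathrm O(V^\perp)$-factor absorbs any determinant condition when $n_W\ge 3$), and the image of $\mathrm O_2$ in $\mathrm{PGL}_2$ is one-dimensional --- it contains the torus $\mathrm{SO}_2/\{\pm1\}\cong\mathbb C^*$, which acts on $\mathbb P^1$ with a dense orbit. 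So the full $K$-stabilizer \emph{does} sweep out an open orbit on the fiber; your intended contradiction evaporates. The deeper conceptual problem is that you switched from the generic $B$-stabilizer (which is what your fibration criterion requires) to the generic $K$-stabilizer at a nondegenerate plane, and these are quite different: the generic $B$-stabilizer on $\mathrm{Gr}(2;W)$ is much smaller than $\mathrm S(\mathrm O(V)\times\mathrm O(V^\perp))$. You also announce that Theorem~\ref{Pan} will be applied to a conormal bundle over a closed orbit, but the computation you then carry out doesn't use any conormal bundle at all. To salvage your route you would need to actually compute the Panyushev pair $(L,W^2)$ for the datum $(\mathrm{SO}(W),W,2)$ --- the paper tabulates this only for $\mathrm{SL}$ and $\mathrm{SP}$ --- and check that the Borel of $L$ fails to act with an open orbit on $\mathbb P^1$. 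The paper's route via the generic isotropy on $\mathrm{Fl}(1,2;\mathbb C^n)$ and Kr\"amer's list avoids all of this.
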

\begin{proof}As Fl$(1,2; W)$ is a $K$-spherical variety, the variety Gr$(2; W)$ is $K$-spherical. Therefore $(\frak k\cap\frak{sl}(W), W)$ is isomorphic to \begin{center}$(\frak{sl}(W), W)$, to $(\frak{so}(W), W)$, or to $(\frak{sp}(W), W)$.\end{center} A generic isotropy subgroup for
the action of SO$_n$ on Fl$(1,2; \mathbb C^n)$ is isomorphic to SO$_{n-2}$. Such a subgroup is not spherical in SO$_n$ for all $n\ge 3$~\cite{Kr}.\end{proof}
\begin{corollary}\label{Cpflso} The only partial flag varieties which are SO$(W)$-spherical are Gr$(r; W)$ for all $r\in\{1,...,n-1\}$.\end{corollary}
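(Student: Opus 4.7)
The plan is to establish the two implications separately. The easy direction is immediate from Remark \ref{Rsphgr}(b): every Grassmannian $\mathrm{Gr}(r;W)$ is SO$(W)$-spherical. For the converse, suppose $\mathrm{Fl}$ is an SO$(W)$-spherical partial flag variety of $W$; I aim to show $\mathrm{Fl}$ must be a Grassmannian, and will argue by contradiction assuming $\mathrm{Fl}$ is not cotangent-equivalent to any $\mathrm{Gr}(r;W)$.

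The central step is to reduce to Lemma \ref{SimG}, which rules out $\mathrm{Fl}(1,2;W)$ being $K$-spherical when $K=\mathrm{SO}(W)$. To make this reduction I would invoke the classification fact (5) of Subsection \ref{SGr}: every partial $W$-flag variety is cotangent-equivalent to $\mathrm{Gr}(r;W)$, to $\mathrm{Fl}(1,2;W)$, to $\mathrm{Fl}(1,3;W)$, or is strictly higher than $\mathrm{Fl}(1,3;W)$ in the partial order. Under our assumption that $\mathrm{Fl}$ is not cotangent-equivalent to a Grassmannian, it lands in one of the last three buckets, and in each of them $\mathrm{Fl} \geq \mathrm{Fl}(1,2;W)$. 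The only piece that requires checking is $\mathrm{Fl}(1,3;W) \geq \mathrm{Fl}(1,2;W)$, which I would verify via dominance order on the associated nilpotent partitions: the transpose of the flag partition $(1,2,n_W-3)$ is $(3,2,1^{n_W-5})$ and dominates $(3,1^{n_W-3})$, the transpose of $(1,1,n_W-2)$.

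Once $\mathrm{Fl} \geq \mathrm{Fl}(1,2;W)$ is in hand, Proposition \ref{SphO} (applied with the lower variety $\mathrm{Fl}(1,2;W)$), together with the observation that cotangent-equivalent partial flag varieties are simultaneously $K$-spherical (their cotangent bundles are birational resolutions of the same nilpotent orbit closure, as recalled after Example \ref{Pn}), forces $\mathrm{Fl}(1,2;W)$ to be SO$(W)$-spherical. Lemma \ref{SimG} then forces $\frak{k}\cap\frak{sl}(W)$ to be $\frak{sl}(W)$ or $\frak{sp}(W)$, contradicting $\frak{k}\cap\frak{sl}(W)=\frak{so}(W)$. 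So $\mathrm{Fl}$ must be cotangent-equivalent to some $\mathrm{Gr}(r;W)$. A short inspection of flag partitions closes the argument: if the length-$(s+1)$ partition $(n_1, n_2-n_1, \ldots, n_W-n_s)$ of $n_W$ has underlying set $\{r, n_W-r\}$, the equation $\sum_i(n_{i+1}-n_i) = n_W$ forces $s=1$, so $\mathrm{Fl}$ is itself a Grassmannian.

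The main obstacle I anticipate is the order comparison $\mathrm{Fl}(1,3;W) \geq \mathrm{Fl}(1,2;W)$, together with the preliminary observation that cotangent-equivalence preserves $K$-sphericity; both are essentially combinatorial and follow from the dictionary developed in Subsection \ref{SGr}. Everything else is an application of already-established tools (Proposition \ref{SphO} and Lemma \ref{SimG}).
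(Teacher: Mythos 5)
Your proof is correct and fills in exactly what the paper's one-line proof ("a simple corollary of Proposition~\ref{SphO} and of the discussion preceding this proposition") leaves implicit: the easy direction via Remark~\ref{Rsphgr}, the reduction of any non-Grassmannian partial flag variety to $\mathrm{Fl}(1,2;W)$ via the cotangent partial order and Proposition~\ref{SphO}, and the final exclusion via Lemma~\ref{SimG}. The dominance-order check $\mathrm{Fl}(1,3;W)\ge\mathrm{Fl}(1,2;W)$ is verified correctly, so this is essentially the intended argument spelled out in detail.
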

\begin{proof}This is a simple corollary of Proposition~\ref{SphO} and of the discussion preceeding this proposition.\end{proof}
\begin{remark}\label{Rpflsl}All partial $W$-flag varieties are SL$(W)$-spherical.\end{remark}
\begin{lemma}Assume that $2|n_W$ and $n_W\ge6$. Then the variety Fl$(2,4; W)$ is not SP$(W)$-spherical.\end{lemma}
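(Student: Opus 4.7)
The plan is to pass from $\mathrm{Fl}(2,4;W)$ to its open dense $\mathrm{SP}(W)$-orbit — an affine reductive homogeneous space — and then to disprove sphericity by exhibiting an irreducible $\mathrm{SP}(W)$-module that appears in its coordinate ring with multiplicity $\ge 2$. Throughout, set $n := n_W/2 \ge 3$.

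\emph{Step 1: Identify the open orbit.} The $\mathrm{SP}(W)$-orbits on $\mathrm{Fl}(2,4;W)$ are indexed by the ranks of $\omega|_{V_1}$ and $\omega|_{V_2}$, and a direct dimension count shows that the stratum where both forms are non-degenerate has dimension $8n-12 = \dim\mathrm{Fl}(2,4;W)$, hence is open and dense. For any representative $(V_1^\circ, V_2^\circ)$ of this stratum the decomposition
\[
W = V_1^\circ \oplus \bigl((V_1^\circ)^\perp\cap V_2^\circ\bigr) \oplus (V_2^\circ)^\perp
\]
is orthogonal symplectic with blocks of dimensions $2,2,n_W-4$, and the stabilizer is the block-diagonal subgroup $H \cong \mathrm{SP}_2\times \mathrm{SP}_2\times \mathrm{SP}_{n_W-4}$. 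Any open $B$-orbit on $\mathrm{Fl}(2,4;W)$ must lie in the dense open stratum, so $\mathrm{Fl}(2,4;W)$ is $\mathrm{SP}(W)$-spherical iff $\mathrm{SP}(W)/H$ is.

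\emph{Step 2: Reduce to multiplicities.} Because $H$ is reductive, $\mathrm{SP}(W)/H$ is affine; by the Vinberg-Kimelfeld criterion recalled in Subsection~\ref{SSph}, sphericity of $\mathrm{SP}(W)/H$ is equivalent to $\mathbb{C}[\mathrm{SP}(W)/H]$ being multiplicity-free as an $\mathrm{SP}(W)$-module. By Frobenius reciprocity and self-duality of simple $\mathrm{SP}(W)$-modules, the multiplicity of a simple module $V$ in this coordinate ring equals $\dim V^H$, so it suffices to exhibit a simple $V$ with $\dim V^H \ge 2$.

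\emph{Step 3: Compute invariants in $\Lambda^2_0 W$.} Take $V := \Lambda^2_0 W$, the unique irreducible $\mathrm{SP}(W)$-complement of $\mathbb{C}\omega$ in $\Lambda^2 W$. With $W = W_1 \oplus W_2 \oplus W_3$ as in Step 1, one has the $H$-equivariant decomposition
\[
\Lambda^2 W = \bigoplus_{i=1}^{3}\Lambda^2 W_i \ \oplus\ \bigoplus_{i<j}(W_i \otimes W_j).
\]
Each cross-term $W_i\otimes W_j$ contributes no $H$-invariant, since each $W_i$ has no $\mathrm{SP}(W_i)$-fixed vector, while each $\Lambda^2 W_i$ contributes precisely the line $\mathbb{C}\omega_i$, where $\omega_i := \omega|_{W_i}$. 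Hence $\dim(\Lambda^2 W)^H = 3$. Since $\omega = \omega_1 + \omega_2 + \omega_3$ spans the unique $\mathrm{SP}(W)$-invariant line in $\Lambda^2 W$, applying $H$-invariants (exact since $H$ is reductive) to $0\to \mathbb{C}\omega \to \Lambda^2 W \to \Lambda^2_0 W \to 0$ yields $\dim(\Lambda^2_0 W)^H = 2$, contradicting multiplicity-freeness.

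The only subtle point is the invariant computation of Step 3 — namely confirming the absence of $H$-invariants in each $W_i\otimes W_j$ and that $(\Lambda^2 W_3)^{\mathrm{SP}_{n_W-4}} = \mathbb{C}\omega_3$, both of which reduce to the standard fact that the defining symplectic module has no invariant vector and only one invariant $2$-form. The hypothesis $n_W\ge 6$ is used precisely to ensure that $W_3$ is a nonzero symplectic subspace, so that a third invariant $\omega_3$ exists and $(\Lambda^2_0 W)^H$ is strictly larger than the image of $\mathbb{C}\omega$; in the excluded case $n_W = 4$ one has $\mathrm{Fl}(2,4;W) = \mathrm{Gr}(2;W)$, which is $\mathrm{SP}(W)$-spherical by Remark~\ref{Rsphgr}, consistent with the collapse of the argument there.
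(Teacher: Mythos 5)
Your argument is correct, but it takes a genuinely different route from the paper's. The paper reduces via the datum machinery of Subsection~\ref{SSph}: the datum $(\mathrm{SP}(W), W, 4)$ determines the pair $(L, W^4) = (\mathrm{SL}_2 \times \mathrm{SL}_2, \mathbb{C}^2 \oplus \mathbb{C}^2)$, so by the fibration $\mathrm{Fl}(2,4;W)\to\mathrm{Gr}(4;W)$ and Theorem~\ref{PTr}, sphericity of $\mathrm{Fl}(2,4;W)$ is equivalent to $\mathrm{SL}_2\times\mathrm{SL}_2$-sphericity of $\mathrm{Gr}(2;\mathbb{C}^2\oplus\mathbb{C}^2)$, and the latter fails (generic isotropy is the diagonal $\mathrm{SL}_2$, so the generic orbit has dimension $3 < 4$). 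You instead identify the open $\mathrm{SP}(W)$-orbit as $\mathrm{SP}(W)/H$ with $H\cong\mathrm{SP}_2\times\mathrm{SP}_2\times\mathrm{SP}_{n_W-4}$ reductive, invoke Matsushima to get affineness, translate sphericity into multiplicity-freeness of $\mathbb{C}[\mathrm{SP}(W)/H]$ via Frobenius reciprocity and the Vinberg--Kimelfeld lemma, and then compute $\dim(\Lambda^2_0 W)^H = 2$ by decomposing $\Lambda^2 W$ over the three blocks. Both proofs are sound; the paper's is shorter because it piggybacks on the general reduction lemmas already developed, whereas yours is more self-contained, produces explicit witness data (the generic stabilizer and the module $\Lambda^2_0W$ of multiplicity two), and makes transparent why $n_W = 4$ is excluded (the third invariant $\omega_3$ disappears). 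One could slightly streamline your Step 3 by noting $\Lambda^2 W = \mathbb{C}\omega\oplus\Lambda^2_0 W$ is already a direct sum of $\mathrm{SP}(W)$-modules, so no short-exact-sequence argument is needed.
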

\begin{proof}To the datum (SP$(W), W, 4)$ one assigns the datum (SL$_2\times$SL$_2, \mathbb C^2\oplus\mathbb C^2)$. Therefore the variety Fl$(2,4; W)$ is SP$(W)$-spherical if and only if the variety Gr$(2; \mathbb C^2\oplus\mathbb C^2)$ is SL$_2\times$SL$_2$-spherical. However, Gr$(2; \mathbb C^2\oplus\mathbb C^2)$ is not SL$_2\times$SL$_2$-spherical.\end{proof}
We recall that any partial $W$-flag variety is cotangent-equivalent
to one of the following\begin{center}Gr$(r;
W)$,\hspace{10pt}Fl$(1,r; W)$,\hspace{10pt}Fl$(1,2,3;
W)$,\hspace{10pt} Fl$(2, 4; W)$,\end{center}or is higher than Fl$(2,
4; W$), see subsection~\ref{SGr}.
\begin{corollary}\label{Cpflsp}The only SP$(W)$-spherical partial flag varieties, up to cotangent-equivalence, are\begin{center}Gr$(r; W)$,\hspace{10pt}Fl$(1,r; W)$,\hspace{10pt}Fl$(1,2,3; W)$.\end{center}\end{corollary}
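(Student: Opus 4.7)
The plan is to deduce the corollary by combining the cotangent-equivalence classification recorded as item (6) at the end of subsection~\ref{SGr} with the monotonicity of $K$-sphericity along the cotangent order provided by Proposition~\ref{SphO}, together with the preceding lemma. The key observation is that sphericity of a partial flag variety Fl is equivalent to $K$-coisotropicity of T$^*$Fl, which depends only on the nilpotent orbit cut out by the moment map; hence sphericity is constant on cotangent-equivalence classes, and it suffices to test one representative per class.

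First, I would recall from item (6) that every partial $W$-flag variety is cotangent-equivalent to one of Gr$(r; W)$, Fl$(1, r; W)$, Fl$(1, 2, 3; W)$, Fl$(2, 4; W)$, or is strictly higher than Fl$(2, 4; W)$. The preceding lemma rules out Fl$(2, 4; W)$ itself whenever it exists (which, in the symplectic setting, automatically forces $n_W \geq 6$). Applying the contrapositive of Proposition~\ref{SphO}, any partial $W$-flag variety higher than Fl$(2, 4; W)$ also fails to be SP$(W)$-spherical. This eliminates Fl$(2, 4; W)$ together with everything above it, leaving only the three classes Gr$(r; W)$, Fl$(1, r; W)$, and Fl$(1, 2, 3; W)$ as possible SP$(W)$-spherical classes.

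To round off the corollary one should also check that each of these three classes actually contains an SP$(W)$-spherical representative. For Gr$(r; W)$ this is immediate from Remark~\ref{Rsphgr}(c). For Fl$(1, r; W)$ and Fl$(1, 2, 3; W)$ one can invoke Theorem~\ref{MPan} (Panyushev) applied to the natural projections Fl$(1, r; W) \to$ Gr$(r; W)$ and Fl$(1, 2, 3; W) \to$ Fl$(2, 3; W)$: the generic Borel stabilizer of SP$(W)$ on the base acts on the $\mathbb P^{r-1}$- (respectively $\mathbb P^1$-) fiber through a symplectic subgroup that is already transitive there. I do not expect any step to present a serious obstacle; the combinatorial bookkeeping of item (6) together with the preceding lemma carries essentially all of the weight, and the only point requiring care is the dichotomy between generic even- and odd-dimensional subspaces of the symplectic space $W$ in the Panyushev verification.
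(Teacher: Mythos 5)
Your argument for the negative (``only if'') direction is exactly the paper's implicit proof: item~(6) of Subsection~\ref{SGr} reduces the list to Gr$(r;W)$, Fl$(1,r;W)$, Fl$(1,2,3;W)$, Fl$(2,4;W)$, and varieties above Fl$(2,4;W)$; the immediately preceding lemma kills Fl$(2,4;W)$ for $n_W\ge 6$; and Proposition~\ref{SphO} propagates non-sphericity upward in the cotangent order. Your remark that Fl$(2,4;W)$ does not exist for $n_W\le 4$ is also correct and matches the paper's follow-up observation that all partial $\mathbb C^4$-flag varieties are SP$_4$-spherical.

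Where your proposal goes astray is the positive direction. You invoke Theorem~\ref{MPan} (Panyushev's theorem that $X$, $\mathrm N_{M/X}$, $\mathrm N_{M/X}^*$ share a generic Borel stabilizer) to pass sphericity from a base to a total space along the fibrations Fl$(1,r;W)\to$ Gr$(r;W)$ and Fl$(1,2,3;W)\to$ Fl$(2,3;W)$. That theorem is about the normal and conormal bundle of a locally closed $K$-stable \emph{subvariety}, not about fiber bundles of projective homogeneous spaces, so it does not apply as cited. Moreover the second fibration is circular in spirit: Fl$(2,3;W)$ has partition $(2,1,n_W-3)$ and so is cotangent-equivalent to a variety of the shape Fl$(1,r';W)$, whose sphericity is precisely what one is trying to establish. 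The paper does not in fact supply a proof of the positive direction at this point; it relegates the verification to ``Part b) is straightforward'' in Theorem~\ref{KlGr}, which amounts to a case-by-case computation (e.g., via Panyushev's doubled-action technique or a direct dimension-and-stabilizer count). If you want to flesh out the positive direction, you should replace the appeal to Theorem~\ref{MPan} by such a computation, or cite the relevant entries \Roman{AP}-1-2, \Roman{AP}-1-3 of Theorem~\ref{KlGr}~b).
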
In particular, all partial $\mathbb C^4$-flag varieties are SP$_4$-spherical.
\subsection{Weakly irreducible spherical modules} Let $W$ be a weakly irreducible, but not irreducible, $K$-spherical module. Then $W$ is a direct sum of two nonzero simple $K$-modules $W_s\oplus W_b$ of dimensions $n_s, n_b$. Without loss of generality we assume that $n_b\ge n_s$.
\begin{lemma} The variety Gr$(2; W)$ is not $K$-spherical.\end{lemma}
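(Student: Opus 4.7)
The plan is to argue by contradiction: assume $\mathrm{Gr}(2;W)$ is $K$-spherical and produce a violation of the weak irreducibility of $W$. The trivial cases $n_s=1$ or $n_b=1$ are excluded immediately, because a one-dimensional summand contributes nothing to $[\mathfrak{k}_W,\mathfrak{k}_W]$: the natural surjection $[\mathfrak{k},\mathfrak{k}]\to[\mathfrak{k}_{W_s},\mathfrak{k}_{W_s}]\oplus[\mathfrak{k}_{W_b},\mathfrak{k}_{W_b}]$ would then be an isomorphism, contradicting weak irreducibility. Hence $n_s,n_b\ge 2$.

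Apply Theorem~\ref{PTr} with $V_b:=W_b$, $V_s:=W_s$, $r:=2$, and symmetrically with the roles of $b$ and $s$ swapped. Both $\mathrm{Gr}(2;W_b)$ and $\mathrm{Gr}(2;W_s)$ must then be $K$-spherical, and in addition $\mathrm{Hom}(W_b^2,W_s)$ is $L$-spherical, where $(L,W_b^2)$ is the pair determined by $(K,W_b,2)$ from Theorem~\ref{Pan}. By Theorem~\ref{Tsgr2}, each of $\mathfrak{k}_b\cap\mathfrak{sl}(W_b)$ and $\mathfrak{k}_s\cap\mathfrak{sl}(W_s)$ is a classical simple Lie algebra of its space ($\mathfrak{sl}$, $\mathfrak{so}$, or $\mathfrak{sp}$). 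Weak irreducibility now yields a simple ideal $\mathfrak{k}'\subset[\mathfrak{k},\mathfrak{k}]$ whose images in these two simple classical algebras are both nonzero; since any nonzero ideal of a simple Lie algebra is the whole algebra, $\mathfrak{k}'$ is isomorphic to each and acts on $W_b$ as the standard representation and on $W_s$ as a nontrivial simple representation.

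The contradiction comes from exhibiting a non-constant $L$-invariant rational function on $\mathrm{Hom}(W_b^2,W_s)$. In the representative case $\mathfrak{k}'\cong\mathfrak{sl}_{n_b}$ with $W_b=W_s=\mathbb{C}^{n_b}$ standard, the group $L$ contains the Levi $\mathrm{GL}_2\times\mathrm{GL}_{n_b-2}$ of the parabolic stabilizer in $\mathrm{SL}_{n_b}$ of a generic $2$-plane $U\subset W_b$, and its $\mathrm{GL}_2$ factor acts as the natural representation on both $W_b^2=U$ and on a $2$-dimensional $L$-summand of the Levi decomposition $W_s=\mathbb{C}^2\oplus\mathbb{C}^{n_b-2}$. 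Consequently $\mathrm{Hom}(W_b^2,W_s)$ contains a direct summand isomorphic to $\mathrm{End}(\mathbb{C}^2)$ on which $\mathrm{GL}_2$ acts by conjugation; any additional commuting factor of $L$ acts on $W_s$ by scalars (Schur, since $W_s$ is $\mathfrak{k}'$-simple), hence scales $\mathrm{tr}$ and $\det$ by compatible weights that cancel in $\mathrm{tr}^2/\det$. The resulting non-constant $L$-invariant rational function on $\mathrm{Hom}(W_b^2,W_s)$ forces every orbit of a Borel subgroup of $L$ to have positive codimension, contradicting $L$-sphericity.

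The main obstacle is the case analysis. The $\mathfrak{so}$-case for $(\mathfrak{k}_b,W_b)$ is eliminated via Corollary~\ref{GG}(b) and Lemma~\ref{SimG} once $n_b\ne 2$; the $\mathfrak{sp}$-case is handled identically to the $\mathfrak{sl}$-case by exhibiting an $\mathrm{End}(U)$-summand with $\mathrm{Sp}_2=\mathrm{SL}_2$-conjugation. The exceptional low-dimensional isomorphisms of classical algebras, non-standard simple $\mathfrak{k}'$-modules $W_s$ from the Benson--Ratcliff list reproduced in the Appendix, and small-dimensional degeneracies ($n_b=2,3$, where the Levi structure of $L$ collapses) are settled by direct computation; for instance, when $n_b=n_s=2$ and $K=\mathrm{SL}_2\cdot T^2$ is the maximal reductive subgroup preserving weak irreducibility, a generic $2$-plane in $W$ is the graph of an isomorphism $W_s\to W_b$, and a direct orbit-dimension count shows that the corresponding Borel orbit on such graphs has codimension at least $1$ in $\mathrm{Gr}(2;W)$.
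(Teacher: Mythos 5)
Your skeleton matches the paper's: assume $\mathrm{Gr}(2;W)$ is $K$-spherical, reduce $n_b=2$ by a direct dimension count, for $n_b>2$ use Corollary~\ref{GG}(b) and Lemma~\ref{SimG} to force $\frak k_{W_b}\cap\frak{sl}(W_b)$ to be $\frak{sl}$ or $\frak{sp}$ standard, and then use weak irreducibility to identify a diagonal simple ideal $\frak k'$ with $n_s=n_b$. Where you diverge is the final contradiction: the paper realizes $W$ inside $\mathbb C^n\otimes\mathbb C^2$ and applies Lemma~\ref{TensP} for the diagonal cases, and uses Theorem~\ref{PTr} with a bare dimension count for $W_s\cong W_b^*$; you instead exhibit an explicit non-constant $L$-invariant rational function $\mathrm{tr}^2/\det$ on a summand of $\mathrm{Hom}(W_b^2,W_s)$. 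That is a legitimately different (and more self-contained) way to kill sphericity, and it avoids invoking Lemma~\ref{TensP}.

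However, there is a genuine gap in the dual case $W_s\cong W_b^*$, which is exactly the third alternative the paper must rule out (Benson--Ratcliff type $(\frak{sl}_n;\,\omega_1\oplus\omega_{n-1})$). There the $L$-summand of $W_s$ corresponding to $U$ is $U^*$, not $U$, so $\mathrm{Hom}(W_b^2,W_s)$ contains $\mathrm{Hom}(U,U^*)$, the bilinear forms on $U$, on which $\mathrm{GL}_2$ acts by $A\mapsto g^{-T}Ag^{-1}$ --- there is no conjugation and no trace, so your invariant $\mathrm{tr}^2/\det$ does not exist in this case. A different invariant (e.g.\ the ratio of the square of the alternating part to the determinant of the symmetric part, or simply the dimension count the paper uses) is required, and you do not supply it; lumping this under ``non-standard simple $\frak k'$-modules $W_s$ settled by direct computation'' is where the proof is incomplete. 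A smaller issue: your claim that $L$ contains the full Levi $\mathrm{GL}_2\times\mathrm{GL}_{n_b-2}$ is not what the paper's lemma on $(K,W,r)\to(L,W^r)$ gives --- it gives essentially $\mathrm{GL}_2$ up to a central torus --- though since your invariant is $\mathrm{GL}_2\times\mathrm{GL}_{n_b-2}$-invariant and the true $L$ sits inside that group, this overstatement does not break the argument. Finally, your worry about exceptional isomorphisms such as $\frak{sl}_4\cong\frak{so}_6$ or $\frak{sp}_4\cong\frak{so}_5$ is actually unnecessary once $n_b\ge n_s$ and the $\frak{so}$-case for $W_b$ has been excluded via Lemma~\ref{SimG}: each such coincidence would force $n_s>n_b$.
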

\begin{proof} Assume $n_b=2$. Then the pair ($[\frak k, \frak k], W)$ is isomorphic to the pair $(\frak{sl}_2, \mathbb C^2\oplus\mathbb C^2)$. The variety Gr$(2; \mathbb C^2\oplus\mathbb C^2)$ is not GL$_2$-spherical for dimension reasons.

Assume $n_b>2$. Then the variety Fl$(1,2; W_b)$ is $K$-spherical by Corollary~\ref{GG}. Therefore $(\frak k_{W_b}\cap\frak{sl}(W_b), W_b)$ is isomorphic to $(\frak{sl}(W_b), W_b)$ or to $(\frak{sp}(W_b), W_b)$. Then the pair $([\frak k, \frak k], W)$ is isomorphic to \begin{center}$(\frak{sl}_n, \mathbb C^n\oplus\mathbb C^n)$, to $(\frak{sl}_n, \mathbb C^n\oplus(\mathbb C^n)^*)$, or to $(\frak{sp}_{2n}, \mathbb C^{2n}\oplus\mathbb C^{2n})$\end{center} (see the tables in~\cite{BR}). Note, that $n_s=n_b$ in all three cases.

Consider the pair $(\frak{sl}_n,\mathbb C^n\oplus\mathbb C^n)$. If the variety Gr$(2; W)$ is $K$-spherical, Gr$(2; \mathbb C^n\otimes\mathbb C^2)$ must be GL$_n\times$GL$_2$-spherical. This is not the case by Lemma~\ref{TensP}.

Next we consider the pair $(\frak{sp}_{2n}, \mathbb C^{2n}\oplus\mathbb C^{2n})$. If the variety Gr$(2; W)$ is $K$-spherical, Gr$(2; \mathbb C^{2n}\otimes\mathbb C^2)$ must be GL$_{2n}\times$GL$_2$-spherical. This is not the case by Lemma~\ref{TensP}.

Finally consider the pair $(\frak{sl}_n,\mathbb C^n\oplus(\mathbb C^n)^*) (n \ge 3)$. If the variety Gr$(2;W)$ is GL$_n$-spherical, the variety $\mathbb C^2\otimes\mathbb C^n$ is GL$_2$-spherical by Theorem~\ref{PTr}. This is not the case for dimension reasons.\end{proof}
\begin{corollary}\label{DecG} The only $K$-spherical partial $W$-flag varieties for a weakly irreducible, but not irreducible, $K$-spherical module $W$ are \begin{center}Gr$(1; W)$ and Gr$(n_W-1; W)$.\end{center}\end{corollary}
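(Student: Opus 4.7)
The plan is to deduce the corollary directly from the preceding lemma of this subsection together with the cotangent-equivalence machinery developed in Subsection~\ref{SGr}. The preceding lemma establishes, for a weakly irreducible but not irreducible $K$-spherical $W$, that the Grassmannian Gr$(2;W)$ fails to be $K$-spherical; this is the essential input and the only place where the hypothesis ``weakly irreducible but not irreducible'' is used.

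Given this, I would invoke the contrapositive of Lemma~\ref{Lgr2}: since Gr$(2;W)$ is not $K$-spherical, every $K$-spherical partial $W$-flag variety is forced to be cotangent-equivalent to $\mathbb P(W)=$Gr$(1;W)$. So the corollary is reduced to identifying all partial flag varieties in the cotangent-equivalence class of $\mathbb P(W)$.

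For that, I would apply Lemma~\ref{TtoP}, which says that two partial flag varieties are cotangent-equivalent if and only if their associated partitions of $n_W$ coincide as sets. The set attached to $\mathbb P(W)$ is $\{1,n_W-1\}$, so I need to enumerate all dimension vectors $(n_1,\dots,n_s)$ whose partition $(n_1,n_2-n_1,\dots,n_W-n_s)$ of $n_W$ has underlying set $\{1,n_W-1\}$. Since every part is a positive integer and the parts sum to $n_W$, the values $1$ and $n_W-1$ must each occur, and the only way to sum to $n_W$ using these values is to take exactly one copy of each; hence $s=1$ and $n_1\in\{1,n_W-1\}$, which yields exactly Gr$(1;W)$ and Gr$(n_W-1;W)$.

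I do not foresee a real obstacle: the three ingredients (the preceding lemma, Lemma~\ref{Lgr2}, and Lemma~\ref{TtoP}) are already in place, and the corollary is essentially a bookkeeping exercise that transports the single piece of information about Gr$(2;W)$ to the whole poset of partial flag varieties. The only minor point to check is the degenerate case $n_W=2$, where the only partial flag variety is $\mathbb P(W)$ itself and the statement becomes tautological.
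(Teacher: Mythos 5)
Your proof is correct and follows exactly the route the paper intends (the Corollary is stated without proof precisely because it is an immediate combination of the preceding lemma on Gr$(2;W)$, the contrapositive of Lemma~\ref{Lgr2}, and the partition description of cotangent-equivalence from Lemma~\ref{TtoP}). Your closing worry about $n_W=2$ is moot: a weakly irreducible but not irreducible $K$-spherical module always has $n_W\ge 4$, since both simple summands must be at least two-dimensional by the classification in~\cite{BR}.
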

\subsection{On the length of certain modules} Let $V$ be a spherical $K$-module and $W$ be a proper nonzero submodule of $V$.
We recall that by Lemma~\ref{Lgr2}, if some partial $V$-flag variety, which is not cotangent-equivalent to $\mathbb P(V)$, is $K$-spherical, then the variety Gr$(2; V)$ is $K$-spherical too.
\begin{lemma}\label{SPSL} Assume that $W$ is a weakly irreducible submodule of $V$ and Gr$(2; V)$ is a $K$-spherical variety. Then the pair $(\frak{sl}(W)\cap\frak k_W, W)$ is isomorphic \begin{center}to $(\frak{sl}(W), W)$ or to $(\frak{sp}(W), W)$.\end{center}\end{lemma}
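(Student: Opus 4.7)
The plan is to reduce the claim to Lemma~\ref{SimG}, whose hypothesis is the $K$-sphericity of $\mathrm{Fl}(1,2;W)$. I will extract this from the $K$-sphericity of $\mathrm{Gr}(2;V)$ by the combinatorial machinery already developed in this section, after first verifying that $W$ itself is a $K$-simple module.

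Fix a $K$-stable complement $V_s$ of $W$ in $V$, so that $V = V_s \oplus V_b$ with $V_b := W$; such a complement exists since $K$ is reductive. The $K$-module $\mathbb{C}[V] \cong \mathbb{C}[W]\otimes\mathbb{C}[V_s]$ is multiplicity-free, so its submodule $\mathbb{C}[W]$ is also multiplicity-free, hence $W$ is a $K$-spherical module. The low-dimensional cases can be dispatched directly: for $n_W = 1$, $\mathfrak{sl}(W) = 0$ and the statement is trivial; for $n_W = 2$, any decomposition of $W$ into two one-dimensional $K$-submodules $W_1 \oplus W_2$ would satisfy $[\mathfrak{k}_{W_1},\mathfrak{k}_{W_1}] \oplus [\mathfrak{k}_{W_2},\mathfrak{k}_{W_2}] = 0 = [\mathfrak{k}_W,\mathfrak{k}_W]$, contradicting weak irreducibility; so $W$ is $K$-simple, and any reductive subalgebra of $\mathfrak{gl}_2$ acting irreducibly on $\mathbb{C}^2$ must contain $\mathfrak{sl}(W) = \mathfrak{sl}_2 = \mathfrak{sp}(W)$, giving $\mathfrak{k}_W \cap \mathfrak{sl}(W) = \mathfrak{sl}(W)$.

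Henceforth assume $n_W \ge 3$. Since $r = 2$ lies in $\{2,\ldots,n_b\}$ and $n_b = n_W \ne 2$, Corollary~\ref{GG}(b) applied to the decomposition $V = V_s \oplus W$ turns the $K$-sphericity of $\mathrm{Gr}(2;V)$ into the $K$-sphericity of $\mathrm{Fl}(1,2;W)$. The main obstacle is now to show that $W$ is $K$-simple, for only then does Lemma~\ref{SimG} apply (its proof routes through Theorem~\ref{Tsgr2}, whose classification via the tables of~\cite{BR} assumes a simple module). Suppose for contradiction that $W$ is weakly irreducible but reducible. Then Corollary~\ref{DecG} applied to the $K$-spherical module $W$ forces every $K$-spherical partial $W$-flag variety to be cotangent-equivalent to $\mathbb{P}(W)$. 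However, $\mathrm{Fl}(1,2;W)$ corresponds to the partition $\{1,1,n_W-2\}$, whose underlying set differs from $\{1,n_W-1\}$ for every $n_W \ge 3$, so by Lemma~\ref{TtoP} the variety $\mathrm{Fl}(1,2;W)$ is not cotangent-equivalent to $\mathbb{P}(W)$, contradicting the $K$-sphericity established above.

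Thus $W$ is a simple $K$-module, and Lemma~\ref{SimG} applied to the $K$-spherical $\mathrm{Fl}(1,2;W)$ delivers the desired identification of $(\mathfrak{k}_W\cap\mathfrak{sl}(W),W)$ with either $(\mathfrak{sl}(W),W)$ or $(\mathfrak{sp}(W),W)$. The subtle step is the third paragraph, where the weak irreducibility hypothesis is used in conjunction with the $K$-sphericity of $W$ (itself a consequence of the sphericity of $V$) to rule out reducibility and thereby make the classification tables of spherical simple modules accessible.
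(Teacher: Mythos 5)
Your proof is correct and follows essentially the same route as the paper's: handle $n_W=2$ directly; for $n_W\ge 3$ use Corollary~\ref{GG} to get $K$-sphericity of $\mathrm{Fl}(1,2;W)$, then combine Corollary~\ref{DecG} (to rule out $W$ being reducible) with Lemma~\ref{SimG}. You merely spell out some details the paper leaves implicit, such as the derivation that $W$ is $K$-spherical and the explicit cotangent-equivalence check for $\mathrm{Fl}(1,2;W)$.
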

\begin{proof}Assume $n_W=2$. Then the pair $(\frak k_W\cap\frak{sl}(W), W)$ is isomorphic to $(\frak{sl}_2, \mathbb C^2)$.

Assume $n_W\ge3$. The variety Fl$(1,2; W)$ is $K$-spherical by Corollary~\ref{GG}. Therefore the pair $(\frak k_W\cap\frak{sl}(W), W)$ is isomorphic to $(\frak{sl}(W), W)$, or to $(\frak{sp}(W), W)$ by Corollary~\ref{DecG} and Lemma~\ref{SimG}.\end{proof}
\begin{lemma}\label{<4}Let $W$ be a $K$-module such that $n_W>3$ and Gr$(2; W)$ is $K$-spherical. Then the length of $W$ as a $\frak k$-module equals at most 3.\end{lemma}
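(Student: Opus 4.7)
I would proceed by contradiction, assuming $\ell := \mathrm{length}(W) \geq 4$ and deriving a contradiction via Ponomareva's classification (Theorem~\ref{Tpon}). Decompose $W = W_1 \oplus \cdots \oplus W_\ell$ into simple $\mathfrak k$-submodules and set $d_i := \dim W_i$. The crucial structural observation is that $K$ preserves each $W_i$, so $K$ is contained in the block Levi $\tilde L := \mathrm{GL}(W_1) \times \cdots \times \mathrm{GL}(W_\ell) \subset \mathrm{GL}(W)$, which is a Levi factor of the parabolic in $\mathrm{GL}(W)$ stabilizing the flag $W_1 \subset W_1 \oplus W_2 \subset \cdots \subset W_1 \oplus \cdots \oplus W_{\ell-1}$.

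First I would upgrade $K$-sphericity to $\tilde L$-sphericity of $\mathrm{Gr}(2; W)$: a Borel of $K$ is a connected solvable subgroup of $\tilde L$ and hence lies inside some Borel of $\tilde L$, so the open Borel$(K)$-orbit on $\mathrm{Gr}(2; W)$ is contained in a Borel$(\tilde L)$-orbit, which is therefore also open. Next, I would invoke Lemma~\ref{Lgtw} to translate $\tilde L$-sphericity of $\mathrm{Gr}(2; W)$ into $\mathrm{GL}(W)$-sphericity of the product $\mathrm{Gr}(2; W) \times \mathrm{Fl}(n_1, \ldots, n_{\ell - 1}; W)$, where $n_i = d_1 + \cdots + d_i$, since $\tilde L$ is exactly the Levi corresponding to the flag with dimension jumps $(d_1, \ldots, d_\ell)$.

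Finally, I would apply Ponomareva's theorem: $\mathrm{GL}(W)$-sphericity of that product forces the unordered pair of partitions $(\{2, n_W - 2\}, \{d_1, \ldots, d_\ell\})$ of $n_W$ to appear in one of the four families in Theorem~\ref{Tpon}. Since $\ell \geq 4$, the partition on the right has at least four parts, and only the family $(\{1, p\}, \{q_1, \ldots, q_{s'+1}\})$ admits this, at the cost of requiring the other (length-$2$) partition to contain the part $1$. Since $n_W > 3$ forces $n_W - 2 \geq 2$, our other partition $\{2, n_W - 2\}$ contains no $1$; hence no family matches and we reach the desired contradiction, so $\mathrm{length}(W) \leq 3$. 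The only delicate point is to verify that Lemma~\ref{Lgtw}'s Levi--flag correspondence applies verbatim to our specific block Levi $\tilde L$; the rest is either a standard fact about Borels or a one-line inspection of Ponomareva's list.
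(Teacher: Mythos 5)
Your proof is correct and is exactly the argument the paper has in mind: the paper's own ``proof'' of Lemma~\ref{<4} consists solely of the reference ``See Theorem~\ref{Tpon},'' and the chain you spell out---reduce $K$-sphericity to $\tilde L$-sphericity for the block Levi $\tilde L=\mathrm{GL}(W_1)\times\cdots\times\mathrm{GL}(W_\ell)$, pass via Lemma~\ref{Lgtw} to $\mathrm{GL}(W)$-sphericity of $\mathrm{Gr}(2;W)\times\mathrm{Fl}(n_1,\dots,n_{\ell-1};W)$, then note that for $\ell\geq 4$ the only matching family in Theorem~\ref{Tpon} forces a part equal to $1$ in $\{2,n_W-2\}$, impossible when $n_W>3$---is precisely what that reference leaves implicit (compare the discussion immediately preceding Theorem~\ref{Tpon}).
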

\begin{proof} See Theorem~\ref{Tpon}.\end{proof}
\subsection{Concluding case-by-case considerations} The following theorem is a second approximation to Theorem~\ref{KlGr}.
\begin{theorem} Let $W_b, W_s$ be $K$-modules of the dimensions $n_b, n_s$ such that $n_b\ge n_s\ge 2$. Suppose that Hom($W_s, W_b)$ is a
$K$-spherical module. Then one of the following holds.\\1) $[\frak k_{W_s}, \frak k_{W_s}]=0$, $n_s=2$ and $([\frak k_{W_b}, \frak k_{W_b}], W_b)$ is either $(\frak{sl}(W_b), W_b)$, or $(\frak{sp}(W_b), W_b)$;\\2) $([\frak k_{W_s}, \frak k_{W_s}], W_s)=(\frak{sl}_2, \mathbb C^2)$ and $([\frak k_{W_b}, \frak k_{W_b}], W_b)$ appears in the following list: $(\frak{sl}(W_b), W_b), (\frak{sp}(W_b), W_b),\hspace{10pt} (\frak{sl}_n\oplus\frak{sl}_m, \mathbb C^n\oplus\mathbb C^m) \\(m, n\ge1), (\frak{sl}_n\oplus\frak{sp}_{2m}, \mathbb C^n\oplus\mathbb C^{2m}) (n\ge1),\hspace{10pt} (\frak{sp}_{2n}\oplus\frak{sp}_{2m}, \mathbb C^{2n}\oplus\mathbb C^{2m})$;\\3) $([\frak k_{W_s}, \frak k_{W_s}], W_s)=(\frak{sl}_n,\mathbb C^n) (n \ge 3)$ and $([\frak k_{W_b}, \frak k_{W_b}], W_b)$ appears in the following list: $(\frak{sl}(W_b), W_b), (\frak{sl}_m,\mathbb C^m\oplus\mathbb C), (\frak{sp}_4, \mathbb C^4)$;\\3') $([\frak k_{W_s}, \frak k_{W_s}], W_s)=(\frak{sl}_n, \mathbb C^n\oplus\mathbb C) (n \ge 2)$ and $([\frak k_{W_b}, \frak k_{W_b}], W_b)=(\frak{sl}(W_b), W_b);$\\4) $([\frak k_{W_s}, \frak k_{W_s}], W_s)=(\frak{sl}_3, \mathbb C^3)$ and $([\frak k_{W_b}, \frak k_{W_b}], W_b)=(\frak{sp}(W_b), W_b);$\\5) $([\frak k_{W_s}, \frak k_{W_s}], W_s)=(\frak{sp}_4, \mathbb C^4)$ and $([\frak k_{W_b}, \frak k_{W_b}], W_b)=(\frak{sl}(W_b), W_b)$.\end{theorem}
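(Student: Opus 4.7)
The plan is to translate the hypothesis into a statement about the sphericity of a Grassmannian, extract structural constraints on $W_s\oplus W_b$ from the preceding subsections, and then enumerate the surviving candidates.

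First, I would use the birational isomorphism $\mathrm{Hom}(W_s,W_b)\simeq\mathrm{Gr}(n_s;W_s\oplus W_b)$ recalled at the beginning of the subsection on simplifications. Thus $\mathrm{Hom}(W_s,W_b)$ is $K$-spherical if and only if $\mathrm{Gr}(n_s;W_s\oplus W_b)$ is $K$-spherical. Because $2\le n_s\le n_b$, we have $n_s\le\lfloor(n_s+n_b)/2\rfloor$, so $\mathrm{Gr}(n_s;W_s\oplus W_b)$ lies above $\mathrm{Gr}(2;W_s\oplus W_b)$ in the partial order of partial flag varieties (statement (2) after Lemma~\ref{TtoP}). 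Proposition~\ref{SphO} then yields that $\mathrm{Gr}(2;W_s\oplus W_b)$ is $K$-spherical, and since $\dim(W_s\oplus W_b)\ge 4$, Lemma~\ref{<4} gives the length bound $\mathrm{length}_{\frak k}(W_s\oplus W_b)\le 3$. Applying Lemma~\ref{SPSL} to each weakly irreducible summand $W'$ of $W_s\oplus W_b$ forces $(\frak{sl}(W')\cap\frak{k}_{W'},W')\cong(\frak{sl}(W'),W')$ or $(\frak{sp}(W'),W')$, so no weakly irreducible summand can be of orthogonal type.

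Second, I would intersect these two constraints with the classification of spherical representations of Benson--Ratcliff~\cite{BR}. Length at most three, together with the exclusion of $\frak{so}$-summands, reduces the possibilities for $([\frak{k}_V,\frak{k}_V],V)$ with $V:=W_s\oplus W_b$ to a finite explicit list of candidates. I would then split each candidate according to the ordered decomposition $V=W_s\oplus W_b$ satisfying $2\le n_s\le n_b$, allowing $W_s$ itself to be weakly reducible (as in case~3') but not to exhaust all of $V$. This enumeration is what produces the families (1)--(5), together with a handful of spurious cases that must still be ruled out.

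Finally, I would carry out the sufficiency check and eliminate the remaining spurious candidates. For sufficiency, Theorem~\ref{PTr} applied with $r=n_s$ reduces $K$-sphericity of $\mathrm{Gr}(n_s;V)$ to two cleaner conditions: $K$-sphericity of $\mathrm{Gr}(n_s;W_b)$, and $L$-sphericity of $\mathrm{Hom}(W_b^{n_s},W_s)$ for the generic stabilizer $L$ of $K$ on $\mathrm{Gr}(n_s;W_b)$. The pair $(L,W_b^{n_s})$ can be computed by the doubled-action technique of~\cite{Pan} recalled in Subsection~\ref{SSph}, and in each case on the list both conditions check out against the Benson--Ratcliff tables. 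The main obstacle will be the careful bookkeeping in this last step: a number of formally admissible candidates (for instance, pairs with $W_s$ of dimension $\ge 3$ and $W_b$ weakly reducible, or tensor-product candidates analogous to $\mathbb C^m\otimes\mathbb C^n$) must be excluded by dimension or generic-stabilizer arguments in the spirit of the proof of Theorem~\ref{Tsgr2} and Lemma~\ref{TensP}.
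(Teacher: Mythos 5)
Your approach matches the paper's almost verbatim: translate the hypothesis to sphericity of $\mathrm{Gr}(n_s;W_s\oplus W_b)$, deduce sphericity of $\mathbb{P}(W_s\oplus W_b)$ and a length bound of~$3$, exclude orthogonal-type weakly irreducible summands via Lemma~\ref{SPSL}, and filter the Benson--Ratcliff tables. Two small remarks. First, you are more explicit than the paper about where the length bound comes from (passing to $\mathrm{Gr}(2;V)$ via Proposition~\ref{SphO} and then invoking Lemma~\ref{<4}), which is a genuine gap-filling of the paper's terse ``therefore''. Second, your proposed sufficiency check via Theorem~\ref{PTr} is superfluous for this statement: the theorem asserts only the one-way implication, and the paper accordingly proves only necessity, so you would be doing more work than the claim requires.
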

\begin{proof} As the variety Hom$(W_s, W_b)$ is $K$-spherical, the variety Gr$(n_s; W_s\oplus W_b)$ is $K$-spherical. Therefore the
variety $\mathbb P(W_s\oplus W_b)$ is $K$-spherical and the length of the $K$-module $W_s\oplus W_b$ is not more then 3. Using this condition
and the results of~\cite{BR} it is straightforward to derive the required list. We also use Lemma~\ref{SPSL}.\end{proof}
In the remainder of this section we collect the additional information needed to prove Theorem~\ref{KlGr}.

Let $W$ be a nonzero $K$-module. Denote by $\mathbb C^*_{a, b}$ the one-dimensional subgroup of GL$(W)$ which corresponds to $\mathbb Ch_{a,b}$ (see Section~\ref{Stro}). We use similar notation if $W$ has length 1 or 3.

Fix $r\in\{1,..., n_W-1\}$. Recall that the datum $(K, W, r)$ determines the datum $(L, W^r)$ (see the discussion following Theorem~\ref{Pan}).
\begin{lemma}The following data $(K,W,r)$ determine the pairs $(L, W^r)$ as follows:\\1) (SL$(W), W, r)\to$(GL$_r, \mathbb C^r)$;\\2) (SP$(W), W, 2)\to$(SL$_2, \mathbb C^2)$;\\3) (SP$(W), W, 3)\to$(SL$_2\times\mathbb C^*_{0,1}, \mathbb C^2\oplus\mathbb C)$;\\4) (SP$(W), W, 4)\to$(SL$_2\times$SL$_2, \mathbb C^2\oplus\mathbb C^2)$.\end{lemma}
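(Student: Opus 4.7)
The plan is to verify each of the four items by choosing an explicit representative $x$ of the open $B$-orbit in $\mathrm{Gr}(r;W)$, computing the stabiliser $K_x$ together with its intersection $B_x = B\cap K_x$, and then identifying the unique connected reductive subgroup $L(x)\subset K_x$ for which $B_x$ is a Borel, along with the restriction of its action to $V^r(x)$.

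For item (1), $\mathrm{SL}(W)$ acts transitively on $\mathrm{Gr}(r;W)$, so I will take $V^r(x)$ in general position with respect to the complete flag fixed by $B$, say $V^r(x) = \mathrm{span}(e_{n_W-r+1},\ldots,e_{n_W})$. Then $K_x$ is the standard maximal parabolic of $\mathrm{SL}(W)$ with Levi $\mathrm{S}(\mathrm{GL}_{n_W-r}\times\mathrm{GL}_r)$, and a direct computation shows that $B_x$ is precisely a Borel of this Levi. Modding out the kernel $\mathrm{SL}_{n_W-r}$, which acts trivially on $V^r(x)$, yields the effective pair $(\mathrm{GL}_r,\mathbb C^r)$.

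For items (2)--(4), I use the fact that the $\mathrm{SP}(W)$-orbits on $\mathrm{Gr}(r;W)$ are classified by the rank of the restricted symplectic form $\omega|_{V^r}$, and I pick a representative of the open orbit in each case using symplectic coordinates $e_1,\ldots,e_n,f_1,\ldots,f_n$ (with $\omega(e_i,f_j)=\delta_{ij}$) adapted to the isotropic flag $\mathrm{span}(e_1)\subset\cdots\subset\mathrm{span}(e_1,\ldots,e_n)$ stabilised by $B$. In (2) the generic $V^2$ is non-degenerate, so $K_x = \mathrm{SP}(V^2)\times\mathrm{SP}((V^2)^\bot)$; taking $V^2=\mathrm{span}(e_1,f_1)$, $B_x$ is the product of Borels in the two factors and the effective action on $V^2$ becomes $(\mathrm{SL}_2,\mathbb C^2)$. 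For (3) the generic $V^3$ carries a canonical $1$-dimensional radical $R = V^3\cap (V^3)^\bot$; choosing $V^3 = \mathrm{span}(e_1,e_2,f_2)$ gives $R = \mathrm{span}(e_1)$ with $V^3/R$ non-degenerate of dimension $2$, and the reductive part of $K_x$ acting effectively on $V^3$ is $\mathrm{SL}_2\times\mathbb C^*_{0,1}$ (with $\mathrm{SL}_2$ symplectic on $V^3/R$ and $\mathbb C^*_{0,1}$ scaling the line $R$), giving the pair $(\mathrm{SL}_2\times\mathbb C^*_{0,1},\mathbb C^2\oplus\mathbb C)$.

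The hard part will be item (4). The generic $V^4$ is again non-degenerate, so $K_x\supset\mathrm{SP}(V^4)\cong\mathrm{SP}_4$ acts faithfully on $V^4$, yet the claim asserts that $L(x)$ acts on $V^4$ only through $\mathrm{SL}_2\times\mathrm{SL}_2\subsetneq\mathrm{SP}_4$. The key point to verify is that the Borel $B$ forces, on a generic $V^4$ of the open $B$-orbit, a canonical $B$-equivariant decomposition $V^4 = U_1\oplus U_2$ into two non-degenerate symplectic $2$-planes; with the representative $V^4 = \mathrm{span}(e_1,e_2,f_1,f_2)$ one takes $U_1 = \mathrm{span}(e_1,f_1)$ and $U_2 = \mathrm{span}(e_2,f_2)$, and these summands are intrinsically determined by the intersection pattern of $V^4$ with the standard isotropic flag. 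Consequently $B\cap\mathrm{SP}(V^4)$ is only a Borel of the subgroup $\mathrm{SL}_2\times\mathrm{SL}_2\subset\mathrm{SP}_4$ preserving this splitting, and a dimension count ($\dim B-\dim\mathrm{Gr}(4;W)$ matching $\dim B(\mathrm{SL}_2\times\mathrm{SL}_2)$ plus the Borel dimension of a suitable trivially-acting reductive subgroup of $\mathrm{SP}((V^4)^\bot)$) confirms the identification, yielding the pair $(\mathrm{SL}_2\times\mathrm{SL}_2,\mathbb C^2\oplus\mathbb C^2)$.
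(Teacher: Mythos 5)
Your plan correctly identifies what must be computed: for $x$ in the open $B$\nobreakdash-orbit, find $B_x$, then the unique reductive $L(x)\subset K_x$ with Borel $B_x$, then its effective action on $V^r(x)$. However, the execution for the symplectic cases (2)--(4) breaks down at the first step. The representatives you pick (e.g.\ $V^2=\mathrm{span}(e_1,f_1)$) lie in the open $\mathrm{SP}(W)$-orbit, but they are \emph{not} in the open $B$\nobreakdash-orbit: they are adapted to the $B$-stable isotropic flag (e.g.\ $\mathrm{span}(e_1)\subset V^2$), which is a degenerate incidence. For such a point $B_x$ is strictly larger than the generic $B$-stabilizer. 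Concretely, take $\mathrm{SP}_4$ and $V^2=\mathrm{span}(e_1,f_1)$: one checks directly that $B_x$ has dimension $4$ and is indeed the product of Borels of $\mathrm{SP}(V^2)\times\mathrm{SP}((V^2)^\perp)$, just as you say; but the generic $B$-stabilizer has dimension $\dim B - \dim\mathrm{Gr}(2;\mathbb C^4)=6-4=2$, so you are computing $B_x$ (and hence $L(x)$) at the wrong point. In general $\dim B_x^{\mathrm{gen}} = \dim B(\mathrm{SP}_{2n})-\dim\mathrm{Gr}(r;\mathbb C^{2n})$ is strictly smaller than the dimension of a Borel of $K_x=\mathrm{SP}(V^r)\times\mathrm{SP}((V^r)^\perp)$ (or the analogous reductive quotient in case (3)), so $L(x)\subsetneq K_x$: for instance in case (2) one finds $L(x)\cong\mathrm{SL}_2\times\mathrm{SP}_{2n-4}$ embedded via a diagonal $\mathrm{SL}_2$, not $K_x=\mathrm{SL}_2\times\mathrm{SP}_{2n-2}$. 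The stated effective pairs turn out to coincide in cases (2) and (3), but that is an accident of the effective image, not a consequence of your argument, and in case (4) you notice the discrepancy yourself.

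This is the underlying reason case (4) felt hard: it is not that $\mathrm{SP}(V^4)$ mysteriously collapses to $\mathrm{SL}_2\times\mathrm{SL}_2$, it is that one must actually work with the generic $L(x)$, which is a proper subgroup of $K_x$ in \emph{all} of cases (2)--(4). Your proposed repair for (4) --- that the flag forces a canonical splitting $V^4=U_1\oplus U_2$ --- is the right heuristic, but it is asserted rather than verified, the dimension count sketched at the end is not carried out, and the same issue must be confronted (and isn't) in (2) and (3). A uniform and rigorous proof here would either explicitly produce a point of the open $B$-orbit and compute $B_x$ and $L(x)$ from it, or use Panyushev's doubled-actions technique (which the paper explicitly flags as the intended tool) to determine the generic $B$-isotropy without choosing a representative at all. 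As written, the proposal establishes case (1) but not (2)--(4).
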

\begin{proof}We omit the proof.\end{proof}
\begin{lemma} For any $q\in\mathbb Z_{\ge2}$ the following statements are equivalent.\\1) The variety Gr$(2; W\oplus\mathbb C^{2q})$ is $K\times$SP$_{2q}$-spherical.\\2) The variety Gr$(2; W\oplus\mathbb C^2)$ is $K\times$SL$_2$-spherical.\\3) The variety Hom$(\mathbb C^2, W)$ is SL$_2\times K$-spherical.\end{lemma}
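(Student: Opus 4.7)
The plan is to deduce both nontrivial equivalences (1)$\Leftrightarrow$(3) and (2)$\Leftrightarrow$(3) from Theorem~\ref{PTr}, combined with the preceding lemma that identifies the isotropy pairs $(L, V_b^r)$.

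For (1)$\Leftrightarrow$(3), I would apply Theorem~\ref{PTr} to the group $K\times\mathrm{SP}_{2q}$ acting on $V = W\oplus\mathbb C^{2q}$, taking $V_s = W$, $V_b = \mathbb C^{2q}$, and $r = 2$. Since $K$ acts trivially on $\mathbb C^{2q}$, the preceding lemma in the case $(\mathrm{SP}(W), W, 2)\to(\mathrm{SL}_2, \mathbb C^2)$ identifies the associated pair $(L, V_b^2)$ with $(K\times\mathrm{SL}_2, \mathbb C^2)$ (with $K$ acting trivially on $V_b^2$). Theorem~\ref{PTr} then asserts that (1) is equivalent to the conjunction of the two conditions: (i) $\mathrm{Hom}(\mathbb C^2, W)$ is $K\times\mathrm{SL}_2$-spherical; (ii) $\mathrm{Gr}(2; \mathbb C^{2q})$ is $K\times\mathrm{SP}_{2q}$-spherical. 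But (ii) is automatic by Remark~\ref{Rsphgr}(c): $K$ acts trivially on $\mathbb C^{2q}$, and every Grassmannian of a symplectic vector space is spherical for the symplectic group. Hence (1)$\Leftrightarrow$(3).

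For (2)$\Leftrightarrow$(3), I would observe that $\mathrm{Hom}(\mathbb C^2, W)$ is a nonempty open $K\times\mathrm{SL}_2$-stable subvariety of the irreducible variety $\mathrm{Gr}(2; W\oplus\mathbb C^2)$, namely the cell of $2$-planes transverse to the $W$-factor (this is the birational identification between $\mathrm{Hom}(V_s, V_b)$ and $\mathrm{Gr}(n_s; V_s\oplus V_b)$ noted at the beginning of the Simplifications subsection). Since sphericity amounts to the existence of an open Borel orbit, the presence of such an orbit in one of these varieties is equivalent to its presence in the other. Alternatively, one may simply repeat the argument of the previous paragraph with $q = 1$: then $\mathrm{Gr}(2; \mathbb C^2)$ is a single point, so the auxiliary Grassmannian condition is vacuous.

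No substantial obstacle is expected; the only bookkeeping point worth verifying explicitly is that, when the acting group is a direct product $K\times\mathrm{SP}_{2q}$ with $K$ acting trivially on $\mathbb C^{2q}$, the construction of $(L, V_b^r)$ behind Theorem~\ref{PTr} propagates the factor $K$ unchanged, producing $L = K\times\mathrm{SL}_2$ and leaving $K$ acting trivially on $V_b^2 = \mathbb C^2$. Once this is in place, the two equivalences fall out immediately.
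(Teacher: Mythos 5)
Your proof is correct and uses the same two ingredients as the paper: Theorem~\ref{PTr} together with the datum identification $(\mathrm{SP}_{2q}, \mathbb C^{2q}, 2)\to(\mathrm{SL}_2,\mathbb C^2)$, and the birational identification of $\mathrm{Hom}(\mathbb C^2, W)$ with the open cell of $\mathrm{Gr}(2; W\oplus\mathbb C^2)$. The paper phrases the reduction as $(1)\Leftrightarrow(2)$ via Theorem~\ref{PTr} and $(2)\Leftrightarrow(3)$ as a reformulation, whereas you route through $(1)\Leftrightarrow(3)$ and $(2)\Leftrightarrow(3)$, but the underlying argument is the same.
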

\begin{proof} As the datum (SP$_{2q}, \mathbb C^{2q}, 2)$ determines the pair (SL$_2, \mathbb C^2)$, part 1) is equivalent to part 2) by Theorem~\ref{PTr}. Part 3) is a reformulation of part 2).\end{proof}
\begin{lemma}\label{Lsp3} For any $q\in\mathbb Z_{\ge2}$ the following statements are equivalent.\\1) The variety Gr$(3; W\oplus\mathbb C^{2q})$ is $K\times$SP$_{2q}$-spherical.\\2) The variety Gr$(3;W\oplus\mathbb C^2\oplus\mathbb C)$ is $K\times$SL$_2\times\mathbb C^*_{0,0,1}$-spherical.\\3) The variety Hom$(\mathbb C^2\oplus\mathbb C, W)$ is SL$_2\times\mathbb C^*_{0,1}\times K$-spherical.\end{lemma}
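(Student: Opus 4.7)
The plan is to mimic the proof of the preceding lemma, replacing the datum $(\mathrm{SP}_{2q},\mathbb C^{2q},2)\to(\mathrm{SL}_2,\mathbb C^2)$ by $(\mathrm{SP}_{2q},\mathbb C^{2q},3)\to(\mathrm{SL}_2\times\mathbb C^*_{0,1},\mathbb C^2\oplus\mathbb C)$, which is entry 3) of the preceding lemma computing $(L,W^r)$ from $(K,W,r)$.

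First I would establish 1)$\Leftrightarrow$2). Apply Theorem~\ref{PTr} with $V_s:=W$, $V_b:=\mathbb C^{2q}$ and $r=3$: the variety $\mathrm{Gr}(3;W\oplus\mathbb C^{2q})$ is $K\times\mathrm{SP}_{2q}$-spherical if and only if $\mathrm{Gr}(3;\mathbb C^{2q})$ is $\mathrm{SP}_{2q}$-spherical and $\mathrm{Hom}((\mathbb C^{2q})^3,W)$ is $L$-spherical, where $(L,(\mathbb C^{2q})^3)$ is the pair associated to the datum $(\mathrm{SP}_{2q},\mathbb C^{2q},3)$. By the preceding lemma this pair is $(\mathrm{SL}_2\times\mathbb C^*_{0,1},\mathbb C^2\oplus\mathbb C)$. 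The first condition is automatic because $\mathrm{Gr}(r;\mathbb C^{2q})$ is $\mathrm{SP}_{2q}$-spherical for every $r$ by Remark~\ref{Rsphgr}\,c). Hence 1) is equivalent to the $\mathrm{SL}_2\times\mathbb C^*_{0,1}\times K$-sphericity of $\mathrm{Hom}(\mathbb C^2\oplus\mathbb C,W)$; observing that this same Hom-variety is birationally isomorphic to $\mathrm{Gr}(3;W\oplus\mathbb C^2\oplus\mathbb C)$ via the birational equivalence $\mathrm{Hom}(V_s,V_b)\cong\mathrm{Gr}(n_s;V_s\oplus V_b)$ recalled at the start of Subsection~\ref{SSssph}, and that sphericity is a birational invariant for irreducible varieties, one obtains 1)$\Leftrightarrow$2).

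Finally, 2)$\Leftrightarrow$3) is the same birational equivalence read once more: take $V_s=\mathbb C^2\oplus\mathbb C$ and $V_b=W$ with $n_s=3$, so that $\mathrm{Gr}(3;W\oplus\mathbb C^2\oplus\mathbb C)$ and $\mathrm{Hom}(\mathbb C^2\oplus\mathbb C,W)$ are birationally isomorphic as $K\times\mathrm{SL}_2\times\mathbb C^*_{0,1}$-varieties (matching the $\mathbb C^*_{0,0,1}$-factor on the Grassmannian side with the $\mathbb C^*_{0,1}$-factor on the Hom side, both of which scale the $\mathbb C$-summand and leave $\mathbb C^2$ and $W$ untouched).

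I do not expect any serious obstacle: the entire argument is a two-line application of Theorem~\ref{PTr} together with Remark~\ref{Rsphgr} and the identification of $(L,W^3)$ for $(\mathrm{SP}(W),W,3)$ that was established in the preceding lemma. The only point that deserves a moment's care is bookkeeping of the one-dimensional torus factors ($\mathbb C^*_{0,1}$ vs.\ $\mathbb C^*_{0,0,1}$) to confirm that the group actions on the two birationally isomorphic varieties genuinely coincide; this is routine.
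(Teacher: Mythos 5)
Your proof is correct and follows essentially the same route as the paper: apply Theorem~\ref{PTr} with $V_s=W$, $V_b=\mathbb C^{2q}$, $r=3$, use the datum $(\mathrm{SP}_{2q},\mathbb C^{2q},3)\to(\mathrm{SL}_2\times\mathbb C^*_{0,1},\mathbb C^2\oplus\mathbb C)$ from the preceding lemma, and identify the $\mathrm{Hom}$-variety with the Grassmannian by the standard birational equivalence. The only (cosmetic) difference is that you chain the equivalences as $1)\Leftrightarrow3)\Leftrightarrow2)$ while the paper states $1)\Leftrightarrow2)$ and then observes $3)$ is a reformulation of $2)$; the underlying tools and bookkeeping are identical.
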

\begin{proof} As the datum (SP$_{2q}, \mathbb C^{2q}, 3)$ determines the pair (SL$_2\times\mathbb C^*_{0,1}, \mathbb C^2\oplus\mathbb C)$, part 1) is equivalent to part 2) by Theorem~\ref{PTr}. Part 3) is a reformulation of part 2).\end{proof}
\begin{corollary}\label{SP=1} Suppose that Gr$(3; W\oplus\mathbb C^{2q})$ is a $K\times$SP$_{2q}$-spherical variety for some $q\in\mathbb Z_{\ge2}$. Then $\frak{sl}(W)\subset\frak k_W$.\end{corollary}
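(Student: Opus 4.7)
The plan has three stages: first, use Lemma~\ref{Lsp3} to reformulate the hypothesis as the sphericity of Gr$(3;W\oplus\mathbb C^2\oplus\mathbb C)$ under $K\times$SL$_2\times\mathbb C^*_{0,0,1}$; second, force $W$ to be a simple $\frak k$-module; and finally exclude the $\frak{so}(W)$ and $\frak{sp}(W)$ branches left open by Theorem~\ref{Tsgr2}. In the first stage, Proposition~\ref{SphO} propagates sphericity from Gr$(3;\cdot)$ down to Gr$(2;\cdot)$, and Lemma~\ref{<4} then bounds the $(\frak k\oplus\frak{sl}_2\oplus\mathbb C)$-length of $W\oplus\mathbb C^2\oplus\mathbb C$ by $3$. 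Since $\mathbb C^2$ and $\mathbb C$ already contribute two distinct simple summands, the $\frak k$-length of $W$ is at most one, so $W$ must be a simple $\frak k$-module; the small cases $n_W\le 2$ are settled by direct inspection.

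Assuming $n_W\ge 3$, I would then invoke Theorem~\ref{PTr} on Gr$(3;W\oplus(\mathbb C^2\oplus\mathbb C))$ with $V_s=\mathbb C^2\oplus\mathbb C$, $V_b=W$ and $r=3$, obtaining simultaneously (i) Gr$(3;W)$ is $K$-spherical and (ii) Hom$(W^3,\mathbb C^2\oplus\mathbb C)$ is spherical under $L\times$SL$_2\times\mathbb C^*$, where $(L,W^3)$ is the generic-stabilizer datum associated to $(K,W,3)$ as in the discussion following Theorem~\ref{Pan}. Applying Proposition~\ref{SphO} to (i) yields that Gr$(2;W)$ is $K$-spherical, so Theorem~\ref{Tsgr2} restricts $(\frak k_W\cap\frak{sl}(W),W)$ to one of $(\frak{sl}(W),W),(\frak{so}(W),W),(\frak{sp}(W),W)$.

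To finish, the orthogonal possibility is eliminated by Corollary~\ref{GG}(b) applied with $V_b=W$ (permitted since $n_W\ne 2$): it gives that Fl$(1,2;W)$ is $K$-spherical, whence Lemma~\ref{SimG} discards $\frak{so}(W)$. For the symplectic possibility, the lemma immediately preceding Lemma~\ref{Lsp3} identifies $(L,W^3)$ with $($SL$_2\times\mathbb C^*,\mathbb C^2\oplus\mathbb C)$ coming from $($SP$(W),W,3)$, so (ii) would force the $9$-dimensional variety Hom$(\mathbb C^2\oplus\mathbb C,\mathbb C^2\oplus\mathbb C)$ to be spherical for $($SL$_2\times\mathbb C^*)^2$, whose Borel has dimension only $2\cdot 3=6$; this dimension mismatch prevents an open Borel orbit, a contradiction. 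Consequently only the $\frak{sl}(W)$ case survives, giving $\frak{sl}(W)\subseteq\frak k_W$. The main delicacy is the length argument pinning $W$ down to being simple; once $(L,W^3)$ has been identified from the lemma preceding Lemma~\ref{Lsp3}, the closing dimension counts are routine.
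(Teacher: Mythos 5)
Your proof gives a correct conclusion, and in broad outline it is a reasonable fleshing-out of the terse inference that the paper intends (the corollary is stated directly after Lemma~\ref{Lsp3} with no written argument). However, the step ``Applying Proposition~\ref{SphO} to (i) yields that Gr$(2;W)$ is $K$-spherical'' is not valid for $n_W\le 4$: for $n_W=4$ the Grassmannian Gr$(3;W)\cong\mathbb P(W^*)$ is cotangent-equivalent to $\mathbb P(W)$, which is \emph{lower} than Gr$(2;W)$ in the moment-map partial order (partition $(2,1,1)$ versus $(2,2)$), so Proposition~\ref{SphO} transfers sphericity in the wrong direction, and for $n_W=3$ the variety Gr$(3;W)$ is a point and the implication is vacuous. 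Consequently your invocation of Theorem~\ref{Tsgr2} is not justified at those dimensions as written.

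Fortunately, those two sentences are redundant in your own proof. You later apply Corollary~\ref{GG}(b) to Gr$(3;W\oplus\mathbb C^2\oplus\mathbb C)$ (which \emph{is} spherical, by Lemma~\ref{Lsp3}) with $V_b=W$, $r=3$, $n_b=n_W\ge 3$, and this already gives that Fl$(1,2;W)$ is $K$-spherical; Lemma~\ref{SimG} then directly yields that $(\frak k_W\cap\frak{sl}(W),W)$ is $(\frak{sl}(W),W)$ or $(\frak{sp}(W),W)$, subsuming Theorem~\ref{Tsgr2} entirely. So you should drop the detour through Gr$(3;W)$ and Gr$(2;W)$. Two smaller points: the lemma that records $(\mathrm{SP}(W),W,3)\mapsto(\mathrm{SL}_2\times\mathbb C^*_{0,1},\mathbb C^2\oplus\mathbb C)$ is not the one \emph{immediately} preceding Lemma~\ref{Lsp3} (it sits one lemma further back), and in the final dimension count one should note that $K$ may contain an extra central $\mathbb C^*$ (since $N_{\frak{gl}(W)}(\frak{sp}(W))=\frak{sp}(W)\oplus\mathbb C$), giving $L$ an extra $\mathbb C^*$ and a Borel of dimension $7$ rather than $6$ -- still strictly less than $9$, so the contradiction survives. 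With these repairs the argument is complete, and the route (Lemma~\ref{Lsp3} to $W$ simple via Lemma~\ref{<4}, then Corollary~\ref{GG}(b) + Lemma~\ref{SimG}, then the dimension count to kill $\frak{sp}(W)$) is a sound reading of what the paper leaves implicit.
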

\begin{corollary}\label{Cflnesp} The variety Fl$(1, 3; W\oplus\mathbb C^{2q})$ is not a $K\times$SP$_{2q}\times\mathbb C^*_{0,1}$-spherical variety for any $q\in\mathbb Z_{\ge2}$.\end{corollary}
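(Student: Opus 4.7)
The plan is to argue by contradiction: suppose $\mathrm{Fl}(1,3;W\oplus\mathbb{C}^{2q})$ is spherical under $G:=K\times\mathrm{SP}_{2q}\times\mathbb{C}^{*}_{0,1}$, and set $V:=W\oplus\mathbb{C}^{2q}$, $n:=n_W+2q$. The overall strategy is to pass from $\mathrm{Fl}(1,3;V)$ down to $\mathrm{Gr}(3;V)$ via the partial order on partial flag varieties, reduce the resulting $\mathrm{Gr}$-sphericity to a module-sphericity question via Theorem~\ref{PTr}, and then recover enough structure on $(K,W)$ to force a contradiction at the level of the original $\mathbb{P}^2$-fiber of $\mathrm{Fl}(1,3;V)\to\mathrm{Gr}(3;V)$.

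First I would invoke Proposition~\ref{SphO}: the partitions $(1,2,n-3)$ and $(3,n-3)$ attached to $\mathrm{Fl}(1,3;V)$ and $\mathrm{Gr}(3;V)$ correspond to Richardson Jordan types $(3,2,1^{n-5})$ and $(2,2,2,1^{n-6})$ respectively, and a direct partial-sum comparison shows the former strictly dominates the latter. Hence $\mathrm{Gr}(3;V)$ is strictly lower than $\mathrm{Fl}(1,3;V)$ in the partial order, so $\mathrm{Gr}(3;V)$ is itself $G$-spherical.

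Next I would apply Theorem~\ref{PTr} with $V_b=\mathbb{C}^{2q}$, $V_s=W$. The datum $(\mathrm{SP}_{2q}\times\mathbb{C}^{*}_{0,1},\mathbb{C}^{2q},3)$ yields a Levi pair $(L,V_b^3)$ whose effective image in $\mathrm{GL}(V_b^3)=\mathrm{GL}_3$ is the block-diagonal $\mathrm{GL}_1\times\mathrm{GL}_2$, reflecting the 1-dimensional symplectic radical of a generic $3$-dimensional subspace of $\mathbb{C}^{2q}$ together with the external scalar. The equivalent condition is therefore that the $K\times\mathrm{GL}_1\times\mathrm{GL}_2$-module $W\oplus W^2=\mathrm{Hom}(\mathbb{C}\oplus\mathbb{C}^2,W)$ be spherical. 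A case-by-case inspection against the Benson--Ratcliff classification of spherical $K$-modules reproduced in the Appendix then forces $\frak{sl}(W)\subset\frak k_W$, paralleling the argument behind Corollary~\ref{SP=1}.

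Finally, I would exploit the sphericity of $\mathrm{Fl}(1,3;V)$, which is strictly stronger than that of $\mathrm{Gr}(3;V)$, by analyzing the $\mathbb{P}^2$-fiber of the projection $\mathrm{Fl}(1,3;V)\to\mathrm{Gr}(3;V)$. For generic $V_3$, the image of $G_{V_3}$ in $\mathrm{GL}(V_3)=\mathrm{GL}_3$ is cut down from the full block-diagonal $\mathrm{GL}_1\times\mathrm{GL}_2$ by the matching condition between the $K$-action on the $W$-component of $V_3$ and the $\mathrm{SP}_{2q}\times\mathbb{C}^{*}_{0,1}$-action on the $\mathbb{C}^{2q}$-component; a direct computation of this image and of its Borel orbits on $\mathbb{P}(V_3)$ then rules out an open orbit for $q\ge 2$, producing the contradiction. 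The main obstacle is precisely this last fiber analysis: one must give a precise description of the matching and verify that for every admissible $(\frak k_W,W)$ from the preceding step the resulting Borel cannot reach a $2$-dimensional orbit on the $\mathbb{P}^2$-fiber.
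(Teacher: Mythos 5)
Your first three steps are sound, but the proposal does not close: you explicitly leave the decisive part — verifying that the Borel of the generic stabilizer cannot attain a $2$-dimensional orbit on the $\mathbb P^2$-fiber of $\mathrm{Fl}(1,3;V)\to\mathrm{Gr}(3;V)$ — as an "obstacle" without carrying it out. That fiber analysis is not a mere calculation to be filled in; it is precisely the content of the corollary, and as stated your plan gives no mechanism for it. Moreover, passing first to $\mathrm{Gr}(3;V)$ and then trying to "add back" the $\mathbb P^2$-fiber throws away information: the $B$-orbit structure on the fiber is entangled with the choice of the generic $V_3\in\mathrm{Gr}(3;V)$, and a separate analysis of $\mathbb P(V_3)$ under the reduced group is awkward to set up rigorously.

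The paper sidesteps all of this by running the Panyushev reduction of Theorem~\ref{Pan}/Theorem~\ref{PTr} on $\mathrm{Fl}(1,3;\cdot)$ itself rather than only on $\mathrm{Gr}(3;\cdot)$. Since the datum $(\mathrm{SP}_{2q},\mathbb C^{2q},3)$ determines the pair $(\mathrm{SL}_2\times\mathbb C^*_{0,1},\ \mathbb C^2\oplus\mathbb C)$, the $K\times\mathrm{SP}_{2q}\times\mathbb C^*_{0,1}$-sphericity of $\mathrm{Fl}(1,3;W\oplus\mathbb C^{2q})$ is equivalent (Lemma~\ref{Lsp3}) to the $K\times\mathrm{GL}_2\times\mathbb C^*_{0,0,1}$-sphericity of $\mathrm{Fl}(1,3;W\oplus\mathbb C^2\oplus\mathbb C)$. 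The latter is ruled out in one line by Theorem~\ref{Tpon} (Ponomareva's classification via Lemma~\ref{Lgtw}): the relevant pair of partitions is $(\{1,2,n_W\},\{1,2,n_W\})$, and no entry of Ponomareva's list allows two three-element sets. Note also that this route makes your third step — extracting $\frak{sl}(W)\subset\frak k_W$ from the Benson–Ratcliff tables à la Corollary~\ref{SP=1} — unnecessary: since Theorem~\ref{Tpon} already excludes sphericity for the full Levi $\mathrm{GL}(W)\times\mathrm{GL}_2\times\mathrm{GL}_1$, it excludes it a fortiori for any subgroup $K\times\mathrm{GL}_2\times\mathbb C^*_{0,0,1}$, with no constraint on $\frak k_W$ needed.
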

\begin{proof}The variety Fl$(1, 3; W\oplus\mathbb C^{2q})$ is $K\times$SP$_{2q}\times\mathbb C^*_{0,1}$-spherical if and only if the variety Fl$(1, 3; W\oplus\mathbb C^2\oplus\mathbb C)$ is $K\times$GL$_2\times\mathbb C^*_{0,0,1}$-spherical (Lemma~\ref{Lsp3}). The latter is false by Theorem~\ref{Tpon}.\end{proof}
\begin{lemma} For any $q\in\mathbb Z_{\ge3}$ the following statements are equivalent.\\1) The variety Gr$(4; W\oplus\mathbb C^{2q})$ is $K\times$SP$_{2q}\times\mathbb C^*_{0,1}$-spherical.\\2) The variety Gr$(4; W\oplus\mathbb C^2\oplus\mathbb C^2)$ is $K\times$SL$_2\times$SL$_2\times\mathbb C^*_{0,1,1}$-spherical.\\3) The variety Hom$(\mathbb C^2\oplus\mathbb C^2, W)$ is SL$_2\times$SL$_2\times \mathbb C^*_{1,1}\times K$-spherical.\end{lemma}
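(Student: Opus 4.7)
The plan is to mimic the proof of the preceding Lemma~\ref{Lsp3}, since the present statement is the $r=4$ analogue of that $r=3$ version. Two ingredients drive everything: the previously established fact that the datum $(\mathrm{SP}_{2q},\mathbb C^{2q},4)$ determines the Levi pair $(\mathrm{SL}_2\times\mathrm{SL}_2,\mathbb C^2\oplus\mathbb C^2)$, and Theorem~\ref{PTr}, which reduces sphericity of a Grassmannian $\mathrm{Gr}(r;V_s\oplus V_b)$ to sphericity of $\mathrm{Gr}(r;V_b)$ together with sphericity of the auxiliary Hom-module $\mathrm{Hom}(V_b^r,V_s)$.

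First I would prove the equivalence $1)\Leftrightarrow 2)$. Set $V_b:=\mathbb C^{2q}$, $V_s:=W$, $r:=4$; the acting group on $V_b$ inside $K\times\mathrm{SP}_{2q}\times\mathbb C^*_{0,1}$ is $\mathrm{SP}_{2q}\times\mathbb C^*_{0,1}$, and the extra $\mathbb C^*_{0,1}$ acts on $V_b$ by homotheties and trivially on $V_s$. The cited datum $(\mathrm{SP}_{2q},\mathbb C^{2q},4)\to(\mathrm{SL}_2\times\mathrm{SL}_2,\mathbb C^2\oplus\mathbb C^2)$ together with the extra homothety factor therefore produces the Levi pair $(\mathrm{SL}_2\times\mathrm{SL}_2\times\mathbb C^*_{1,1},\mathbb C^2\oplus\mathbb C^2)$ for the action on the subspace $V_b^4$. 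Since $\mathrm{Gr}(4;\mathbb C^{2q})$ is $\mathrm{SP}_{2q}$-spherical (Corollary~\ref{Cpflsp}), Theorem~\ref{PTr} then tells us that 1) is equivalent to $\mathrm{Hom}(\mathbb C^2\oplus\mathbb C^2,W)$ being $\mathrm{SL}_2\times\mathrm{SL}_2\times\mathbb C^*_{1,1}\times K$-spherical, i.e.\ to statement 3).

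Next I would observe that the equivalence $2)\Leftrightarrow 3)$ is essentially a tautology: by the birational isomorphism $\mathrm{Hom}(V_s,V_b)\simeq\mathrm{Gr}(\dim V_s;V_s\oplus V_b)$ recalled at the beginning of the section, the variety $\mathrm{Hom}(\mathbb C^2\oplus\mathbb C^2,W)$ is, as a $K\times\mathrm{SL}_2\times\mathrm{SL}_2\times\mathbb C^*_{1,1}$-variety, birationally the same as $\mathrm{Gr}(4;W\oplus\mathbb C^2\oplus\mathbb C^2)$ with its $K\times\mathrm{SL}_2\times\mathrm{SL}_2\times\mathbb C^*_{0,1,1}$-action (the weight-$0$ coordinate coming from $W$, on which the torus acts trivially). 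Sphericity is a birational invariant, so 2) and 3) are equivalent.

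There is no genuine obstacle; the only thing to be careful about is bookkeeping of the one-dimensional tori (making sure that the $\mathbb C^*_{0,1}$ appearing in 1) matches, after the Theorem~\ref{PTr} reduction, the $\mathbb C^*_{1,1}$ of 3) and equivalently the $\mathbb C^*_{0,1,1}$ of 2), rather than producing a spurious extra central factor). Everything else is a direct citation of Theorem~\ref{PTr}, Corollary~\ref{Cpflsp}, and the Hom/Grassmannian birational equivalence.
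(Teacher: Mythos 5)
Your proof is correct and takes essentially the same route as the paper: Theorem~\ref{PTr} (together with the Levi datum $(\mathrm{SP}_{2q},\mathbb C^{2q},4)\to(\mathrm{SL}_2\times\mathrm{SL}_2,\mathbb C^2\oplus\mathbb C^2)$ enlarged by the homothety torus to $\mathbb C^*_{1,1}$, and the SP-sphericity of Gr$(4;\mathbb C^{2q})$ from Corollary~\ref{Cpflsp}) establishes one equivalence, while the Hom/Grassmannian birational identification gives the other. The paper routes this as $1)\Leftrightarrow2)$ via Theorem~\ref{PTr} and $3)$ a reformulation of $2)$, whereas you route it as $1)\Leftrightarrow3)$ via Theorem~\ref{PTr} and $2)\Leftrightarrow3)$ a reformulation, but these are the same argument with different labels.
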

\begin{proof}As the datum (SP$_{2q}, \mathbb C^{2q}, 4)$ determines the pair (SL$_2\times$SL$_2\times\mathbb C^*_{1,1}, \mathbb C^2\oplus\mathbb C^2)$, part 1) is equivalent to part 2) by Theorem~\ref{PTr}. Part 3) is a reformulation of part 2).\end{proof}
\begin{corollary}\label{SPG4} Suppose that Gr$(4; W\oplus\mathbb C^{2q})$ is a $K\times$SP$_{2q}$-spherical variety for some $q\in\mathbb Z_{\ge3}$. Then either $n_W=2$ and $\frak{sl}(W)\subset\frak k_W$, or $n_W=1$.\end{corollary}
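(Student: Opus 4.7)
The plan is to combine the preceding lemma with the monotonicity of Proposition~\ref{SphO}, the earlier Corollary~\ref{SP=1}, and a dimension count. By the preceding lemma, sphericality of $\mathrm{Gr}(4; W \oplus \mathbb C^{2q})$ as a $K \times \mathrm{SP}_{2q}$-variety is equivalent, for every $q \geq 3$, to sphericality of the module $N := \mathrm{Hom}(\mathbb C^2 \oplus \mathbb C^2, W)$ as a module for $G := \mathrm{SL}_2 \times \mathrm{SL}_2 \times \mathbb C^*_{1,1} \times K$. This reformulation is independent of $q$, so the whole analysis may be carried out on $N$.

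First I would extract the containment $\frak{sl}(W) \subset \frak k_W$. In the dominance order on partitions of $n_W + 2q$, the partition $(4, n_W + 2q - 4)$ dominates $(3, n_W + 2q - 3)$, so $\mathrm{Gr}(3; W \oplus \mathbb C^{2q})$ is lower than $\mathrm{Gr}(4; W \oplus \mathbb C^{2q})$ (see Subsection~\ref{SGr}); Proposition~\ref{SphO} therefore gives $K \times \mathrm{SP}_{2q}$-sphericality of the former, and Corollary~\ref{SP=1} then yields $\frak{sl}(W) \subset \frak k_W$. Since this is exactly the condition the corollary asserts for $n_W = 2$, it remains to prove $n_W \leq 2$.

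Next, a dimension count: $\dim N = 4 n_W$, while $\dim \frak b_G = 5 + \dim \frak b_K \leq 5 + \tfrac{n_W(n_W+1)}{2}$ because $\frak k_W \subset \frak{gl}(W)$. Sphericality requires $\dim \frak b_G \geq \dim N$, giving $n_W^2 - 7 n_W + 10 \geq 0$, hence $n_W \leq 2$ or $n_W \geq 5$. This immediately excludes $n_W \in \{3,4\}$. The case $n_W = 1$ fits the conclusion, and the case $n_W = 0$ is trivial; for $n_W = 2$ the dimension inequality collapses to $\dim \frak b_K \geq 3$, forcing $\frak k_W = \frak{gl}(W)$, which is exactly the asserted condition.

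The main obstacle is ruling out $n_W \geq 5$, where the dimension inequality alone is satisfied. Here I would use the inclusion $\frak{sl}(W) \subset \frak k_W$ together with the observation that sphericality under $G$ implies sphericality under the larger group $G' := \mathrm{SL}_2 \times \mathrm{SL}_2 \times \mathbb C^* \times \mathrm{GL}(W)$ (a larger Borel still acts with an open orbit containing the smaller one). View $N$ as $\mathrm{Mat}_{4 \times n_W}$ with $\mathrm{GL}(W)$ acting by column operations and $H := \mathrm{SL}_2 \times \mathrm{SL}_2 \times \mathbb C^*$ acting by block-upper-triangular row operations through its embedding in $\mathrm{GL}_4$. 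For $n_W = 5$ the bound above is an equality, so $G'$-sphericality would force the generic $B_{G'}$-stabilizer to be trivial; a direct normal-form computation reducing a generic $4 \times 5$ matrix under $B_H$ on rows and a Borel of $\mathrm{GL}(W)$ on columns exhibits a positive-dimensional stabilizer, a contradiction. For $n_W \geq 6$ I would decompose $\mathbb C[\mathrm{Mat}_{4 \times n_W}] = \bigoplus_\lambda S^\lambda(\mathbb C^4) \otimes S^\lambda(\mathbb C^{n_W})^*$ by Cauchy, observe that restricting the $S^\lambda(\mathbb C^4)$-factors to $H$ produces repeated $T_{G'}$-weights, and conclude that $\mathbb C[N]^{U_{G'}}$ fails to be a multiplicity-free $T_{G'}$-module, contradicting sphericality.
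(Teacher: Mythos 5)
Your overall plan is sound and the early steps are correct, but the crucial case $n_W\geq 5$ is only sketched, not proved, so as written the proposal has a genuine gap exactly where the work is.

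The reformulation via the preceding lemma is fine (a small care point: the lemma's three conditions involve $K\times\mathrm{SP}_{2q}\times\mathbb C^*_{0,1}$, so what you actually have is an implication from the corollary's hypothesis $K\times\mathrm{SP}_{2q}$-sphericality, not an equivalence, but that is the direction you need). Deriving $\frak{sl}(W)\subset\frak k_W$ by descending from $\mathrm{Gr}(4)$ to $\mathrm{Gr}(3)$ through Proposition~\ref{SphO} and Corollary~\ref{SP=1} is correct, and so is the dimension count $5+\tfrac{n_W(n_W+1)}{2}\geq 4n_W$ giving $n_W\leq 2$ or $n_W\geq 5$. For $n_W=2$ your deduction that $\frak k_W=\frak{gl}(W)$ is a (harmless) strengthening of the conclusion $\frak{sl}(W)\subset\frak k_W$, which in any case you already have from the descent step.

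The gap is the $n_W\geq 5$ range. You assert that a ``direct normal-form computation'' produces a positive-dimensional stabilizer for $n_W=5$, and that the Cauchy decomposition ``produces repeated $T_{G'}$-weights'' for $n_W\geq 6$, but you exhibit neither. The second claim is the one that actually carries the day, and it needs a specific partition to be convincing: taking $\mu=(2,1)$ in the Cauchy expansion $\mathbb C[W\otimes\mathbb C^4]=\bigoplus_\mu S^\mu(W)^*\otimes S^\mu(\mathbb C^4)^*$, the restriction of $S^{(2,1)}(\mathbb C^4)$ to $\mathrm{SL}_2\times\mathrm{SL}_2$ (under $\mathbb C^4=\mathbb C^2\oplus\mathbb C^2$) contains the type $V_1\otimes V_0$ with multiplicity two (once from $(\alpha,\beta)=((2,1),\varnothing)$ and once from $((1),(1,1))$), and these two copies have the same weight under the scalar torus $\mathbb C^*_{1,1}$ since $|\mu|$ is fixed; as $S^\mu(W)^*$ is a single simple $K$-type once $\frak{sl}(W)\subset\frak k_W$, this forces multiplicity in $\mathbb C[N]$ and contradicts sphericality. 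Once you include such a computation the $n_W=5$ versus $n_W\geq 6$ split is unnecessary — the same $\mu=(2,1)$ argument handles every $n_W\geq 2$, so it subsumes both the normal-form detour and in fact the dimension count itself. For comparison, the paper treats this corollary as immediate: after the reformulation lemma, one simply consults the Benson--Ratcliff classification of spherical modules (reproduced in the Appendix) and observes that $W\otimes(\mathbb C^2\oplus\mathbb C^2)$ with the prescribed group never appears with $n_W\geq 3$. Your route is more elementary in spirit but, as it stands, the key inequality is stated rather than established.
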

\begin{proposition}\label{Pklgr} Let $V$ be a $K$-module.\\a) Suppose that Gr$(r; V)$ is a $K$-spherical variety for some $r\in\{ 2,..., [\frac{n_V}{2}]\}$. Then the datum $$(r; [\frak k_V, \frak k_V], V)\eqno (3)$$ appears in the following list:\\
1) $(r; \frak{sl}_n, \mathbb C^n), (r; \frak{so}_n, \mathbb C^n) (n\ge 3), (r; \frak{sp}_n, \mathbb C^n)$;\\
2-1-1) $(2; \frak{sp}_n\oplus\frak{sl}_m, \mathbb C^n\oplus\mathbb C^m) (m\ge 1);$\\
2-1-2) $(2; \frak{sp}_n\oplus\frak{sp}_m, \mathbb C^n\oplus\mathbb C^m);$\\
2-2) $(3; \frak{sl}_n\oplus\frak{sp}_m, \mathbb C^n\oplus\mathbb C^m) ( n\ge 1);$\\
2-3) $(r; \frak{sp}_n, \mathbb C^n\oplus\mathbb C);$\\
2-4) $(r; \frak{sl}_n\oplus\frak{sp}_4, \mathbb C^n\oplus\mathbb C^4) (n\ge 1)$;\\
2-5) $(r; \frak{sl}_n\oplus\frak{sl}_m, \mathbb C^n\oplus\mathbb C^m) (n,m\ge 1)$;\\
3-1-1) $(2; \frak{sl}_n\oplus\frak{sl}_m\oplus\frak{sl}_q, \mathbb C^n\oplus\mathbb C^m\oplus\mathbb C^q) (m,n,q\ge 1)$;\\
3-1-2) $(2; \frak{sl}_n\oplus\frak{sl}_m\oplus\frak{sp}_q, \mathbb C^n\oplus\mathbb C^m\oplus\mathbb C^q) ( m\ge 1, n\ge 1)$;\\
3-1-3) $(2; \frak{sl}_n\oplus\frak{sp}_m\oplus\frak{sp}_q,\mathbb C^n\oplus\mathbb C^m\oplus\mathbb C^q) ( n\ge 1)$;\\
3-1-4) $(2; \frak{sp}_n\oplus\frak{sp}_m\oplus\frak{sp}_q, \mathbb C^n\oplus\mathbb C^m\oplus\mathbb C^q) $;\\
3-2) $(r; \frak{sl}_n\oplus\frak{sl}_m,\mathbb C^n\oplus\mathbb C^m\oplus\mathbb C) (n,m\ge 1)$.\\
b) For all data $(r; \frak k', V)$ from the list there exists a reductive subgroup $K\subset$GL$(V)$ with Lie algebra $\frak k$ such that Gr$(r; V)$ is $K$-spherical and $\frak k'=[\frak k, \frak k]$.\end{proposition}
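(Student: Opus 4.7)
The plan is to prove part a) by first decomposing $V=V_1\oplus\cdots\oplus V_t$ into weakly irreducible $\frak k$-submodules and then bounding the length $t$. By Lemma~\ref{<4}, once $n_V>3$ we have $t\le 3$; the few small-dimensional cases that escape this bound can be handled by direct inspection using Theorem~\ref{Tpon}. Moreover, Corollary~\ref{DecG} shows that if some $V_i$ is weakly irreducible but not irreducible, then the only spherical Grassmannians of $V_i$ (and, by a dimension bookkeeping via Theorem~\ref{PTr}, of $V$) are Gr$(1;\cdot)$ and Gr$(n_{(\cdot)}-1;\cdot)$, which lie outside the range $r\in\{2,\dots,[n_V/2]\}$. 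So I may assume each $V_i$ is a simple $\frak k$-module, and split into the three cases $t=1,2,3$.

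For $t=1$, Theorem~\ref{Tsgr2} combined with the necessary condition that Gr$(2;V)$ be $K$-spherical (Lemma~\ref{Lgr2}) forces $(\frak k_V\cap\frak{sl}(V),V)$ to be one of $(\frak{sl}(V),V),(\frak{so}(V),V),(\frak{sp}(V),V)$, and Remark~\ref{Rsphgr} confirms that any $r$ is admissible for these; this gives item~1). For $t=2$, write $V=V_s\oplus V_b$ with $n_b\ge n_s$ and apply Theorem~\ref{PTr}: the sphericity of Gr$(r;V)$ is equivalent to the simultaneous sphericity of Gr$(r;V_b)$ as a $K$-variety and of Hom$(V_b^r,V_s)$ as an $L$-module, where $(L,V_b^r)$ is the pair produced from $(K,V_b,r)$. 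The classification of $L$-spherical modules Hom$(W_s,W_b)$ stated just before Proposition~\ref{Pklgr}, together with the preceding list of data $(K,W,r)\mapsto(L,W^r)$ for the three possibilities $V_b=(\frak{sl}(V_b),V_b),(\frak{so}(V_b),V_b),(\frak{sp}(V_b),V_b)$, then enumerates the admissible pairs; splitting according to whether $V_b$ is of type $\frak{sl},\frak{so},\frak{sp}$ and the value of $r$ produces items 2-1 through 2-5. For $t=3$, the length bound combined with Lemma~\ref{<4} and its proof via Theorem~\ref{Tpon} forces $r=2$, and iterating Corollary~\ref{GG} (reducing to Gr$(2;V_i\oplus V_j)$ for each pair) pins down each individual $(\frak k_{V_i},V_i)$ to be of $\frak{sl}$ or $\frak{sp}$ type, giving items 3-1 and 3-2.

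For part b), one exhibits in each case an explicit reductive $K\subset{}$GL$(V)$, typically $[\frak k',\frak k']$ together with a small central torus chosen so that the Benson--Ratcliff normalizer condition of Subsection~\ref{SSssph} is met, and verifies $K$-sphericity of Gr$(r;V)$. For items 1), 2-5), and 3-2) this amounts to checking that the corresponding module (Hom$(\mathbb C^r,V)$, Hom$(\mathbb C^n,\mathbb C^m\oplus\mathbb C)$, etc.) is spherical, reducing to entries of the Benson--Ratcliff tables. For items 2-1, 2-2, 2-3, 2-4 involving $\frak{sp}$-factors, the relevant pairs $(L,V_b^r)$ are given by the lemma preceding Corollary~\ref{SP=1}, and sphericity of the ensuing Hom-module is again read off from~\cite{BR}. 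The three-summand items 3-1 are verified by a direct generic-stabilizer computation using the doubled-action method of Theorem~\ref{MPan}.

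The main obstacle will be the case $t=2$ with $V_b$ of symplectic type and $r\in\{3,4\}$: here the reduction $(\mathrm{SP}(V_b),V_b,r)\to(L,V_b^r)$ produces the nontrivial pairs $(\mathrm{SL}_2\times\mathbb C^*_{0,1},\mathbb C^2\oplus\mathbb C)$ and $(\mathrm{SL}_2\times\mathrm{SL}_2,\mathbb C^2\oplus\mathbb C^2)$, so the $L$-sphericity of Hom$(V_b^r,V_s)$ must be checked carefully, and Corollary~\ref{Cflnesp} together with Corollary~\ref{SPG4} cuts down the possibilities considerably but requires attention to the central tori. The second delicate point is the three-summand case: ruling out all combinations outside 3-1-1 through 3-1-4 and 3-2 requires combining Lemma~\ref{TensP} (the $\mathbb C^n\otimes\mathbb C^m$ obstruction) with the length-at-most-3 bound of Lemma~\ref{<4} in several overlapping sub-cases, and this is where the bulk of the case analysis lives.
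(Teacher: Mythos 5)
Your proposal follows the same overall architecture as the paper's proof (decompose $V$ into summands, bound their number via Lemma~\ref{<4}, determine the types of the summands, then case-split on the number of summands and $r$), and the key tools you name—Theorem~\ref{PTr}, Theorem~\ref{Tpon}, Corollaries~\ref{GG},~\ref{SP=1},~\ref{SPG4},~\ref{Cflnesp}, Lemma~\ref{TensP}—are exactly the ones the paper uses. So the plan is sound and should close. That said, you take a different route at one structural step: you decompose $V$ into \emph{weakly irreducible} summands, discard the non-simple ones via Corollary~\ref{DecG} plus a dimension count through Theorem~\ref{PTr}, and then for $t=3$ determine the remaining summand types by iterating Corollary~\ref{GG}. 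The paper instead decomposes $V$ directly into \emph{simple} summands and invokes Lemma~\ref{SPSL} once to settle, in a single stroke, that every simple summand $V_i$ is of type $\frak{sl}(V_i)$ or $\frak{sp}(V_i)$ with $n_{V_i}\ge 4$; this is what feeds cleanly into the Ponomareva list (the pair of unordered partitions). Your route reaches the same conclusion but pushes most of the case analysis into the reduction step and the three-summand case, whereas the paper's use of Lemma~\ref{SPSL} up front collapses the type determination and lets it lean on Theorem~\ref{Tpon} directly for the final enumeration. Both routes work; the paper's is leaner because it centralizes the elimination of $\frak{so}$-type and non-simple summands in one lemma, which you should cite explicitly to avoid re-deriving it piecemeal. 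One more small point: your caveat about ``small-dimensional cases escaping'' Lemma~\ref{<4} is moot here, since the hypothesis $r\in\{2,\dots,[n_V/2]\}$ already forces $n_V\ge4$.
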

\begin{proof}Let $V$ be a direct sum of nonzero simple $\frak k$-modules \begin{center}$V_1\oplus V_2\oplus... \oplus V_l$.\end{center}By Lemma~\ref{Lgr2} the variety Gr$(2; V)$ is $K$-spherical and therefore $l\le 3$ by Lemma~\ref{<4}.

If $l=1$, the $K$-module $V$ is simple. This case is considered in Subsection~\ref{SSssph} (Theorem~\ref{Tsgr2} and Remark~\ref{Rsphgr}).

In the rest of the proof we assume that $l\ge 2$, i.e. $l=2, 3$. By Lemma~\ref{SPSL}, for all $i$, \begin{center}$[\frak k_{V_i}, \frak k_{V_i}]=\frak{sl}(V_i)$, or $[\frak k_{V_i}, \frak k_{V_i}]=\frak{sp}(V_i)$ and $n_{V_i}\ge 4$.\end{center}
If the variety Gr$(r; V)$ is $K$-spherical then the pair of sets \begin{center}$(\{r, n_V-r\}, \{n_{V_1},..., n_{V_l}\})$\end{center} appears in the list of Theorem~\ref{Tpon}. All entries of the required list with $r=2$ are recovered by the list of Theorem~\ref{Tpon}.

In the rest of the proof we assume that $r\ge 3$. It remains to show that the datum (3) is as in cases 2-2, 2-3, 2-4, 2-5 or 3-2. If $\frak{sl}(V_i)\subset\frak k_{V_i}$ for all $i$, then Theorem~\ref{Tpon} implies that we are in cases 2-5 or 3-2.

In the rest of the proof we assume that \begin{center}$[\frak k_{V_1}, \frak k_{V_1}]=\frak{sp}(V_1)$ and $n_{V_1}\ge 4$.\end{center}

As $r\ge 3$, Gr$(3; V)$ is a $K$-spherical variety by Proposition~\ref{SphO}. Then \begin{center}$\frak{sl}(V_2\oplus...\oplus V_s)\subset\frak k_{V_2\oplus...\oplus V_s}$,\end{center} in particular, $l=2$. For $r=3$, this is case 2-2 from the list.

In the rest of the proof we assume that $r\ge 4$. Case 2-4 corresponds to $n_{V_1}=4$. Assume  now that $n_{V_1}\ge 6$. As Gr$(r; V)$ is a $K$-spherical variety, Gr$(4; V)$ is a $K$-spherical variety. Therefore $n_{V_2}\le 2$ by Corollary~\ref{SPG4}. A straightforward computation shows that Gr$(4; V_1\oplus\mathbb C^2)$ is not an SP$(V_1)\times$GL$_2$-spherical variety for all $n_{V_1}$ such that $n_{V_1}\ge 6$. Therefore $n_{V_2}=1$ and we are in case 2-3.

This proves a). Part b) is straightforward.\end{proof}
We are now ready to prove Theorem~\ref{KlGr}.
\begin{proof}[Proof of theorem~\ref{KlGr}]Let $V$ be a direct sum of simple nonzero $\frak k$-modules \begin{center}$V_1\oplus V_2\oplus... \oplus V_l$.\end{center} If $l=1$, the $\frak k$-module $V$ is simple. This case is considered in Subsection~\ref{SSssph} (Remark~\ref{Rpflsl}, Corollary~\ref{Cpflso}, Corollary~\ref{Cpflsp}).

In the rest of the proof we assume that $l\ge 2$. The claim of Theorem~\ref{KlGr} for $s=1$ is established in Proposition~\ref{Pklgr}.

In the rest of the proof we also assume that $s\ge 2$. By Lemma~\ref{Lgr2} the variety Gr$(2; V)$ is $K$-spherical and $l\le 3$ by Lemma~\ref{<4}. Lemma~\ref{SPSL} implies, for all $i$, that\begin{center}$[\frak k_{V_i}, \frak k_{V_i}]=\frak{sl}(V_i)$, or $[\frak k_{V_i}, \frak k_{V_i}]=\frak{sp}(V_i)$ and $n_{V_i}\ge 4$.\end{center}

If the variety Fl$(n_1,...,n_s; V)$ is $K$-spherical then the pair of sets \begin{center}$(\{n_1,...,n_V-n_s\}, \{n_{V_1},..., n_{V_l}\})$\end{center} appears in the list of Theorem~\ref{Tpon}. If $\frak{sl}(V_i)\subset\frak k_{V_i}$ for all $i$, then Theorem~\ref{Tpon} implies that we are in cases \setcounter{AP}{2}\Roman{AP}-1-1, \Roman{AP}-2-1, \Roman{AP}-2-2, \Roman{AP}-2-3.

In the rest of the proof we assume that $[\frak k_{V_1}, \frak k_{V_1}]=\frak{sp}(V_1)$ and $n_{V_1}\ge 4$. It remains to show that the datum (1) is as in cases \Roman{AP}-1-2, \Roman{AP}-1-3, \Roman{AP}-2-4, \Roman{AP}-2-5.

We have now $l\le 2$ by Theorem~\ref{Tpon} and therefore $l=2$. The variety Fl$(n_1,..., n_s; V)$ is cotangent-equivalent to \begin{center}Fl$(1, 2; V)$, or to Fl$(1, 3; V)$,\end{center} or is higher than Fl$(1, 3; V)$ (see Subsection~\ref{SGr}). As the variety Fl$(1,3; V)$ is not $K$-spherical by Corollary~\ref{Cflnesp}, Fl$(n_1,..., n_s; V)$ is cotangent-equivalent to Fl($1,2; V$) and we are in cases \Roman{AP}-2-4, \Roman{AP}-2-5.

This proves a). Part b) is straightforward.\end{proof}
\newpage\section{Joseph ideals}\label{SJId}
In this section $\frak g$ is semisimple. Let $\lambda\in\frak h_\frak g^*$ be a dominant weight. A a two-sided ideal $I$ of U$(\frak g)$ determines the submodule $I$M$_\lambda$ of M$_\lambda$. This defines a map $\alpha$ from the latice of ideals of U$(\frak g)$ to the latice of submodules of M$_\lambda$.
\begin{theorem}[A.~Joseph{~\cite[Prop.~4.3]{Jo2}}] The map $\alpha$ is an embedding of the latice of two-sided ideals $I\subset$U$(\frak g)$ such
that Ker$\chi_\lambda\subset I$ to the latice of submodules of M$_\lambda$. If $\lambda$ is regular, $\alpha$ is an isomorphism.\end{theorem}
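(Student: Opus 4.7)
The map $\alpha$ is order-preserving by construction ($I_1 \subseteq I_2 \Rightarrow I_1 M_\lambda \subseteq I_2 M_\lambda$) and lands in submodules, since each two-sided ideal $I$ is in particular a left ideal. My strategy is to introduce a candidate inverse $\beta(N) := \mathrm{Ann}_{\mathrm{U}(\frak g)}(M_\lambda/N)$, which is automatically a two-sided ideal of U$(\frak g)$ containing Ker$\chi_\lambda$ (since $M_\lambda/N$ affords central character $\chi_\lambda$), and then to prove $\beta \circ \alpha = \mathrm{Id}$ for arbitrary dominant $\lambda$ and $\alpha \circ \beta = \mathrm{Id}$ when $\lambda$ is regular. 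The inclusions $\alpha(\beta(N)) \subseteq N$ and $I \subseteq \beta(\alpha(I))$ are immediate from the definitions.

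For injectivity I would prove the non-trivial inclusion Ann$(M_\lambda/IM_\lambda) \subseteq I$. Applying a projective functor $\EuScript H_\lambda^{w\lambda}$ from Subsection~\ref{SSpro} to the embedding $IM_\lambda \hookrightarrow M_\lambda$, and using the key compatibility $\EuScript H(IN) = I\,\EuScript H(N)$ --- valid because $\EuScript H$ is a direct summand of the functor of tensoring with a finite-dimensional $\frak g$-module, through which two-sided ideals pass via the coproduct --- one identifies $\EuScript H_\lambda^{w\lambda}(IM_\lambda) = I\cdot\mathrm P_{w\lambda}$ for every $w \in \mathrm W^\frak g$. Since $\{\mathrm P_{w\lambda}\}_{w\in \mathrm W^\frak g}$ collectively form a projective generator of the block $\EuScript O_\lambda$, any $u \in\,$Ann$(M_\lambda/IM_\lambda)$ must also annihilate every quotient $\mathrm P_{w\lambda}/I\mathrm P_{w\lambda}$; the algebra U$(\frak g)/I$ acts faithfully on the direct sum $\oplus_w \mathrm P_{w\lambda}/I\mathrm P_{w\lambda}$ (as the $\mathrm P_{w\lambda}$ exhaust the projective covers of the simples in $\EuScript O_\lambda$), forcing $u \in I$.

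For surjectivity when $\lambda$ is regular, I would invoke the Bernstein--Gelfand equivalence~\cite{BeG}: the tensor functor $B \mapsto B \otimes_{\mathrm{U}(\frak g)} M_\lambda$ restricts to an equivalence between an appropriate category of Harish--Chandra bimodules with right generalized central character $\chi_\lambda$ and the block $\EuScript O_\lambda$, carrying the algebra U$(\frak g)/$Ker$\chi_\lambda$ (viewed as a bimodule over itself) to $M_\lambda$. Under this equivalence, sub-bimodules of U$(\frak g)/$Ker$\chi_\lambda$ --- which are precisely the two-sided ideals of U$(\frak g)$ containing Ker$\chi_\lambda$ --- correspond bijectively to submodules of $M_\lambda$, yielding the claimed isomorphism. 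The main obstacle is the compatibility $\EuScript H(IN) = I\,\EuScript H(N)$ between two-sided ideals and projective functors, together with the precise verification that regularity of $\lambda$ (and not merely dominance) is exactly what makes the Bernstein--Gelfand tensor functor essentially surjective onto $\EuScript O_\lambda$; both facts reside in~\cite{BeG}, but their careful deployment is where the real content of the argument lies.
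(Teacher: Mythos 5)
The paper cites this as a result of Joseph without giving its own proof, so I can only assess your argument on its merits. Your overall framework --- set up the adjoint pair $\alpha,\beta$ with the immediate inclusions $\alpha\beta\subseteq\mathrm{Id}$ and $\mathrm{Id}\subseteq\beta\alpha$, then invoke the Bernstein--Gelfand bimodule equivalence --- is the right one, and the surjectivity argument via~\cite{BeG} is essentially correct and is in fact the standard route. However, your injectivity argument has a genuine gap. The compatibility ``$\EuScript H(IN)=I\,\EuScript H(N)$'' is false: a two-sided ideal $I$ of U$(\frak g)$ is not a Hopf ideal, so it does not pass through the coproduct in the way you assert. Concretely, take $\frak g=\frak{sl}_2$, $E$ the $3$-dimensional adjoint module, $N=\mathbb C$ the trivial module, and $I=\mathrm{Ann}\,N$ the augmentation ideal. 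Then $E\otimes IN=0$ while $I(E\otimes N)=IE=E\neq 0$, so even the full functor $E\otimes-$ (of which every projective functor is a direct summand) violates the claimed identity. The identification $\EuScript H_\lambda^{w\lambda}(IM_\lambda)=I\cdot\mathrm P_{w\lambda}$, and the faithfulness argument built on it, therefore do not go through.

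The fix is already in your hands. The Bernstein--Gelfand functor $B\mapsto B\otimes_{\mathrm U(\frak g)}M_\lambda$ that you invoke for surjectivity is exact and fully faithful on the relevant category of Harish--Chandra bimodules for \emph{any} dominant $\lambda$ (regularity is only needed for essential surjectivity onto $\EuScript O_\lambda$). An exact fully faithful functor $F$ is automatically injective on subobject lattices: if $A,B$ are subobjects of $X$ with $F(A)\subseteq F(B)$ inside $F(X)$, then $F(A\hookrightarrow X\twoheadrightarrow X/B)=0$, and faithfulness forces $A\subseteq B$. Applying this with $X=\mathrm U(\frak g)/\ker\chi_\lambda$, whose sub-bimodules are exactly the $I/\ker\chi_\lambda$ with $\ker\chi_\lambda\subseteq I$, and noting $F(I/\ker\chi_\lambda)=IM_\lambda$, yields the injectivity of $\alpha$ directly --- and this is much closer to Joseph's actual argument. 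The true intertwining statement behind the translation philosophy is that projective functors on $\EuScript O$ correspond to projective functors on Harish--Chandra bimodules under $F$, not that $\EuScript H$ commutes with multiplication by two-sided ideals on arbitrary modules.
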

\begin{remark}The two-sided ideals of U$(\frak g)$ have been studied extensively in the last 30 years. This section adapts some known results to the setup of the dissertation. We thank Anthony Joseph and Anna Melnikov for useful comments and references.\end{remark}
\subsection{The case $\frak g=\frak{sl}(W)$} Let $\EuScript Q\subset\frak{sl}(W)^*$ be the non-trivial nilpotent orbit of minimal dimension. This nilpotent orbit consists of matrices of rank 1. We represent such matrices as $l\otimes v$, for some $v\in W$ and $l\in W^*$ with $l(v)=0$.
\begin{definition} Let $I$ be a primitive ideal of U$(\frak{sl}(W))$. We call $I$ a {\it Joseph ideal} whenever V$(I)=\bar{\EuScript Q}$.\end{definition}
\begin{lemma} Let $I$ be a Joseph ideal and let $\tilde L$ be a finitely generated $(\frak{sl}(W), \frak b^\frak{sl}_W)$-module such that $I\subset$Ann$\tilde L$. Then $\tilde L$ is an $(\frak{sl}(W), \frak h^{\frak{sl}}_W)$-bounded module.\end{lemma}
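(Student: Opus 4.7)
The plan is to verify the criterion of Proposition~\ref{Pbm} applied with $\mathfrak{k}=\mathfrak{h}^{\mathfrak{sl}}_W$, namely that every irreducible component of $V(\tilde L)$ is $H$-spherical, where $H\subset\mathrm{SL}(W)$ is the maximal torus with Lie algebra $\mathfrak{h}^{\mathfrak{sl}}_W$. Treating $\tilde L$ as a $(\mathfrak{sl}(W),\mathfrak{h}^{\mathfrak{sl}}_W)$-module is legitimate because $\mathfrak{h}^{\mathfrak{sl}}_W\subset\mathfrak{b}^{\mathfrak{sl}}_W$ acts locally finitely, and finite generation plus $\mathfrak{b}^{\mathfrak{sl}}_W$-local finiteness forces finite-dimensional generalized weight spaces, so the proposition is applicable.

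First I would pin down where $V(\tilde L)$ sits. Because $\mathfrak{b}^{\mathfrak{sl}}_W$ acts locally finitely on $\tilde L$, Fernando's theorem gives $\mathfrak{b}^{\mathfrak{sl}}_W\subset\mathfrak{g}[\tilde L]\subset V(\tilde L)^\perp$, hence $V(\tilde L)\subset(\mathfrak{b}^{\mathfrak{sl}}_W)^\perp=:\mathfrak{n}$, the nilradical opposite to $\mathfrak{b}^{\mathfrak{sl}}_W$. The inclusion $I\subset\operatorname{Ann}\tilde L$ yields $GV(\tilde L)\subset V(I)=\overline{\EuScript Q}$, so $V(\tilde L)\subset\overline{\EuScript Q}\cap\mathfrak{n}$. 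Writing a rank-one traceless matrix as $v\otimes l^*$ with $l(v)=0$, strict upper-triangularity $v_i l_j=0$ for $i\geq j$ is equivalent to $\max\mathrm{supp}(v)<\min\mathrm{supp}(l)$, i.e.\ to the existence of $k\in\{1,\dots,n_W-1\}$ with $\mathrm{supp}(v)\subseteq\{1,\dots,k\}$ and $\mathrm{supp}(l)\subseteq\{k+1,\dots,n_W\}$. Thus $\overline{\EuScript Q}\cap\mathfrak{n}=\{0\}\cup C_1\cup\dots\cup C_{n_W-1}$, where each $C_k$ is the irreducible piece consisting of rank-one matrices supported in the block ``rows $\leq k$, columns $>k$'', of dimension $k+(n_W-k)-1=n_W-1=\tfrac12\dim\EuScript Q$.

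Next I would identify the components of $V(\tilde L)$. Since $\ker\chi\subset I\subset\operatorname{Ann}\tilde L$ for the central character $\chi$ of the Joseph ideal, $\tilde L$ affords $\chi$, so Gabber's Theorem~\ref{Gab} applies componentwise: every irreducible component $\tilde V$ of $V(\tilde L)$ satisfies $\dim\tilde V\geq\tfrac12\dim G\tilde V$. If $GV(\tilde L)=\{0\}$, then $\tilde L$ is finite-dimensional and trivially bounded; otherwise $GV(\tilde L)=\overline{\EuScript Q}$, forcing $\dim\tilde V\geq n_W-1$, and since $\tilde V$ is contained in some $C_k$ of the same dimension, $\tilde V=C_k$. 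Finally, a direct computation of the $H$-stabilizer at a generic $v\otimes l^*\in C_k$ (with both supports full) shows the stabilizer is the cyclic group $\mu_{n_W}\subset\mathrm{SL}(W)$ of scalar matrices in $\mathrm{SL}(W)$, so $H$ acts on $C_k$ with open orbit of dimension $n_W-1=\dim C_k$; in particular each $C_k$ is $H$-spherical. Proposition~\ref{Pbm} then delivers $\mathfrak{h}^{\mathfrak{sl}}_W$-boundedness of $\tilde L$.

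The main (minor) obstacle is the passage from the simple-module setting of Theorem~\ref{Tcob} to a merely finitely generated $\tilde L$: one has to separately verify that (i) $\tilde L$ affords a central character (it does, via $\ker\chi\subset I$), and (ii) the coisotropy estimate of Theorem~\ref{Gab} applies to each irreducible component even though $\tilde L$ is not assumed simple. Both points hold under our hypotheses, so no geometric input beyond the picture already developed for the minimal nilpotent orbit of $\mathfrak{sl}(W)^*$ is required.
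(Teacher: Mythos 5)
Your proof is correct and follows the same overall strategy as the paper's: show $V(\tilde L)\subset\overline{\EuScript Q}\cap(\frak b^{\frak{sl}}_W)^\bot$, decompose the latter into the $(n_W-1)$-dimensional pieces $C_k$ (the paper's $\EuScript Q_i$), observe that each is spherical for the maximal torus $H$, and invoke Proposition~\ref{Pbm}. The one place you take a longer path than necessary is the identification of the individual irreducible components of $V(\tilde L)$ via the central character and Gabber's Theorem~\ref{Gab}. This step can be skipped entirely: every irreducible component of $V(\tilde L)$ is an $H$-stable closed irreducible subvariety of some $C_k$, and for any such subvariety $Y$, the algebra $\mathbb C[Y]$ is a quotient of the bounded $\frak h$-module $\mathbb C[C_k]$; by the Vinberg--Kimelfeld criterion (the lemma in Subsection~\ref{SSph}) this forces $Y$ to be $H$-spherical without requiring $Y$ to be all of $C_k$. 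Thus Proposition~\ref{Pbm} applies directly once the inclusion $V(\tilde L)\subset\bigcup_k C_k$ and the sphericity of each $C_k$ are in hand, which is exactly what the paper does. Your extra verifications are sound but not needed for the argument to go through.
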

\begin{proof} As $\tilde L$ is an $(\frak{sl}(W), \frak b^{\frak{sl}}_W)$-module, V($\tilde L)\subset \bar{\EuScript Q}\cap(\frak b^{\frak{sl}}_W)^\bot$. We have\begin{center}
$\bar{\EuScript Q}\cap(\frak b^{\frak{sl}}_W)^\bot=\{x\in\frak{sl}(W)^*\mid x=l\otimes v$ for some $v\in W$ and $l\in W^*$ such that $l(bv)=0$ for all $b\in\frak b^{\frak{sl}}_W\}$.\end{center}
The variety $\bar{\EuScript Q}\cap(\frak b^{\frak{sl}}_W)^\bot$ has irreducible components $\{\EuScript Q_i\}_{i\le n_W-1}$, where\begin{center}$\EuScript Q_i:=\{x\in\frak{sl}(W)^*\mid x=l\otimes v$ for some $v\in$span$\langle$e$_1,..., $e$_i\rangle$ and $l\in$span$\langle$e$_{i+1}^*, ..., $e$_{n_W}^*\rangle$\}.\end{center}
The dimension of $\EuScript Q_i$ equals $n_W-1$ for any $i\le n_W-1$. Let H$^\frak{sl}_W$ be the connected subgroup of SL$(W)$ with Lie algebra $\frak h^\frak{sl}_W$. The varieties $\EuScript Q_i$ are H$^\frak{sl}_W$-stable and are H$^\frak{sl}_W$-spherical for all $i\le n_W-1$. Therefore $\tilde L$ is a bounded $(\frak{sl}(W), \frak h^\frak{sl}_W$)-module by Proposition~\ref{Pbm}.\end{proof}
\begin{corollary}\label{Cqtoso}a) Let $I$ be a Joseph ideal and let $\lambda\in{\frak h^\frak{sl}_W}^*$ be a weight such that $I=$Ann~L$_\lambda$. Then L$_\lambda$ is a bounded $\frak h^\frak{sl}_W$-module. In particular, $\bar\lambda$ is a semi-decreasing tuple.\\b) If $\bar\lambda$ is a semi-decreasing $n_W$-tuple then I$(\lambda)$ is a Joseph ideal.\end{corollary}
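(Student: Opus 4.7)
The plan is to prove the two parts separately, with (a) following immediately from the preceding lemma combined with Mathieu's classification, and (b) being a dimension count using the Penkov--Serganova bound together with Joseph's irreducibility theorem for the associated variety of a primitive ideal.

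For part (a), I would first observe that the simple highest weight module L$_\lambda$ is by construction a $(\frak{sl}(W), \frak b^\frak{sl}_W)$-module, since any highest weight vector generates a $\frak b^\frak{sl}_W$-locally-finite module. Because $I = $ Ann L$_\lambda$ is by hypothesis a Joseph ideal, the preceding lemma applies verbatim and tells us that L$_\lambda$ is $\frak h^\frak{sl}_W$-bounded. Invoking Theorem~\ref{Tsmo} then forces $\bar\lambda$ to be semi-decreasing, which completes (a).

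For part (b), I would start from Theorem~\ref{Tsmo} in the opposite direction: a semi-decreasing tuple $\bar\lambda$ already guarantees that L$_\lambda$ is a bounded $(\frak{sl}(W), \frak h^\frak{sl}_W)$-module. Since $\bar\lambda$ is by definition \emph{not} decreasing, L$_\lambda$ is infinite-dimensional, so by Joseph's theorem the associated variety GV(L$_\lambda) = \overline{Gu}$ is the closure of a single nonzero nilpotent $G$-orbit. I would then bound the dimension of GV(L$_\lambda$) from above by Corollary~\ref{Cps}: because $\frak k = \frak h^\frak{sl}_W$ is abelian one has $\frak b_\frak k = \frak h^\frak{sl}_W$ of dimension $n_W-1$, giving
\[
\dim\mathrm{GV}(\mathrm L_\lambda)\;\le\;2\dim\frak b_\frak k\;=\;2(n_W-1).
\]
On the other hand, by Example~\ref{Pn} the minimal nilpotent orbit $\EuScript Q$ of dimension $2(n_W-1)$ lies in the closure of every nonzero nilpotent orbit of $\frak{sl}(W)$, so $\dim\overline{Gu}\ge 2(n_W-1)$. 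The two inequalities force $\overline{Gu} = \bar{\EuScript Q}$, so V(I$(\lambda)) = \bar{\EuScript Q}$ and I$(\lambda)$ is a Joseph ideal.

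There is no serious obstacle: all the heavy lifting (Mathieu's theorem, the preceding lemma on boundedness of modules annihilated by Joseph ideals, Joseph's irreducibility of GV for primitive ideals, the Penkov--Serganova dimension inequality, and the minimality of $\EuScript Q$) has already been established in the excerpt. The only subtlety is to remember that ``semi-decreasing'' excludes ``decreasing,'' so the module in (b) is automatically infinite-dimensional; without this the lower bound on $\dim\mathrm{GV}(\mathrm L_\lambda)$ would fail and the dimension squeeze would not close.
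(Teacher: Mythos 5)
Your proposal is correct and follows essentially the same route as the paper: part (a) is exactly the application of the preceding lemma plus Theorem~\ref{Tsmo}, and for part (b) the paper also squeezes $\dim\mathrm{GV}(\mathrm L_\lambda)$ via Bernstein's inequality and the sphericity of $\mathrm V(\mathrm L_\lambda)$ to conclude $\mathrm{GV}(\mathrm L_\lambda)=\bar{\EuScript Q}$. The only cosmetic difference is that you invoke Corollary~\ref{Cps} (which repackages Theorem~\ref{Tber} plus the sphericity bound) and make explicit the observation that semi-decreasing implies not decreasing, hence $\mathrm L_\lambda$ is infinite-dimensional — a point the paper leaves implicit when it says the only orbit satisfying the dimension bound is $\EuScript Q$.
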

\begin{proof}Part a) is trivial. We have \begin{center}dim~GV$(M)\le$2dim~V$(M)$\end{center} by Theorem~\ref{Tber}. As $\bar\lambda$ is semi-decreasing, L$_\lambda$ is bounded and therefore~V(L$_\lambda)$ is H$_W^\frak{sl}$-spherical. Hence dim~GV$(M)\le 2(n_W-1)$. The only nilpotent orbit which satisfies this inequality is $\EuScript Q$.\end{proof}
\begin{corollary}[{~\cite[Table~3]{Jo3}}]\label{CJid} Let $I\subset$U$(\frak{sl}(W))$ be a two-sided ideal. Then $I$ is a Joseph ideal if and only if one of the following conditions holds:\\a) there exists a semi-integral semi-decreasing tuple $\bar\lambda$ such that $I=$I$(\lambda)$;\\b) there exists a singular integral semi-decreasing tuple $\bar\lambda$ such that $I$=I$(\lambda)$;\\c) there exists a regular integral semi-decreasing tuple $\bar\lambda$ and a number $k\in\{1,..., n_W-1\}$ such that $I$=I$($s$_k\lambda$).\end{corollary}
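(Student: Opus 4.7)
The plan is to combine Corollary~\ref{Cqtoso}, the trichotomy from Section~\ref{Snot} (every semi-decreasing tuple is regular integral, singular integral, or semi-integral), and the description in Section~\ref{SM} of coherent families of bounded $(\frak{sl}(W), \frak h_W^\frak{sl})$-modules affording a fixed central character.

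For the forward direction, suppose $I$ is a Joseph ideal. By Duflo's theorem $I=$~I$(\mu)=$~Ann~L$_\mu$ for some tuple $\bar\mu$, and Corollary~\ref{Cqtoso}~a) forces $\bar\mu$ to be semi-decreasing. Applying the trichotomy, I would split into three sub-cases: if $\bar\mu$ is semi-integral I land in case (a); if $\bar\mu$ is singular integral I land in case (b); and if $\bar\mu$ is regular integral I invoke the fact recalled in Section~\ref{SM} that for a regular integral central character there are exactly $n_W-1$ coherent families of bounded $(\frak{sl}(W), \frak h_W^\frak{sl})$-modules affording $\chi_\mu$, each family containing L$_{\mathrm s_k\mathrm{ord}(\mu)}$ for some $k\in\{1,\ldots,n_W-1\}$. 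Consequently I$(\mu)=$~I(s$_k$ord$(\mu))$, which places $I$ in the form prescribed by (c).

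For the reverse direction, cases (a) and (b) are immediate from Corollary~\ref{Cqtoso}~b): any semi-decreasing tuple yields a Joseph ideal. In case (c), the $n_W-1$ tuples s$_k$ord$(\lambda)$ (for $k=1,\ldots,n_W-1$), obtained by adjacent transpositions of the decreasing rearrangement ord$(\lambda)$, are each themselves semi-decreasing, so Corollary~\ref{Cqtoso}~b) gives that every I(s$_k$ord$(\lambda))$ is a Joseph ideal. One then uses the Mathieu coherent-family identification recalled in Section~\ref{SM} to match the ideal I(s$_k\lambda)$ appearing in (c) with some I(s$_{k'}$ord$(\lambda))$ from this list.

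The main subtlety, and where I expect the real bookkeeping, is the verification in the regular integral case that the $n_W-1$ ideals I(s$_k$ord$(\lambda))$ are pairwise distinct and exhaust the Joseph ideals with central character $\chi_\lambda$. This rests on Mathieu's enumeration of coherent families~\cite{M} combined with the dimension bound supplied by Corollary~\ref{Cqtoso}, and is where one must confirm that (c) neither over- nor under-counts the regular integral Joseph ideals at a given infinitesimal character.
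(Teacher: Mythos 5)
Your proof is correct and takes essentially the same approach as the paper's one-line proof, which reads: ``Follows from Corollary~\ref{Cqtoso} and the discussion about coherent families in Subsection~\ref{SM}.'' You have simply unpacked that terse argument: Duflo's theorem plus Corollary~\ref{Cqtoso}~a) for the forward direction, Corollary~\ref{Cqtoso}~b) for (a)--(b) in the reverse direction, and Mathieu's coherent-family enumeration (with each family containing L$_{\mathrm{s}_k\mathrm{ord}(\lambda)}$) for case~(c) in both directions, which is exactly what the paper's citations are intended to supply.
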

\begin{proof}Follows from Corollary~\ref{Cqtoso} and the discussion about coherent families in Subsection~\ref{SM}.\end{proof}
\begin{lemma} Let $k\in\{1, 2,..., n_W-1\}$ and $\lambda$ be a regular integral dominant weight. Then Ann~L$_{\mathrm s_k\lambda}$ is a submaximal ideal of U$(\frak{sl}(W))$, i.e. for any two ideals $I_1$, $I_2$ such that Ann~L$_{\mathrm s_k\lambda}\subsetneq I_1, I_2$ we have $I_1=I_2$.\end{lemma}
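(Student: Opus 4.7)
The plan is to combine Joseph's lattice isomorphism (stated at the start of this section) with Corollary~\ref{CJid} and an associated-variety argument, thereby reducing the statement to a covering relation in the submodule lattice of the Verma module $\mathrm M_\lambda$.

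First, since $\mathrm L_{s_k\lambda}$ affords the central character $\chi_{s_k\lambda}=\chi_\lambda$, the ideal $\mathrm{Ann}\,\mathrm L_{s_k\lambda}$ contains $\mathrm{Ker}\,\chi_\lambda$. Because $\lambda$ is regular, Joseph's theorem then furnishes an order-isomorphism $\alpha\colon I\mapsto I\cdot\mathrm M_\lambda$ between two-sided ideals $I\subset\mathrm U(\mathfrak{sl}(W))$ containing $\mathrm{Ker}\,\chi_\lambda$ and submodules of $\mathrm M_\lambda$. The desired statement is thus equivalent to the assertion that $\alpha(\mathrm{Ann}\,\mathrm L_{s_k\lambda})$ has a unique proper submodule of $\mathrm M_\lambda$ covering it.

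Next, Corollary~\ref{CJid} gives $\mathrm V(\mathrm{Ann}\,\mathrm L_{s_k\lambda})=\overline{\EuScript Q}$, the closure of the minimal nilpotent $\mathrm{SL}(W)$-orbit. For any proper ideal $J\supsetneq\mathrm{Ann}\,\mathrm L_{s_k\lambda}$, the quotient $\mathrm U(\mathfrak{sl}(W))/J$ is a proper quotient of the prime Noetherian algebra $A:=\mathrm U(\mathfrak{sl}(W))/\mathrm{Ann}\,\mathrm L_{s_k\lambda}$, and a standard Gelfand--Kirillov / Bernstein dimension argument, using the strictness of the associated-graded inclusion $\mathrm{gr}\,\mathrm{Ann}\,\mathrm L_{s_k\lambda}\subsetneq\mathrm{gr}\,J$ and the irreducibility of $\overline{\EuScript Q}$ as a $G$-variety, forces $\dim\mathrm V(J)<\dim\overline{\EuScript Q}$. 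Since the only proper $G$-stable closed subvariety of $\overline{\EuScript Q}$ is $\{0\}$, one obtains $\mathrm V(J)=\{0\}$, so $\mathrm U(\mathfrak{sl}(W))/J$ is finite-dimensional. Because $\lambda$ is dominant integral regular, the only finite-dimensional simple $\mathfrak{sl}(W)$-module with central character $\chi_\lambda$ is $\mathrm L_\lambda$ (Weyl), and $\mathrm U(\mathfrak{sl}(W))/\mathrm{Ann}\,\mathrm L_\lambda\cong\mathrm{End}_\mathbb{C}(\mathrm L_\lambda)$ (Burnside) is simple, so $\mathrm{Ann}\,\mathrm L_\lambda$ is maximal. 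Combining these observations, every proper $J$ strictly containing $\mathrm{Ann}\,\mathrm L_{s_k\lambda}$ is contained in $\mathrm{Ann}\,\mathrm L_\lambda$.

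It remains to rule out any $J$ with $\mathrm{Ann}\,\mathrm L_{s_k\lambda}\subsetneq J\subsetneq\mathrm{Ann}\,\mathrm L_\lambda$. Under $\alpha$, this is the statement that the interval $[\alpha(\mathrm{Ann}\,\mathrm L_{s_k\lambda}),\alpha(\mathrm{Ann}\,\mathrm L_\lambda)]$ in the submodule lattice of $\mathrm M_\lambda$ is trivial. My plan here is to identify $\alpha(\mathrm{Ann}\,\mathrm L_\lambda)$ as the unique maximal submodule of $\mathrm M_\lambda$ (since $\mathrm{Ann}\,\mathrm L_\lambda$ is the unique maximal proper ideal containing $\mathrm{Ker}\,\chi_\lambda$ and $\alpha$ is a lattice isomorphism) and to locate $\alpha(\mathrm{Ann}\,\mathrm L_{s_k\lambda})$ using the BGG embedding $\mathrm M_{s_k\lambda}\hookrightarrow\mathrm M_\lambda$, which is available because $s_k\lambda<\lambda$ in the order of Subsection~\ref{SSpro}. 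The simplicity of the subquotient $\alpha(\mathrm{Ann}\,\mathrm L_\lambda)/\alpha(\mathrm{Ann}\,\mathrm L_{s_k\lambda})$ --- which I expect to be isomorphic to $\mathrm L_{s_k\lambda}$ --- then follows from the BGG/Kazhdan--Lusztig description of the Jordan--H\"older composition factors appearing in $\mathrm M_\lambda$. \textbf{The main obstacle} is precisely this final covering relation; the earlier paragraphs show that everything else reduces to this single local computation in the submodule lattice of $\mathrm M_\lambda$.
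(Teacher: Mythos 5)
Your proposal is incomplete, and you acknowledge this: the ``covering relation'' you flag in the final paragraph --- that no ideal lies strictly between $\mathrm{Ann}\,\mathrm L_{s_k\lambda}$ and $\mathrm{Ann}\,\mathrm L_\lambda$ --- is the entire content of the lemma, and you leave it open. Your first two paragraphs therefore earn less than one would hope: the Gelfand--Kirillov dimension argument shows that any proper ideal $J\supsetneq\mathrm{Ann}\,\mathrm L_{s_k\lambda}$ has finite codimension and hence is contained in $\mathrm{Ann}\,\mathrm L_\lambda$, but this says nothing about whether the interval between the two ideals is trivial, which is precisely what submaximality requires.

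The paper closes exactly that gap, by an argument which makes your first half redundant and which runs in the opposite logical direction from what you sketch. Rather than attempting to identify $\alpha(\mathrm{Ann}\,\mathrm L_{s_k\lambda})$ from the outside and then check it is covered uniquely --- and note that your suggestion via the BGG embedding points at $\mathrm M_{s_k\lambda}\subset\mathrm M_\lambda$, which is \emph{not} the image of $\mathrm{Ann}\,\mathrm L_{s_k\lambda}$ under Joseph's isomorphism --- the paper first produces a submodule for which unique covering is automatic and only then identifies the associated ideal with $\mathrm{Ann}\,\mathrm L_{s_k\lambda}$. Concretely: using the multiplicity-one decomposition $\mathrm{rad}^1\mathrm M_\lambda/\mathrm{rad}^2\mathrm M_\lambda\cong\bigoplus_j\mathrm L_{s_j\lambda}$, one takes the unique submodule $\mathrm{rad}^{1+\varepsilon_k}\mathrm M_\lambda\supseteq\mathrm{rad}^2\mathrm M_\lambda$ with $\mathrm{rad}^1\mathrm M_\lambda/\mathrm{rad}^{1+\varepsilon_k}\mathrm M_\lambda\cong\mathrm L_{s_k\lambda}$; because $\mathrm{rad}^1\mathrm M_\lambda$ is the unique maximal submodule of $\mathrm M_\lambda$ and $\mathrm L_{s_k\lambda}$ is simple, $\mathrm{rad}^{1+\varepsilon_k}\mathrm M_\lambda$ has $\mathrm{rad}^1\mathrm M_\lambda$ as its only cover, so under Joseph's lattice isomorphism the corresponding two-sided ideal $I$ is submaximal with $\mathrm{Ann}\,\mathrm L_\lambda$ above it. Since $\mathrm L_{s_k\lambda}$ is a subquotient of $\mathrm M_\lambda/\mathrm{rad}^{1+\varepsilon_k}\mathrm M_\lambda$ one has $I\subseteq\mathrm{Ann}\,\mathrm L_{s_k\lambda}$, and the two cannot differ because $\mathrm{Ann}\,\mathrm L_{s_k\lambda}$ has infinite codimension while the unique ideal above $I$ has finite codimension. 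The multiplicity-one structure of the second radical layer is the BGG/Kazhdan--Lusztig input you allude to; it must be invoked and built around, not merely anticipated, for the lemma to follow.
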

\begin{proof} The Verma module M$_\lambda$ has a unique maximal submodule rad$^1$M$_\lambda$ such that M$_\lambda/$rad$^1$M$_\lambda$ is a simple module L$_\lambda$. Let rad$^2$M$_\lambda$ be the minimal submodule of rad$^1$M$_\lambda$ such that rad$^1$M$_\lambda/$rad$^2$M$_\lambda$ is a semisimple $\frak{sl}(W)$-module. The module M$_\lambda/$rad$^1$M$_\lambda$ is finite-dimensional and rad$^1$M$_\lambda/$rad$^2$M$_\lambda$ has a unique submodule isomorphic to L$_{\mathrm s_k\lambda}$. Let rad$^{1+\varepsilon_k}$M$_\lambda$ be a the submodule of rad$^1$M$_\lambda$ such that rad$^1$M$_\lambda/$rad$^{1+\varepsilon_k}$M$_\lambda=$L$_{\mathrm s_k\lambda}$. Let $I$ be the corresponding to rad$^{1+\varepsilon_k}$M$_\lambda$ two-sided ideal. Then $I$ is submaximal and $I\subset$Ann~L$_{\mathrm s_k\lambda}$. As the unique ideal which is larger then $I$ is of finite codimension, we have Ann~L$_{\mathrm s_k\lambda}=I$. Therefore the ideal Ann~L$_{\mathrm s_k\lambda}$ is submaximal.\end{proof}
Let $\bar\lambda$ be an $n_W$-tuple. We can consider the algebra U$(\frak{sl}(W))/$I$(\lambda)$ as an $(\frak{sl}(W)_l\oplus\frak{sl}(W)_r)$-module, where both $\frak{sl}(W)_l$ and $\frak{sl}(W)_r$ are isomorphic to $\frak{sl}(W)$ and the action is given by \begin{center} $(g_1, g_2)m=g_1m-mg_2$\end{center}for $g_1, g_2\in\frak{sl}(W)$ and $m\in$U$(\frak{sl}(W))/$I$(\lambda)$.
Fix $i\in\{1,..., n_W-1\}$. The action of \begin{center}$\frak{sl}(W)_d:=\{(g, g)\}_{g\in\frak{sl}(W)}\subset\frak{sl}(W)_l\oplus\frak{sl}(W)_r$\end{center} on U$(\frak{sl}(W))/$I(s$_i\rho)$ is locally finite. Therefore U$(\frak{sl}(W))/$I(s$_i\rho)$ is an $(\frak{sl}(W)_l\oplus\frak{sl}(W)_r, \frak{sl}(W)_d)$-module. This module affords the central character ($\chi_\rho, \chi_\rho)$. The category of $(\frak{sl}(W)_l\oplus\frak{sl}(W)_r, \frak{sl}(W)_d)$-modules which afford the central character $(\chi_\rho, \chi_\rho)$ is equivalent to the category of $(\frak{sl}(W), \frak b^\frak{sl}_W)$-modules which afford the central character $\chi_\rho$. The equivalence is given by the functor\begin{center}
H$_{\mathrm M_\rho}: \frak{sl}(W)_l\oplus\frak{sl}(W)_r-$mod$\to \frak{sl}(W)-$mod,\\$ M\mapsto M\otimes_{\mathrm U(\frak{sl}(W)_r)}$M$_\rho$,\end{center}
see~\cite[Prop. 5.9]{BeG}. The action of the $\frak{sl}(W)_l$-projective functors on the categories of $(\frak{sl}(W)_l\oplus\frak{sl}(W)_r, \frak{sl}(W)_d)$-modules and $(\frak{sl}(W), \frak b^\frak{sl}_W)$-modules commutes with the functor H$_{\mathrm M_\rho}$.

Fix $i, j\in\{1,..., n_W-1\}$. Let $P_j$ be the maximal parabolic subgroup of SL$(V)$ for which the space span$\langle$e$_r\rangle_{r\le j}$ is stable and $\frak p_j$ be the Lie algebra of $P_j$. We have \begin{center}F$_W\EuScript H_\rho^\rho=\Sigma_{i\le n_W}\EuScript H_\rho^{\rho+\varepsilon_i}$\end{center}and\begin{center}$\EuScript H_\rho^{\mathrm s_i\rho}=\EuScript H^\rho_{\rho+\varepsilon_{i+1}}\EuScript H^{\rho+\varepsilon_{i+1}}_\rho=\EuScript H_{\rho+\varepsilon_{i+1}}^{\mathrm s_i\rho}\EuScript H_\rho^{\rho+\varepsilon_{i+1}}$\end{center} for all $i\le n_W-1$.
\begin{lemma} Assume $i\ne j$. Then \begin{center}$\EuScript H_\rho^{\rho+\varepsilon_{i+1}}($M$_\rho/$rad$^{1+\varepsilon_j}$M$_\rho)=0$ and $\EuScript H_\rho^{\mathrm s_i\rho}($M$_\rho/$rad$^{1+\varepsilon_j}$M$_\rho)=0$.\end{center}\end{lemma}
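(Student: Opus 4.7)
The plan is to apply the exact projective functor $\EuScript H := \EuScript H_\rho^{\rho+\varepsilon_{i+1}}$ to the short exact sequence
\[
0 \to L_{s_j\rho} \to M_\rho/\mathrm{rad}^{1+\varepsilon_j}M_\rho \to L_\rho \to 0,
\]
which comes at once from the identities $M_\rho/\mathrm{rad}^1 M_\rho = L_\rho$ and $\mathrm{rad}^1 M_\rho/\mathrm{rad}^{1+\varepsilon_j}M_\rho = L_{s_j\rho}$ recalled in the excerpt. Since projective functors are exact, it suffices to establish separately that $\EuScript H L_\rho = 0$ and $\EuScript H L_{s_j\rho} = 0$ in order to conclude the first vanishing.

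For these two vanishings I would invoke the standard ``translation to the wall'' behaviour from Bernstein--Gelfand~\cite{BeG}. The target weight $\mu := \rho+\varepsilon_{i+1}$ has stabilizer $W^{\frak g}_\mu = \{e,s_i\}$, and the rule governing $\EuScript H_\rho^\mu$ on simple highest weight modules is that $\EuScript H L_{w\rho}\ne 0$ precisely when $w$ is the longest element of its coset $w\{e,s_i\}$. For $w=e$ the coset $\{e,s_i\}$ has longest element $s_i\ne e$, so $\EuScript H L_\rho = 0$. For $w=s_j$ with $j\ne i$, the coset is $\{s_j,s_js_i\}$ whose longest element is $s_js_i$ (of length $2$, since $s_i$ and $s_j$ are then distinct simple reflections in $S_{n_W}$), which is not $s_j$; hence $\EuScript H L_{s_j\rho} = 0$. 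A quick $\frak{sl}_2$ sanity check confirms this is the correct form of the criterion: translation to the singular block kills the trivial module $L_\rho$ but sends $L_{s\rho}$ to the unique simple of the singular block, consistently with the identification $\EuScript H_\phi^\psi M_\phi = P_\psi$ on the projective Verma.

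For the second claim, apply the factorization
\[
\EuScript H_\rho^{s_i\rho} \;=\; \EuScript H^\rho_{\rho+\varepsilon_{i+1}} \circ \EuScript H^{\rho+\varepsilon_{i+1}}_\rho
\]
recorded just before the lemma. The first claim shows that the inner functor sends $M_\rho/\mathrm{rad}^{1+\varepsilon_j}M_\rho$ to zero, and applying $\EuScript H^\rho_{\rho+\varepsilon_{i+1}}$ to zero gives zero.

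The main obstacle is phrasing the wall-translation criterion in the paper's exact convention: depending on normalization one encounters it as ``longest'' versus ``shortest'' coset representatives, and the version required here (the one where $L_\rho$ is the object that dies) has to be squared with the identification $\EuScript H^\psi_\phi M_\phi = P_\psi$ for correctly ordered pairs $(\phi,\psi)$. Once this compatibility is pinned down -- most transparently via the explicit $\frak{sl}_2$ calculation indicated above -- the rest of the argument is purely combinatorial in $S_{n_W}$.
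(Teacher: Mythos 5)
Your proof is correct, but for the first vanishing it takes a genuinely different route from the paper. You filter $M_\rho/\mathrm{rad}^{1+\varepsilon_j}M_\rho$ by its composition series $0\to L_{s_j\rho}\to M_\rho/\mathrm{rad}^{1+\varepsilon_j}M_\rho\to L_\rho\to0$, and then kill each simple constituent by the Jantzen translation-to-wall criterion (with the stabilizer $W^{\frak g}_{\rho+\varepsilon_{i+1}}=\{e,s_i\}$ correctly identified, and the combinatorics $\ell(e)<\ell(s_i)$, $\ell(s_j)<\ell(s_js_i)$ for $i\neq j$ correctly read off). The paper instead argues structurally: it observes that $M_\rho/\mathrm{rad}^{1+\varepsilon_j}M_\rho$ lies in the parabolic category of $(\frak{sl}(W),\frak p_j)$-modules, that projective functors preserve this subcategory, and that any nonzero quotient of $\EuScript H_\rho^{\rho+\varepsilon_{i+1}}M_\rho=M_{\rho+\varepsilon_{i+1}}$ would force its unique simple quotient $L_{\rho+\varepsilon_{i+1}}$ to be $\frak p_j$-locally finite, which fails when $i\neq j$. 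Your approach invokes the translation principle as a black box and reduces everything to Coxeter combinatorics on right cosets of $W_\mu$, which is probably the more standard textbook route; the paper's argument avoids that machinery entirely at the cost of handling the parabolic structure explicitly. Both versions deduce the second vanishing identically, via the factorization $\EuScript H_\rho^{s_i\rho}=\EuScript H^\rho_{\rho+\varepsilon_{i+1}}\circ\EuScript H_\rho^{\rho+\varepsilon_{i+1}}$.
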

\begin{proof} The module M$_\rho/$rad$^{1+\varepsilon_j}$M$_\rho$ is an $(\frak{sl}(W), \frak p_j)$-module. Therefore \begin{center}$\EuScript H_\rho^{\rho+\varepsilon_{i+1}}($M$_\rho/$rad$^{1+\varepsilon_j}$M$_\rho)$\end{center} is an $(\frak{sl}(W), \frak p_j)$-module. If $\EuScript H_\rho^{\rho+\varepsilon_{i+1}}($M$_\rho/$rad$^{1+\varepsilon_j}$M$_\rho)\ne 0$, then some simple quotient of $\EuScript H_\rho^{\rho+\varepsilon_{i+1}}$M$_\rho$ is an $(\frak{sl}(W), \frak p_j)$-module. If $i\ne j$, a unique simple quotient of \begin{center}$\EuScript H_\rho^{\rho+\varepsilon_{i+1}}$M$_\rho=$M$_{\rho+\varepsilon_{i+1}}$\end{center} is not an $(\frak{sl}(W), \frak p_j)$-module, i.e. the action of $\frak p_j$ on L$_{\rho+\varepsilon_{i+1}}$ is not locally finite. Therefore \begin{center}$\EuScript H_\rho^{\rho+\varepsilon_{i+1}}($M$_\rho/$rad$^{1+\varepsilon_j}$M$_\rho)=0$ and $\EuScript H_\rho^{\mathrm s_i\rho}($M$_\rho/$rad$^{1+\varepsilon_j}$M$_\rho)=0$.\end{center}\end{proof}
As [P$_{\mathrm s_i\rho}] =[$M$_\rho]+[$M$_{\mathrm s_i\rho}]$, the projective functor $\EuScript H_\rho^{\mathrm s_i\rho}$ is identified with s$_i$+1 under the identification of PF$\overline{\mathrm{unc}}(\chi_\rho)$ with $\mathbb Z[$S$_{n_W}]$. As the set $\{$s$_i+1\}_{i\le n_W-1}$ generates $\mathbb Z[$S$_{n_W}]$, the set $\{\EuScript H_\rho^{\mathrm s_i\rho}\}_{i\le n_W-1}$ generates the algebra PF$\overline{\mathrm{unc}}(\chi_\rho)$.
\begin{corollary}\label{Casn}Let $M$ be an $\frak{sl}(W)$-module. Suppose that $M$ is annihilated by a submaximal ideal I$($s$_j\rho)$ for
some $j\le n_W-1$. Then \begin{center}$\EuScript H_\rho^{\rho+\varepsilon_{i+1}}M=0$ for all $i\ne j$.\end{center}\end{corollary}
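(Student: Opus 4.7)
The plan is to reduce the corollary to the preceding lemma by means of a universal bimodule argument. Set $\tilde{M}_0:=\mathrm U(\frak{sl}(W))/$I$(\mathrm s_j\rho)$. Since $\chi_{\mathrm s_j\rho}=\chi_\rho$, the bimodule $\tilde{M}_0$ affords the central character $\chi_\rho$ on both sides. Any left $\frak{sl}(W)$-module $M$ annihilated by I$(\mathrm s_j\rho)$ is a $U/$I$(\mathrm s_j\rho)$-module and hence a quotient of a direct sum of copies of $\tilde{M}_0$. The projective functor $\EuScript H:=\EuScript H_\rho^{\rho+\varepsilon_{i+1}}$ is exact and commutes with arbitrary direct sums (as a direct summand of $W\otimes(\cdot)$), so it suffices to prove $\EuScript H\tilde{M}_0=0$.

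To this end, I tensor $\tilde{M}_0$ with M$_\rho$ on the right. By the Joseph embedding $\alpha$ recalled at the start of Section~\ref{SJId}, the submodule of M$_\rho$ associated to I$(\mathrm s_j\rho)$ is rad$^{1+\varepsilon_j}$M$_\rho$, i.e.\ I$(\mathrm s_j\rho)$M$_\rho=$rad$^{1+\varepsilon_j}$M$_\rho$. Hence
$$\tilde{M}_0\otimes_{\mathrm U(\frak{sl}(W))}\mathrm M_\rho=\mathrm M_\rho/\mathrm{rad}^{1+\varepsilon_j}\mathrm M_\rho.$$
The functor $\EuScript H$ acts on the left factor and commutes with right-tensoring by M$_\rho$ (since M$_\rho$ is unaffected by the finite-dimensional $E$ in the definition $\EuScript HN=\mathrm{Pr}_{\chi_{\rho+\varepsilon_{i+1}}}(E\otimes N)$). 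Using $i\ne j$, the preceding lemma therefore gives
$$(\EuScript H\tilde{M}_0)\otimes_{\mathrm U(\frak{sl}(W))}\mathrm M_\rho=\EuScript H(\mathrm M_\rho/\mathrm{rad}^{1+\varepsilon_j}\mathrm M_\rho)=0.$$

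Finally, I invoke the Bernstein-Gelfand equivalence recalled in Subsection~\ref{SM} in its general form: for the regular dominant integral weight $\rho$, the functor $(-)\otimes_{\mathrm U(\frak{sl}(W))}\mathrm M_\rho$ is fully faithful on the category of $\mathrm{ad}\,\frak{sl}(W)$-locally finite bimodules whose right action affords the generalized central character $\chi_\rho$. The bimodule $\EuScript H\tilde{M}_0$ lies in this category, since projective functors preserve both ad-local-finiteness and the right $\frak{sl}(W)$-action. Therefore $\EuScript H\tilde{M}_0=0$, and together with the reduction of the first paragraph this yields $\EuScript H M=0$ for every $M$ annihilated by I$(\mathrm s_j\rho)$. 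The main technical subtlety is precisely this last appeal to faithfulness when the left central character $\chi_{\rho+\varepsilon_{i+1}}$ is singular integral; this is a routine extension of the cited Bernstein-Gelfand result, as only the regularity of the right-hand weight $\rho$ is used in its proof.
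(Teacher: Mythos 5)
Your proof is correct and supplies the argument the paper leaves implicit: it assembles exactly the pieces set up in the discussion preceding the corollary, namely the reduction to the bimodule $\mathrm U(\frak{sl}(W))/\mathrm I(\mathrm s_j\rho)$, the identification with $\mathrm M_\rho/\mathrm{rad}^{1+\varepsilon_j}\mathrm M_\rho$ via Joseph's embedding $\alpha$, the commutation of the $\frak{sl}(W)_l$-projective functors with $\mathrm H_{\mathrm M_\rho}$, and faithfulness of $\mathrm H_{\mathrm M_\rho}$. The one point you rightly flag --- that $\EuScript H_\rho^{\rho+\varepsilon_{i+1}}\tilde M_0$ has left central character $\chi_{\rho+\varepsilon_{i+1}}$, landing outside the $(\chi_\rho,\chi_\rho)$ case the paper states explicitly --- can also be sidestepped without the extended Bernstein--Gelfand statement by first deducing $\EuScript H_\rho^{\mathrm s_i\rho}M=0$ (whose bimodule avatar stays inside the $(\chi_\rho,\chi_\rho)$ category) and then cancelling the faithful translation off the wall $\EuScript H^\rho_{\rho+\varepsilon_{i+1}}$ via the factorization $\EuScript H_\rho^{\mathrm s_i\rho}=\EuScript H^\rho_{\rho+\varepsilon_{i+1}}\EuScript H_\rho^{\rho+\varepsilon_{i+1}}$.
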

\begin{remark}The notion of left cell~\cite{Jo4} (see also~\cite{Vo2}) relates nilpotent orbits with Weyl group modules. Moreover, the ordering of primitive ideals is related to left cells via tensoring by finite-dimensional representations. As the projective functors are building blocks of the functors of tensoring with a finite-dimensional representation, the action of the projective functors on a primitive ideal $I$ is closely related~\cite{Jo5} with the nilpotent orbit V$(I)$ (see also~\cite{Jo6}). Corollary~\ref{Casn} is an explicit example of this relationship.\end{remark}
\begin{proposition} Let $M$ be an infinite-dimensional finitely generated $\frak{sl}(W)$-module which affords the generalized central character $\chi_\rho$. Then there exists $i\le n_W-1$ such that $\EuScript H_\rho^{\rho+\varepsilon_{i+1}}M\ne 0$.\end{proposition}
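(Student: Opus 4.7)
The plan is to argue by contradiction. Assume $\EuScript H_\rho^{\rho+\varepsilon_{i+1}}M=0$ for every $i\in\{1,\dots,n_W-1\}$. Since $M$ affords $\chi_\rho$ and $F_W\EuScript H_\rho^\rho=\sum_{i=1}^{n_W}\EuScript H_\rho^{\rho+\varepsilon_i}$, the decomposition of $W\otimes M$ by generalized central character collapses to
\[F_WM=\EuScript H_\rho^{\rho+\varepsilon_1}M,\]
a module supported at the single character $\chi_{\rho+\varepsilon_1}$.

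Next I would apply $F_{W^*}$ and compute in two ways. On one hand, $W^*\otimes W\cong\frak{sl}(W)\oplus\mathbb C$ as $\frak{sl}(W)$-modules, so $F_{W^*}F_WM=(\frak{sl}(W)\otimes M)\oplus M$. On the other hand, decomposing $F_{W^*}$ on the $\chi_{\rho+\varepsilon_1}$-block yields
\[F_{W^*}\EuScript H_\rho^{\rho+\varepsilon_1}M=\sum_{k=1}^{n_W}\EuScript H_{\rho+\varepsilon_1}^{\rho+\varepsilon_1-\varepsilon_k}\EuScript H_\rho^{\rho+\varepsilon_1}M.\]
For $k=1$, Theorem~\ref{Tweq} identifies $(\EuScript H_\rho^{\rho+\varepsilon_1},\EuScript H_{\rho+\varepsilon_1}^\rho)$ as mutually inverse equivalences—both $\rho$ and $\rho+\varepsilon_1$ are regular dominant integral with trivial $\mathrm W^\frak g$-stabilizer—so this summand equals $M$. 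For each $k\ge 2$, a direct inspection of the tuple $\rho+\varepsilon_1-\varepsilon_k$ shows that its $k$th and $(k+1)$st entries coincide, so its stabilizer in $\mathrm W^\frak g$ contains $s_k$ and the central character $\chi_{\rho+\varepsilon_1-\varepsilon_k}$ is singular; in particular it differs from the regular character $\chi_\rho$.

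Extracting the $\chi_\rho$-component from the equality above and cancelling $M$, I obtain $\EuScript H_\rho^\rho(\frak{sl}(W)\otimes M)=0$. To reach a contradiction, I would use the $\frak{sl}(W)$-linear action map $\frak{sl}(W)\otimes M\to M$, $g\otimes m\mapsto gm$. Since $M$ has central character $\chi_\rho$, this map vanishes on every central character block of the source other than $\chi_\rho$, so it factors through $\EuScript H_\rho^\rho(\frak{sl}(W)\otimes M)$ with image $\frak{sl}(W)\cdot M\subseteq M$. Because $M$ is finitely generated and infinite-dimensional, $\frak{sl}(W)\cdot M\ne 0$—otherwise $\frak{sl}(W)$ would act trivially on $M$, making $M$ a finitely generated trivial $\mathrm U(\frak{sl}(W))$-module, hence finite-dimensional. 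This contradicts $\EuScript H_\rho^\rho(\frak{sl}(W)\otimes M)=0$.

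The main obstacle is the bookkeeping of blocks: ensuring that each pair $(\rho+\varepsilon_1,\rho+\varepsilon_1-\varepsilon_k)$ really defines a projective functor (this is automatic since $\rho+\varepsilon_1$ is regular, so the correct-ordering condition on the pair is vacuous) and verifying that $\chi_{\rho+\varepsilon_1-\varepsilon_k}\ne\chi_\rho$ for every $k\ge 2$, so that the cancellation isolating $\EuScript H_\rho^\rho(\frak{sl}(W)\otimes M)$ works cleanly.
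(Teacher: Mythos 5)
Your argument is correct, and it is genuinely different from the paper's. The paper proves this proposition by invoking the Beilinson--Bernstein observation (cited as~\cite{BeBe2}) that some Borel subalgebra $\frak b$ with nilpotent radical $\frak n$ has $(M/\frak n M)^{\frak b}\ne 0$, interpreting this as the nonvanishing of the $\frak{sl}(W)$-coinvariants of $\mathrm M_\rho\otimes_{\mathbb C}M$, and then pushing this nonvanishing along the right exact sequence obtained by applying $(\cdot\otimes_{\mathbb C}M)/\frak{sl}(W)(\cdot\otimes_{\mathbb C}M)$ to $\bigoplus_i\mathrm P_{\mathrm s_i\rho}\to\mathrm M_\rho\to\mathrm L_\rho\to0$; the identification $\mathrm P_{\mathrm s_i\rho}\otimes M$ with $\mathrm M_\rho\otimes\EuScript H_\rho^{\mathrm s_{n_W-i}\rho}M$ then produces a nonvanishing $\EuScript H_\rho^{\rho+\varepsilon_{i+1}}M$. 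Your route instead exploits the adjoint pair $(F_W,F_{W^*})$, the $\frak{sl}(W)$-module decomposition $W^*\otimes W\cong\frak{sl}(W)\oplus\mathbb C$, the fact that $(\EuScript H_\rho^{\rho+\varepsilon_1},\EuScript H_{\rho+\varepsilon_1}^{\rho})$ are mutually inverse equivalences between two \emph{regular} blocks, and the $\frak{sl}(W)$-linearity of the action map. This is more self-contained: it bypasses the geometric input behind the Casselman-type statement from~\cite{BeBe2} and replaces the projective resolution of $\mathrm L_\rho$ with a single adjunction computation.

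Two small points you should tighten. First, your reason for discarding the terms with $k\ge 2$ (``the $k$th and $(k+1)$st entries of $\rho+\varepsilon_1-\varepsilon_k$ coincide, so it is singular'') fails at $k=n_W$: the tuple $\rho+\varepsilon_1-\varepsilon_{n_W}=(n_W+1,n_W-1,\dots,2,0)$ is in fact regular. Fortunately the conclusion you need is only $\chi_{\rho+\varepsilon_1-\varepsilon_k}\ne\chi_\rho$, and this still holds: for $2\le k\le n_W-1$ the weight is singular while $\chi_\rho$ is regular, and for $k=n_W$ the multiset $\{0,2,3,\dots,n_W-1,n_W+1\}$ is not a translate of $\{1,\dots,n_W\}$. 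Second, the step ``cancelling $M$'' from $\EuScript H_\rho^\rho(\frak{sl}(W)\otimes M)\oplus M\cong M$ is not automatic for arbitrary modules; you should note that $M$ is finitely generated over the Noetherian ring $\mathrm U(\frak{sl}(W))$, hence a Noetherian module, so a surjective endomorphism of $M$ is injective and the complementary summand must vanish. With those two remarks the proof is complete.
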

\begin{proof} Without loss of generality we assume that $M=\frak{sl}(W)M$. In~\cite{BeBe2} the authors mention that there exists a Borel subalgebra $\frak b\subset\frak{sl}(W)$ with nilpotent radical $\frak n$ such that $(M/\frak nM)^\frak b\ne 0$. We have$$(M/\frak nM)^\frak b=(\mathrm M_\rho\otimes_\mathbb CM)/(\frak{sl}(W)(\mathrm M_\rho\otimes_\mathbb CM))\ne 0\eqno(4).$$ As the sequence\begin{center}$\oplus_{i\le n_W-1}$P$_{\mathrm s_i\rho}\to$M$_\rho\to $L$_\rho\to 0$\end{center}is exact, the sequence\begin{center}$\frac{\oplus_{i\le n_W-1}\mathrm P_{\mathrm s_i\rho}\otimes_\mathbb CM}{\frak{sl}(W)\oplus_{i\le n_W-1}\mathrm P_{\mathrm s_i\rho}\otimes_\mathbb CM}\to\frac{\mathrm M_\rho\otimes_\mathbb CM}{\frak{sl}(W)(\mathrm M_\rho\otimes_\mathbb CM)}\to\frac{\mathrm L_\rho\otimes_\mathbb CM}{\frak{sl}(W)(\mathrm L_\rho\otimes_\mathbb CM)}\to 0$\end{center}is exact. As \begin{center}(L$_\rho\otimes_\mathbb CM)/\frak{sl}(W)($L$_\rho\otimes_\mathbb CM)= M/\frak{sl}(W)M$=0,\end{center}formula (4) implies\begin{center} (P$_{\mathrm s_i\rho}\otimes_\mathbb CM)/\frak{sl}(W)($P$_{\mathrm s_i\rho}\otimes_\mathbb CM)\ne 0$\end{center} for some $i\le n_W-1$. A straightforward computation shows that\begin{center}$\frac{\mathrm P_{\mathrm s_i\rho}\otimes_\mathbb CM}{\frak{sl}(W)(\mathrm P_{\mathrm s_i\rho}\otimes_\mathbb CM)}=$\\$\frac{\EuScript H_\rho^{\rho+\varepsilon_{i+1}}\mathrm M_\rho\otimes_\mathbb C\EuScript H_\rho^{\rho+\varepsilon_{n_w+1-i}}M}{\frak{sl}(W)(\EuScript H_\rho^{\rho+\varepsilon_{i+1}}\mathrm M_\rho\otimes_\mathbb C\EuScript H_\rho^{\rho+\varepsilon_{n_w+1-i}}M)}=\frac{\mathrm M_\rho\otimes_\mathbb C\EuScript H_\rho^{\mathrm s_{n_W-i}\rho}M}{\frak{sl}(W)(\mathrm M_\rho\otimes_\mathbb C\EuScript H_\rho^{\mathrm s_{n_W-i}\rho}M)}$\hspace{10pt}.\end{center}Therefore $\EuScript H_\rho^{\mathrm s_{n_W-i}\rho}M\ne0$.\end{proof}
\subsection{The case $\frak g=\frak{sp}(W\oplus W^*)$}\label{SSJisp} Let $\EuScript Q_\frak{sp}\subset\frak{sp}(W\oplus W^*)^*$ be the non-trivial nilpotent orbit of minimal dimension. This nilpotent orbit consists of matrices of rank 1. We represent such matrices as $v\otimes v$ with $v\in W\oplus W^*$.
\begin{definition} Let $I$ be a primitive ideal of U$(\frak{sp}(W\oplus W^*))$. We call $I$ a {\it Joseph ideal} whenever V$(I)=\bar{\EuScript Q}_\frak{sp}$.\end{definition}
\begin{lemma} Let $I$ be a Joseph ideal and let $\tilde L$ be a simple $(\frak{sp}(W\oplus W^*), \frak b^\frak{sp}_W)$-module. Then $\tilde L$ is an $(\frak{sp}(W\oplus W^*), \frak h_W)$-bounded module.\end{lemma}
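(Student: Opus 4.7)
The plan is to mimic the proof of the preceding $\frak{sl}(W)$-lemma, replacing the geometry of the minimal nilpotent orbit of $\frak{sl}(W)$ with that of the minimal orbit of $\frak{sp}(W\oplus W^*)$. Reading the statement in parallel with the $\frak{sl}$-version, I assume the intended hypothesis is $I\subset\mathrm{Ann}\,\tilde L$. Since $\tilde L$ is a $(\frak{sp}(W\oplus W^*),\frak b^\frak{sp}_W)$-module annihilated by $I$, the Corollary after Fernando's theorem together with V$(\tilde L)\subset $V$(\mathrm{gr}\,I)$ gives
\[
\mathrm V(\tilde L)\subset\bar{\EuScript Q}_\frak{sp}\cap(\frak b^\frak{sp}_W)^{\bot}.
\]
Using the Killing form to identify $\frak{sp}(W\oplus W^*)^*$ with $\frak{sp}(W\oplus W^*)$, the subspace $(\frak b^\frak{sp}_W)^\bot$ becomes the nilradical $\frak n^+\subset\frak b^\frak{sp}_W$, and under the standard isomorphism $\frak{sp}(W\oplus W^*)\cong \mathrm S^2(W\oplus W^*)$ the closure $\bar{\EuScript Q}_\frak{sp}$ corresponds to the cone of rank-one symmetric tensors $\{v\cdot v : v\in W\oplus W^*\}$.

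Next I would compute $\bar{\EuScript Q}_\frak{sp}\cap\frak n^+$ explicitly. Writing $v=\sum_i a_i\mathrm{e}_i+\sum_i b_i\partial_{\mathrm{e}_i}$, the tensor $v\cdot v$ expands in the basis consisting of $\mathrm{e}_i\mathrm{e}_j$, $\mathrm{e}_i\partial_{\mathrm{e}_j}$, and $\partial_{\mathrm{e}_i}\partial_{\mathrm{e}_j}$, with coefficients quadratic in the $a$'s and $b$'s. Among these, $\partial_{\mathrm{e}_i}\partial_{\mathrm{e}_j}$ lies in negative root spaces, $\mathrm{e}_j\partial_{\mathrm{e}_i}$ ($i<j$) lies in negative root spaces, and $\mathrm{e}_i\partial_{\mathrm{e}_i}$ lies in $\frak h_W$; all of these must have vanishing coefficient, which forces $b_i=0$ for every $i$. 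Hence $v\in W$, so the intersection equals the Veronese cone $\{v\cdot v : v\in W\}\subset\mathrm S^2W\subset\frak n^+$, an irreducible variety of dimension $n_W$.

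I would then verify $H_W$-sphericity of this cone: the torus $H_W$ acts via $h\cdot(v\cdot v)=(hv)\cdot(hv)$, scaling each basis monomial $\mathrm{e}_i\mathrm{e}_j$ by $\lambda_i\lambda_j$ if $h$ has eigenvalues $\lambda_1,\dots,\lambda_{n_W}$. For a generic $v=\sum a_i\mathrm e_i$ with all $a_i\ne 0$, the stabilizer of $v\cdot v$ in $H_W$ equals $\{\pm(1,\dots,1)\}$, so the orbit has dimension $n_W$, which coincides with the dimension of the cone. Thus the orbit is open, and $\bar{\EuScript Q}_\frak{sp}\cap\frak n^+$ is an irreducible $H_W$-spherical variety.

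Finally, I would finish via Proposition~\ref{Pbm}. If $\tilde L$ is finite-dimensional there is nothing to prove, so assume $\tilde L$ is infinite-dimensional. Then $\mathrm{Ann}\,\tilde L$ is a primitive ideal containing $I$ with nontrivial associated variety, and since $\EuScript Q_\frak{sp}$ is the minimal nonzero nilpotent $G$-orbit, $\mathrm{Ann}\,\tilde L=I$ and $\mathrm{GV}(\tilde L)=\bar{\EuScript Q}_\frak{sp}$ of dimension $2n_W$. Bernstein's Theorem~\ref{Tber} gives $\dim\mathrm V(\tilde L)\ge n_W$, and the inclusion $\mathrm V(\tilde L)\subset\bar{\EuScript Q}_\frak{sp}\cap\frak n^+$ together with irreducibility of the right-hand side forces equality. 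By Proposition~\ref{Pbm}, $\tilde L$ is $\frak h_W$-bounded. I expect the main obstacle is the concrete combinatorial step of identifying the intersection $\bar{\EuScript Q}_\frak{sp}\cap\frak n^+$: one must carefully track which quadratic monomials in the $\mathrm e_i$'s and $\partial_{\mathrm e_j}$'s land in $\frak n^+$, remembering that long-root vectors $\mathrm e_i^2$ lie in $\frak n^+$ whereas the Cartan elements $\mathrm e_i\partial_{\mathrm e_i}$ do not.
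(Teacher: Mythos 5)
Your proof is correct and follows essentially the same route as the paper's: both identify $\mathrm V(\tilde L)\subset\bar{\EuScript Q}_\frak{sp}\cap(\frak b^\frak{sp}_W)^\bot$, compute this intersection to be the single Veronese cone $\EuScript Q_1=\{v\cdot v: v\in W\}$ of dimension $n_W$, verify that $\EuScript Q_1$ is $H_W$-spherical, and conclude via Proposition~\ref{Pbm}. You correctly diagnosed the missing hypothesis $I\subset\mathrm{Ann}\,\tilde L$, which the paper omits (just as it does not in the preceding $\frak{sl}$-lemma, where this hypothesis is explicit). You are more explicit than the paper on two points: the coefficient computation showing $b_i=0$ for all $i$ (the paper merely records the condition $(v,bv)=0$ for all $b\in\frak b^\frak{sp}_W$), and the stabilizer computation $\{\pm\mathrm{Id}\}$ establishing sphericity (the paper simply asserts it). Your final paragraph pins down $\mathrm V(\tilde L)=\EuScript Q_1$ exactly via Bernstein's inequality; the paper does not bother, implicitly relying on the fact that any $H_W$-stable closed irreducible subvariety of the spherical cone $\EuScript Q_1$ is itself $H_W$-spherical (since $\EuScript Q_1$ has finitely many torus orbits) — your dimension argument is a clean way to sidestep that observation, at no extra cost.
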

\begin{proof} As $\tilde L$ is an $(\frak{sp}(W\oplus W^*), \frak b^\frak{sp}_W)$-module, we have V$(\tilde L)\subset\bar{\EuScript Q}_\frak{sp}\cap(\frak b^\frak{sp}_W)^\bot$. Furthermore,\begin{center}
$(\frak b^\frak{sp}_W)^\bot\cap\bar{\EuScript Q}_\frak{sp}=\{x\in\frak{sp}(W\oplus W^*)^*\mid x=v\otimes v$ for some $v\in W\oplus W^*$ such that $(v, bv)=0$ for all $b\in\frak b^\frak{sp}_W\}$.\end{center}
The variety $(\frak b^\frak{sp}_W)^\bot\cap\bar{\EuScript Q}_\frak{sp}$ has the unique irreducible component $\EuScript Q_1$, where
\begin{center}$\EuScript Q_1:=\{x\in\frak{sp}(W\oplus W^*)^*\mid x=v\otimes v$ for some $v\in$span$\langle$e$_1, ..., $e$_{n_W} \rangle\}$.\end{center}
The dimension of $\EuScript Q_1$ equals to $n_W$. Let H$_W$ be an irreducible subgroup of SP$(W\oplus W^*)$ with Lie algebra $\frak h_W$. The variety $\EuScript Q_1$ is H$_W$-stable and is H$_W$-spherical. Therefore $\tilde L$ is a bounded $(\frak{sp}(W\oplus W^*), \frak h_W)$-module  by Proposition~\ref{Pbm}.\end{proof}
\begin{corollary}\label{Cqtosw}a) Let $I$ be a Joseph ideal and let $\bar\mu$ be an $n_W$-tuple such that $I=$Ann~L$_\mu$. Then L$_\mu$ is a bounded $\frak h_W$-module. In particular, $\bar\mu$ is a Shale-Weil tuple.\\b)Let $\bar\mu$ be a Shale-Weil $n_W$-tuple then I$_{\frak{sp}}(\mu)$ is a Joseph ideal.\end{corollary}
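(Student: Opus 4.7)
The plan is to mimic the strategy used for Corollary~\ref{Cqtoso}, adapted to the symplectic setting, relying on the preceding lemma, Theorem~\ref{Tbwm}, Proposition~\ref{Pbm} and Bernstein's inequality (Theorem~\ref{Tber}).

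For part (a) I would argue that the conclusion is essentially immediate from the lemma that directly precedes the corollary. Indeed, $\mathrm L_\mu$ is a simple $(\frak{sp}(W\oplus W^*),\frak b^\frak{sp}_W)$-module whose annihilator is by assumption the Joseph ideal $I$; the preceding lemma then gives that $\mathrm L_\mu$ is $\frak h_W$-bounded. The ``in particular'' clause is then Theorem~\ref{Tbwm}: the bounded highest-weight modules for $(\frak{sp}(W\oplus W^*),\frak h_W)$ are precisely the $\mathrm L_\mu$ with $\bar\mu$ a Shale-Weil tuple.

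For part (b) the plan is to run the argument of Corollary~\ref{Cqtoso} with $\EuScript Q_\frak{sp}$ replacing $\EuScript Q$. Assuming $\bar\mu$ is Shale-Weil, Theorem~\ref{Tbwm} gives that $\mathrm L_\mu$ is $\frak h_W$-bounded, and then Proposition~\ref{Pbm}(a) shows $\mathrm V(\mathrm L_\mu)$ is $H_W$-spherical; since $H_W$ is a torus this just means $\dim \mathrm V(\mathrm L_\mu)\le n_W$. Combined with Bernstein's theorem (Theorem~\ref{Tber}) this yields
\[
\dim \mathrm{GV}(\mathrm L_\mu)\;\le\;2\dim\mathrm V(\mathrm L_\mu)\;\le\;2n_W.
\]
I would then invoke the standard fact that the minimal nonzero nilpotent $G$-orbit $\EuScript Q_\frak{sp}\subset \frak{sp}(W\oplus W^*)^*$ has dimension exactly $2n_W$, and that every other nonzero nilpotent orbit has strictly larger dimension. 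Consequently $\mathrm{GV}(\mathrm L_\mu)$ is either $\{0\}$ or $\overline{\EuScript Q}_\frak{sp}$. To rule out the first case it suffices to note that because $\rho=(n_W,\ldots,1)$ is integer-valued while the Shale-Weil condition puts $\mu_i\in\tfrac12+\mathbb Z$, the highest weight $\mu-\rho$ of $\mathrm L_\mu$ is not in the integer lattice, so $\mathrm L_\mu$ is infinite-dimensional and $\mathrm{GV}(\mathrm L_\mu)\neq\{0\}$. Hence $\mathrm{GV}(\mathrm L_\mu)=\overline{\EuScript Q}_\frak{sp}$, and $\mathrm I_\frak{sp}(\mu)$ is a Joseph ideal by definition.

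There is no real obstacle here; everything reduces to invoking results already in place. The only point where care is needed is verifying that no intermediate nilpotent orbit of $\frak{sp}(W\oplus W^*)^*$ fits inside the dimension budget $2n_W$ produced by Bernstein's inequality, and that infinite-dimensionality of $\mathrm L_\mu$ is automatic from the half-integrality of $\bar\mu$ -- both of which are standard facts about $\frak{sp}_{2n_W}$.
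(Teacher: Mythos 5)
Your proposal is correct and follows essentially the same route as the paper: invoke the preceding lemma and Theorem~\ref{Tbwm} for part (a), then use Theorem~\ref{Tbwm}, sphericity of $\mathrm V(\mathrm L_\mu)$, and Bernstein's inequality (Theorem~\ref{Tber}) to bound $\dim\mathrm{GV}(\mathrm L_\mu)$ by $2n_W$ and identify the orbit as $\EuScript Q_\frak{sp}$. The only difference is that you explicitly rule out the zero orbit by observing that the half-integrality of a Shale--Weil tuple forces $\mathrm L_\mu$ to be infinite-dimensional, a point the paper leaves implicit.
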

\begin{proof}Part a) is trivial. We have dim~GV$(M)\le$2dim~V$(M)$ by Theorem~\ref{Tber}. As $\bar\mu$ is Shale-Weil, L$_\mu$ is bounded and therefore~V(L$_\mu)$ is H$_W$-spherical. Hence dim~GV$(M)\le 2n_W$. The only nilpotent orbit which satisfies this inequality is $\EuScript Q_\frak{sp}$.\end{proof}
We recall that $\sigma$ stands to the reflection with respect to the simple long root of $\frak{sp}(W\oplus W^*)$ for our fixed choice of a Borel subalgebra, see Subsection~\ref{SM}.
\begin{lemma} Let $\bar\mu$ be a Shale-Weil tuple. Then Ann~L$_\mu$ is a maximal ideal of U$(\frak{sp}(W\oplus W^*))$.\end{lemma}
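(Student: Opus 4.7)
The plan is to invoke the lattice isomorphism of Joseph (the first theorem of this section) at the weight $\mu$: for any regular weight $\lambda$, the map $\alpha: I\mapsto I\mathrm M_\lambda$ is a lattice isomorphism between two-sided ideals $I$ of $\mathrm U(\mathfrak g)$ containing $\ker\chi_\lambda$ and submodules of $\mathrm M_\lambda$. To apply this at $\lambda=\mu$ I need two inputs: that $\mu$ is regular and dominant in the sense of Subsection~\ref{SSpro}, and the standard identification $\alpha(\mathrm{Ann}\,\mathrm L_\mu)=\mathrm{rad}\,\mathrm M_\mu$.

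The first step is a direct root-data calculation using the Shale-Weil conditions. Regularity: the inequalities $\mu_1>\mu_2>\cdots>\mu_{n_W-1}>|\mu_{n_W}|$ together with $\mu_i\in\tfrac12+\mathbb Z$ force the $2n_W$ numbers $\pm\mu_1,\ldots,\pm\mu_{n_W}$ to be pairwise distinct, so the stabilizer of $\mu$ in $\mathrm W^{\mathfrak{sp}}=\mathrm S_{n_W}\ltimes(\mathbb Z/2)^{n_W}$ is trivial. Dominance: for each positive root $\alpha$ I check $\alpha^\vee(\mu)\notin\mathbb Z_{<0}$. For the short positive coroots, whose values on $\mu$ are $\mu_i-\mu_j$ and $\mu_i+\mu_j$ for $i<j$, the decreasing pattern and the inequality $\mu_{n_W-1}>|\mu_{n_W}|$ imply these are strictly positive integers. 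For the long positive coroots $(2\epsilon_i)^\vee$, the value is $\mu_i\in\tfrac12+\mathbb Z$, which is not an integer and hence not in $\mathbb Z_{<0}$. Thus $\mu$ is maximal in the partial order $<$.

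The second step is then formal. Since $\mu$ is dominant, $\mathrm L_\mu$ is the unique simple quotient of $\mathrm M_\mu$, so $\mathrm M_\mu$ admits a unique maximal proper submodule $\mathrm{rad}\,\mathrm M_\mu$; since $\mu$ is also regular, Joseph's map $\alpha$ is a bijection onto all submodules of $\mathrm M_\mu$, and a short argument (the inclusion $\mathrm{Ann}\,\mathrm L_\mu\cdot\mathrm M_\mu\subset\mathrm{rad}\,\mathrm M_\mu$ together with the fact that any ideal $I$ with $I\mathrm M_\mu\subset\mathrm{rad}\,\mathrm M_\mu$ annihilates $\mathrm L_\mu$) identifies $\alpha(\mathrm{Ann}\,\mathrm L_\mu)$ with $\mathrm{rad}\,\mathrm M_\mu$. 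Now let $J$ be any two-sided ideal with $J\supsetneq\mathrm{Ann}\,\mathrm L_\mu$; then $J\supset\ker\chi_\mu$, so $\alpha(J)=J\mathrm M_\mu$ is a submodule of $\mathrm M_\mu$ strictly containing $\mathrm{rad}\,\mathrm M_\mu$, hence equal to $\mathrm M_\mu$. Applying $\alpha^{-1}$ yields $J=\mathrm U(\mathfrak{sp}(W\oplus W^*))$, so $\mathrm{Ann}\,\mathrm L_\mu$ is maximal. The only genuine work is the first-step verification of regularity and dominance; everything else is a formal consequence of Joseph's theorem and the structure of a Verma module with dominant highest weight.
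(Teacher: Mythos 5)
Your proof is correct, and it takes a genuinely different path from the paper's. The paper first reduces (via $\mathrm I_{\mathfrak{sp}}(\mu)=\mathrm I_{\mathfrak{sp}}(\sigma\mu)$) to the case of a positive Shale-Weil tuple so that $\mu$ is visibly dominant, then argues by contradiction: if $I\supsetneq\mathrm I(\mu)$, then $I\mathrm M_\mu\supsetneq\mathrm I(\mu)\mathrm M_\mu$ by the injectivity in Joseph's theorem; a simple quotient $\mathrm L_{\mu'}$ of $\mathrm M_\mu/I\mathrm M_\mu$ is a highest weight module annihilated by the Joseph ideal $\mathrm I(\mu)$, hence $\mathfrak h_W$-bounded, so $\bar\mu'$ is Shale-Weil; but the only Shale-Weil tuples in the $W^{\mathfrak g}$-orbit of $\mu$ are $\mu$ and $\sigma\mu$, which give the same annihilator $\mathrm I(\mu)$, forcing $I\subset\mathrm I(\mu)$. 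You instead verify from the Shale-Weil inequalities and the half-integrality that $\mu$ is both regular and dominant (for either sign of $\mu_{n_W}$, so you don't need the positivity reduction), and then use the full bijectivity in Joseph's theorem: $\alpha(\mathrm{Ann}\,\mathrm L_\mu)=\mathrm{rad}\,\mathrm M_\mu$ is the unique maximal proper submodule, and any strictly larger ideal must map to $\mathrm M_\mu$ itself. The upshot is that your argument shows the more general statement that $\mathrm{Ann}\,\mathrm L_\lambda$ is a maximal two-sided ideal for every regular dominant $\lambda$, bypassing the bounded-module/Shale-Weil machinery entirely; the trade-off is that you rely on the surjectivity part of Joseph's lattice isomorphism, whereas the paper only needs injectivity together with the explicit description of the Shale-Weil tuples in a given central-character fiber.
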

\begin{proof} Without loss of generality we assume that $\bar\mu$ is positive, in particular $\mu$ is dominant. Let $I$ be an ideal of U$(\frak{sp}(W\oplus W^*))$ such that I$(\mu)\subsetneq I$. Then I$(\mu)$M$_\mu\subsetneq I$M$_\mu$. As $I$ annihilates M$_\mu/I$M$_\mu$, we have $I\subset$Ann~L$_{\mu'}$ for some simple $\frak h_W$-bounded module L$_{\mu'}$. Therefore either $\mu'=\mu$ or $\mu'=\sigma\mu$. Hence $I\subset$I$(\mu)$ and $I$=I$(\mu$).\end{proof}
Let $\bar\mu$ be a Shale-Weil $n_W$-tuple. Let $\bar\mu'$ be an $n_W$-tuple and let $\mu'\in\frak h_W^*$ be the corresponding weight.
\begin{lemma} Suppose $\EuScript H_\mu^{\mu'}$L$_\mu\ne0$. Then $\mu'$ is a Shale-Weil tuple.\end{lemma}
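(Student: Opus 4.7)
First I would invoke Corollary~\ref{Cqtosw}: since $\bar\mu$ is Shale--Weil, $\mathrm{Ann}\,L_\mu = \mathrm{I}_{\frak{sp}}(\mu)$ is a Joseph ideal and $L_\mu$ is an infinite-dimensional simple $\frak h_W$-bounded module---infinite-dimensional because $\mu-\rho$ has half-integer entries, whereas highest weights of finite-dimensional simple $\frak{sp}(W\oplus W^*)$-modules lie in the weight lattice $\mathbb Z^{n_W}$. The plan is to exhibit $L_{\mu'}$ as a simple bounded quotient of $\EuScript H_\mu^{\mu'}L_\mu$ and then apply Mathieu's Theorem~\ref{Tbwm}.

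Preservation of $\frak h_W$-boundedness under $\EuScript H_\mu^{\mu'}$ will follow from the fact that $\EuScript H_\mu^{\mu'}$ is, by construction, an indecomposable direct summand of the functor $F_E = E\otimes(\cdot)$ for some finite-dimensional $\frak g$-module $E$. Hence $\EuScript H_\mu^{\mu'}L_\mu$ is a direct summand of $E\otimes L_\mu$, whose $\frak h_W$-weight multiplicities are uniformly bounded by $(\dim E)\cdot C_{L_\mu}$; since the weights of $E$ lie in $\mathbb Z^{n_W}$, all $\frak h_W$-weights of $\EuScript H_\mu^{\mu'}L_\mu$ remain half-integral.

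To identify $L_{\mu'}$ as a specific quotient, I would replace $(\mu,\mu')$ by a correctly ordered representative of its diagonal $W^{\frak g}$-orbit---a change that does not affect the functor $\EuScript H_\mu^{\mu'}$ (see Subsection~\ref{SSpro})---thereby assuming $\mu$ dominant. Then $M_\mu = P_\mu$ and $L_\mu$ is a quotient of $M_\mu$, so by Bernstein--Gelfand $\EuScript H_\mu^{\mu'}L_\mu$ is a nonzero quotient of $\EuScript H_\mu^{\mu'}M_\mu = P_{\mu'}$. Since $P_{\mu'}$ has $L_{\mu'}$ as its unique simple quotient, $L_{\mu'}$ itself is a simple quotient of $\EuScript H_\mu^{\mu'}L_\mu$; by the previous paragraph it is $\frak h_W$-bounded and infinite-dimensional, so Theorem~\ref{Tbwm} forces $\bar\mu'$ to be Shale--Weil.

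The hard part will be this last step: for Shale--Weil $\bar\mu$ the shifted weight $\mu-\rho$ is typically singular (for $\bar\mu = \bar\mu_0$, one has $\mu_0 - \rho = (-\tfrac12,\ldots,-\tfrac12)$ with stabilizer $S_{n_W}$), so $W^{\frak g}_\mu$ is large; the correctly ordered representative of $\mu'$ must then be chosen modulo this stabilizer, and I would need to verify that the conclusion ``$\bar\mu'$ is Shale--Weil'' is invariant under this residual choice.
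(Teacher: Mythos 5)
Your argument is essentially the paper's: $\EuScript H_\mu^{\mu'}\mathrm L_\mu$ is $\frak h_W$-bounded and is a nonzero quotient of $\EuScript H_\mu^{\mu'}\mathrm M_\mu=\mathrm P_{\mu'}$, so $\mathrm L_{\mu'}$ (the unique simple quotient of $\mathrm P_{\mu'}$) is a bounded quotient of it, and Theorem~\ref{Tbwm} finishes. The worry in your last paragraph is unfounded: in the conventions of Subsection~\ref{SSpro}, regularity is a property of the $\rho$-shifted parameter $\mu$ itself rather than of the highest weight $\mu-\rho$, and since the coordinates of any Shale--Weil tuple are pairwise distinct in absolute value, $\mu$ is regular (and dominant once one passes to the positive representative $\sigma\bar\mu$ if necessary), so the choice of correctly ordered representative is unique and causes no trouble.
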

\begin{proof} The $\frak{sp}(W\oplus W^*)$-module L$_\mu$ is $\frak h_W$-bounded and therefore $\EuScript H_\mu^{\mu'}$L$_\mu$ is $\frak h_W$-bounded. As L$_\mu$ is a quotient of M$_\mu$, $\EuScript H_\mu^{\mu'}$L$_\mu$ is a quotient of $\EuScript H_\mu^{\mu'}$M$_\mu$. As L$_{\mu'}$ is the unique simple quotient of $\EuScript H_\mu^{\mu'}$M$_\mu$, L$_{\mu'}$ is an $\frak h_W$-bounded $\frak{sp}(W\oplus W^*)$-module. Therefore $\mu'$ is a Shale-Weil tuple by Theorem~\ref{Tbwm}.\end{proof}
\begin{corollary}\label{Cgsp} Let $M$ be an $\frak{sp}(W\oplus W^*)$-module such that I$_\frak{sp}(\mu)=$Ann$M$. Suppose $\EuScript H_\mu^{\mu'}M\ne 0$. Then $\mu'$ is a Shale-Weil $n_W$-tuple.\end{corollary}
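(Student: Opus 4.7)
The corollary extends the preceding lemma from the specific case $M=L_\mu$ to an arbitrary $\frak{sp}(W\oplus W^*)$-module $M$ sharing its annihilator. My plan is to reduce nonvanishing of $\EuScript H_\mu^{\mu'}$ on $M$ to nonvanishing on $L_\mu$, at which point the preceding lemma applies directly.

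Since $\EuScript H_\mu^{\mu'}$ is exact and commutes with directed limits (it is a summand of $E\otimes-$ for some finite-dimensional $\frak g$-module $E$), the hypothesis $\EuScript H_\mu^{\mu'}M\ne 0$ descends to a finitely generated submodule of $M$; that submodule retains $I_\frak{sp}(\mu)$ as annihilator by the maximality of $I_\frak{sp}(\mu)$ (established in the preceding lemma of Subsection~\ref{SSJisp} using that $\bar\mu$ is Shale-Weil). Theorem~\ref{Trel} then guarantees every simple subquotient is $\frak h_W$-bounded, paralleling $L_\mu$ (itself $\frak h_W$-bounded by Theorem~\ref{Tbwm}).

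The decisive second step identifies the projective functor, restricted to the subcategory of $\frak g$-modules annihilated by $I_\frak{sp}(\mu)$, with the functor $Y\otimes_A -$, where $A:=\mathrm U(\frak{sp}(W\oplus W^*))/I_\frak{sp}(\mu)$ is a simple algebra (maximality of $I_\frak{sp}(\mu)$) and $Y$ is the Harish-Chandra bimodule representing $\EuScript H_\mu^{\mu'}$ taken modulo the right action of $I_\frak{sp}(\mu)$. Nonvanishing of $Y\otimes_A M$ forces $Y\ne 0$; exploiting the faithful simplicity of $L_\mu$ as an $A$-module together with the ad-locally-finite Harish-Chandra structure of $Y$, one concludes $\EuScript H_\mu^{\mu'}L_\mu=Y\otimes_A L_\mu\ne 0$, whereupon the preceding lemma forces $\mu'$ Shale-Weil. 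The main obstacle is precisely the implication $Y\ne 0\Rightarrow Y\otimes_A L_\mu\ne 0$: this fails for generic right modules over a simple ring (as Weyl algebra counterexamples show), so the argument must crucially invoke the HC bimodule structure of $Y$---either via standard results on HC bimodules over Joseph quotients of $\mathrm U(\frak g)$, or via a direct subquotient construction that exploits the maximality of $I_\frak{sp}(\mu)$ to exhibit $L_\mu$ explicitly as a subquotient to which the preceding lemma's argument can be transferred.
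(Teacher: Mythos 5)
The paper states this corollary without proof, so you are filling a genuine gap, and your ingredients (exactness of the projective functor, maximality of $I_\frak{sp}(\mu)$ from the preceding lemma, realization via a Harish-Chandra bimodule over the simple ring $A=\mathrm U(\frak g)/I_\frak{sp}(\mu)$) are the right ones. You also correctly flag the genuine obstruction in your plan: from $\bar Y\ne 0$ one cannot conclude $\bar Y\otimes_A L_\mu\ne 0$ by general ring theory, and your proposal leaves this unresolved. As written, the argument is therefore incomplete at exactly the step you yourself mark as decisive.

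The way around the obstruction is not to force the reduction through $L_\mu$. Since $M$ is a left $A$-module, it is a quotient of a direct sum of copies of $A$; as $\EuScript H_\mu^{\mu'}$ is (right-)exact and commutes with direct sums, $\EuScript H_\mu^{\mu'}M\ne 0$ already forces $\EuScript H_\mu^{\mu'}A\ne 0$, viewing $A$ as a Harish-Chandra $\mathrm U(\frak g)$-bimodule. Now transport to category $\EuScript O$ by the equivalence $\mathrm H_{\mathrm M_\mu}\colon Y\mapsto Y\otimes_{\mathrm U(\frak g)}\mathrm M_\mu$ of~\cite[Prop.\ 5.9]{BeG}, which the paper invokes in Section~\ref{SJId} and which intertwines the left projective functors. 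Since $\mathrm H_{\mathrm M_\mu}$ is an equivalence, it detects nonvanishing, so $\EuScript H_\mu^{\mu'}\bigl(\mathrm M_\mu/I_\frak{sp}(\mu)\mathrm M_\mu\bigr)\ne 0$. The module $N:=\mathrm M_\mu/I_\frak{sp}(\mu)\mathrm M_\mu$ is a nonzero quotient of $\mathrm M_\mu$ annihilated by the Joseph ideal $I_\frak{sp}(\mu)$, hence $\frak h_W$-bounded by the lemma preceding Corollary~\ref{Cqtosw}. Thus $\EuScript H_\mu^{\mu'}N$ is a nonzero bounded quotient of $\mathrm P_{\mu'}=\EuScript H_\mu^{\mu'}\mathrm M_\mu$, so $\mathrm L_{\mu'}$ is itself bounded, and $\bar\mu'$ is Shale--Weil by Theorem~\ref{Tbwm}. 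This runs the lemma's proof verbatim with $N$ in place of $L_\mu$, sidestepping the tensor-vanishing problem entirely; in this light, your observation that ``the argument must crucially invoke the HC bimodule structure'' is correct, but the invocation should be the category equivalence $\mathrm H_{\mathrm M_\mu}$ rather than a claim about $\bar Y\otimes_A L_\mu$ that is harder to substantiate directly.
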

\newpage\section{Modules of small growth}\label{Ssmg}
\begin{definition}Let $M$ be a $\frak g$-module. We say that $M$ is Z$(\frak g)$-{\it finite} if dim~Z$(\frak g)m<\infty$ for all $m\in M$, where \begin{center}Z$(\frak g)m:=\{m'\in M\mid m'=zm$ for some $m\in M$ and $z\in$Z$(\frak g)$\}.\end{center}\end{definition}
\subsection{The case $\frak g=\frak{sl}(W)$}
\begin{proposition}If $M$ is an infinite-dimensional finitely generated $\frak{sl}(W)$-module then dim~V$(M)\ge n_W-1$.\end{proposition}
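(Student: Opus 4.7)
The plan is to combine Bernstein's inequality (Theorem~\ref{Tber}), $\dim \mathrm V(M)\ge\tfrac12 \dim \mathrm{GV}(M)$, with a uniform lower bound on the dimensions of nonzero $G$-orbits in $\frak{sl}(W)^*$. It then suffices to show $\dim \mathrm{GV}(M)\ge 2(n_W-1)$.

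I would first verify that $\mathrm{GV}(M)\ne\{0\}$. If $\mathrm{GV}(M)=\{0\}$, then by the graded Nullstellensatz the ideal $\mathrm{gr}\,\mathrm{Ann}\,M$ contains $\mathrm S^n(\frak{sl}(W))$ for some $n$, so $\mathrm U(\frak{sl}(W))/\mathrm{Ann}\,M$ is spanned by the image of $U_{n-1}$ and is in particular finite-dimensional. Being a finitely generated module over this finite-dimensional algebra, $M$ would then be finite-dimensional, contradicting the hypothesis. Note further that $\mathrm{GV}(M)$ is a closed, conical, $G$-invariant subvariety of $\frak{sl}(W)^*$: conical because $\mathrm{gr}\,\mathrm{Ann}\,M$ is graded, and $G$-invariant because $\mathrm{Ann}\,M$ is a two-sided ideal of $\mathrm U(\frak{sl}(W))$ and therefore $\mathrm{gr}\,\mathrm{Ann}\,M$ is $\mathrm{ad}(\frak{sl}(W))$-stable. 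Hence $\mathrm{GV}(M)$ contains some orbit $Gx$ with $x\ne 0$.

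The key geometric input is then the uniform bound $\dim Gx\ge 2(n_W-1)$ for every nonzero $x\in\frak{sl}(W)$. I would verify this by a direct Jordan-form computation: the maximum of $\dim Z_{\frak{sl}(W)}(x)$ over nonzero $x$ equals $(n_W-1)^2$, realised both by the rank-one nilpotents of Example~\ref{Pn} and by the semisimple elements whose eigenvalue multiplicities are $(n_W-1,1)$. Consequently $\dim \mathrm{GV}(M)\ge \dim G x\ge 2(n_W-1)$, and Bernstein's inequality now gives $\dim \mathrm V(M)\ge n_W-1$, as required.

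The only non-routine step is the uniform centralizer-dimension bound above; this is a standard linear-algebraic computation via the Jordan decomposition, and I expect it to be the main, though still elementary, technical ingredient. Everything else is formal manipulation of the filtered/graded objects recalled in Subsection~\ref{SSavg}, together with the basic observation that a finitely generated module over a finite-dimensional algebra is finite-dimensional.
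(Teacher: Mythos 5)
Your proof is correct, and it follows the same two-step strategy as the paper (a lower bound on $\dim\mathrm{GV}(M)$, then Bernstein's inequality). The difference is in the first step. The paper asserts that $\mathrm{GV}(M)$ is a union of \emph{nilpotent} $\mathrm{SL}(W)$-orbits and cites that the minimal nonzero nilpotent orbit has dimension $2(n_W-1)$. That assertion is not true for an arbitrary finitely generated module (take $M=\mathrm U(\frak{sl}(W))$, for which $\mathrm{GV}(M)=\frak{sl}(W)^*$); it requires $M$ to be $\mathrm Z(\frak{sl}(W))$-finite or to afford a generalized central character, a hypothesis the statement does not make explicit, although it is present in the ambient Definition of small growth. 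You sidestep this by proving the uniform estimate $\dim Gx\ge 2(n_W-1)$ for \emph{every} nonzero $x\in\frak{sl}(W)$, via the centralizer bound $\dim Z_{\frak{gl}(W)}(x)\le (n_W-1)^2+1$ for non-scalar $x$ (combining the semisimple-part bound $\sum m_i^2\le(n_W-1)^2+1$ with the nilpotent-part bound $\sum(\lambda_j')^2\le(n_W-1)^2+1$); together with the conicality and $G$-invariance of $\mathrm{GV}(M)$, which you correctly derive from the gradedness and two-sidedness of $\mathrm{Ann}\,M$, this gives $\dim\mathrm{GV}(M)\ge 2(n_W-1)$ directly. Your argument therefore establishes the proposition exactly as stated, with no hidden hypothesis, at the cost of the explicit Jordan-form computation the paper replaces by a citation to the classification of nilpotent orbits. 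The preliminary check that $\mathrm{GV}(M)\ne\{0\}$ when $M$ is infinite-dimensional (a finitely generated module over a finite-dimensional quotient algebra is finite-dimensional) is also correct and is implicit in the paper.
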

\begin{proof}The variety GV$(M)$ is a union of several nilpotent SL$(W)$-orbits in $\frak{sl}(W)^*$. There is a unique nonzero nilpotent orbit of minimal dimension equal to 2$(n_W-1)$. As $M$ is infinite-dimensional, dim V$(M)\ge\frac{1}{2}$dim~GV$(M)\ge n_W-1$ by Theorem~\ref{Tber}.\end{proof}
\begin{definition}Let $M$ be a finitely generated $\frak{sl}(W)$-module which is Z$(\frak{sl}(W))$-finite. We say that $M$ is {\it of small growth} if dim~V$(M)\le n_W-1$, i.e. if dim~V$(M)$ equals either $n_W-1$ or 0.\end{definition}
The $\frak{sl}(W)$-modules of small growth form a full subcategory of the category of $\frak{sl}(W)$-modules. This subcategory is stable under the projective functors.

We recall that a finitely generated $\frak g$-module $M$ has a graded version gr$M$ which is a finitely generated S$(\frak g)$-module. The sum of ranks\footnote{The rank of finitely generated module over a commutative ring is the rank of a maximal free submodule.} of gr$M$ over the function rings of all irreducible components of V$(M)$ of maximal dimension is called {\it the Bernstein number of} $M$ (see also~\cite[p.~78]{KL}), and we denote this number b$(M)$.
\begin{proposition}Any $\frak{sl}(W)$-module $M$ of small growth is of finite length.\end{proposition}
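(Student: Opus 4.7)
The plan is to bound the length of $M$ by combining the Bernstein number $b(M)$ just defined with a separate argument for finite-dimensional composition factors. Since $M$ is Z$(\frak{sl}(W))$-finite and finitely generated, it decomposes as a finite direct sum indexed by its generalized central characters, and so I reduce to the case where $M$ affords a single generalized central character $\chi$.

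The first key step is a dichotomy for simple subquotients of $M$. Any simple subquotient $L$ satisfies V$(L)\subseteq$V$(M)$, hence dim~V$(L)\le n_W-1$. By A.~Joseph's theorem (cited in Subsection~\ref{SSavg}), GV$(L)$ is an irreducible nilpotent orbit closure, which in $\frak{sl}(W)^*$ has dimension either $0$ or at least $2(n_W-1)$. Bernstein's inequality (Theorem~\ref{Tber}) then forces $L$ to be either finite-dimensional, or of Gelfand--Kirillov dimension exactly $n_W-1$ with GV$(L)=\overline{\EuScript O_{\min}}$, the closure of the minimal nilpotent orbit.

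By additivity of the Bernstein number on short exact sequences of modules of GK-dimension $n_W-1$, and since each simple such module contributes at least $1$ to $b$, the number of infinite-dimensional composition factors of $M$ is at most $b(M)<\infty$. To bound the finite-dimensional composition factors, I use that there is at most one finite-dimensional simple module $L_{\mathrm{fin}}$ with central character $\chi$ (when $\chi=\chi_\lambda$ for some dominant integral $\lambda$); its annihilator $I_{\mathrm{fin}}$ has finite codimension in U$(\frak{sl}(W))$, and by Weyl's complete reducibility every finite-dimensional $\frak{sl}(W)$-module affording $\chi$ is a direct sum of copies of $L_{\mathrm{fin}}$.

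Finally, the maximal $I_{\mathrm{fin}}$-torsion submodule $M_{\mathrm{fin}}$ of $M$ is finitely generated (as a submodule of a Noetherian module) and hence annihilated by some power $I_{\mathrm{fin}}^n$; since U$(\frak{sl}(W))/I_{\mathrm{fin}}^n$ is finite-dimensional (one checks that each graded quotient $I_{\mathrm{fin}}^k/I_{\mathrm{fin}}^{k+1}$ is a finitely generated module over the finite-dimensional algebra U$(\frak{sl}(W))/I_{\mathrm{fin}}$), $M_{\mathrm{fin}}$ is itself finite-dimensional and of finite length. The main obstacle lies in the last combinatorial step: showing that the quotient $M/M_{\mathrm{fin}}$ admits no $L_{\mathrm{fin}}$ subquotients, so that all finite-dimensional composition factors of $M$ are captured inside $M_{\mathrm{fin}}$. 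This delicate step is expected to be the hardest; I anticipate it will follow from analyzing how $L_{\mathrm{fin}}$ could appear as a subquotient in a module on which no nonzero power of $I_{\mathrm{fin}}$ acts by zero, combined with the boundedness of finite-dimensional quotients of finitely generated modules forced by Weyl complete reducibility.
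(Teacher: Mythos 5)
Your proposal has a genuine gap in the final step, and you in fact anticipate this yourself. The claim that all finite-dimensional composition factors of $M$ are captured inside the maximal $I_{\mathrm{fin}}$-torsion submodule $M_{\mathrm{fin}}$ is not only unproved but false as stated: consider an extension $0\to L_\infty\to M\to L_{\mathrm{fin}}\to 0$ where $L_\infty$ is infinite-dimensional. Here $M_{\mathrm{fin}}$ can be $0$, yet $M/M_{\mathrm{fin}}=M$ still has $L_{\mathrm{fin}}$ as a subquotient. So the separation ``$M_{\mathrm{fin}}$ absorbs all finite-dimensional factors'' does not hold, and the argument does not close. There is also a subtler logical issue earlier: you count ``infinite-dimensional composition factors,'' but composition factors are only defined once finite length is known, so the argument needs to be phrased in terms of arbitrary finite filtrations (the Bernstein number bound on the number of subquotients of GK-dimension $n_W-1$ in any finite filtration does work, but the conclusion ``finite length'' requires separately bounding or eliminating long stretches of finite-dimensional subquotients, which is exactly the open step).

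The paper sidesteps all of this by working directly with an arbitrary strictly descending chain $\cdots\subset M_{-i}\subset\cdots\subset M$ (ACC already holds since $\mathrm U(\frak{sl}(W))$ is Noetherian and $M$ is finitely generated, so only DCC needs checking). Either some $\mathrm V(M_{-i})=0$, whence $M_{-i}$ is finite-dimensional and the chain stops; or all $\mathrm V(M_{-i})$ have dimension $n_W-1$, in which case the integer sequence $\mathrm b(M_{-i})$ is decreasing and stabilizes at some $i$, forcing all $M_{-i}/M_{-j}$ with $j\ge i$ to be finite-dimensional, so $M_{-i}/\bigcap_j M_{-j}$ embeds in a direct sum of finite-dimensional modules and, being finitely generated and $\mathrm Z(\frak{sl}(W))$-finite, is itself finite-dimensional. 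This handles the ``long stretch of finite-dimensional quotients'' uniformly in one stroke, precisely where your argument stalls. Your idea of using the Bernstein number to bound the count of infinite-dimensional factors is aligned with the paper's use of $\mathrm b(M_{-i})$ as a decreasing invariant, but the DCC framing is what allows the finite-dimensional tail to be controlled without the false torsion-submodule claim.
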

\begin{proof}As $M$ is finitely generated, it is enough to check the descending chain condition for $M$. Let \begin{center}$...\subset M_{-i}\subset...\subset M_{-0}=M$\end{center} be a strictly descending chain of $\frak{sl}(W)$-submodules. These $\frak{sl}(W)$-submodules $M_{-i}$ are finitely generated. We have dim~V$(M_{-i})=n_W-1$ for all $i\in\mathbb Z_{\ge0}$ or dim~V$(M_{-i})=0$ for some $i\in\mathbb Z_{\ge0}$. In the second case dim~$M_{-i}<\infty$ and therefore the chain $...\subset M_{-i}$ stabilizes.

Assume that dim~V$(M_{-i})=n_W-1$ for all $i\in\mathbb Z_{\ge0}$. If $i\ge j$ then b($M_{-i})\le$b$(M_{-j})$, i.e. the sequence $\{$b$(M_{-i})\}_{i\in\mathbb Z_{\ge0}}$ is decreasing. Therefore there exists $i\in\mathbb Z_{\ge0}$ such that b$(M_{-i})=$b$(M_{-j})$ for all $j\ge i$. Therefore dim~V$(M_{-i}/M_{-j})<n_W-1$ and $M_{-i}/M_{-j}$ is a finite-dimensional $\frak{sl}(W)$-module for all $j\ge i$. We have an inclusion\begin{center}$M_{-i}/\cap_{j\ge i}M_{-j}\hookrightarrow \oplus_{j\ge i}M_{-i}/M_{-j}$.\end{center}The right-hand side is a direct sum of a finite-dimensional $\frak{sl}(W)$-modules. As $M_{-i}$ is Z$(\frak{sl}(W))$-finite and finitely generated, only a finite number of simple $\frak{sl}(W)$-modules appear in this direct sum. Therefore $M_{-i}/\cap_{j\ge i}M_{-j}$ is a finite-dimensional $\frak{sl}(W)$-module.\end{proof}
\begin{proposition}\label{Pasm} Let $I$ be the annihilator of a simple infinite-dimensional $\frak{sl}(W)$-module $M$ of small growth. Then $I$ is a Joseph ideal.\end{proposition}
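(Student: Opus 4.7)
The plan is to argue purely by dimension count, using Bernstein's inequality together with the fact that in $\frak{sl}(W)$ the minimal non-trivial nilpotent orbit is unique and has dimension exactly $2(n_W-1)$.

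First I would observe that, since $M$ is simple, it affords a central character and is in particular $\mathrm{Z}(\frak{sl}(W))$-finite, so the hypothesis ``of small growth'' applies and gives $\dim\mathrm V(M)\le n_W-1$. Combined with the previous proposition (any infinite-dimensional finitely generated $\frak{sl}(W)$-module satisfies $\dim\mathrm V(M)\ge n_W-1$), we get the equality $\dim\mathrm V(M)=n_W-1$. Next, since $M$ is simple, the theorem of Joseph quoted in Subsection~\ref{SSavg} tells us that $\mathrm{GV}(M)=\overline{Gu}$ for a single nilpotent orbit $Gu\subset\frak{sl}(W)^*$, and this orbit is nonzero because $M$ is infinite-dimensional.

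Now I would invoke Bernstein's theorem (Theorem~\ref{Tber}): $\dim\mathrm V(M)\ge\tfrac12\dim\mathrm{GV}(M)$, which yields
\[
\dim\mathrm{GV}(M)\;\le\;2(n_W-1).
\]
At this point the key input is the classical fact that the unique nonzero nilpotent $\mathrm{SL}(W)$-orbit of minimal dimension in $\frak{sl}(W)^*$ is $\EuScript Q$ (the rank-one matrices), with $\dim\EuScript Q=2(n_W-1)$; every other nonzero nilpotent orbit is strictly larger. Therefore $Gu=\EuScript Q$ and $\mathrm{GV}(M)=\overline{\EuScript Q}$. By the definition of $\mathrm{GV}(M)$ as the zero set of $\mathrm{gr}\,I$, this equality reads $\mathrm V(I)=\overline{\EuScript Q}$, which is exactly the definition of a Joseph ideal (Subsection~\ref{SSJisp}, applied to $\frak{sl}(W)$). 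Since the primitivity of $I$ is immediate from the simplicity of $M$, the proof is complete.

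There is no serious obstacle: the whole argument is a one-line dimension estimate sandwiched between Bernstein's inequality and the classification of minimal nilpotent orbits in type~A. The only point one must be careful about is verifying that the ``small growth'' hypothesis does force the nonzero nilpotent orbit underlying $\mathrm{GV}(M)$ to be the minimal one, which is precisely where the uniqueness of the minimal nilpotent orbit in $\frak{sl}(W)^*$ enters.
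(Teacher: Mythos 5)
Your argument is correct and matches the paper's own proof, which is exactly the same Bernstein-inequality dimension count combined with the uniqueness of the minimal nonzero nilpotent orbit $\EuScript Q$ in $\frak{sl}(W)^*$ of dimension $2(n_W-1)$; you just spell out the intermediate steps that the paper compresses into two lines. One small slip: the definition of a Joseph ideal for $\frak{sl}(W)$ appears in the $\frak{sl}(W)$ part of Section~\ref{SJId}, not in Subsection~\ref{SSJisp} (which handles the $\frak{sp}$ case), but this is only a citation error and does not affect the argument.
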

\begin{proof} As dim~GV$(M)\le2$dim~V$(M)$ (Theorem~\ref{Tber}),\begin{center} dim~GV$(M)\le$2$(n_W-1)$.\end{center} As $M$ is infinite-dimensional, $I$ is a Joseph ideal.\end{proof}
Let $\EuScript J$ be the set of simple $\frak{sl}(W)$-modules of small growth, and $\langle\EuScript J\rangle$ be the vector space generated by the set $\EuScript J$. The action of the projective functors on the category of modules of small growth defines an action of the projective functors on $\langle\EuScript J\rangle$ by linear operators. As any $\frak{sl}(W)$-module $M$ of small growth has finite length, $M$ defines the vector $[M]\in\langle\EuScript J\rangle$ which is a sum of the simple $M$-subquotients with their multiplicities in $M$.

Let $\bar\lambda$ be a decreasing $n_W$-tuple and $\EuScript J_\lambda$ be the set of simple $\frak{sl}(W)$-modules of small growth annihilated by Ker$\chi_\lambda$, and $\langle\EuScript J_\lambda\rangle$ be the free vector space generated by $\EuScript J_\lambda$. The action of PFunc$(\chi_\lambda)$ defines an action of S$_{n_W}$ on $\langle\EuScript J_\lambda\rangle$, see Proposition~\ref{Ppfsn}. We recall that $\mathbb C^{sgn}$ is the sign representation of S$_{n_W}$. Set $\langle\EuScript J_\lambda\rangle^{sgn}:=\langle\EuScript J_\lambda\rangle\otimes_\mathbb C\mathbb C^{sgn}$. As vector spaces $\langle\EuScript J_\lambda\rangle$ and $\langle\EuScript J_\lambda\rangle^{sgn}$ are identical.
\begin{lemma}An S$_{n_W}$-module $\langle\EuScript J_\lambda\rangle^{sgn}$ is a direct sum of copies of $\mathbb C^{n_W-1}$ and $\mathbb C$.\end{lemma}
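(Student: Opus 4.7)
The plan is to apply Lemma~\ref{Lsn} with $R:=\langle\EuScript J_\lambda\rangle^{sgn}$. For this I need to verify the hypothesis $R=\sum_{i\le n_W-1}R_{\bar i}$, and I will do so by showing that each basis vector $[M]$ (for $M\in\EuScript J_\lambda$) lies in $R_{\bar k}$ for some $k$. Under the identification in Proposition~\ref{Ppfsn} (transferred to the block of $\chi_\lambda$ via the equivalence of Theorem~\ref{Tweq}), the projective functor $\EuScript H_\lambda^{s_i\lambda}$ corresponds to $1+s_i\in\mathbb Z[S_{n_W}]$, so the action of $s_i$ on $\langle\EuScript J_\lambda\rangle$ is $[M]\mapsto[\EuScript H_\lambda^{s_i\lambda}M]-[M]$, and in the sign-twisted module $R$ this becomes $[M]\mapsto[M]-[\EuScript H_\lambda^{s_i\lambda}M]$.

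Suppose first that $M\in\EuScript J_\lambda$ is infinite-dimensional, with $\bar\lambda$ regular. By Proposition~\ref{Pasm}, $\mathrm{Ann}\,M$ is a Joseph ideal, and Corollary~\ref{CJid}(c) forces $\mathrm{Ann}\,M=I(s_k\lambda)$ for some $k\in\{1,\ldots,n_W-1\}$. The transfer of Corollary~\ref{Casn} from the weight $\rho$ to $\lambda$ yields $\EuScript H_\lambda^{\lambda+\varepsilon_{i+1}}M=0$ for all $i\ne k$, and the factorization $\EuScript H_\lambda^{s_i\lambda}=\EuScript H_{\lambda+\varepsilon_{i+1}}^{s_i\lambda}\circ\EuScript H_\lambda^{\lambda+\varepsilon_{i+1}}$ (recorded just before Corollary~\ref{Casn}) then gives $\EuScript H_\lambda^{s_i\lambda}M=0$ for every $i\ne k$. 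Hence $s_i\cdot[M]=[M]$ in $R$ for every $i\ne k$, so $[M]\in R_{\bar k}$.

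Next suppose $M=L_\lambda$ is finite-dimensional, which occurs precisely when $\bar\lambda$ is strictly decreasing integral. The BGG resolution yields $[L_\lambda]=\sum_{w\in S_{n_W}}(-1)^{\ell(w)}[M_{w\lambda}]$ in $[\EuScript O_\lambda]_\frak g$. Applying $1+s_i$ and using the formulas $(1+s_i)[M_{w\lambda}]=[M_{w\lambda}]+[M_{s_iw\lambda}]$ together with the length identity $(-1)^{\ell(s_iw)}=-(-1)^{\ell(w)}$, the substitution $w'=s_iw$ in the second sum collapses the result to $[\EuScript H_\lambda^{s_i\lambda}L_\lambda]=(1+s_i)[L_\lambda]=0$ in $[\EuScript O_\lambda]_\frak g$. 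Since the Grothendieck class of a finite-length module vanishes only when the module itself does, $\EuScript H_\lambda^{s_i\lambda}L_\lambda=0$ for every $i$, whence $s_i\cdot[L_\lambda]=[L_\lambda]$ in $R$ for all $i$, and $[L_\lambda]$ lies in every $R_{\bar k}$.

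These two cases together give $R=\sum_{i\le n_W-1}R_{\bar i}$, and Lemma~\ref{Lsn} delivers the claimed decomposition. The main technical obstacle is the transfer of Corollary~\ref{Casn} from $\chi_\rho$ to a general decreasing $\lambda$: for regular decreasing $\lambda$ this is a direct consequence of Theorem~\ref{Tweq}, whereas for singular decreasing $\lambda$ the $S_{n_W}$-action on $\langle\EuScript J_\lambda\rangle$ must first be set up by translation to a nearby regular weight (in the spirit of Lemma~\ref{Lsoch}), after which the parameterization of Joseph ideals in the block $\chi_\lambda$ by semi-decreasing tuples in $W^\frak g\lambda$ (via Corollary~\ref{CJid}(a,b)) lets the same vanishing argument go through verbatim.
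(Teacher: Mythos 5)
Your proposal is correct and takes essentially the same route as the paper: reduce to $\lambda=\rho$, use the identification $\EuScript H_\rho^{s_i\rho}\leftrightarrow 1+s_i$ from Proposition~\ref{Ppfsn}, invoke Corollary~\ref{Casn} together with the factorization $\EuScript H_\rho^{s_i\rho}=\EuScript H_{\rho+\varepsilon_{i+1}}^{s_i\rho}\EuScript H_\rho^{\rho+\varepsilon_{i+1}}$ to place each simple class in some $R_{\bar k}$, and conclude with Lemma~\ref{Lsn}. You are somewhat more explicit than the paper in two places --- you verify separately (via the BGG resolution) that the finite-dimensional simple constituent is fixed by every $s_i$ in $R$, and you spell out the translation-functor bookkeeping that justifies the ``without loss of generality $\lambda=\rho$'' step; the paper leaves both implicit.
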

\begin{proof} Without loss of generality we assume that $\lambda=\rho$ ($\rho$ is defined in Subsection~\ref{SSpro}). The action of $\EuScript H_\rho^{\mathrm s_k\rho}$ on $\EuScript J_\lambda$ corresponds to multiplication by (s$_k-1$) on $\langle\EuScript J_\lambda\rangle^{sgn}$(cf. Corollary~\ref{Casn}). Let $M$ be a simple $(\frak{sl}(W), \frak{sl}(V))$-module of small growth and $k$ be a number such that Ann$M$=I(s$_k\rho$). Then $\EuScript H_\rho^{\mathrm s_l\rho}M=0$ for all $k\ne l$ (Corollary~\ref{Casn}). As the simple modules generate $\langle\EuScript J_\lambda\rangle^{sgn}$, \begin{center}$\langle\EuScript J_\lambda\rangle^{sgn}=+_{i\le n_W-1}\langle\EuScript J_\lambda\rangle^{sgn}_{\bar i}$.\end{center} Therefore as an S$_{n_W}$-module $\langle\EuScript J_\lambda\rangle^{sgn}$ is a direct sum of copies of $\mathbb C^{n_W-1}$ and $\mathbb C$ (Lemma~\ref{Lsn}).\end{proof}
\begin{lemma}Let $M$ be a simple $\frak{sl}(W)$-module of small growth annihilated by Ker$\chi_\rho$ and $k\in\{1,..., n_W-1\}$ be a number. Then, for some $j\in\{1, 2\}$, \begin{center}$[\EuScript H_\rho^{\rho+\varepsilon_k}M]=j[M']$\end{center}for a simple $\frak{sl}(W)$-module $M'$ of small growth.\end{lemma}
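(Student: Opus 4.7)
The plan is to set $N := \EuScript H_\rho^{\rho+\varepsilon_k}M$ and prove the two assertions that the lemma combines: every composition factor of $N$ is isomorphic to a single simple $M'$ of small growth, and the total length of $N$ is at most two. Since the subcategory of modules of small growth is stable under projective functors (as noted earlier in the section), $N$ lies in this subcategory, and the preceding proposition ensures that it has finite length. Hence $[N]$ is a well-defined finite sum of classes of simple modules of small growth, and only the isotypic structure together with the length bound remain to be shown.

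I would split the argument according to whether $\rho+\varepsilon_k$ is regular or singular. If $k=1$ then $\rho+\varepsilon_1$ is still regular integral (merely incrementing the top coordinate of $\rho$ preserves distinctness), and $W^\frak g_{\rho+\varepsilon_1} = W^\frak g_\rho = \{e\}$ while the difference $\varepsilon_1$ is integral; the last paragraph of Subsection~\ref{SSpro} then provides that $(\EuScript H_\rho^{\rho+\varepsilon_1}, \EuScript H_{\rho+\varepsilon_1}^\rho)$ is a pair of mutually inverse equivalences of the corresponding module categories. Consequently $N$ is simple and the lemma holds with $j=1$ and $M'=N$.

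For $k\in\{2,\dots,n_W-1\}$ the tuple $\rho+\varepsilon_k$ is singular integral with $s_{k-1}\in W^\frak g_{\rho+\varepsilon_k}$. Under the identification of projective functors with $W^\frak g$-orbits of weight pairs we get $\EuScript H_{\rho+\varepsilon_k}^\rho = \EuScript H_{\rho+\varepsilon_k}^{s_{k-1}\rho}$, and the composition identity quoted just before Corollary~\ref{Casn} yields $\EuScript H_{\rho+\varepsilon_k}^\rho\circ\EuScript H_\rho^{\rho+\varepsilon_k} = \EuScript H_\rho^{s_{k-1}\rho}$. By adjunction, $\mathrm{End}(N) = \mathrm{Hom}(M,\EuScript H_\rho^{s_{k-1}\rho}M)$. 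The previous lemma identifies the class of $\EuScript H_\rho^{s_{k-1}\rho}M$ in $\langle\EuScript J_\rho\rangle^{sgn}$ with $(s_{k-1}-1)[M]$; since $\langle\EuScript J_\rho\rangle^{sgn}$ decomposes into copies of $\mathbb C$ and $\mathbb C^{n_W-1}$, on which the operator $s_{k-1}-1$ has eigenvalues in $\{0\}$ and $\{0,-2\}$ respectively, the multiplicity of $[M]$ in $[\EuScript H_\rho^{s_{k-1}\rho}M]$ is at most $2$, and it is non-zero exactly when $N\ne 0$. This already caps $\dim\mathrm{End}(N)\le 2$.

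The main obstacle will be upgrading this endomorphism bound to the isotypic length statement for $N$. For that I would use three ingredients: (i) each simple composition factor $L$ of $N$ is of small growth at $\chi_{\rho+\varepsilon_k}$, so by Proposition~\ref{Pasm} it is annihilated by a Joseph ideal, and by Corollary~\ref{CJid}(b) there is only one such ideal at this singular integral central character, forcing all composition factors of $N$ to share an annihilator; (ii) the adjunction $\mathrm{Hom}(N,L) = \mathrm{Hom}(M,\EuScript H_{\rho+\varepsilon_k}^\rho L) = \mathrm{Hom}(M,\EuScript H_\rho^{s_{k-1}\rho}M)$ shows that each simple quotient of $N$ yields a non-zero map from $M$ into the wall-crossing image, exhibiting each such quotient as a fixed module $M'$; (iii) dually, simple submodules of $N$ are also forced to be $M'$. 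Combining these with the $S_{n_W}$-module decomposition bound from the previous paragraph, $N$ is isotypic of type $M'$ with length exactly $j\in\{1,2\}$, as claimed.
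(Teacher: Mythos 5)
Your plan takes a substantially longer and different route than the paper, and it has a genuine gap in the central estimate.

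The paper's own proof is four lines. Assume $N:=\EuScript H_\rho^{\rho+\varepsilon_k}M\neq 0$. By adjunction, $\mathrm{Id}_N$ produces a nonzero map $M\to\EuScript H^\rho_{\rho+\varepsilon_k}N$, so $M$ is a subquotient of $\EuScript H^\rho_{\rho+\varepsilon_k}M'$ for \emph{some} simple subquotient $M'$ of $N$. Now simply apply the exact functor $\EuScript H_\rho^{\rho+\varepsilon_k}$ to this subquotient relation and use $\EuScript H_\rho^{\rho+\varepsilon_k}\EuScript H^\rho_{\rho+\varepsilon_k}=2\,\mathrm{Id}$: one gets that $N=\EuScript H_\rho^{\rho+\varepsilon_k}M$ is a subquotient of $M'\oplus M'$. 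This single step delivers both conclusions at once -- every composition factor of $N$ is $M'$ and the length is at most $2$ -- and there is no need to separate a regular case from a singular one, to analyse $\dim\mathrm{End}(N)$, or to invoke the $\mathrm S_{n_W}$-module decomposition of $\langle\EuScript J_\rho\rangle^{sgn}$ at all.

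The specific step in your argument that does not go through is the claim that the eigenvalues of $\mathrm s_{k-1}-1$ on $\langle\EuScript J_\rho\rangle^{sgn}$ bound the multiplicity of $[M]$ in $(\mathrm s_{k-1}-1)[M]$. That multiplicity is a diagonal matrix entry in the basis of simple modules, and for an operator whose square is a scalar multiple of itself (here $(\mathrm s_{k-1}-1)^2=-2(\mathrm s_{k-1}-1)$) the diagonal entries are \emph{not} constrained by the eigenvalues unless the basis is adapted to the eigenspaces. There is no reason the simple-module basis is so adapted, so the bound $\dim\mathrm{End}(N)\le 2$ does not follow from this spectral observation. Your ingredients (i)--(iii) for "upgrading" an endomorphism bound to the isotypic statement also do not close this: (ii) only shows each simple quotient $L$ of $N$ satisfies $\mathrm{Hom}(M,\EuScript H^\rho_{\rho+\varepsilon_k}L)\neq 0$, which by itself does not force all such $L$ to coincide. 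The honest way to force coincidence is precisely the paper's step of applying $\EuScript H_\rho^{\rho+\varepsilon_k}$ to the relation and using $\EuScript H_\rho^{\rho+\varepsilon_k}\EuScript H^\rho_{\rho+\varepsilon_k}=2\,\mathrm{Id}$, at which point your $\mathrm S_{n_W}$ machinery is superfluous.
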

\begin{proof} Assume that $\EuScript H_\rho^{\rho+\varepsilon_k}M$ is nonzero. The identity homomorphism \begin{center}Id: $ \EuScript H_\rho^{\rho+\varepsilon_k}M\to\EuScript H_\rho^{\rho+\varepsilon_k}M$\end{center} gives rise to a nonzero homomorphism\begin{center}$M\to \EuScript H^\rho_{\rho+\varepsilon_k}\EuScript H_\rho^{\rho+\varepsilon_k}M=\EuScript H_\rho^{\mathrm s_k\rho}M$.\end{center}Let $M'$ be a simple subquotient of $\EuScript H_\rho^{\rho+\varepsilon_k}M$ such that $M$ is a subquotient of $\EuScript H^\rho_{\rho+\varepsilon_k}M'$. As $\EuScript H_\rho^{\rho+\varepsilon_k}\EuScript H^\rho_{\rho+\varepsilon_k}=2$Id, we have $\EuScript H_\rho^{\rho+\varepsilon_k}\EuScript H^\rho_{\rho+\varepsilon_k}M'=M'\oplus M'$. Therefore $\EuScript H_\rho^{\rho+\varepsilon_k}M$ is semisimple of length 2 or less and $M'$ is, up to isomorphism, the only simple constituent of $M$.\end{proof}
\begin{lemma} Let $M$ be a simple $\frak{sl}(W)$-module of small growth annihilated by I$(\rho+\varepsilon_k)$. Then, for some $j\in\{1, 2\}$,\begin{center}$[\EuScript H^\rho_{\rho+\varepsilon_k}M]=j[M']+[T]$,\end{center} where $M'$ is a simple $\frak{sl}(W)$-module of small growth and $T$ is some $\frak{sl}(W$)-module of small growth such that \begin{center}$\EuScript H_\rho^{\rho+\varepsilon_k}T=0$, $\EuScript H_\rho^{\rho+\varepsilon_k}M'=\frac{2}{j}M$.\end{center}\end{lemma}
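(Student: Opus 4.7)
\emph{Plan.} The strategy mirrors that of the preceding lemma, now applied on the opposite side of the wall. The central ingredient is the identity
\[
\EuScript H_\rho^{\rho+\varepsilon_k}\EuScript H^\rho_{\rho+\varepsilon_k}=2\,\mathrm{Id}
\]
on modules annihilated by $\mathrm{Ker}\,\chi_{\rho+\varepsilon_k}$, already invoked in the previous lemma, which yields $\EuScript H_\rho^{\rho+\varepsilon_k}\EuScript H^\rho_{\rho+\varepsilon_k}M=M\oplus M$ and drives the Grothendieck-group bookkeeping.

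First I would note that $\EuScript H^\rho_{\rho+\varepsilon_k}M$ is again a module of small growth (the category is stable under projective functors, as recalled at the beginning of the section) and hence of finite length by the earlier proposition. Writing $[\EuScript H^\rho_{\rho+\varepsilon_k}M]=\sum_i n_i[N_i]$ in the Grothendieck group, I would split the indices according to whether $\EuScript H_\rho^{\rho+\varepsilon_k}N_i$ vanishes; the $N_i$ in the kernel are collected into the candidate $T$, which satisfies $\EuScript H_\rho^{\rho+\varepsilon_k}T=0$ by exactness.

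For each $N_i$ with $\EuScript H_\rho^{\rho+\varepsilon_k}N_i\ne0$ the preceding lemma applies to the simple small-growth $\chi_\rho$-module $N_i$, yielding $[\EuScript H_\rho^{\rho+\varepsilon_k}N_i]=j_i[M_i']$ with $j_i\in\{1,2\}$ and a unique simple $M_i'$. Since $N_i$ is a subquotient of $\EuScript H^\rho_{\rho+\varepsilon_k}M$ and $\EuScript H_\rho^{\rho+\varepsilon_k}$ is exact, $\EuScript H_\rho^{\rho+\varepsilon_k}N_i$ is a subquotient of $M\oplus M$, forcing $M_i'\cong M$ for every such $i$. Summing classes produces the numerical balance $\sum_i n_ij_i=2$.

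The main obstacle is to show that a single isomorphism class of $N_i$ accounts for the entire non-$T$ contribution, so that the contribution takes the form $j[M']$ for one simple $M'$. For this I would establish that $\EuScript H^\rho_{\rho+\varepsilon_k}M$ is indecomposable. Biadjointness of $\EuScript H_\rho^{\rho+\varepsilon_k}$ and $\EuScript H^\rho_{\rho+\varepsilon_k}$ yields
\[
\mathrm{End}\bigl(\EuScript H^\rho_{\rho+\varepsilon_k}M\bigr)=\mathrm{Hom}\bigl(\EuScript H_\rho^{\rho+\varepsilon_k}\EuScript H^\rho_{\rho+\varepsilon_k}M,M\bigr)=\mathrm{Hom}(M\oplus M,M)=\mathbb C^{2},
\]
a two-dimensional algebra. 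Tracing the unit and counit of the adjunction through the zigzag identities, combined with the factor of two in $\EuScript H_\rho^{\rho+\varepsilon_k}\EuScript H^\rho_{\rho+\varepsilon_k}=2\,\mathrm{Id}$, shows that this algebra is the local ring $\mathbb C[t]/(t^{2})$ rather than $\mathbb C\oplus\mathbb C$; thus $\EuScript H^\rho_{\rho+\varepsilon_k}M$ is indecomposable, its socle is simple, and this simple socle serves as the unique $M'$. The remaining simple subquotients either coincide with $M'$ (each extra copy enlarging the multiplicity $j$) or lie in $T$. Once uniqueness of $M'$ is in hand, the balance $n_{M'}j_{M'}=2$ with $j_{M'}\in\{1,2\}$ leaves the two possibilities $(n_{M'},j_{M'})\in\{(2,1),(1,2)\}$; in both $\EuScript H_\rho^{\rho+\varepsilon_k}M'=j_{M'}M=(2/n_{M'})M$, and setting $j:=n_{M'}$ matches the lemma statement.
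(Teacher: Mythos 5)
The overall strategy parallels the paper's: both start from $\EuScript H_\rho^{\rho+\varepsilon_k}\EuScript H^\rho_{\rho+\varepsilon_k}=2\,\mathrm{Id}$, both conclude that at most two simple subquotients of $\EuScript H^\rho_{\rho+\varepsilon_k}M$ survive $\EuScript H_\rho^{\rho+\varepsilon_k}$, and both aim to show the surviving subquotients are isomorphic by inspecting $\mathrm{End}(\EuScript H^\rho_{\rho+\varepsilon_k}M)$. You also correctly identify that the crux is whether this $2$-dimensional algebra is local or split. The gap is precisely there.

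Your argument deduces $\dim_{\mathbb C}\mathrm{End}(\EuScript H^\rho_{\rho+\varepsilon_k}M)=2$ from biadjointness and $GF=2\,\mathrm{Id}$, and then asserts that ``tracing the unit and counit of the adjunction through the zigzag identities, combined with the factor of two'' forces the algebra to be $\mathbb C[t]/(t^2)$ rather than $\mathbb C\oplus\mathbb C$. This step is not justified, and I do not see how to make it go through on formal grounds alone: a $2$-dimensional unital $\mathbb C$-algebra is either $\mathbb C[t]/(t^2)$ or $\mathbb C\times\mathbb C$, and the zigzag identities together with $GF=2\,\mathrm{Id}$ constrain only the dimension, not the ring structure. (They say, roughly, that the unit $M\to GFM\cong M^{\oplus 2}$ and counit $GFM\to M$ compose to a nonzero scalar; this holds in both rings.) What the paper actually does at this point is to compute the endomorphism ring \emph{explicitly}: it identifies $\mathrm{Hom}(\EuScript H^\rho_{\rho+\varepsilon_k}M,\EuScript H^\rho_{\rho+\varepsilon_k}M)$, via $M$ being simple, with the space of natural transformations of the projective functor $\EuScript H^\rho_{\rho+\varepsilon_k}$ to itself, and then uses the Bernstein--Gelfand classification to identify that with $\mathrm{End}(\mathrm P_{\mathrm s_k\rho})$, which equals $\mathbb C[x]/(x^2)$ by the known structure of $\mathrm P_{\mathrm s_k\rho}$ (a non-split self-extension of Verma modules). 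That concrete representation-theoretic input is exactly what your abstract argument is missing.

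A second, smaller, gap: even granting that $\mathrm{End}(\EuScript H^\rho_{\rho+\varepsilon_k}M)\cong\mathbb C[x]/(x^2)$, indecomposability by itself does not give that the socle is simple, nor that only one isomorphism class of simple subquotient survives. The paper closes this by constructing, for each putative surviving simple $\hat M_i$, a nilpotent endomorphism $\phi_i$ of $\EuScript H^\rho_{\rho+\varepsilon_k}M$ with image $\hat M_i$ (using that $\hat M_i$ is both a sub and a quotient, by adjunction), and then observing that $\phi_1,\phi_2$ lie in the $1$-dimensional radical $(x)\subset\mathbb C[x]/(x^2)$, hence are proportional, hence have the same image. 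You would need an argument of this type to finish; a bare appeal to ``simple socle'' is not enough.
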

\begin{proof} Let $0\subset M^0\subset...\subset M^i=M$ be a composition series of $\EuScript H^\rho_{\rho+\varepsilon_k}M$.
As $\EuScript H_\rho^{\rho+\varepsilon_k}\EuScript H^\rho_{\rho+\varepsilon_k}M=2$Id, the number of simple subquotients $M^{l+1}/M^l$ such that $\EuScript H_\rho^{\rho+\varepsilon_k}(M^{l+1}/M^l)$ is nonzero does not exceed 2. Assume that there is a unique such subquotient $M'$. Then there exists a semisimple $\frak{sl}(W)$-module $T$ such that \begin{center}$\EuScript H^\rho_{\rho+\varepsilon_k}[M]=[M']+[T]$,\end{center} and\begin{center} $[\EuScript H_\rho^{\rho+\varepsilon_k}M']=2[M]$, $\EuScript H_\rho^{\rho+\varepsilon_k}[T]=0$.\end{center}

Suppose that such a subquotient is not unique. Then there exist two simple $\frak{sl}(W)$-modules $\hat M_1$ and $\hat M_2$ and a semisimple $\frak{sl}(W)$-module $T$ such that \begin{center}$[\EuScript H^\rho_{\rho+\varepsilon_k}M]=[\hat M_1]+[\hat M_2]+[T]$ and $[\EuScript H_\rho^{\rho+\varepsilon_k}\hat M_1]=[\EuScript H_\rho^{\rho+\varepsilon_k}\hat M_2]=[M]$.\end{center} We have Hom$_{\frak{sl}(W)}(\EuScript H_\rho^{\rho+\varepsilon_k}\hat M_1, \EuScript H_\rho^{\rho+\varepsilon_k}\hat M_1)=$\begin{center}Hom$_{\frak{sl}(W)}(\hat M_1, \EuScript H^\rho_{\rho+\varepsilon_k}\EuScript H_\rho^{\rho+\varepsilon_k}\hat M_1)=$Hom$_{\frak{sl}(W)}(\EuScript H^\rho_{\rho+\varepsilon_k}\EuScript H_\rho^{\rho+\varepsilon_k}\hat M_1, \hat M_1)$.\end{center}
Then there exists a sequence of nonzero $\frak{sl}(W)$-homomorphisms\begin{center}
$\EuScript H_\rho^{\mathrm s_k\rho}\hat M_1\to\hat M_1\to\EuScript H_\rho^{\mathrm s_k\rho}\hat M_1.$\end{center}
Therefore there exists an $\frak{sl}(W)$-homomorphism $\phi_1: \EuScript H^\rho_{\rho+\varepsilon_k}M\to \EuScript H^\rho_{\rho+\varepsilon_k}M$ whose image is $\hat M_1$. In the same way there exists an $\frak{sl}(W)$-homomorphism \begin{center}$\phi_2:\EuScript H^\rho_{\rho+\varepsilon_k}M\to\EuScript H^\rho_{\rho+\varepsilon_k}M$\end{center}with image $\hat M_2$. Assume that $\hat M_1\not\cong\hat M_2$. Then both compositions $\phi_1\circ\phi_2$ and $\phi_2\circ\phi_1$ equal zero. We have \begin{center}Hom$_{\frak{sl}(W)}(\EuScript H^\rho_{\rho+\varepsilon_k}M, \EuScript H^\rho_{\rho+\varepsilon_k}M)=$\\Hom$_{\frak{sl}(W)}(M, \EuScript H_\rho^{\rho+\varepsilon_k}\EuScript H^\rho_{\rho+\varepsilon_k}M)=$Hom$_{\frak{sl}(W)}(M, \EuScript H^{\rho+\varepsilon_k}_{\rho+\varepsilon_k}M)$.\end{center}
As $M$ is simple, Hom$_{\frak{sl}(W)}(M, \EuScript H^{\rho+\varepsilon_k}_{\rho+\varepsilon_k}M)=$Hom$_{\frak{sl}(W)}($Id$, \EuScript H^{\rho+\varepsilon_k}_{\rho+\varepsilon_k})$, where the right-hand side refers to homomorphisms of functors. Therefore\begin{center} Hom$_{\frak{sl}(W)}(\EuScript H^\rho_{\rho+\varepsilon_k}M, \EuScript H^\rho_{\rho+\varepsilon_k}M)=$Hom$_{\frak{sl}(W)}(\EuScript H^\rho_{\rho+\varepsilon_k}, \EuScript H^\rho_{\rho+\varepsilon_k})$.\end{center} However,\begin{center} Hom$_{\frak{sl}(W)}(\EuScript H^\rho_{\rho+\varepsilon_k}, \EuScript H^\rho_{\rho+\varepsilon_k})=$Hom$_{\frak{sl}(W)}($P$_{\mathrm s_k\rho}, $P$_{\mathrm s_k\rho})=\mathbb C[x]/(x^2)$.\end{center} As $\phi_1\circ\phi_2=0$, $\phi_1$ and $\phi_2$ are identified with $\alpha_1x$ and $\alpha_2x$ for some constants $\alpha_1, \alpha_2\in\mathbb C$. Then $\alpha_1$ is proportional to $\alpha_2$ and therefore \begin{center}$\hat M_1\cong\hat M_2$.\end{center}\end{proof}
\begin{corollary}\label{Crisi} There is a natural bijection between the set of simple infinite-dimensional $\frak{sl}(W)$-modules of small growth annihilated by I$(s_k\rho)$ and the set of simple $\frak{sl}(W)$-modules of small growth annihilated by I($\rho+\varepsilon_k)$.\end{corollary}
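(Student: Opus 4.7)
The strategy is to exhibit mutually inverse bijections $\Phi$ and $\Psi$ via the wall-crossing translation functors $T:=\EuScript H_\rho^{\rho+\varepsilon_k}$ and its adjoint $T^*:=\EuScript H^\rho_{\rho+\varepsilon_k}$. These are exact, mutually adjoint, and satisfy both $T^*T=\EuScript H_\rho^{\mathrm s_k\rho}$ and $TT^*=2\cdot\mathrm{Id}$ on the category of modules of central character $\chi_{\rho+\varepsilon_k}$. The first preceding lemma guarantees that for a simple small-growth module $M$ at $\chi_\rho$ with $TM\ne 0$, the class $[TM]=j[M']$ concentrates on a single simple small-growth module $M'$ at $\chi_{\rho+\varepsilon_k}$, with $j\in\{1,2\}$. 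The second preceding lemma guarantees that for a simple small-growth $M'$ at $\chi_{\rho+\varepsilon_k}$, the module $T^*M'$ contains a distinguished simple constituent $M$, uniquely characterised by the condition $TM\ne 0$, satisfying $TM=(2/j)M'$. Set $\Phi(M):=M'$ and $\Psi(M'):=M$.

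The proof then reduces to three checks. First, the image $\Psi(M')$ is automatically infinite-dimensional: the only simple finite-dimensional $\frak{sl}(W)$-module with central character $\chi_\rho$ is the trivial module $\mathrm L_\rho$, and $T\mathrm L_\rho=0$ since $T\mathrm L_\rho$ is a finite-dimensional module with generalised central character $\chi_{\rho+\varepsilon_k}$ and no dominant integral weight lies in the $\mathrm S_{n_W}$-orbit of $\rho+\varepsilon_k$ (this weight has two equal adjacent coordinates at positions $k-1$ and $k$ and hence no regular representative). Second, one verifies $\Phi\circ\Psi=\mathrm{Id}$ and $\Psi\circ\Phi=\mathrm{Id}$: the first identity is immediate from $T\Psi(M')=(2/j)M'$, while for the second one uses the adjunction embedding $M\hookrightarrow T^*TM=j\,T^*\Phi(M)$ together with the observation that inside $T^*\Phi(M)$ only the $\Psi(\Phi(M))$-isotypic summand survives an application of $T$, while $TM\ne 0$ forces $M$ to lie in that summand.

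The main obstacle is checking that $\Phi$ is everywhere defined, i.e.\ that $TM\ne 0$ for every simple infinite-dimensional small-growth module $M$ with $\mathrm{Ann}\,M=\mathrm I(\mathrm s_k\rho)$. If $TM=0$ then $\EuScript H_\rho^{\mathrm s_k\rho}M=T^*TM=0$, and this must be excluded. The cleanest route uses A.~Joseph's left-cell description of primitive ideals containing $\mathrm{Ker}\,\chi_\rho$~\cite{Jo4}: translation by $\EuScript H_\rho^{\mathrm s_k\rho}$ preserves the left cell that distinguishes $\mathrm I(\mathrm s_k\rho)$ among the submaximal primitive ideals with central character $\chi_\rho$, and in particular cannot annihilate any module with that annihilator, since $\EuScript H_\rho^{\mathrm s_k\rho}\mathrm L_{\mathrm s_k\rho}$ admits $\mathrm L_\rho$ as a composition factor. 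Once this nonvanishing is established, the bijection is a direct bookkeeping consequence of the two preceding lemmas.
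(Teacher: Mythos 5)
Your proposal is essentially the route the paper intends: Corollary~\ref{Crisi} is stated without proof precisely because it is a bookkeeping consequence of the two lemmas immediately before it, which you package into the mutually inverse maps $\Phi$ and $\Psi$ via the translation functors $T=\EuScript H_\rho^{\rho+\varepsilon_k}$ and $T^*=\EuScript H^\rho_{\rho+\varepsilon_k}$. Your checks of $\Phi\circ\Psi=\mathrm{Id}$, $\Psi\circ\Phi=\mathrm{Id}$, and of infinite-dimensionality on the $\Psi$-side are all sound (the last via the observation that $\chi_{\rho+\varepsilon_k}$ is singular integral, hence supports no finite-dimensional simple modules; your claim that the repeated coordinates sit at positions $k-1,k$ should read $k,k+1$ under the paper's convention that $\varepsilon_i$ has its $1$ at position $i+1$, but this is harmless). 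The one spot where you depart from the paper's toolbox is the nonvanishing of $TM$ for $M$ annihilated by $\mathrm I(\mathrm s_k\rho)$: you reach for Joseph's left-cell/$\tau$-invariant description, which is correct (the $\tau$-invariant of $\mathrm I(\mathrm s_k\rho)$ controls vanishing of the wall-crossing functor, and $\theta_{s_k}\mathrm L_{\mathrm s_k\rho}\ne0$), but the paper already supplies a more self-contained argument: Corollary~\ref{Casn} says $\EuScript H_\rho^{\rho+\varepsilon_{i+1}}M=0$ for all $i\ne k$ when $\mathrm{Ann}\,M=\mathrm I(\mathrm s_k\rho)$, and the Proposition immediately following it (the generalized Casselman-type statement from \cite{BeBe2}) says some such translation is nonzero for any infinite-dimensional finitely generated $M$ at $\chi_\rho$; together these force $TM\ne 0$. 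Either route closes the gap; citing the paper's own Proposition would be tighter.
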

\begin{lemma} The subspace of S$_{n_W}$-invariants in $\langle\EuScript J_\lambda\rangle$ is one-dimensional and is generated by the class of the simple finite-dimensional module.\end{lemma}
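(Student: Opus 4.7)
The plan is to first decompose $\langle\EuScript J_\rho\rangle$ into manageable pieces, then verify that the class of the finite-dimensional simple is invariant, and finally rule out any additional invariants by a kernel/injectivity argument for the operators $\EuScript H_\rho^{\mathrm s_k\rho}$. Without loss of generality take $\bar\lambda=\rho$. By Proposition~\ref{Pasm} and Corollary~\ref{CJid}, every simple of small growth annihilated by $\mathrm{Ker}\,\chi_\rho$ is either the unique finite-dimensional simple $L_{\mathrm{fin}}$ or an infinite-dimensional simple annihilated by one of the Joseph ideals $\mathrm I(\mathrm s_k\rho)$, $k\in\{1,\ldots,n_W-1\}$, so
$$\langle\EuScript J_\rho\rangle=\mathbb C[L_{\mathrm{fin}}]\oplus\bigoplus_{k=1}^{n_W-1}V_k,$$
where $V_k$ is spanned by the classes of infinite-dimensional simples in $\EuScript J_\rho$ annihilated by $\mathrm I(\mathrm s_k\rho)$. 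The preceding lemma identifies $\EuScript H_\rho^{\mathrm s_k\rho}$ with the operator $\mathrm s_k-1$ on $\langle\EuScript J_\rho\rangle^{\mathrm{sgn}}$, so a vector is $\mathrm S_{n_W}$-invariant precisely when every $\EuScript H_\rho^{\mathrm s_k\rho}$ kills it.

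First I would verify that $[L_{\mathrm{fin}}]$ is invariant. For each $k$, the translation-to-wall functor $\EuScript H_\rho^{\rho+\varepsilon_k}$ annihilates the trivial module $L_{\mathrm{fin}}$: its image would have central character $\chi_{\rho+\varepsilon_k}$, whose Weyl orbit admits no representative $\mu$ such that $\mu-\rho$ is $\frak{sl}(W)$-dominant integral (the shift $\varepsilon_k$ itself is not $\frak{sl}(W)$-dominant), so no finite-dimensional simple sits in the target block; since translation functors preserve finite-dimensionality, the image must be zero. Applying $\EuScript H^{\mathrm s_k\rho}_{\rho+\varepsilon_k}$ on top and using the factorization of $\EuScript H_\rho^{\mathrm s_k\rho}$ through the $\mathrm s_k$-wall yields $\EuScript H_\rho^{\mathrm s_k\rho}L_{\mathrm{fin}}=0$ for every $k$, hence $\mathrm s_k[L_{\mathrm{fin}}]=[L_{\mathrm{fin}}]$.

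For the reverse direction, write any invariant as $v=c_0[L_{\mathrm{fin}}]+\sum_l v_l$ with $v_l\in V_l$. By Corollary~\ref{Casn}, applied via the factorization of $\EuScript H_\rho^{\mathrm s_l\rho}$ through the $\mathrm s_l$-wall, the operator $\EuScript H_\rho^{\mathrm s_l\rho}$ annihilates $V_k$ for $k\ne l$ and also $[L_{\mathrm{fin}}]$, so the invariance condition decouples to $\EuScript H_\rho^{\mathrm s_l\rho}v_l=0$ for every $l$. It then suffices to prove $\EuScript H_\rho^{\mathrm s_l\rho}|_{V_l}$ has trivial kernel. Factor $\EuScript H_\rho^{\mathrm s_l\rho}=\EuScript H^{\mathrm s_l\rho}_{\rho+\varepsilon_l}\circ\EuScript H_\rho^{\rho+\varepsilon_l}$. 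The first factor sends each basis class $[M]\in V_l$ to $j_M[M']$ with $j_M\in\{1,2\}$, where $M\mapsto M'$ is the bijection of Corollary~\ref{Crisi}. The second factor, by the analogue for the $\mathrm s_l\rho$-chamber of the decomposition $\EuScript H^\rho_{\rho+\varepsilon_k}M=j[M']+[T]$ appearing above, sends $[M']$ to $j'_{M'}[M'']+[T']$, where $M''\in V_l$ corresponds to $M'$ under the inverse bijection and $T'$ satisfies $\EuScript H_\rho^{\rho+\varepsilon_l}T'=0$. By Corollary~\ref{Casn}, the composition factors of $T'$ have annihilator $\mathrm I(\mathrm s_j\rho)$ with $j\ne l$ or are finite-dimensional, so $T'$ contributes nothing to $V_l$. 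Hence the $V_l$-component of $\EuScript H_\rho^{\mathrm s_l\rho}[M]$ equals $j_Mj'_{M'}[M'']\ne0$, and since $M\mapsto M''$ is a bijection of the basis of $V_l$ (being a composition of bijections), the restriction $\EuScript H_\rho^{\mathrm s_l\rho}|_{V_l}$ is injective. This forces $v_l=0$ for every $l$, so $v\in\mathbb C[L_{\mathrm{fin}}]$, completing the proof.

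The main technical obstacle is the last step, specifically controlling the correction term $T'$ arising when translating out of the wall: the key input is that the vanishing $\EuScript H_\rho^{\rho+\varepsilon_l}T'=0$, combined with Corollary~\ref{Casn}, excludes $\mathrm I(\mathrm s_l\rho)$ from the annihilators of composition factors of $T'$. This is exactly what prevents cancellation in the $V_l$-component of $\EuScript H_\rho^{\mathrm s_l\rho}[M]$ and delivers injectivity of the restriction to $V_l$, which in turn ensures that the invariant subspace collapses to the one-dimensional $\mathbb C[L_{\mathrm{fin}}]$.
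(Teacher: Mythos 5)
Your proof is correct and rests on the same structural inputs as the paper's: the translation of the invariance condition into the condition that all $\EuScript H_\rho^{\mathrm s_k\rho}$ kill $v$, the annihilator stratification controlled by Corollary~\ref{Casn}, and the bijection from Corollary~\ref{Crisi}. Where you differ is in which translation functor you apply to $v=\sum_i\alpha_i[M_i]$. The paper applies only the translation-to-wall $\EuScript H_\rho^{\rho+\varepsilon_{k+1}}$: since $\EuScript H_\rho^{\rho+\varepsilon_{k+1}}\EuScript H^\rho_{\rho+\varepsilon_{k+1}}=2\,\mathrm{Id}$, the vanishing $\EuScript H_\rho^{\mathrm s_k\rho}v=0$ forces $\EuScript H_\rho^{\rho+\varepsilon_{k+1}}v=0$ directly, and then Corollary~\ref{Casn} removes the contributions from simples annihilated by $I(\mathrm s_j\rho)$ with $j\ne k$ (and the finite-dimensional one), so that Corollary~\ref{Crisi} applied once gives $\alpha_i=0$ for every infinite-dimensional $M_i$ annihilated by $I(\mathrm s_k\rho)$. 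You instead apply the full wall-crossing $\EuScript H_\rho^{\mathrm s_l\rho}$, which produces the translation-out-of-wall correction term $T'$; you must then separately argue, via $\EuScript H_\rho^{\rho+\varepsilon_l}T'=0$ and Corollary~\ref{Casn} again, that $T'$ contributes nothing to $V_l$. This extra bookkeeping is correct but unnecessary: by stopping at the wall, the paper never introduces $T'$ at all. A genuine addition on your side is the explicit verification that $\EuScript H_\rho^{\rho+\varepsilon_k}L_{\mathrm{fin}}=0$ (because the singular central character $\chi_{\rho+\varepsilon_k}$ has no finite-dimensional simple in its block), and hence that $[L_{\mathrm{fin}}]$ is actually invariant — a step the paper leaves implicit but which is needed to conclude that the invariant subspace is one-dimensional rather than zero. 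Minor caveat: you factor through the $\mathrm s_l\rho$-chamber and appeal to an unstated ``analogue'' of the $\EuScript H^\rho_{\rho+\varepsilon_k}$ lemma; it would be cleaner to use the factorization $\EuScript H_\rho^{\mathrm s_l\rho}=\EuScript H^\rho_{\rho+\varepsilon_{l+1}}\EuScript H_\rho^{\rho+\varepsilon_{l+1}}$ the paper provides, so that the lemma applies verbatim.
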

\begin{proof} Without loss of generality we assume that $\lambda=\rho$. Let $\{\alpha_i\}$ be numbers such that $\Sigma \alpha_i[M_i]$ is non-zero and S$_{n_W}$-invariant. Then $\EuScript H_\rho^{\rho+\varepsilon_k}\alpha_i[M_i]=0$, i.e. $\Sigma_i\alpha_i\EuScript H_\rho^{\rho+\varepsilon_k}M_i=0$. Therefore $\alpha_i=0$ for all infinite-dimensional simple modules annihilated by I(s$_k\rho$). Hence $\alpha_i\ne 0$ implies that $M_i$ is finite-dimensional.\end{proof}
\begin{corollary}For any $i, j\in\{1,..., n_W-1\}$ there is a natural bijection between the set of simple infinite-dimensional $\frak{sl}(W)$-modules of small growth annihilated by I$(s_i\rho)$ and the set of simple infinite-dimensional $\frak{sl}(W)$-modules of small growth annihilated by I($s_j\rho)$.\end{corollary}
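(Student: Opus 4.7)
The plan is to exploit the $S_{n_W}$-module structure on $\langle\EuScript J_\lambda\rangle^{sgn}$ established in the previous two lemmas to show that $|\mathcal J_{\rho,k}^\infty|$ is independent of $k$, where $\mathcal J_{\rho,k}^\infty$ denotes the set of simple infinite-dimensional $\frak{sl}(W)$-modules of small growth annihilated by I$(\mathrm s_k\rho)$, and then to read off a canonical bijection from the isotypic decomposition. Take $\lambda=\rho$; by the decomposition lemma $\langle\EuScript J_\rho\rangle^{sgn}\cong(\mathbb C^{n_W-1})^{\oplus a}\oplus\mathbb C^{\oplus b}$ as an $S_{n_W}$-module, and by the invariants lemma $b=1$ with the invariant line spanned by $[\mathrm L_\rho]$. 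The $\bar k$-fixed subspace of $\mathbb C^{n_W-1}$ is one-dimensional (spanned by the zero-sum vector constant on $\{1,\dots,k\}$ and on $\{k+1,\dots,n_W\}$) and the trivial summand is entirely $\bar k$-fixed, so $\dim\langle\EuScript J_\rho\rangle^{sgn}_{\bar k}=a+1$ for every $k$.

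For $M\in\mathcal J_{\rho,k}^\infty$, Corollary~\ref{Casn} yields $\EuScript H_\rho^{\rho+\varepsilon_{l+1}}M=0$ for $l\ne k$, and hence $\EuScript H_\rho^{\mathrm s_l\rho}M=0$ for $l\ne k$. Since $\EuScript H_\rho^{\mathrm s_l\rho}$ acts on $\langle\EuScript J_\rho\rangle^{sgn}$ as multiplication by $(\mathrm s_l-1)$, this gives $\mathrm s_l[M]=[M]$ in the signed action, so $[M]\in\langle\EuScript J_\rho\rangle^{sgn}_{\bar k}$; the invariant class $[\mathrm L_\rho]$ lies there as well. Since $\EuScript J_\rho$ is a basis of $\langle\EuScript J_\rho\rangle^{sgn}$, the subset $\{[M]:M\in\mathcal J_{\rho,k}^\infty\}\cup\{[\mathrm L_\rho]\}$ is linearly independent inside $\langle\EuScript J_\rho\rangle^{sgn}_{\bar k}$, producing the upper bound $|\mathcal J_{\rho,k}^\infty|\le a$.

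By Proposition~\ref{Pasm} together with Corollary~\ref{CJid}~c), every simple $\frak{sl}(W)$-module of small growth with central character $\chi_\rho$ is either $\mathrm L_\rho$ or infinite-dimensional with annihilator I$(\mathrm s_k\rho)$ for some $k\in\{1,\dots,n_W-1\}$. Hence $\EuScript J_\rho=\{\mathrm L_\rho\}\sqcup\bigsqcup_{k=1}^{n_W-1}\mathcal J_{\rho,k}^\infty$ and
\[\sum_{k=1}^{n_W-1}|\mathcal J_{\rho,k}^\infty|=\dim\langle\EuScript J_\rho\rangle^{sgn}-1=a(n_W-1).\]
Combined with the bound $|\mathcal J_{\rho,k}^\infty|\le a$ this forces equality $|\mathcal J_{\rho,k}^\infty|=a$ for every $k$, and in particular $|\mathcal J_{\rho,i}^\infty|=|\mathcal J_{\rho,j}^\infty|$. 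The natural bijection itself comes from the isotypic decomposition of $\langle\EuScript J_\rho\rangle^{sgn}$: inside each of the $a$ copies of $\mathbb C^{n_W-1}$ the one-dimensional $\bar i$- and $\bar j$-fixed lines are canonically paired, and $\{[M]:M\in\mathcal J_{\rho,i}^\infty\}$ respectively $\{[M]:M\in\mathcal J_{\rho,j}^\infty\}$ form bases of $\langle\EuScript J_\rho\rangle^{sgn}_{\bar i}/\mathbb C[\mathrm L_\rho]$ respectively $\langle\EuScript J_\rho\rangle^{sgn}_{\bar j}/\mathbb C[\mathrm L_\rho]$, which yields the required matching.

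The main obstacle I anticipate is confirming that $[\mathrm L_\rho]$ really lies in $\langle\EuScript J_\rho\rangle^{sgn}_{\bar k}$ for every $k$; this amounts to tracing the sign twist in the definition of $\langle\EuScript J_\rho\rangle^{sgn}$ so that the invariants lemma identifies $\mathbb C[\mathrm L_\rho]$ with the trivial $S_{n_W}$-isotypic inside the signed module, from which every $\bar k$-fixed subspace inherits this invariant line automatically.
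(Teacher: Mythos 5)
Your argument is correct and fills in the counting step the paper leaves implicit: the corollary follows from the $S_{n_W}$-module structure of $\langle\EuScript J_\rho\rangle^{sgn}$ exactly as you describe, with the upper bound $|\mathcal J_{\rho,k}^\infty|\le a$ coming from linear independence inside $\langle\EuScript J_\rho\rangle^{sgn}_{\bar k}$ and the total count forcing equality. The obstacle you flag is real but harmless — the preceding lemma on $S_{n_W}$-invariants should read $\langle\EuScript J_\lambda\rangle^{sgn}$ rather than $\langle\EuScript J_\lambda\rangle$ (one checks $\EuScript H_\rho^{\mathrm s_k\rho}[\mathrm L_\rho]=0$ via a BGG-resolution computation, so on the untwisted module $s_k$ sends $[\mathrm L_\rho]$ to $-[\mathrm L_\rho]$), and reading it in the signed module as the proof there actually does is exactly what your argument needs.
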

The following statement can be considered as a weak analogue of Theorem~\ref{Tweq}.
\begin{corollary}\label{Cii} Let $\bar\lambda_1, \bar\lambda_2$ be semi-decreasing $n_W$-tuples such that\begin{center}m$(\lambda_1)$=m$(\lambda_2)$.\end{center} There is a natural bijection between the set of simple infinite-dimensional $\frak{sl}(W)$-modules of small growth annihilated by I$(\lambda_1)$ and the set of simple infinite-dimensional $\frak{sl}(W)$-modules of small growth annihilated by \begin{center}I$(\lambda_1)$ and I($\lambda_2)$.\end{center}\end{corollary}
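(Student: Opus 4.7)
The plan is to use the monodromy invariant to reduce the two semi-decreasing tuples to a common reference primitive ideal and then transport the bijection along the category equivalences of Lemma~\ref{Lsoch} and Theorem~\ref{Tweq}. By Proposition~\ref{Pasm} the annihilator of any simple infinite-dimensional $\frak{sl}(W)$-module of small growth is a Joseph ideal, so each of the two sets in the statement consists of small-growth modules whose annihilator is a Joseph ideal containing the prescribed $I(\bar\lambda_j)$; by Corollary~\ref{CJid} the Joseph ideals themselves are parametrised by semi-decreasing tuples.

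For the non-regular-integral case $m(\bar\lambda_1)=m(\bar\lambda_2)\ne 1$, Lemma~\ref{Lsoch} furnishes for each $j$ a parameter $s_j\in\mathbb{C}$ with $\mathrm{e}^{2\pi is_j}=m(\bar\lambda_j)$ and an equivalence between $\frak{sl}(W)$-modules annihilated by $I(\bar\lambda_j)$ and by $I(s_j,n_W-1,\dots,1)$; since the monodromies coincide, $s_1$ and $s_2$ can be chosen equal, so the two equivalences have a common target category and their composition equates the categories annihilated by $I(\bar\lambda_1)$ and by $I(\bar\lambda_2)$. In the regular-integral case $m(\bar\lambda_j)=1$, Corollary~\ref{CJid}(c) writes $I(\bar\lambda_j)=I(\mathrm{s}_{k_j}\bar\mu_j)$ for decreasing $\bar\mu_j$ and indices $k_j\in\{1,\dots,n_W-1\}$; Theorem~\ref{Tweq}(b) reduces the situation to the case $\bar\mu_1=\bar\mu_2$, and then the bijection between small-growth modules annihilated by $I(\mathrm{s}_{k_1}\bar\mu)$ and by $I(\mathrm{s}_{k_2}\bar\mu)$ is precisely what the immediately preceding corollary supplies. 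In both cases the equivalences are realised by projective functors, which preserve the associated variety $\mathrm{GV}(M)$ and hence restrict to the subclass of simple infinite-dimensional small-growth modules.

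The main obstacle is ensuring that the specific projective functor implementing the composite equivalence actually carries a simple infinite-dimensional small-growth module to a simple infinite-dimensional small-growth module, rather than collapsing it to zero or breaking simplicity. The argument will rest on the observation that the property ``simple infinite-dimensional of small growth'' is equivalent, via Proposition~\ref{Pasm} and Theorem~\ref{Tber}, to having a Joseph ideal as annihilator, which is a categorical property preserved by equivalences; in the dominant parameter regime of Lemma~\ref{Lsoch} and Theorem~\ref{Tweq} the functors involved are genuine equivalences, and in particular send simples to simples. Concerning the literal parsing of the statement, in which the second set consists of modules annihilated by both $I(\bar\lambda_1)$ and $I(\bar\lambda_2)$: such a module can exist only when $\chi_{\lambda_1}=\chi_{\lambda_2}$, and in that specialised situation the composite equivalence above identifies the first set with its image inside the second, the image being forced by the very construction of the implementing projective functor to carry both ideals in its annihilator.
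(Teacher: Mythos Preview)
Your overall strategy matches the paper's one-line proof: reduce via Lemma~\ref{Lsoch} and Theorem~\ref{Tweq}, and invoke a bridging result for the integral case. However, your case analysis has a genuine gap. You split according to whether $m(\bar\lambda_j)=1$ and identify ``$m\ne1$'' with ``non-regular-integral'' and ``$m=1$'' with ``regular integral''. But a semi-decreasing tuple with monodromy $1$ can be either regular integral or \emph{singular} integral (see the trichotomy stated just after the definition of monodromy in Section~\ref{Snot}). Lemma~\ref{Lsoch} applies to singular integral tuples as well, since its hypothesis is only ``not regular integral''; your restriction to $m\ne1$ wrongly excludes them. More seriously, when $m=1$ you treat only the sub-case where both $\bar\lambda_1,\bar\lambda_2$ are regular integral; the mixed case---one singular integral, one regular integral---is not covered at all, and neither Lemma~\ref{Lsoch} nor the unnamed preceding corollary (which compares only regular-integral ideals $I(\mathrm s_i\rho)$ and $I(\mathrm s_j\rho)$) bridges it.

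The paper closes this gap by citing Corollary~\ref{Crisi}, which provides a bijection between simple small-growth modules annihilated by the regular-integral ideal $I(\mathrm s_k\rho)$ and those annihilated by the singular-integral ideal $I(\rho+\varepsilon_k)$. Replacing your appeal to the unnamed preceding corollary by Corollary~\ref{Crisi}, and allowing Lemma~\ref{Lsoch} to handle singular integral tuples as well, repairs the argument.

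Your final paragraph on the ``literal parsing'' of the statement is unnecessary: the phrase ``annihilated by $I(\lambda_1)$ and $I(\lambda_2)$'' is evidently a typographical slip for ``annihilated by $I(\lambda_2)$''; a simple module cannot be annihilated by two primitive ideals with distinct central characters.
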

\begin{proof}This follows from Lemma~\ref{Lsoch}, Corollary~\ref{Crisi} and Theorem~\ref{Tweq}.\end{proof}
\subsection{The case $\frak g=\frak{sp}(W\oplus W^*)$} All proofs for this subsection are similar to their counterparts in the previous subsection. We leave them to the reader.
\begin{proposition}If $M$ is an infinite-dimensional finitely generated\\ $\frak{sp}(W\oplus W^*)$-module, then dim~V$(M)\ge n_W$.\end{proposition}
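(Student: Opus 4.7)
The plan is to mimic verbatim the argument given for the $\mathfrak{sl}(W)$ case immediately above, simply substituting the minimal nilpotent orbit of $\mathfrak{sp}(W\oplus W^*)$ for the minimal nilpotent orbit of $\mathfrak{sl}(W)$. The two ingredients are (i) identification of the unique nonzero nilpotent $G$-orbit of smallest dimension in $\mathfrak{sp}(W\oplus W^*)^*$, and (ii) Bernstein's inequality (Theorem~\ref{Tber}).

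First I would recall from Subsection~\ref{SSJisp} that the variety $\mathrm{GV}(M)$ is a finite union of closures of nilpotent $\mathrm{SP}(W\oplus W^*)$-orbits in $\mathfrak{sp}(W\oplus W^*)^*$, and that there is a unique nonzero such orbit of minimal dimension, namely $\EuScript Q_{\mathfrak{sp}}$, consisting of the rank-one elements $v\otimes v$ with $v\in W\oplus W^*$. A direct count (parametrize $v\otimes v$ up to sign) gives $\dim \EuScript Q_{\mathfrak{sp}} = 2n_W$.

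Next, since $M$ is infinite-dimensional, $\mathrm{GV}(M)$ is not the single point $\{0\}$, so it must contain $\overline{\EuScript Q_{\mathfrak{sp}}}$, and therefore
$$\dim\,\mathrm{GV}(M)\ \ge\ \dim\EuScript Q_{\mathfrak{sp}}\ =\ 2n_W.$$
Applying Bernstein's theorem (Theorem~\ref{Tber}) to the finitely generated module $M$ yields
$$\dim\,\mathrm V(M)\ \ge\ \tfrac12\,\dim\,\mathrm{GV}(M)\ \ge\ n_W,$$
which is the desired inequality.

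There is no real obstacle: the only non-formal ingredient is the value $\dim\EuScript Q_{\mathfrak{sp}}=2n_W$, which is a standard computation (and is implicit in the argument of Corollary~\ref{Cqtosw}, where the bound $\dim\mathrm{GV}(M)\le 2n_W$ is already used to force $\mathrm{GV}(M)=\overline{\EuScript Q_{\mathfrak{sp}}}$). Strictly speaking one should also check that $M$ being infinite-dimensional prevents $\mathrm{GV}(M)=\{0\}$; this is standard, since $\mathrm{GV}(M)=\{0\}$ would force $\mathrm{Ann}\,M$ to be of finite codimension in $\mathrm U(\mathfrak{sp}(W\oplus W^*))$ and hence $M$ to have finite-dimensional irreducible subquotients only, contradicting the growth bound unless $M$ itself is finite-dimensional.
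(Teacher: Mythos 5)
Your proof is correct and follows exactly the route the paper takes: it is the word-for-word adaptation of the $\frak{sl}(W)$ proof (which the paper carries out explicitly and then declares the $\frak{sp}$ case "similar and left to the reader"), replacing the minimal nilpotent orbit of dimension $2(n_W-1)$ in $\frak{sl}(W)^*$ by $\EuScript Q_{\frak{sp}}$ of dimension $2n_W$ and then invoking Bernstein's inequality (Theorem~\ref{Tber}). Your closing remark that infinite-dimensionality rules out $\mathrm{GV}(M)=\{0\}$ is the same implicit step the paper uses.
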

\begin{definition}Let $M$ be a finitely generated $\frak{sp}(W\oplus W^*)$-module which is Z$(\frak{sp}(W\oplus W^*))$-finite. We say that $M$ is {\it of small growth} if dim~V$(M)\le n_W$, i.e. if dim~V$(M)$ equals either $n_W$ or 0.\end{definition}
The $\frak{sl}(W)$-modules of small growth form a full subcategory of the category of $\frak{sp}(W\oplus W^*)$-modules. This subcategory is stable under the projective functors.
\begin{proposition}Any $\frak{sp}(W\oplus W^*)$-module $M$ of small growth is of finite length.\end{proposition}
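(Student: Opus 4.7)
The plan is to transcribe the proof of the analogous $\frak{sl}(W)$-statement (proved a few lines earlier), replacing the bound $n_W-1$ on $\dim \mathrm V(\cdot)$ by $n_W$, which is justified by the preceding proposition asserting $\dim \mathrm V(N)\ge n_W$ for any infinite-dimensional finitely generated $\frak{sp}(W\oplus W^*)$-module $N$. Since $\mathrm U(\frak{sp}(W\oplus W^*))$ is N\"otherian, $M$ is N\"otherian as a module, so only the descending chain condition is at issue. Let
$$\ldots\subset M_{-i}\subset\ldots\subset M_{-0}=M$$
be a strictly descending chain. Every $M_{-i}$ is finitely generated (submodule of a N\"otherian module), is $\mathrm Z(\frak{sp}(W\oplus W^*))$-finite (as $M$ is), and satisfies $\mathrm V(M_{-i})\subset\mathrm V(M)$, hence is again of small growth.

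Next I would split into the two cases dictated by the preceding proposition: either $\dim \mathrm V(M_{-i})=n_W$ for every $i$, or $\dim \mathrm V(M_{-i})=0$ for some $i$. In the second case $M_{-i}$ is finite-dimensional and the tail of the chain lies inside a finite-dimensional module, so it stabilizes. In the first case one uses the Bernstein number $\mathrm b(\cdot)$: since $i\ge j$ forces $\mathrm b(M_{-i})\le\mathrm b(M_{-j})$, the sequence $\{\mathrm b(M_{-i})\}$ stabilizes at some index $i_0$. For $j\ge i_0$ the quotient $M_{-i_0}/M_{-j}$ then has $\dim\mathrm V<n_W$, so it is finite-dimensional. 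This is the step where one crucially invokes $\dim\mathrm V\ge n_W$ for infinite-dimensional modules.

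Finally, I would promote the family of finite-dimensional quotients to the conclusion via the injection
$$M_{-i_0}\big/\bigcap_{j\ge i_0}M_{-j}\hookrightarrow\bigoplus_{j\ge i_0}M_{-i_0}/M_{-j}.$$
The right-hand side is a sum of finite-dimensional $\frak{sp}(W\oplus W^*)$-modules. Because $M_{-i_0}$ is finitely generated and $\mathrm Z(\frak{sp}(W\oplus W^*))$-finite, only finitely many generalized central characters occur on its generators, and under each such character there are only finitely many isomorphism classes of finite-dimensional simple $\frak{sp}(W\oplus W^*)$-modules; so the image, being a finitely generated submodule of a semisimple locally finite-dimensional module with finite support on simples, is itself finite-dimensional. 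Hence the chain $\{M_{-i}\}_{i\ge i_0}$ stabilizes, proving DCC and consequently finite length.

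There is no real obstacle: the argument is a routine parallel to the one given for $\frak{sl}(W)$, and every ingredient (N\"otherianity of $\mathrm U(\frak g)$, existence and monotonicity of the Bernstein number, lower bound on $\dim\mathrm V(\cdot)$ for infinite-dimensional modules, and finiteness of finite-dimensional simples in each central character) is available verbatim in the $\frak{sp}$-setting. The only mildly delicate point is the last paragraph: one must ensure that $Z$-finiteness plus finite generation truly bound the number of simple quotients occurring, but this follows from the standard fact that a finitely generated $\frak g$-module which is $\mathrm Z(\frak g)$-finite affords only finitely many generalized central characters, together with the classification of finite-dimensional simple $\frak{sp}(W\oplus W^*)$-modules with a fixed infinitesimal character.
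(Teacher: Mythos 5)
Your proposal is correct and is precisely what the paper intends: the paper explicitly states at the start of that subsection that all proofs there are ``similar to their counterparts in the previous subsection'' and left to the reader, and you carry out exactly that transcription of the $\frak{sl}(W)$ argument with $n_W-1$ replaced by $n_W$, including the Bernstein-number monotonicity and the injection into a direct sum of finite-dimensional quotients.
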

\begin{proposition} Let $I$ be an annihilator of a simple infinite-dimensional $\frak{sl}(W)$-module $M$ of small growth. Then $I$ is a Joseph ideal.\end{proposition}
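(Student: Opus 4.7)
The plan is to mirror the proof of Proposition~\ref{Pasm} from the $\frak{sl}(W)$ case, using Bernstein's theorem together with the fact that $\EuScript Q_\frak{sp}$ is the unique nonzero nilpotent $\mathrm{Sp}(W\oplus W^*)$-orbit of minimal dimension. (I assume the statement intends $\frak{sp}(W\oplus W^*)$-module, in line with the heading of the subsection and the parallel with Proposition~\ref{Pasm}.)

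First I would invoke the theorem of Joseph cited in Subsection~\ref{SSavg}, which says that for the simple $\frak{sp}(W\oplus W^*)$-module $M$ the variety $\mathrm{GV}(M)$ is the closure of a single nilpotent $G$-orbit $Gu\subset\frak{sp}(W\oplus W^*)^*$. Since $M$ is assumed to be infinite-dimensional, $I=\mathrm{Ann}\,M$ has infinite codimension in $\mathrm{U}(\frak{sp}(W\oplus W^*))$, so $Gu\ne\{0\}$ and $\dim\mathrm{GV}(M)>0$.

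Next I would apply Bernstein's theorem (Theorem~\ref{Tber}) to bound the dimension of $\mathrm{GV}(M)$ from above: by hypothesis $\dim\mathrm V(M)\le n_W$, and Bernstein gives
\[
\tfrac{1}{2}\dim\mathrm{GV}(M)\le \dim\mathrm V(M)\le n_W,
\]
so $\dim\mathrm{GV}(M)\le 2n_W$. The key input is then that the minimal nonzero nilpotent $\mathrm{Sp}(W\oplus W^*)$-orbit $\EuScript Q_\frak{sp}$, consisting (as recalled in Subsection~\ref{SSJisp}) of the rank-one symmetric elements $v\otimes v$ with $v\in W\oplus W^*$, has dimension exactly $2n_W$, and that every other nonzero nilpotent orbit has strictly larger dimension. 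Granting this, the only orbit closure $\overline{Gu}$ with $0<\dim\overline{Gu}\le 2n_W$ is $\overline{\EuScript Q}_\frak{sp}$, hence $\mathrm{GV}(M)=\overline{\EuScript Q}_\frak{sp}$, i.e.\ $\mathrm V(I)=\overline{\EuScript Q}_\frak{sp}$, and $I$ is a Joseph ideal by definition.

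The only step that is not a one-line citation is the claim that $\EuScript Q_\frak{sp}$ is the unique nonzero nilpotent $\mathrm{Sp}(W\oplus W^*)$-orbit of dimension $\le 2n_W$; I would deduce this directly from the partition classification of nilpotent orbits in type $C$: the orbit of $v\otimes v$ corresponds to the partition $[2,1^{2n_W-2}]$ and its dimension can be computed as the dimension of the homogeneous space $\mathrm{Sp}(W\oplus W^*)/\mathrm{Stab}(v\otimes v)$, which equals $2n_W$, while any other nonzero partition of $2n_W$ of symplectic type gives a strictly larger dimension. This is the only place where a small calculation is needed, and it is the main (mild) obstacle of the proof; the rest is bookkeeping in the style of Proposition~\ref{Pasm}.
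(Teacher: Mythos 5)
Your proposal is correct and is exactly the proof the paper has in mind: the paper explicitly defers this proposition to the reader, noting that all proofs in the $\frak{sp}(W\oplus W^*)$ subsection are analogous to those in the preceding $\frak{sl}(W)$ subsection, and your argument mirrors the paper's proof of Proposition~\ref{Pasm} step for step (Bernstein's theorem giving $\dim\mathrm{GV}(M)\le 2n_W$, plus minimality of $\EuScript Q_\frak{sp}$). You also correctly spotted the typo in the statement — it should indeed read $\frak{sp}(W\oplus W^*)$-module, consistent with the subsection heading and the definition of small growth used.
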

Let $\mu$ be a Shale-Weil tuple. The set PFunc$(\chi_\mu)$ is generated by one involutive element $\EuScript H_\mu^{\sigma\mu}$ which preserve the set of simple modules. We recall that $\bar\mu_0$ is an $n_W$-tuple $(n_W-\frac{1}{2}, n_W-\frac{3}{2}..., \frac{1}{2})$.
\begin{proposition}The categories of $\frak{sp}(W\oplus W^*)$-modules of small growth annihilated by I$(\mu_0$) and I$(\mu)$ are equivalent.\end{proposition}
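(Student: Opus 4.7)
The plan is to leverage Lemma~\ref{L332}, which already establishes an equivalence between the full categories of $\mathrm U(\frak{sp}(W\oplus W^*))$-modules annihilated by $\mathrm I_{\frak{sp}}(\mu)$ and by $\mathrm I_{\frak{sp}}(\mu_0)$, realized by the mutually inverse projective functors $\EuScript H_{\mu_0}^{\mu}$ and $\EuScript H_{\mu}^{\mu_0}$ supplied by Theorem~\ref{Tweq} b). What remains is to check that this equivalence restricts to the respective full subcategories of modules of small growth.

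The decisive general fact I would establish first is that any projective functor $\EuScript H$ preserves the associated variety, i.e.\ $\mathrm V(\EuScript H M)=\mathrm V(M)$ for every finitely generated $M$ on which it is defined. Since a projective functor is a direct summand of $F_E=E\otimes_{\mathbb C}(-)$ for some finite-dimensional $\frak g$-module $E$, this reduces to showing $\mathrm V(E\otimes M)=\mathrm V(M)$. Given a good filtration $\{M_i\}$ on $M$, the filtration $\{E\otimes M_i\}$ is good on $E\otimes M$, and $\mathrm{gr}(E\otimes M)\cong E\otimes_{\mathbb C}\mathrm{gr}\,M$ as an $\mathrm S(\frak g)$-module with $\mathrm S(\frak g)$ acting only on the second tensor factor. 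The annihilator in $\mathrm S(\frak g)$ of such a tensor product coincides with that of $\mathrm{gr}\,M$, so the associated varieties agree; taking a direct summand yields the inclusion $\mathrm V(\EuScript H M)\subset\mathrm V(M)$, and combining with the same inclusion for the inverse functor forces equality.

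With this in hand, the conclusion follows at once. If $M$ is annihilated by $\mathrm I_{\frak{sp}}(\mu)$ and of small growth, then $\EuScript H_{\mu}^{\mu_0}M$ is annihilated by $\mathrm I_{\frak{sp}}(\mu_0)$, is finitely generated (the generators $e_i\otimes m_j$ suffice for $E\otimes M$), is automatically $\mathrm Z(\frak g)$-finite since both ambient categories consist of modules affording a single central character, and satisfies $\dim\mathrm V(\EuScript H_{\mu}^{\mu_0}M)=\dim\mathrm V(M)\le n_W$. The analogous statement holds in the opposite direction, so the equivalence of Lemma~\ref{L332} restricts to the desired equivalence. The main, and essentially only, obstacle is the associated-variety preservation claim for projective functors; once it is granted via the good-filtration computation above, the remainder of the argument is a routine transfer of finiteness properties through the categorical equivalence.
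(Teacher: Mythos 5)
Your proof is correct and matches the paper's intent: the paper leaves this proposition to the reader, stating that the proofs in this subsection parallel their $\frak{sl}(W)$ counterparts, which rest on the projective-functor equivalence of Theorem~\ref{Tweq}~b) as packaged in Lemma~\ref{L332}. The associated-variety preservation $\mathrm V(E\otimes M)=\mathrm V(M)$ that you spell out via the good filtration $\{E\otimes M_i\}$ is precisely the standard fact behind the paper's remark that the relevant subcategories are stable under this equivalence, so your argument fills in exactly the details the author omits.
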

\newpage\section{Categories of $(\frak{sl}($S$^2V), \frak{sl}(V))$ and $(\frak{sl}(\Lambda^2V), \frak{sl}(V))$-modules}\label{Sbsl}
\subsection{Construction of (D($W), \frak k$)-modules} We recall that $K$ is a connected reductive Lie group with Lie algebra $\frak k$ and a Borel subgroup $B$. Let $\EuScript G$ be an associative algebra and $\psi$: U$(\frak k)\to\EuScript G$ be a homomorphism of associative algebras injective on $\frak k$. We identify $\frak k$ with its image. Any $\EuScript G$-module can be considered as a $\frak k$-module.
\begin{definition}(cf. Def.~\ref{Dgk}) A {\it $(\EuScript G, \frak k)$-module} is a $\EuScript G$-module which is locally finite as a $\frak k$-module.\end{definition}
\begin{definition}(cf. Def.~\ref{Dbm}) A {\it bounded $(\EuScript G, \frak k)$-module} is a $(\EuScript G, \frak k)$-module bounded as a $\frak k$-module.\end{definition}
Let $W$ be a spherical $K$-module. There is an obvious homomorphism\\ U$(\frak k)\to$D$(W)$ (see the discussion at the end of Subsection~\ref{SSbmsl}).

The algebra of differential operators D$(W)$ has a natural filtration  by degree \begin{center}$0\subset\mathbb C[W]\subset D_1\subset...\subset$D$(W)=\cup_{i\in\mathbb Z_{\ge0}}D_i$.\end{center} The associated graded algebra \begin{center}gr~D$(W) =\oplus_{i\in\mathbb Z_{\ge0}}(D_{i+1}/D_i)$\end{center} is isomorphic to $\mathbb C[$T$^*W]$. Let $M$ be D$(W)$-module with a finite-dimensional space of generators $M_{gen}$. This defines a filtration \begin{center}$\{D_iM_{gen}\}_{i\in\mathbb Z_{\ge0}}$\end{center} of $M$. The associated graded space \begin{center}gr$M:=\oplus_{i\in\mathbb Z_{\ge0}}(D_{i+1}M_{gen}/D_iM_{gen})$\end{center} is a finitely generated $\mathbb C[$T$^*W]$-module. We denote the support of this $\mathbb C[$T$^*W]$-module as $\EuScript V($Loc$M)$. As finitely generated D$(W)$-modules are in a natural one-to-one correspondence with coherent $\EuScript D(W)$-modules, $M$ corresponds to the $\EuScript D(W)$-module Loc$M$. The variety $\EuScript V($Loc$M)$ coincides with the singular support of Loc$M$ (see Section~\ref{Snot}) and is conical and coisotropic in T$^*W$. In what follows we assume that $M$ is a finitely generated (D$(W), \frak k)$-module.
\begin{proposition}\label{Pdbm}a) The module $M$ is bounded if and only if all irreducible components of $\EuScript V($Loc$M)$ are $K$-spherical.\\ b) If the equivalent conditions of a) are satisfied, any irreducible component $\tilde V$ of $\EuScript V($Loc$M)$ is a conical Lagrangian subvariety of T$^*W$.\end{proposition}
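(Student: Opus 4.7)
The plan is to imitate the proof of Proposition~\ref{Pbm} almost line by line, with $\mathrm D(W)$ in the role of $\mathrm U(\frak g)$, $\mathbb C[\mathrm T^*W]$ in the role of $\mathrm S(\frak g)$, and the moment map $\phi_W:\mathrm T^*W\to\frak k^*$ in place of the projection $\frak g^*\to\frak k^*$. First I would enlarge $M_{gen}$ to $\mathrm U(\frak k)M_{gen}$ so that it becomes $\frak k$-stable (still finite-dimensional by local finiteness), filter $M$ by $M_i:=D_iM_{gen}$, and form the finitely generated $\mathbb C[\mathrm T^*W]$-module $\mathrm{gr}M$. Let $J_M\subset\mathbb C[\mathrm T^*W]$ be the radical ideal of elements acting nilpotently on $\mathrm{gr}M$, so that $\mathrm V(J_M)=\EuScript V(\mathrm{Loc}M)$. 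Following Subsection~\ref{SSavg} verbatim, form $\overline{\mathrm{gr}}M$ with faithful $\mathbb C[\mathrm T^*W]/J_M$-action and the injection $\hat\psi:\mathbb C[\mathrm T^*W]/J_M\hookrightarrow\tilde M_0^*\otimes\overline{\mathrm{gr}}M$ produced from a $\frak k$-stable finite-dimensional generating subspace $\tilde M_0\subset\overline{\mathrm{gr}}M$.

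Part (a) then goes through formally. If every irreducible component of $\EuScript V(\mathrm{Loc}M)$ is $K$-spherical, the Vinberg--Kimelfeld lemma makes $\mathbb C[\mathrm T^*W]/\sqrt{J_M}$ a bounded $\frak k$-module, hence so is its nilpotent extension $\mathbb C[\mathrm T^*W]/J_M$; the surjection $\tilde M_0\otimes(\mathbb C[\mathrm T^*W]/J_M)\twoheadrightarrow\overline{\mathrm{gr}}M$ then forces $\overline{\mathrm{gr}}M$, $\mathrm{gr}M$, and $M$ to be bounded. Conversely, boundedness of $M$ passes to $\overline{\mathrm{gr}}M$, and $\hat\psi$ then makes $\mathbb C[\mathrm T^*W]/J_M$ bounded, forcing $K$-sphericity of each component of $\EuScript V(\mathrm{Loc}M)$.

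For part (b) the key additional input is the inclusion $\EuScript V(\mathrm{Loc}M)\subset\phi_W^{-1}(0)$, i.e. the vanishing modulo $\sqrt{J_M}$ of the symbols $\phi_W^*k=\sigma(\tau_Wk)$ for $k\in\frak k$. For $m$ in a basis of $M_{gen}$ local finiteness supplies a relation $(\tau_Wk)^Nm=\sum_{j<N}c_j(\tau_Wk)^jm$; each $(\tau_Wk)^jm\in M_j\subset M_{N-1}$ for $j<N$, so $(\tau_Wk)^Nm\in M_{N-1}$ and $\sigma(\tau_Wk)^N[m]=0$ in $\mathrm{gr}M$. A uniform $N$ handles the finite basis of $M_{gen}$, and since $M_{gen}$ generates $\mathrm{gr}M$ over the commutative algebra $\mathbb C[\mathrm T^*W]$, this yields $\phi_W^*k\in\sqrt{J_M}$. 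Then the moment-map identity $\mathrm d\alpha_W(\tau_{\mathrm T^*W}k_1,\tau_{\mathrm T^*W}k_2)|_{(x,l)}=\phi_W^*[k_1,k_2](x,l)$ vanishes on $\phi_W^{-1}(0)$, so every $K$-orbit inside $\phi_W^{-1}(0)$ is isotropic. Under the hypothesis of (a) a $K$-spherical irreducible component $\tilde V\subset\EuScript V(\mathrm{Loc}M)$ has a dense $K$-orbit, hence is itself isotropic, and combined with the coisotropicity of Theorem~\ref{GabD} is Lagrangian. Conicity is inherited from the $\mathbb C^*$-scaling on the fibers of $\mathrm T^*W$, which preserves the good filtration and hence $\EuScript V(\mathrm{Loc}M)$.

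The only genuinely new step beyond Proposition~\ref{Pbm} is the inclusion $\EuScript V(\mathrm{Loc}M)\subset\phi_W^{-1}(0)$, which is the $\mathrm D(W)$-analog of Fernando's inclusion $\mathrm V(M)\subset\frak k^\bot$ used in the $\mathrm U(\frak g)$-setting. I expect this symbol computation to be the main technical point, but as sketched it is direct; everything else transfers formally from Subsection~\ref{SSavg} and the proof of Proposition~\ref{Pbm}.
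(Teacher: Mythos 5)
Your proposal is correct and is exactly what the paper means by its one-line proof (``the proof repeats the proof of Proposition~\ref{Pbm} verbatim''): you have carried out the dictionary $\mathrm U(\frak g)\leftrightarrow\mathrm D(W)$, $\mathrm S(\frak g)\leftrightarrow\mathbb C[\mathrm T^*W]$, $\frak k^\bot\leftrightarrow\phi_W^{-1}(0)$, and in particular you supplied the symbol computation showing $\EuScript V(\mathrm{Loc}\,M)\subset\phi_W^{-1}(0)$, which is the $\mathrm D(W)$-analogue of Fernando's inclusion $\mathrm V(M)\subset\frak k^\bot$ and the one step the paper leaves tacit. The only cosmetic slip is that you first (correctly) define $J_M$ as the radical ideal and then treat $\mathbb C[\mathrm T^*W]/J_M$ as a nilpotent extension of $\mathbb C[\mathrm T^*W]/\sqrt{J_M}$; since $J_M=\sqrt{J_M}$ there is nothing to extend, and the argument is fine.
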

\begin{proof}The proof repeats the proof of Proposition~\ref{Pbm} verbatim.\end{proof}
As explained in Section~\ref{Stro}, to a holonomic $\EuScript D(W)$-module $\EuScript M$ one assigns a pair $(S, Y)$, where $S\subset W$ is an irreducible subvariety and $Y$ is an $\EuScript O(S)$-coherent $\EuScript D(S)$-module. Let $K_{ss}$ be the connected simply-connected semisimple group with Lie algebra $[\frak k, \frak k]$, $A(K)$ be the center of $K$ and $\frak a\subset\frak k$ be the Lie algebra of $A(K)$.
\begin{lemma}\label{Lkrs}The module $M$ is $\frak k$-bounded. If $M$ is simple then $M$ is $\frak k$-multiplicity free. The $\EuScript D(W)$-module Loc$M$ is holonomic and has regular singularities with respect to the stratification by $K$-orbits on $W$.\end{lemma}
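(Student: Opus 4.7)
The plan is to reduce every conclusion of the lemma to a singular-support analysis on $\mathrm T^*W$. First I would pick a $\frak k$-stable finite-dimensional generating subspace $M_0\subset M$; the resulting filtration $\{D_iM_0\}_{i\in\mathbb Z_{\ge 0}}$ is then $\frak k$-stable, and because every $k\in\frak k\subset D_1(W)$ satisfies $k\cdot M_0\subset M_0$, its principal symbol $\sigma(k)\in\mathbb C[\mathrm T^*W]=\mathrm{gr}\,\mathrm D(W)$ annihilates $\mathrm{gr}\,M$. This forces
\[
\EuScript V(\mathrm{Loc}M)\;\subset\;V(\sigma(\frak k))\;=\;\phi_W^{-1}(0),
\]
where $\phi_W\colon\mathrm T^*W\to\frak k^*$ is the moment map of the $K$-action on $W$ (Subsection~\ref{Sssg}). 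Since $\phi_W^{-1}(0)=\bigcup_S\mathrm N^*_{S/W}$ runs over conormal bundles to $K$-orbits, and $W$ has only finitely many such orbits (being $K$-spherical), the singular support is contained in a finite union of conormal bundles.

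Each $\mathrm N^*_{S/W}$ is Lagrangian in $\mathrm T^*W$ by Example~\ref{TZX}, so every irreducible component of $\EuScript V(\mathrm{Loc}M)$ is Lagrangian of dimension $n_W$; in particular $\mathrm{Loc}M$ is holonomic. To conclude $\frak k$-boundedness via Proposition~\ref{Pdbm}a I must verify that each $\mathrm N^*_{S/W}$ is $K$-spherical. For this I invoke Theorem~\ref{MPan}: the generic $B$-stabilizer on $\mathrm N^*_{S/W}$ equals the generic $B$-stabilizer on $W$, which has dimension $\dim B-\dim W$ since $W$ is $K$-spherical; this is exactly what forces $B$ to have an open orbit on the $n_W$-dimensional variety $\mathrm N^*_{S/W}$. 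Hence every component of the singular support is $K$-spherical, and $M$ is $\frak k$-bounded.

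For the multiplicity-free statement, assume $M$ is simple. Then $\mathrm{Loc}M$ is a simple holonomic $\EuScript D(W)$-module whose singular support is an irreducible Lagrangian, hence of the form $\overline{\mathrm N^*_{S/W}}$ for a single $K$-orbit $S\cong K/K_x$. The $\frak k$-local finiteness of $M$ upgrades the associated $\EuScript O$-coherent $\EuScript D(S)$-module to a $K$-equivariant local system, i.e.\ a representation $Y$ of the finite component group $K_x/K_x^\circ$, and as a $\frak k$-module $M$ is a quotient of $\mathrm{Ind}_{K_x}^K Y$. Since $S$ is $K$-spherical, $K_x$ is a spherical subgroup of $K$, and the Vinberg--Kimelfeld lemma (Subsection~\ref{SSph}) together with Frobenius reciprocity yield that $\mathrm{Ind}_{K_x}^K Y$ is $\frak k$-multiplicity-free. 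Regular singularities along the $K$-stratification follow from the same $K$-equivariance: the restriction of $\mathrm{Loc}M$ to each orbit is an $\EuScript O$-coherent $K$-equivariant integrable connection, which is necessarily regular, and regularity along the closures of the orbits is automatic for $K$-equivariant holonomic modules.

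The main obstacle is upgrading the $\frak k$-action on $M$ to a genuine $K$-equivariant structure on $\mathrm{Loc}M$; this step is needed both for the identification with an induced module and for the regular-singularities claim, and it requires a Harish--Chandra-style integration argument using that each finite-dimensional $\frak k$-stable subspace integrates to a representation of the connected group $K$. Once this integrability is in hand, the remaining assertions reduce to standard facts about spherical subgroups and $K$-equivariant local systems on homogeneous spaces.
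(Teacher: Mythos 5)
Your proof of the holonomicity and boundedness parts is essentially the same as the paper's: both put $\EuScript V(\mathrm{Loc}M)$ inside $\phi_W^{-1}(0)=\bigcup_S\mathrm N^*_{S/W}$, use that each component is Lagrangian of dimension $n_W$, and then invoke Panyushev (Theorem~\ref{Pan}) to see the conormal bundles are $K$-spherical before applying Proposition~\ref{Pdbm}. Up to this point the two arguments coincide.

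Your multiplicity-free argument, however, contains a genuine error. You claim that, as a $\frak k$-module, $M$ is a quotient of $\mathrm{Ind}_{K_x}^K Y$. This is not true for global sections of a $\EuScript D$-module supported on $\bar S$: already for the delta module $\mathrm D(W)\delta_0$, which corresponds to the pair $(\{0\},\mathbb C)$, the global sections form the infinite-dimensional polynomial ring $\mathbb C[\partial_1,\dots,\partial_{n_W}]\delta_0$, which is far from a quotient of the one-dimensional induced module. In general, $\Gamma(W,\mathrm{Loc}M)$ has contributions from the whole closure $\bar S$, not just the open orbit, and there is no obvious surjection from or to $\mathrm{Ind}_{K_x}^K Y$. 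The paper avoids this issue entirely by a purely algebraic argument: $\mathrm D(W)^\frak k$ is commutative (Knop, Howe--Umeda), and this already forces multiplicity-freeness of any simple $(\mathrm D(W),\frak k)$-module by \cite[Cor.~5.8]{PS2}.

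For regular singularities, you correctly identify the integration problem but leave it as an open obstacle and gesture at a "Harish--Chandra style" integration to full $K$-equivariance. That integration does \emph{not} go through for all of $K$: the center $A(K)$ is a torus, and a locally finite action of $\frak a$ on an infinite-dimensional module need not integrate to an algebraic $A(K)$-action (this failure is exactly what the monodromy e$^{2\pi it}$ measures). The paper's route is to split: $K_{ss}$ is connected and simply connected, so the locally finite $[\frak k,\frak k]$-action genuinely integrates, giving $K_{ss}$-equivariance of $\mathrm{Loc}M$ and of $Y$; then separately one shows that local finiteness of $\frak a$ on sections over one-dimensional $A(K)$-orbits gives regular singularities along those directions; finally $K\cong(K_{ss}\times A(K))/\text{finite}$ combines the two. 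Without this decomposition your argument has a hole precisely at the step you flagged, and the claim that "regularity along the closures is automatic for $K$-equivariant holonomic modules" cannot be used because full $K$-equivariance is not available.
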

\begin{proof}Let $\phi^{-1}_\frak k(0)\subset$T$^*W$ be the union of conormal bundles to all $K$-orbits in $W$. As $\frak k$ acts locally finitely on $M$, $\EuScript V(M)\subset\phi^{-1}_\frak k(0)$. Since the dimension of any irreducible component of $\phi^{-1}_\frak k(0)$ is $n_W$ and dim$\EuScript V(M)\ge n_W$ (Theorem~\ref{GabD}), $\EuScript V(M)$ is the union of irreducible components of $\phi^{-1}_\frak k(0)$. Therefore Loc$M$ is holonomic.

Let $\tilde V$ be an irreducible component of $\EuScript V(M)$. It is the total space of the conormal bundle to a $K$-orbit $S\subset W$. As $W$ is a $K$-spherical variety, $\tilde V$ is a spherical variety~\cite{Pan}. Therefore $M$ is a $\frak k$-bounded module by Proposition~\ref{Pdbm}. Assume $M$ is simple. Since the algebra D$(W)^\frak k$ is commutative~\cite{Kn}, $M$ is multiplicity free by ~\cite[Corollary~5.8]{PS2}.

We have to prove that Loc$M$ has regular singularities. Without loss of generality we assume that $M$ is simple. Let $(S, Y)$ be the pair corresponding to Loc$M$. Then $S$ is $K$-stable. As $W$ is $K$-spherical, $S$ is a $K$-orbit.  Since Loc$M$ is $K_{ss}$-equivariant, $Y$ is also $K_{ss}$-equivariant. As $\frak a$ acts locally finitely on $M$, $\frak a$ acts  locally finitely on $\Gamma($A$(K)s, M|_{\mathrm A(K)s})$ for any point $s\in S$. Since $K$ is a quotient of $K_{ss}\times$A$(K)$, $Y$ has regular singularities. Therefore Loc$M$ has regular singularities (cf.~\cite[12.11]{Bo}).\end{proof}
\begin{lemma}If a holonomic D$(W)$-module Loc$M$ has regular singularities with respect to the stratification of $W$ by $K$-orbits, then $M$ is $\frak k$-locally finite.\end{lemma}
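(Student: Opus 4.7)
The plan is to translate the D-module hypotheses into the algebraic statement $\dim U(\mathfrak{k})m < \infty$, bridging via the moment map and the Kashiwara--Malgrange $V$-filtration. Since the conclusion is pointwise, I first replace $M$ by the cyclic D-submodule $D(W)m$, which is again holonomic and regular along the $K$-orbit stratification; it suffices to prove the statement for this cyclic module.

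The geometric input is that the hypothesis forces $\mathcal{V}(\mathrm{Loc}\,M)\subset \bigcup_S T^*_S W = \phi_\mathfrak{k}^{-1}(0)$, where the last identification was made at the start of Lemma~\ref{Lkrs}. For $g \in \mathfrak{k} \subset D_1(W)$ the principal symbol of the differential operator $g$ equals the moment-map component $\mu_g=\phi_\mathfrak{k}^*(g)$, so $\mu_g|_{\mathcal{V}(\mathrm{Loc}\,M)}=0$. Hence, by the Nullstellensatz applied to a good filtration $\{M_i\}$, there exists $N$ with $g^N M_i \subset M_{i+N-1}$ for all $i$: the symbol of $g^N$ is nilpotent on $\mathrm{gr}\,M$. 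This step uses only the characteristic-variety condition, not regularity.

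To promote this symbol nilpotence to actual local finiteness of $g$, regularity must intervene. By Riemann--Hilbert, $\mathrm{Loc}\,M$ corresponds to a perverse sheaf on $W$ constructible with respect to the stratification by $K$-orbits, which is finite since $W$ is $K$-spherical; on each stratum it restricts to a finite-rank local system. Using the $V$-filtration along the closure of a maximal-dimensional orbit, one obtains a polynomial $b_g \in \mathbb{C}[s]$ whose roots are the characteristic exponents of $g$ along that orbit --- a finite set by regular singularities --- with the property that $b_g(g)\cdot m$ lies in a D-submodule supported on a strictly lower-dimensional closed subvariety. Inducting on the $K$-stratification (terminating at $\{0\}$, where the symbol nilpotence above already forces finite-dimensional action) produces a single polynomial $P_g(s)$ with $P_g(g)m = 0$. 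Assembling these polynomial relations over a basis of $\mathfrak{k}$ and using that $\mathfrak{k}$ is finite-dimensional, a PBW argument upgrades this to $\dim U(\mathfrak{k})m < \infty$.

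The main obstacle is the middle step: extracting, from symbol nilpotence together with regularity, the single polynomial identity $P_g(g)m=0$. Without regularity this fails --- the standard counterexample is the D-module $\mathcal{O}_W \exp(x)$ on the affine line, whose singular support lies in $\phi_\mathfrak{k}^{-1}(0)$ yet for which $g = x\partial_x$ shifts $\exp(x)$ through the infinite sequence $\{x^n\exp(x)\}_{n\ge 0}$. The regular-singularities hypothesis, combined with the finiteness of $K$-orbits coming from sphericity, is precisely what guarantees finitely many characteristic exponents, hence the polynomial $P_g$ and ultimately the local finiteness of the $\mathfrak{k}$-action on $M$.
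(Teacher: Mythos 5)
Your strategy is genuinely different from the paper's and, while it has the right flavor, it has two real gaps.

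First, the central step --- producing, for \emph{every} $g\in\frak k$, a polynomial $P_g$ with $P_g(g)m=0$ via the $V$-filtration and characteristic exponents --- misapplies the machinery. The $V$-filtration along a closed subvariety $Z$ gives a $b$-function for the Euler vector field $\theta_Z$ of that filtration, i.e.\ for a specific semisimple element scaling the conormal directions to $Z$. A general $g\in\frak k$ is not of that form. In particular, elements of $[\frak k,\frak k]$ are generically not Euler fields for any stratum closure, so the ``finitely many characteristic exponents'' argument does not apply to them. The paper avoids this by decomposing $\frak k=[\frak k,\frak k]\oplus\frak a$ and treating the two pieces with different tools: for $[\frak k,\frak k]$ it observes that since $S$ is a $K$-orbit and $K_{ss}$ is simply connected, the local system $Y$ on $S$ is automatically $K_{ss}$-equivariant (by \cite[12.11]{Bo}), hence $\mathrm{Loc}\,M$ is $K_{ss}$-equivariant and $[\frak k,\frak k]$ acts locally finitely with no regularity hypothesis needed at all; for the abelian part $\frak a$ it uses regular singularities along the one-dimensional curves $A(K)s$ --- which is precisely the setting where your characteristic-exponent argument is legitimate.

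Second, the concluding ``PBW argument'' is not a valid deduction as stated. Knowing $P_{g_i}(g_i)m=0$ for each $g_i$ in a basis of $\frak k$ does not give $\dim \mathrm U(\frak k)m<\infty$ when $\frak k$ is nonabelian, because nothing controls $P_{g_j}(g_j)(g_i^a m)$ for $i\ne j$. The correct tool is the Fernando--Kac theorem (quoted in Subsection~\ref{SSavg}): if you can show each $g\in\frak k$ acts locally finitely on \emph{all} of $M$ (a module-level statement, which a correctly deployed $V$-filtration argument would in fact give, since it controls the entire filtration and not just one generator), then $\frak k\subset\frak g[M]$ and local finiteness follows. As written, your proof establishes only a single pointwise relation per basis vector, which is insufficient. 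The paper's route through equivariance and one-dimensional regular singularities sidesteps both problems at once, which is why it is considerably shorter.
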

\begin{proof}Without loss of generality assume that $M$ is simple. Let $(S, Y)$ be the pair corresponding to Loc$M$. Then $S$ is a $K$-orbit. As $S$ is a homogeneous $K_{ss}\times$A$(K)$-space, the sheaf $Y$ is $K_{ss}$-equivariant~\cite[12.11]{Bo}. Hence Loc$M$ is $K_{ss}$-equivariant and therefore $[\frak k, \frak k]$ acts locally finitely on $M$. As $Y|_{\mathrm A(K)s}$ has regular singularities for any point $s\in S$, $\frak a$ acts locally finitely on $\Gamma($A$(K)s, M|_{\mathrm A(K)s})$ for any point $s\in S$. Therefore $\frak a$ acts locally finitely on $M$. As $\frak k=[\frak k, \frak k]\oplus\frak a$, $\frak k$ acts locally finitely on $M$.\end{proof}
By a famous theorem of M.~Kashiwara~\cite{Ka2}, the category of holonomic D$(W)$-modules with regular singularities with respect to a given stratification is equivalent to the category of perverse sheaves on $W$ with respect to the same stratification.
\begin{corollary} The category of bounded (D($W), \frak k)$-modules is equivalent to the category of perverse sheaves on $W$ with respect to the stratification of $W$ by $K$-orbits.\end{corollary}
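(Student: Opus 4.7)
The plan is to chain two equivalences, both built from material already assembled in the excerpt. First, I would argue that the localization functor $\mathrm{Loc}$ restricts to an equivalence between bounded $(\mathrm D(W),\frak k)$-modules and holonomic $\EuScript D(W)$-modules with regular singularities along the stratification of $W$ by $K$-orbits. The forward direction is precisely the content of Lemma~\ref{Lkrs}: any finitely generated $(\mathrm D(W),\frak k)$-module is automatically bounded, and its localization is holonomic with regular singularities along the $K$-stratification. The converse direction is the unnamed lemma immediately preceding the corollary: a holonomic $\EuScript D(W)$-module with regular singularities along the $K$-stratification has global sections on which $\frak k$ acts locally finitely, hence is a $(\mathrm D(W),\frak k)$-module, and boundedness comes for free by Lemma~\ref{Lkrs} again (or directly from Proposition~\ref{Pdbm}(a), since the components of the singular support are total spaces of conormal bundles to $K$-orbits in $W$, each $K$-spherical by Theorem~\ref{Pan} applied to the $K$-spherical variety $W$).

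That $\mathrm{Loc}$ and $\mathrm{GSec}$ are genuinely mutually quasi-inverse on these subcategories is automatic because $W$ is affine, so $\mathrm{GSec}$ is exact and fully faithful on quasicoherent $\EuScript D(W)$-modules. One should pass between finitely generated and general $(\mathrm D(W),\frak k)$-modules via filtered colimits, which is harmless since both sides of the equivalence are closed under such colimits and both lemmas apply to arbitrary finitely generated submodules.

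To conclude, I would invoke the Riemann--Hilbert correspondence of Kashiwara~\cite{Ka2}, cited just above the corollary, which identifies the category of holonomic $\EuScript D(W)$-modules with regular singularities along a fixed algebraic stratification with the category of perverse sheaves on $W$ adapted to the same stratification. Composing the two equivalences yields the desired one. There is no serious obstacle: the substance has been done in Lemma~\ref{Lkrs} and its companion, whose role is precisely to match up the boundedness/local-finiteness condition on the $\mathrm D(W)$-side with the regular-holonomic condition needed to apply Kashiwara's theorem.
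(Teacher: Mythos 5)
Your proposal reconstructs exactly the argument the paper intends: the corollary is stated directly after Lemma~\ref{Lkrs}, its converse (the unnamed lemma about regular singularities implying $\frak k$-local finiteness), and the statement of Kashiwara's Riemann--Hilbert theorem, with no further justification, so chaining these three via the affine $\mathrm{Loc}/\mathrm{GSec}$ equivalence is precisely the point. The only small misstep is the aside about passing to general modules via filtered colimits --- perverse sheaves form a finite-length abelian category and are not closed under filtered colimits, so one should simply read ``bounded $(\mathrm D(W),\frak k)$-modules'' as finitely generated (equivalently, holonomic on the $\EuScript D$-module side), which is exactly what your $\mathrm{Loc}/\mathrm{GSec}$ argument on finitely generated objects actually establishes and what the paper implicitly assumes.
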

The categories of such perverse sheaves has been described explicitly for the stratifications of S$^2V$ and $\Lambda^2V$ by the GL($V$)-orbits~\cite{BG}. In particular, in~\cite{BG} the authors have described the simple objects of this categories.

The following theorem provides a checkable condition under which the functions $[M_{1,2}:\cdot]_\frak k$ are pairwise disjoint (see Section~\ref{Stro}) for non-isomorphic simple (D$(W), \frak k)$ modules $M_1$ and $M_2$. The proof, which we present below, follows essentially the proof of~\cite[Corollary 5.8]{PS2}.
\begin{theorem}\label{T1m} Assume D$(W)^\frak k$ is the image of Z$(\frak k)$. Let $E$ be a simple $\frak k$-module. Then there exists at most one simple (D$(W), \frak k)$-module $M$ such that Hom$_\frak k(E, M)\ne0$.\end{theorem}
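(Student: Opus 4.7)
The plan follows~\cite[Corollary~5.8]{PS2}. Suppose $M_1$ and $M_2$ are two simple (D$(W), \frak k)$-modules with Hom$_\frak k(E, M_i)\ne 0$; I aim to produce a nonzero D$(W)$-homomorphism $M_1\to M_2$, which by simplicity of both sides will be an isomorphism.

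First, Lemma~\ref{Lkrs} ensures each $M_i$ is $\frak k$-multiplicity-free, so Hom$_\frak k(E, M_i)=\mathbb Cv_i$ is one-dimensional, spanned by a $\frak k$-embedding $v_i: E\hookrightarrow M_i$ whose image is the entire $E$-isotype $(M_i)_E$. Since D$(W)^\frak k$ preserves $\frak k$-isotypes, it acts on each Hom$_\frak k(E, M_i)$ by a character $\chi_i$. The hypothesis that D$(W)^\frak k$ is the image of Z$(\frak k)$, combined with the fact that Z$(\frak k)$ acts on the simple $\frak k$-module $E$ by the central character $\chi_E$, forces $\chi_1=\chi_2=\chi_E$.

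Next I would form the diagonal $v=(v_1, v_2):E\hookrightarrow M_1\oplus M_2$ and let $N:=$D$(W)\cdot v(E)$. Both projections $p_i: N\to M_i$ are surjective since $v_i(E)$ generates the simple $M_i$. Using $\ker p_1\cap\ker p_2=0$ and simplicity of the $M_i$, a standard submodule analysis yields the dichotomy: either some $p_i$ is already an isomorphism (whence $M_1\cong M_2$), or else $N=M_1\oplus M_2$.

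The main obstacle is ruling out the second alternative. The strategy is to show that Hom$_\frak k(E, N)$ is one-dimensional, spanned by $v$, contradicting the two-dimensional space Hom$_\frak k(E, M_1\oplus M_2)$ which would arise if $N=M_1\oplus M_2$. Decomposing D$(W)=\oplus_\lambda$D$(W)_\lambda$ into $\frak k$-adjoint isotypes, one examines the $\frak k$-equivariant maps D$(W)_\lambda\otimes E\to (M_i)_E\cong E$ given by $d\otimes e\mapsto P_E(dv_i(e))$. For $\lambda=0$ these maps agree for $i=1,2$ by the $\chi_E$ computation above. The hard part will be the analogous coincidence for $\lambda\ne 0$: this is where the hypothesis D$(W)^\frak k=$Z$(\frak k)$-image enters crucially, via a pairing argument expressing non-invariant components of $d$ through products that land in D$(W)^\frak k$, whose action on the $v_i$ is the known scalar $\chi_E$. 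Once these maps coincide for both $i=1,2$, the $E$-isotype projections of $dv(e)$ stay on the diagonal $v(E)$, giving Hom$_\frak k(E, N)=\mathbb Cv$ and the desired contradiction. Hence $N$ is simple, both projections are isomorphisms, and $M_1\cong M_2$.
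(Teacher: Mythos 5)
Your overall strategy is sound, and the dichotomy you derive for the diagonal submodule $N\subset M_1\oplus M_2$ is correct: either one projection is an isomorphism (giving $M_1\cong M_2$) or $N=M_1\oplus M_2$. You also correctly locate the crux of the matter — showing $\mathrm{Hom}_\frak k(E,N)$ is one-dimensional. However, this crucial step is left as a hand-wave. The phrase ``via a pairing argument expressing non-invariant components of $d$ through products that land in $\mathrm D(W)^\frak k$'' does not constitute a proof; it is not clear which pairing you have in mind, why such products would reproduce the needed projections, or how one controls the $\frak k$-non-invariant part of $d$ along the way. This is precisely the point where the argument could fail, and it is exactly the content that the paper's Lemma~\ref{L96} supplies.

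The paper's own proof is structurally parallel but cleaner: rather than working inside $M_1\oplus M_2$, it analyzes the universal object $I:=\mathrm D(W)\otimes_{\mathrm U(\frak k)}E$ and shows that the $E$-isotypic component $R$ of $I$ is exactly $E$ (one copy). The mechanism is the following: for $g\in\mathrm D(W)$, the $E$-component of $g\cdot(1\otimes e)$ gives a linear map $\phi:E\to R\cong E\otimes\mathbb C^n$. Lemma~\ref{L96} expresses $\phi$ through $\frak k$-\emph{invariant} maps $\phi_i\in\mathrm U(\frak k)\phi\,\mathrm U(\frak k)$; the corresponding operators $g_i$ then satisfy $[g_i,\frak k]E=0$, so by semisimplicity of the adjoint action of $\frak k$ on $\mathrm D(W)$ one can write $g_i=c_i+a_i$ with $c_i\in\mathrm D(W)^\frak k$ and $a_iE=0$. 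The hypothesis that $\mathrm D(W)^\frak k$ is the image of $\mathrm Z(\frak k)$ then forces $g_iE\subset\mathrm U(\frak k)E$, hence $R=E$. From $R=E$ one gets a unique maximal submodule of $I$ avoiding $E$, and Frobenius reciprocity realizes every simple $M$ with $\mathrm{Hom}_\frak k(E,M)\ne0$ as a quotient of $I$, pinning it down uniquely. In your version, one would apply exactly this computation to $N$ (which is a quotient of $I$, so $R=E$ already implies $\dim\mathrm{Hom}_\frak k(E,N)\le1$). To close the gap you should either prove a statement of the form of Lemma~\ref{L96} and run the $g_i=c_i+a_i$ decomposition, or pass to the induced module and invoke Frobenius reciprocity as the paper does. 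As written, the proposal identifies the difficulty but does not resolve it.
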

To prove Theorem~\ref{T1m} we need the following preparatory lemma.
\begin{lemma}\label{L96}Let $E$ be a simple finite-dimensional $\frak k$-module. Then for any $n\in\mathbb Z_{\ge1}$ and $\phi\in$Hom$(E, E\otimes \mathbb C^n)$ there exist $\phi_1,..., \phi_s\in$Hom$_\frak k(E, E\otimes\mathbb C^n)$ such that $\phi_i\in$U$(\frak k)\phi$U$(\frak k)$ and $\phi\in+_i$U$(\frak k)\phi_i$U$(\frak k)$.\end{lemma}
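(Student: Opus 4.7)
The plan is to identify the bimodule $\mathrm{Hom}(E,E\otimes\mathbb C^n)$ explicitly with $\mathrm{End}(E)\otimes\mathbb C^n$ and then exploit the simplicity of the matrix algebra $\mathrm{End}(E)$.

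First I would set up the natural U$(\frak k)$-bimodule isomorphism $\mathrm{Hom}(E,E\otimes\mathbb C^n)\cong\mathrm{End}(E)\otimes\mathbb C^n$, sending $\phi$ to $\sum_i a_i\otimes e_i$ where $\phi(v)=\sum_i a_i(v)\otimes e_i$. Here $\mathbb C^n$ carries the trivial U$(\frak k)$-bimodule structure, while $\mathrm{End}(E)$ carries the bimodule structure of left and right multiplication via the algebra map U$(\frak k)\to\mathrm{End}(E)$ coming from the action on $E$. Since $E$ is simple finite-dimensional, the Jacobson density theorem makes this map surjective, so for computing U$(\frak k)\phi$U$(\frak k)$ I may replace U$(\frak k)$ by $\mathrm{End}(E)\cong M_d(\mathbb C)$ acting by matrix multiplication on the left and right.

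The next step would be to describe U$(\frak k)\phi$U$(\frak k)$ explicitly. Let $U\subseteq\mathbb C^n$ be the minimal subspace with $\phi\in\mathrm{End}(E)\otimes U$; equivalently, $U$ is the image of the linear map $\mathrm{End}(E)^{*}\to\mathbb C^n$ sending $f$ to $\sum_i f(a_i)e_i$. I expect to prove U$(\frak k)\phi$U$(\frak k)=\mathrm{End}(E)\otimes U$. The inclusion $\subseteq$ is immediate; for $\supseteq$ I would use elementary matrices, computing $e_{pq}\phi e_{rs}=e_{ps}\otimes\sum_i(a_i)_{qr}e_i$ and letting $(p,q,r,s)$ vary. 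This produces tensors $e_{ps}\otimes u$ with $u$ running over a spanning set of $U$, and hence all of $\mathrm{End}(E)\otimes U$ by taking linear combinations.

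With this description, Schur's lemma completes the argument: the $\frak k$-invariants of $\mathrm{End}(E)$ under the adjoint (commutator) action are $\mathbb C\cdot\mathrm{Id}_E$, so those of $\mathrm{End}(E)\otimes U$ are $\mathrm{Id}_E\otimes U$. Choosing a basis $w_1,\dots,w_s$ of $U$ and setting $\phi_i:=\mathrm{Id}_E\otimes w_i$ gives $\frak k$-equivariant maps $E\to E\otimes\mathbb C^n$ lying in U$(\frak k)\phi$U$(\frak k)$, and each U$(\frak k)\phi_i$U$(\frak k)=\mathrm{End}(E)\otimes w_i$, so their sum is $\mathrm{End}(E)\otimes U\ni\phi$, as required. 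The main obstacle I anticipate is the sub-bimodule computation U$(\frak k)\phi$U$(\frak k)=\mathrm{End}(E)\otimes U$: because left and right actions both operate on the $\mathrm{End}(E)$-factor, the $\mathbb C^n$-slots are coupled and cannot be separated by one-sided moves, and the elementary-matrix calculation above is precisely what disentangles this coupling, using that $M_d(\mathbb C)\otimes M_d(\mathbb C)^{\mathrm{op}}$ acts densely enough on $M_d(\mathbb C)$ to isolate any entry.
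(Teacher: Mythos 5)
The paper gives no proof of this lemma (it writes ``We omit the proof''), so there is nothing to compare against; your argument must be judged on its own. It is correct. The key moves are all in order: the identification $\mathrm{Hom}(E,E\otimes\mathbb C^n)\cong\mathrm{End}(E)\otimes\mathbb C^n$ as a U$(\frak k)$-bimodule (left action on End$(E)$ by postcomposition, right by precomposition, trivial on the multiplicity space $\mathbb C^n$) matches the bimodule structure implicitly used in the proof of Theorem~\ref{T1m}; Burnside/Jacobson density gives surjectivity of U$(\frak k)\to\mathrm{End}(E)$ since $E$ is simple finite-dimensional over the algebraically closed field $\mathbb C$, so U$(\frak k)\phi$U$(\frak k)=\mathrm{End}(E)\,\phi\,\mathrm{End}(E)$; the elementary-matrix computation $e_{pq}\phi e_{rs}=e_{ps}\otimes v_{qr}$ with $v_{qr}=\sum_i(a_i)_{qr}e_i$ correctly identifies the sub-bimodule as $\mathrm{End}(E)\otimes U$, where $U$ is exactly the span of the $v_{qr}$ (the image of the associated linear map $\mathrm{End}(E)^*\to\mathbb C^n$, which is the ``column space'' of $\phi$); and Schur's lemma then gives $\mathrm{Hom}_\frak k(E,E\otimes\mathbb C^n)\cap(\mathrm{End}(E)\otimes U)=\mathrm{Id}_E\otimes U$, from which a basis $w_1,\dots,w_s$ of $U$ produces the required $\phi_i=\mathrm{Id}_E\otimes w_i$ with $\sum_i$U$(\frak k)\phi_i$U$(\frak k)=\mathrm{End}(E)\otimes U\ni\phi$. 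Your anticipated ``obstacle'' is precisely the right thing to worry about and your elementary-matrix argument resolves it; the only point worth stating a touch more explicitly is that you are implicitly taking linear combinations over the pairs $(q,r)$ with $(p,s)$ fixed, which is legitimate because a sub-bimodule is in particular a linear subspace.
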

\begin{proof}We omit the proof.\end{proof}
\begin{proof}[Proof of Theorem~\ref{T1m}]Let $R$ be the isotypic component of $E$ in D$(W)\otimes_{\mathrm U(\frak k)}E$ and pr denote the projection D$(W)\otimes_{\mathrm U(\frak k)}E\to R$. For any $g\in$D$(W)$ there exists $g'\in$D$(W)$ such that pr($gm)=g'm$ for any $m\in E$. Therefore $g'$ induces a homomorphism $\phi$ from $E$ to $R$. Let $\phi_i$ be as in Lemma~\ref{L96}. Then for any $i$ there exist $g_i\in$D$(W)$ such that $g_im=\phi_im$ for any $m\in E$. We have $[g_i, k]m=0$ for any $k\in\frak k$ and $m\in E$. As the adjoint action of $\frak k$ on D$(W)$ is semisimple, $g_i=c_i+a_i$ for some $c_i, a_i\in$D$(W)$ such that $c_i\in$D$(W)^\frak k$ and $a_im=0$ for all $m\in E$. Hence $g_iE\subset$U$(\frak k)E$ and therefore $R=E$.

Let $\tilde M$ be the unique maximal submodule of D$(W)\otimes_{\mathrm U(\frak k)}E$ which does not contain $R=E$. Then the quotient D$(W)\otimes_{\mathrm U(\frak k)}E/\tilde M$ is simple and, up to isomorphism, is the unique simple (D$(W), \frak k)$-module for which dim~Hom$_\frak k(E,\cdot)\ne 0$.\end{proof}
Let $\frak k=\frak{gl}(V)$ and let $W$ equal $\Lambda^2V$ or S$^2V$. Then D$(W)^\frak k$ is the image of Z$(\frak k)$~\cite{HU}. Therefore, for non-isomorphic simple (D$(W), \frak k$)-modules $M_{1,2}$, the functions $[M_{1,2}:\cdot]_\frak k$ are pairwise disjoint, i.e. the product of these functions is the zero function (see Section~\ref{Stro}).
\subsection{A useful algebraic trick}\label{SSutr} We recall that $E$ is the Euler operator and that \{D$^i(W)\}_{i\in\mathbb Z}$ are the $E$-eigenspaces of D$(W)$ (see the discussion at the end of Subsection~\ref{SSbmsl}). In this section we relate the category of (D$(W), E$)-modules to the categories of D$^0(W)$ and D$^{\bar 0}(W)$-modules (see also~\cite{PS2}). Let $M$ be a locally finite $E$-module. Fix $t\in\mathbb C$. Set\begin{center}$M_t:=\{m\in M\mid (E-t)^nm=0$ for some $n\in\mathbb Z_{\ge0}$\}.\end{center}
If $M$ is D$(W)$-module which is semisimple an an $E$-module, then $M_t$ is a D$^t\mathbb P(W)$-module.
\begin{definition}We say that a D$(W)$-module $M$ is {\it a $\mathrm  D(W)$-module with monodromy} e$^{2\pi it}$ if $M=\oplus_{j\in\mathbb Z}M_{t+j}$.\end{definition}
Any simple D$(W)$-module with a locally finite action of $E$ is a module with a monodromy. Let $\delta_0$ be the delta function at $0\in W$. The functions 1 and $\delta_0$ generate D$(W)$-modules which we denote D$(W)1$ and D$(W)\delta_0$.
\begin{lemma}\label{Lsup} Let $M$ be a simple D$(W)$-module with monodromy e$^{2\pi it}$. Assume that \begin{center}dim$M_t<\infty$ and $n_W\ge 2$.\end{center} Then $M$ is isomorphic to one of the following modules: \begin{center}0, D$(W)1$, D$(W)\delta_0$.\end{center}\end{lemma}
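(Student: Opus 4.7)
The plan is to split into two cases according to the set-theoretic support of $M$ in $W$. If $M=0$ we are done; otherwise, either the support equals $\{0\}$, or it is strictly larger.

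In the first case, I would invoke Kashiwara's equivalence: simple $\mathrm D(W)$-modules supported at a single closed point are in bijection with simple $\mathbb C$-vector spaces, giving $M\cong\mathrm D(W)\delta_0$ as the unique nonzero such module.

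In the second case, I would restrict to $W^\times:=W\setminus\{0\}$, so that $M|_{W^\times}$ is a nonzero simple $\mathrm D(W^\times)$-module with locally finite $E$-action and monodromy $\mathrm e^{2\pi it}$. Since $E$ generates a free $\mathbb C^*$-action on $W^\times$, making $\pi\colon W^\times\to\mathbb P(W)=\mathbb P^{n_W-1}$ a principal $\mathbb C^*$-bundle, Beilinson--Bernstein $\mathbb C^*$-equivariant descent yields an equivalence between $\mathrm D(W^\times)$-modules with monodromy $\mathrm e^{2\pi it}$ and $\mathcal D^t\mathbb P(W)$-modules. Under this equivalence $M|_{W^\times}$ corresponds to a simple $\mathcal D^t\mathbb P(W)$-module $\tilde N$, and a careful bookkeeping identifies $M_t\cong\Gamma(\mathbb P(W),\tilde N)$. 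The assumption $\dim M_t<\infty$ then forces $\tilde N$ to have finite-dimensional global sections; since $n_W\ge 2$ makes $\mathbb P(W)$ positive-dimensional, a Bernstein-dimension argument forces $\tilde N$ to be $\mathcal O$-coherent, hence isomorphic to the unique rank-one simple $\mathcal D^t$-module $\mathcal O_{\mathbb P(W)}(t)$ (using that $\mathbb P^{n_W-1}$ is simply connected). Pulling back by $\pi$ and using that $W^\times$ is simply connected (again $n_W\ge2$), $M|_{W^\times}$ must be isomorphic to $\mathcal O_{W^\times}$ as a $\mathrm D(W^\times)$-module, which forces $t\in\mathbb Z$. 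Simplicity of $M$ and the uniqueness of the simple $\mathrm D(W)$-extension of $\mathcal O_{W^\times}$ across $\{0\}$ then yield $M\cong\mathcal O_W=\mathbb C[W]=\mathrm D(W)\cdot 1$.

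The main obstacle I anticipate is the Bernstein-type claim that any simple $\mathcal D^t$-module on the positive-dimensional variety $\mathbb P(W)$ with finite-dimensional global sections must be $\mathcal O$-coherent (hence a twisted structure sheaf), together with making the descent identification $M_t\cong\Gamma(\mathbb P(W),\tilde N)$ fully rigorous. The hypothesis $n_W\ge2$ enters essentially in two places: to ensure $\dim\mathbb P(W)\ge1$ (so that non-$\mathcal O$-coherent simple $\mathcal D^t$-modules have infinite-dimensional global sections) and to ensure $\pi_1(W^\times)$ is trivial (so that $\mathcal O$-coherent descent produces the trivial connection).
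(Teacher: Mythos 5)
Your proposal takes a genuinely different route from the paper's. The paper splits on whether $M_t$ vanishes. When $0<\dim M_t<\infty$, it observes that $M=D(W)M_t$ is then a locally finite $\frak{sl}(W)$-module, so Lemma~\ref{Lkrs} applies: $\mathrm{Loc}\,M$ is regular holonomic and constructible along the $SL(W)$-orbit stratification $\{\{0\},\,W\setminus\{0\}\}$; since both strata are simply connected for $n_W\ge2$, the associated simple perverse sheaf is an IC sheaf with trivial local system, so $M\cong D(W)\delta_0$ or $M\cong D(W)1$. When $M_t=0$, the $E$-grading of the simple module $M$ is bounded above or below, hence the $\mathrm e_i$ or the $\partial_{\mathrm e_i}$ act locally nilpotently, and Kashiwara's theorem finishes. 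You instead split by support and, in the nontrivial case, descend to $\mathbb P(W)$ via $\mathbb C^*$-monodromic descent, bypassing Lemma~\ref{Lkrs} entirely.

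The descent idea is appealing, but as written it has two real gaps, both of which you yourself flag. First, the bookkeeping does not produce an isomorphism $M_t\cong\Gamma(\mathbb P(W),\tilde N)$: since $\pi\colon W^\times\to\mathbb P(W)$ is affine, $\Gamma(\mathbb P(W),\tilde N)=\Gamma(W^\times, j^*M)_t$, and the natural map $M\to\Gamma(W^\times,j^*M)$ is only injective, with cokernel the local cohomology $H^1_{\{0\}}(M)$, a $D(W)$-module supported at $0$. Thus $\dim M_t<\infty$ does not by itself give $\dim\Gamma(\tilde N)<\infty$ without control of $H^1_{\{0\}}(M)_t$, which essentially presupposes the holonomicity of $M$ that one is trying to establish. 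Second, the ``Bernstein-dimension argument'' is left as a black box: Bernstein's inequality alone does not show that a simple $\mathcal D^t$-module on $\mathbb P(W)$ with finite-dimensional (possibly zero) global sections is $\mathcal O$-coherent. A correct version passes through the moment map, i.e.\ the Borho--Brylinski/Barlet--Kashiwara identity $V(\Gamma(\tilde N))=\phi_{\mathbb P(W)}(\EuScript V(\tilde N))$ recalled in Subsection~\ref{SSof}; but that is precisely the kind of machinery the paper's proof sidesteps by invoking Lemma~\ref{Lkrs}. Until these two steps are closed, the proposal is an outline rather than a proof.
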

\begin{proof}Let dim$M_t\ne0$. Then the action of $\frak{sl}(W)$ on $M_t$ is locally finite and therefore $M$ is a locally finite $\frak{sl}(W)$-module. As $W$ is a spherical SL$(W)$-module, $M$ is a holonomic D$(W)$-module with regular singularities by Lemma~\ref{Lkrs}. Let Sh be the corresponding to $M$ simple perverse sheaf. Then Sh is constructible with respect to the stratification \{$\{0\}$, $W-\{0\}$\} of $W$. As \begin{center}$\pi_1(\{0\})=\pi_1(W-\{0\})=0$,\end{center} Sh is the simple perverse sheaf corresponding to $\{0\}$ or $W-\{0\}$, in both cases the local system being trivial. The latter two perverse sheaves correspond to the simple D$(W)$-modules D$(W)\delta_0$ and D$(W)1$, respectively. Therefore $M$ is isomorphic to 0, to D$(W)1$, or to D$(W)\delta_0$.

Assume that $M_t=0$. Then, for all $i\le n_W$, the action of e$_i$ on $M$ is locally finite, or the action of $\partial_{\mathrm e_i}$ is locally finite for all $i$. In the first case $M$ is isomorphic to D$(W)\delta_0$ by Kashiwara's theorem. In the second case D$(W)$ is isomorphic to D$(W)1$ by Kashiwara's theorem (we can interchange the roles of e$_i$ and $\partial_{\mathrm e_i}$).\end{proof}
\begin{lemma}\label{Ldtpw}a) Let $M$ be a nonzero simple D$(W)$-module with monodromy e$^{2\pi it}$ which is not isomorphic to D$(W)1$ and D$(W)\delta_0$. Then $M_t$ is a simple infinite-dimensional D$^t\mathbb P(W)$-module.\\b) Let $M_{res}$ be a simple infinite-dimensional D$^t\mathbb P(W)$-module. Then the D$(W)$-module \begin{center}D$(W)\otimes_{\mathrm D^0(W)}M_{res}$\end{center} has a unique simple quotient $\tilde M$. This quotient is not isomorphic to D$(W)1$ or to D$(W)\delta_0$ and has monodromy e$^{2\pi it}$. Moreover, there is an isomorphism $\tilde M_t=M_{res}$ of D$^t\mathbb P(W)$-modules.\end{lemma}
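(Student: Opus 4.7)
The proof in both parts exploits the $\mathbb Z$-grading $\mathrm D(W) = \oplus_i \mathrm D^i(W)$ together with the bracket relation $[E, d] = id$ for $d \in \mathrm D^i(W)$. The preliminary step is to upgrade the monodromy decomposition $M = \oplus_{j \in \mathbb Z} M_{t+j}$ into an honest eigenspace decomposition: the subspace $N := \oplus_s \ker(E - s) \subset M$ is $\mathrm D(W)$-stable, since by the bracket relation $\mathrm D^i(W) \cdot \ker(E - s) \subset \ker(E - (s + i))$, and it is nonzero because $E - s$ is locally nilpotent on each nonzero generalized eigenspace $M_s$ and therefore has nontrivial classical kernel. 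Simplicity of $M$ forces $N = M$, so $M_t = \ker(E - t)$ is genuinely a $\mathrm D^t\mathbb P(W)$-module. For part a), infinite-dimensionality of $M_t$ is then immediate from Lemma~\ref{Lsup} under the stated exclusion of $\mathrm D(W)1$ and $\mathrm D(W)\delta_0$. For simplicity over $\mathrm D^t\mathbb P(W)$, I would take any nonzero $\mathrm D^0(W)$-submodule $N_0 \subset M_t$: the $\mathrm D(W)$-module it generates is the graded sum $\oplus_i \mathrm D^i(W) N_0 \subset \oplus_i M_{t+i}$, which equals $M$ by simplicity, and extracting its degree-$t$ component recovers $\mathrm D^0(W) N_0 = N_0 = M_t$.

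For part b), I would form the induced module $\tilde N := \mathrm D(W) \otimes_{\mathrm D^0(W)} M_{res}$, place $M_{res}$ in degree $t$, and use $[E, d] = id$ to verify that $E$ acts on $\mathrm D^i(W) \otimes M_{res}$ by the scalar $t + i$. Since $M_{res}$ is simple as a $\mathrm D^0(W)$-module via the surjection $\mathrm D^0(W) \twoheadrightarrow \mathrm D^t\mathbb P(W)$, every graded $\mathrm D(W)$-submodule of $\tilde N$ either has vanishing degree-$t$ part or contains the entire copy $1 \otimes M_{res}$ and hence coincides with $\tilde N$. Consequently the sum $\tilde N_{\max}$ of all submodules with vanishing degree-$t$ part is the unique maximal proper submodule, and $\tilde M := \tilde N / \tilde N_{\max}$ is the unique simple quotient; by construction $\tilde M_t = M_{res}$, the monodromy of $\tilde M$ is $e^{2\pi it}$ from the eigenvalue count, and infinite-dimensionality of $M_{res}$ excludes $\tilde M \cong \mathrm D(W)1$ or $\mathrm D(W)\delta_0$ by Lemma~\ref{Lsup}.

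The main technical point will be the preliminary semisimplicity of $E$ on $M$: without it, the identification $M_t = \ker(E - t)$ that promotes $M_t$ to a genuine $\mathrm D^t\mathbb P(W)$-module, and that in part b) matches $\tilde M_t$ with $M_{res}$ on the nose rather than only up to an extension, is not automatic from the monodromy hypothesis alone. Everything else reduces to a standard graded-induction argument, with the uniqueness of the maximal submodule in part b) being the usual feature of an induced module built from a simple module over the degree-zero subalgebra.
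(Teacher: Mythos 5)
Your argument is correct and follows essentially the paper's own route: graded induction from the degree-zero subalgebra together with simplicity, with any submodule of the induced module being automatically $E$-graded, so the unique maximal submodule is the sum of those with vanishing degree-$t$ part. The one thing you do that the paper leaves implicit is the preliminary observation that the Euler operator $E$ acts semisimply on a simple $\mathrm D(W)$-module with monodromy (via the bracket relation $[E,d]=id$ and local nilpotency of $E-s$ on each generalized eigenspace), so that $M_t=\ker(E-t)$ is genuinely a $\mathrm D^t\mathbb P(W)$-module; this is exactly the hypothesis the paper's discussion preceding the lemma singles out and then uses tacitly, so making it explicit is a useful clarification rather than a change of approach.
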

\begin{proof}a) By Lemma~\ref{Lsup} we have dim$M_t=\infty$. Let $\tilde M_t\subset M_t$ be a D$^t\mathbb P(W)$-submodule of $M_t$. The inclusion $\tilde M_t\to M_t$ induces a homomorphism \begin{center}inc: D$(W)\otimes_{\mathrm D^0(W)}\tilde M_t\to M$\end{center} of $\mathbb Z$-graded D$(W)$-modules. We have \begin{center}(D$(W)\otimes_{\mathrm D^0(W)}\tilde M_t)_t=\tilde M_t$,\end{center}and the map\begin{center}inc$_t$: (D$(W)\otimes_{\mathrm D^0(W)}\tilde M_t)_t\to M_t$\end{center} is the inclusion map. As $M$ is simple,  inc equals 0 or is surjective and therefore $\tilde M_t$ is isomorphic to 0 or to $M_t$. Therefore a) follows.

Any submodule $\bar M$ of D$(W)\otimes_{\mathrm D^0(W)}M_{res}$ either intersects $M_{res}$ trivially, or coincides with D$(W)\otimes_{\mathrm D^0(W)}M_{res}$. Therefore D$(W)\otimes_{\mathrm D^0(W)}M_{res}$ has a unique maximal submodule $\bar M$ and this submodule is isomorphic to a direct sum of copies of D$(W)1$ and D$(W)\delta_0$. The quotient \begin{center}$\tilde M:=$(D$(W)\otimes_{\mathrm D^0(W)}M_{res})/\bar M$\end{center} is simple and $\tilde M_t=M_{res}$. As $M_{res}$ is infinite-dimensional, $\tilde M$ is not isomorphic to D$(W)1$ or to D$(W)\delta_0$.\end{proof}
This reduces the study of (simple) D$^t\mathbb P(W)$-modules to a study of some (simple) modules over the Weyl algebra. Sometimes the latter is much easier.

We recall that D$(W)=$D$^{\bar0}(W)\oplus$D$^{\bar1}(W)$ is a
$\mathbb Z_2$-graded algebra. We say that a D$^{\bar 0}(W)$-module
{\it has half-monodromy} e$^{\pi it}$ if $M$ is a direct sum
$\oplus_{j\in\mathbb Z}M_{t+2j}$. We denote by \begin{center}(D($W),
E)$-mod\end{center} the category of D$(W)$-modules with a locally
finite action of $E$ and by \begin{center}(D$(W), E)$-mod$^{\mathrm
e^{2\pi it}}$\end{center} the subcategory of (D$(W), E)$-mod of
modules with monodromy e$^{2\pi it}$ for any $t\in\mathbb C$.
Similarly we define \begin{center}(D$^{\bar 0}(W), E)$-mod and
(D$^{\bar 0}(W), E)$-mod$^{\mathrm e^{\pi it}}$.\end{center} The
sign '$\cong$' stands for equivalence of categories. For any
$t\in\mathbb C$ we have two functors:
\begin{center}Res$_{\mathrm e^{2\pi it}}^{\mathrm e^{\pi it}}:$ D$(W)$-mod$^{\mathrm e^{2\pi it}}\to$D$^{\bar 0}(W)$-mod$^{\mathrm e^{\pi it}}$,\\
$M\mapsto \oplus_{j\in\mathbb Z}M_{t+2j}$;\\
Ind$_{\mathrm e^{\pi it}}^{\mathrm e^{2\pi it}}:$ D$^{\bar 0}(W)$-mod$^{\mathrm e^{\pi it}}\to$D($W$)-mod$^{\mathrm e^{2\pi it}}$,\\
$M\mapsto$D$(W)\otimes_{\mathrm D^{\bar 0}(W)}M$.\end{center}
\begin{theorem}The functors Ind and Res are mutually inverse equivalences of the categories of (D$(W), E)$-mod$^{\mathrm e^{2\pi it}}$ and (D$^{\bar 0}(W), E)$-mod$^{\mathrm e^{\pi it}}$.\end{theorem}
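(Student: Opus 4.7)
The plan is to recognize the theorem as a special case of the classical equivalence between graded modules over a strongly $\mathbb Z_2$-graded algebra and ungraded modules over its degree-zero part. The decomposition D$(W)=$D$^{\bar 0}(W)\oplus$D$^{\bar 1}(W)$ is a $\mathbb Z_2$-grading of algebras, and any D$(W)$-module $M$ with monodromy $\mathrm e^{2\pi it}$ inherits a compatible grading $M=M_{\bar 0}\oplus M_{\bar 1}$ with $M_{\bar 0}:=\oplus_{j\in\mathbb Z}M_{t+2j}$ and $M_{\bar 1}:=\oplus_{j\in\mathbb Z}M_{t+2j+1}$, because any $d\in$D$^a(W)$ shifts generalized $E$-eigenspaces by $a$. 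By definition, a D$^{\bar 0}(W)$-module with half-monodromy $\mathrm e^{\pi it}$ is precisely a D$^{\bar 0}(W)$-module whose generalized $E$-eigenvalues lie in $t+2\mathbb Z$, i.e., exactly a module of the form $M_{\bar 0}$ for some such $M$.

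The main technical input is that D$(W)$ is \emph{strongly} $\mathbb Z_2$-graded, meaning D$^{\bar 1}(W)\cdot$D$^{\bar 1}(W)=$D$^{\bar 0}(W)$. This follows at once from the Weyl-algebra identity $1=\partial_{\mathrm e_i}\mathrm e_i-\mathrm e_i\partial_{\mathrm e_i}$, whose two summands already lie in D$^{\bar 1}(W)\cdot$D$^{\bar 1}(W)$. The standard consequence is that for any $\mathbb Z_2$-graded D$(W)$-module $M$ the natural multiplication map D$(W)\otimes_{\mathrm D^{\bar 0}(W)}M_{\bar 0}\to M$ is an isomorphism: choosing $1=\sum_i a_ib_i$ with $a_i,b_i\in$D$^{\bar 1}(W)$ gives $m=\sum_i a_i(b_im)$ for every $m\in M_{\bar 1}$, proving surjectivity; and the kernel $K$ is a graded submodule whose degree-zero part $K\cap M_{\bar 0}$ is zero, so applying the same surjectivity to $K$ yields $K_{\bar 1}=$D$^{\bar 1}(W)\cdot K_{\bar 0}=0$ and hence $K=0$.

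With the equivalence $N\mapsto$D$(W)\otimes_{\mathrm D^{\bar 0}(W)}N$, $M\mapsto M_{\bar 0}$ between D$^{\bar 0}(W)$-modules and $\mathbb Z_2$-graded D$(W)$-modules in hand, the theorem reduces to matching Res and Ind with these functors restricted to the appropriate monodromy subcategories. If $N$ has half-monodromy $\mathrm e^{\pi it}$, then Ind$(N)$ really has monodromy $\mathrm e^{2\pi it}$: the commutation $(E-\alpha-a)(dn)=d(E-\alpha)n$ for $d\in$D$^a(W)$ shows that a tensor $d\otimes n$ with $d\in$D$^{2k+1}(W)$ and $n$ a generalized $E$-eigenvector of eigenvalue $t+2j$ lies in the generalized eigenspace for $t+2(j+k)+1$, so all $E$-eigenvalues of Ind$(N)$ lie in $t+\mathbb Z$, as required. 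Conversely Res$(M)=M_{\bar 0}$ has $E$-eigenvalues in $t+2\mathbb Z$ by construction. The one place where some substance is needed is the strong-grading identity D$^{\bar 1}(W)\cdot$D$^{\bar 1}(W)=$D$^{\bar 0}(W)$; everything else is formal bookkeeping of $E$-eigenvalues combined with the standard graded/ungraded equivalence for strongly graded algebras.
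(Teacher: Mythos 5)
Your proof is correct, and while it rests on the same $\mathbb Z_2$-grading $\mathrm D(W)=\mathrm D^{\bar 0}(W)\oplus\mathrm D^{\bar 1}(W)$ and the same compatible grading $M=M_{\bar 0}\oplus M_{\bar 1}$ used in the paper, you organize the argument around a key lemma the paper never names: $\mathrm D(W)$ is \emph{strongly} $\mathbb Z_2$-graded, i.e. $\mathrm D^{\bar 1}(W)\cdot\mathrm D^{\bar 1}(W)=\mathrm D^{\bar 0}(W)$, which you extract cleanly from $1=\partial_{\mathrm e_i}\mathrm e_i-\mathrm e_i\partial_{\mathrm e_i}$. This buys you a genuinely complete argument for $\mathrm{Ind}\circ\mathrm{Res}\cong\mathrm{Id}$: surjectivity of the natural map $\mathrm D(W)\otimes_{\mathrm D^{\bar 0}(W)}M_{\bar 0}\to M$ comes from writing $1=\sum a_ib_i$ with $a_i,b_i$ odd, and injectivity follows because the kernel $K$ is a graded submodule with $K_{\bar 0}=0$, whence $K_{\bar 1}=\mathrm D^{\bar 1}(W)K_{\bar 0}=0$. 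The paper, by contrast, only exhibits surjectivity (arguing that $(M/\mathrm{Im}\,\psi)_{\bar t}=0$ forces $M/\mathrm{Im}\,\psi=0$, which again tacitly uses strong grading) and never addresses injectivity; it also includes a digression about the case $M_t=0$ which your approach makes unnecessary. In short, your route is the standard graded/ungraded equivalence for strongly graded rings, and it closes a gap that the paper leaves to the reader; the paper's presentation is shorter but relies on the reader supplying precisely the fact you made explicit.
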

\begin{proof}For a D$(W)$-module $M$ with monodromy e$^{2\pi it}$, set \begin{center}$M_{\bar t}:=\oplus_{j\in\mathbb Z}M_{t+2j+1}$.\end{center}Then $M=M_{\bar t}\oplus M_{\overline{t+1}}$, and this is a $\mathbb Z_2$-grading on $M$ considered as a D$(W)$-module.

Let $M_{\bar t}$ be a D$^{\bar 0}(W)$-module with half-monodromy e$^{\pi it}$. The D$(W)$-module \begin{center}D$(W)\otimes_{\mathrm D^{\bar0}(W)}M_{\bar t}$\end{center} is $\mathbb Z_2$-graded and has monodromy e$^{2\pi it}$. Moreover,\begin{center}(D$(W)\otimes_{\mathrm D^{\bar0}(W)}M_{\bar t})_{\bar t}=$D$^{\bar 0}(W)\otimes_{\mathrm D^{\bar0}(W)}M_{\bar t}=M_{\bar t}$.\end{center} Therefore Res$^{\mathrm e^{\pi it}}_{\mathrm e^{2\pi it}}\circ$Ind$_{\mathrm e^{\pi it}}^{\mathrm e^{2\pi it}}$ is the identity functor.

Let $M$ be a D$(W)$-module with monodromy e$^{2\pi it}$. If $M_t=0$, then $M$ has finite length and, up to isomorphism, the only simple constituents of it are D$(W)1$ and D$(W)\delta_0$. In both cases $M_{\bar t}\ne 0$. There is a natural homomorphism \begin{center}$\psi$: D$(W)\otimes_{\mathrm D^{\bar0}(W)}M_{\bar t}\to M$.\end{center} We have \begin{center}(D$(W)\otimes_{\mathrm D^{\bar0}(W)}M_{\bar t})_{\bar t}=$D$(W)^{\bar 0}\otimes_{\mathrm D^{\bar0}(W)}M_{\bar t}=M_{\bar t}$.\end{center}Moreover,\begin{center}$\psi_{\bar t}:$ (D$(W)\otimes_{\mathrm D^{\bar0}(W)}M_{\bar t})_{\bar t}\to M_{\bar t}$\end{center} is an isomorphism. Therefore $(M/$Im$\psi)_{\bar t}=0$ and hence $M/$Im$\psi=0$. This shows that Ind$_{\mathrm e^{\pi it}}^{\mathrm e^{2\pi it}}\circ$Res$^{\mathrm e^{\pi it}}_{\mathrm e^{2\pi it}}$ is the identity functor.\end{proof}
\begin{corollary}We have \begin{center}(D$^{\bar 0}(W), E)$-mod$\cong$(D$(W), E)$-mod$\oplus$(D$(W), E)$-mod.\end{center}\end{corollary}
\begin{proof}We have\begin{center}(D$^{\bar 0}(W), E)$-mod$\cong\oplus_{t\in\mathbb C/\mathbb Z}$(D$^{\bar 0}(W), E)$-mod$^{\mathrm e^{2\pi it}}$,\\(D$(W), E)$-mod$\cong\oplus_{t\in\mathbb C/\mathbb Z}$(D$(W), E)$-mod$^{\mathrm e^{2\pi it}}$.\end{center}On the other hand (D$^{\bar 0}(W), E)$-mod$^{\mathrm e^{\pi it}}\cong$(D$(W), E)$-mod$^{\mathrm e^{2\pi it}}$. The two half-monodromies e$^{\pi it}, -$e$^{\pi it}=$e$^{\pi i(t+1)}$ combine into monodromy e$^{2\pi it}$, and\begin{center}(D$^{\bar 0}(W), E)$-mod$\cong$(D$(W), E)$-mod$\oplus$(D$(W), E)$-mod.\end{center}\end{proof}
\subsection{A description of $(\frak{sl}(W), \frak{sl}(V))$-modules}\label{SSdsl}In this section\begin{center}$K:$=SL$(V)$, $\frak g:=\frak{sl}(W)$\end{center} where $W:=\Lambda^2V (n_V=2k, n_V\ge5)$ or $W=$S$^2V (n_V\ge3)$.
\begin{lemma}\label{LslJ} Let $M$ be a simple bounded $(\frak{sl}(W), \frak{sl}(V))$-module. Then Ann$M$ is a Joseph ideal.\end{lemma}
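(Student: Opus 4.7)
The plan is to show that $\mathrm{GV}(M)=\overline{\EuScript Q}$, where $\EuScript Q\subset\frak{sl}(W)^*$ is the minimal nonzero nilpotent orbit (the rank-one matrices); this is precisely the condition for $\mathrm{Ann}\,M$ to be a Joseph ideal. The finite-dimensional case is trivial, so assume $M$ is infinite-dimensional; then $\mathrm{GV}(M)=\overline{\EuScript Z}$ for some nonzero nilpotent $\mathrm{SL}(W)$-orbit $\EuScript Z$, by Joseph's theorem recalled in Subsection~\ref{SSavg}.

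First I would apply Theorem~\ref{Tcob}: the boundedness of $M$ gives that $\mathrm{GV}(M)$ is $\mathrm{SL}(V)$-coisotropic. Exactly as in the proof of Theorem~\ref{Tcob}, Richardson's theorem (Theorem~\ref{MMapSl}) together with the $\frak{sl}$-geometry of Example~\ref{Pn} gives an $\mathrm{SL}(V)$-equivariant birational isomorphism between $\EuScript Z$ and $\mathrm T^*\mathrm{Fl}$ for some partial $W$-flag variety $\mathrm{Fl}$; by the sphericity-coisotropy correspondence used at the end of the proof of Proposition~\ref{SphO}, coisotropy of $\EuScript Z$ forces $\mathrm{Fl}$ to be $\mathrm{SL}(V)$-spherical. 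The task is thus reduced to the following: which partial $W$-flag varieties are $\mathrm{SL}(V)$-spherical when $W=\mathrm S^2V$ or $W=\Lambda^2V$?

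The next step invokes the classification of Theorem~\ref{KlGr}. Since $W$ is a simple $\frak{sl}(V)$-module, the pair $([\frak k_W,\frak k_W],W)$ is either $(\frak{sl}(V),\mathrm S^2V)$ or $(\frak{sl}(V),\Lambda^2V)$; however, every entry of Theorem~\ref{KlGr} in which $W$ is a simple $\frak k$-module is a natural representation of the form $(\frak{sl}_n,\mathbb C^n)$, $(\frak{so}_n,\mathbb C^n)$ or $(\frak{sp}_n,\mathbb C^n)$, so our pair does not occur there. Hence, up to cotangent-equivalence, the only $\mathrm{SL}(V)$-spherical partial $W$-flag variety is $\mathbb P(W)$ itself. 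By Example~\ref{Pn}, the moment map from $\mathrm T^*\mathbb P(W)$ surjects onto the closure of the minimal nilpotent $\mathrm{SL}(W)$-orbit, so $\mathrm{GV}(M)=\overline{\EuScript Q}$, as required.

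The main obstacle is precisely this combinatorial check against Theorem~\ref{KlGr}: although $(\frak{sl}(V),\mathrm S^2V)$ and $(\frak{sl}(V),\Lambda^2V)$ are $\mathrm{SL}(V)$-spherical as modules (so $\mathbb P(W)$ is spherical and a bounded module with orbit $\overline{\EuScript Q}$ exists), one must rule out that any higher partial $W$-flag variety is $\mathrm{SL}(V)$-spherical. This exclusion is exactly what the absence of $(\frak{sl}_n,\mathrm S^2\mathbb C^n)$ and $(\frak{sl}_n,\Lambda^2\mathbb C^n)$ from the list of Theorem~\ref{KlGr} guarantees, and is the only non-trivial input to the argument.
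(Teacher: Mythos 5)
Your proof is correct and follows essentially the same route as the paper: Theorem~\ref{Tcob} gives $\mathrm{SL}(V)$-coisotropy of $\mathrm{GV}(M)$, the moment map makes $\mathrm{GV}(M)$ birational to $\mathrm T^*\mathrm{Fl}$ for a partial $W$-flag variety $\mathrm{Fl}$, coisotropy forces $\mathrm{Fl}$ to be $\mathrm{SL}(V)$-spherical, and the classification of $\mathrm{SL}(V)$-spherical partial $W$-flag varieties then forces $\mathrm{Fl}$ to be cotangent-equivalent to $\mathbb P(W)$, so $\mathrm{GV}(M)=\bar{\EuScript Q}$. The only real difference is that you cite Theorem~\ref{KlGr} explicitly for the final classification step where the paper simply asserts the fact; one could equally well combine Lemma~\ref{Lgr2} with Theorem~\ref{Tsgr2}.
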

\begin{proof} By Theorem~\ref{Tcob} the variety GV$(M)$ is SL$(V)$-coisotropic. According to the discussion following Example~\ref{Pn}, GV$(M)$ is birationally isomorphic to T$^*$Fl for some partial $W$-flag variety Fl. Hence Fl is an SL$(V)$-spherical variety. As the only partial $W$-flag varieties which are SL$(V)$-spherical are $\mathbb P(W)$ and $\mathbb P(W^*)$, GV$(M)$ coincides with $\EuScript Q$ (cf. Example~\ref{Pn}).\end{proof}
We are now ready to prove Theorems ~\ref{Tsmsl}-\ref{Tasl}. 
\begin{proof}[Proof of Theorem~\ref{Tsmsl}] As Ann$M$ is a Joseph ideal, V$(M)\subset\EuScript Q$. As $M$ has finite type over $\frak k$, we have dim~V$(M)=\frac{1}{2}$dim$\EuScript Q=n_W-1$ and $M$ is of small growth.\end{proof}
\begin{proof}[Proof of Theorem~\ref{Tannsl}]By Lemma~\ref{LslJ} the ideal Ann$M$ is a Joseph ideal. Therefore Ann$M$=I$(\lambda)$ for some semi-decreasing $n_W$-tuple $\bar\lambda$ by Corollary~\ref{CJid}.\end{proof}
\begin{proof}[Proof of Theorem~\ref{Tasl}]As Ann$M$=I$(\lambda)$ for some semi-decreasing tuple $\bar\lambda$, then V(Ann$M$)=GV($M$)=$\EuScript Q$ (Corolary~\ref{CJid}). The variety $\mathbb P(W)$ is SL$(V)$-spherical, and hence the variety T$^*\mathbb P(W)$ is SL$(V)$-coisotropic. As $\EuScript Q$ is birationally isomorphic to T$^*\mathbb P(W)$ (Example~\ref{Pn}), $\EuScript Q$ is SL$(V)$-coisotropic. Therefore $M$ is a bounded $(\frak{sl}(W), \frak{sl}(V))$-module by Theorem~\ref{Tcob}.\end{proof}
Let B$_t(W)$ denote the cardinality of the set of isomorphism classes of simple infinite-dimensional bounded $(\frak{sl}(W), \frak{sl}(V))$-modules annihilated by I($t, n_W-1,..., 1)$. Let P$_{e^{2\pi it}}(W)$ be the cardinality of the set of isomorphism classes of simple perverse sheaves on $W$ (with respect to the stratification by GL$(V)$-orbits) such that they have fixed monodromy $e^{2\pi it}$ and are neither supported at 0 nor are smooth on $W$. The monodromies of the simple perverse sheaves on $W$ with respect to the stratification by GL$(V)$-orbits are described in quiver terms in the Appendix.
\begin{lemma}\label{Ldps}We have B$_t(W)=$P$_{\mathrm e^{2\pi it}}(W)<\infty$ for all $t\in\mathbb C$.\end{lemma}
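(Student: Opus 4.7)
The plan is to exhibit an explicit bijection matching the two sides, by threading the module $M$ through the chain
\[
(\mathfrak{sl}(W),\mathfrak{sl}(V))\text{-mod} \;\longleftrightarrow\; D^t\mathbb P(W)\text{-mod} \;\longleftrightarrow\; D(W)\text{-mod with monodromy }e^{2\pi it} \;\longleftrightarrow\; \text{perverse sheaves on }W,
\]
and then reading off finiteness from the quiver description of the last category in the Appendix.

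First, I would use the surjection $\phi_t\colon \mathrm U(\mathfrak{sl}(W)) \twoheadrightarrow \mathrm D^t\mathbb P(W)$ from Subsection~\ref{SSbmsl}, whose kernel is exactly $\mathrm I(t,n_W-1,\ldots,1)$, to identify simple infinite-dimensional bounded $(\mathfrak{sl}(W),\mathfrak{sl}(V))$-modules annihilated by this ideal with simple infinite-dimensional $\mathrm D^t\mathbb P(W)$-modules on which $\mathfrak{sl}(V)$ acts locally finitely and boundedly. Then, via Lemma~\ref{Ldtpw}, I would lift each such $M$ to the unique simple $\mathrm D(W)$-module $\widetilde M$ with monodromy $e^{2\pi it}$, different from $\mathrm D(W)1$ and $\mathrm D(W)\delta_0$, satisfying $\widetilde M_t=M$. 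Observe that the Euler operator $E$ represents (up to scalar) the element $\mathrm{Id}_V\in\mathfrak{gl}(V)$ and acts by $t$ on $M$, so $M$ is automatically $\mathfrak{gl}(V)$-locally finite.

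The key step is then to promote $\mathfrak{gl}(V)$-local finiteness from $M=\widetilde M_t$ to all of $\widetilde M$. Because $\mathrm D(W)$ is $\mathfrak{gl}(V)$-locally finite under the adjoint action, the subspace of $\mathfrak{gl}(V)$-locally-finite vectors in $\widetilde M$ is a nonzero $\mathrm D(W)$-submodule (it contains $M$), hence equals $\widetilde M$ by simplicity. Since $W=\mathrm S^2V$ or $\Lambda^2V$ is a spherical $\mathrm{GL}(V)$-module, Lemma~\ref{Lkrs} now applies: $\widetilde M$ is $\mathfrak{gl}(V)$-multiplicity-free (in particular bounded), and $\mathrm{Loc}\,\widetilde M$ is holonomic with regular singularities with respect to the stratification of $W$ by $\mathrm{GL}(V)$-orbits. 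By Kashiwara's theorem, $\widetilde M$ corresponds to a simple perverse sheaf on $W$ with monodromy $e^{2\pi it}$. The two exceptions $\mathrm D(W)1$ and $\mathrm D(W)\delta_0$ are precisely the perverse sheaves that are smooth on $W$ and supported at $0$ respectively; both carry monodromy $1$, so the exclusion in the definition of $\mathrm P_{e^{2\pi it}}(W)$ matches the exclusion in the definition of $\mathrm B_t(W)$ (the corresponding $\mathfrak{sl}(W)$-subquotient $(\mathrm D(W)1)_t$ or $(\mathrm D(W)\delta_0)_t$ is either zero or finite-dimensional).

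For the reverse direction I would start with a simple perverse sheaf satisfying the conditions in the definition of $\mathrm P_{e^{2\pi it}}(W)$, invoke the lemma immediately following Lemma~\ref{Lkrs} to obtain a simple $\mathfrak{gl}(V)$-locally-finite $\mathrm D(W)$-module $\widetilde M$ with monodromy $e^{2\pi it}$, and then set $M:=\widetilde M_t$; pulling back along $\phi_t$ yields a simple infinite-dimensional bounded $(\mathfrak{sl}(W),\mathfrak{sl}(V))$-module annihilated by $\mathrm I(t,n_W-1,\ldots,1)$. Finiteness $\mathrm P_{e^{2\pi it}}(W)<\infty$ is then immediate from the Appendix: simple perverse sheaves for the $\mathrm{GL}(V)$-stratification are parametrized by pairs (orbit, simple representation of $\pi_1$), both finite data sets once the monodromy is fixed. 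The only genuinely subtle point, and the one I would expect to require care, is the "going up" of $\mathfrak{gl}(V)$-local finiteness from $\widetilde M_t$ to $\widetilde M$; everything else is a direct assembly of results already available in the excerpt.
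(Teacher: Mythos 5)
Your proposal is correct and follows essentially the same route as the paper: pass through $\phi_t$ to identify $B_t(W)$ with a count of simple infinite-dimensional $\mathrm D^t\mathbb P(W)$-modules, lift via Lemma~\ref{Ldtpw} to simple $\mathrm D(W)$-modules with monodromy $e^{2\pi it}$ (excluding $\mathrm D(W)1$ and $\mathrm D(W)\delta_0$), and invoke the Kashiwara equivalence established around Lemma~\ref{Lkrs} to land on perverse sheaves. The paper's proof is terser and leaves the ``going up'' of $\frak{gl}(V)$-local finiteness from $\widetilde M_t$ to $\widetilde M$ implicit; your explicit argument (the adjoint $\frak{gl}(V)$-local finiteness of $\mathrm D(W)$ makes the locally finite vectors a nonzero $\mathrm D(W)$-submodule, hence all of $\widetilde M$) correctly supplies the missing detail.
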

\begin{proof} Let ${\lambda_t\in\frak h_W^\frak{sl}}^*$ be the weight corresponding to $(t, n_W-1,..., 1)$. The quotient U$(\frak{sl}(W))/$Ann~L$_{\lambda_t}$ is isomorphic to D$^t\mathbb P(W)$ (see the discussion at the end of Subsection~\ref{SSbmsl}). Therefore the isomorphism classes of simple infinite-dimensional $(\frak{sl}(W), \frak{sl}(V))$-modules annihilated by I($\lambda_t)$ are in one-to-one correspondence with the simple (D$(W), \frak{gl}(V))$-modules with monodromy $e^{2\pi it}$, which are not isomorphic to D$(W)1$ or D$(W)\delta_0$ (see Lemma~\ref{Ldtpw}).\end{proof}
For a semi-decreasing $n_W$-tuple $\bar\lambda$ we denote by B$_\lambda(W)$ the cardinality of the set of isomorphism classes of simple infinite-dimensional bounded $(\frak{sl}(W), \frak{sl}(V))$-modules annihilated by I($\lambda)$. Then for some $t\in\mathbb C$ such that e$^{2\pi it}=m(\lambda)$, we have \begin{center}B$_\lambda(W)$=B$_t(W)=$P$_{\mathrm m(\lambda)}(W)<\infty$,\end{center} by Lemma~\ref{Lsoch} and Corollary~\ref{Cii}. This proves Theorem~\ref{Tgkps}.

We are now ready to prove Theorem~\ref{Tsptosl}.
\begin{proof}[Proof of Theorem~\ref{Tsptosl}]a) Let $M$ be a bounded $(\frak{sp}(W\oplus W^*), \frak{gl}(V))$-module. We can consider $M$ as an $(\frak{gl}(W), \frak{gl}(V))$-module. Let $E$ be a generator of the center of $\frak{gl}(W)$. Then $M$ is a direct sum of $E$-eigenspaces $\oplus_{t\in\mathbb C}M_t$. As $M$ is a bounded $(\frak{gl}(W), \frak{gl}(V))$-module, $M_t$ is a bounded $(\frak{sl}(W), \frak{sl}(V))$-module.\\b) Any simple bounded $(\frak{sl}(W), \frak{sl}(V))$-module annihilated by I($s_1\rho)$ is an $(\frak{sl}(W), \frak{sl}(V))$-submodule of an $(\frak{sp}(W\oplus W^*), \frak{gl}(V))$-module. Any other simple bounded module is isomorphic to a subquotient of a tensor product $F\otimes M$ where $F$ is a finite-dimensional SL$(W)$-module and $M$ is a simple $(\frak{sl}(W), \frak{sl}(V))$-module annihilated by I$(\rho+\varepsilon_k)$. Let $F_\frak{sp}$ be an SP$(W\oplus W^*)$-module which contains $F$ as an SL$(W)$-submodule (such a module always exists, see~\cite{Gr}). Then $F_\frak{sp}\otimes M$ is a bounded $\frak{gl}(V)$-module and contains $M$ as an $\frak{gl}(W)$-submodule.\end{proof}
\subsection{Categories of $(\frak{sp}(W\oplus W^*), \frak{gl}(V))$-modules}\label{Sspgl}In this section \begin{center}$K:$=GL$(V)$, $\frak g:=\frak{sp}(W\oplus W^*)$\end{center} where $W:=\Lambda^2V (n_V=2k, n_V\ge5)$ or $W=$S$^2V (n_V\ge3)$.
\begin{lemma}\label{LspJ}Let $M$ be a simple bounded $(\frak{sp}(W\oplus W^*), \frak{gl}(V))$-module. Then Ann$M$ is a Joseph ideal.\end{lemma}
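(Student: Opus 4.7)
The plan is to adapt the short proof of Lemma~\ref{LslJ} to the symplectic setting. Let $\tilde V$ be an irreducible component of V$(M)$. By Corollary~\ref{Cghol}, $\tilde V$ is a conical Lagrangian subvariety of GV$(M)=G\tilde V$, so $\dim\tilde V=\tfrac{1}{2}\dim$~GV$(M)$. By Proposition~\ref{Pbm}, $\tilde V$ is $K$-spherical, hence $\dim\tilde V\le\dim\mathrm B_K=\binom{n_V+1}{2}$. Combining yields $\dim$~GV$(M)\le 2\binom{n_V+1}{2}$. Since the lemma is trivial when $M$ is finite-dimensional, we may assume GV$(M)$ is a nonzero nilpotent $G$-orbit closure.

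In the case $W=\mathrm S^2V$ we have $\binom{n_V+1}{2}=n_W$, so the bound reads $\dim$~GV$(M)\le 2n_W=\dim\EuScript Q_\frak{sp}$. Since $\EuScript Q_\frak{sp}$ is the unique nonzero nilpotent $G$-orbit of dimension at most $2n_W$ (the nonzero orbits above $\EuScript Q_\frak{sp}$ have strictly larger dimension), we conclude GV$(M)=\overline{\EuScript Q}_\frak{sp}$, so Ann$M$ is a Joseph ideal.

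In the case $W=\Lambda^2V$ the bound only gives $\dim$~GV$(M)\le 2n_W+2n_V$, which is strictly larger than $\dim\EuScript Q_\frak{sp}=2n_W$. To close this gap I would invoke Theorem~\ref{Tsptosl} a): view $M$ as a $(\frak{gl}(W),\frak{gl}(V))$-module through the inclusion $\frak{gl}(W)\subset\frak{sp}(W\oplus W^*)$ and decompose $M=\oplus_t M_t$ into $E$-eigenspaces. Each $M_t$ is a bounded $(\frak{sl}(W),\frak{sl}(V))$-module, so by Lemma~\ref{LslJ} every simple $\frak{sl}(W)$-subquotient of $M_t$ has Joseph annihilator in U$(\frak{sl}(W))$ and hence $\frak{sl}(W)$-associated variety contained in $\overline{\EuScript Q}$. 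Since $\mathrm{Ann}_{\frak{sp}(W\oplus W^*)}(M)\cap\mathrm U(\frak{sl}(W))$ annihilates each such subquotient, the image of GV$(M)$ under the restriction $\frak{sp}(W\oplus W^*)^*\to\frak{sl}(W)^*$ is forced to lie inside $\overline{\EuScript Q}$. Intersecting this restriction constraint with the earlier Lagrangian/$K$-sphericity constraint on $\tilde V\subset\frak k^\bot\cap$~GV$(M)$ should isolate $\overline{\EuScript Q}_\frak{sp}$ among all nilpotent $G$-orbit closures.

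The main obstacle is the $\Lambda^2V$ case. A rank-one element of $\frak{sl}(W)$ sits in the $\frak{sp}$-orbit of Jordan type $[2,2,1^{2n_W-4}]$, not in the minimal $\frak{sp}$-orbit of type $[2,1^{2n_W-2}]$; so the restriction argument alone does not collapse GV$(M)$ onto $\overline{\EuScript Q}_\frak{sp}$, and the Lagrangian and $K$-sphericity conditions on $\tilde V$ must do the remaining work of ruling out every nilpotent $G$-orbit that strictly contains $\EuScript Q_\frak{sp}$ and intersects $\frak{sl}(W)$ inside $\overline{\EuScript Q}$.
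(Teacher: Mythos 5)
Your opening paragraph is the right argument and it already settles both cases; you just stop too early in the $\Lambda^2V$ case. You established $\dim\mathrm{GV}(M)\le 2\dim\frak b_{\frak{gl}(V)}$ and, for $W=\Lambda^2V$, computed $\dim\frak b_{\frak{gl}(V)}=\binom{n_V+1}{2}=n_W+n_V$, giving $\dim\mathrm{GV}(M)\le 2n_W+2n_V$. What you missed is the dimension of the \emph{next} nilpotent orbit: the unique orbit immediately above $\EuScript Q_\frak{sp}$ (Jordan type $[2,2,1^{2n_W-4}]$, the image of the moment map for $\mathrm T^*\mathbb P(W\oplus W^*)$) has dimension $2(2n_W-1)=4n_W-2$. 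Since $4n_W-2>2n_W+2n_V$ iff $n_V^2-3n_V-2>0$, which holds for all $n_V\ge4$ and in particular for $n_V\ge 5$, the only nonzero orbit with dimension $\le 2n_W+2n_V$ is still $\EuScript Q_\frak{sp}$. So the argument you started with closes the $\Lambda^2V$ case with no further input. In fact this is exactly the paper's proof, phrased as a contradiction: if $\mathrm{GV}(M)$ is neither $0$ nor $\bar{\EuScript Q}_\frak{sp}$, then it contains $\tilde{\EuScript Q}$ of dimension $2(2n_W-1)$, and Corollary~\ref{Cps} would force $\dim\frak b_\frak k\ge 2n_W-1$, which is false.

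The digression through Theorem~\ref{Tsptosl}~a) and the restriction $\frak{sp}(W\oplus W^*)^*\to\frak{sl}(W)^*$ is not needed and, as you correctly observe at the end, does not close the gap on its own: the rank-one nilpotent elements of $\frak{sl}(W)$ land in the $\frak{sp}$-orbit $[2,2,1^{2n_W-4}]$, not in $\EuScript Q_\frak{sp}$, so knowing that the $\frak{sl}(W)$-subquotients have Joseph annihilators in $\mathrm U(\frak{sl}(W))$ does not by itself pin $\mathrm{GV}(M)$ to $\bar{\EuScript Q}_\frak{sp}$. The only fact you need to add to your first paragraph is the numerical one: the second-smallest nonzero nilpotent orbit already has dimension $4n_W-2$, which exceeds $2\dim\frak b_{\frak{gl}(V)}$ in both the $\mathrm S^2V$ and $\Lambda^2V$ cases for the parameter ranges considered.
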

\begin{proof}Let $\tilde{\EuScript Q}$ be the image of the moment map $\phi:$T$^*\mathbb P(W\oplus W^*)\to\frak{sp}(W\oplus W^*)^*$. Assume that GV$(M)$ does not equal to 0 or to $\bar{\EuScript Q}_\frak{sp}$. Then~\cite[6.2]{CM} $\tilde{\EuScript Q}\subset$GV$(M)$ and dim~GV$(M)\ge2(2n_W-1)$. Therefore dim$\frak b_{\frak{sl}(V)}\ge2n_W-1$ by Corollary~\ref{Cps}. This inequality is false.\end{proof}
We are now ready to prove Theorems ~\ref{Tsmsp}---~\ref{Tspeq} and Theorem~\ref{Tspps}.
\begin{proof}[Proof of Theorem~\ref{Tsmsp}] As Ann$M$ is a Joseph ideal, we have V$(M)\subset\bar{\EuScript Q}_{\frak{sp}}\cap\frak k^\bot$. As dim$\bar{\EuScript Q}_{\frak{sp}}\cap\frak k^\bot=n_W$, $M$ is of small growth.\end{proof}
\begin{proof}[Proof of Theorem~\ref{Tannsp}]As $M$ is a bounded module, Ann$M$ is a Joseph ideal. Therefore Ann$M$=I$(\bar\mu)$ for some Shale-Weil tuple $\bar\lambda$ by Corollary~\ref{Cqtosw}.\end{proof}
\begin{proof}[Proof of Theorem~\ref{Tasp}]As $M$ is an $(\frak{sp}(W\oplus W^*), \frak{gl}(V))$-module, we have V$(M)\subset\frak{gl}(V)^\bot\cap\EuScript Q_\frak{sp}$. Moreover, $\EuScript Q_\frak{sp}$ is a quotient of $W\oplus W^*$ by $\mathbb Z_2$ (see Subsection~\ref{SSJisp}), hence $\frak{gl}(V)^\bot\cap\EuScript Q_\frak{sp}$ is a quotient by $\mathbb Z_2$ of the union of conormal bundles to GL$(V)$-orbits. Since $W$ is GL$(V)$-spherical, these conormal bundles are GL$(V)$-spherical. Therefore any irreducible component of V$(M)$ is GL$(V)$-spherical, and $M$ is $\frak{gl}(V)$-bounded by Proposition~\ref{Pbm}.\end{proof}
We recall that $\bar\mu_0$ is an $n_W$-tuple $(n_W-\frac{1}{2}, n_W-\frac{3}{2},..., \frac{1}{2})$ and $\mu_0\subset\frak h_W^*$ is the corresponding weight.
\begin{proof}[Proof of Theorem~\ref{Tspeq}] Let $\bar\mu_1, \bar\mu_2$ be Shale-Weil $n_W$-tuples. The categories of bounded $(\frak{sp}(W\oplus W^*), \frak{gl}(V))$-modules annihilated by I$(\mu_1)$ and I$(\mu_2)$ are equivalent by Theorem~\ref{Tweq}.\end{proof}
\begin{proof}[Proof of Theorem~\ref{Tspps}] The category of $(\frak{sp}(W\oplus W^*), \frak{gl}(V))$-modules annihilated by I($\mu$) is equivalent, via the functors $\EuScript H_{\mu_0}^{\mu'}$ and $\EuScript H_{\mu'}^{\mu_0}$, to the category of $(\frak{sp}(W\oplus W^*), \frak{gl}(V))$-modules annihilated by I($\mu_0)$. The second category is equivalent to the direct sum of two copies of the category of perverse sheaves on $W$ with respect to the stratification by GL$(V)$-orbits (see Subsection~\ref{SSutr}).\end{proof}
Let $\langle\EuScript J_{\mu_0}\rangle$ be the free vector space generated by the isomorphism classes of simple bounded $(\frak{sp}(W\oplus W^*), \frak{gl}(V))$-modules annihilated by Ker$\chi_{\mu_0}$. We recall that PF$\overline{\mathrm{unc}}(\chi_{\mu_0})$ acts on $\langle\EuScript J_{\mu_0}\rangle$ by linear operators ( Section~\ref{SSpro}). Let $\bar\mu'$ be an $n_W$-tuple such that $\EuScript H_{\mu_0}^{\mu'}|_{\langle\EuScript J_{\mu_0}\rangle}\ne 0$. Then $\bar\mu'$ is a Shale-Weil tuple by Corollary~\ref{Cgsp}. Therefore the action of PF$\overline{\mathrm{unc}}(\chi_{\mu_0})$ collapses to an action of $\EuScript H_{\mu_0}^{\mu_0}$ and $\EuScript H_{\mu_0}^{\sigma\mu_0}$. The functor $\EuScript H_{\mu_0}^{\sigma\mu_0}$ is involutive and we call it Inv (see also Section~\ref{Stro}).

Let $M$ be a simple U$(\frak{sp}(W\oplus W^*))$-module of small growth annihilated by I$(\mu_0)$. Let $E$ be a finite-dimensional simple $\frak{sp}(W\oplus W^*$)-module.  We recall that F$_E:=E\otimes\cdot$. Then F$_E\EuScript H_{\mu_0}^{\mu_0}$ is a direct sum $\oplus_i\EuScript H_{\mu_0}^{\mu_i}$ of indecomposable projective functors. Therefore \begin{center}$E\otimes M=\oplus_i\EuScript H_{\mu_0}^{\mu_i}M$.\end{center} If $\mu_i$ is not a Shale-Weil tuple for some $i$, then $\EuScript H_{\mu_0}^{\mu_i}M=0$ (Corollary~\ref{Cgsp}). If $\mu_i$ is a negative Shale Weil tuple for some $i$, then $\EuScript H_{\mu_0}^{\mu_i}=\EuScript H_{\mu_0}^{\sigma\mu_i}$Inv. Therefore \begin{center}F$_EM=\oplus_{i\le s}\EuScript H_{\mu_0}^{\mu_i'}M\oplus(\oplus_{i\le s'}\EuScript H_{\mu_0}^{\mu_i''}$Inv$M$)\end{center} for some positive Shale-Weil tuples $\bar\mu_1',..., \bar\mu_s'$ and $\bar\mu_1'',..., \bar\mu_{s'}''$.

Let  $\EuScript J$ be the free vector space generated by the simple $\frak{sp}(W\oplus W^*)$-modules of small growth and $\langle\EuScript J\rangle$ be the free vector space generated by $\EuScript J$ (cf. Section~\ref{Ssmg}). One can express the projective functors $\EuScript H_{\mu_0}^\mu$ in terms of the functors F$_E$.  More precisely, for any Shale-Weil tuple $\bar\mu$ we have$$\EuScript H_{\mu_0}^{\mu}=(\oplus_{i\le s}c_i\mathrm F_{E_i}\EuScript H_{\mu_0}^{\mu_0})+(\oplus_{i\le s'} c_i'\mathrm F_{E_i'}\mathrm{Inv})\eqno(5)$$ for some finite-dimensional $\frak{sp}(W\oplus W^*)$-modules $E_1,..., E_s, E_1',..., E_s'$ and some integers $c_1,..., c_s, c_1',..., c_{s'}'$; this is an equality for linear operators on $\langle\EuScript J\rangle$. Therefore\begin{center}$\EuScript H_{\mu_0}^\mu([M]+[$Inv$M])=([M]+$Inv$[M])\otimes_\mathbb C(\oplus_ic_i [E_i]+\oplus_ic_i'[E_i'])$\end{center}and\begin{center}$\EuScript H_{\mu_0}^\mu([M]-[$Inv$M])=([M]-$Inv$[M])\otimes_\mathbb C(\oplus_ic_i[E_i]-\oplus_ic_i'[E_i'])$.\end{center}
This shows that if we are given the functions $[M:\cdot]_\frak k$ and $[$Inv$M:\cdot]_\frak k$, the coefficients $c_i, c_i'$, and the functions $[E_i:\cdot]_\frak k, [E_i':\cdot]_\frak k$, we can find the functions $[\EuScript H_{\mu_0}^\mu M:\cdot]_\frak k$ and $[\EuScript H_{\mu_0}^\mu$Inv$M:\cdot]_\frak k$ as formal power series.

Moreover, it is enough to determine the coefficients $c_i, c_i'$ and the modules $E_i, E_i'$ for one simple module $M$ of small growth annihilated by I$(\mu_0)$. This work has been done by O.~Mathieu~\cite{M} for a simple module L$_{\mu_0}$, and we now explain the result.

Let $\bar\mu$ be a positive Shale-Weil tuple and be the coefficients $c_i, c_i'$, the modules $E_i, E_i'$ be as in formula~(5). Let $L$ and $L^\sigma$, $L^{\bar\mu}$ and $L^{\sigma\bar\mu}$ be finite-dimensional Spin$_{2n_W}$-modules as in Section~\ref{Stro}. Then\begin{center}$([L^{\bar\mu}:\cdot]_{\frak h_W}+[L^{\sigma\bar\mu}:\cdot]_{\frak h_W})=$\\$(\oplus_ic_i[E_i:\cdot]_{\frak h_W}+\oplus_ic_i'[E_i':\cdot]_{\frak h_W})\otimes([L:\cdot]_{\frak h_W}+[L^\sigma:\cdot]_{\frak h_W})$,\\$([L^{\bar\mu}:\cdot]_{\frak h_W}-[L^{\sigma\bar\mu}:\cdot]_{\frak h_W})=$\\$(\oplus_ic_i[E_i:\cdot]_{\frak h_W}-\oplus_i[E_i':\cdot]_{\frak h_W})\otimes([L:\cdot]_{\frak h_W}-[L^\sigma:\cdot]_{\frak h_W})$.\end{center}
As the functions \begin{center}$[L^{\bar\mu}:\cdot]_{\frak h_W}, [L^{\sigma\bar\mu}:\cdot]_{\frak h_W}, [L:\cdot]_{\frak h_W}, [L^\sigma:\cdot]_{\frak h_W}$\end{center} are computed by the Weyl character formula, we consider the above mentioned formula as an explicit description of the coefficients $c_i, c_i'$ and the $\frak{sp}(W\oplus W^*)$-modules $E_i, E_i'$.

The mystery of this formula is that the function $[L:\cdot]_{\frak h_W}$ comes from the universe of Spin$_{2n_W}$-modules and the functions $[E_i:\cdot]_{\frak h_W}$ come from the universe of SP$_{2n_W}$-modules.
\newpage\section{Appendix}\label{SAP}
\subsection{Results of C. Benson, G. Ratcliff, V. Kac} The classification of spherical modules has been worked out in several steps. V.~Kac has classified simple spherical modules in~\cite{Kc}, C.~Benson and G.~Ratcliff have classified all spherical modules in~\cite{BR}. The classification is contained also in paper~\cite{Le} of A.~Leahy. Below we reproduce their list.

Let $W$ be a $K$-module. Then $W$ is a spherical $\frak k$-variety if and only if the pair $([\frak k, \frak k], W)$ is a direct sum of pairs $(\frak k_i, W_i)$ listed below and in addition $\frak k+\oplus_i c_i=$N$_{\frak{gl}(W)}(\frak k+\oplus_i c_i)$ for certain abelian Lie algebras $c_i$ attached to $(\frak k_i, W_i)$. Here N$_{\frak{gl}(W)}\frak k$ stands for the normalizer of $\frak k$ in $\frak{gl}(W)$ and $c_i$ is a 0-, 1- or 2-dimensional Lie algebra listed in square brackets after the pair $(\frak k_i, W_i)$. This subalgebra is generated by linear operators $h_1$ and $h_{m, n}$. By definition, $h_1=$Id. The notation $h_{m, n}$ is used only when $W=W_1\oplus W_2$: in this case $h_{m, n}|_{W_1}=m\cdot$Id and $h_{m, n}|_{W_2}=n\cdot$Id. The notation '$(\frak k_i, \{W_i, W_i'\})$' is shorthand for '$(\frak k_i, W_i)$' and '$(\frak k_i, W_i')$'. Finally, $\omega_i$ stands for the $i$-th fundamental weight and the corresponding fundamental representation. We follow the enumeration convention for fundamental weights of~\cite{OV}.
\begin{center}{\bfseries Table~\ref{SAP}.1}: Weakly irreducible spherical pairs.\end{center}
0) $(0, \mathbb C) [0]$.\\
\setcounter{AP}{1}\roman{AP}) Irreducible representations of simple Lie algebras:\\
1) $(\frak{sl}_n, \{\omega_1, \omega_{n-1}\}) [\mathbb Ch_1] (n\ge 2)$;\\2) $(\frak{so}_n, \omega_1) [0] (n\ge 3)$;\\3) $(\frak{sp}_{2n}, \omega_1) [\mathbb Ch_1] (n\ge 2)$;\\4) $(\frak{sl}_n, \{2\omega_1, 2\omega_{n-1}\}) [0] (n\ge 3)$;\\5) $(\frak{sl}_{2n+1}, \{\omega_2, \omega_{n-2}\}) [\mathbb Ch_1] (n\ge 2)$;\\6) $(\frak{sl}_{2n}, \{\omega_2, \omega_{n-2}\}) [0] (n\ge 3)$;\\7) $(\frak{so}_7, \omega_3) [0]$;\\8) $(\frak{so}_8, \{\omega_3, \omega_4\}) [0]$;\\9) $(\frak{so}_9, \omega_4) [0]$;\\10) $(\frak{so}_{10}, \{\omega_4, \omega_5\}) [\mathbb Ch_1]$;\\11) $($E$_6, \omega_1) [0]$;\\12) (G$_2, \omega_1 ) [0]$.\\
\setcounter{AP}{2}\roman{AP}) Irreducible representations of nonsimple Lie algebras:\\
1) $(\frak {sl}_n\oplus\frak{sl}_m, \{\omega_1, \omega_{n-1}\}\otimes\{\omega_1, \omega_{m-1}\}) [\mathbb Ch_1](m>n\ge 2)$;\\2) $(\frak {sl}_n\oplus\frak{sl}_n, \{\omega_1, \omega_{n-1}\}\otimes\{\omega_1, \omega_{n-1}\}) [0](n\ge 2)$;\\3) $(\frak{sl}_2\oplus\frak{sp}_{2n}, \omega_1\otimes\omega_1) [0](n\ge 2)$;\\4)
$(\frak{sl}_3\oplus\frak{sp}_{2n}, \{\omega_1,\omega_2\}\otimes\omega_1) [0](n\ge 2)$;\\5) $(\frak{sl}_n\oplus\frak{sp}_4, \{\omega_1, \omega_{n-1}\}\otimes\omega_{1}) [\mathbb Ch_1](n\ge 5)$;\\6) $(\frak{sl}_4\oplus\frak{sp}_4, \{\omega_1, \omega_3\}\otimes\omega_{1}) [0]$.\\
\setcounter{AP}{3}\roman{AP}) Reducible representations of Lie algebras:\\
1) $(\frak{sl}_n\oplus\frak{sl}_m\oplus\frak{sl}_2; (\{\omega_1, \omega_{n-1}\}\oplus\{\omega_1, \omega_{n-1}\})\otimes\omega_1) [\mathbb Ch_{1,0}\oplus\mathbb Ch_{0,1}](n, m\ge 3)$;\\
2) $(\frak{sl}_n; \{\omega_1\oplus\omega_1, \omega_{n-1}\oplus\omega_{n-1}\}) [\mathbb Ch_{1,1}](n\ge 3)$;\\
3) $(\frak{sl}_n; \omega_1\oplus\omega_{n-1}) [\mathbb Ch_{1,-1}](n\ge 3)$;\\
4) $(\frak{sl}_{2n}; \{\omega_1, \omega_{n-1}\}\oplus\{\omega_2, \omega_{n-2}\}) [\mathbb Ch_{0,1}](n\ge 2)$;\\
5) $(\frak{sl}_{2n+1}; \omega_1\oplus\omega_2) [\mathbb Ch_{1,-m}](n\ge 2)$;\\
6) $(\frak{sl}_{2n+1}; \omega_{n-1}\oplus\omega_2) [\mathbb Ch_{1, m}](n\ge 2)$;\\
7) $(\frak{sl}_n\oplus\frak{sl}_m; \{\omega_1, \omega_{n-1}\}\otimes(\mathbb C\oplus\{\omega_1, \omega_{m-1}\})) [\mathbb Ch_{1,0}](2\le n<m)$;\\
8) $(\frak{sl}_n\oplus\frak{sl}_m; \{\omega_1, \omega_{n-1}\}\otimes(\mathbb C\oplus\{\omega_1, \omega_{m-1}\})) [\mathbb Ch_{1, 1}](m\ge 2, n\ge m+2)$;\\
9) $(\frak{sl}_n\oplus\frak{sl}_m; \{\omega_1, \omega_{n-1}\}\oplus\{\omega_1^* (=\omega_{n-1}), \omega_{n-1}^* (=\omega_1)\}\otimes\{\omega_1, \omega_{m-1}\}) [\mathbb Ch_{1,0}](2\le n<m)$;\\
10) $(\frak{sl}_n\oplus\frak{sl}_m; \{\omega_1, \omega_{n-1}\}\oplus\{\omega_1^* (=\omega_{n-1}), \omega_{n-1}^* (=\omega_1)\}\otimes\{\omega_1, \omega_{m-1}\}) [\mathbb Ch_{1,-1}](m\ge 2, n\ge m+2)$;\\
11) $(\frak{sl}_n\oplus\frak{sp}_{2m}\oplus\frak{sl}_2; (\{\omega_1, \omega_{n-1}\}\oplus\omega_1)\otimes\omega_1) [\mathbb Ch_{0,1}](n\ge 3, m\ge 1)$;\\
12) $(\frak{sl}_2; \{\omega_1\oplus\omega_1\}) [0]$;\\
13) $(\frak{sl}_n\oplus\frak{sl}_n; \{\omega_1, \omega_{n-1}\}\oplus\{\omega_1^{(*)}, \omega_{n-1}^{(*)}\}\otimes\{\omega_1, \omega_{n-1}\}) [0](n\ge 2)$;\\
14) $(\frak{sl}_{n+1}\oplus\frak{sl}_n; \{\omega_1, \omega_n\}\oplus\{\omega_1^{(*)}, \omega_n^{(*)}\}\otimes\{\omega_1, \omega_{n-1}\}) [0](n\ge 2)$;\\
15) $(\frak{sl}_2\oplus\frak{sp}_{2n}; \omega_1\otimes(\mathbb C\oplus\omega_1)) [0](n\ge 2)$;\\
16) $(\frak{sp}_{2n}\oplus\frak{sp}_{2m}\oplus\frak{sl}_2; (\omega_1\oplus \omega_1)\otimes\omega_1) [0] (m, n\ge2)$;\\
17) $(\frak{sl}_2\oplus\frak{sl}_2\oplus\frak{sl}_2, (\omega_1\oplus\omega_1)\otimes\omega_1) [0]$;\\
18) $(\frak{so}_8, \{\omega_1\oplus\omega_3, \omega_1\oplus\omega_4, \omega_3\oplus\omega_4\}) [0]$.
\subsection{Results of T.~Braden and M.~Grinberg}Let $n$ be a positive integer and $V$ be a $\mathbb C$-vector space of dimension $2n$. Then the category of perverse sheaves on $\Lambda^2V$ with respect to the stratification by GL$(V)$-orbits is equivalent to the category of representations of the following quiver A with relations:\begin{center}{\Large A$_0$}$_{\xrightarrow{q_0}}^{\xleftarrow{p_0}}${\Large A$_1$}$_{\xrightarrow{q_1}}^{\xleftarrow{p_1}}$...$_{\xrightarrow{q_{n-2}}}^{\xleftarrow{p_{n-2}}}${\Large A$_{n-1}$}$_{\xrightarrow{q_{n-1}}}^{\xleftarrow{p_{n-1}}}${\Large A$_n$},\\ $\xi_i$ and $\nu_i$ are invertible for all $i$, $\xi_i=\nu_i$ for $i\in\{1,..., n-1\}$,\end{center}where $\xi_i:=1+q_{i-1}p_{i-1}$ for $i\in\{1,..., n\}$, and $\nu_i:=1+p_iq_i$ for $i\in\{0,..., n-1\}$. Let $R$ be a representation of the quiver A.  If $R$ is simple, the invertible operators $\{\nu_0^n, \nu_1^{n-1}\xi^1,..., \xi_n^n\}$ are proportional to the identity map with a fixed constant $c\in\mathbb C^*$. We call $c$ {\it the monodromy of $R$}.

The set of eigenvalues of $1+p_iq_i$ is independent from $i$, and we call them {\it eigenvalues of $R$}. If $R$ is simple, this set consists of one element $\lambda$. For a given eigenvalue $\lambda\ne 1$, there exists precisely one simple representation of A with eigenvalue $\lambda$. The simple representations of A with eigenvalue 1 are enumerated by the vertices of the quiver.

By definition, the {\it support of $R$} is the set of vertices corresponding to non-zero vector spaces. The corresponding to $R$ perverse sheaf is supported at 0 if and only if $R$ is supported at A$_0$. The corresponding to $R$ perverse sheaf is smooth along $W$ if and only if $R$ is supported \\at A$_n$.

Let now $V$ be a $\mathbb C$-vector space of dimension $n$. Then the
category of perverse sheaves on S$^2V$ with respect to the
stratification by GL$(V)$-orbits is equivalent to the category of
representations of the following quiver B with
relations:\begin{center}{\Large
B$_0$}$_{\xrightarrow{q_0}}^{\xleftarrow{p_0}}${\Large
B$_1$}$_{\xrightarrow{q_1}}^{\xleftarrow{p_1}}$...$_{\xrightarrow{q_{n-2}}}^{\xleftarrow{p_{n-2}}}${\Large
B$_{n-1}$}$_{\xrightarrow{q_{n-1}}}^{\xleftarrow{p_{n-1}}}${\Large
B$_n$},\\ $\xi_i$ and $\nu_i$ are invertible for all $i$,
$\xi_i^2=\nu_i^2$ for $i\in\{1,..., n-1\}$, $p_j\nu_{j+1}=-\nu_j
p_j,\hspace{10pt} q_j\nu_j=-\nu_{j+1} q_j,\hspace{10pt}
p_j\xi_{j+1}=-\xi_j p_j,\hspace{10pt} q_j\xi_j=-\xi_{j+1} q_j$
whenever both sides of equalities are well-defined. Here
$\xi_i:=1+q_{i-1}p_{i-1}$ for $i\in\{1,..., n\}$, $\nu_i:=1+p_iq_i$
for $i\in\{0,..., n-1\}$.\end{center} Let $R$ be a representation of
the quiver B.  Assume $R$ is simple. Then the invertible operators
$\{\nu_0^n, \nu_1^{n-1}\xi^1,..., \xi_n^n\}$ are proportional to the
identity map with a fixed constant $c\in\mathbb C^*$. We call $c$
{\it the monodromy of $R$}. The set of eigenvalues of $(-1)^i\xi_i$
consists of one element $\bar\xi$; the set of eigenvalues of
$(-1)^i\nu_i$ consists of one element $\bar\nu$.  We call the pair
$(\bar\xi, \bar\nu)$ the {\it spectrum of $R$}. We have
$\bar\xi=\bar\nu=1$ or $\bar\xi=-\bar\nu$. The simple
representations of B with spectrum $(1, 1)$ are enumerated by the
inner vertices of the B-quiver. For $\lambda\ne\pm1$ there exists
precisely one simple representation of B with the spectrum
$(\lambda, -\lambda)$. The simple representations of B with spectra
$(1, -1)$ and $(-1, 1)$ are enumerated by the vertices of the
quiver.

By definition, the {\it support of $R$} is the set of vertices corresponding to non-zero vector spaces. The corresponding to $R$ perverse sheaf is supported at 0 if and only if $R$ is supported at B$_0$. The corresponding to $R$ perverse sheaf is smooth along $W$ if and only if $R$ is supported \\at B$_n$.
\newpage\section{Acknowledgements}
I thank my scientific advisor Ivan Penkov for his attention to my work and the great help with the text-editing. Overall, he spend more then 60 hours of pure time on just text of my dissertation. I thank all participants of '\'Ernest Vinberg's seminar for fruitful discussions around invariant theory, and especially I thank Vladimir Zhgoon. I thank Dmitri Panyushev for interest to my work and useful comments to preliminary versions of texts.
\newpage
\end{document}